\newcommand\Sti{\begingroup\fontencoding{LGR}\selectfont\char7\endgroup} 
\newcommand\stig{\text{\Sti}} 
\renewcommand*\thepart{\Roman{part}}
    \def\subsection{\@startsection{subsection}{2}%
      \z@{.5\linespacing\@plus.7\linespacing}{-.5em}%
      {\normalfont\bfseries}}
\def\part{\cleardoublepage \thispagestyle{empty}%
  \null\vfil  \markboth{}{}\secdef\@part\@spart}
\def\@part[#1]#2{%
  \ifnum \c@secnumdepth >-2\relax \refstepcounter{part}%
    \addcontentsline{toc}{part}{\partname\ \thepart.
        \protect\enspace\protect\noindent#1}%
  \else
    \addcontentsline{toc}{part}{#1}\fi
  \begingroup\centering
  \ifnum \c@secnumdepth >-2\relax
       {\fontsize{\@xviipt}{22}\bfseries
         \partname\ \thepart} \vskip 20\p@ \fi
  \fontsize{\@xxpt}{25}\bfseries
      #1\vskip 15\p@  \fontsize{\@xiipt}{15}\bfseries #2\vfil\vfil\endgroup \newpage\thispagestyle{empty}}
\def\@spart#1{\addcontentsline{toc}{part}{\protect\noindent#1}%
  \begingroup\centering
  \fontsize{\@xxpt}{25}\bfseries
     #1\vfil\vfil\endgroup \newpage\thispagestyle{empty}}
\theoremstyle{plain}
\newtheorem{theorem}[subsection]{Theorem}
\newtheorem{corollary}[subsection]{Corollary}
\newtheorem{proposition}[subsection]{Proposition}
\newtheorem{lemma}[subsection]{Lemma}
\theoremstyle{definition}
\newtheorem{definition}[subsection]{Definition}
\newtheorem{example}[subsection]{Example}
\theoremstyle{remark}
\newtheorem{remark}[subsection]{Remark}
\newtheorem{notation}[subsection]{Notation}
\providecommand{\hoch}{Hochschild }
\providecommand{\sym}{\mathcal{S}}
\providecommand{\Z}{\mathbb{Z}}
\providecommand{\C}{\mathbb{C}}
\providecommand{\F}{\mathbb{F}}
\providecommand{\N}{\mathbb{N}}
\providecommand{\Nsucc}{N_{+}}
\providecommand{\Zsucc}{Z_{+}}
\providecommand{\bbA}{\mathbb{A}}
\providecommand{\bbG}{\mathbb{G}}
\providecommand{\bbO}{\mathbb{O}}
\providecommand{\bbV}{\mathbb{V}}
\providecommand{\bbT}{\mathbb{T}}
\providecommand{\bbS}{\mathbb{S}}
\providecommand{\bbD}{\mathbb{D}}
\providecommand{\A}{\mathfrak{a}}
\providecommand{\p}{\mathfrak{p}}
 \providecommand{\cF}{\mathscr{F}}
\providecommand{\cG}{\mathscr{G}}
\providecommand{\cA}{\mathscr{A}}
\providecommand{\cB}{\mathscr{B}}
\providecommand{\cC}{\mathscr{C}}
\providecommand{\hC}{\widehat{\mathscr{C}}}
\providecommand{\cD}{\mathscr{D}}
\providecommand{\hD}{\widehat{\mathscr{D}}}
\providecommand{\cE}{\mathscr{E}}
\providecommand{\cI}{\mathscr{I}}
\providecommand{\cO}{\mathscr{O}}
\providecommand{\cM}{\mathscr{M}}
\providecommand{\cN}{\mathscr{N}}
\providecommand{\cP}{\mathscr{P}}
\providecommand{\cS}{\mathscr{S}}
\providecommand{\cT}{\mathscr{T}}
\providecommand{\cV}{\mathscr{V}}
\providecommand{\cW}{\mathscr{W}}
\providecommand{\bA}{\mathbf{A}}
\providecommand{\bB}{\mathbf{B}}
\providecommand{\bC}{\mathbf{C}}
\providecommand{\bF}{\mathbf{F}}
\providecommand{\bE}{\mathbf{E}}
\providecommand{\bG}{\mathbf{G}}
\providecommand{\bF}{\mathbf{F}}
\providecommand{\bH}{\mathbf{H}}
\providecommand{\bI}{\mathbf{I}}
\providecommand{\bX}{\mathbf{X}}
\providecommand{\bY}{\mathbf{Y}}
\providecommand{\bZ}{\mathbf{Z}}
\providecommand{\bT}{\mathbf{T}}
\providecommand{\bO}{\mathbf{O}}
\providecommand{\bP}{\mathbf{P}}
\providecommand{\OO}{\mathbf{O}}
\providecommand{\bM}{\mathbf{M}}
\providecommand{\bN}{\mathbf{N}}
\providecommand{\V}{\mathbf{V}}
\providecommand{\W}{\mathbf{W}}
\providecommand{\K}{\mathbf{K}}
\providecommand{\Do}{\mathcal{D}}
\providecommand{\cH}{\mathscr{H}}
\providecommand{\cR}{\mathscr{R}}
\providecommand{\h}{\mathbf{h}}
\providecommand{\bsi}{{\bm{\sigma}}}
\providecommand{\spec}{\mathop{\rm Spec}}
\providecommand{\Sp}{\mathop{\rm Sp}}
\providecommand{\speczar}{\mathop{\rm Spec.Zar}}
\providecommand{\specet}{\text{Spec.\'Et}}
\providecommand{\Pic}{\text{Pic}}
\providecommand{\uPic}{\text{\underline{Pic}}}
\providecommand{\et}{\text{\'et}}
\providecommand{\Ob}{\mathop{\rm Ob}}
\providecommand{\id}{\mathop{\rm id}\nolimits}
\providecommand{\eq}[1]{#1\text{-eq}}
\providecommand{\Aut}{\mathop{\rm Aut}\nolimits}
\providecommand{\Hom}{\mathop{\rm Hom}\nolimits}
\providecommand{\Ext}{\mathop{\rm Ext}\nolimits}
\providecommand{\Isom}{\mathop{\rm Isom}\nolimits}
\providecommand{\End}{\mathop{\rm End}\nolimits}
\providecommand{\uAut}{\mathop{\underline{\rm Aut}}\nolimits}
\providecommand{\uHom}{\mathop{\underline{\rm Hom}}\nolimits}
\providecommand{\uIsom}{\mathop{\underline{\rm Isom}}\nolimits}
\providecommand{\uEnd}{\mathop{\underline{\rm End}}\nolimits}
\providecommand{\uExt}{\mathop{\underline{\rm Ext}}\nolimits}
\providecommand{\uC}{\mathop{\underline{\rm C}}\nolimits}
\providecommand{\Ch}{\text{C}}
\providecommand{\ch}{\mathbf{Ch}}
\providecommand{\Ex}{\mathbf{E}}
\providecommand{\eend}{{\textstyle\int}}
\providecommand{\coend}{{\textstyle\int}}
\providecommand{\mono}{\rightarrowtail}
\providecommand{\HH}{\mathop{\rm H}\nolimits}
\providecommand{\uH}{\mathop{\underline{\rm H}}\nolimits}
\providecommand{\tH}{\text{\rm H}}
\providecommand{\ter}{e}
\providecommand{\ini}{0}
\providecommand{\op}{{\mathop{\rm op}\nolimits}}
\providecommand{\ind}{\mathop{\rm Ind}\nolimits}
\providecommand{\be}{\underline{\mathbf{e}}}
\providecommand{\I}{\text{I}}
\providecommand{\jj}{\text{J}}
\providecommand{\Fix}{\mathop{\rm Fix}\nolimits}
\providecommand{\Quo}{\mathop{\rm Quo}\nolimits}
\renewcommand{\ker}{\mathop{\rm ker}\nolimits}
\providecommand{\Ker}{\mathop{\rm Ker}\nolimits}
\providecommand{\im}{\mathop{\rm im}\nolimits}
\renewcommand{\Im}{\mathop{\rm Im}\nolimits}
\providecommand{\coim}{\mathop{\rm coim}\nolimits}
\providecommand{\Coim}{\mathop{\rm Coim}\nolimits}
\providecommand{\coker}{\mathop{\rm coker}\nolimits}
\providecommand{\Coker}{\mathop{\rm Coker}\nolimits}
\providecommand{\vect}{\mathop{\rm vec}\nolimits}
\providecommand{\Eq}{\mathop{\rm Eq}\nolimits}
\providecommand{\coeq}{\mathop{\rm coeq}\nolimits}
\providecommand{\cone}{\mathop{\rm cone}\nolimits}
\providecommand{\coh}{\mathbf{H}}
\providecommand{\der}{\mathbf{R}}
\providecommand{\colim}{\mathop{\rm colim}}
\providecommand{\copinf}{\amalg_\infty}
\providecommand{\copz}{\amalg_\Z}
\providecommand{\prodz}{\Pi_\Z}
\providecommand{\prodinf}{\Pi_\infty}
\providecommand{\produinf}{\Pi^\infty}
\providecommand{\sprod}{{\textstyle\prod}}
\providecommand{\ssum}{{\textstyle\sum}}
\providecommand{\Gal}{\mathop{\rm Gal}}
\providecommand{\Split}{\text{\rm Split}}
\providecommand{\Sub}{\text{\rm Sub}}
\providecommand{\cat}{\mathbf{cat}}
\providecommand{\Sch}{\mathbf{Sch}}
\providecommand{\aff}{\text{\rm -Aff}}
\providecommand{\Set}{\mathbf{Set}}
\providecommand{\Rng}{\mathbf{Rng}}
\providecommand{\Tors}{\mathbf{Tors}}
\providecommand{\uTors}{\underline{\mathbf{Tors}}}
\providecommand{\Gp}{\text{\rm Gr}}
\providecommand{\daGp}{\text{\rm -Gr}}
\providecommand{\rng}{\text{\rm -Rng}}
\providecommand{\da}{\text{\rm-}}
\providecommand{\Ab}{\text{\rm Ab}}
\providecommand{\ab}{\text{\rm -Ab}}
\providecommand{\CGp}{{(\cC\text{-}\Gp)}}
\providecommand{\hCGp}{{(\hC\text{-}\Gp)}}
\providecommand{\hCGps}{{\hC\text{-}\Gp}}
\providecommand{\Omod}{{(\bO\text{-Mod})}}
\providecommand{\Mod}{\text{\rm-Mod}}
\providecommand{\Comod}{\text{\rm-Comod}}
\providecommand{\Alg}{\text{\rm-Alg}}
\providecommand{\GOmod}{{(\bG\text{-}\bO\text{\rm-Mod})}}
\providecommand{\Vab}{\cV\da\Ab}
\providecommand{\Top}{\mathfrak{Top}}
\providecommand{\Diff}{\mathbf{Diff}}
\providecommand{\diff}{\bsi\text{-}}
\providecommand{\diffinf}{\bsi_\infty\text{-}}
\providecommand{\difuinf}{\bsi^\infty\text{-}}
\providecommand{\diffN}{\bsi_\N\text{-}}
\providecommand{\dif}[1]{\bsi#1\text{-}}
\providecommand{\difd}[1]{\bsi_{#1}\text{-}}
\providecommand{\difu}[1]{\bsi^{#1}\text{-}}
\providecommand{\diffZ}{\bsi_\Z\text{-}}
\providecommand{\diffst}{\bsi^{*}\text{-}}
\providecommand{\Nat}{\mathbf{Nat}}
\providecommand{\nat}{\text{-}\Nat}
\providecommand{\psh}{\mathbf{PSh}}
\providecommand{\sh}{\mathbf{Sh}}
\providecommand{\diffpsh}{\diff\psh}
\providecommand{\hdifC}{\diffpsh(\diff\cC)}
\providecommand{\ov}{{\kern-1pt\mathord{/}\kern-.7pt}}
\providecommand{\Ov}{{\kern-1pt\sslash\kern-.7pt}}
\providecommand{\diff}{\text{\it Diff}}
\providecommand{\forg}[1]{\lfloor#1\rfloor}
\providecommand{\ssig}[1]{\lceil#1|}
\providecommand{\psig}[1]{|#1\rceil}
\providecommand{\real}[1]{\llbracket#1\rrbracket}
\providecommand{\assoc}[1]{\mathring{#1}}
\providecommand{\lexp}[2]{{\vphantom{#2}}^{#1}{\kern-.1ex#2}}
\providecommand{\lsub}[2]{{\vphantom{#2}}_{#1}{\kern-.1ex#2}}
\providecommand{\lrexp}[3]{{\vphantom{#2}}^{#1}{\kern-.1ex#2^#3}}
\providecommand{\ldexp}[3]{{\vphantom{#2}}^{#1}{\kern-.1ex#2_#3}}
\begin{document}

\frontmatter

\subjclass[2000]{Primary . Secondary .}
\keywords{difference algebra, topos theory, cohomology, enriched category}
\title{A topos-theoretic view of difference algebra}
%\title{An enriched view of difference algebra}
%\title{Hochschild cohomology of difference algebraic groups}
\date{\today}

\author{Ivan Toma{\v s}i{\'c}} 
%\classno{03C60, 11G25, 12H10 (primary), 14G10, 14G15 (secondary)}
\address{Ivan Toma{\v s}i{\'c}\\
         School of Mathematical Sciences\\
  	Queen Mary University of London\\
         London, E1 4NS\\
        United Kingdom}
\email{i.tomasic@qmul.ac.uk}

%\begin{abstract}
%We develop a theory of \hoch cohomology of difference algebraic groups. 
%\end{abstract}

\maketitle

%    Dedication.  If the dedication is longer than a line or two
%    remove the centering instructions and the line break.
%\cleardoublepage
%\thispagestyle{empty}
%\vspace*{13.5pc}
%\begin{center}
%  Dedication text (use \\[2pt] for line break if necessary)
%\end{center}
%\cleardoublepage

%\setcounter{tocdepth}{3}

\tableofcontents

\section*{Introduction}

\pagestyle{plain}

\subsection{The origins of difference algebra}

Difference algebra can be traced back to considerations involving recurrence relations, recursively defined sequences, rudimentary dynamical systems, functional equations and the study of associated \emph{difference equations}. 

Let  $k$ be a commutative ring with identity, and let us write
$$
R=k^\N
$$
for the ring ($k$-algebra) of $k$-valued sequences, and let $\sigma:R\to R$ be the \emph{shift} endomorphism given by 
$$
\sigma(x_0,x_1,\ldots)=(x_1,x_2,\ldots).
$$
The \emph{first difference} operator $\Delta:R\to R$ is defined as
$$
\Delta=\sigma-\id,
$$
and, for $r\in\N$, the \emph{$r$-th difference} operator $\Delta^r:R\to R$ is the $r$-th compositional power/iterate of $\Delta$, i.e.,
$$
\Delta^r=(\sigma-\id)^r=\sum_{i=0}^r\binom{r}{i}(-1)^{r-i}\sigma^i.
$$
Classically, a \emph{difference equation} is a polynomial expression involving an unknown $x$ for a sequence in $R$, and its iterated differences. More precisely, given a polynomial  $F\in k[X_0,X_1,\ldots,X_r]$, an expression
$$
F(x,\Delta(x),\ldots,\Delta^r(x))=0
$$
is a polynomial difference equation in an unknown $x$. In view of the fact that iterated difference operators can be expressed in terms of iterated shifts, the above equation can be rewritten as
$$
\tilde{F}(x,\sigma(x),\ldots,\sigma^r(x))=0,
$$
for some polynomial $\tilde{F}\in k[X_0,X_1,\ldots,X_r]$. 

Historically, the terminology shifted in favour of the shift endomorphisms, so now we most commonly refer to the latter form as a polynomial difference equation, and its solutions are sought in an abstract \emph{difference $k$-algebra}
$$
(R,\sigma),
$$ 
consisting of a $k$-algebra $R$ and an endomorphism $\sigma:R\to R$. 

\subsection{Algebraic structures with operators}\label{class-diffalg}

Abstract difference and differential algebra were founded by Ritt in 1930s as the study of algebraic structures equipped with difference and differential operators. In this classical (or \emph{strict}) point of view, the operators are superimposed onto classical algebraic structures. 

In particular, a \emph{difference ring} is a pair
$$
(R,\sigma_R),
$$
where $R$ is a commutative ring with identity and $\sigma:R\to R$ is an endomorphism.

A \emph{morphism} %of difference rings} 
$f:(R,\sigma_R)\to (S,\sigma_S)$ is
a commutative diagram
 \begin{center}
 \begin{tikzpicture} 
\matrix(m)[matrix of math nodes, row sep=2em, column sep=2em, text height=1.5ex, text depth=0.25ex]
 {
 |(1)|{R}		& |(2)|{S} 	\\
 |(l1)|{R}		& |(l2)|{S} 	\\
 }; 
\path[->,font=\scriptsize,>=to, thin]
(1) edge node[above]{$f$} (2) edge node[left]{$\sigma_R$}   (l1)
(2) edge node[right]{$\sigma_S$} (l2) 
(l1) edge  node[above]{$f$} (l2);
\end{tikzpicture}
\end{center}
or ring homomorphisms. We obtain the category 
$$
\diff\Rng
$$
of difference rings. 

If $R\in\diff\Rng$, a (difference) \emph{$R$-module} is a pair 
$$(M,\sigma_M),$$ 
where $M$ is a module for the underlying ring of $R$, and $\sigma_M:M\to M$ is an additive endomorphism such that, for $r\in R$ and $m\in M$, 
$$
\sigma_M(r.m)=\sigma_R(r).\sigma_M(m).
$$
A \emph{morphism $f:(M,\sigma_M)\to (N,\sigma_N)$} is a homomorphism $f$ of the underlying modules satisfying $$ f\circ\sigma_M=\sigma_N\circ f.$$
We write 
$$
R\Mod
$$
for the category of difference modules over the difference ring $R$.

Similarly, we define difference analogues of other types of mathematical structures, including sets, groups, abelian groups, algebras over a difference ring, etc.

A  \emph{differential ring} is a pair
$$
(R,\delta)
$$
consisting of a  ring $R$ with a derivation $\delta:R\to R$.
There exist suitable notions of differential modules and other algebraic objects. 

It is possible to consider multiple difference or differential operators, and the two types of operators can be combined into difference-differential structures, where the interaction between the operators is often prescribed.

The differential side of the story developed much quicker. 
%due to efforts of numerous researchers. 
Most notably, it was formulated in a manner analogous to the algebraic geometry of the time by Kolchin, who studied differential varieties over differential fields, developed the Galois theory of differential equations, and studied difference algebraic groups (\cite{kolchin}).

A comparable level of development of difference algebra had to wait until Cohn's monograph \cite{cohn}, where the difference analogue of commutative algebra needed for formulating `difference algebraic geometry' was established. The Galois theory of difference equations was described by van der Put and Singer in \cite{vdP-Singer}.

The ultimate exposition of the classical view of difference algebra is Levin's recent book \cite{levin}, which covers Cohn's material from a contemporary point of view and expands it with a selection of important developments established since.

\subsection{Model theory of fields with operators}

Model theory has been highly successful in the study of fields with operators, both difference and differential. To focus just on the difference side of the story, existentially closed difference fields were axiomatised by Macintyre \cite{angus} and Chatzidakis-Hrushovski \cite{zoe-udi} by the theory called ACFA, but the classification of definable sets was made difficult by the lack of full quantifier elimination for ACFA. 

Nevertheless, Zilber's trichotomy has been established by Chatzidakis, Hrushovski and Peterzil in  \cite{zoe-udi} and \cite{acfa2}, stating that a one-dimensional definable set is either in correspondence with an
algebraic curve over a definable one-dimensional field, or it is locally modular (it arises from a definable group $G$ such that every definable subset of $G^n$ is a finite Boolean combination of cosets of definable groups), or it is trivial in the sense that all definable relations on X are reducible to binary relations.

These results, coupled with methods and ideas of stability/simplicity theory, found spectacular applications in number theory, such as Hrushovski's model-theoretic proof of the Manin-Mumford conjecture \cite{udi-MM}, and more recently in algebraic dynamics, such as in the work of Medvedev-Scanlon \cite{MeSc}.

The long-standing conjecture by Macintyre that ACFA is the
first-order theory of difference fields 
$$
(\bar{\F}_p,x\mapsto x^{p^n})
$$
 consisting of the algebraic closure of a finite field $\F_p$ 
 equipped with a power of the Frobenius automorphism was settled by Hrushovski in \cite{udi-frob}.

\subsection{Difference algebraic geometry}\label{old-diff-ag}

The scheme-theoretic development of difference algebraic geometry originates in Hrushovski's paper \cite{udi-frob} with the definition of the fixed-point spectrum 
$$
{\spec}^\sigma(A)
$$
of a difference ring $(A,\sigma)$. Cohn's monograph provides a wealth of information needed for studying such schemes, but a number of new results had to be developed from first principles. This context was further explored by Wibmer and the author. The drawback of this approach is that it is not possible to recover the difference ring $(A,\sigma)$ by taking the global sections of the structure sheaf of  ${\spec}^\sigma(A)$ in general. 

More recently, Wibmer and the author adopted the position that it is beneficial to adopt a functorial view, where a difference scheme associated to the difference ring $(A,\sigma)$ is treated as a representable functor from the category of difference rings to the category of sets
$$
R\mapsto \diff\Rng(A,R),
$$ 
to be thought of as the realisation functor of the difference scheme $\spec(A,\sigma)$. By Yoneda's lemma, the difference ring $(A,\sigma)$ can be recovered from its realisation functor, which parallels the situation in classical algebraic geometry. 

\subsection{In search of difference cohomology}

The goal of this manuscript is to identify the right context for introducing cohomology into difference algebra and geometry. The author took the following path to enlightenment in his quest for difference cohomology.

%{\bf  Awakening.} 
%Classical algebra is founded in the universe of sets.

{\bf Cartesian closed categories.}
Classical algebra is founded in the category of sets, where
we have the natural `currying' isomorphism
$$
\Set(X\times Y,Z)\simeq \Set(X,\Set(Y,Z)).
$$
Homological algebra is rooted in a few basic dualities that stem from it, such as the following \emph{hom-tensor duality}. Given a commutative ring with identity $R$, and $R$-modules $A$, $B$, $C$, the object $R\Mod(B,C)$ is again an $R$-module, and we have a natural isomorphism
$$
R\Mod(A\otimes_R B,C)\simeq R\Mod(A,R\Mod(B,C)).
$$

Any attempt to develop homological algebra in the strict difference context described in \ref{class-diffalg} encounters the following insurmountable obstacles. 

If $Y$ and $Z$ are difference sets, the object
$$
\diff\Set(Y,Z)
$$
is a bare set, and not a difference set, so we cannot hope for a difference analogue of the currying isomorphism in the strict difference context.

Similarly, if $R\in\diff\Rng$ is a difference ring, and $B$, $C$ are difference $R$-modules,
the set of difference $R$-module homomorphisms
$$
R\Mod(B,C)
$$ 
is not a difference $R$-module in general. Hence, we cannot hope for a difference analogue of the hom-tensor duality in the confines of the strict difference context.

%{\bf Insight.} 

The above properties can be restated by saying that the category of sets is \emph{cartesian closed,} and that the category of modules over a commutative unital ring is \emph{monoidal closed.} 

We now show  how to make difference sets into a cartesian closed category, and how to make the category of difference modules over a difference ring into a monoidal closed category.

%{\bf Essence and knowledge.} 
{\bf Enriching difference algebra.}
Consider the difference set $$\Nsucc=(\N,i\mapsto i+1).$$
Given difference sets $X$ and $Y$, we  define the \emph{internal hom object} 
$$
[X,Y]\in\diff\Set
$$
by stipulating
$$
[X,Y]=\diff\Set(\Nsucc\times X,Y)\simeq \{(f_i)_{i\in\N}: f_i\in\Set(X,Y), f_{i+1}\circ \sigma_X=\sigma_Y\circ f_i\},
$$
together with the shift $s:[X,Y]\to [X,Y]$,
$$
s(f_0,f_1,\ldots)=(f_1,f_2,\ldots).
$$
We obtain a natural isomorphism
$$
\diff\Set(X\times Y,Z)\simeq\diff\Set(X,[Y,Z]),
$$ 
and hence $\diff\Set$ is a cartesian closed category. 

If $R$ is a difference ring, and $A$, $B$ and $C$ are difference $R$-modules, we can define the internal hom object
$$
[B,C]_R\in R\Mod
$$
analogously, and we obtain the difference hom-tensor duality
$$
R\Mod(A\otimes_R B,C)\simeq R\Mod(A,[B,C]_R),
$$
and thus $R\Mod$ is a monoidal closed category.

Our %mantra 
motto becomes the following.
\begin{quote}
Difference algebraic geometry and homological algebra must be developed in the framework of \emph{enriched category theory}, where the relevant categories are enriched over difference sets and difference modules.
\end{quote}

In view of the fact that 
$$\diff\Set(X,Y)=\Fix([X,Y],s),$$ 
we observe that the ordinary difference categories are the \emph{underlying categories} of the relevant $\diff\Set$-enriched categories. Hence, the classical difference algebra only sees the tip of the iceberg.  
 
%{\bf Liberation.} 
{\bf A topos-theoretic view.}
Writing $\bsi$ for the category associated to the monoid $\N$, we have equivalences of categories
$$
\diff\Set\simeq \bB\N \simeq [\bsi,\Set],
$$
between difference sets, the category $\bB\N$ of sets with an action of the monoid $(\N,+)$ and the category of presheaves on $\bsi^\op\simeq\bsi$. By definition, they are all Grothendieck topoi, and $\bB\N$ is often referred to as the \emph{classifying topos of $\N$.}

%Allowing our mind to roam freely, 
We update our basic principle to the following.
\begin{quote}
Difference algebra is the study of algebraic objects \emph{internal} in the topos $\diff\Set$.
\end{quote}
 Indeed, in terminology of categorical logic, the theories of groups, abelian groups, rings, and modules are \emph{algebraic theories} in the sense that the relevant axioms can be expressed by the commutativity of various diagrams involving algebraic operations treated as morphisms, so they can be interpreted in any category with finite limits.

 In particular, difference rings are ring objects in the category $\diff\Set$, i.e.,
 $$
 \diff\Rng\simeq \mathbf{Rng}(\diff\Set).
 $$
 Moreover, if $R\in\diff\Rng$ is a ring object in the topos $\diff\Set$, the category of difference $R$-modules is equivalent to the category of modules in the ringed topos $(\diff\Set,R)$, i.e.,
 $$
 R\Mod\simeq \mathbf{Mod}(\diff\Set,R).
 $$
 The enriched structure mentioned above is automatic, since the internal homs of difference sets are the internal homs from topos theory, and the internal homs of difference modules are the topos-theoretic ones that stem from the operations on modules in ringed topoi.  

%{\bf Nirvana?} 
{\bf Changing the universe.}
An established principle in topos theory states that the universe of sets can be replaced by an arbitrary base topos, and that it should be possible to reprise  interesting  topics and chapters of classical mathematics in the new context. 

We cling to this principle as a kind of Ariadne's thread, guiding us out of the labyrinth of guesswork on the path of applying the vast machinery of topos theory and categorical logic in the case of difference sets. There is no need to wonder how to define appropriate difference analogues of classical objects, a predicament often encountered by a researcher in difference algebra. 

Remarkably, this special case of a possibly simplest example of a non-boolean topos hides a wealth of previously uncovered objects, and takes us on a path through a significant part of the Elephant \cite{elephant1}, \cite{elephant2}.

\subsection{Difference algebraic geometry as relative algebraic geometry}

Inspired by the above philosophy of working in a new universe, we proclaim the following.

\begin{quote}
Difference algebraic geometry is algebraic geometry over the \emph{base topos} $\diff\Set$.
\end{quote}

For a ringed topos $(\cE,A)$, Hakim \cite{hakim} defines the \emph{Zariski spectrum}
$\speczar(\cE,A)$
as a locally ringed topos equipped with a morphism of ringed topoi
$$
\speczar(\cE,A)\to (\cE,A)
$$
so that the 2-functor $\speczar$ is a 2-right adjoint to the inclusion 2-functor of locally ringed topoi into ringed topoi, as explained in Section~\ref{rel-AG}.

Moreover, the inclusion 2-functor of \emph{strictly locally ringed} topoi  into locally ringed topoi into admits a right adjoint $\specet$, giving rise to the \emph{\'etale spectrum}
$$
\specet(\cE,A)\to (\cE,A).
$$

Hence, the \emph{affine difference scheme} associated to a difference ring $A$ is the locally ringed topos 
$$
(X,\cO_X)=\speczar(\diff\Set,A),
$$ 
and its \'etale topos is
$$
(X_\text{\'et},\cO_{X_\text{\'et}})=\specet(X,\cO_X),
$$
as we explain in Section~\ref{diff-AG}. This approach resolves all foundational issues mentioned in \ref{old-diff-ag}. 

{\bf \'Etale fundamental group of a difference scheme.}
Under certain conditions, the structure morphism $X_\text{\'et}\to\diff\Set$ is a locally connected geometric morphism.

If we have a point $\bar{x}:\diff\Set\to X_\text{\'et}$, the 
\emph{difference \'etale fundamental group} of $X$ is defined as the prodiscrete $\diff\Set$-localic group
$$
\pi_1({X_\text{\'et}}_{\ov\diff\Set},\bar{x})
$$
as in \cite{bunge-04}, building on the earlier work of \cite{sga1, joyal-tierney, bunge-92, bunge-moerdijk}. In the absence of a point, we work with a localic difference groupoid given by this theory.

We also adapt Janelidze's pure Galois theory (\cite{borceux-janelidze}, \cite{janelidze}) to the context of difference rings and compare the resulting profinite groupoid to the fundamental groupoid. 

Let us indicate why we require such advanced technology to develop difference Galois theory. In algebraic geometry, one can always reduce to the case of a connected base scheme, one can always find some geometric points in the base scheme and associated fibre functors, and Grothendieck's Galois theory exploits the fact that the target category $\Set$ of the fibre functors is a boolean topos. In the difference case, we have to deal with the cases where the base difference scheme is topologically totally disconnected yet indecomposable as a difference object, the inability to find points in the base difference scheme,  and the fact that the topos $\diff\Set$ is not boolean.

The classical theory is summarised in Section~\ref{rel-galois}, and our difference versions are given in Section~\ref{diff-gal}.

{\bf Cohomology of difference schemes.} If $(X,\cO_X)$ is a difference scheme as above, the global sections geometric morphism factors as
$$
\gamma_\text{Zar}:X\xrightarrow{\pi_\text{Zar}}\diff\Set\to\Set, 
$$
where $\pi_\text{Zar}$ is the structure geometric morphism of the Zariski spectrum.

If $M$ is an $\cO_X$-module, we define the Zariski cohomology and the relative Zariski cohomology as the usual topos cohomology groups
$$
\tH^i(X,M)=R^i\gamma_{\text{Zar}*}(M), \ \ \text{ and }\ \ \tH^i(X/\diff\Set,M)=R^i\pi_{\text{Zar}*}(M),
$$
the latter being a difference abelian group. 

Similarly, if $M$ is an %$\cO_{X_\text{\'et}}$-module, 
abelian group in $X_\et$, we define the \'etale cohomology and the relative \'etale cohomology groups as the usual topos cohomology groups
$$
\tH^i(X_\et,M)=R^i\gamma_{\et,*}(M),\ \ \text{ and }\ \ \tH^i({X_\text{\'et}}/{\diff\Set},M)=R^i\pi_{\et,*}(M),
$$
the latter being a difference abelian group.

For any reasonable scheme topology $\tau$,  we define $\tau$-spectra of a difference scheme (\ref{external-spectra}, \ref{tau-spectra-diff}), and $\tau$-cohomology analogously to the above (\ref{coh-rel-sch}, \ref{diff-cohom}). In particular, we occasionally use the fppf topology. 

As explained in Section~\ref{top-coh}, the advantage of placing our definitions firmly within the classical context of topos theory is that we have a vast machinery at our disposal, and many results are automatic. 
In particular, all spectra in sight are ringed Grothendieck topoi, so the relevant categories of modules and abelian groups are known to be abelian with enough injectives, and classical homological algebra applies, as well as all results from Grothendieck's SGA or \cite{stacks-project} developed at the generality of ringed sites or topoi.

Hence, we immediately know the relationship between the first cohomology and the Picard group (\ref{picard}, \ref{h90-diff}), and the fact that the first cohomology group classifies torsors (\ref{group-torsors}, \ref{diff-tors}, \ref{gen-diff-tors}).

Grothendieck-Leray spectral sequence gives a relationship between the absolute and relative cohomology groups, which becomes particularly simple in the difference case \ref{diff-cohom}.

Following Hakim's general result \ref{coh-quasicoh}, we obtain the difference analogues of the vanishing theorems in cohomology of quasi-coherent sheaves \ref{coh-quasicoh-diff}.

Upon establishing a comparison \ref{comp-diff} between the relative difference cohomology and the classical cohomology of the underlying scheme, we are able to perform a number of fairly explicit calculations, such as computing  the \'etale cohomology groups of a difference field in \ref{etcoh-diff-field} and that of a difference curve in \ref{etcoh-diff-curve}.

 {\bf Cohomology of difference algebraic groups.} 
 
In \cite{michael-habil}, Wibmer adopts a categorical view of difference algebraic geometry. Given a base difference ring $k$, the work is done in the diagram category
$$
[k\Alg,\Set],
$$
and a difference varieties over $k$ correspond to functors represented by $k$-algebras of finite $\sigma$-type.  In particular, a difference algebraic group is a functor represented by a difference Hopf algebra over $k$. 

In Section~\ref{diff-gp-coh}, we work the context of categories enriched over $\diff\Set$ and the \emph{enriched} functor category
$$
[k_\infty\Alg,\diff\Set].
$$ 
This context naturally leads to the notion of representations of affine difference algebraic groups as difference comodules of the associated difference Hopf algebra, allowing us to compare to the previous work of \cite{michael-alexey} and \cite{piotr}.

\subsection{Organisation of the material}

In the first part of the manuscript, we aim to review the material from enriched category theory and topos theory that will be used in the second part in the context of difference algebra and over the base topos $\diff\Set$. 
We mostly follow the classics such as \cite{sga4.1}, \cite{borceux-1}, \cite{borceux-2}, \cite{borceux-3}, \cite{olivia-book}, \cite{johnstone}, \cite{kelly},  \cite{elephant1}, \cite{elephant2}, \cite{maclane-moerdijk}. 

The only original contributions in this part are in Section~\ref{enr-homol-alg}, where we show that one can obtain  enriched derived functors of an enriched functor, in Section~\ref{rel-AG}, where we indirectly show that Tierney's construction of the Zariski spectrum from \cite{tierney-spec} works,  and perhaps in Sections~\ref{rel-galois}, Section~\ref{coh-rel-sch} and Section~\ref{s:rel-gp-coh}, where we combine known methods for possibly novel applications.

In the second part of the manuscript, we have a surprisingly nontrivial task of unravelling the general concepts in the difference context, whilst interpreting the emerging objects in a way that gives an impression of doing `difference algebraic geometry'.  Most of it is completely unseen in difference algebra.

Whenever possible, we address an issue in the utmost generality in the first part in order to extract it as a special case in the second. On the other hand, certain concepts become much simpler in the difference case, and we are able to pursue them much further than in full generality. In particular, several calculations from  Section~\ref{diff-gal} and Section~\ref{diff-coh} are specific. 

\subsection{Generalisations}

Our basic premise is that most facts about difference algebra can be derived from topos theory over the base topos 
$$\bB \N.$$
We can replace $\N$ by any other monoid, quandle or a category, and, by following the template we outlined, obtain the relevant  `equivariant' geometry.

\subsection{Acknowledgements} I am a relative novice in category theory, and only started thinking of the role of enriched categories in difference homological algebra in 2017, and I discovered topos theory and categorical logic even later. Consequently, I was reliant on help from a number of category theorists. 

I  would like to express my gratitude to Olivia Caramello, Nicola Gambino, Martin Hyland, Peter Johnstone, Edmund Robinson, Ross Street and Gavin Wraith, for generously sharing their knowledge and insights with me.  

A number of topics, including the motivation for Section~\ref{diff-gp-coh} and Corollary~\ref{diff-galois-gp} as motivation for Section~\ref{diff-gal} stem from joint work and fruitful discussions with Michael Wibmer, and will lead to joint publications. I thank him for his patience. 

There is a number of independent papers that are related to parts of this manuscript, such as \cite{michael-anette}, \cite{piotr} and especially the recent preprint \cite{piotr2}. This shows that difference algebra is decidedly entering the age of cohomology. We explain the connections in Section~\ref{appendix1}.

\subsection{Disclaimer}

This is a working manuscript and therefore somewhat incomplete. It lacks examples and precision in certain parts. In spite of occasionally using internal language, and developing some categorical logic for difference sets, it lacks a general treatment of categorical logic. We will strive to address these issues in future versions. 

This version is intended as a `proof of concept' that the outlined programme of bringing relative algebraic geometry and difference algebraic geometry on par with classical algebraic geometry is possible, and we intend to carry out that programme as far as possible.

 \mainmatter 

\pagestyle{headings}

\part[$\cE$GA]{Algebraic geometry in enriched categories and topoi}

\section{Category theory essentials}

\subsection{Representable functors}

Let $\cC$ be a category and let
$$
\hC=\mathbf{Hom}(\cC^\circ,\Set)
$$
be the category of contravariant functors from $\cC$ to the category of sets. There is a canonical functor $\h:\cC\to\hC$, associating to each $X\in\Ob(\cC)$ the functor $\h_X$ defined by
$$
\h_X(S)=\Hom_{\cC}(S,X).
$$
By Yoneda, for every $X\in\Ob(\cC)$ and $\bF\in\Ob(\hC)$, we have a bijection
$$
\Hom_{\hC}(\h_X,\bF)\stackrel{\sim}{\longrightarrow}\bF(X).
$$
In particular, the functor $\h$ is  \emph{fully faithful}, i.e., for every pair $X,Y\in\Ob(\cC)$, the canonical map
$$
\Hom_{\cC}(X,Y)\stackrel{\sim}{\longrightarrow} \Hom_{\hC}(\h_X,\h_Y) 
$$
is bijective. It yields an equivalence between $\cC$ and the full subcategory of \emph{representable functors} of $\hC$, which allows us to identify $X$ and $\h_X$ in the sequel.

\subsection{Limits}

The category $\hC$ admits arbitrary (small) limits and the functor $\h$ commutes with limits.
We discuss some special cases.

The category $\hC$ has a terminal object $\be$ satisfying $\be(S)=\{\emptyset\}$. It is representable if and only if $\cC$ has a terminal object $e$, in which case $\be=\h_e$;

Projective limits in $\hC$ are computed argument-wise, i.e., 
$$
(\varprojlim_i \bF_i)(S)=\varprojlim_i \bF_i(S)
$$

Given morphisms $\bF\to \bG$ and $\bF'\to\bG$ in $\hC$, the fibre product is defined by
$$
\left(\bF\times_{\bG}\bF'\right)(S)=\bF(S)\times_{\bG(S)}\bF'(S).
$$
We define the product of $\bF, \bF'\in\Ob(\hC)$ as
$$
\bF\times\bF'=\bF\times_{\be}\bF'.
$$
In particular, given $X,X',Y\in \Ob(\cC)$, $\h_X\times_{\h_Y}\h_{X'}$ (resp.\ $\h_X\times\h_{X'}$) is representable if and only if $X\times_YX'$ (resp.\ $X\times X'$) exists in $\cC$, and in that case
$$
\h_X\times_{\h_Y}\h_{X'}=\h_{X\times_YX'} \ \ \ \left(\text{resp. } \h_X\times\h_{X'}=\h_{X\times X'}\right).
$$
%We can add a discussion of equalisers, kernels, etc, if needed. Sub-objects, monomorphisms, epimorphisms, as needed!?!

\subsection{Adjoints and representability}\label{general-representab}

Let $R:\cC\to\cD$ be a functor which is right adjoint to $L:\cD\to\cC$. We define a functor 
$$
\widehat{L}:\widehat{\cC}\to\widehat{\cD}, \ \ \ \ \ \widehat{L}(\bF)=\bF\circ L.
$$
For any $X\in\Ob(\cC)$,
$$
\widehat{L}(\h_X)=\h_{R(X)}.
$$
The diagram of categories
 \begin{center}
 \begin{tikzpicture} 
\matrix(m)[matrix of math nodes, row sep=2em, column sep=3em, text height=1.9ex, text depth=0.25ex]
 {
 |(1)|{\cC}		& |(2)|{\hC} 	\\
 |(l1)|{\cD}		& |(l2)|{\widehat{D}} 	\\
 }; 
\path[->,font=\scriptsize,>=to, thin]
(1) edge node[above]{$\h$} (2) edge node[left]{$R$}   (l1)
(2) edge node[right]{$\widehat{L}$} (l2) 
(l1) edge  node[above]{$\h$} (l2);
\end{tikzpicture}
\end{center}
provides a manner of extending the functor $R$ from $\cC$ to $\widehat{\cC}$ via the Yoneda embedding. By a slight abuse of notation, we may write $R=\widehat{L}:\hC\to\hD$. 

Generally speaking, we know that left and right adjoints of $\widehat{L}$ are obtained as left and right Kan extensions $\text{Lan}_L\mathord{-}$, $\text{Ran}_L\mathord{-}$ along $L$, if they exist. With the above assumption on the existence of a right adjoint $R$, we see that
$$
\text{Ran}_L\mathord{-}\simeq\widehat{R},
$$
and so $\widehat{L}$ has a right adjoint $\widehat{R}$.

\subsection{The section functor}

We define a functor $\Gamma:\hC\to\Set$ as follows. For $\bF\in\Ob(\hC)$, let
$$
\Gamma(\bF)=\Hom(\be,\bF).
$$
%More explicitly, an element $\gamma\in\Gamma(\bF)$ is a family $(\gamma_
For $X\in\Ob(\cC)$, we let $\Gamma(X)=\Gamma(\h_X)$. When $\cC$ has a terminal object $e$, we have an isomorphism $\Gamma(X)\simeq\Hom(e,X)$.

\subsection{Relative categories and slices}

Let $S\in\Ob(\cC)$. The  category $\cC_{\ov S}$ of objects of $\cC$ \emph{over $S$} (or a \emph{slice category}) is the category whose objects are arrows $f:T\to S$ of $\cC$, and, given another arrow $f':T'\to S$, 
$$
\Hom_{\cC_{\ov S}}(f,f')=\{u\in \Hom_{\cC}(T,T'): f=f'\circ u\}.
$$

Let $f:T\to S$ be an object of $\cC_{\ov S}$. The identity $S\to S$ is a terminal object of $\cC_{\ov S}$ so
$$
\Gamma(f)=\Gamma(T/S)=\{u\in\Hom(S,T): fu=\id_S\}
$$
is the set of \emph{sections} of $T$ over $S$.

The category $(\cC_{\ov S})_{\ov f}=(\cC_{\ov S})_{\ov T}$ is endowed with a canonical isomorphism
$$
\cC_{\ov T}\simeq (\cC_{\ov S})_{\ov T}.
$$

%If $f:T\to S$ is an object of $\cC_{\ov S}$ then 

\subsection{Base change}\label{bc-abstr}

Let $S\in\Ob(\cC)$ and let $f:T\to S$ be an object of $\cC_{\ov S}$. We have canonical functors
$$
i_S=\ssum_S:\cC_{\ov S}\to\cC, \ \ \text{ and }\ \ \ i_f=i_{T/S}=\ssum_f:(\cC_{\ov S})_{\ov T}\to\cC_{\ov S},
$$
defined by $i_S(S'\to S)=S'$ and analogously for $i_{T/S}$.
Identifying $(\cC_{\ov S})_{\ov T}$ with $\cC_{\ov T}$, we have
$$
i_S\circ i_{T/S}=i_T,
$$
and we may write $i_f=\ssum_f:\cC_{\ov T}\to\cC_{\ov S}$,
$$\ssum_f(Z\xrightarrow{h}T)=(Z\xrightarrow{h}T\xrightarrow{f}S).$$
For $X\in\Ob(\cC)$ (resp.\ $Y\in\Ob(\cC_{\ov S})$, let $p_S(X)$ (resp.\ $p_f(Y)=p_{T/S}(Y)$) be the object
$X\times S$ (resp.\ $Y\times_S T$), if it exists, equipped with its second projection,
$$
X\times S\xrightarrow{p_S(X)} S, \ \ \ \text{ resp. }\ \ \ Y\times_ST\xrightarrow{p_{T/S}(Y)}T.
$$
We shall also write
$$
p_S(X)=S^*X=X_S\ \ \ \text{ and }\ \ \ p_{T/S}(Y)=f^*Y=Y_T.
$$
The resulting (partially defined) \emph{base change} functors
$$
p_S=S^*:\cC\to \cC_{\ov S}\ \ \ \text{ resp. }\ \ \ p_f=p_{T/S}=f^*:\cC_{\ov S}\to\cC_{\ov T}
$$
are right adjoint to $i_S$ (resp.\ $i_{T/S}$), i.e., we have a familiar relation
$$
\ssum_f\dashv f^*.
$$
Indeed, if $g:U\to S$ is an object of $\cC_{\ov S}$ (resp.\ $h:V\to T$ is an object of $\cC_{\ov T}$, then $i_S(g)=U$ (resp.\ $i_{T/S}(h)=f\circ h:V\to S$), so by the definition of product (resp.\ fibre product),
$$
{\cC_{\ov S}}(U,X\times S)\simeq{\cC}(U,X),\ \ \ \text{ resp. }\ \ \ {\cC_{\ov T}}(V,X\times_ST)\simeq{\cC_{\ov S}}(V,X).
$$ 
The restriction functors
$$
i_{S}^*:\hC\to\widehat{\cC_{\ov S}},\ \ \ \text{ resp. }\ \ \ i_{T/S}^*:\widehat{\cC_{\ov S}}\to\widehat{\cC_{\ov T}}
$$
are obtained by precomposition with functors $i_S$ (resp.\ $i_{T/S}$). We define the base changes of $\bF\in\hC$ and $\bG\in\widehat{\cC_{\ov S}}$ as
$$
\bF_S=i_S^*(F)=F\circ i_S, \ \ \ \text{ resp. }\ \ \ \bG_T=i_{T/S}^*(\bG)=\bG\circ i_{T/S}.
$$
Clearly,
$$
i_{T/S}^*\circ i_S^*=i_T^*,
$$
so for any $\bF\in\hC$,
$$
(\bF_S)_T=\bF_T.
$$
The notation for base change of objects of $\cC$ and of $\hC$ is consistent because, for $X\in\Ob(\cC)$, the functor $(\h_X)_S$ is representable if and only if the product $X\times S$ exists and in that case
$$
(\h_X)_S\simeq\h_{X_S}.
$$
Note that 
$$
\Gamma(\bF_S)\simeq\cC(\h_S,\bF)\simeq\bF(S),
$$
so in particular
$$
\Gamma(X_S)\simeq\cC(S,X)\simeq X(S).
$$

\subsection{The Grothendieck construction}\label{ord-groth-constr}

Given a presheaf $\bF\in\hC$, the Grothendieck construction forms the \emph{category of elements of $\bF$}, denoted
$$
\cC_{\ov\bF}
$$
as the category whose objects are elements of the disjoint union
$$
\coprod_{S\in\cC}\bF(S),
$$
and, given objects $x\in\bF(S)$, $y\in\bF(T)$, a morphism
$$
f:x\to y
$$
is given by a morphism $f\in\cC(S,T)$ with $\bF(f)(x)=y$. 

It is well-known that there is an equivalence of categories
$$
\hC_{\ov\bF}\simeq \widehat{\cC_{\ov\bF}}.
$$

%,  
In particular, given and object $S\in\cC$, the categories $\hC_{\ov \h_S}$ and $\widehat{\cC_{\ov S}}$ are equivalent. 

Moreover, for an object $f:T\to S$ of $\cC_{\ov S}$, $\h_f:\h_T\to\h_S$ is an object of $\hC_{\ov \h_S}$ and we have that
$$
\Gamma(\h_f)\simeq\Gamma(\h_T/\h_S)\simeq\Gamma(T/S)\simeq\Gamma(f).
$$

There is a canonical functor
$$
i_\bF:\cC_{\ov\bF}\to\cC
$$
defined by $i_\bF(x)=S$, if $x\in\bF(S)$. It yields the functor
$$
i_\bF^*: \hC\to \widehat{\cC_{\ov\bF}}\simeq\hC_{\ov\bF}, \bG\mapsto \bG_\bF=\bG\circ i_\bF,
$$
which coincides with the base change functor, i.e.,
$$
i_\bF^*\simeq \bF^*.
$$
In other words,
$$
\bG_\bF=(\bG\times\bF\to \bF),
$$
in perfect analogy with the considerations of \ref{bc-abstr}.

\subsection{Objects $\uHom$, $\uAut$}\label{uhom-psh}

For $\bF, \bG\in\Ob(\hC)$, we define an object $\uHom(\bF,\bG)$ of $\hC$ via
$$
\uHom(\bF,\bG)(S)=
\Hom_{\widehat{\cC_{\ov S}}}(\bF_S,\bG_S)\simeq
\Hom_{{\hC_{\ov \h_S}}}(\bF\times\h_S,\bG\times\h_S)\simeq
\Hom_{\hC}(\bF\times\h_S,\bG).
$$
This construction enjoys the following properties:
\begin{enumerate}
\item $\uHom(\be,\bG)\simeq\bG$;
\item $\uHom$ commutes with base change, i.e., 
$$
\uHom(\bF_S,\bG_S)\simeq\uHom(\bF,\bG)_S;
$$ 
\item the assignment $(\bF,\bG)\mapsto\uHom(\bF,\bG)$ is a bifunctor contravariant in $\bF$ and covariant in $\bG$.
\end{enumerate}

Let $\bE, \bF, \bG, \bH\in\Ob(\hC)$. We have a natural bijection
$$
\Hom(\bE\times\bF,\bG)\simeq \Hom(\bE,\uHom(\bF,\bG)).
$$
Moreover, there is an isomorphism of functors
$$
\uHom(\bE\times\bF,\bG)\simeq \uHom(\bE,\uHom(\bF,\bG)).
$$
Swapping the factors in the above product, we deduce
\begin{align*}
\Hom(\bE,\uHom(\bF,\bG)) & \simeq\Hom(\bF,\uHom(\bE,\bG)),\\
\uHom(\bE,\uHom(\bF,\bG)) & \simeq\uHom(\bF,\uHom(\bE,\bG)).
\end{align*}
Substituting $\bE=\be$ in the above, we see that
$$
\Gamma(\uHom(\bF,\bG))\simeq\Hom(\bF,\bG).
$$

The composition of $\Hom$ argument-wise yields a functorial morphism
$$
\uHom(\bF,\bG)\times\uHom(\bG,\bH)\to\uHom(\bF,\bH).
$$

We define a sub-object $\uIsom(\bF,\bG)$ of $\uHom(\bF,\bG)$ by
$$
\uIsom(\bF,\bG)(S)=\Isom(\bF_S,\bG_S).
$$   
In particular, we write
$$
\uEnd(\bF)=\uHom(\bF,\bF)\ \ \ \text{ and }\ \ \ \uAut(\bF)=\uIsom(\bF,\bF).
$$
Clearly,
$$
\Gamma(\uIsom(\bF,\bG))=\Isom(\bF,\bG), \ \Gamma(\uEnd(\bF))=\End(\bF), \ \Gamma(\uAut(\bF))=\Aut(\bF).
$$

All the above constructions apply to the special case of representable functors. Let $X,Y,Z\in\Ob(\cC)$. In the case when $\uHom(\h_X,\h_Y)$ is representable by an object of $\cC$, we denote that object
$$
\uHom(X,Y).
$$
If $Z\times X$ exists, then
$$
\Hom(Z,\uHom(X,Y))\simeq\Hom(Z\times X,Y),
$$
and that property characterises it when $\cC$ admits products.
We define objects $\uIsom(X,Y)$, $\uEnd(X)$, $\uAut(X)$ analogously, if they exist.

It is important to emphasise that the above subsection applies without effort to relative categories $\cC_{\ov S}$, and the following notation reflects the role of the base: if $T,T'\in\Ob(\cC_{\ov S})$, we will write 
$$
\uHom_S(T,T')
$$  
for the object $\uHom_{\cC_{\ov S}}(T/S,T'/S)$.

\subsection{Restriction of scalars}\label{weilres-alg}

Let $S\in\Ob(\cC)$ and $\bX,\bY,\bZ\in\Ob(\hC)$, where $\bX$ and $\bY$ are over $\bZ$, and $\bZ$ is over $S$. 
%QUESTION: Take $Z\in\cC$ to simplify exposition?
% ADD DIAGRAM???

We define a sub-object $\uHom_{\bZ/S}(\bX,\bY)$ of $\uHom_S(\bX,\bY)$ in $\widehat{\cC_{\ov S}}$ by
$$
\uHom_{\bZ/S}(\bX,\bY)(S')=\Hom_{\bZ_{S'}}(\bX_{S'},\bY_{S'})\simeq\Hom_{\bZ}(\bX\times_SS',\bY),
$$
for  an object $S'$ of $\cC_{\ov S}$.

For every object $\bT$ in $\hC$ over $S$, we have natural bijections
\begin{align*}
\Hom_S(\bT,\uHom_{\bZ/S}(\bX,\bY)) & \simeq \Hom_{\bZ}(\bX\times_S\bT,\bY)\\
& \simeq \Hom_{\bZ}(\bZ\times_S\bT,\uHom_{\bZ}(\bX,\bY))\\
& \simeq \Hom_{\bZ}(\bX,\uHom_{\bZ}(\bZ\times_S\bT,\bY)),
\end{align*}
as well as analogous isomorphisms of $S$-functors with $\uHom$ in place of $\Hom$.

In the special case $\bX=\bZ$, we write
$$
\prod_{\bZ/S}\bY=\uHom_{\bZ/S}(\bZ,\bY).
$$
Thus
$$
\left(\prod_{\bZ/S}\bY\right)(S')=\Hom_{\bZ}(\bZ\times_SS',\bY)\simeq\Gamma(\bY_{S'}/\bZ_{S'}).
$$
The restriction of scalars functor
$$
\prod_{\bZ/S}:\widehat{\cC_{\ov \bZ}}\to\widehat{\cC_{\ov S}}
$$ 
is right adjoint to the base change functor from $S$ to $\bZ$, i.e., for an $S$-functor $\bT$, we have a natural bijection
$$
\Hom_S(\bT,\prod_{\bZ/S}\bY)\simeq\Hom_{\bZ}(\bT\times_S\bZ,\bY).
$$
Note the relation
$$
\uHom_{\bZ/S}(\bX,\bY)\simeq\uHom_{\bX/S}(\bX,\bY\times_{\bZ}\bX)=\prod_{\bX/S}(\bY\times_{\bZ}\bX),
$$
so, in particular, for $\bZ=S$ we derive
$$
\uHom_S(\bX,\bY)\simeq\prod_{\bX/S}\bY_{\bX}.
$$
We make the unit
$$
\eta:\id\to\prod_{\bZ/S}p_{\bZ/S}
$$
of the above adjunction explicit. For an $S$-functor $\bT$ and $S'\in\Ob(\cC_{\ov S})$,
$$
\left(\prod_{\bZ/S}\bT_{\bZ}\right)(S')=\Hom_{\bZ}(\bZ\times_SS',\bT_{\bZ})=\Hom_{\bZ}(S'_{\bZ},\bT_{\bZ})=\bT_{\bZ}(S'_{\bZ}),
$$
and the map
$$
\eta_{\bT}(S'):\bT(S')\to \bT_{\bZ}(S'_{\bZ})
$$
is the base change $u\mapsto u_{\bZ}$, for $u\in \bT(S')$.

The functor $\bY\mapsto\uHom_{\bZ/S}(\bX,\bY)$ commutes with fibre product,
$$
\uHom_{\bZ/S}(\bX,\bY\times_{\bZ}\bY')\simeq\uHom_{\bZ/S}(\bX,\bY)\times_S \uHom_{\bZ/S}(\bX,\bY).
$$
Consequently, if $\bY$ is a $\bZ$-group (resp. ring, or any reasonable algebraic structure), then $\uHom_{\bZ/S}(\bX,\bY)$ is an $S$-group (resp. ring, etc). 

%[REFERENCE?].

\subsection{Constant objects}

Let $\cC$ be a category with coproducts %(`direct sums' in SGA??) 
and fibre products such that coproducts commute with base change. For a set $E$ and an object $S$ in $\cC$, we define the \emph{constant object}
$$
E_S=\coprod_{i\in E}S_i,
$$
where $S_i\in\Ob(\cC)$ are isomorphic copies of $S$. It is characterised by the property that, for all $T\in\Ob(\cC)$,
$$
\Hom_\cC(E_S,T)=\Hom_\Set(E,\Hom_\cC(S,T)).
$$
The object $E_S$ has a canonical projection to $S$, which makes it an object of $\cC_{\ov S}$. If $S'\to S$ is a morphism in $\cC$, we have
$$
E_{S'}=(E_S)_S',
$$
since we assumed that coproducts commute with base changes.

The assignment $E\mapsto E_S/S$ gives a functor $\Set\to\cC_{\ov S}$, which commutes with finite products. If we assume that $S\times_{S\coprod S}S$ equals the initial object $\emptyset$ in $\cC$, then this functor commutes with finite limits (including fibre products).

%%%%%%%%%%%%%%%%%%%%%%%%%%%%%%%%%%%%%%%%%%%%%%%%%%%%%

\section{Topoi}

This section contains an overview of basic topos theory, mostly following \cite{johnstone}. 
\subsection{Sites}

Let $\cC$ be a small category. 

A \emph{sieve} on an object $U\in\cC$ is a family $R$ of $\cC$-morphisms with codomain $U$ which is a left ideal for the composition  in $\cC$ in the sense that, for any $W\xrightarrow{\beta} V$ and any $V\xrightarrow{\alpha} U\in R$, we have that  $W\xrightarrow{\alpha\beta}U\in R$.

A \emph{Grothendieck topology} on $\cC$ specifies, for each $U\in\cC$, a set $J(U)$ of sieves on $U$, such that
\begin{enumerate}
\item for each $U$, the maximal sieve consisting of all morphisms with codomain $U$ belongs to $J(U)$;
\item if $R\in J(U)$ and $V\xrightarrow{f}U$ is a morphism in $\cC$, then the sieve
$$
f^*(R)=\{W\xrightarrow{\alpha}V: f\alpha\in R\}
$$
is in $J(V)$;
\item if $R\in J(U)$ and $S$ is a sieve on $U$ such that, for each $V\xrightarrow{f}U$ in $R$,
we have $f^*(S)\in J(V)$, then $S\in J(U)$. 
\end{enumerate}

A \emph{site} is a pair
$$
(\cC,J)
$$
consisting of a small category $\cC$ equipped with a Grothendieck topology $J$.

Note that a sieve $R$ on $U$ can be identified with the sub-presheaf 
$$
V\mapsto\{\alpha\in R: \text{domain}(\alpha)=V\}
$$
of $\h_U$.

A presheaf $F\in\hC$ is a \emph{sheaf} for the topology $J$, if, for every $U\in\cC$ and every $R\in J(U)$, each $\hC$-morphism $R\to F$ has a unique extension to a morphism $\h_U\to F$.

We write
$$
\sh(\cC,J)
$$
for the full subcategory of $\hC$ whose objects are $J$-sheaves.

Note, in the \emph{minimal topology}, where the only covering sieves are the maximal sieves, every presheaf is a sheaf.

\subsection{Grothendieck topoi}\label{groth-top}

A \emph{Grothendieck topos} is a category equivalent to the category of sheaves on a site.

Hence, by taking the minimal topology on a small category $\cC$, any presheaf category $\hC$ is a Grothendieck topos. 

The inclusion functor $\sh(\cC,J)\to \hC$ has a left-exact left adjoint
$$
L:\hC\to \sh(\cC,J),
$$
called the \emph{associated sheaf} functor.

\begin{theorem}[Giraud]\label{giraud-th}
A category $\cE$ is a Grothendieck topos if and only if the following conditions are satisfied:
\begin{enumerate}
\item $\cE$ has finite limits;
\item $\cE$ has all set-indexed coproducts, and they are disjoint and universal;
\item equivalence relations in $\cE$ have universal coequalisers and they are effective;
\item epimorphisms in $\cE$ are coequalisers;
\item $\cE$ has small hom-sets, i.e., for any $X,Y\in\cE$, $\cE(X,Y)$ is a set;
\item $\cE$ has a set of generators.
\end{enumerate}

\end{theorem}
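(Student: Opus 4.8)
The plan is to prove the two implications separately, the forward one being routine and the converse carrying all the weight.

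For the ``only if'' direction, given a site $(\cC,J)$ I would verify the six conditions directly for $\cE=\sh(\cC,J)$, leaning on the associated sheaf functor $L:\hC\to\sh(\cC,J)$, which (as stated above) exists and is left exact. Each of the six properties holds in the presheaf topos $\hC$, where finite limits and all colimits are computed argument-wise and hence inherited from $\Set$: coproducts are disjoint and universal, equivalence relations are effective with universal coequalisers, and epimorphisms are coequalisers. Finite limits of sheaves are computed as in $\hC$; an arbitrary colimit of sheaves is $L$ applied to the colimit in $\hC$, so left-exactness of $L$ transports the good exactness behaviour of $\hC$ to $\sh(\cC,J)$. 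Small hom-sets are immediate since $\sh(\cC,J)$ is a full subcategory of $\hC$, and the sheafifications $L\h_U$ of the representables form a generating set, obtained by applying the colimit-preserving $L$ to the canonical epimorphism $\coprod\h_U\to F$ valid in $\hC$.

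For the ``if'' direction, suppose $\cE$ satisfies (1)--(6). First one notes that $\cE$ is cocomplete (coproducts directly by (2), general small colimits then following from the exactness axioms (3)--(4)). Choose a generating set and let $\cC$ be a small full subcategory of $\cE$ containing it; equip $\cC$ with the topology $J$ in which a sieve $R$ on $U$ covers exactly when the morphisms of $R$ are jointly epimorphic in $\cE$ --- the restriction to $\cC$ of the canonical topology of $\cE$, which is a Grothendieck topology by universality of colimits. Define
$$
\ell:\cE\to\hC,\qquad \ell(X)=\cE(-,X)|_{\cC}.
$$
The assertion to prove is that $\ell$ is an equivalence $\cE\simeq\sh(\cC,J)$, in three steps. \emph{Step 1:} $\ell(X)$ is a $J$-sheaf, because any jointly epimorphic family $\{U_i\to U\}$ in $\cE$ is effective epimorphic --- that is, $U$ is the coequaliser of $\coprod_{i,j}U_i\times_U U_j\rightrightarrows\coprod_i U_i$, which is precisely what axioms (2)--(4) guarantee --- so applying $\cE(-,X)$ turns this colimit presentation of $U$ into the equaliser expressing the sheaf axiom. \emph{Step 2:} $\ell$ is fully faithful; faithfulness is immediate from $\cC$ being generating, and fullness follows from the fact that each $X\in\cE$ is the colimit of the canonical diagram $\cC_{\ov\ell(X)}\to\cE$ of objects of $\cC$ over $X$ (again a consequence of generation together with the exactness axioms), so that a natural transformation $\ell(X)\to\ell(Y)$ is exactly a cocone under that diagram with apex $Y$, i.e.\ a morphism $X\to Y$. \emph{Step 3:} $\ell$ is essentially surjective onto $\sh(\cC,J)$; writing a sheaf $F$ as the tautological colimit $\colim_{\cC_{\ov F}}\h_c$ of representables in $\hC$, one forms the corresponding colimit $X=\colim_{\cC_{\ov F}}c$ in $\cE$ and shows $\ell(X)\cong F$ --- equivalently, the realisation functor $L:\hC\to\cE$ left adjoint to $\ell$ restricts to an inverse equivalence $\sh(\cC,J)\xrightarrow{\ \sim\ }\cE$.

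The main obstacle is Step 3, and within it the verification that the realisation functor $L$ is left exact \emph{on sheaves}, equivalently that the adjunction unit $F\to\ell L F$ is an isomorphism for every sheaf $F$: this is where all three exactness axioms (2)--(4) are used essentially and somewhat delicately, since one must rewrite the colimit computing $LF$ as coproducts and quotients of equivalence relations and commute these past finite limits by universality and effectivity. Everything else is either formal (Yoneda, the density presentation of a presheaf as a colimit of representables, general adjoint-functor bookkeeping) or a direct unwinding of the definitions of $J$ and $\ell$.
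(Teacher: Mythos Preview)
The paper does not prove this theorem: it is stated as a classical result attributed to Giraud, with no proof given, and the exposition immediately moves on to elementary topoi. This is consistent with the paper's stated aim in that part of merely reviewing material from standard references (\cite{sga4.1}, \cite{johnstone}, \cite{maclane-moerdijk}, \cite{elephant1}, \cite{elephant2}).

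Your sketch is a faithful outline of the standard proof as found in those references, and correctly identifies Step~3 --- left-exactness of the realisation functor on sheaves, equivalently that the unit $F\to\ell LF$ is an isomorphism for sheaves --- as the locus of all the real work. One small remark: your parenthetical that cocompleteness follows from (2) together with (3)--(4) deserves a line of justification, since axioms (3)--(4) speak only of coequalisers of equivalence relations and of epimorphisms being regular, not of arbitrary coequalisers; one typically constructs general coequalisers via image factorisations (which exist by (3)--(4)) and then quotients by the effective equivalence relation generated. But for a sketch this is acceptable, and there is nothing to compare against in the paper itself.
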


\subsection{Elementary topoi}\label{elem-top}

A category $\cE$ is an \emph{elementary topos} if
\begin{enumerate}
\item $\cE$ has finite limits (equivalently, all pullbacks and a terminal object $I$);
\item $\cE$ is \emph{cartesian closed}, i.e., for each $X\in\cE$, the functor $\mathord{-}\times X$ has a right adjoint 
$$
[X,\mathord{-}]:\cE\to \cE;
$$
\item $\cE$ has a \emph{subobject classifier}, i.e., an object $\Omega$ and a morphism $I\xrightarrow{t}\Omega$ such that, for each monomorphism $Y\xrightarrow{u} X$ in $\cE$, there is a unique morphism $\chi_u:X\to\Omega$ (called the \emph{classifying map} of $u$) making 
\begin{center}
 \begin{tikzpicture} 
\matrix(m)[matrix of math nodes, row sep=2em, column sep=2em, text height=1.5ex, text depth=0.25ex]
 {
 |(1)|{Y}		& |(2)|{I} 	\\
 |(l1)|{X}		& |(l2)|{\Omega} 	\\
 }; 
\path[->,font=\scriptsize,>=to, thin]
(1) edge node[above]{} (2) edge node[left]{$u$}   (l1)
(2) edge node[right]{$t$} (l2) 
(l1) edge  node[above]{$\chi_u$} (l2);
\end{tikzpicture}
\end{center}
\end{enumerate}
a pullback diagram.

\begin{proposition}
Every Grothendieck topos is elementary.
\end{proposition}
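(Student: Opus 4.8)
The plan is to fix a site $(\cC,J)$ with $\cE\simeq\sh(\cC,J)$ and to verify the three defining properties of \ref{elem-top} one at a time, using throughout that the associated sheaf functor $L:\hC\to\sh(\cC,J)$ is left exact and that the inclusion $\sh(\cC,J)\hookrightarrow\hC$ creates limits. The finite-limits axiom is then immediate: limits in $\sh(\cC,J)$ are computed as in $\hC$, hence argument-wise in $\Set$, so $\cE$ has a terminal object (namely $\be$) and all pullbacks.

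For cartesian closure I would, given a sheaf $F$, take $[F,\mathord{-}]$ to be the presheaf-level internal hom $\uHom(F,\mathord{-})$ of \ref{uhom-psh} restricted to sheaves. The point requiring work is that $\uHom(F,G)$ is again a sheaf whenever $G$ is. Fix $U\in\cC$ and a covering sieve $R\in J(U)$, regarded as a subpresheaf $R\mono\h_U$; by the adjunction $\Hom(\bE\times F,G)\simeq\Hom(\bE,\uHom(F,G))$ of \ref{uhom-psh}, natural in $\bE$, the required unique-extension property for $\uHom(F,G)$ along $R\mono\h_U$ is equivalent to bijectivity of the restriction map $\Hom(\h_U\times F,G)\to\Hom(R\times F,G)$. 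Since $G$ is a sheaf, $\Hom_{\hC}(X,G)\simeq\Hom_{\sh(\cC,J)}(LX,G)$ naturally in $X$, so it suffices to show $L(R\times F)\to L(\h_U\times F)$ is an isomorphism; left exactness of $L$ identifies this map with $LR\times LF\to L\h_U\times LF$, and $LR\to L\h_U$ is an isomorphism because $R$ is $J$-covering. Once $\uHom(F,\mathord{-})$ is known to take values in sheaves, for any sheaf $\bE$ one has $\Hom_{\sh}(\bE\times F,G)=\Hom_{\hC}(\bE\times F,G)\simeq\Hom_{\hC}(\bE,\uHom(F,G))=\Hom_{\sh}(\bE,\uHom(F,G))$, so $\mathord{-}\times F$ has a right adjoint in $\cE$.

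For the subobject classifier I would take $\Omega_J$ to be the presheaf sending $U$ to the set of $J$-closed sieves on $U$, together with $\be\xrightarrow{t}\Omega_J$ selecting the maximal sieve. Two checks remain: that $\Omega_J$ is a sheaf, and that for every sheaf $X$ the morphisms $X\to\Omega_J$ are in natural bijection with the $J$-closed subpresheaves of $X$, which are exactly the subobjects of $X$ in $\sh(\cC,J)$ (a subpresheaf of a sheaf is a sheaf precisely when it is $J$-closed). Both are routine manipulations with closed sieves and the separatedness and gluing conditions; granting them, the square classifying a monomorphism $Y\mono X$ is a pullback by the very definition of $\chi_u$.

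I expect the cartesian-closure step to be the main obstacle, since it is the only place where the exponential construction must be reconciled with the Grothendieck topology, and the argument turns entirely on left exactness of $L$ (equivalently, on sheafification preserving the finite products that appear). By contrast the finite-limits axiom comes for free, and the subobject-classifier axiom is essentially bookkeeping once one has guessed that $\Omega_J$ should be assembled from $J$-closed sieves.
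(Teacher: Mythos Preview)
Your proposal is correct and follows essentially the same approach as the paper's proof, which is only a sketch: verify the three axioms by taking the presheaf-level constructions and showing they restrict to sheaves. Your argument for cartesian closure (via left exactness of $L$ applied to $R\times F\to\h_U\times F$) spells out what the paper leaves as ``we can directly verify'', and your description of $\Omega_J$ via $J$-closed sieves is equivalent to the paper's description via sheaf-subobjects of $L(\h_U)$.
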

\begin{proof}
Let $\cE\simeq \sh(\cC,J)$. Arbitrary limits in $\hC$ are defined argument-wise so they clearly exist, and it is readily verified that finite limits of $J$-sheaves are also sheaves. We have shown in \ref{uhom-psh} that $\hC$ is cartesian closed, and we can directly verify that $[X,Y]$ is a $J$-sheaf if $X$, $Y$ are. 

In $\hC$, the subobject classifier $\Omega$ is the presheaf
$$
\Omega(U)\simeq\hC(\h_U,\Omega)\simeq\{\text{sub-presheaves of }\h_U\}\simeq\{\text{sieves on }U\}.
$$
In $\cE$, $\Omega(U)$ is the set of subobjects of the associated sheaf $L(\h_U)$ which are sheaves.
\end{proof}

\subsection{Geometric morphisms}\label{geom-morphisms}

A \emph{geometric morphism} 
$$
\cF\xrightarrow{f}\cE
$$
consists of a pair of functors $f_*:\cF\to \cE$ and $f^*:\cE\to \cF$ such that
$$
f^*\dashv f_*
$$
and $f^*$ is left exact.

A \emph{natural transformation} 
$$
\eta: f\to g
$$
between geometric morphisms  \begin{tikzcd}[cramped,sep=small]
\cF \ar[yshift=2pt]{r}{f} \ar[yshift=-2pt]{r}[swap]{g} & \cE
\end{tikzcd}
is a natural transformation $f^*\xrightarrow{\eta} g^*$.

Topoi, geometric morphisms and natural transformations give rise to a 2-category denoted
$$
\Top.
$$

A geometric morphism $\cF\xrightarrow{f}\cE$ is \emph{essential,} if it stems from a triple of adjoint functors
$$
f_!\dashv f^*\dashv f_*.
$$

A geometric morphism $\cF\xrightarrow{f}\cE$ is 
\begin{enumerate}
\item an \emph{inclusion}, if the counit $f^*f_*\to\id$ of the adjunction $f^*\dashv f_*$ is an isomorphism;
\item a \emph{surjection}, the the unit $\id\to f_*f^*$ of $f^*\dashv f_*$ is a monomorphism.
\end{enumerate}

A functor between topoi is \emph{logical}, if it preserves finite limits, internal homs/exponentials, and the subobject classifier.

\begin{remark}
For a site $(\cC,J)$, the pair of functors  
\begin{tikzcd}[cramped,sep=small]
\sh(\cC,J)  \arrow[r, hook, yshift=-2pt] & \arrow[l,yshift=2pt ,"L" above] \hC
\end{tikzcd}
shows that a Grothendieck topos admits a geometric embedding into a presheaf category. 

In fact, a category is a Grothendieck topos if and only if it admits a geometric embedding into a presheaf category.
\end{remark}

A geometric morphism $\cF\xrightarrow{f}\cE$ satisfies 
$$
f_*[f^*Y,X]\simeq[Y,f_*X],
$$
naturally in $X\in\cF$ and $Y\in\cE$. Moreover, we have morphisms
\begin{align*}
\Phi_f:\, & f^*[Y,Y']\to [f^*Y,f^*Y'],  \\%& Y,Y'\in \cE,\\
\Psi_f:\, & f_*[X,X']\to [f_*X,f_*X'], \\%& X,X'\in\cF,\\
\Xi_f:\, & [f_!X,Y]\to f_*[X,f^*Y], & \\%X\in\cF, Y\in\cE,\\
\Lambda_f:\, & f_!(X\times f^*Y)\to f_!X\times Y, %& X\in\cF, Y\in\cE,
\end{align*}
natural in $X,X'\in\cF$ and $Y,Y'\in\cE$, 
where the last two morphisms are defined when $f$ is essential.

\begin{lemma}[{\cite[Expos\'e~IV, 10.6]{sga4.1}}]\label{geom-m-iso}
The morphism $\Psi_f$ is an isomorphism for all $X'$ if and only if the adjunction morphism $f^*f_*X\to X$ is an isomorphism. Hence, $\Psi_f$ is an isomorphism for all $X,X'$ if and only if $f$ is in inclusion. 

If $\cF$, $\cE$ are Grothendieck topoi (or it is given that $f$ is essential), the following conditions are equivalent:
\begin{enumerate}
\item $\Phi_f$ is an isomorphism for all $Y,Y'$;
\item $f$ is essential and $\Xi_f$ is an isomorphism for all $X,Y$;
\item $f$ is essential and $\Lambda_f$ is an isomorphism for all $X,Y$.
\end{enumerate}
\end{lemma}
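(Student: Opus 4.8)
The plan is to establish the first assertion by a direct adjointness computation, then derive the remaining equivalences by a cycle of implications, exploiting the explicit definitions of the four canonical morphisms $\Phi_f$, $\Psi_f$, $\Xi_f$, $\Lambda_f$. First I would recall how $\Psi_f\colon f_*[X,X']\to[f_*X,f_*X']$ is built: by adjunction it corresponds to a map $f^*(f_*[X,X']\times f_*X)\to X'$, and since $f^*$ preserves finite limits this factors through $f^*f_*[X,X']\times f^*f_*X$, then through the counits $f^*f_*[X,X']\to[X,X']$ and $f^*f_*X\to X$, followed by evaluation $[X,X']\times X\to X'$. Thus if the counit $f^*f_*X\to X$ is an isomorphism for our fixed $X$, one checks $\Psi_f$ is invertible for all $X'$ by unwinding this; conversely, taking $X'$ to be an injective cogenerator (or simply running the Yoneda argument over all $X'$, using $\Gamma\circ\uHom\simeq\Hom$ from \ref{uhom-psh}), invertibility of $\Psi_f$ for all $X'$ forces the counit at $X$ to be iso. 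Since $f$ is an inclusion precisely when $f^*f_*\to\id$ is an isomorphism (the defining condition in \ref{geom-morphisms}), the second sentence follows immediately by quantifying over $X$.

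For the three-way equivalence under the Grothendieck (or essential) hypothesis, I would prove $(1)\Rightarrow(2)\Rightarrow(3)\Rightarrow(1)$. The implication $(1)\Rightarrow(2)$: first note that if $\Phi_f$ is an isomorphism, then $f^*$ preserves exponentials, and a left-exact functor between Grothendieck topoi preserving exponentials and all colimits is essential — here one invokes the special adjoint functor theorem or the fact that $f^*$ preserving $\Omega$-powers plus cocontinuity gives a left adjoint $f_!$ (for Grothendieck topoi $f^*$ is automatically cocontinuous; the essentialness is the substantive input, and I would cite it or derive $f_!$ as a left Kan extension along the inclusion of representables, which exists because the relevant colimits are absolute once $\Phi_f$ is iso). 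Granting $f$ essential, $\Xi_f\colon[f_!X,Y]\to f_*[X,f^*Y]$ is the mate of $\Phi_f$ under the adjunctions $f_!\dashv f^*\dashv f_*$: apply $f_*$ to $\Phi_f\colon f^*[Y,Y']\to[f^*Y,f^*Y']$, use $f_*[f^*Y,X]\simeq[Y,f_*X]$, and transpose across $f_!\dashv f^*$; hence $\Phi_f$ iso $\Leftrightarrow$ $\Xi_f$ iso. Similarly $(2)\Leftrightarrow(3)$: $\Lambda_f\colon f_!(X\times f^*Y)\to f_!X\times Y$ (the projection-formula morphism) is adjoint-transpose to $\Xi_f$ — indeed $[f_!X\times Y,Z]\simeq[f_!X,[Y,Z]]\simeq f_*[X,f^*[Y,Z]]\simeq f_*[X,[f^*Y,f^*Z]]\simeq f_*[X\times f^*Y,f^*Z]\simeq[f_!(X\times f^*Y),Z]$, and chasing the identity on the outer terms shows $\Lambda_f$ is iso iff all these intermediate comparison maps (built from $\Xi_f$ and $\Phi_f$) are iso. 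I would present this as a diagram chase rather than grinding each naturality square.

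The main obstacle, I expect, is the step inside $(1)\Rightarrow(2)$ establishing that $\Phi_f$ an isomorphism forces $f$ to be essential — this is where the Grothendieck hypothesis is genuinely used and where one cannot argue purely formally. The cleanest route is: for each object $X\in\cF$, the functor $Y\mapsto f_*[X,f^*Y]$ is representable — using $\Phi_f$ iso it equals $f_*[f^*(-),X]$ reindexed, which is accessible and preserves limits, so by the adjoint functor theorem for locally presentable categories it has a left adjoint; assembling these gives $f_!$. Alternatively one cites \cite[Expos\'e~IV]{sga4.1} directly, since the statement attributes the lemma there. Everything else — the explicit mate/transpose identifications and the Yoneda reductions via $\Gamma(\uHom(\bF,\bG))\simeq\Hom(\bF,\bG)$ — is routine once the diagrams are set up, so I would keep those terse and concentrate the write-up on the representability argument and on displaying the adjunction string that links $\Xi_f$ to $\Lambda_f$.
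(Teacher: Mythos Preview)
The paper does not give its own proof of this lemma: it is stated with the citation to \cite[Expos\'e~IV, 10.6]{sga4.1} and nothing more, so there is no argument in the paper to compare yours against.

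Your sketch is broadly sound. The first claim about $\Psi_f$ follows exactly as you say: unwinding $\Psi_f$ via the counit and evaluation, then testing against all $X'$ by Yoneda and specialising the free variable to the terminal object recovers the counit $f^*f_*X\to X$. The mate/transpose identifications linking $\Phi_f$, $\Xi_f$, and $\Lambda_f$ once $f_!$ exists are standard and you have the right adjunction string. The one place that needs tightening is the step you flag yourself, that (1) forces $f$ to be essential in the Grothendieck case: your phrase ``preserving exponentials plus cocontinuity gives a left adjoint'' is backwards, since cocontinuity of $f^*$ yields a \emph{right} adjoint (which it already has). What you actually need is that $f^*$ preserves all small limits, and the passage from ``preserves finite limits and exponentials'' to ``preserves arbitrary products'' deserves an explicit argument. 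Since the paper simply defers to SGA4 here, doing the same would be entirely acceptable.
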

%the \emph{Frobenius reciprocity}
%$$
%f_!(A\times f^*B)\simeq (f_!A)\times B,
%$$

%\subsection{Base topos}

\subsection{The fundamental theorem of topos theory}\label{fund-th-topos}

\begin{theorem}[Lawvere-Tierney]\label{fund-th-top}
Let $X\xrightarrow{f}Y$ be a morphism in a topos $\cE$. Then
\begin{enumerate}
\item $\cE_{\ov X}$ is a topos;
\item the base change/pullback functor $f^*:\cE_{\ov Y}\to\cE_{\ov X}$ is logical, and it has a right adjoint
$$
\sprod_f:\cE_{\ov X}\to\cE_{\ov Y}.
$$
\end{enumerate}
\end{theorem}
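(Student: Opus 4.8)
The plan is to prove the three assertions in turn, exploiting the characterisation of elementary topoi from Section~\ref{elem-top}. For part (1), I would verify the three defining axioms of an elementary topos directly for $\cE_{\ov X}$. Finite limits in a slice category are computed by forming the corresponding limits in $\cE$ over $X$; concretely, the terminal object of $\cE_{\ov X}$ is $\id_X$, and a pullback of $(A\to X)$ and $(B\to X)$ along maps over some $(C\to X)$ is the pullback $A\times_C B$ in $\cE$, which exists since $\cE$ has finite limits. For cartesian closedness, given an object $p\colon A\to X$ of $\cE_{\ov X}$, the required right adjoint $[p,\mathord{-}]_X$ to $\mathord{-}\times_X p$ can be constructed from the internal hom of $\cE$: for $q\colon B\to X$, one puts $[p,q]_X$ to be the subobject of $[A,B]$ (internal hom in $\cE$, which exists by cartesian closedness of $\cE$) consisting of those ``morphisms over $X$'', cut out by an equaliser involving $p$ and $q$; this equaliser exists by part of (1). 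Finally, the subobject classifier of $\cE_{\ov X}$ is $t^X\colon (X\times\Omega \to X)$, i.e.\ the projection $X\times\Omega\to X$ equipped with the map $X\cong X\times I \xrightarrow{\id\times t} X\times\Omega$; that this classifies subobjects in $\cE_{\ov X}$ follows because a subobject of $(A\to X)$ in $\cE_{\ov X}$ is the same thing as a subobject of $A$ in $\cE$, and its classifying map $A\to\Omega$ pairs with $p$ to give $A\to X\times\Omega$ over $X$.

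For part (2), that $f^*$ is logical is essentially the content of part (1) applied along $f$: the pullback functor $f^*\colon\cE_{\ov Y}\to\cE_{\ov X}$ preserves the terminal object (pullback of $\id_Y$ is $\id_X$), preserves all pullbacks (pasting of pullback squares), preserves the subobject classifier $Y\times\Omega \mapsto X\times\Omega$ on the nose, and preserves exponentials because the construction of $[\mathord{-},\mathord{-}]_X$ above is stable under the base change $f$ (one checks the defining equaliser is preserved by $f^*$, which holds since $f^*$ preserves finite limits and, crucially, the internal homs of $\cE$ in the sense that $f^*[A,B]\cong[f^*A,f^*B]$ when computed in the slices — this is the slice-level incarnation of the compatibility of internal hom with pullback recorded in Lemma~\ref{geom-m-iso} and in \ref{uhom-psh}(2)). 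The existence of the right adjoint $\sprod_f$ is then the heart of the matter. The cleanest route is: since $f^*$ is logical, it preserves finite limits and is in particular left exact; and one shows $f^*$ also has a left adjoint $\ssum_f\colon\cE_{\ov X}\to\cE_{\ov Y}$, namely post-composition with $f$ (this is the adjunction $\ssum_f\dashv f^*$ from \ref{bc-abstr}). A logical functor between elementary topoi that has a left adjoint automatically has a right adjoint: this is a standard consequence of the special adjoint functor theorem, using that $f^*$ preserves all colimits it must (being a left adjoint to nothing a priori — so instead one argues via the monadicity/comonadicity of logical functors, or via the fact that $f^*\cong (\mathord{-})\times_{(Y\to 1)}(X\to 1)$ localised appropriately). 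Concretely, in the Grothendieck case one may construct $\sprod_f$ by hand from the internal hom: $\sprod_f(A\xrightarrow{p} X)$ has as its ``fibre over a generalized element $y$ of $Y$'' the object of sections of $p$ over the fibre $X_y$, which is exactly the shape of the restriction-of-scalars functor $\prod_{\mathbf{Z}/S}$ of \ref{weilres-alg}.

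I expect the main obstacle to be the construction of the right adjoint $\sprod_f$ at the level of elementary topoi, i.e.\ without assuming cocompleteness. For Grothendieck topoi one has the adjoint functor theorem available and the problem is soft; but the sharp elementary statement requires the observation that $f^*\colon\cE_{\ov Y}\to\cE_{\ov X}$ is (by descent / the fact that it is monadic when $f$ is an epimorphism, and in general a composite of such situations with further slicing) a functor admitting a right adjoint for formal reasons internal to topos theory — the key lemma being that for any object $X$ of a topos $\cE$, the functor $X^*\colon\cE\to\cE_{\ov X}$, $A\mapsto (X\times A\to X)$, has a right adjoint $\sprod_X$, which one builds as a subobject of an exponential: $\sprod_X(A\to X) \hookrightarrow [X, A]$ (internal hom in $\cE$) cut out by the condition ``the composite $X\to[X,A]\to [X,X]$'' — wait, more precisely as the pullback of $[X,A]\to[X,X]$ along the name of $\id_X$. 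Once this single fact is in hand, the general $\sprod_f$ is assembled by factoring through slices and base changes, and everything else is bookkeeping of adjunctions. Verifying this pullback really gives a right adjoint — the triangle identities — is the one genuinely computational point I would not skip.
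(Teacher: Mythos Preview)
The paper does not prove this theorem; it is stated as a classical result attributed to Lawvere--Tierney and used thereafter as a black box. There is therefore nothing to compare your approach against.

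That said, your sketch follows the standard route and is essentially correct. The construction of $\sprod_X(A\to X)$ as the pullback of $[X,A]\to[X,X]$ along the name of $\id_X$ is exactly the classical one, and the subobject classifier $X\times\Omega\to X$ in the slice is right. Two minor points: your argument that $f^*$ preserves exponentials by citing Lemma~\ref{geom-m-iso} is circular in spirit, since that lemma concerns geometric morphisms and the isomorphism $\Phi_f$ there is \emph{derived from} the fundamental theorem rather than used to prove it; the honest argument is a direct verification using the explicit slice-exponential construction. Also, your appeal to the special adjoint functor theorem for the existence of $\sprod_f$ is unnecessary and unavailable in the elementary setting---you correctly retreat to the explicit pullback construction at the end, and that is the argument to keep.
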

In particular, a topos is \emph{locally cartesian closed}, in the sense that each slice $\cE_{\ov X}$ is cartesian closed. We write $$[\mathord{-},\mathord{-}]_X$$ for the internal hom in $\cE_{\ov X}$.
\begin{corollary}
For any morphism $X\xrightarrow{f}Y$ in $\cE$, the adjunctions
$$
\ssum_f\dashv f^*\dashv \sprod_f
$$
give rise to an essential geometric \emph{localisation morphism}
$$
\cE_{\ov X}\xrightarrow{i_f}\cE_{\ov Y},
$$
where we take $(i_f)_!=f_!=\ssum_f$, $i_f^*=f^*$ and $(i_f)_*=f_*=\sprod_f$.
\end{corollary}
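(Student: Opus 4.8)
The plan is to recognise that the corollary is a purely formal consequence of Theorem~\ref{fund-th-top} together with the base-change adjunction recorded in \ref{bc-abstr}, so the proof amounts to assembling already-established pieces and checking that all variances are oriented correctly. First I would note that by part (1) of Theorem~\ref{fund-th-top} both $\cE_{\ov X}$ and $\cE_{\ov Y}$ are (elementary) topoi, so it is meaningful to speak of a geometric morphism between them.

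Next I would construct the geometric morphism $i_f\colon \cE_{\ov X}\to\cE_{\ov Y}$ by declaring its inverse image to be $i_f^*=f^*\colon\cE_{\ov Y}\to\cE_{\ov X}$ and its direct image to be $(i_f)_*=\sprod_f\colon\cE_{\ov X}\to\cE_{\ov Y}$. That this pair is a geometric morphism requires exactly two facts, both supplied by part (2) of Theorem~\ref{fund-th-top}: the adjunction $f^*\dashv\sprod_f$ holds on the nose, and $f^*$ is \emph{logical}, hence in particular preserves all finite limits, i.e.\ is left exact. No further verification is needed at this stage.

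For the \emph{essential} strengthening I would invoke \ref{bc-abstr}: in any category possessing the relevant fibre products the base-change functor $f^*$ is right adjoint to $\ssum_f$ (composition with $f$), and a topos certainly has fibre products. Combining this with the previous paragraph gives the triple adjunction $\ssum_f\dashv f^*\dashv\sprod_f$, which is by definition what it means for $i_f$ to be essential, with $(i_f)_!=\ssum_f$. Finally, to justify the name \emph{localisation morphism}, I would point to the canonical equivalence $\cE_{\ov X}\simeq(\cE_{\ov Y})_{\ov X}$ (the slice identification appearing in \ref{bc-abstr}), under which $i_f$ is visibly of the same shape as a localisation of $\cE_{\ov Y}$ at the object $X$.

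The only real obstacle — and it is a bookkeeping one — is keeping the variances straight: one must be careful that the inverse-image functor of $i_f$ is the pullback $f^*$ going \emph{from} $\cE_{\ov Y}$ \emph{to} $\cE_{\ov X}$, not $\ssum_f$ in the opposite direction, and that ``logical'' is used in its full strength to extract left exactness of $f^*$. Beyond that there is nothing substantive to prove; all the content lives in Theorem~\ref{fund-th-top}, whose proof we take as given.
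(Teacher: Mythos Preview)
Your proposal is correct and matches the paper's approach exactly: the paper states this corollary without proof, treating it as an immediate formal consequence of Theorem~\ref{fund-th-top} together with the adjunction $\ssum_f\dashv f^*$ from \ref{bc-abstr}, which is precisely the assembly you carry out. Your care about variances and the use of logicality to extract left exactness of $f^*$ are exactly the points that need checking.
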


The geometric morphism $i_f$ is an inclusion if and only if $f$ is a monomorphism, and $i_f$ is a surjection if and only if $f$ is an epimorphism.

%We list some further properties of these functor in a topos $\cE$, 
With the above notation,
the \emph{Beck-Chevalley condition} states that a diagram
\begin{center}
 \begin{tikzpicture} 
\matrix(m)[matrix of math nodes, row sep=2em, column sep=2em, text height=1.5ex, text depth=0.25ex]
 {
 |(1)|{\tilde{X}}		& |(2)|{\tilde{Y}} 	\\
 |(l1)|{X}		& |(l2)|{Y} 	\\
 }; 
\path[->,font=\scriptsize,>=to, thin]
(1) edge node[above]{$\tilde{f}$} (2) edge node[left]{$\tilde{p}$}   (l1)
(2) edge node[right]{$p$} (l2) 
(l1) edge  node[above]{$f$} (l2);
\end{tikzpicture}
\end{center}
is a pullback if and only if 
$$
\tilde{p}_!\,\tilde{f}^*\simeq f^*\, p_!.
$$
Using \ref{geom-m-iso}, together with the fact that $f^*$ is logical, or directly using Beck-Chevalley condition, we obtain that $\Phi_{i_f}$, $\Xi_{i_f}$ and $\Lambda_{i_f}$ are isomorphisms. In particular, the adjunction $f_!\dashv f^*$ is \emph{$\cE_{\ov Y}$-enriched} in the sense that, for any $A\xrightarrow{a} X\in\cE_{\ov X}$ and $B\xrightarrow{b}Y\in\cE_{\ov Y}$, we have the \emph{Wirthm\"uller} isomorphism
$$
f_*[a,f^*b]_X\simeq[f_!a,b]_Y.
$$
It follows, by taking $a=\id_X$, that
$$
f_*f^*b=[f,b]_Y.
$$
In particular, if $X,Y,S\in\cE$, writing $i_S:\cE_{\ov S}\to \cE$ for the localisation morphism, we have that
$$
{i_S}_*i_S^*Y=[S,Y],
$$
and we obtain the familiar relation
$$
\cE(S,[X,Y])\simeq \cE(S\times X,Y)\simeq \cE_{\ov S}(i_S^*X,i_S^*Y).
$$

\subsection{Image factorisations}\label{img-fact}

\subsection{Logical connectives on the subobject classifier} 

Logical operations on the subobject classifier $\Omega$ of an elementary topos $\cE$ are the following. 

The map `true' is the map 
$$\ter\xrightarrow{t}\Omega$$ introduced alongside $\Omega$ in \ref{elem-top}, and it can be viewed as the classifying map of $\ter\to\ter$. 

On the other hand, the `false' map
$$
\ter\xrightarrow{f}\Omega
$$
is the classifying map of $\ini\mono\ter$.

Conjunction 
$$
\land:\Omega\times\Omega\to\Omega
$$
is the classifying map of $\ter\xrightarrow{(t,t)}\Omega\times\Omega$.

Disjunction 
$$
\lor:\Omega\times\Omega\to\Omega
$$
is the classifying map of the union of subobjects $\Omega\times I\xrightarrow{\id\times t}\Omega\times\Omega$ and $I\times\Omega\xrightarrow{t\times\id}\Omega\times\Omega$.

Negation $$\lnot:\Omega\to\Omega$$ is the classifying map of $\ter\xrightarrow{f}\Omega$.

We make $\Omega$ into a poset with the order relation 
$$\Omega_1=\Eq(\land,\pi_1)\subseteq\Omega\times\Omega,$$
and the implication
$$
\Rightarrow:\Omega\times\Omega\to\Omega
$$ 
is the classifying map of $\Omega_1\to \Omega\times\Omega$.
 
The above structure makes the poset $\Omega$ into an internal Heyting algebra in $\cE$. 

A topos $\cE$ is called \emph{Boolean,} if $\lnot\lnot=\id_\Omega$, and \emph{De Morgan,} if $\lnot \circ\land=\lor\circ \lnot\times\lnot$.

\subsection{Subobjects}\label{subobj}

Given an object $X$ of a topos $\cE$, arguing through characteristic maps of subobjects,  the Heyting algebra structure of $\Omega$ makes
$$
\Sub(X)
$$ 
a Heyting algebra.

Given a morphism $f:X\to Y$ in $\cE$,  we can describe the functors
\begin{center}
 \begin{tikzpicture} 
 [cross line/.style={preaction={draw=white, -,
line width=3pt}}]
\matrix(m)[matrix of math nodes, minimum size=1.7em,
inner sep=0pt, 
row sep=0em, column sep=4em, text height=1.5ex, text depth=0.25ex]
 { 
  |(dc)|{\Sub(X)}	 &
 |(c)|{\Sub(Y)} 	      \\ };
\path[->,font=\scriptsize,>=to, thin]
(c) edge[draw=none] node (mid) {} (dc)
(c) edge node (fo) [pos=0.2,above=-1pt]{$f^{-1}$}  (dc)
(dc) edge [bend right=35] node (ss) [below]{$\forall_f$} (c) edge [bend left=35] node (ps) [above]{$\exists_f$} (c)
(mid) edge[draw=none] node[sloped]{$\vdash$} (ss)
(mid) edge[draw=none] node[sloped]{$\dashv$} (ps)
;
\end{tikzpicture}
\end{center}
as follows. 

The functor $f^{-1}$ is defined as the restriction of the pullback functor 
$f^*:\cE_{\ov Y}\to\cE_{\ov X}$ to $\Sub(Y)$.

The functor $\forall_f$ is the restriction of $\sprod_f:\cE_{\ov X}\to\cE_{\ov Y}$ to $\Sub(X)$.

The functor $\ssum_f$ does not restrict to $\Sub(X)$, so we define
$$
\exists_f(U\mono X)=\im(U\mono X\xrightarrow{f}Y).
$$

\subsection{Power objects}\label{power-ob}

Given an object $X$ in a topos $\cE$, its power object
$$
PX=[X,\Omega]
$$
inherits the Heyting algebra structure from $\Omega$. 

The membership relation $$\text{\large$\in$}_X\mono PX\times X$$
 is the subobject classified by the evaluation map
$[X,\Omega]\times X\to \Omega$.

For any other object $Y$ and a subobject $R\mono Y\times X$, there is a unique morphism $Y\xrightarrow{r}PX$ such that
\begin{center}
 \begin{tikzpicture} 
\matrix(m)[matrix of math nodes, row sep=2em, column sep=3em, text height=1.5ex, text depth=0.25ex]
 {
 |(1)|{R}		& |(2)|{\text{\large$\in$}_X} 	\\
 |(l1)|{Y\times X}		& |(l2)|{PX\times X} 	\\
 }; 
\path[->,font=\scriptsize,>=to, thin]
(1) edge node[above]{} (2) edge[>->] node[left]{}   (l1)
(2) edge node[right]{} (l2) 
(l1) edge  node[above]{$r\times \id$} (l2);
\end{tikzpicture}
\end{center}
is a pullback.

DISCUSS the functor 
$$
P=\cE^\op\to \cE.
$$
!!!!!!!!!!!!!!!!!!

Given a morphism $X\xrightarrow{f} Y$ in $\cE$, the above construction applied to subobjects $\exists_{\id\times f}(\text{\large$\in$}_X)\mono PX\times Y$ and 
$\forall_{\id\times f}(\text{\large$\in$}_X)\mono PX\times Y$  yields morphisms
\begin{center}
 \begin{tikzpicture} 
 [cross line/.style={preaction={draw=white, -,
line width=3pt}}]
\matrix(m)[matrix of math nodes, minimum size=1.7em,
inner sep=0pt, 
row sep=0em, column sep=4em, text height=1.5ex, text depth=0.25ex]
 { 
  |(dc)|{PX}	 &
 |(c)|{PY} 	      \\ };
\path[->,font=\scriptsize,>=to, thin]
(c) edge[draw=none] node (mid) {} (dc)
(c) edge node (fo) [pos=0.5,above=-1pt]{$Pf$}  (dc)
(dc) edge [bend right=35] node (ss) [above]{$\forall f$} (c) edge [bend left=35] node (ps) [above]{$\exists f$} (c)
%(mid) edge[draw=none] node[sloped]{$\vdash$} (ss)
%(mid) edge[draw=none] node[sloped]{$\dashv$} (ps)
;
\end{tikzpicture}
\end{center}
such that $\exists f$ is left adjoint, and $\forall f$ is right adjoint to $Pf$ in the sense of posets $PX$ and $PY$.  We observe that the functors from \ref{subobj} are externalisations of these morphisms.

\section{Enriched category theory}

In this section, we adapt the expositions  of enriched category theory from \cite{borceux-2}, \cite{kelly} and a few other sources to suit our subsequent applications.

\subsection{Enriched categories and functors}

Let $\cV$ be a \emph{monoidal category}, with \emph{tensor product} bifunctor $\otimes:\cV\times\cV\to\cV$ and \emph{unit object} $I\in\Ob(\cV)$.  We write 
$$
V=\cV(I,\mathord{-}):\cV\to\Set
$$
for the functor of points on $\cV$.

A \emph{$\cV$-category $\cA$} %(cf.~\cite[Definition~6.2.1]{borceux-vol2}) 
consists of 
\begin{enumerate}
\item a class $\Ob(\cA)$ of `objects';
\item for every pair $X,Y\in\Ob(\cA)$, a `hom' object $\cA(X,Y)$ of $\cV$;
\item for every triple $X,Y,Z\in\Ob(\cA)$, a `composition' morphism in $\cV$,
$$
c_{XYZ}:\cA(X,Y)\otimes\cA(Y,Z)\to\cA(A,Z);
$$
\item for every $X\in\Ob(\cA)$, a `unit' morphism in $\cV$,
$$
u_X:I\to\cA(X,X),
$$
\end{enumerate}
such that, for any $X,Y,Z,T\in\cA$, the diagrams
\begin{center}
 \begin{tikzpicture} 
\matrix(m)[matrix of math nodes, row sep=2em, column sep=3.5em, text height=1.5ex, text depth=0.25ex]
 {
 |(1)|{(\cA(X,Y)\otimes\cA(Y,Z))\otimes\cA(Z,T)}		& |(2)|{\cA(X,Z)\otimes\cA(Z,T)} 	\\
 |(m1)|{\cA(X,Y)\otimes(\cA(Y,Z)\otimes\cA(Z,T))}     &    \\
 |(l1)|{\cA(X,Y)\otimes\cA(Y,T)}		& |(l2)|{\cA(X,T)} 	\\
 }; 
\path[->,font=\scriptsize,>=to, thin]
(1) edge node[above]{$c_{XYZ}\otimes\id$} (2) edge node[left]{$a$}   (m1)
(m1) edge node[left]{$\id\otimes c_{YZT}$}   (l1)
(2) edge node[right]{$c_{XZT}$} (l2) 
(l1) edge  node[above]{$c_{XYT}$} (l2);
\end{tikzpicture}
\end{center}
and
\begin{center}
\begin{tikzpicture} 
\matrix(m)[matrix of math nodes, row sep=2em, column sep=2em, text height=1.5ex, text depth=0.25ex]
 {
|(0)|{I\otimes\cA(X,Y)}  &  |(1)|{\cA(X,Y)}		& |(2)|{\cA(X,Y)\otimes I} 	\\
|(l0)|{\cA(X,X)\otimes\cA(X,Y)} & |(l1)|{\cA(X,Y)}		& |(l2)|{\cA(X,Y)\otimes\cA(Y,Y)} 	\\
 }; 
\path[->,font=\scriptsize,>=to, thin]
(0) edge node[above]{$l$} (1)
(l0) edge node[above]{$c_{XXY}$} (l1)
(0) edge  node[left]{$u_X\otimes\id$} (l0)
(2) edge node[above]{$r$} (1) 
(1) edge node[right]{$\id$}   (l1)
(2) edge node[right]{$\id\otimes u_Y$} (l2) 
(l2) edge node[above]{$c_{XYY}$} (l1) 
;
\end{tikzpicture}
\end{center}
are commutative, where we wrote $a$, $l$ and $r$ for the associativity, left and right unit isomorphisms in $\cV$.

We also say that $\cA$ is \emph{enriched over $\cV$}.

When $\cV$ is symmetric monoidal, we can form the \emph{opposite} $\cV$-category $\cA^\op$ by setting
$$
\cA^\op(X,Y)=\cA(Y,X).
$$

Given $\cV$-categories $\cA$, $\cB$, a \emph{$\cV$-functor} $F:\cA\to\cB$ consists of
\begin{enumerate}
\item for every $X\in\Ob(\cA)$, an object $F(X)\in\Ob(\cB)$;
\item for every pair $X,Y\in\Ob(\cA)$, a morphism in $\cV$,
$$
F_{XY}:\cA(X,Y)\to\cB(F(X),F(Y)),
$$
\end{enumerate}
%fulfilling the natural requirements from \cite[Definition~6.2.3]{borceux-vol2}.
so that, for all $X,Y,Z\in\cA$, the diagrams 
\begin{center}
 \begin{tikzpicture} 
\matrix(m)[matrix of math nodes, row sep=2em, column sep=3.5em, text height=1.5ex, text depth=0.25ex]
 {
 |(1)|{\cA(X,Y)\otimes\cA(Y,Z)}		& |(2)|{\cA(X,Z)} 	\\
 |(l1)|{\cB(FX,FY)\otimes\cB(FY,FZ)}		& |(l2)|{\cB(FX,FZ)} 	\\
 }; 
\path[->,font=\scriptsize,>=to, thin]
(1) edge node[above]{$c_{XYZ}$} (2) edge node[left]{$F_{XY}\otimes F_{YZ}$}   (l1)
(2) edge node[right]{$F_{XZ}$} (l2) 
(l1) edge  node[above]{$c_{FX,FY,FZ}$} (l2);
\end{tikzpicture}
 \begin{tikzpicture} 
\matrix(m)[matrix of math nodes, row sep=2em, column sep=2em, text height=1.5ex, text depth=0.25ex]
 {
 |(1)|{I}		& |(2)|{\cA(X,X)} 	\\
		& |(l2)|{\cB(FX,FX)} 	\\
 }; 
\path[->,font=\scriptsize,>=to, thin]
(1) edge node[above]{$u_X$} (2) edge node[left,pos=.7]{$u_{FX}$}   (l2)
(2) edge node[right]{$F_{XX}$} (l2) 
;
\end{tikzpicture}
\end{center}
commute.

If $F,G:\cA\to\cB$ are two $\cV$-functors, a \emph{$\cV$-natural transformation} 
$\alpha: F\Rightarrow G$ is a collection of $\cV$-morphisms
$$
\alpha_X:I\to\cB(F(X),G(X)),
$$
indexed by $X\in\Ob(\cA)$, such that, for all $X,Y\in\cA$, the diagram
%satisfying a suitable enriched analogue of the usual naturality requirement, see \cite[Definition~6.2.4]{borceux-vol2}.
\begin{center}
\begin{tikzpicture} 
\matrix(m)[matrix of math nodes, row sep=2em, column sep=1.5em, text height=1.5ex, text depth=0.25ex]
 {
 %&  |(1)|{\cA(X,Y)}		& 	\\
|(l0)|{I\otimes\cA(X,Y)} & 	 |(1)|{\cA(X,Y)}		& |(l2)|{\cA(X,Y)\otimes I} 	\\
|(d0)|{\cB(FX,GX)\otimes\cB(GX,GY)} & 	|(b1)|{\cB(FX,GY)}	& |(d2)|{\cB(FX,FY)\otimes\cB(FY,GY)} 	\\
%&  |(b1)|{\cB(FX,GY)}		& 	\\
 }; 
\path[->,font=\scriptsize,>=to, thin]
(1) edge node[above]{$l^{-1}$} (l0)
(1) edge node[above]{$r^{-1}$} (l2)
(l0) edge node[left]{$\alpha_X\otimes G_{XY}$} (d0) 
(l2) edge node[right]{$F_{XY}\otimes\alpha_Y$} (d2) 
(d0) edge node[above,pos=0.4]{$c$} (b1) 
(d2) edge node[above,pos=0.4]{$c$} (b1) 
;
\end{tikzpicture}
\end{center}
commutes.

Given a monoidal category $\cV$, the small $\cV$-categories, together with the $\cV$-functors and the $\cV$-natural transformations, form a 2-category denoted $$\cV\da\cat.$$

\subsection{Monoidal closed categories}

A symmetric monoidal category $\cV$ is \emph{closed} if, for each $B\in\Ob(\cV)$, the functor 
$\mathord{-}\otimes B:\cV\to\cV$ has a right adjoint denoted
$[B,\mathord{-}]$, i.e., we have bijections
$$
\cV(A\otimes B,C)\simeq\cV(A,[B,C]),
$$
natural in $A, C\in\cV$.

The \emph{internal hom objects} 
$$
[B,C]=\uHom(B,C)\in\Ob(\cV)
$$ 
for $B,C\in\Ob(\cV)$ endow the category $\cV$ with the structure of a $\cV$-category.

We recall that a category $\cV$ is \emph{cartesian closed}, if it has products, and it is monoidal closed for the monoidal structure given by direct product. In that case, the unit object $I$ is also a terminal object and we denote it by $e$.

%\subsection{Cartesian closed categories}
%
%If a category $\cV$ has products, it can be considered as a special case of a monoidal category in which the tensor product is the direct product, and the unit object is the terminal object of $\cV$. 

\subsection{End/coend calculus}

Let $\cV$ be a symmetric monoidal closed category, let $\cA$ be a $\cV$-category, and consider a $\cV$-functor
$$
T:\cA^{\op}\otimes\cA\to\cV.
$$
Suppose that there exists a pair $(K,\lambda)$ consisting of an object $K\in\cV$ and a family $\lambda_X:K\to T(X,X)$ indexed by $X\in\cA$ which is \emph{$\cV$-natural} in the sense that, for each $X,X'\in\cA$, the diagram
\begin{center}
 \begin{tikzpicture} 
\matrix(m)[matrix of math nodes, row sep=2em, column sep=3em, text height=1.5ex, text depth=0.25ex]
 {
 |(1)|{\cA(X,X')}		& |(2)|{[T(X,X),T(X,X')]} 	\\
 |(l1)|{[T(X',X'),T(X,X')]}		& |(l2)|{[K,T(X,X')]} 	\\
 }; 
\path[->,font=\scriptsize,>=to, thin]
(1) edge node[above]{$T(X,\mathord{-})$} (2) edge node[left]{$T(\mathord{-},X')$}   (l1)
(2) edge node[right]{$[\lambda_X,\id]$} (l2) 
(l1) edge  node[above]{$[\lambda_{X'},\id]$} (l2);
\end{tikzpicture}
\end{center}
commutes, and \emph{universal} in the sense that any other $\cV$-natural family $\lambda'_X:K'\to T(X,X)$ is given by $\lambda'_X=\lambda_X\circ f$ for a unique morphism $f:K'\to K$. We write 
$$
K=\eend_{X\in\cA}T(X,X)
$$
and call it the \emph{end} of $T$. 

By adjunction, the $\cV$-naturality condition can be expressed as the commutativity of
\begin{center}
 \begin{tikzpicture} 
\matrix(m)[matrix of math nodes, row sep=2em, column sep=3em, text height=1.5ex, text depth=0.25ex]
 {
 |(1)|{K}		& |(2)|{T(X,X)} 	\\
 |(l1)|{T(X',X')}		& |(l2)|{[\cA(X,X'),T(X,X')]} 	\\
 }; 
\path[->,font=\scriptsize,>=to, thin]
(1) edge node[above]{$\lambda_X$} (2) edge node[left]{$\lambda_{X'}$}   (l1)
(2) edge node[right]{$\rho_{XX'}$} (l2) 
(l1) edge  node[above]{$\sigma_{XX'}$} (l2);
\end{tikzpicture}
\end{center}
where $\rho_{XX'}$ is the transform of $T(X,\mathord{-})_{XX'}$ and $\sigma_{XX'}$ is the transform of $T(\mathord{-},X')_{X'X}$.

Hence, when $\cA$ is small and $\cV$ is complete, the end of any $T$ exists and it is given as the equaliser
\begin{center}
 \begin{tikzpicture} 
\matrix(m)[matrix of math nodes, row sep=0em, column sep=1.7em, text height=1.5ex, text depth=0.25ex]
 {
|(0)|{\eend_{X\in\cA}T(X,X)} & |(1)|{\prod_{X\in\cA}T(X,X)}		& |(2)|{\prod_{X,X'\in\cA}[\cA(X,X'),T(X,X')]} 	\\
 }; 
\path[->,font=\scriptsize,>=to, thin,yshift=12pt]
(0) edge node[above]{$\lambda$} (1)
([yshift=2pt]1.east) edge node[above]{$\rho$} ([yshift=2pt]2.west) 
([yshift=-2pt]1.east)edge node[below]{$\sigma$}   ([yshift=-2pt]2.west) 
;
\end{tikzpicture}
\end{center}

The universal property of ends can be enriched to the relation
$$
[V,\eend_XT(X,X)]\simeq \eend_X[V,T(X,X)]
$$
for any $V\in\cV$. 

 The following facts are well-known in enriched category theory, and can be found in \cite{kelly}. For simplicity, we assume that $\cV$ is complete so that all the relevant ends exist.

Formation of ends is \emph{functorial} in the sense that $\eend_{X\in\cA}$ yields a functor from the category of $\cV$-functors $\cA^\op\otimes\cA\to \cV$ to $\cV$.

Ends commute with limits in the following sense. If $T_i:\cA^\op\otimes\cA\to\cV$ are $\cV$-functors, then
$$
\eend_X \lim_i T_i(X,X)=\lim_i \eend_X T_i(X,X).
$$

Let $\cA$ and $\cB$ be $\cV$-categories, and let $T:(\cA\otimes\cB)^\op\otimes(\cA\otimes\cB)\to\cV$ be a $\cV$-functor. The \emph{Fubini Theorem} for ends states that 
\begin{align*}
\eend_{(X,Y)\in\cA\otimes\cB}T(X,Y,X,Y)& \simeq \eend_{Y\in\cB}\eend_{X\in\cA}T(X,Y,X,Y)\\ & \simeq \eend_{X\in\cA}\eend_{Y\in\cB}T(X,Y,X,Y).
\end{align*}

Given a $\cV$-functor $T:\cA^\op\otimes\cA\to\cV$, its \emph{coend} $$\coend^{X\in\cA}T(X,X)$$ is dual to the notion of end, and it is determined by the universal property
$$
[\coend^X T(X,X),V]\simeq \eend_X[T(X,X),V],
$$
for all $V\in\cV$.

\subsection{Enriched functor categories}\label{enr-fun-cat}
Let $\cV$ be a complete symmetric monoidal closed category and let $\cA$ and $\cB$ be $\cV$-categories with $\cA$ small. The category $$[\cA,\cB]=\cV[\cA,\cB]$$
of $\cV$-functors $\cA\to\cB$ has a structure of a $\cV$-category as follows. Given $\cV$-functors $F,G:\cA\to\cB$, the internal hom object
$$
[\cA,\cB](F,G)\in\Ob(\cV),
$$
is given as the end
$$
[\cA,\cB](F,G)=\eend_{X\in\cA}\cB(FX,GX).
$$
More explicitly, it is the equaliser 
\begin{center}
 \begin{tikzpicture} 
\matrix(m)[matrix of math nodes, row sep=0em, column sep=1.7em, text height=1.5ex, text depth=0.25ex]
 {
|(0)|{[\cA,\cB](F,G)} & |(1)|{\displaystyle\prod_{X\in\cA}\cB(FX,GX)}		& |(2)|{\displaystyle\prod_{X,X'\in\cA}[\cA(X,X'),\cB(FX,GX')]} 	\\
 }; 
\path[->,font=\scriptsize,>=to, thin,yshift=12pt]
(0) edge node[above]{} (1)
([yshift=2pt]1.east) edge node[above]{} ([yshift=2pt]2.west) 
([yshift=-2pt]1.east)edge node[below]{}   ([yshift=-2pt]2.west) 
;
\end{tikzpicture}
\end{center}
in $\cV$, where the two parallel arrows express naturality in this context. A $\cV$-natural transformation $\alpha:F\Rightarrow G$ corresponds to a point 
$$\alpha\in V([\cA,\cB](F,G)),$$ i.e.,
to a $\cV$-morphism 
$\alpha:I\to[\cA,\cB](F,G)$.

\subsection{Enriched Yoneda}

Let $\cC$ be a $\cV$-category. The category of \emph{$\cV$-presheaves on $\cC$} is the $\cV$-category
$$
\hC=\cV[\cC^\circ,\cV],
$$
where $\cV$ is a $\cV$-category when considered with internal homs.

Clearly, an object $X\in\Ob(\cC)$ yields the $\cV$-presheaf
$$
\h_X:\cC^\circ\to\cV,\ \ \ \ \ \h_X(S)=\cC(S,X).
$$
A $\cV$-presheaf is called \emph{representable}, if it isomorphic to a presheaf of the form $\h_X$ for some object $X$ in $\cC$.

The enriched Yoneda lemma states that, for a complete symmetric monoidal closed category $\cV$, a small $\cV$-category $\cC$, an object $X\in\Ob(\cC)$ and a $\cV$-preseheaf $F:\cC^\circ\to\cV$, the object 
$\hC(\h_X,F)$ exists and there is an isomorphism 
$$
\hC(\h_X,F)\simeq F(X)
$$
in $\cV$, which is $\cV$-natural both in $F$ and in $X$.

Consequently, we obtain a fully faithful $\cV$-functor
$$
\h:\cC\to \hC, % \cV[\cA^\circ,\cV],
\ \ \ \ \ X\mapsto \h_X
$$
called the \emph{$\cV$-Yoneda embedding}.

Another useful formula for our (co)end calculus is provided by the \emph{ninja Yoneda lemma} stating that, for $F\in\hC$,
$$
F\simeq \eend_{X\in\cC}[\cC(X,\mathord{-}),F(X)]\simeq \coend^{X\in\cC}F(X)\otimes\cC(\mathord{-},X). 
$$

\subsection{Day convolution}\label{day-conv}

Let $\cV$ be a complete cocomplete symmetric monoidal category, and let $\cC$ be a small $\cV$-monoidal category. The monoidal structure of $\cC$ gives rise to the \emph{Day convolution tensor product} on the $\cV$-functor category $[\cC,\cV]$, 
$$
*:[\cC,\cV]\otimes [\cC,\cV]\to [\cC,\cV],
$$
defined by
$$
F*G(Z)=\coend^{(X,Y)\in\cC\times\cC}\cC(X\otimes_\cC Y,Z)\otimes_\cV F(X)\otimes_\cV G(Y).
$$
The monoidal $\cV$-category $([\cC,\cV],*)$ admits a biclosed structure, i.e., the $\cV$-functor
$\mathord{-}*F$ has a right adjoint $\mathord{-}/F$ given by
$$
(G/F)(X)=\eend_{Y\in\cC}[F(Y),G(X\otimes Y)],
$$
while the $\cV$-functor $F*\mathord{-}$ has a right adjoint $F\backslash\mathord{-}$ given by
$$
(F\backslash G)(X)=\eend_{Y\in\cC}[F(Y),G(Y\otimes X)].
$$
A symmetry for $\otimes_\cC$ yields an isomorphism $G/F\simeq F\backslash G$ and in that case $([\cC,\cV],*)$ is symmetric monoidal closed.

\subsection{Change of base along monoidal morphisms}

Let $F:\cV\to\cW$ be a morphism of monoidal categories, given by 
\begin{enumerate}
\item a functor $F:\cV\to\cW$;
\item for each $X,Y\in\Ob(\cV)$, a $\cW$-morphism $F(X)\otimes F(Y)\to F(X\otimes Y)$;
\item a $\cW$-morphism $J\to F(I)$, where $I$ is the unit of $\cV$ and $J$ is the unit of $\cW$.
\end{enumerate}

 It induces a 2-functor 
$$
F_{*}:\cV\text{-}\mathbf{Cat}\to \cW\text{-}\mathbf{Cat}
$$
called the \emph{base change} functor as follows. 
Given a $\cV$-category $\cA$, the $\cW$-category $F_{*}(\cA)$ has the same objects as $\cA$, and, for $X,Y\in\Ob(\cA)$, 
$$
F_{*}(\cA)(X,Y)=F(\cA(X,Y)).
$$
If $T:\cA\to\cB$ is a $\cV$-functor, the $\cW$-functor $F_{*}(T):F_{*}(\cA)\to F_{*}(\cB)$ acts as $T$ on objects, and for $X,X'\in\Ob(\cA)$, $$(F_{*}(T))_{XX'}=F(T_{XX'}).$$

If $\alpha:T\Rightarrow T'$ is a $\cV$-natural transformation, the $\cW$-natural transformation $F_{*}(\alpha):F_{*}(T)\Rightarrow F_{*}(T')$ is given by the collection $F_{*}(\alpha)_X$, $X\in\Ob(\cA)$, where $F_{*}(\alpha)_X$ is the composite
$$
J\longrightarrow F(I)\stackrel{F(\alpha_X)}{\longrightarrow}F(\cA(T(X),T'(X))).
$$

Note, if $F$ is left adjoint to $G:\cW\to\cV$, then $F_{*}$ is left adjoint to $G_{*}$. Given a $\cV$-category $\cA$ and a $\cW$-category $\cB$, and a $\cW$-functor $T:F_{*}\cA\to \cB$, the collection of $\cV$-morphisms associated to $\cW$-morphisms $T_{XX'}:F_{*}\cA(X,X')\to\cB(TX,TX')$ via adjunctions
$$
\Hom_\cW(F\cA(X,X'),\cB(HX,HX'))\simeq\Hom_\cV(\cA(X,X'),G\cB(HX,HX'))
$$
yields a $\cV$-functor $\cA\to G_{*}\cB$.

Additionally, by \cite[Prop.~2.3.3]{john-gray-closed-categories-lax-limits-etc} if $G$ is normal and $F$ is $\cV$-left adjoint to $G$ in the sense that
$$
G\cW[FX,Y]\simeq \cV[X,GY], 
$$
then $F_{*}$ is $\cV\text{-}\mathbf{Cat}$-left adjoint to $G_{*}$, i.e.,
$$
G_{*} \cW[F_{*}\cA,\cB]\simeq \cV[\cA,G_{*}\cB].
$$

\subsection{The underlying category 2-functor}\label{und-cat}

For any monoidal category $\cV$, a special case of base change functor is the `underlying category' 2-functor 
$$
(\mathord{-})_0:\cV\text{-}\mathbf{Cat}\to\mathbf{Cat}
$$ 
obtained by base change via the points functor $V:\cV\to\Set$. %$V=\cV(I,\mathord{-}):\cV\to\Set.$$

In particular, when $\cV$ is symmetric monoidal closed, the functor $V:\cV\to\Set$ admits a left adjoint, so the above construction gives a left adjoint to the forgetful functor  $(\mathord{-})_0$ %:\cV\text{-}\mathbf{Cat}\to\mathbf{Cat}$ 
called the `associated free $\cV$-category' functor. 

\begin{notation}\label{V-is-V-cat}
If $\cV$ is symmetric monoidal closed, it is a $\cV$-category when considered with internal homs. We use the notation $\cV_0$ when we wish to refer to the ordinary category structure of $\cV$, and in that case $\cV_0$ is indeed the underlying category of the $\cV$-category $\cV$.
\end{notation}

\subsection{The presheaf associated with an enriched presheaf}\label{und-presh}

Let $\cV$ be a symmetric monoidal closed category, let $\cC$ be a $\cV$-category, and let
$$
F:\cC^\circ\to \cV
$$
be a $\cV$-presheaf. Applying the underlying category 2-functor yields a functor
$F_0:\cC_0^\circ\to \cV_0$, where $\cV_0$ is simply the category $\cV$. The composite
$$
\assoc{F}=V\circ F_0:\cC^\circ_0\to\Set
$$
is called the presheaf associated with the $\cV$-presheaf $F$. This defines a functor
$$
\assoc{V}:(\hC)_0\to \widehat{\cC_0}, \ \ \ \assoc{V}(\bF)=\assoc{\bF}.
$$
Indeed, given a $\cV$-natural transformation $\varphi:\bF\to\bG$ of two $\cV$-presheaves,
the underlying category 2-functor gives a natural transformation $\varphi_0:\bF_0\to\bG_0$, and then $V\varphi_0$ is a natural transformation from $V\circ \bF_0=\assoc{\bF}$ to $V\circ\bG_0=\assoc{\bG}$.

Note, if $F$ is $\cV$-represented by an object $X$ in $\cC$, then $\assoc{F}$ is represented by $X$ considered as an object of $\cC_0$, since
$$
\assoc{F}(S)=V F(S)=V\cC(S,X)=\cC_0(S,X).
$$

\subsection{Kan extensions}

Let  $L:\cA\to\cC$ be a $\cV$-functor, and let $F\in \widehat{\cA}$. When $\cA$ is small and $\cV$ is complete, the \emph{right Kan extension of $F$ along $L$} is an element of $\hC$ obtained as
$$
\text{Ran}_LF(C)=\eend_{A\in\cA}[\cC(LA,C),FA].
$$
When $\cA$ is small and $\cV$ is cocomplete, the \emph{left Kan extension of $F$ along $L$} is an element of $\hC$ obtained as
$$
\text{Lan}_LF(C)=\coend^{A\in\cA}\cC(C,LA)\otimes FA.
$$
Precomposing with $L$ gives rise to a $\cV$-functor
$$
\widehat{L}:\hC\to \widehat{\cA}, \ \ \ \widehat{L}(F)=F\circ L.
$$
By the \emph{Theorem of Kan adjoints} (\cite[Theorem~4.50]{kelly}), $\widehat{L}$ has a left adjoint $\text{Lan}_L$ if $\text{Lan}_F$ exists for every $F\in\widehat{A}$, and $\widehat{L}$ has the right adjoint $\text{Ran}_L$ if $\text{Ran}_LF$ exists for each $F\in\widehat{\cA}$.

\subsection{Enriched adjointness and representability}\label{adj-rep-enrich}

Let $L:\cA\to\cB$ and $R:\cB\to\cA$ be $\cV$-functors so that $L$ is left $\cV$-adjoint to $R$, i.e., for $X\in\cA$, $Y\in\cB$, we have natural $\cV$-isomorphisms
$$
\cB(LX,Y)\simeq\cA(X,RY).
$$
Then, for $Y\in\cB$,
$$
\widehat{L}(\h_Y)=\h_Y\circ L\simeq \h_{RY},
$$
so the functor $R$ naturally extends  from $\cB$ to $\widehat{L}$ on the $\cV$-presheaf category $\widehat{\cB}$ %$\cV[\cB^\circ,\cV]$ 
via the enriched Yoneda embedding. 

If $\cA$ is small and $\cV$ is complete, the theorem of Kan adjoints yields that the right adjoint to $\widehat{L}$ is given as $\text{Ran}_L$. Using the adjointness of $L$ and $R$, as well as the ninja Yoneda lemma, we obtain
$$
\text{Ran}_LF(S)=\eend_{A\in\cA}[\cB(LA,S),FA]\simeq\eend_A[\cB(A,RS),FA]\simeq F(RS)=\widehat{R}F(S),
$$
so $\widehat{R}$ is right adjoint to $\widehat{L}$.

%???????Suppose additionally that $\cD$ is monoidal and that a category $\cE$ is enriched in $\cD$. Thus, for $X\in\Ob(\cE)$, $\h_X\in\mathbf{Hom}(\cE^\circ,\cD)$. Moreover, we have a functor
%$$
%L\circ\mathord{-}:\mathbf{Hom}(\cE^\circ,\cD)\to \mathbf{Hom}(\cE^\circ,\cC).
%$$
%If $L$ is monoidal, then can we say when
%$$
%L\circ\h_X%\simeq\h_{R\circ L(X)}????? L \text{not defined on} X
%$$
%is representable?

\subsection{Pullbacks and pushforwards along ordinary arrows}\label{push-pull}

Let $\cV$ be a $\cV$-category. For $X,Y\in\cC$, let 
$$
f\in\cC_0(X,Y)=V(\cC(X,Y))=\cV(I,\cC(X,Y))
$$
be an ordinary arrow. We sometimes simply write $f:X\to Y$. For $Z\in\cC$, we define the $\cV$-morphism 
$$
f_*:\cC(Z,X)\to \cC(Z,Y)
$$
as the composite
$$
\cC(Z,X)\to\cC(Z,X)\otimes I\xrightarrow{\id\otimes f}\cC(Z,X)\otimes\cC(X,Y)\xrightarrow{c_{ZXY}}\cC(Z,Y).
$$
Similarly, we define
$$
f^*:\cC(Y,Z)\to \cC(X,Z)
$$
as the composite
$$
\cC(Y,Z)\to I\otimes\cC(Y,Z)\xrightarrow{f\otimes\id}\cC(X,Y)\otimes\cC(Y,Z)\xrightarrow{c_{XYZ}}\cC(X,Z).
$$

\subsection{The arrow category of an enriched category}\label{arrow-cat}

Let $\cV$ be symmetric monoidal closed with pullbacks, and let $\cC$ be a $\cV$-category.
The \emph{arrow} $\cV$-category $$\cC^{\to}$$ has arrows $f:X\to Y$ from the underlying category $\cC_0$ as objects, and the internal hom object $\cC^{\to}(X\xrightarrow{f}Y,X'\xrightarrow{f'}Y')$ is defined as the pullback
 \begin{center}
 \begin{tikzpicture} 
\matrix(m)[matrix of math nodes, row sep=2em, column sep=3em, text height=1.5ex, text depth=0.25ex]
 {
 |(1)|{\cC^{\to}(f,f')}		& |(2)|{\cC(Y,Y')} 	\\
 |(l1)|{\cC(X,X')}		& |(l2)|{\cC(X,Y')} 	\\
 }; 
\path[->,font=\scriptsize,>=to, thin]
(1) edge node[above]{} (2) edge node[left]{}   (l1)
(2) edge node[right]{$f^*$} (l2) 
(l1) edge  node[above]{$f'_*$} (l2);
\end{tikzpicture}
\end{center}
where the $\cV$-morphisms $f^*$ and $f'_*$ have been defined in \ref{push-pull}.

\subsection{Tensors and cotensors}\label{tens-cotens}

Let $\cV$ be a  symmetric monoidal category, and let $\cC$ be a $\cV$-category.

We say that $\cC$ is \emph{tensored} over $\cV$, if for every $E\in\cV$, there exists a $\cV$-functor $E\otimes\mathord{-}:\cC\to \cC$ and $\cV$-isomorphisms
$$
\cC(E\otimes X,Y)\simeq [E,\cC(X,Y)],
$$
which are $\cV$-natural in $Y$.

We say that $\cC$ is \emph{cotensored} over $\cV$, if for every $E\in\cV$, there exists a $\cV$-functor $[E,\mathord{-}]:\cC\to \cC$ and $\cV$-isomorphisms
$$
\cC(Y,[E,X])\simeq [E,\cC(Y,X)],
$$
which are $\cV$-natural in $Y$.

Note, if $\cC$ is tensored and cotensored over $\cV$, the $\cV$-functors $E\otimes\mathord{-}:\cV\to \cV$ and $[E,\mathord{-}]:\cV\to\cV$ are $\cV$-adjoint,
$$
\cC(E\otimes X,Y)\simeq\cC(X,[E,Y]).
$$

\subsection{Tensors and cotensors in the opposite category}\label{tens-cotens-op}

Let $\cV$ be a  symmetric monoidal category. If $\cC$ is a tensored $\cV$-category, then 
$\cC^\op$ is a cotensored $\cV$-category with
$$
[E,X^\op]^\op=(E\otimes X)^\op,
$$
for $E\in\cV$ and $X\in\cC$, and we write $X^\op$ for the corresponding object of $\cC^\op$.
Indeed, 
$$
[E,\cC^\op(Y^\op,X^\op)]=[E,\cC(X,Y)]\simeq \cC(E\otimes X,Y)\simeq \cC^\op(Y^\op,(E\otimes X)^\op).
$$
Dually, if $\cC$ is a cotensored $\cV$-category, then $\cC^\op$ is a tensored $\cV$-category with
$$
E\otimes^{\op}X^\op=[E,X]^\op.
$$
Hence, if $\cC$ is both tensored and contensored over $\cV$, so it $\cC^\op$.

\subsection{Enriched presheaves are tensored and cotensored}\label{enr-psh-tens-cotens}

Let $\cV$ be a complete symmetric monoidal category, let $\cC$ be a small $\cV$ category, and let $\hC=[\cC^\circ,\cV]$ be the $\cV$-category of $\cV$-presheaves on $\cC$.

Then $\hC$ is both tensored and cotensored over $\cV$, with the following structure. For $E\in\cV$, and $\bF\in\hC$, we define the $\cV$-presheaf $E\otimes \bF\in\hC$ by
$$
(E\otimes \bF)(S)=E\otimes \bF(S),
$$
and the $\cV$-presheaf $[E,\bF]$ by
$$
[E,\bF](S)=[E,\bF(S)].
$$
Let $\cC$ be a tensored and cotensored $\cV$-category. For representable functors, the above structure agrees with the usual tensored and cotensored structure,
$$
E\otimes\h_X\simeq \h_{E\otimes X}, \ \ \ \ \ [E,\h_X]\simeq\h_{[E,X]}.
$$
Moreover, we see that
$$
[E,\bF](S)\simeq [E,\bF(S)]\simeq [E,[\h_S,\bF]]\simeq[E\otimes\h_S,\bF]\simeq \bF(E\otimes S).
$$
This is consistent with the extension of the cotensored structure of $\cC$ to $\hC$ obtained using
 the fact that, given $E\in\cV$, $E\otimes\mathord{-}$ is left $\cV$-adjoint to $[E,\mathord{-}]$,  and the principle \ref{adj-rep-enrich}.
 % yields that the cotensored structure extends to the $\cV$-presheaf category $\cV[\cA^\circ,\cV]$. For a $\cV$-functor $G:\cA^\circ\to\cV$, we let
%$$
%[E,G](S)=G(E\otimes S).
%$$

\subsection{Enriching ordinary limits and colimits}\label{enr-limits}

\begin{remark}\label{tens-pres-lim}
Let $\cV$ be a symmetric monoidal category and let $\cC$ be tensored over $\cV$. Then, for any $X\in\cC$ the functor 
$$
\mathord{-}\otimes X:\cV_0\to \cC_0
$$
is left adjoint to 
$$
\cC(X,\mathord{-}):\cC_0\to \cV_0.
$$
Hence, if $\cV$ and $\cC_0$ are complete, $\cC(X,\mathord{-})$
preserves ordinary limits, 
$$
\cC(X,\lim_i Y_i)\simeq\lim_i\cC(X,Y_i),
$$
and, if $\cV$ and $\cC_0$ are cocomplete, $\mathord{-}\otimes X$ preserves ordinary colimits,
$$
\colim_j E_j\otimes X\simeq \colim_j (E_j\otimes X).
$$
\end{remark}
%\begin{proof}
%For an arbitrary $E\in\cV$, we have
%\begin{multline*}
%\cV_0(E,\cC(X,\lim_iY_i))\simeq\cC_0(E\otimes X,\lim_iY_i)\simeq\lim_i\cC_0(E\otimes X,Y_i)\\
%\simeq\lim_i\cV_0(E,\cC(X,Y_i))\simeq\cV_0(E,\lim_i\cC(X,Y_i)).
%\end{multline*}
%\end{proof}

\begin{remark}
%As pointed out to the author by Ross Street, 
If $\cV$ is complete cartesian closed, \ref{tens-pres-lim} shows that limits in $\cV_0$ are automatically enriched. In particular, the category $\cV$ has $\cV$-products and equalisers in the sense that
$$
[A,\prod_i B_i]\simeq\prod_i[A,B_i]
$$
and
$$
\begin{tikzcd}[column sep=1em]
{[A,\Eq(B}\ar[yshift=2pt]{r}{} \ar[yshift=-2pt]{r}[swap]{} & {C)]}\simeq {\Eq([A,B]}\ar[yshift=2pt]{r}{} \ar[yshift=-2pt]{r}[swap]{} &{[A,C]}).
\end{tikzcd}
$$
\end{remark}

\begin{remark}\label{cotens-pres-lim}
Let $\cC$ be cotensored over a symmetric monoidal category $\cV$. Then, for all $E\in\cV$, $X,Y\in \cC$, we have
$$
\cC_0(Y,[E,X])\simeq\cV_0(E,\cC(Y,X)),
$$
so the functor 
$$
[\mathord{-},X]:\cV_0\to \cC_0^\circ
$$
is left adjoint to the functor
$$
\cC(\mathord{-},X):\cC_0^\circ\to\cV_0.
$$
Hence, if $\cV$ and $\cC_0$ are complete and cocomplete, both functors take colimits to limits,
$$
[\colim_iE_i,X]\simeq\lim_i[E_i,Y], \ \ \ \ \cC(\colim_j X_j,Y)\simeq\lim_j\cC(X_i,Y).
$$
\end{remark}

\begin{remark}\label{tens-cotens-lim}
Let $\cC$ be tensored and cotensored over a symmetric monoidal category $\cV$. Then, for any $E\in\cV$, the functor
$$
E\otimes\mathord{-}:\cC_0\to\cC_0
$$
is left adjoint to the functor
$$
[E,\mathord{-}]:\cC_0\to\cC_0.
$$
Hence, if $\cV$ and $\cC_0$ are cocomplete, the functor $E\otimes\mathord{-}$ commutes with colimits,
$$
E\otimes\colim_j X_j\simeq \colim_j E\otimes X_j,
$$
and, if $\cV$ and $\cC_0$ are complete, the functor $[E,\mathord{-}]$ commutes with limits,
$$
[E,\lim_i X_i]\simeq \lim_i[E,X_i].
$$
\end{remark}

\subsection{Enriched presheaves and cartesian closedness}

Let $\cV$ be a complete cartesian closed category, let $\cC$ be a small $\cV$-category, and let $\hC=[\cC^\circ,\cV]$ be the $\cV$-category of $\cV$-presheaves on $\cC$. We will show that $\hC$ is complete cartesian closed. 

\begin{definition}
%For the cartesian structure on $\hC$, given 
For $\bF,\bG$ in $\hC$, we define %the $\cV$-presheaf 
$$
\bF\times\bG\in \hC   %\cC^\circ\to \cV
$$
by setting, for $S\in \cC$,
$$
(\bF\times \bG) (S)=\bF(S)\times \bG(S).
$$
Moreover, for $S,S'\in\cC$, the $\cV$-morphism
$$
(\bF\times \bG)_{S'S}:\cC(S',S)\to [\bF(S)\times \bG(S),\bF(S')\times \bG(S')]
$$
is obtained by adjunction from the composite of
\begin{multline*}
%\begin{split}
\cC(S',S)\times \bF(S)\times \bG(S)\xrightarrow{\Delta\times\id\times\id}\cC(S',S)\times\cC(S',S)\times \bF(S)\times \bG(S)    \\ 
\xrightarrow{\bF_{S'S}\times \bG_{S'S}\times\id\times\id}[\bF(S),\bF(S')]\times[\bG(S),\bG(S')]\times \bF(S)\times \bG(S)\\
\simeq [\bF(S),\bF(S')]\times \bF(S)\times[\bG(S),\bG(S')]\times \bG(S)\xrightarrow{\mathop{\rm ev}\times\mathop{\rm ev}}\bF(S')\times \bG(S').
%\end{split}
\end{multline*}
\end{definition}

The unit for the above product is the $\cV$-presheaf
$$
\be\in \hC,
$$
mapping every $S\in\cC$ to the terminal object $e$ of $\cV$.

\begin{definition}
We define the $\cV$-presheaf 
$$
%\uHom(F,G)
[\bF,\bG]:\cC^\circ\to \cV
$$
by the rule, for $S\in\cC$,
$$
%\uHom(F,G)(S)
[\bF,\bG](S)=\hC(\bF\times\h_S,\bG).
$$
For $S,S'\in \cC$, the $\cV$-morphism
$$
%\uHom(F,G)
[\bF,\bG]_{S'S}:\cC(S',S)\to [\hC(\bF\times\h_S,\bG),\hC(\bF\times\h_{S'},\bG)]
$$
is defined as follows. Consider the diagram
\begin{center}
 \begin{tikzpicture} 
% \begin{scope}[execute at begin node=$, execute at end node=$]
\matrix(m)[matrix of math nodes, row sep=2em, column sep=1em, text height=1.5ex, text depth=0.25ex]
 {
 |(1)|{\displaystyle\cC(S',S)\times\prod_T[\bF(T)\times\cC(T,S),\bG(T)]}		&[1em] |(2)|{\displaystyle\cC(S',S)\times\prod_{T,T'}[\cC(T',T),[\bF(T)\times\cC(T,S),\bG(T')]} 	\\
 |(l1)|{\displaystyle\prod_T[\bF(T)\times\cC(T,S'),\bG(T)]}		& |(l2)|{\displaystyle\prod_{T,T'}[\cC(T',T),[\bF(T)\times\cC(T,S'),\bG(T')]} 	\\
 }; 
\path[->,font=\scriptsize,>=to, thin]
([yshift=2pt]1.east) edge %node[above]{$p$} 
([yshift=2pt]2.west) 
([yshift=-1pt]1.east) edge %node[below]{$q$} 
([yshift=-1pt]2.west) 
(1) edge %node[left]{$f$}  
 (l1)
(2) edge %node[right]{$g$} 
(l2) 
([yshift=2pt]l1.east) edge %node[above]{$p'$} 
([yshift=2pt]l2.west) 
([yshift=-1pt]l1.east) edge %node[below]{$q'$} 
([yshift=-1pt]l2.west) 
;
%\end{scope}
\end{tikzpicture}
\end{center}
where the right vertical arrow is obtained by adjunction from the composite of
\begin{multline*}
\bF(T)\times\cC(T,S')\times\cC(S',S)\times[\bF(T)\times\cC(T,S),\bG(T')]\\\xrightarrow{\id\times\circ\times\id}\bF(T)\times\cC(T,S)\times [\bF(T)\times\cC(T,S),\bG(T')]\xrightarrow{\mathop{\rm ev}} \bG(T'),
\end{multline*}
and the left vertical arrow is even more straightforward to define. The following lemma yields a morphism from the equaliser of the top row to the equaliser of the bottom row,  i.e.,  
a $\cV$-morphism $\cC(S',S)\times\hC(\bF\times\h_S,\bG)\to \hC(\bF\times\h_{S'},\bG)$, whence we obtain the desired morphism by adjunction.
\end{definition}
 
\begin{lemma}\label{eq-lemma}
Suppose we have a solid arrow diagram
 \begin{center}
 \begin{tikzpicture} 
\matrix(m)[matrix of math nodes, row sep=2em, column sep=2em, text height=1.5ex, text depth=0.25ex]
 {
|(0)|{\Eq(p,q)}  &  |(1)|{A}		&[1em] |(2)|{B} 	\\
|(l0)|{\Eq(p',q')} & |(l1)|{A'}		& |(l2)|{B'} 	\\
 }; 
\path[->,font=\scriptsize,>=to, thin]
(0) edge (1)
(l0) edge (l1)
(0) edge[dashed]  node[left]{$\bar{f}$} (l0)
([yshift=2pt]1.east) edge node[above]{$p$} ([yshift=2pt]2.west) 
([yshift=-1pt]1.east) edge node[below]{$q$} ([yshift=-1pt]2.west) 
(1) edge node[left]{$f$}   (l1)
(2) edge node[right]{$g$} (l2) 
([yshift=2pt]l1.east) edge node[above]{$p'$} ([yshift=2pt]l2.west) 
([yshift=-1pt]l1.east) edge node[below]{$q'$} ([yshift=-1pt]l2.west) 
;
\end{tikzpicture}
\end{center}
in a category where the equalisers $\Eq(p,q)$ and $\Eq(p',q')$ exist, which commutes in the sense that $g\circ p=p'\circ f$ and $g\circ q=q'\circ f$. Then there exists a unique dashed morphism $\bar{f}$ that renders the whole diagram commutative.

Moreover, if the right hand side of the diagram
 \begin{center}
 \begin{tikzpicture} 
\matrix(m)[matrix of math nodes, row sep=2em, column sep=2em, text height=1.5ex, text depth=0.25ex]
 {
|(0)|{\Eq(p,q)}  &  |(1)|{A}		&[1em] |(2)|{B} 	\\
|(l0)|{\Eq(p',q')} & |(l1)|{A'}		& |(l2)|{B'} 	\\
|(ll0)|{\Eq(p'',q'')} & |(ll1)|{A''}		& |(ll2)|{B''} 	\\
 }; 
\path[->,font=\scriptsize,>=to, thin]
(0) edge (1)
(l0) edge (l1)
(ll0) edge (ll1)
(0) edge[dashed]  node[left]{$\bar{f}$} (l0)
([yshift=2pt]1.east) edge node[above]{$p$} ([yshift=2pt]2.west) 
([yshift=-1pt]1.east) edge node[below]{$q$} ([yshift=-1pt]2.west) 
(1) edge node[left]{$f$}   (l1)
(2) edge  (l2) 
([yshift=2pt]l1.east) edge node[above]{$p'$} ([yshift=2pt]l2.west) 
([yshift=-1pt]l1.east) edge node[below]{$q'$} ([yshift=-1pt]l2.west) 

([xshift=-1.5pt]l0.south) edge[dashed] node[left]{$\bar{h}$}   ([xshift=-1.5pt]ll0.north)
([xshift=1.5pt]l0.south) edge[dashed] node[right]{$\bar{h}'$}   ([xshift=1.5pt]ll0.north)

([xshift=-1.5pt]l1.south) edge node[left]{$h$}   ([xshift=-1.5pt]ll1.north)
([xshift=1.5pt]l1.south) edge node[right]{$h'$}   ([xshift=1.5pt]ll1.north)

([xshift=-1.5pt]l2.south) edge    ([xshift=-1.5pt]ll2.north)
([xshift=1.5pt]l2.south) edge    ([xshift=1.5pt]ll2.north)

([yshift=2pt]ll1.east) edge node[above]{$p''$} ([yshift=2pt]ll2.west) 
([yshift=-1pt]ll1.east) edge node[below]{$q''$} ([yshift=-1pt]ll2.west) 
;
\end{tikzpicture}
\end{center}
commutes and the relevant equalisers exist, the above yields morphisms $\bar{f}$, $\bar{h}$, $\bar{h}'$ which make the whole diagram commutative,
and we obtain a morphism 
$$
\Eq(p,q)\to \Eq(\bar{h},\bar{h}').
$$
\end{lemma}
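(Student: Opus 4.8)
For the first assertion, write $e\colon\Eq(p,q)\to A$ and $e'\colon\Eq(p',q')\to A'$ for the equalising morphisms, so that $pe=qe$, $p'e'=q'e'$, and both are monomorphisms. The plan is to check that $f\circ e$ equalises the pair $(p',q')$ and then invoke the universal property of $\Eq(p',q')$. Indeed, using the hypotheses $g\circ p=p'\circ f$ and $g\circ q=q'\circ f$ one gets
$$
p'\circ(f\circ e)=g\circ p\circ e=g\circ q\circ e=q'\circ(f\circ e),
$$
so there is a unique $\bar f\colon\Eq(p,q)\to\Eq(p',q')$ with $e'\circ\bar f=f\circ e$; this last equation is precisely the commutativity of the left-hand square, and uniqueness of $\bar f$ is immediate since $e'$ is monic.

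For the ``moreover'' part, name the remaining vertical arrows $g\colon B\to B'$ and $\beta,\beta'\colon B'\to B''$, and let $e''\colon\Eq(p'',q'')\to A''$ be the third equalising mono. Applying the first assertion to the ladder formed by the top two rows produces $\bar f$ as above; applying it to the two ladders formed by the bottom two rows --- once with the vertical pair $(h,\beta)$, once with $(h',\beta')$ --- produces $\bar h,\bar h'\colon\Eq(p',q')\to\Eq(p'',q'')$ satisfying $e''\circ\bar h=h\circ e'$ and $e''\circ\bar h'=h'\circ e'$. Together with $e'\circ\bar f=f\circ e$ these identities say that the left-hand column of the three-row diagram commutes. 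To produce the claimed morphism $\Eq(p,q)\to\Eq(\bar h,\bar h')$, I would show that $\bar f$ factors through the equaliser inclusion $\Eq(\bar h,\bar h')\hookrightarrow\Eq(p',q')$, i.e.\ that $\bar h\circ\bar f=\bar h'\circ\bar f$. Since $e''$ is monic it suffices to verify this after composing with $e''$, where it reads $h\circ f\circ e=h'\circ f\circ e$; this holds because the right-hand part of the diagram is assumed commutative. The universal property of $\Eq(\bar h,\bar h')$ then supplies the (unique) morphism $\Eq(p,q)\to\Eq(\bar h,\bar h')$ through which $\bar f$ factors.

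The whole argument is a sequence of applications of the universal property of equalisers, together with the fact that equalising morphisms are monomorphisms, so I expect no real conceptual difficulty. The point demanding care is the bookkeeping in the three-row case: because of the parallel arrows, ``the right-hand part commutes'' packages several commutativities at once, and one must keep track of exactly which of them are used --- in particular the relation $h\circ f\circ e=h'\circ f\circ e$ that forces $\bar f$ to land in $\Eq(\bar h,\bar h')$. Everything else is a routine diagram chase.
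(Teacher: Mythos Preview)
Your argument is correct and is exactly the standard one: use that $f\circ e$ equalises $(p',q')$ to invoke the universal property, and iterate. The paper in fact states this lemma without proof, presumably because it is an elementary exercise in the universal property of equalisers; your write-up fills that in appropriately. Your closing remark is apt: the only subtlety is unpacking what ``the right-hand side commutes'' means in the presence of parallel arrows, and in particular extracting the relation $h\circ f\circ e=h'\circ f\circ e$ needed to make $\bar f$ factor through $\Eq(\bar h,\bar h')$; the paper leaves this informal, and in its intended application the required identities hold by construction.
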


\begin{proposition}\label{enrich-presh-duality}
With the above cartesian structure, $\hC$ is cartesian closed. Even more, for $\bE,\bF,\bG\in\hC$, we have an isomorphism
$$
[\bE\times \bF,\bG]\simeq [\bE,[\bF,\bG]].
$$
\end{proposition}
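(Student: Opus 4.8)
The plan is to show that the $\cV$-functor $\mathord{-}\times\bF\colon\hC\to\hC$ has $[\bF,\mathord{-}]$ as a right $\cV$-adjoint, i.e.\ that there is an isomorphism in $\cV$
$$
\hC(\bE\times\bF,\bG)\simeq\hC(\bE,[\bF,\bG]),
$$
$\cV$-natural in $\bE$ and in $\bG$; this is precisely the assertion that $\hC$ is cartesian closed. Granting it, the stronger statement $[\bE\times\bF,\bG]\simeq[\bE,[\bF,\bG]]$ is then formal: since $\mathord{-}\times(\bE\times\bF)\simeq(\mathord{-}\times\bE)\times\bF$ as $\cV$-functors, the $\cV$-functors $[\bE\times\bF,\mathord{-}]$ and $[\bE,[\bF,\mathord{-}]]$ are both right $\cV$-adjoint to the same $\cV$-functor, hence $\cV$-naturally isomorphic, and one evaluates at $\bG$.

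The cleanest route to the adjunction is by reduction to representable $\bE$. For $\bE=\h_S$, the definition of the internal hom, together with commutativity of the (pointwise) cartesian product on $\hC$, gives
$$
\hC(\h_S\times\bF,\bG)\simeq\hC(\bF\times\h_S,\bG)=[\bF,\bG](S),
$$
while the enriched Yoneda lemma gives $\hC(\h_S,[\bF,\bG])\simeq[\bF,\bG](S)$; both are $\cV$-natural in $S\in\cC$ and in $\bG$, because the $\cV$-presheaf structure on $S\mapsto\hC(\bF\times\h_S,\bG)$ is, by construction, the structure $[\bF,\bG]_{S'S}$ fixed in the definition above. Thus the two sides of the desired adjunction agree $\cV$-naturally on all representables. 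Now both $\bE\mapsto\hC(\bE\times\bF,\bG)$ and $\bE\mapsto\hC(\bE,[\bF,\bG])$ are $\cV$-functors $\hC^\op\to\cV$ that turn the canonical coend presentation $\bE\simeq\coend^{S}\bE(S)\otimes\h_S$ (ninja Yoneda) into the corresponding end over $S$: for the first this holds because $\mathord{-}\times\bF$ preserves coends and tensors (products in $\hC$ are pointwise, $\mathord{-}\times V$ preserves colimits in the cartesian closed $\cV$, and $(E\otimes\bE')\times\bF\simeq E\otimes(\bE'\times\bF)$), while $\hC(\mathord{-},\bG)$ sends coends to ends; for the second it is the same fact for $\hC(\mathord{-},[\bF,\bG])$. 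Hence the isomorphism on representables extends uniquely to a $\cV$-natural isomorphism $\hC(\bE\times\bF,\bG)\simeq\hC(\bE,[\bF,\bG])$ for arbitrary $\bE$, which is cartesian closedness.

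Alternatively one can argue by direct end calculus. Unwinding with the end formula of \ref{enr-fun-cat} and the definition of $[\bF,\bG]$ yields
$$
\hC(\bE,[\bF,\bG])=\eend_{S\in\cC}\Bigl[\bE(S),\eend_{T\in\cC}\bigl[\bF(T)\times\cC(T,S),\bG(T)\bigr]\Bigr];
$$
interchanging $[\bE(S),\mathord{-}]$ past the inner end (using $[V,\eend_X T]\simeq\eend_X[V,T]$, valid for complete $\cV$), applying the exponential law $[A\times B,C]\simeq[A,[B,C]]$ in $\cV$, then the Fubini theorem for ends, and finally collapsing the end over $S$ via the ninja Yoneda isomorphism $\eend_{S}[\bE(S)\times\cC(T,S),\bG(T)]\simeq[\bE(T),\bG(T)]$, one reaches $\eend_{T}[\bE(T)\times\bF(T),\bG(T)]=\hC(\bE\times\bF,\bG)$.

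I expect the main obstacle to be the same in either approach: confirming that the isomorphism obtained is a genuine $\cV$-\emph{enriched} adjunction, compatible with the rather intricate $\cV$-presheaf structure $[\bF,\bG]_{S'S}$ fixed in the definition (built through Lemma~\ref{eq-lemma}), rather than just a family of bijections of underlying hom-sets. Concretely one should exhibit the evaluation counit $[\bF,\bG]\times\bF\to\bG$ and the unit $\bE\to[\bF,\bE\times\bF]$ as $\cV$-natural transformations and verify the triangle identities; in the representables approach this reduces to checking that the $S$-functoriality of $\hC(\bF\times\h_S,\bG)=[\bF,\bG](S)$ is indeed witnessed by the transposed evaluation maps appearing in the definition. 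The end manipulations (or the density argument) carry the conceptual content; the rest is routine bookkeeping aligning them with the explicit cartesian structure.
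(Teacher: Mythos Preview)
Your proposal is correct. Your ``alternative'' end-calculus route is essentially the paper's own argument: the paper also reduces to showing $\hC(\bE\times\bF,\bG)\simeq\hC(\bE,[\bF,\bG])$ and does so by a short chain of (co)end manipulations, starting from the left-hand side, expanding $\bE(T)$ via the coend form of ninja Yoneda, and then using the exponential law and Fubini to reach the right-hand side --- exactly the mirror image of your computation, which starts from the right and collapses via the end form of Yoneda. Your ``main'' density-on-representables approach is a valid and slightly more conceptual repackaging of the same content (the coend presentation of $\bE$ is precisely what makes the reduction to representables work), and your remarks about checking $\cV$-naturality against the explicit structure maps $[\bF,\bG]_{S'S}$ are apt; the paper sidesteps this bookkeeping by appealing to the naturality of the (co)end identities used.
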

\begin{proof}
It suffices to show that for all $\bE,\bF,\bG\in\hC$, we have a natural $\cV$-isomorphism
$$
\hC(\bE\times \bF,\bG)\simeq\hC(\bE,[\bF,\bG]).
$$
This follows as a consequence \cite[Example~5.2]{brian-day-on-closed-categories-of-functors} of a much more general framework, so we sketch a simpler proof in our special case. Using the (co)end calculus,
\begin{multline*}
\hC(\bE\times\bF,\bG)=\int_{T\in\cC}[\bE(T)\times\bF(T),\bG(T)]
=\int_{T}\left[\int^{S}\bE(S)\times\cC(T,S)\times \bF(T),\bG(T)\right]\\
=\int_T\int_S\left[\bE(S),[\cC(T,S)\times\bF(T),\bG(T)]\right]
=\int_S\left[\bE(S),\int_T[\cC(T,S)\times\bF(T),\bG(T)]\right]\\
=\int_S\left[\bE(S),[\bF,\bG](S)\right]=\hC(\bE,[\bF,\bG]).
\end{multline*}

\end{proof}

Given $\bF,\bG\in \hC$, it is sometimes convenient to write 
$$
\uHom(\bF,\bG)=[\bF,\bG],\ \  \uEnd(\bF)=[\bF,\bF].
$$

\subsection{Constant enriched presheaves and global sections}\label{enrich-global-sect}

Let $\cV$ be complete cartesian closed, and let $\cC$ be a $\cV$-category. The \emph{constant $\cV$-presheaf} functor
$$
\cC^*: \cV\to \hC, \ \ \ \cC^*(E)=E\otimes \be
$$
has a right $\cV$-adjoint 
$$
\Gamma=\hC(\be,\mathord{-}):\hC\to \cV,
$$
called the \emph{global sections functor}. 

Indeed, using the fact that $\hC$ is tensored over $\cV$, 
$$
\hC(\cC^*(E),\bF)=\hC(E\otimes \be,\bF)\simeq [E,\hC(\be,\bF)].
$$

\begin{corollary}\label{uhom-to-hom}
For $\bF,\bG\in\hC$, we have
$$
\Gamma[\bF,\bG]\simeq \hC(\bF,\bG).
$$
\end{corollary}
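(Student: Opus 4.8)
The plan is to deduce this directly from Proposition~\ref{enrich-presh-duality} by specialising to the unit object. By definition $\Gamma = \hC(\be,\mathord{-})$, so $\Gamma[\bF,\bG] = \hC(\be,[\bF,\bG])$. Setting $\bE = \be$ in the $\cV$-natural isomorphism $\hC(\bE\times\bF,\bG)\simeq\hC(\bE,[\bF,\bG])$ provided by Proposition~\ref{enrich-presh-duality} yields
$$
\hC(\be\times\bF,\bG)\simeq\hC(\be,[\bF,\bG]) = \Gamma[\bF,\bG].
$$
It therefore remains only to identify $\be\times\bF$ with $\bF$ as a $\cV$-presheaf, after which the two displays combine to give $\Gamma[\bF,\bG]\simeq\hC(\bF,\bG)$.

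For the identification, recall that the cartesian product on $\hC$ is computed argument-wise, $(\be\times\bF)(S) = \be(S)\times\bF(S) = e\times\bF(S)$, and $e$ is the terminal object of the cartesian closed category $\cV$, so $e\times\bF(S)\simeq\bF(S)$ naturally in $S$; one checks that this collection of isomorphisms respects the $\cV$-presheaf structure maps — indeed $\be$ is precisely the unit for $\times$ on $\hC$, as noted immediately after the definition of the product — whence $\be\times\bF\simeq\bF$ in $\hC$.

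Alternatively, one can argue entirely with the (co)end calculus, along the lines of the proof of Proposition~\ref{enrich-presh-duality}: since $\be(T) = e$ and $[e,X]\simeq X$ in $\cV$,
$$
\Gamma[\bF,\bG] = \int_{T\in\cC}[\,\be(T),[\bF,\bG](T)\,]\simeq\int_{T\in\cC}[\bF,\bG](T) = \int_{T\in\cC}\hC(\bF\times\h_T,\bG),
$$
and then $\hC(\bF\times\h_T,\bG) = \int_S[\bF(S)\times\cC(T,S),\bG(S)]\simeq\int_S[\cC(T,S),[\bF(S),\bG(S)]]$; applying Fubini to interchange the two ends and then the enriched Yoneda lemma in the form $\int_{T}[\cC(T,S),M]\simeq M$ collapses the outer end, leaving $\int_S[\bF(S),\bG(S)] = \hC(\bF,\bG)$. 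Either way, the only genuine content is the collapse $\be\times\bF\simeq\bF$ (respectively the Yoneda reduction); all the real work has already been carried out in Proposition~\ref{enrich-presh-duality}, so there is no serious obstacle to anticipate.
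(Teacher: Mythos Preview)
Your proof is correct and follows exactly the same approach as the paper: apply Proposition~\ref{enrich-presh-duality} with $\bE=\be$, use $\Gamma=\hC(\be,\mathord{-})$, and collapse $\be\times\bF\simeq\bF$. The paper's proof is the one-line chain $\Gamma[\bF,\bG]=\hC(\be,[\bF,\bG])\simeq\hC(\be\times\bF,\bG)\simeq\hC(\bF,\bG)$, which is precisely your first argument; your alternative end-calculus derivation is unnecessary here but also valid.
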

\begin{proof}
Using \ref{enrich-presh-duality}, we obtain that
$$
\Gamma[\bF,\bG]=\hC(\be,[\bF,\bG])\simeq\hC(\be\times \bF,\bG)\simeq\hC(\bF,\bG).
$$
\end{proof}

\begin{lemma}\label{Gamma-exact}
If $\cC$ is tensored over a cocomplete $\cV$ and $\cC_0$ has a terminal object $e_\cC$, then $\Gamma$ is exact.
\end{lemma}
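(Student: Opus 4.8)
The plan is to identify $\Gamma$ with evaluation at the terminal object $e_\cC$, which manifestly preserves all small limits and colimits. The first step is to show that the constant $\cV$-presheaf $\be$ is representable, namely $\be\simeq\h_{e_\cC}$ in $\hC$. Since limits in $\hC=[\cC^\circ,\cV]$ are computed objectwise, it suffices to prove that each hom-object $\cC(S,e_\cC)$ is terminal in $\cV$, for then $\h_{e_\cC}$ is the objectwise-terminal, hence terminal, $\cV$-presheaf, just like $\be$. This is where the tensoring of $\cC$ over $\cV$ is used: applying the points functor $V=\cV(I,\mathord{-})$ to the tensoring isomorphism $\cC(E\otimes S,e_\cC)\simeq[E,\cC(S,e_\cC)]$ yields natural bijections
$$
\cV_0(E,\cC(S,e_\cC))\simeq V[E,\cC(S,e_\cC)]\simeq\cC_0(E\otimes S,e_\cC),
$$
whose right-hand side is a singleton because $e_\cC$ is terminal in $\cC_0$. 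Hence $\cC(S,e_\cC)$ is terminal in $\cV_0$, naturally in $S$, as required.

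With $\be\simeq\h_{e_\cC}$ in hand, the enriched Yoneda lemma (applicable since $\cV$ is complete) gives, for every $\bF\in\hC$,
$$
\Gamma(\bF)=\hC(\be,\bF)\simeq\hC(\h_{e_\cC},\bF)\simeq\bF(e_\cC),
$$
$\cV$-naturally in $\bF$, so $\Gamma$ is isomorphic to the evaluation functor $\mathrm{ev}_{e_\cC}\colon\hC\to\cV$. Since $\cV$ is both complete and cocomplete, limits and colimits in $[\cC^\circ,\cV]$ are both formed objectwise, so every evaluation functor preserves all small limits and colimits; in particular $\Gamma$ preserves finite limits and finite colimits, i.e.\ is exact.

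The only delicate point, and the only place the hypotheses genuinely enter, is the first step: a terminal object of the underlying category $\cC_0$ need not give a terminal object of the enriched category $\cC$ (one whose hom-objects into it are terminal in $\cV$), and it is precisely the tensoring that bridges this gap. Cocompleteness of $\cV$ is then used only to ensure that colimits in $\hC$ are objectwise, upgrading the always-available left-exactness of the right adjoint $\Gamma$ to full exactness.
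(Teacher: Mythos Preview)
Your proof is correct and follows essentially the same approach as the paper: both arguments use the tensoring to show that $\cC(S,e_\cC)$ is terminal in $\cV$ (the paper cites the adjunction $\mathord{-}\otimes S\dashv\cC(S,\mathord{-})$, which is the same content as your direct computation), conclude $\be\simeq\h_{e_\cC}$, apply enriched Yoneda to identify $\Gamma$ with evaluation at $e_\cC$, and finish by noting that colimits in $\hC$ are pointwise. Your write-up is simply more detailed, in particular in spelling out why tensoring is needed to promote a terminal object of $\cC_0$ to an enriched terminal object.
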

\begin{proof}
Using \ref{tens-pres-lim}, for every $X\in\cC$, $\cC(X,e_\cC)=e_\cV$, so 
$$
\h_{e_\cC}\simeq \be=\cC^*(e_\cV).
$$
Hence, 
$$
\Gamma(\bF)=\hC(\be,\bF)\simeq\hC(\h_{e_\cC},\bF)\simeq\bF(e_\cC), 
$$
so $\Gamma$ preserves small colimits using the fact 
that small colimits exist pointwise in $\hC$ (\cite[3.3]{kelly}), i.e., that evaluation of presheaves at an object preserves them.
\end{proof}

\subsection{Internal hom and presheaf hom}

Suppose $\cV$ is a cartesian closed category.
For an object $X$ of $\cV$, we write
$\h_X(\mathord{-})=[\mathord{-},X]$ for the $\cV$-presheaf represented by $X$, and we write 
$\assoc{\h}_X(\mathord{-})=\cV_0(\mathord{-},X)$ for the associated presheaf.

Then, for $X,Y\in\cV$, 
$$
\uHom(\assoc{\h}_X,\assoc{\h}_Y)\simeq\assoc{\h}_{[X,Y]}.
$$

If $\cV$ is a complete cartesian closed category, we have even more,
$$
[\h_X,\h_Y]\simeq\h_{[X,Y]}.
$$
Indeed, 
$$
[\h_X,\h_Y](S)=\widehat{\cV}(\h_X\times\h_S,\h_Y)\simeq\widehat{\cV}(\h_{X\times S},\h_Y)\simeq[X\times S,Y]\simeq[S,[X,Y]]=\h_{[X,Y]}(S).
$$

Moreover, if $\cC$ is a tensored and cotensored $\cV$-category, then for $E\in\cV$, $X\in\cC$,
$$
[E_e,\h_X]\simeq\h_{[E,X]}\simeq[E,\h_X].
$$

\subsection{Internal and presheaf hom of associated presheaves}

Let $\cV$ be a complete cartesian closed category, let $\cC$ be a $\cV$-category and let $\hC=[\cC^\circ,\cV]$ be the category of $\cV$-presheaves on $\cC$. We write $\cC_0$ for the underlying category of $\cC$, and $\widehat{\cC_0}$ for the category of (ordinary) presheaves on $\cC_0$.

Recall, for an enriched presheaf $\bF\in\hC$, its underlying presheaf $\assoc{\bF}\in\widehat{\cC_0}$ is defined as the composite $V\circ\bF_0$, where $\bF_0$ is the underlying functor of $\bF$.

\begin{lemma}\label{inthom-assoc-psh}
For $X\in\cC$ and $\bF\in\hC$, 
$$
\widehat{\cC_0}(\assoc{\h}_X,\assoc{\bF})=V(\hC(\h_X,\assoc{\bF})), \ \ \ \text{ and }\ \ \ \ 
\uHom_{\widehat{\cC_0}}(\assoc{\h}_X,\assoc{\bF})=\assoc{[\h_X,\bF]}.
$$
\end{lemma}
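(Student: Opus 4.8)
The plan is to prove the two stated isomorphisms separately. The first falls out of the Yoneda lemmas; the second reduces, after evaluating at an object, to a single compatibility between the $\cV$-enriched and the ordinary cartesian-closed structures, and that compatibility is where the real work lies.

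\textbf{First isomorphism.} By \ref{und-presh}, $\assoc{\h}_X$ is exactly the presheaf on $\cC_0$ represented by $X$, so the ordinary Yoneda lemma gives $\widehat{\cC_0}(\assoc{\h}_X,\assoc{\bF})\simeq\assoc{\bF}(X)$. The enriched Yoneda lemma gives $\hC(\h_X,\bF)\simeq\bF(X)$ in $\cV$, and applying the functor of points $V=\cV(e,\mathord{-})$ yields $V(\hC(\h_X,\bF))\simeq V(\bF(X))=\assoc{\bF}(X)$. (I read the ``$\assoc{\bF}$'' occurring inside $\hC(\h_X,\mathord{-})$ in the statement as $\bF$, which is the only reading that typechecks, since $\hC(\h_X,\mathord{-})$ only accepts $\cV$-presheaves.) Comparing the two identifications proves the claim.

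\textbf{Second isomorphism.} Both sides are presheaves on $\cC_0$, so it suffices to evaluate at an arbitrary $S\in\cC_0$. By the defining formula for internal hom in a presheaf topos (\ref{uhom-psh}), the left-hand side at $S$ is $\widehat{\cC_0}(\assoc{\h}_X\times\assoc{\h}_S,\assoc{\bF})$. By the definition of $[\mathord{-},\mathord{-}]$ in $\hC$ together with the definition of the associated presheaf, the right-hand side at $S$ is $\assoc{[\h_X,\bF]}(S)=V\bigl([\h_X,\bF](S)\bigr)=V\bigl(\hC(\h_X\times\h_S,\bF)\bigr)$. Finally, since $V$ preserves limits and products of presheaves are computed pointwise, the associated-presheaf functor $\assoc{V}$ preserves finite products, so $\assoc{\h}_X\times\assoc{\h}_S\simeq\assoc{V}(\h_X\times\h_S)$. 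Hence the second isomorphism is equivalent to a natural-in-$S$ identification
$$
\widehat{\cC_0}\bigl(\assoc{V}(\h_X\times\h_S),\,\assoc{V}(\bF)\bigr)\;\simeq\;V\bigl(\hC(\h_X\times\h_S,\bF)\bigr).
$$

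\textbf{The compatibility, and the main obstacle.} I would prove this identification by expanding both sides as ends, mimicking the proof of Proposition~\ref{enrich-presh-duality}. The right-hand side, via the end-formula for $\cV$-natural transformations and the fact that $V$ preserves small limits, is the set of $\cV$-natural families $\bigl(\phi_T\colon\cC(T,X)\times\cC(T,S)\to\bF(T)\bigr)_{T\in\cC}$; the left-hand side, via the end-formula for morphisms of ordinary presheaves, is the set of ordinary natural families $\bigl(\psi_T\colon\cC_0(T,X)\times\cC_0(T,S)\to V\bF(T)\bigr)_{T\in\cC_0}$; and the comparison is $(\phi_T)_T\mapsto(V\phi_T)_T$. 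Showing this is a bijection is the step I expect to be the main obstacle: it asks for a \emph{unique} lift of an ordinary natural family of set maps to a $\cV$-natural family of $\cV$-morphisms, and it cannot rely on faithfulness of $V$ (which fails already for $\cV=\diff\Set$, e.g.\ $\cV_0((\Z,+1),(\Z,+1))\to\Set(\emptyset,\emptyset)$ is not injective), so the lift must be extracted from the $\cV$-naturality constraints rather than read off objectwise; a purely formal reshuffling of ends only reproduces the two descriptions without relating them. The route I would pursue is to resolve $\h_X$ and $\h_S$ by representables via the ninja Yoneda lemma, $\cC(T,X)\simeq\coend^{U\in\cC}\cC(U,X)\otimes\cC(T,U)$ and likewise for $S$, reorganize the resulting iterated ends by the Fubini theorem so that the running variable enters only through a representable hom-object, and there invoke the enriched Yoneda lemma (which pins a $\cV$-morphism out of a representable to a single point of its codomain) to match, step for step, the identical manipulation carried out on the ordinary side by the ordinary ninja Yoneda and Yoneda lemmas. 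Since $\cV$ is cartesian closed, finite products distribute over the tensors and coends involved, and since $V$ preserves limits, one checks that every rewriting commutes with $V$; making this matching close honestly — genuinely tying $V$ to $\cV$-naturality — is the crux of the argument, after which the displayed identification, and hence the lemma, follows.
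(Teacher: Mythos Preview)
Your first isomorphism is correct and matches the paper exactly.

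For the second isomorphism, you have missed the key simplification that makes the paper's proof a one-liner. Under the standing assumption that $\cC$ has finite products (implicit here and explicit in the later applications, e.g.\ Proposition~\ref{enr-der-of-fix}), the product of representables is representable: $\h_X\times\h_S\simeq\h_{X\times S}$ in $\hC$ and $\assoc{\h}_X\times\assoc{\h}_S\simeq\assoc{\h}_{X\times S}$ in $\widehat{\cC_0}$. Your displayed target
\[
\widehat{\cC_0}\bigl(\assoc{V}(\h_X\times\h_S),\assoc{V}(\bF)\bigr)\simeq V\bigl(\hC(\h_X\times\h_S,\bF)\bigr)
\]
is then nothing but the first isomorphism applied to the object $X\times S$ in place of $X$. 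That is the paper's entire argument for the second part.

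The route you sketch instead --- proving $\widehat{\cC_0}(\assoc{V}G,\assoc{V}\bF)\simeq V(\hC(G,\bF))$ for $G=\h_X\times\h_S$ by resolving via ninja Yoneda and matching ends --- is not merely roundabout but attacks a statement that is false for general $G$. For $\cV=\diff\Set$ and $\cC$ the trivial one-object $\cV$-category, it would assert $\Set(\Fix G,\Fix F)\simeq\diff\Set(G,F)$, which fails already for $G=F=\Nsucc$ (the left side is a singleton, the right side is $\N$). So any argument that does not exploit the representability of $\h_X\times\h_S$ is doomed; your proposed end-manipulation, if it closed at all, would amount to a disguised proof of that representability. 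Recognise it directly and the lemma is immediate.
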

\begin{proof}
Using ordinary and enriched Yoneda,
$$
\widehat{\cC_0}(\assoc{\h}_X,\assoc{\bF})=\assoc{\bF}(X)=V(\bF(X))=V(\hC(\h_X,\assoc{\bF})).
$$
Moreover, for $S\in\cC$, using the above,
\begin{multline*}
\uHom(\assoc{\h}_X,\assoc{\bF})(S)=\widehat{\cC_0}(\assoc{\h}_X\times\assoc{\h}_S,\assoc{\bF})=\widehat{\cC_0}(\assoc{\h}_{X\times S},\assoc{\bF})\\
=V(\hC(\h_{X\times S},\bF))=
V(\hC(\h_X\times\h_S,\bF)=V([\h_X,\bF](S)),
\end{multline*}
whence we conclude that $\uHom(\assoc{\h}_X,\assoc{\bF})$ is the  presheaf associated to the enriched presheaf $[\h_X,\bF]$. 
\end{proof}

\subsection{Internal isomorphisms and automorphisms}\label{int-isom}

Let $\cV$ be a cartesian closed category with pullbacks. For objects $X, Y$ in $\cV$, we define the internal isomorphism object $$\uIsom(X,Y)\in\cV$$ as the pullback
\begin{center}
 \begin{tikzpicture} 
\matrix(m)[matrix of math nodes, row sep=2em, column sep=5em, text height=1.9ex, text depth=0.25ex]
 {
 |(1)|{\uIsom(X,Y)}		& |(2)|{I} 	\\
 |(l1)|{[X,Y]\times[Y,X]}		& |(l2)|{[X,X]\times[Y,Y]} 	\\
 }; 
\path[->,font=\scriptsize,>=to, thin]
(1) edge node[above]{} (2) edge node[left]{}   (l1)
(2) edge node[right]{$(u_X,u_Y)$} (l2) 
(l1) edge  node[below]{$(c_{XYX},c_{YXY})$} (l2);
\end{tikzpicture}
\end{center}
and we write
$$
\uAut(X)=\uIsom(X,X).
$$

%By analogy with \ref{int-isom}, we define the internal isomorphism and automorphism presheaves
%$$
%\uIsom(F,G),\ \ \uAut(F)\ \in\hC.
%$$

\section{Internal category theory}

Our exposition of internal category theory mostly follows \cite{johnstone, elephant1, elephant2, borceux-1}, but we put a slant on the enriched aspects of the theory, in particular on enriching slices in \ref{enr-int-slices} and enriched Grothendieck construction in \ref{enrich-groth-constr}. 

\subsection{Internal categories}\label{int-cats}

Let $\cV$ be a category with finite limits. A \emph{category internal in $\cV$} (or a \emph{$\cV$-internal category}) is a tuple
$$
\C=\left( C_0,C_1,C_1\xrightarrow{d_0}C_0, C_1\xrightarrow{d_1}C_0, C_0\xrightarrow{i}C_1, C_2\xrightarrow{c} C\right)
$$
of objects and morphisms of $\cV$, where we adopt a convention that $C_n$ is the n-fold pullback 
$$
C_1\times_{C_0}C_1\times_{C_0}\cdots\times_{C_0}C_1
$$
where $C_1$ appearing on the left of the symbol $\times_{C_0}$ is taken with the structure morphism $d_1$, while $C_1$ appearing on the right is taken with the structure morphism $d_0$. 

The object $C_0$ is called the \emph{object of objects} of $\C$, while $C_1$ is the \emph{object of morphisms}, $d_0$ and $d_1$ are the \emph{source} and \emph{target} maps, and $i$ is the \emph{identity} morphism. 
In particular, $C_2=C_1\times_{C_0}C_1$ is the pullback
 \begin{center}
 \begin{tikzpicture} 
\matrix(m)[matrix of math nodes, row sep=2em, column sep=2em, text height=1.5ex, text depth=0.25ex]
 {
 |(1)|{C_2}		& |(2)|{C_1} 	\\
 |(l1)|{C_1}		& |(l2)|{C_0} 	\\
 }; 
\path[->,font=\scriptsize,>=to, thin]
(1) edge node[above]{$\pi_2$} (2) edge node[left]{$\pi_1$}   (l1)
(2) edge node[right]{$d_0$} (l2) 
(l1) edge  node[above]{$d_1$} (l2);
\end{tikzpicture}
\end{center}
representing \emph{composable pairs} of morphisms, and $c$ is the \emph{composition} map. These morphisms must make the diagrams
\begin{center}
 \begin{tikzpicture} 
\matrix(m)[matrix of math nodes, row sep=2em, column sep=2em, text height=1.5ex, text depth=0.25ex]
 {
 |(1)|{C_0}		& |(2)|{C_1} 	\\
 |(l1)|{C_1}		& |(l2)|{C_0} 	\\
 }; 
 \path[-,font=\scriptsize,thin]
 ([yshift=.6pt,xshift=.6pt]1.south east) edge %node[above]{$p$} 
([yshift=.6pt,xshift=.6pt]l2.north west) 
([yshift=-.6pt,xshift=-.6pt]1.south east) edge %node[below]{$q$} 
([yshift=-.6pt,xshift=-.6pt]l2.north west);
\path[->,font=\scriptsize,>=to, thin]
(1) edge node[above]{$i$} (2) edge node[left]{$i$}   (l1)
(2) edge node[right]{$d_0$} (l2) 
(l1) edge  node[above]{$d_1$} (l2);
\end{tikzpicture}
\hskip4em
\begin{tikzpicture} 
\matrix(m)[matrix of math nodes, row sep=2em, column sep=2em, text height=1.5ex, text depth=0.25ex]
 {
|(0)|{C_1}  &  |(1)|{C_2}		& |(2)|{C_1} 	\\
|(l0)|{C_0} & |(l1)|{C_1}		& |(l2)|{C_0} 	\\
 }; 
\path[->,font=\scriptsize,>=to, thin]
(1) edge node[above]{$\pi_1$} (0)
(l1) edge node[above]{$d_0$} (l0)
(0) edge  node[left]{$d_0$} (l0)
(1) edge node[above]{$\pi_2$} (2) 
(1) edge node[right]{$c$}   (l1)
(2) edge node[right]{$d_1$} (l2) 
(l1) edge node[above]{$d_1$} (l2) 
;
\end{tikzpicture}
\end{center}
\begin{center}
\begin{tikzpicture} 
\matrix(m)[matrix of math nodes, row sep=2em, column sep=2em, text height=1.5ex, text depth=0.25ex]
 {
|(0)|{C_0\times_{C_0}C_1}  &  |(1)|{C_2}		& |(2)|{C_1\times_{C_0}C_0} 	\\
 & |(l1)|{C_1}		& 	\\
 }; 
\path[->,font=\scriptsize,>=to, thin]
(0) edge node[above]{$i\times\id$} (1)
%(l1) edge node[above]{$d_0$} (l0)
(0) edge  node[left]{} (l1)
(2) edge node[above]{$\id\times i$} (1) 
(1) edge node[right]{$c$}   (l1)
(2) edge node[right]{} (l1) 
%(l1) edge node[above]{$d_1$} (l2) 
;
\end{tikzpicture}
\hskip2em
\begin{tikzpicture} 
\matrix(m)[matrix of math nodes, row sep=2em, column sep=2em, text height=1.5ex, text depth=0.25ex]
 {
 |(1)|{C_3}		& |(2)|{C_2} 	\\
 |(l1)|{C_2}		& |(l2)|{C_1} 	\\
 }; 
\path[->,font=\scriptsize,>=to, thin]
(1) edge node[above]{$\id\times c$} (2) edge node[left]{$c\times\id$}   (l1)
(2) edge node[right]{$c$} (l2) 
(l1) edge  node[above]{$c$} (l2);
\end{tikzpicture}
\end{center}
commutative.

An \emph{internal functor} (or \emph{morphism of internal categories}) 
$$
f:\C\to\bbD
$$
is given by a pair of $\cV$-morphisms $C_0\xrightarrow{f_0}D_0$, $C_1\xrightarrow{f_1}D_1$ making the diagrams
\begin{center}
\begin{tikzpicture} 
\matrix(m)[matrix of math nodes, row sep=2em, column sep=2em, text height=1.5ex, text depth=0.25ex]
 {
|(0)|{C_0}  &  |(1)|{C_1}		& |(2)|{C_0} 	\\
|(l0)|{D_0} & |(l1)|{D_1}		& |(l2)|{D_0} 	\\
 }; 
\path[->,font=\scriptsize,>=to, thin]
(1) edge node[above]{$d_0$} (0)
(l1) edge node[above]{$d_0$} (l0)
(0) edge  node[left]{$f_0$} (l0)
(1) edge node[above]{$d_1$} (2) 
(1) edge node[right]{$f_1$}   (l1)
(2) edge node[right]{$f_0$} (l2) 
(l1) edge node[above]{$d_1$} (l2) 
;
\end{tikzpicture}
 \begin{tikzpicture} 
\matrix(m)[matrix of math nodes, row sep=2em, column sep=2em, text height=1.5ex, text depth=0.25ex]
 {
 |(1)|{C_0}		& |(2)|{C_1} 	\\
 |(l1)|{D_0}		& |(l2)|{D_1} 	\\
 }; 
\path[->,font=\scriptsize,>=to, thin]
(1) edge node[above]{$i$} (2) edge node[left]{$f_0$}   (l1)
(2) edge node[right]{$f_1$} (l2) 
(l1) edge  node[above]{$i$} (l2);
\end{tikzpicture}
 \begin{tikzpicture} 
\matrix(m)[matrix of math nodes, row sep=2em, column sep=2em, text height=1.5ex, text depth=0.25ex]
 {
 |(1)|{C_2}		& |(2)|{C_1} 	\\
 |(l1)|{D_2}		& |(l2)|{D_1} 	\\
 }; 
\path[->,font=\scriptsize,>=to, thin]
(1) edge node[above]{$c$} (2) edge node[left]{$f_1\times f_1$}   (l1)
(2) edge node[right]{$f_1$} (l2) 
(l1) edge  node[above]{$c$} (l2);
\end{tikzpicture}
\end{center}
commutative.

An \emph{internal natural transformation} $\alpha$ between two internal functors $f, g:\C\to\bbD$ is given by a $\cV$-morphism $C_0\xrightarrow{\alpha} D_1$ rendering the diagrams
\begin{center}
\begin{tikzpicture} 
\matrix(m)[matrix of math nodes, row sep=2em, column sep=2em, text height=1.5ex, text depth=0.25ex]
 {
 &  |(1)|{C_0}		& 	\\
|(l0)|{D_0} & |(l1)|{D_1}		& |(l2)|{D_0} 	\\
 }; 
\path[->,font=\scriptsize,>=to, thin]
(1) edge node[above,pos=0.6]{$f_0$} (l0)
(1) edge node[above,pos=0.6]{$g_0$} (l2)
(1) edge node[right]{$\alpha$}   (l1)
(l1) edge node[above,pos=0.4]{$d_0$} (l0) 
(l1) edge node[above,pos=0.4]{$d_1$} (l2) 
;
\end{tikzpicture}
\begin{tikzpicture} 
\matrix(m)[matrix of math nodes, row sep=2em, column sep=2em, text height=1.5ex, text depth=0.25ex]
 {
 &  |(1)|{C_1}		& 	\\
|(l0)|{D_2} & |(l1)|{D_1}		& |(l2)|{D_2} 	\\
 }; 
\path[->,font=\scriptsize,>=to, thin]
(1) edge node[above left,pos=0.6]{$(\alpha\circ d_0,g_1)$} (l0)
(1) edge node[above right,pos=0.6]{$(f_1,\alpha\circ d_1)$} (l2)

(l0) edge node[above,pos=0.4]{$c$} (l1) 
(l2) edge node[above,pos=0.4]{$c$} (l1) 
;
\end{tikzpicture}
\end{center}
commutative.

We obtain the 2-category of $\cV$-internal categories
$$
\cat(\cV).
$$
It is classical (\cite[Proposition~7.2.2]{jacobs}) that, since $\cV$ has finite limits, $\cat(\cV)$ also has finite limits, and, moreover, if $\cV$ is cartesian closed, then so it $\cat(\cV)$. 

\subsection{Extended Yoneda embedding}\label{ext-yoneda}

If $\cV$ is a category with finite limits, we have the extended Yoneda embedding 2-functor
$$
\cat(\cV)\to [\cV^\op,\cat].
$$
Indeed, given an internal category $\C=(C_0,C_1)\in\cat(\cV)$, the functor 
$$
\h_\C: \cV^\op\to \cat
$$
assigns to an object $U\in\cV$ the category $\h_\C(U)$ with the set of objects $\cV(U,C_0)$ and the set of morphisms $\cV(U,C_1)$, while a morphism $V\to U$ in $\cV$ naturally yields a functor 
$
\h_\C(U)\to \h_\C(V)
$.
An internal functor $f:\C\to \bbD$ gives rise to a natural transformation $\h_f:\h_\C\to \h_\bbD$, and an internal natural transformation $\alpha$ between internal functors $f,g:\C\to \bbD$ yields, for all $U\in\cV$, a natural transformation $\h_\alpha(U):\h_f(U)\Rightarrow \h_g(U)$ between the functors $\h_f(U), \h_g(U):\h_\C(U)\to \h_\bbD(U)$. Hence, $\h$ is a 2-functor.

\subsection{Internal presheaves}\label{int-presh}

Let $\C\in\cat(\cV)$. An \emph{internal presheaf} on $\C$ is a pair
$$
F=\left(F_0\xrightarrow{\gamma_0}C_0, F_1\xrightarrow{e} F_0\right)
$$
of $\cV$-morphisms, where, for $n\geq 1$, we define 
$$
F_n=C_n\times_{C_0}F_0, 
$$
so that the diagrams 
\begin{center}
\begin{tikzpicture} 
\matrix(m)[matrix of math nodes, row sep=2em, column sep=2em, text height=1.5ex, text depth=0.25ex]
 {
|(0)|{F_0}  &  |(1)|{F_1}		& |(2)|{F_0} 	\\
|(l0)|{C_0} & |(l1)|{C_1}		& |(l2)|{C_0} 	\\
 }; 
\path[->,font=\scriptsize,>=to, thin]
(1) edge node[above]{$e$} (0)
(l1) edge node[above]{$d_0$} (l0)
(0) edge  node[left]{$\gamma_0$} (l0)
(1) edge node[above]{$\pi_2$} (2) 
(1) edge node[right]{$\pi_1$}   (l1)
(2) edge node[right]{$\gamma_0$} (l2) 
(l1) edge node[above]{$d_1$} (l2) 
;
\end{tikzpicture}
 \begin{tikzpicture} 
\matrix(m)[matrix of math nodes, row sep=2em, column sep=2em, text height=1.5ex, text depth=0.25ex]
 {
 |(1)|{C_0\times_{C_0}F_0}		& |(2)|{F_1} 	\\
		& |(l2)|{F_0} 	\\
 }; 
\path[->,font=\scriptsize,>=to, thin]
(1) edge node[above]{$i\times\id$} (2) edge node[left]{$\pi_2$}   (l2)
(2) edge node[right]{$e$} (l2) 
;
\end{tikzpicture}
 \begin{tikzpicture} 
\matrix(m)[matrix of math nodes, row sep=2em, column sep=2em, text height=1.5ex, text depth=0.25ex]
 {
 |(1)|{F_2}		& |(2)|{F_1} 	\\
 |(l1)|{F_1}		& |(l2)|{F_1} 	\\
 }; 
\path[->,font=\scriptsize,>=to, thin]
(1) edge node[above]{$\id\times e$} (2) edge node[left]{$c\times\id$}   (l1)
(2) edge node[right]{$e$} (l2) 
(l1) edge  node[above]{$e$} (l2);
\end{tikzpicture}
\end{center}
commute, where the right square in the first diagram is a pullback.

A \emph{morphism of internal presheaves} $f:F\to G$ is a morphism $f_0:F_0\to G_0$ over $C_0$ which makes the diagram
$$
 \begin{tikzpicture}
[cross line/.style={preaction={draw=white, -,
line width=4pt}}]
\matrix(m)[matrix of math nodes, row sep=.9em, column sep=.5em, text height=1.5ex, text depth=0.25ex]
{			& |(x0)| {F_0}	&				& |(x1)| {F_1} 	&			& |(x0s)| {F_0}	\\   [.2em]
|(y0)|{G_0} &			& |(y1)|{G_1} 	&			&  |(y0s)| {G_0}&			\\  [.4em]
%		& |(d4)| {X}	&			& |(d3)| {Y}	\\   %[.8cm]
			& |(s0)|{C_0} 		&			& |(s1)|{C_1} 	&			& |(s0s)|{C_0} 				\\};
\path[->,font=\scriptsize,>=to, thin]
(x1) edge node[above]{$e$} (x0) edge node[above]{$\pi_2$} (x0s) 
	edge node[left,pos=0.3]{$d_1^*f_0$} (y1) edge node[right,pos=0.8]{$\pi_1$} (s1)
(x0) edge node[left,pos=0.3]{$f_0$} (y0) edge node[right,pos=0.8]{$\gamma_0$} (s0)
(x0s) edge node[left,pos=0.3]{$f_0$} (y0s) edge node[right,pos=0.6]{$\gamma_0$} (s0s)
(y0) edge node[left,pos=0.6]{$\gamma_0$} (s0)
(y0s) edge node[left,pos=0.6]{$\gamma_0$}(s0s)
(y1) edge [cross line] node[above,pos=0.2]{$e$} (y0) edge [cross line] node[above,pos=0.8]{$\pi_2$} (y0s)  edge node[left,pos=0.6]{$\pi_1$}  (s1)
(s1) edge node[below]{$d_0$} (s0) edge node[below]{$d_1$} (s0s) 
;
\end{tikzpicture}
$$ 
commutative (and includes two appropriate pullback squares on the right).

The resulting category of internal presheaves on $\C$ is denoted
$$
[\C^\op,\cV].
$$

\subsection{Discrete fibrations}\label{discr-fibs}

Let $F\in [\C^\op,\cV]$ be an internal presheaf. In the notation of \ref{int-presh}, writing $F_n=C_n\times_{C_0}F_0$ and $\gamma_n=\pi_1:F_n\to C_n$ for $n\geq 1$,  we can  define an internal category $\F$ with the source map $e$ and the target map $\pi_2$ in such a way that $\gamma$ becomes an internal functor $\F\to\C$. In other words, we obtain a functor
$$
[\C^\op,\cV]\to \cat(\cV)_{\ov\C}, \ \ \ F\mapsto (\F\to\C).
$$
Clearly, an object $\F\xrightarrow{\gamma}\C$ of $\cat(\cV)_{\ov\C}$ is isomorphic to an object in the image of the above functor if and only if the square
$$
 \begin{tikzpicture} 
\matrix(m)[matrix of math nodes, row sep=2em, column sep=2em, text height=1.5ex, text depth=0.25ex]
 {
 |(1)|{F_1}		& |(2)|{F_0} 	\\
 |(l1)|{C_1}		& |(l2)|{C_0} 	\\
 }; 
\path[->,font=\scriptsize,>=to, thin]
(1) edge node[above]{$d_1$} (2) edge node[left]{$\gamma_1$}   (l1)
(2) edge node[right]{$\gamma_0$} (l2) 
(l1) edge  node[above]{$d_1$} (l2);
\end{tikzpicture}
$$
is a pullback, i.e., if it is a \emph{discrete fibration}. Hence we obtain an equivalence of categories 
$$
[\C^\op,\cV]\simeq \mathbf{dFib}(\cV)_{\ov\C},
$$
where $\mathbf{dFib}(\cV)$ stands for the category of $\cV$-internal categories and discrete fibrations between them. 

Consequently, if $F\in[\C^\op,\cV]$ is an internal presheaf associated with a discrete fibration $\F\to\C$, we have an equivalence of categories
$$
[\C^\op,\cV]_{\ov F}\simeq [\F^\op,\cV].
$$
This stament can be viewed as a (largely trivial) internal Grothendieck construction. 

Let $f:\C\to\bbD$ be a morphism in $\cat(\cV)$. The natural pullback functor $f^*:\cat(\cV)_{\ov\bbD}\to \cat(\cV)_{\ov\C}$ preserves discrete fibrations, so it induces a functor
$$
f^*:[\bbD^\op,\cV]\to [\C^\op,\cV].
$$

\subsection{Internal presheaves as monad algebras}\label{mon-alg}

Let $\cV$ be a category with finite limits, and let $\C\in\cat(\cV)$. The forgetful functor 
$$
U:[\C^\op,\cV]\to \cV_{\ov C_0}, \ \ \ \ F\mapsto (F_0\xrightarrow{\gamma_0}C_0)
$$
is monadic. Indeed, we have a monad
$$
\bbT_\C=(T_\C,\eta,\mu),
$$
where $T_\C=\sum_{d_0}\, d_1^*:\cV_{\ov C_0}\to \cV_{\ov C_0}$, and the natural transformations $$\eta:\id\to T_\C$$
iand $\mu:T_\C\, T_\C\to T_\C$ are defined 
using the identity and composition of $\C$
as follows. For $X\xrightarrow{\gamma}C_0$ in $\cV_{\ov C_0}$, the $\cV_{\ov C_0}$-morphism
$\eta_\gamma:\gamma\to T_\C(\gamma)$ is $i \times\id:X\to C_1\times_{C_0}X$,
and the $\cV_{\ov C_0}$-morphism $\mu_\gamma:T_\C T_\C(\gamma)\to T_\C(\gamma)$
is $c\times\id:C_1\times_{C_0}C_1\times_{C_0}\times X\to C_1\times_{C_0}X$.

The diagram in \ref{int-presh} shows that  
the action $e:F_1\to F_0$ of an internal presheaf $F\in[\C^\op,\cV]$ can be equivalently given by a morphism $F_1=d_1^*F_0\to d_0^*F_0$ in $\cV_{\ov C_1}$, or, via the adjunction $\sum_{d_0}\dashv d_0^*$, through a morphism
$$
T_\C(\gamma_0)\to\gamma_0.
$$
Interpreting the other two diagrams in this context yields that $F$ is a $\bbT_\C$-algebra.

Thus, we obtain an equivalence 
$$
[\C^\op,\cV]\simeq \cV^{\bbT_\C}
$$
between the category of internal presheaves on $\C$ and the category of $\bbT_\C$-algebras. 

Moreover, we obtain a functor
$$
R:\cV_{\ov C_0}\to [\C^\op,\cV]
$$  
which is left adjoint to $U$ as follows. For an object $X\xrightarrow{\gamma}C_0$ of $\cV_{\ov C_0}$,  $R(\gamma)$ is the internal presheaf corresponding to the discrete fibration over $\C$ with source and target maps $\mu_\gamma,\pi_{23}:T_\C T_\C(\gamma)\to T_\C(\gamma)$. The internal presheaf $R(\gamma)$ is called the \emph{representable functor} on $\gamma$, and the adjunction
$$
R\dashv U
$$
is the internal version of the Yoneda lemma.

%\begin{center}
% \begin{tikzpicture} 
%% \begin{scope}[execute at begin node=$, execute at end node=$]
%\matrix(m)[matrix of math nodes, row sep=2em, column sep=1em, text height=1.5ex, text depth=0.25ex]
% {
% |(1)|{T_\CT_C}		&[1em] |(2)|{\displaystyle\cC(S',S)\times\prod_{T,T'}[\cC(T',T),[\bF(T)\times\cC(T,S),\bG(T')]} 	\\
% |(l1)|{\displaystyle\prod_T[\bF(T)\times\cC(T,S'),\bG(T)]}		& |(l2)|{\displaystyle\prod_{T,T'}[\cC(T',T),[\bF(T)\times\cC(T,S'),\bG(T')]} 	\\
% }; 
%\path[->,font=\scriptsize,>=to, thin]
%([yshift=2pt]1.east) edge %node[above]{$p$} 
%([yshift=2pt]2.west) 
%([yshift=-1pt]1.east) edge %node[below]{$q$} 
%([yshift=-1pt]2.west) 
%(1) edge %node[left]{$f$}  
% (l1)
%(2) edge %node[right]{$g$} 
%(l2) 
%([yshift=2pt]l1.east) edge %node[above]{$p'$} 
%([yshift=2pt]l2.west) 
%([yshift=-1pt]l1.east) edge %node[below]{$q'$} 
%([yshift=-1pt]l2.west) 
%;
%%\end{scope}
%\end{tikzpicture}
%\end{center}

%Let $\cV$ be a locally cartesian closed category, and 

\subsection{Internal limits and colimits}\label{int-lim-colim}

Assume $\cV$ has finite limits and reflexive coequalisers, and let $\C\in\cat(\cV)$. 
Then $\cV$ is \emph{internally cocomplete} in the sense that the functor
$$
{\textstyle\varinjlim_\C}:[\C^\op,\cV]\to \cV, \ \ \ \ 
F\mapsto\coeq(\begin{tikzcd}[cramped,sep=small]
F_1 \ar[yshift=2pt]{r}{e} \ar[yshift=-2pt]{r}[swap]{\pi_2} & F_0
\end{tikzcd})
$$
%\begin{tikzpicture} 
%\matrix(m)[matrix of math nodes, row sep=0em, column sep=1.7em, text height=1.5ex, text depth=0.25ex]
% {
%|(0)|{F} & |(1)|{F_1}		& |(2)|{F_0} 	\\
% }; 
%\path[->,font=\scriptsize,>=to, thin,yshift=12pt]
%(0) edge node[above]{} (1)
%([yshift=2pt]1.east) edge node[above]{$e$} ([yshift=2pt]2.west) 
%([yshift=-2pt]1.east)edge node[below]{$\pi_1$}   ([yshift=-2pt]2.west) 
%;
%\end{tikzpicture}
%
%F\mapsto \coeq
is left adjoint to the functor
$$
\C^*:\cV\to [\C^\op,\cV]
$$ 
sending an object $X\in\cV$ to the constant presheaf $\C\times X\xrightarrow{\pi_1}\C$.

The link to representable functors is provided by the relation 
$$
{\textstyle\varinjlim_\C}(R(X\xrightarrow{\gamma}C_0))\simeq X.
$$

If $\cV$ is cartesian closed, then it is \emph{internally complete} in the sense that $\C^*$ has a right adjoint
$$
\Gamma={\textstyle\varprojlim_\C}:[\C^\op,\cV]\to \cV
$$
defined as follows. Given an $F\in [\C^\op,\cV]$, denote by $\F\xrightarrow{\gamma}\C$ the associated discrete fibration, and let $\varprojlim_\C(F)$ be the equaliser of
$$
\begin{tikzpicture} 
\matrix(m)[matrix of math nodes, row sep=2em, column sep=1em, text height=1.5ex, text depth=0.25ex]
 {
|(0)|{\sprod_{C_0}(\gamma_0)}  &  |(0h)| {} & |(1)|{[C_0,F_0]}	& |(1h)|{}	& |(2)|{[C_1,F_0]} 	\\
|(l0)|{} & |(l0h)|{\sprod_{C_1}(\gamma_1)} &  |(l1)|{}	&|(l1h)|{[C_1,F_1]}	& |(l2)|{} 	\\
 }; 
\path[->,font=\scriptsize,>=to, thin]
(0) edge node[above]{} (1)
(l0h) edge node[above]{} (l1h)
(0) edge  node[below left]{$h$} (l0h)
(1) edge node[above]{$[d_0,F_0]$} (2) 
%(1) edge node[right]{$\pi_1$}   (l1)
(l1h) edge node[below right]{$[C_1,e]$} (2) 
%(l0h) edge node[above]{$d_1$} (l1h) 
;
\end{tikzpicture}
$$
where the morphism $h$ is obtained by adjunction from the composite
$$
C_1^*\sprod_{C_0}(\gamma_0)\simeq d_1^*C_0^*\sprod_{C_0}(\gamma_0)\xrightarrow{d_1^*\beta}d_1^*(\gamma_0)\simeq\gamma_1,
$$
where we wrote $\beta$ for the counit of $C_0^*\dashv\sprod_{C_0}$.

The diagram
$$
\begin{tikzpicture} 
\matrix(m)[matrix of math nodes, row sep=3em, column sep=4em, text height=1.9ex, text depth=0.25ex]
 {
 |(1)|{[\C^\op,\cV]}		& |(2)|{\cV_{\ov C_0}} 	\\
 |(l1)|{\cV}		& |(l2)|{\cV} 	\\
 }; 
\path[->,font=\scriptsize,>=to, thin]
(1) edge node[above]{$U$} (2) 

(1) edge[draw=none] node (1mid) {} (l1)
(l1) edge node (fo) [pos=0.65,right=-2pt]{$\C^*$}  (1)
(1) edge [bend right=45] node(i) [left]{$\textstyle\varinjlim_\C$}   (l1)
(1) edge [bend left=45] node(fi) [right]{$\textstyle\varprojlim_\C$}   (l1)
(i) edge[draw=none] node{$\dashv$} (1mid)
(1mid) edge[draw=none] node{$\dashv$} (fi)

(2) edge[draw=none] node (2mid) {} (l2)
(l2) edge node (fo) [pos=0.70,right=-3pt]{$C_0^*$}  (2)
(2) edge [bend right=45] node(qo) [left=-3pt]{$\ssum_{C_0}$} (l2) 
(2) edge [bend left=45] node(hi) [right]{$\sprod_{C_0}$}   (l2)
(l1) edge  node[above]{$\id$} (l2)
(qo) edge[draw=none] node{$\dashv$} (2mid)
(2mid) edge[draw=none] node{$\dashv$} (hi)
;
\end{tikzpicture}
$$
summarises the above discussion.

\subsection{Comonad coalgebras in a topos}\label{lawvere-tierney-thm}

The theorem of Lawvere-Tierney states that, if $\bbG=(G,\epsilon,\delta)$ is a comonad in a topos $\cE$ with $G$ left-exact, then the category $\cE_\bbG$ of $\bbG$-coalgebras is a topos, and there is a geometric morphism $\cE\to \cE_\bbG$.

%The internal homs (exponentials) in $\cE_\bbG$ are constructed as follows. 
The category $\cE_\bbG$ has finite limits by left-exactness of $G$. 
%that it is cartesian closed, and that it has a subobject classifier.  We proceed to sketch the verification of cartesian closedness. 

Given coalgebras 
$X\xrightarrow{\theta} GX$ and $Y\xrightarrow{\phi}GY$, the internal hom (exponential)
$$
[(X,\theta),(Y,\phi)]
$$
is constructed as the equaliser of
$$
\begin{tikzpicture} 
\matrix(m)[matrix of math nodes, row sep=2em, column sep=1em, text height=1.5ex, text depth=0.25ex]
 {
|(0)|{G[X,Y]}  &  |(0h)| {} & |(1)|{}	& |(1h)|{}	& |(2)|{G[X,GY]} 	\\
|(l0)|{} & |(l0h)|{GG[X,Y]} &  |(l1)|{}	&|(l1h)|{G[GX,GY]}	& |(l2)|{} 	\\
 }; 
\path[->,font=\scriptsize,>=to, thin]
%(0) edge node[above]{} (1)
(l0h) edge node[above]{$G\rho$} (l1h)
(0) edge  node[below left]{$\delta_{[X,Y]}$} (l0h)
(0) edge node[above]{$G[X,\phi]$} (2) 
%(1) edge node[right]{$\pi_1$}   (l1)
(l1h) edge node[below right]{$G[\theta,GY]$} (2) 
%(l0h) edge node[above]{$d_1$} (l1h) 
;
\end{tikzpicture}
$$
where $\rho:G[X,Y]\to[GX,GY]$ is obtained by adjunction from the composite
$$
G[X,Y]\times GX\simeq G([X,Y]\times X)\xrightarrow{G(\mathop{\rm ev})}GY.
$$

The subobject classifier $\Omega_\bbG$ of $\cE_\bbG$ is the equaliser of 
\begin{center}
\begin{tikzpicture} 
\matrix(m)[matrix of math nodes, row sep=2em, column sep=2em, text height=1.5ex, text depth=0.25ex]
 {
|(0)|{G\Omega}  &  	& |(2)|{G\Omega} 	\\
 & |(l1)|{GG\Omega}		& 	\\
 }; 
\path[->,font=\scriptsize,>=to, thin]
(0) edge node[above]{$\id_{G\Omega}$} (2)
%(l1) edge node[above]{$d_0$} (l0)
edge  node[below left]{$\delta_\Omega$} (l1)
(l1) edge node[below right]{$G\tau$} (2) 
;
\end{tikzpicture}
\end{center}
where $\tau$ is the classifying map of $G(t):e\simeq G(e)\mono G\Omega$.

\subsection{Internal presheaves as comonad coalgebras}\label{comon-coalg}

Let $\cV$ be a locally cartesian closed category, let $\C\in\cat(\cV)$. Then the forgetful functor 
$$
U:[\C^\op,\cV]\to \cV_{\ov C_0}
$$
is comonadic. 

Indeed, the functor part $T_\C=\ssum_{d_0}d_1^*:\cV_{\ov C_0}\to\cV_{\ov C_0}$ of the monad $\bbT_\C$ we defined in \ref{mon-alg} has a right adjoint
$$
G_\C=\sprod_{d_1}d_0^*
$$
so by Eilenberg-Moore \cite[0.14]{johnstone} gives rise to a unique comonad $\bbG_\C=(G_\C,\varepsilon,\delta)$ so that $\cV^{\bbT_\C}\simeq\cV_{\bbG_\C}$. By \ref{mon-alg}, we obtain an equivalence
$$
[\C^\op,\cV]\simeq \cV_{\bbG_\C}
$$
between the category of internal presheaves on $\cC$, and the category of $\bbG_\C$-coalgebras.

We can now apply the methods of \ref{lawvere-tierney-thm} to the comonad $\bbG_\C$.
Noting that the construction of internal homs only uses the structure of a locally cartesian closed category and not the full topos structure, we conclude that
$$[\C^\op,\cV]$$
is cartesian closed.

Moreover, if $\cV=\cE$ is a topos, we get that
$$
[\C^\op,\cE]
$$
is a topos, and the diagram at the end of \ref{int-lim-colim} yields the canonical geometric morphism
$$
[\C^\op,\cE]\to\cE.
$$

The above comonad structure can be made explicit as follows, using the diagrams for internal categories set out in \ref{int-cats}. 

The counit $\varepsilon:G_\C\to\id$ is the composite
$$
\sprod_{d_1}d_0^*\to \sprod_{d_1}\sprod_i\, i^*d_0^*\simeq \sprod_{d_1 i}(d_0i)^*\simeq\id,
$$
using the properties $d_0 i=d_1 i=\id_{C_0}$. 

The comultiplication $\delta:G_\C\to G_\C G_\C$ is the composite
$$
\sprod_{d_1}d_0^*\to \sprod_{d_1}\sprod_c\, c^* d_0^*\simeq \sprod_{d_1}\sprod_{\pi_2}\pi_1^* d_0^*\to \sprod_{d_1} d_0^*\,\sprod_{d_1}d_0^*,
$$  
where the last step is obtained using the Beck-Chevalley morphism for the pullback square from the definition of $C_2$ in \ref{int-cats}.

Note that 
$$
G_\C\circ P=P\circ \ssum_{d_1}d_0^*
$$
as functors on $\cE_{\ov C_0}$, so $G_\C(C_0^*\Omega))\simeq G_\C(P(\id_{C_0}))\simeq P(d_1)$, and $G_\C G_\C(C_0^*\Omega)=P(\ssum_{d_1} d_0^*(d_1))$. 

Hence, writing
$\bar{d}_2=\ssum_{d_1} d_0^*(d_1):C_2\to C_0$ and $\bar{c}:\bar{d}_2\to d_1$ for the morphism corresponding to composition $c:C_2\to  C_1$, a suitable modification of the diagram from \ref{lawvere-tierney-thm} yields that the subobject classifier
$$
\Omega_{[\C^\op,\cE]}
$$
is the equaliser of the $\cE_{\ov C_0}$-diagram
\begin{center}
\begin{tikzpicture} 
\matrix(m)[matrix of math nodes, row sep=2em, column sep=2em, text height=1.5ex, text depth=0.25ex]
 {
|(0)|{P(d_1)}  &  	& |(2)|{P(d_1)} 	\\
 & |(l1)|{P(\bar{d}_2)}		& 	\\
 }; 
\path[->,font=\scriptsize,>=to, thin]
(0) edge node[above]{$\id$} (2)
%(l1) edge node[above]{$d_0$} (l0)
edge  node[below left]{$P(\bar{c})$} (l1)
(l1) edge node[below right]{$\forall\pi_2$} (2) 
;
\end{tikzpicture}
\end{center}
where the morphism $\forall\pi_2$ is associated to $\pi_2:\bar{d}_2\to d_1$ as in \ref{power-ob}.

\subsection{Functoriality of forming internal presheaves}\label{funct-int-presh}

%Let $\cE$ be a topos, or at least a locally cartesian closed category with Beck-Chevalley property. 

As discussed in \ref{discr-fibs}, a morphism $f:\C\to\bbD$ of $\cE$-internal categories induces a functor 
$$
f^*:[\bbD^\op,\cE]\to[\C^\op,\cE].
$$
If $\cE$ has reflexive coequalisers and the pullback functors in $\cE$ preserve them, then $f^*$ has a left adjoint
$$
\textstyle\varinjlim_{f}:[\C^\op,\cE]\to [\bbD^\op,\cE].
$$

If $\cE$ is a topos (or at least a locally cartesian closed category with Beck-Chevalley property), then $f^*$ has a right adjoint
$$
\textstyle\varprojlim_{f}:[\C^\op,\cE]\to [\bbD^\op,\cE].
$$

Hence, if $\cE$ is a topos, the resulting diagram
$$
\begin{tikzpicture} 
\matrix(m)[matrix of math nodes, row sep=3em, column sep=4em, text height=1.9ex, text depth=0.25ex]
 {
 |(1)|{[\C^\op,\cE]}		& |(2)|{\cE_{\ov C_0}} 	\\
 |(l1)|{[\bbD^\op,\cE]}		& |(l2)|{\cE_{\ov D_0}} 	\\
 }; 
\path[->,font=\scriptsize,>=to, thin]
(1) edge node[above]{$U$} (2) 

(1) edge[draw=none] node (1mid) {} (l1)
(l1) edge node (fo) [pos=0.65,right=-2pt]{$f^*$}  (1)
(1) edge [bend right=45] node(i) [left]{$\textstyle\varinjlim_f$}   (l1)
(1) edge [bend left=45] node(fi) [right]{$\textstyle\varprojlim_f$}   (l1)
(i) edge[draw=none] node{$\dashv$} (1mid)
(1mid) edge[draw=none] node{$\dashv$} (fi)

(2) edge[draw=none] node (2mid) {} (l2)
(l2) edge node (fo) [pos=0.70,right=-3pt]{$f_0^*$}  (2)
(2) edge [bend right=45] node(qo) [left=-3pt]{$\ssum_{f_0}$} (l2) 
(2) edge [bend left=45] node(hi) [right]{$\sprod_{f_0}$}   (l2)
(l1) edge  node[above]{$U$} (l2)
(qo) edge[draw=none] node{$\dashv$} (2mid)
(2mid) edge[draw=none] node{$\dashv$} (hi)
;
\end{tikzpicture}
$$
illustrates the fact that the assignment $\C\mapsto [\C^\op,\cE]$, $f\mapsto(\varprojlim_f,f^*)$ defines a 2-functor
$$
\cat(\cE)\to \mathfrak{Top}_{\ov\cE}.
$$

\subsection{Base change for internal presheaves}\label{bc-int-psh}

Let $f:\cF\to\cE$ be a geometric morphism and $\C\in\cat(\cE)$. Then we have a geometric morphism $f^{\C^\op}$ such that the diagram
$$
 \begin{tikzpicture} 
\matrix(m)[matrix of math nodes, row sep=2em, column sep=2em, text height=1.5ex, text depth=0.25ex]
 {
 |(1)|{[f^*\C^\op,\cF]}		& |(2)|{[\C^\op,\cE]} 	\\
 |(l1)|{\cF}		& |(l2)|{\cE} 	\\
 }; 
\path[->,font=\scriptsize,>=to, thin]
(1) edge node[above]{$f^{\C^\op}$} (2) edge %node[left]{$\gamma_1$}   
(l1)
(2) edge %node[right]{$\gamma_0$} 
(l2) 
(l1) edge  node[above]{$f$} (l2);
\end{tikzpicture}
$$
is a pullback in $\Top$, and satisfies the Beck condition
$$
(\C^{\op})^*\, f_*\simeq (f^{\C^\op})_*\, (f^*\C^\op)^*.
$$
Thinking of internal presheaves in $[\C^\op,\cE]$ as discrete fibrations over $\cC$ in $\cE$,
the functor $$(f^{\C^\op})^*$$ simply applies $f^*$ to a discrete fibration.

On the other hand, 
$$
(f^{\C^\op})_* =\eta_\C^*\circ f_*,
$$
i.e., it applies $f_*$ to a discrete fibration over $f^*\C$, and then takes the pullback along the unit $\eta_\C:\C\to f_*f^*\C$ of adjunction $f^*\dashv f_*$.

If $f$ is essential with $f_!$ left exact, then $f^{\C^\op}$ is essential, the functor 
$$
(f^{\C^\op})_!=\ssig_{\epsilon_\C}\circ f_!,
$$
where $\epsilon_\C:f_!f^*\C\to \C$ is the counit of adjunction $f_!\dashv f^*$.

% DO WE NEED f_!  exact???????

\subsection{Internal sites}\label{int-sites}

Let $\C\in\cat(\cE)$ be an internal category in a topos. The subobject classifier 
$$
\Omega_{[\C^\op,\cE]}\xrightarrow{\omega}C_0
$$
can be viewed as the \emph{object of sieves} of $\C$. Indeed, its description from \ref{comon-coalg} shows that it is the subobject of realisations
$$
\real{\mathop{\rm Sv}}\mono P(d_1)
$$ 
of the formula
%$$
%\phi(S:PC_1)\equiv \forall a:C_1\ \forall a':C_1 \ \ a\in S\land d_1(a')=d_0(a)\Rightarrow a\circ a'\in S.
%$$
\begin{align*}
\mathop{\rm Sv}(S:P(d_1)) & \equiv \forall z:\bar{d}_2\ \pi_2(z)\in S\Rightarrow \bar{c}(z)\in S\\
& \equiv  \forall (a',a):\bar{d}_2\ a\in S\Rightarrow a\circ a'\in S.
\end{align*}

An \emph{internal coverage} is a $\cE_{\ov C_0}$-morphism
\begin{center}
\begin{tikzpicture} 
\matrix(m)[matrix of math nodes, row sep=2em, column sep=2em, text height=1.5ex, text depth=0.25ex]
 {
|(0)|{T}  &  	& |(2)|{\Omega_{[\C^\op,\cE]}} 	\\
 & |(l1)|{C_0}		& 	\\
 }; 
\path[->,font=\scriptsize,>=to, thin]
(0) edge node[above]{$c$} (2)
%(l1) edge node[above]{$d_0$} (l0)
edge  node[below left]{$b$} (l1)
(2) edge node[below right]{$\omega$} (l1) 
;
\end{tikzpicture}
\end{center}
%
%
%$$
%C_0\xleftarrow{b} T\xrightarrow{c} \mathop{\rm Sv}(\C)
%$$
%in $\cE$, satisfying
%$$
%\forall t:T\ \forall a: C_1\ a\in c(t)\Rightarrow d_1(a)=b(t),
%$$
%and 
% an internal analogue of axiom (C),
%\begin{multline*}
%\forall a:C_1\ \forall t:T\ d_1(a)=b(t)\\
%\Rightarrow \exists t':T\ b(t')=d_0(a)
 %\land \forall a':C_1\ a'\in c(t')\Rightarrow a\circ a'\in c(t).
%\end{multline*}
%In more practical terms, 
satisfying the following internal analogue of axiom (C). If we form the subobject 
$$
Q\mono T\times_{C_0}C_1\times_{C_0}T
$$
of realisations of the formula
$$
\{(t',a,t):T\times_{C_0}C_1\times_{C_0}T\,|\, \forall a':d_1\ a'\in c(t')\Rightarrow a\circ a'\in c(t)\},
$$
%$$
%\psi(t':T,a:C_1,t:T)\equiv \forall a':C_1\ b(t')=d_0(a)\land d_1(a)=b(t)\land a'\in c(t')\Rightarrow a\circ a'\in c(t),
%$$
then the composite
$$
Q\mono T\times_{C_0}C_1\times_{C_0}T\xrightarrow{\pi_{23}}C_1\times_{C_0}T
$$
is required to be an epimorphism. 

An \emph{internal site} in $\cE$ is an internal category endowed with an internal coverage.

\subsection{Internal coverages on a poset}\label{int-cov-poset}

Assume $\C\in\cat(\cE)$ is an internal poset in a topos. Bearing in mind that a sieve in a poset identifies with a lower set/ideal of domains of its morphisms, an internal coverage on $\cC$ is determined by a span
$$
C_0\xleftarrow{b} T\xrightarrow{c} \mathop{\rm Idl}(\C),
$$
where the object of lower sets
$$
\mathop{\rm Idl}(\C)\mono PC_0
$$
is the object of realisations of the formula 
$$
\bar{\phi}(S:PC_0)\equiv \forall x:C_0\ \forall x':C_0\ x\in S\land x'\leq x \Rightarrow x'\in S. 
$$
The above data is required to satify the formula
$$
\forall t:T\ \forall x:C_0\ x\in c(t)\Rightarrow x\leq b(t),
$$
and the poset variant of axiom (C),
$$
\forall t:T\ \forall x:C_0\ x\leq b(t) \Rightarrow \exists t':T\ \ b(t')=x \land \forall x':C_0\ x'\in c(t')\Rightarrow x'\in c(t).
$$

\subsection{Internal sheaves}\label{int-sh}

Let $\C$ be an internal category in a topos $\cE$, and let $F\in[\C^\op,\cE]$ be an internal presheaf. 

Using the notation from \ref{int-presh}, the formula 
$$
\text{comp}_F(s:[d_0,\gamma_0],
S:\Omega_{[\C^\op,\cE]})
\equiv \forall (g,f):C_2\ \ f\in S\Rightarrow s(f\circ g)=e(g,s(f))
$$
expresses that a system $s$ of sections of $F$ is compatible with a sieve $S$.

The formulae
\begin{multline*}
\text{glue}_F(S:\Omega_{[\C^\op,\cE]})\equiv
\forall s:[d_0,\gamma_0]\ \text{comp}_F(s,S)\Rightarrow \\
\exists \tilde{s}:F_0 \ \gamma_0(\tilde{s})=\omega(S) \land \forall f:C_1\ f\in S\Rightarrow e(f,\tilde{s})=s(f)
\end{multline*}
and
\begin{multline*}
\text{uniq}_F(S:\Omega_{[\C^\op,\cE]}) \equiv \forall (\tilde{s},\tilde{s}'):F_0\times F_0\ 
\left(\forall f:C_1\ f\in S\Rightarrow e(f,\tilde{s})=e(f,\tilde{s}')\right)\Rightarrow \tilde{s}=\tilde{s}'
\end{multline*}
express the gluing property and the uniqueness of gluing of $F$ with respect to a sieve $S$. Hence, the formula
$$
\text{sheaf}_F(S:\Omega_{[\C^\op,\cE]})\equiv \text{glue}_F(S)\land \text{uniq}_F(S)
$$
expresses that the presheaf $F$ satisfies the presheaf axiom for a sieve $S$. 

If $(\C,T)$ is an internal site as in \ref{int-sites}, we say that $F$ is a $T$-sheaf if
$$
\models \forall t:T\  \text{sheaf}_F(c(t)),
$$
or, equivalently, if
\begin{multline*}
\models \forall t:T \ \forall s:[d_0,\gamma_0]\ \text{comp}_F(s,c(t))\Rightarrow \\
\exists! \tilde{s}:F_0 \ \gamma_0(\tilde{s})=b(t)  \land \forall f:C_1\ f\in c(t)\Rightarrow e(f,\tilde{s})=s(f).
\end{multline*}

We write 
$$
\sh_\cE(\C,T)
$$
for the full subcategory of $[\C^\op,\cE]$  of internal sheaves. In fact, it is a subtopos of the topos of internal presheaves, see \cite[C2.4]{elephant2}. 

\subsection{Externalising internal sheaves}\label{semidir}

Let $\cE=\sh(\cC,J)$ be a Grothendieck topos, and let $(\bbD,K)$ be an internal site in $\cE$. We present the construction from \cite{moerdijk-cont-fibr} and \cite[C2.5.4]{elephant2} 
of a category $\cC\rtimes\bbD$ and a coverage $J\rtimes K$ on $\cC\rtimes\bbD$ such that
$$
\sh_\cE(\bbD,K)\simeq \sh(\cC\rtimes\bbD,J\rtimes K),
$$
allowing us to reduce a consideration of internal sheaves to ordinary sheaves.

Considering $\bbD$ a functor $\cC^\op\to\cat$, we define the category
$$
\cC\rtimes\bbD
$$
whose objects are pairs $(U,V)$ with $U\in\cC$ and $V\in \bbD(U)$, and morphisms
$(U',V')\to (U,V)$ are pairs $(a,b)$, where $U'\xrightarrow{a}U\in\cC$, and $V'\xrightarrow{b}\bbD(a)(V)\in\bbD(U')$.

It follows immediately (\cite[C2.5.3]{elephant2}) that
$$
[\bbD^\op,[\cC^\op,\Set]]\simeq [(\cC\rtimes\bbD)^\op,Set].
$$

Note that, given $(U,V)\in\cC\rtimes\bbD$, if $S\in K(U)$ with $b(S)=V$, then
\begin{multline*}
c(C)\in \Hom_{\ov D_0}(h_U,P(d_1))\simeq  \Hom_{\ov D_0}(h_U,[d_1,\Omega_{D_0}])\\\simeq \Hom_{\ov D_0}(h_U\times_{D_0}D_1,\Omega_{D_0})\simeq \Sub_{\ov D_0}(h_U\times_{D_0}D_1),
\end{multline*}
so for $U'\in\cC$, $c(S)(U')$ is identified with a set of pairs $(f,g)$ with $f:U'\to U$ and $g:V'\to \bbD(f)(V)$ in $\bbD(U')$, i.e., with a set of morphisms
$$
(U',V')\to (U,V)
$$
in $\cC\rtimes\bbD$.

We define a coverage $J\rtimes K$ on $\cC\rtimes\bbD$ by saying that a sieve $\tilde{R}$ on an object $(U,V)$ in $\cC\rtimes\bbD$ is a $J\rtimes K$-covering, if there exists a $J$-covering sieve $R$ of $U$, and, for every $f\in R$, there is an $S_f\in K(\mathop{\rm dom}(f))$ with $b(S_f)=V$, such that $\tilde{R}$ contains the set
$$
\{(f,g)\,:\, f\in R, (f,g)\in S_f\}.
$$

\subsection{Bounded morphisms}\label{bdd-morphi}

We say that a geometric morphism $f:\cF\to\cE$ is \emph{bounded}, if there exists an object $G\in \cF$ which \emph{generates $\cF$ over $\cE$} in the sense that, for every $X\in\cF$ there exists a $Y\in\cE$, a subobject $S\to f^*(Y)\times G$ and an epimorphism $S\to X$.

Let $\cG\xrightarrow{g}\cF\xrightarrow{f}\cE$ be two geometric morphisms. By \cite[B3.1.10]{elephant1}, if both $f$ and $g$ are bounded, then $fg$ is bounded. If $fg$ is bounded, then $g$ is bounded.

Any geometric inclusion is bounded. Moreover, if $\C\in \cat(\cE)$, the canonical geometric morphism $[\C^\op,\cE]\to \cE$ is bounded (\cite[B3.2.1]{elephant1}).
\begin{theorem}[Giraud-Mitchell-Diaconescu, {\cite[B3.3.4]{elephant1}}]
Any bounded morphism $\cF\xrightarrow{f} \cE$ can be decomposed (up to natural isomorphism) as
$$
\cF\xrightarrow{i} [\C^\op,\cE]\to \cE,
$$
where $i$ is an inclusion, and $\C$ is some category internal in $\cE$. 

Moreover, $f$ is equivalent to the canonical morphism
$$
\sh_\cE(\C,K)\to \cE 
$$
for some internal site $(\C,K)$ in $\cE$.
\end{theorem}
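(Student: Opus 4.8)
The plan is to use the bound on $f$ to realise $\cF$ as a subtopos of a topos of internal presheaves $[\C^\op,\cE]$ over $\cE$, and then to invoke the internal analogue of the Lawvere--Tierney classification of subtoposes of a presheaf topos by (here, internal) Grothendieck topologies, which is exactly \cite[C2.4]{elephant2} as recorded in \ref{int-sh}. So first fix an object $G\in\cF$ generating $\cF$ over $\cE$ in the sense of \ref{bdd-morphi}; replacing $G$ by $G\amalg f^*1$ we may assume the terminal object $1_\cF$ is a subobject of $G$, a harmless normalisation that ensures the internal category below carries the unit data and that the composite $[\C^\op,\cE]\to\cE$ recovers $f$. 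Now build an internal category $\C$ in $\cE$ whose externalisation $\h_\C\colon\cE^\op\to\cat$ (see \ref{ext-yoneda}) is, at each $E\in\cE$, equivalent to the full subcategory of $\cF_{/f^*E}$ spanned by the subobjects of $f^*E\times G$ (the \emph{basic objects}). Concretely, put $C_0=f_*(PG)$ with $PG=[G,\Omega_\cF]$ the power object of $G$ in $\cF$ (\ref{power-ob}); then by adjunction and cartesian closedness $\cE(E,C_0)\simeq\cF(f^*E,PG)\simeq\Sub_\cF(f^*E\times G)$, so a generalised point of $C_0$ is precisely a basic object $S\hookrightarrow f^*E\times G$ over $f^*E$. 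For $C_1$ one takes the object of $\cE$ representing $E\mapsto\{(S,T,\phi):S,T\in\Sub_\cF(f^*E\times G),\ \phi\colon S\to T\text{ in }\cF_{/f^*E}\}$; its existence is exactly where boundedness enters, expressing that the $\cE$-indexed category $\underline\cF$ restricted to basic objects is locally small over $\cE$, so that this hom-object is $f_*$ of a suitable pushforward built inside the locally cartesian closed topos $\cF$. Source, target, identity and composition are induced by the evident operations on basic objects, and the internal-category axioms of \ref{int-cats} follow because $f^*$ is left exact and $f_*$ preserves the finite limits involved.

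Next, define $i_*\colon\cF\to[\C^\op,\cE]$ by sending $X$ to the internal presheaf ``$\uHom_\cF(-,X)$ restricted along $\C$'': as a discrete fibration over $\C$ (see \ref{discr-fibs}) its object of elements is the object of $\cE$ representing $E\mapsto\coprod_{S\in\Sub_\cF(f^*E\times G)}\cF_{/f^*E}(S,f^*E\times X)$, with the evident projection to $C_0$ and $\C$-action. One checks $i_*$ has a left adjoint $i^*$, namely the left Kan extension of the inclusion of basic objects, computed as a reflexive coequaliser of coproducts of basic objects (an internal coend, à la \ref{int-lim-colim}); and that $i^*$ is left exact. This left-exactness is the crux: it holds because $\C$ is closed under the finite limits needed and because $G$ generates, so every object of $\cF$ is an $\cE$-indexed quotient of a coproduct of basic objects and the colimits computing $i^*$ are sufficiently filtered, whence left-exactness of $f^*$ propagates through. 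Thus $i=(i^*,i_*)$ is a geometric morphism over $\cE$ (the triangle to $\cE$ commutes by construction, using the normalisation of $G$). Since $G$ generates $\cF$ over $\cE$, the restricted Yoneda embedding $i_*$ is full and faithful, equivalently the counit $i^*i_*\to\id$ is an isomorphism; hence $i$ is a geometric inclusion, giving the decomposition $\cF\xrightarrow{i}[\C^\op,\cE]\to\cE$.

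Finally, by the internal Lawvere--Tierney theorem (\cite[C2.4]{elephant2}, cf.\ \ref{int-sh}), any geometric inclusion into $[\C^\op,\cE]$ is equivalent over $[\C^\op,\cE]$ to $\sh_\cE(\C,K)\hookrightarrow[\C^\op,\cE]$ for a unique internal coverage $K$ on $\C$, obtained by reading off the local operator on the internal subobject classifier $\Omega_{[\C^\op,\cE]}$ (whose description as an object over $C_0$ was given in \ref{comon-coalg}) corresponding to the inclusion $i$. Composing with the canonical morphism $[\C^\op,\cE]\to\cE$ of \ref{funct-int-presh} identifies $f$ with $\sh_\cE(\C,K)\to\cE$, as required.

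The main obstacle is the construction of $C_1$ together with its universal property --- the concrete form of boundedness as ``relative local smallness'' --- and, above all, the verification that the left adjoint $i^*$ to the restricted Yoneda embedding is left exact, so that $i$ is genuinely a geometric morphism and an inclusion. Everything else --- the internal-category axioms for $\C$, fullness and faithfulness of $i_*$ from $G$ being a generator, and the passage from a subtopos to an internal site --- is either formal or a direct appeal to the cited results.
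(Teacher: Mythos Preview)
The paper does not supply its own proof of this theorem: it is simply stated with a citation to \cite[B3.3.4]{elephant1} and then used in the surrounding discussion. So there is nothing to compare your argument against on the paper's side.

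That said, your sketch is essentially the standard proof as given in the Elephant. You take a bound $G$, form the internal full subcategory $\C$ of $\cF$ on the subobjects of $f^*(-)\times G$ (with $C_0=f_*(PG)$), use the restricted Yoneda embedding as $i_*$, and then appeal to the internal Lawvere--Tierney/subtopos classification to obtain the internal coverage $K$. This is the right architecture. The two places you flag as the main obstacles --- the representability of the hom-data giving $C_1$ and the left exactness of $i^*$ --- are indeed exactly where the real work lies in Johnstone's account; in particular, the left exactness of $i^*$ is not a trivial filteredness argument but requires the careful analysis in B3.3. Your remark that ``the colimits computing $i^*$ are sufficiently filtered'' is too breezy to count as a proof, but since the paper itself defers entirely to the reference, you are not being held to a higher standard than the paper here.
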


Giraud's Theorem~\ref{giraud-th} can be reformulated (\cite[C2.2.8]{elephant2}) by saying that a topos $\cE$ is Grothendieck if and only if it admits a bounded geometric morphism 
$\cE\to\Set$.

Let $\cF\xrightarrow{f} \cE$ be a geometric morphism between Grothendieck topoi, and let $\cE\simeq\sh(\cC,J)$. Then $f$ is bounded and there exists an internal site $(\bbD,K)$ in $\cE$ so that $\cF\simeq\sh_\cE(\bbD,K)$. It follows from construction \ref{semidir} (more preciselu, by \cite[C2.5.4]{elephant2}) that  
$$
\cF\simeq \sh_\cE(\bbD,K)\simeq \sh(\cC\rtimes\bbD,J\rtimes K),
$$
and that $f$ is induced by the projection functor $\cC\rtimes\bbD\to \cC$.

%\begin{proposition}
%%If $\cE=\sh(\cC,J)$ is a Grothendieck topos and $(\bbD,K)$ is so is $[\C^\op,\cE]$. 
%With the above notation
%$$
%\cF\simeq \sh_\cE(\bbD,K)
%$$
%
%\end{proposition}
%\begin{proof}
%Putting together \cite[B3.1.8(b), B3.1.10, B3.2.1, B3.3.4]{elephant1}, the canonical geometric morphism $[\C^\op,\cE]\to \cE$ is bounded, and there is a bounded morphism $\cE\to\Set$ since $\cE$ is a Grothendieck topos. Hence, the composite $[\C^\op,\cE]\to\Set$ is bounded and $[\C^\op,\cE]$ is a Grothendieck topos. 
%\end{proof}

\subsection{Enriched structure of internal presheaves}\label{enr-int-presh}

Let $\cE$ be a topos, and let $\C\in\cat(\cE)$. We write 
$$
\widehat{\C}=[\C^\op,\cE]
$$
for the category of internal presheaves on $\C$, considered as a $\cE$-enriched category with internal hom objects defined as
$$
\widehat{\C}(F,F')=\Gamma[F,F']=\sprod_{C_0}U[F,F'],
$$
where $[F,F']\in[\C^\op,\cE]$ is the internal hom constructed in \ref{comon-coalg}.

More explicitly, let $F=(F_0,\alpha)$ be given by a $\cE_{\ov C_1}$-morphism $\alpha:d_1^*F_0\to d_0^*F_0$ (as in \ref{mon-alg}), and similarly for $F'=(F_0',\alpha')$. Then 
$\widehat{\C}(F,F')$ is the equaliser 
\begin{center}
 \begin{tikzpicture} 
\matrix(m)[matrix of math nodes, row sep=0em, column sep=1.7em, text height=1.5ex, text depth=0.25ex]
 {
|(0)|{\widehat{\C}(F,F')} & |(1)|{\sprod_{C_0}[F_0,F_0']_{C_0}}		& |(2)|{\sprod_{C_1}[d_1^*F_0,d_0^*F_0']_{C_1}} 	\\
 }; 
\path[->,font=\scriptsize,>=to, thin,yshift=12pt]
(0) edge node[above]{} (1)
([yshift=2pt]1.east) edge node[above]{} ([yshift=2pt]2.west) 
([yshift=-2pt]1.east)edge node[below]{}   ([yshift=-2pt]2.west) 
;
\end{tikzpicture}
\end{center}
where the top arrow is the adjoint of the composite
\begin{multline*}
C_1^*\sprod_{C_0}[F_0,F_0']_{C_0}\simeq d_0^*C_0^*\sprod_{C_0}[F_0,F_0']_{C_0}\xrightarrow{d_0^*(\epsilon)}d_0^*[F_0,F_0']_{C_0}\\ \simeq [d_0^*F_0,d_0^*F_0']_{C_1}\xrightarrow{[\alpha,d_0^*F_0']_{C_1}}[d_1^*F_0,d_0^*F_0']_{C_1},
\end{multline*}
where we wrote $\epsilon$ for the counit of $C_0^*\dashv\sprod_{C_0}$, while the bottom arrow is obtained analogously using $\alpha'$.

\subsection{Geometric morphisms and enrichment}\label{enrich-via-geom}

The considerations of \ref{enr-int-presh} can be generalised, as noted in \cite{mitchell-on-topoi-as-closed-cats}. 

If $f:\cF\to\cE$ is a geometric morphism, then $\cF$ can be considered a tensored and cotensored $\cE$-category with internal hom objects
$$
\cF_{\ov\cE}(X,Y)=f_*[X,Y]_\cF\in \cE.
$$
The natural isomorphism 
$
f_*[f^*E,X]\simeq[E,f_*X]
$,
seen in \ref{geom-morphisms}, shows that the adjunction $f^*\dashv f_*$ is $\cE$-enriched, i.e., we have 
$$
\cF_{\ov\cE}(f^*E,X)\simeq [E,f_*X]_\cE.
$$

The tensored structure is given by the rule
$$
E\otimes X=f^*E\times X,
$$
where $E\in\cE$ and $X\in \cF$.

The cotensored structure is defined as
$$
[E,X]=[f^*E,X].
$$

\subsection{Enriching slices}\label{enr-int-slices}

Let $\cE$ be a topos, and let $S\in\cE$. Then $\cE_{\ov S}$ is also a topos and hence cartesian closed, so we can make it into an $\cE$-enriched category with internal hom objects
$$
\cE_{\ov S}(X\to S,Y\to S)=\sprod_S[X\to S,Y\to S]_S.
$$

%As shown in \ref{comon-coalg}, if  then  $\widehat{\C}$ is also a topos. 

Let $\C\in\cat(\cE)$, let $F\in\widehat{\C}$ be an internal presheaf 
on $\C$, and let $f:\F\to\C$ be the associated discrete fibration. Then the equivalence of categories from \ref{discr-fibs} can be enriched to the $\cE$-equivalence
$$
\widehat{\C}_{\ov F}\simeq \widehat{\F},
$$
by letting, for $G\to F, H\to F\in\widehat{\C}_{\ov F}$, 
$$
\widehat{\C}_{\ov F}(G\to F,H\to F)=\widehat{\F}(G,H).
$$

\begin{lemma}\label{adj-int-slices}
%There is an equivalence of $\cE$-categories
The functor $\varinjlim_f:\widehat{\C}_{\ov F}\simeq\widehat{\F}\to\widehat{\C}$ is $\cE$-left adjoint to $f^*:\widehat{\C}\to\widehat{\F}$ in the sense that, for $G\in\widehat{\F}$ and $H\in\widehat{\C}$,
$$
\widehat{\C}(\varinjlim_f G,H)\simeq \widehat{\F}(G,f^*H).
$$ 
\end{lemma}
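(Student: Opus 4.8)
The plan is to recognise the internal functor $f\colon\F\to\C$ as an essential geometric morphism $\bar f\colon\widehat\F\to\widehat\C$ whose inverse image is \emph{logical}, deduce from Lemma~\ref{geom-m-iso} that its comparison morphism $\Xi_{\bar f}$ is an isomorphism inside $\widehat\C$, and then transport that isomorphism down to $\cE$ along the structure morphism of $\widehat\C$, where it becomes exactly the claimed $\cE$-enriched hom-isomorphism. Concretely: by \ref{funct-int-presh}, since $\cE$ is a topos (hence has reflexive coequalisers preserved by pullback), $f^*\colon\widehat\C\to\widehat\F$ has a left adjoint $\varinjlim_f$ and a right adjoint $\varprojlim_f$, so $\bar f:=(\varprojlim_f,f^*)$ is a geometric morphism, \emph{essential}, with $\bar f_!=\varinjlim_f$, $\bar f^*=f^*$, $\bar f_*=\varprojlim_f$. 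Since $\C\mapsto[\C^\op,\cE]$ is a $2$-functor into $\Top_{\ov\cE}$ (\ref{funct-int-presh}), the composite of $\bar f$ with the canonical structure morphism $\pi\colon\widehat\C\to\cE$ (so $\pi_*=\Gamma=\varprojlim_\C$, see \ref{int-lim-colim}, \ref{comon-coalg}) is the canonical structure morphism $\pi'\colon\widehat\F\to\cE$; hence $\pi'_*=\pi_*\circ\varprojlim_f$.

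Next I would establish logicality of $\bar f^*$. By the internal Grothendieck construction \ref{discr-fibs}, the equivalence $\widehat\F\simeq\widehat\C_{\ov F}$ identifies $f^*$ with the base-change (localisation) functor $i_F^*\colon\widehat\C\to\widehat\C_{\ov F}$ along $F\to\be$, and $i_F^*$ is logical by the fundamental theorem of topos theory \ref{fund-th-topos}. Thus $\bar f^*$ is logical, so in particular the canonical morphism $\Phi_{\bar f}\colon\bar f^*[Y,Y']\to[\bar f^*Y,\bar f^*Y']$ of \ref{geom-morphisms} is an isomorphism for all $Y,Y'$. Since $\bar f$ is essential, Lemma~\ref{geom-m-iso} then forces the comparison morphism $\Xi_{\bar f}\colon[\bar f_!G,H]\to\bar f_*[G,\bar f^*H]$ to be an isomorphism, natural in $G\in\widehat\F$ and $H\in\widehat\C$; unwinding notation this is a natural isomorphism
$$
[\varinjlim_f G,H]_{\widehat\C}\;\simeq\;\varprojlim_f\bigl([G,f^*H]_{\widehat\F}\bigr)
$$
in $\widehat\C$, the brackets denoting the internal homs of $\widehat\C$ and of $\widehat\F$ respectively.

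To finish, apply $\pi_*$ to this isomorphism. Using the description of the $\cE$-enrichments from \ref{enr-int-presh}, namely $\widehat\C(X,Y)=\pi_*[X,Y]_{\widehat\C}$ and $\widehat\F(X,Y)=\pi'_*[X,Y]_{\widehat\F}$, together with $\pi'_*=\pi_*\circ\varprojlim_f$ from the first paragraph, we obtain
$$
\widehat\C(\varinjlim_f G,H)\;\simeq\;\widehat\F(G,f^*H),
$$
naturally in $G$ and $H$, since every map in sight is a map of $\cE$-objects and is natural; this is precisely the asserted $\cE$-enriched adjunction.

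The main obstacle is the logicality step: it rests on identifying the pullback of discrete fibrations along $f$ with the genuine base-change functor $i_F^*$ on $\widehat\C$, which is \enquote{largely trivial} externally (cf.\ \ref{discr-fibs}) but demands some care with the internal Grothendieck construction; alternatively one can check directly, via the explicit formulas of \ref{comon-coalg}, that this pullback functor preserves the subobject classifier and internal homs. If one prefers to bypass that identification altogether, an equivalent route is to transport the Wirthm\"uller isomorphism $\sprod_F[a,i_F^*b]_F\simeq[(i_F)_!a,b]$ of \ref{fund-th-topos} across the $\cE$-equivalence $\widehat\F\simeq\widehat\C_{\ov F}$ of \ref{enr-int-slices} and then apply $\Gamma$, with the compatibility of the two $\cE$-enrichments in play supplied by \ref{enrich-via-geom}; everything else is routine bookkeeping with the apparatus already in place.
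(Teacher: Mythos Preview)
Your proof is correct and follows essentially the same route as the paper: both establish the internal relation $[\varinjlim_f G,H]_{\widehat\C}\simeq \varprojlim_f[G,f^*H]_{\widehat\F}$ and then apply $\varprojlim_\C$ (your $\pi_*$), using $\varprojlim_\C\varprojlim_f=\varprojlim_\F$. The paper's proof is terser, simply asserting that relation (it is the Wirthm\"uller isomorphism already recorded after \ref{fund-th-top} for the localisation morphism at $F$), whereas you spell out its justification via logicality of $f^*$ and Lemma~\ref{geom-m-iso}.
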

\begin{proof}
We apply the functor $\varprojlim_{\C}$ to both sides of the relation
$$
[\varinjlim_f G,H]_{\widehat{\C}}\simeq \varprojlim_f[G,f^*H]_{\widehat{\F}}
$$
and use the fact that $\varprojlim_{\C}\varprojlim_f=\varprojlim_{\F}$.
\end{proof}

\subsection{Enriched and internal Grothendieck construction}\label{enrich-groth-constr}

Let $\cE$ be a topos, %locally cartesian closed category, 
let $\cC$ be a small $\cE$-enriched category and let $\hC=[\cC^\op,\cE]$ be the $\cE$-category of $\cE$-presheaves on $\cC$. 

Given a $\cE$-presheaf $\bF\in\hC$, let
$$
\F=\cC_{\Ov\bF}
$$ 
be the $\cV$-internal category %$(G_0,G_1)$ 
with the object of objects
$$
F_0=\coprod_{S\in\cC}\bF(S),
$$ 
and the object of morphisms
$$
F_1=\coprod_{S,S'\in\cC}\cC(S',S)\times\bF(S),
$$
where $d_0:F_1\to F_0$ is obtained by pasting together the actions
$$
\cC(S',S)\times\bF(S)\xrightarrow{e_{S'S}} \bF(S')
$$
obtained by adjunction from the morphisms $\bF_{S'S}:\cC(S',S)\to [\bF(S),\bF(S')]$,
and $d_1:F_1\to F_0$ is obtained from the projections
$$
\cC(S',S)\times\bF(S)\xrightarrow{p_{S'S}} \bF(S).
$$

Given an object $S\in\cC$, we define the $\cE$-internal category
$$
\cC_{\Ov S}=\cC_{\Ov\h_S},
$$
and, in particular, the internal category
$$
\C=\cC_{\Ov I}
$$
is the \emph{internalisation} of $\cC$.

On the other hand, the slice category $\hC_{\ov\bF}$ is $\cE$-enriched as follows. For $\bG\xrightarrow{g}\bF$ and $\bH\xrightarrow{h}\bF$, the internal hom object
$$
\hC_{\ov\bF}(g,h)=\hC_{\ov\bF}(\bG,\bH)
$$
is defined as the equaliser
\begin{center}
 \begin{tikzpicture} 
\matrix(m)[matrix of math nodes, row sep=0em, column sep=1em, text height=1.5ex, text depth=0.25ex]
 {
|(0)|{\hC_{\ov\bF}(g,h)} & |(1)|{\displaystyle\prod_{S\in\cC}\sprod_{\bF(S)}[g_S,h_S]_{\bF(S)}}		& |(2)|{\displaystyle\prod_{S,S'\in\cC}\sprod_{\cC(S',S)\times\bF(S)}[p_{S'S}^*(g_S),e_{S'S}^*(h_{S'})]_{\cC(S',S)\times\bF(S)}} 	\\
 }; 
\path[->,font=\scriptsize,>=to, thin,yshift=12pt]
(0) edge node[above]{} (1)
([yshift=2pt]1.east) edge node[above]{} ([yshift=2pt]2.west) 
([yshift=-2pt]1.east)edge node[below]{}   ([yshift=-2pt]2.west) 
;
\end{tikzpicture}
\end{center}
where the top arrow is formed using the adjunctions of the composites
\begin{multline*}
(\bF(S)\times\cC(S',S))^*\sprod_{\bF(S')}[g_{S'},h_{S'}]_{\bF(S')}\simeq e_{S'S}^*\bF(S')^*\sprod_{\bF(S')}[g_{S'},h_{S'}]_{\bF(S')}\\
\xrightarrow{p_{S'S}^*(\epsilon_{S'})} e_{S'S}^*[g_{S'},h_{S'}]_{\bF(S')}\simeq[e_{S'S}^*(g_{S'}),e_{S'S}^*(h_{S'})]_{\cC(S',S)\times\bF(S)}\\
\xrightarrow{[\alpha_{S'S},e_{S'S}^*(h_{S'})]_{\cC(S',S)\times\bF(S)}}
[p_{S'S}^*(g_S),e_{S'S}^*(h_{S'})]_{\cC(S',S)\times\bF(S)},
\end{multline*}
where $\epsilon_{S'}$ was the counit of the adjunction $\bF(S')^*\dashv\sprod_{\bF(S')}$, and the $\cE_{\ov{\cC(S',S)\times\bF(S)}}$-morphism $\alpha_{S'S}:p_{S'S}^*(g_S)\to e_{S'S}^*(g_{S'})$ is obtained using the presheaf structure of $\bG\xrightarrow{g}\bF$. The bottom arrow is obtained analogously, using the presheaf structure of $\bH\xrightarrow{h}\bF$.

We note that the right hand side term in the above equaliser is isomorphic to
$$
\prod_{S,S'\in\cC}\sprod_{\bF(S')}[\ssum_{e_{S'S}}p_{S'S}^*(g_S),h_{S'}]_{\bF(S')}
\simeq
\prod_{S,S'\in\cC}\sprod_{\bF(S)}[g_S,\sprod_{p_{S'S}}e_{S'S}^*(h_{S'})]_{\bF(S)},
$$
and that for $\bF=\h_I$, the above equaliser coincides with the definition given in \ref{enr-fun-cat}.

\begin{theorem}\label{th-groth}
We have an equivalence of $\cE$-enriched  
categories
$$
\widehat{\cC_{\Ov\bF}}\simeq \hC_{\ov\bF}.
$$
\end{theorem}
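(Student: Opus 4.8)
The plan is to reduce the claim to the internal Grothendieck construction of \ref{discr-fibs}, upgraded to the enriched setting in \ref{enr-int-slices}, and then to identify $\cE$-enriched presheaves on $\cC$ with internal presheaves on the internalisation $\C=\cC_{\Ov\be}$ from \ref{enrich-groth-constr}. The first step is to check that the forgetful internal functor $\pi\colon\cC_{\Ov\bF}\to\C$ is a \emph{discrete fibration}. With $F_0=\coprod_S\bF(S)$, $F_1=\coprod_{S,S'}\cC(S',S)\times\bF(S)$ and $C_0=\coprod_S e$, $C_1=\coprod_{S,S'}\cC(S',S)$, the square defining discreteness asks that $F_1\simeq C_1\times_{C_0}F_0$ compatibly with $d_1$ and $\gamma_0$; since coproducts in a topos are disjoint and universal, the pullback distributes over the indexing coproduct and reduces termwise to $\cC(S',S)\times_e\bF(S)=\cC(S',S)\times\bF(S)$. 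Hence by \ref{discr-fibs} there is a unique internal presheaf $\widetilde\bF\in[\C^\op,\cE]$ with associated discrete fibration $\pi$, and by \ref{enr-int-slices} an equivalence of $\cE$-enriched categories $\widehat{\cC_{\Ov\bF}}=[(\cC_{\Ov\bF})^\op,\cE]\simeq[\C^\op,\cE]_{\ov\widetilde\bF}$.

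The next step is to produce an $\cE$-equivalence $\hC\simeq[\C^\op,\cE]$ carrying $\bF$ to $\widetilde\bF$. An internal presheaf $G$ on $\C$ has $G_0$ lying over $C_0=\coprod_S e$, hence decomposes as $G_0=\coprod_S G^{(S)}_0$; its $\bbT_\C$-algebra action from \ref{mon-alg} is a map $C_1\times_{C_0}G_0=\coprod_{S,S'}\cC(S',S)\times G^{(S)}_0\to G_0$ whose $(S,S')$-component lands in $G^{(S')}_0$ and transposes to $\cC(S',S)\to[G^{(S)}_0,G^{(S')}_0]$; unitality and associativity of the action are precisely the $\cE$-functor axioms for $\cC^\op\to\cE$. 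This assignment is an equivalence on objects and morphisms, and it is $\cE$-enriched because the internal-hom equaliser of \ref{enr-int-presh} for $\C$ becomes, via $\sprod_{\coprod_S e}\simeq\prod_S(-)$ and $\sprod_{\coprod_{S,S'}\cC(S',S)}\simeq\prod_{S,S'}\sprod_{\cC(S',S)}$, the equaliser defining the $\cE$-category $[\cC^\op,\cE]$ in \ref{enr-fun-cat}; one checks the two parallel arrows correspond. The category of elements of $\bF$ being $\cC_{\Ov\bF}$ by construction, $\bF\mapsto\widetilde\bF$ under this equivalence, and it restricts to an $\cE$-equivalence of slices over corresponding objects (the slice enrichments on both sides being computed by the same equaliser recipe of \ref{enr-int-slices}), so $\hC_{\ov\bF}\simeq[\C^\op,\cE]_{\ov\widetilde\bF}\simeq\widehat{\cC_{\Ov\bF}}$.

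The conceptual content is entirely in \ref{discr-fibs} and \ref{enr-int-slices}; the remaining work is bookkeeping, and the delicate point I expect to be the main obstacle is matching the two presentations of the internal homs. Alternatively to the two steps above, one can compare directly for $\Do=\cC_{\Ov\bF}$, where $D_0=\coprod_S\bF(S)$ and $D_1=\coprod_{S,S'}\cC(S',S)\times\bF(S)$, so that $\sprod_{D_0}[G_0,H_0]_{D_0}\simeq\prod_S\sprod_{\bF(S)}[\,\cdot\,]$ and $\sprod_{D_1}[d_1^*G_0,d_0^*H_0]_{D_1}\simeq\prod_{S,S'}\sprod_{\cC(S',S)\times\bF(S)}[\,\cdot\,]$; the \ref{enr-int-presh}-equaliser for $\widehat{\Do}(G,H)$ then matches, term by term, the equaliser defining $\hC_{\ov\bF}(g,h)$ stated just before the theorem, the only genuine task being to confirm that the structure maps coming from $d_0,d_1$ and from the presheaf structures of $G,H$ translate into those coming from $e_{S'S},p_{S'S}$ and the slice structures of $g,h$. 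Throughout, disjointness and universality of coproducts in the topos $\cE$ are what let every object decompose over $\cC$, and applying the underlying-category $2$-functor of \ref{und-cat} recovers the ordinary equivalence $\hC_{\ov\bF}\simeq\widehat{\cC_{\ov\bF}}$ of \ref{ord-groth-constr} as a consistency check.
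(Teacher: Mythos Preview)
Your proposal is correct. The paper, however, takes what you describe as the \emph{alternative} route: it constructs explicit mutually inverse assignments $H\leftrightarrow(\bH\to\bF)$ between internal presheaves on $\F=\cC_{\Ov\bF}$ and objects of $\hC_{\ov\bF}$ (via pullbacks along $\bF(S)\hookrightarrow F_0$ in one direction and coproducts $\coprod_S\bH(S)$ in the other), and then compares the hom-equalisers directly, proving that $\sprod_{F_0}[G_0,H_0]_{F_0}\simeq\prod_S\sprod_{\bF(S)}[g_S,h_S]_{\bF(S)}$ and $\sprod_{F_1}[d_1^*G_0,d_0^*H_0]_{F_1}\simeq\prod_{S,S'}\sprod_{\cC(S',S)\times\bF(S)}[p_{S'S}^*g_S,e_{S'S}^*h_{S'}]_{\cC(S',S)\times\bF(S)}$ by a Yoneda-and-disjoint-coproduct argument, while leaving the verification that the two parallel arrows match to the reader.

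Your primary strategy---reduce to the base case $\bF=\be$ via the discrete fibration $\cC_{\Ov\bF}\to\C$ and the already-packaged equivalence of \ref{enr-int-slices}---is more conceptual and avoids rebuilding the slice machinery from scratch. The price is that you must still prove the base case $\hC\simeq_\cE\widehat{\C}$ directly, and then confront the point you flag as delicate: the paper \emph{defines} the $\cE$-enrichment on $\hC_{\ov\bF}$ by the explicit equaliser formula immediately preceding the theorem, so to match the statement as written you still owe the identification of that formula with the one transported through your equivalence. That identification is exactly the term-by-term hom comparison you sketch in your final paragraph, which is the core of the paper's own proof. So both routes converge on the same computation; yours arrives there with a clearer explanation of \emph{why} it should work.
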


\begin{proof}
Let $H=(H_0,H_1)$ be an internal presheaf on $\F=\cC_{\Ov\bF}=(F_0,F_1)$ as above, determined by
$$
\gamma_0:H_0\to F_0, %=\coprod_{S\in\cC}\bF(S),
$$
and the action 
$$
e:H_1=F_1\times_{F_0}H_0\to H_0.
$$
%where $G_1=\coprod_{S,S'\in\cC}\bF(S)\times\cC(S',S)$. 

We define $\bH\in\hC_{\ov\bF}$ as follows. For $S\in\cC$, let $\bH(S)$ be the pullback
 \begin{center}
 \begin{tikzpicture} 
\matrix(m)[matrix of math nodes, row sep=2em, column sep=2em, text height=1.5ex, text depth=0.25ex]
 {
 |(1)|{\bH(S)}		& |(2)|{H_0} 	\\
 |(l1)|{\bF(S)}		& |(l2)|{F_0} 	\\
 }; 
\path[->,font=\scriptsize,>=to, thin]
(1) edge node[above]{} (2) edge node[left]{}   (l1)
(2) edge node[right]{$\gamma_0$} 
(l2) 
(l1) edge  %node[above]{$\gamma_0$} 
(l2);
\end{tikzpicture}
\end{center}
where the bottom arrow is the natural morphism $\bF(S)\to F_0=\coprod_{T\in\cC}\bF(T)$.

Moreover, given $S,S'\in\cC$, using the fact that 
\begin{multline*}
\cC(S',S)\times\bF(S)\times_{F_1}H_1=\cC(S',S)\times\bF(S)\times_{F_1}\left(F_1\times_{F_0}H_0\right)\\
\simeq\cC(S',S)\times\bF(S)\times_{F_0}H_0
\simeq \cC(S',S)\times\bH(S),
\end{multline*}
we obtain the diagram
$$
 \begin{tikzpicture}
[cross line/.style={preaction={draw=white, -,
line width=4pt}}]
\matrix(m)[matrix of math nodes, row sep=.1em, column sep=1em,  
text height=1.2ex, text depth=0.25ex]
{
|(h)|{H_0} 		&		&[2em] |(a)|{H_1} %{G_1\times_{G_0}H_0} 	
&[-2.2em]		& |(b)|{H_0}	&			\\[1.2em]
			& |(H)|{\bH(S)}  	& 		&|(A)|{\cC(S',S)\times\bH(S)} 	&		& |(B)|{\bH(S')}\\[.6em]          
|(u)|{G_0}	&		&|(c)|{G_1}	&		& |(d)|{G_0} &			\\[1.2em]
			&|(U)|{\bF(S)}	&		&|(C)|{\cC(S',S)\times\bF(S)} &			& |(D)|{\bF(S')}   \\};
\path[->,font=\scriptsize,>=to, thin,inner sep=1pt]
(a) edge node[pos=0.5,above]{$\pi_2$}(h)
(c) edge node[pos=0.3,above]{$d_1$}(u)
(h) edge node[pos=0.5,left]{$\gamma_0$}(u) 
(U) edge  (u) 
(H) edge [cross line] (U) edge (h)
(C) edge (U) 
(a)edge node[pos=0.5,above]{$e$}(b)
(b)edge node[pos=0.2, left%=-2pt
]{$\gamma_0$}(d)
(a) edge node[pos=0.3,left]{$\pi_1$}(c)
(c)edge node[pos=0.6,above%=-2pt
]{$d_0$}(d)
(A)edge[cross line, dashed]  (B)
(B)edge[cross line]  (D)
(A)edge[cross line]  (C)
(C)edge[cross line]  (D)
(A)edge %node[pos=0.5,above right]{$\sigma$}
(a)
(C)edge (c)
(B)edge (b)
(A) edge [cross line] (H)
(D)edge (d);
\end{tikzpicture}
$$ 
in which all the vertical parallelograms are pullbacks, and the resulting dashed action yields the $\cV$-presheaf structure morphism
$$
\bH_{S'S}:\cC(S',S)\to[\bH(S),\bH(S')].
$$

Conversely, let $f\in\hC_{\ov\bF}$ be a $\cV$-natural transformation $f:\bH\to\bF$. We define an internal presheaf $H=(H_0,H_1)$ on $\F$ by setting
$$
H_0=\coprod_{S\in\cC}\bH(S), \text{ and } \gamma_0=\coprod_{S\in\cC}f_S: H_0\to F_0.
$$
The action 
$$
e:F_1\times_{F_0}H_0\to H_0
$$
is obtained, using the disjointness of coproducts and the fact that coproducts commute with pullbacks, as
\begin{multline*}
\coprod_{S,S'}\cC(S',S)\times\bF(S)\times_{\coprod_T\bF(T)}\coprod_T\bH(T)\simeq
\coprod_{S,S'}\left(\cC(S',S)\times\bF(S)\right)\times_{\bF(S)}\bH(S)\\
\to\coprod_{S,S'}\cC(S',S)\times\bH(S)\xrightarrow{\coprod_{S,S'}e_{S'S}}
\coprod_{S'}\bH(S').
\end{multline*}
We leave the full details of the verification that the assignments 
$$
H\mapsto (\bH\to \bF), \ \ \  \text{ and } \ \ \ (\bH\to\bF)\mapsto H
$$
define an equivalence of $\cE$-categories $\widehat{\F}\simeq \hC_{\ov\bF}$ to the reader, and we illustrate the proof by showing that, given objects $\bG\xrightarrow{g}\bF, \bG\xrightarrow{h}\bF\in\hC_{\ov\bF}$ and the corresponding $G,H\in\widehat{\F}$, we have
$$
\hC_{\ov\bF}(g,h)\simeq \widehat{\F}(G,H).
$$
By  \ref{enr-int-presh},  
$$
\begin{tikzcd}[column sep=1em]
\widehat{\F}(G,H)= {\Eq\left(\sprod_{F_0}[G_0,H_0]_{F_0}\right.}\ar[yshift=2pt]{r}{} \ar[yshift=-2pt]{r}[swap]{} &{\left.\sprod_{F_1}[{}^{\F}d_1^*G_0,{}^{\F}d_0^*H_0]_{F_1}\right)}.
\end{tikzcd}
$$
We claim that
$$
\sprod_{F_0}[G_0,H_0]_{F_0}\simeq\prod_{S\in\cC}\sprod_{\bF(S)}[g_S,h_S]_{\bF(S)}.
$$
Indeed, for an arbitrary $X\in\cE$, using the disjointness of coproducts and the fact that they commute with pullbacks, 
\begin{multline*}
\cE(X,\sprod_{F_0}[G_0,H_0]_{F_0})\simeq \cE_{\ov F_0}(F_0^*X, [G_0,H_0])\simeq
\cE_{\ov F_0}(F_0^*X\times_{F_0}G_0,H_0)\\ \simeq
\cE_{\ov\coprod_S\bF(S)}\left(\coprod_S\bF(S)^*X\times_{\coprod_S\bF(S)}\coprod_S\bG(S),\coprod_S\bH(S)\right)\\
\simeq \cE_{\ov\coprod_S\bF(S)}\left(\coprod_S\left(\bF(S)^*X\times_{\bF(S)}\bG(S)\right),\coprod_S\bH(S)\right)\\
\simeq \prod_S\cE_{\ov\bF(S)}\left(\bF(S)^*X\times_{\bF(S)}\bG(S),\bH(S)\right)\\
\simeq \prod_S\cE_{\ov\bF(S)}\left(\bF(S)^*X,[\bG(S),\bH(S)]_{\bF(S)}\right)\\
\simeq \prod_S\cE\left(X,\sprod_{\bF(S)}[\bG(S),\bH(S)]_{\bF(S)}\right)\\
\simeq \cE\left(X,\prod_S\sprod_{\bF(S)}[\bG(S),\bH(S)]_{\bF(S)}\right).
\end{multline*}
\end{proof}
Similarly, we obtain that
$$
\sprod_{F_1}[{}^{\F}d_1^*G_0,{}^{\F}d_0^*H_0]_{F_1})=\prod_{S,S'}\sprod_{\cC(S',S)\times\bF(S)}[p_{S'S}^*(g_S),e_{S'S}^*(h_S')]_{\cC(S',S)\times\bF(S)},
$$
and, upon a further verification that the morphisms in the equalisers are also compatible, we obtain the expression for $\hC_{\ov\bF}(g,h)$ given previously.

To draw an analogy with the considerations of \ref{ord-groth-constr}, let
$$
\ssum_\bF:\hC_{\ov\bF}\to \hC
$$
be the natural $\cE$-functor which maps $\bG\to\bF$ to $\bG$, and let 
$$p_\bF=\bF^*:\hC\to\hC_{\ov\bF}$$
be the base change functor.

Moreover, the canonical discrete fibration $i_\F:\F=\cC_{\Ov\bF}\to\C$ yields the functor
$$
i_\F^*:\hC\simeq\widehat{\C}\to\widehat{\cC_{\Ov\bF}}.
$$
For $\bG\in\hC$, we write $\bG_\F=i_\F^*\bG$.

\begin{corollary}
\begin{enumerate}
\item\label{adj-bc} There is an adjunction of $\cE$-enriched functors
$$
\ssum_\bF\dashv \bF^*. 
$$
\item\label{bc-comp} There is an isomorphism of $\cE$-functors 
$$\alpha\circ i_\F^*\simeq \bF^*,$$
where $\alpha:\widehat{\cC_{\Ov\bF}}\simeq \hC_{\ov\bF}$.
\end{enumerate}
\begin{proof}
For (\ref{adj-bc}), given $\bG\to\bF\in\hC_{\ov\bF}$ and $\bH\in\hC$,
%\begin{tikzcd}[cramped,sep=small]
%\bG\ar{d}{} \\ \bF
%\end{tikzcd}
\begin{multline*}
\hC\left(\ssum_\bF(\bG\to\bF),\bH\right)\stackrel{{\ref{th-groth}}}{\simeq}
\widehat{\C}(\ssum_F(G\to F),H)\\
\stackrel{{\ref{adj-int-slices}}}{\simeq}\widehat{\cC_{\Ov\bF}}(G,F\times H)
\stackrel{\ref{th-groth}}{\simeq}
\hC_{\ov\bF}(\bG\to\bF, \bF\times \bH\to \bF).
\end{multline*}

To see (\ref{bc-comp}), we note that, by the construction from the proof of \ref{th-groth}, $\alpha$ takes $\bG_\F\in\widehat{\F}$ to an object of $\hC_{\ov\bF}$ whose value at $S\in\cC$ is the pullback
$$
(F_0\times G_0)\times_{F_0}\bF(S)\simeq \bF(S)\times\bG(S).
$$ 
\end{proof}

\end{corollary}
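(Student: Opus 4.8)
The plan is to deduce both clauses from Theorem~\ref{th-groth} and Lemma~\ref{adj-int-slices} once one preparatory identification is in place: under the equivalences $\hC\simeq\widehat{\C}$ (the $\bF=\be$ instance of Theorem~\ref{th-groth}) and $\alpha\colon\widehat{\cC_{\Ov\bF}}\simeq\hC_{\ov\bF}$, the $\cE$-functor $\ssum_\bF$ should correspond to $\varinjlim_{i_\F}\colon\widehat{\F}\to\widehat{\C}$ and $\bF^*$ to $i_\F^*\colon\widehat{\C}\to\widehat{\F}$. Clause~(\ref{bc-comp}) is essentially this last statement, so I would establish it first.

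\textbf{Clause (\ref{bc-comp}).} Unwinding the construction of $\alpha$ from the proof of Theorem~\ref{th-groth}, an internal presheaf $H$ on $\F=\cC_{\Ov\bF}$ with structure map $\gamma_0\colon H_0\to F_0$ is sent to the $\cE$-presheaf whose value at $S\in\cC$ is the pullback of $\gamma_0$ along the coproduct insertion $\bF(S)\to F_0=\coprod_{T\in\cC}\bF(T)$. Applying this to $H=\bG_\F=i_\F^*\bG$ --- which, being the pullback of the discrete fibration $\cC_{\Ov\bG}\to\C$ along $i_\F$, has object of objects $F_0\times G_0$ with $\gamma_0$ the first projection --- the relevant pullback is $(F_0\times G_0)\times_{F_0}\bF(S)$. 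Since coproducts in the topos $\cE$ are disjoint and stable under pullback, this reduces to $\bF(S)\times\bG(S)=(\bF\times\bG)(S)=\bF^*(\bG)(S)$; it then remains to check that this identification is natural in $S$ and in $\bG$ and intertwines the two presheaf structure morphisms, which is the same bookkeeping with the same two coproduct properties. This yields $\alpha\circ i_\F^*\simeq\bF^*$.

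\textbf{Clause (\ref{adj-bc}).} For $\bG\to\bF\in\hC_{\ov\bF}$ and $\bH\in\hC$, writing $G\in\widehat{\F}$ and $H\in\widehat{\C}$ for the corresponding internal presheaves, I would run the chain
\begin{multline*}
\hC\bigl(\ssum_\bF(\bG\to\bF),\bH\bigr)\;\simeq\;\widehat{\C}\bigl(\varinjlim_{i_\F}G,H\bigr)\\
\;\simeq\;\widehat{\F}\bigl(G,i_\F^*H\bigr)\;\simeq\;\hC_{\ov\bF}\bigl(\bG\to\bF,\bF\times\bH\to\bF\bigr),
\end{multline*}
$\cE$-naturally in $\bG$ and $\bH$. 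The middle isomorphism is Lemma~\ref{adj-int-slices}; the last is Theorem~\ref{th-groth} (an $\cE$-equivalence, hence $\cE$-fully faithful on hom-objects) combined with Clause~(\ref{bc-comp}), which identifies $\alpha(i_\F^*H)$ with the base change $\bF^*\bH=(\bF\times\bH\to\bF)$. For the first isomorphism I need $\ssum_\bF$ to match $\varinjlim_{i_\F}$ across the equivalences: both are left adjoint to the respective base-change functors $\bF^*$ and $i_\F^*$, which themselves correspond by Clause~(\ref{bc-comp}), so uniqueness of left adjoints forces the two to agree, after which the enriched structure is determined. The composite chain is exactly the asserted $\cE$-enriched adjunction $\ssum_\bF\dashv\bF^*$.

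\textbf{Expected obstacle.} The one point that requires care is keeping the adjective ``$\cE$-enriched'' alive throughout: Theorem~\ref{th-groth} and Lemma~\ref{adj-int-slices} must be used in their enriched forms, and the identification $\ssum_\bF\simeq\varinjlim_{i_\F}$ has to be promoted from an equivalence of ordinary functors to one of $\cE$-functors. Concretely this is where the $\cE$-naturality of the hom-object isomorphisms --- those of \ref{funct-int-presh} for $\varinjlim_{i_\F}\dashv i_\F^*$ and of Theorem~\ref{th-groth} for $\alpha$ --- does the real work; everything else is routine manipulation of pullbacks and coproducts in a topos.
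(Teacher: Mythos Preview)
Your argument is correct and matches the paper's: both parts rest on transporting the enriched adjunction $\varinjlim_{i_\F}\dashv i_\F^*$ of Lemma~\ref{adj-int-slices} across the equivalence of Theorem~\ref{th-groth}, and your computation for clause~(\ref{bc-comp}) is exactly the pullback $(F_0\times G_0)\times_{F_0}\bF(S)\simeq\bF(S)\times\bG(S)$ that the paper gives. One small simplification: your justification of the first isomorphism in clause~(\ref{adj-bc}) via ``uniqueness of left adjoints'' is unnecessarily indirect (and skirts circularity at the enriched level), since the equivalence of Theorem~\ref{th-groth} visibly carries the forgetful $\ssum_\bF$ to the forgetful $\ssum_F$ on internal slices, which is $\varinjlim_{i_\F}$ by construction --- the paper just invokes this directly.
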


\begin{corollary}\label{psh-int-hom-rel}
For $S\in\cC$, $\bF,\bG\in\hC$,
$$
\widehat{\cC_{\Ov S}}(\bF_S,\bG_S)\simeq \hC_{\ov\h_S}(\bF\times\h_S,\bG\times\h_S)\simeq\hC(\bF\times\h_S,\bG)\simeq[\bF,\bG](S).
$$
\end{corollary}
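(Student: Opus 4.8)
The plan is to read off \ref{psh-int-hom-rel} from the enriched Grothendieck equivalence \ref{th-groth} together with its compatibility with base change and the enriched base-change adjunction recorded in the corollary just above; essentially no new construction is needed, only careful bookkeeping.

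First I would specialise Theorem~\ref{th-groth} to the representable presheaf $\h_S$, obtaining an equivalence of $\cE$-enriched categories
$$
\alpha\colon\widehat{\cC_{\Ov S}}=\widehat{\cC_{\Ov\h_S}}\ \xrightarrow{\ \sim\ }\ \hC_{\ov\h_S}.
$$
Since $\alpha$ is an $\cE$-equivalence it induces isomorphisms of internal hom objects in $\cE$, so $\widehat{\cC_{\Ov S}}(\bF_S,\bG_S)\simeq\hC_{\ov\h_S}(\alpha\bF_S,\alpha\bG_S)$. Writing $\F=\cC_{\Ov\h_S}$ with its canonical discrete fibration $i_\F\colon\F\to\C$, we have $\bF_S=i_\F^*\bF$, and by part~\ref{bc-comp} of the preceding corollary the composite $\alpha\circ i_\F^*$ is the base-change functor $\h_S^*\colon\hC\to\hC_{\ov\h_S}$, $\bH\mapsto(\bH\times\h_S\to\h_S)$. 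Hence $\alpha\bF_S\simeq(\bF\times\h_S\to\h_S)$ and $\alpha\bG_S\simeq(\bG\times\h_S\to\h_S)$, which gives the first isomorphism of the statement.

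For the second isomorphism I would invoke the adjunction of $\cE$-enriched functors $\ssum_{\h_S}\dashv\h_S^*$ from part~\ref{adj-bc} of that corollary. Being $\cE$-enriched, it supplies a natural isomorphism of $\cE$-objects $\hC(\ssum_{\h_S}X,\bG)\simeq\hC_{\ov\h_S}(X,\h_S^*\bG)$; taking $X=\h_S^*\bF=(\bF\times\h_S\to\h_S)$ and noting $\ssum_{\h_S}(\bF\times\h_S\to\h_S)=\bF\times\h_S$ yields
$$
\hC_{\ov\h_S}(\bF\times\h_S\to\h_S,\ \bG\times\h_S\to\h_S)\ \simeq\ \hC(\bF\times\h_S,\bG).
$$
The last isomorphism $\hC(\bF\times\h_S,\bG)\simeq[\bF,\bG](S)$ is nothing but the defining formula for the internal-hom $\cE$-presheaf $[\bF,\bG]$ (the definition preceding Proposition~\ref{enrich-presh-duality}). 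Finally, each of these identifications is natural in $S$, being assembled from the presheaf and discrete-fibration structure maps, so they glue to an isomorphism of $\cE$-presheaves on $\cC$ rather than merely an objectwise one.

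I do not expect a genuine obstacle: the statement is a specialisation and repackaging of \ref{th-groth} and the two parts of the corollary following it. The one point that must be handled with care is verifying that every intermediate step is truly $\cE$-enriched, so that it produces isomorphisms of hom-\emph{objects} of $\cE$ rather than of underlying hom-\emph{sets}; this is guaranteed because \ref{th-groth} is stated as an equivalence of $\cE$-categories and \ref{adj-bc} as an adjunction of $\cE$-functors. The only actual computation needed is the identification of $\alpha(\bF_S)$ with $(\bF\times\h_S\to\h_S)$ in $\hC_{\ov\h_S}$, and this has already been carried out in the proof of part~\ref{bc-comp}.
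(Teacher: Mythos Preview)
Your proposal is correct and follows precisely the approach the paper intends: the corollary is stated without proof because it is meant to follow immediately from Theorem~\ref{th-groth} together with parts~\ref{adj-bc} and~\ref{bc-comp} of the preceding corollary, plus the definition of $[\bF,\bG](S)$, and that is exactly what you do. Your care in noting that each step is $\cE$-enriched (so that the isomorphisms are of hom-objects, not merely hom-sets) is appropriate and matches the level of precision the paper maintains throughout this section.
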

%\subsection{Relative enriched categories}

\subsection{Torsors and Diaconescu's Theorem}\label{tors-diacon}

Let $\C$ be an internal category in a topos $\cS$, and let $\cE\xrightarrow{p}\cS$ be a geometric morphism. 

A \emph{$\C$-torsor} in $\cE$ (or a \emph{flat presheaf}) is an object $F\in[\C^\op,\cE]$ such that the domain of the corresponding discrete fibration $\F\to p^*(\C)$ is filtered in the sense that
each of the morphisms
$$
F_0\to 1,\ \ \ (d_1p_1,d_1p_2):P_\F\to F_0\times F_0, \ \ \ (d_2t_1,d_2t_2):T_\F\to R_\F
$$
is a cover (regular epimorphism), where the objects $P_\F$ (of pairs of morphisms), $R_\F$ (of parallel pairs) and $T_\F$ (of coequaliser-type diagrams) are the pullbacks 
\begin{center}
\begin{tikzpicture} 
\matrix(m)[matrix of math nodes, row sep=2em, column sep=2em, text height=1.5ex, text depth=0.25ex]
 {
 |(1)|{P_\F}		& |(2)|{F_1} 	\\
 |(l1)|{F_1}		& |(l2)|{F_0} 	\\
 }; 
\path[->,font=\scriptsize,>=to, thin]
(1) edge node[above]{$p_1$} (2) edge node[left]{$p_2$}   (l1)
(2) edge node[right]{$d_0$} (l2) 
(l1) edge  node[above]{$d_0$} (l2);
\end{tikzpicture}
 \begin{tikzpicture} 
\matrix(m)[matrix of math nodes, row sep=2em, column sep=2.5em, text height=1.5ex, text depth=0.25ex]
 {
 |(1)|{R_\F}		& |(2)|{F_1} 	\\
 |(l1)|{F_1}		& |(l2)|{F_0\times F_0} 	\\
 }; 
\path[->,font=\scriptsize,>=to, thin]
(1) edge node[above]{$r_1$} (2) edge node[left]{$r_2$}   (l1)
(2) edge node[right]{$(d_0,d_1)$} (l2) 
(l1) edge  node[above]{$(d_0,d_1)$} (l2);
\end{tikzpicture}
 \begin{tikzpicture} 
\matrix(m)[matrix of math nodes, row sep=2em, column sep=2.5em, text height=1.5ex, text depth=0.25ex]
 {
 |(1)|{T_\F}		& |(2)|{F_2} 	\\
 |(l1)|{F_2}		& |(l2)|{F_0\times F_0} 	\\
 }; 
\path[->,font=\scriptsize,>=to, thin]
(1) edge node[above]{$t_1$} (2) edge node[left]{$t_2$}   (l1)
(2) edge node[right]{$(d_0,d_1)$} (l2) 
(l1) edge  node[above]{$(d_0,d_1)$} (l2);
\end{tikzpicture}
\end{center}
These diagrams are internal versions of the usual axioms for a filtered small category, i.e., 
\begin{enumerate}
\item there exists and object;
\item  for any pair of objects, there exists a pair of morphisms from them with common codomain;
\item for any parallel pair of morphisms, there exists a morphism coequalising them.
\end{enumerate}

We consider the full subcategory
$$
\Tors(\C,\cE)
$$
of $[\C^\op,\cE]$ whose objects are $\C$-torsors in $\cE$.

\begin{theorem}[Diaconescu]\label{diaconescu}
Let $\C$ be an internal category in a topos $\cS$, and $p:\cE\to\cS$ a geometric morphism. Then there is an equivalence of categories
$$
\mathfrak{Top}_{\ov \cS}(\cE,[\C,\cS])\simeq \Tors(\C,\cE),
$$
which is natural in $\cE$.
\end{theorem}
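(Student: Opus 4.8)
This is the classical theorem of Diaconescu, and the plan is to give the standard argument adapted to the present setting, with \cite[B3.2.7]{elephant1} as the model for the details. The guiding idea is that $[\C,\cS]$ is the free $\cS$-indexed colimit completion of $\C$: a geometric morphism $f\colon\cE\to[\C,\cS]$ over $\cS$ is the same thing as a left exact, colimit preserving, $\cS$-linear functor $f^*\colon[\C,\cS]\to\cE$; every internal diagram is a canonical $\cS$-indexed colimit of representables, so $f^*$ is the left Kan extension along the internal Yoneda embedding of its restriction $F$ to representables; and left exactness of $f^*$ is equivalent to flatness of $F$ — which is precisely the torsor condition via the correspondence between internal presheaves and discrete fibrations of \ref{discr-fibs}.

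Concretely I would proceed as follows. First, by the 2-categorical universal property of pullbacks in $\Top$ together with base change \ref{bc-int-psh}, one has $\Top_{\ov\cS}(\cE,[\C,\cS])\simeq\Top_{\ov\cE}(\cE,[p^*\C,\cE])$, and $\Tors(p^*\C,\cE)=\Tors(\C,\cE)$ by definition, so the problem reduces to showing $\Top_{\ov\cE}(\cE,[p^*\C,\cE])\simeq\Tors(p^*\C,\cE)$; I write $\C$ for $p^*\C$ from now on. A geometric morphism $f\colon\cE\to[\C,\cE]$ over $\cE$ unwinds to a left exact, cocontinuous, $\cE$-linear functor $f^*\colon[\C,\cE]\to\cE$, the compatibility with the structure morphism being automatic, since once $f^*$ preserves the terminal object it carries the constant diagram functor to the identity. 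Using the comonad-coalgebra presentation of internal presheaves from \ref{comon-coalg} and the Grothendieck construction of \ref{enrich-groth-constr}, every internal diagram on $\C$ is the $\cE$-indexed colimit of representables, so $f^*$ is determined by the internal presheaf $F$ on $\C$ obtained by restricting $f^*$ along the internal Yoneda embedding $y$, and $f^*\simeq\text{Lan}_yF$. Conversely $\text{Lan}_yF$ is the internal coend $F\otimes_\C(\mathord{-})$; it always possesses the right adjoint $\cE(F,\mathord{-})$ and hence defines a geometric morphism into $[\C,\cE]$ over $\cE$. The crucial point is then that $\text{Lan}_yF$ preserves finite limits exactly when the discrete fibration $\F\to\C$ attached to $F$ is filtered, i.e.\ when $F$ is a torsor: rewriting the terminal object, binary products and equalizers of $[\C,\cE]$ through their presentations by representables converts their preservation by $\text{Lan}_yF$ into exactly the three covering conditions in the definition of $\Tors(\C,\cE)$. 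Finally, $f\mapsto f^*y$ and $F\mapsto\text{Lan}_yF$ are mutually quasi-inverse, the unit and counit being identities modulo the density isomorphism and the enriched Yoneda lemma.

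Naturality in $\cE$ is then formal: for a geometric morphism $g\colon\cE'\to\cE$ over $\cS$, precomposition $f\mapsto f\circ g$ matches, under the equivalence, the functor sending a torsor $F$ to the internal presheaf obtained by applying $g^*$ to its discrete fibration (cf.\ \ref{bc-int-psh}), which is again a torsor because $g^*$ is left exact and preserves covers; the identity $(f\circ g)^*y=g^*(f^*y)$ supplies the comparison isomorphism, and the naturality squares commute on the nose. The main obstacle is the crucial step above: establishing, over an arbitrary base topos and for an arbitrary internal category, that pointwise left Kan extension along the internal Yoneda embedding is left exact precisely for filtered $F$. Unlike in the $\Set$-based case this has to be run through the locally cartesian closed and comonad-theoretic machinery recalled in \ref{comon-coalg} (or imported wholesale from \cite[B3.2.7]{elephant1}); everything else is routine manipulation of the Grothendieck construction and of base change.
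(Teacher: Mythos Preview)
The paper does not give a proof of this theorem; it is stated as the classical Diaconescu theorem and used as a black box, with \cite[B3.2.7]{elephant1} serving as the reference elsewhere in the text. Your proposal is a correct outline of the standard proof (reduce along the base-change pullback of \ref{bc-int-psh}, identify geometric morphisms over $\cE$ with cocontinuous left exact $\cE$-linear functors, recover such a functor as the left Kan extension of a flat presheaf along internal Yoneda, and match flatness with the filtered/torsor conditions), so there is nothing to compare against in the paper itself.
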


\section{Algebraic structures in enriched categories and topoi}

\subsection{Algebraic theories}

A \emph{finitary algebraic theory} $\bbT$ consists of a set of operations/function symbols of arbitrary arity and a set of finitary equations between them. It is \emph{finitely presented} if both sets are finite.  

If $\cC$ is a category with finite products, a \emph{model of $\bbT$ in $\cC$} is an object $M\in \cC$, equipped with a morphism $f:M^n\to M$ for each $n$-ary operation $f$ in $\bbT$, such that each equation in $\bbT$ yields a commutative diagram in $\cC$. We write
$$
\bbT(\cC)=\cC_\bbT
$$
for the category of models of $\bbT$ in $\cC$.

In other words, a category of \emph{algebraic structures} and their morphisms in $\cC$ is defined by a set of commutative diagrams expressing the existence of certain finite limits. Typical examples include monoids, groups, rings and modules. 

In categories of presheaves, limits (and colimits) are defined argument-wise, so we remark (as in \cite[I~3.2]{sga4.1}) that 
$$
\bbT(\hC)\simeq [\cC^\op,\bbT(\Set)].
$$ 
An enriched version of the statement is the following. Let $\cV$ be a complete cartesian closed category. We define the $\cV$-category
$$
\cV\da\bbT
$$
with the same objects as $\bbT(\cV)$. For $G,H\in\bbT(\cV)$, the internal hom object
$$
\cV\da\bbT[G,H]\in\cV
$$  
is the subobject of $[G,H]$ defined by the appropriate finite limit as specified by $\bbT$.

If $\cC$ is a small $\cV$-category, \ref{enrich-presh-duality} shows that the $\cV$-category $\hC=[\cC^\op,\cV]$ of $\cV$-presheaves is complete cartesian closed, so by the above, we defined the $\hC$-category
$$
\hC\da\bbT.
$$
On the other hand, $\hC\da\bbT$ is a $\cV$-category with internal hom objects
$$
\hC\da\bbT(\bG,\bH)=\hC\da\bbT[\bG,\bH](e).
$$
We have an equivalence of $\cV$-categories
$$
\hC\da\bbT\simeq[\cC^\op,\cV\da\bbT].
$$
Moreover, if we replace $\cV$ by a topos $\cE$, then, using \ref{psh-int-hom-rel},
$$
\hC\da\bbT[\bG,\bH](X)\simeq \hC_{\Ov X}\da\bbT(i_X^*\bG,i_X^*\bH),
$$
for $\bG,\bH\in \hC\da\bbT$ and $X\in\cC$. In order to avoid awkward notation for a general $\bbT$, we will provide a proof in the concrete case of groups in \ref{V-gp-prop}.

If $\cE$ is a topos with a natural number object, the forgetful functor $U:\bbT(\cE)\to\cE$ admits a left adjoint, called the \emph{free $\bbT$-model} functor
$$
F:\cE\to \bbT(\cE).
$$
Moreover, $\bbT(\cE)$ is monadic over $\cE$ and has finite colimits.

If $f:\cF\to\cE$ is a geometric morphism, since both $f^*$ and $f_*$ preserve finite limits, they define an adjoint pair of functors 
$$
f^*:\bbT(\cE)\to\bbT(\cF), \ \ \ f_*:\bbT(\cF)\to \bbT(\cE),
$$
and we have canonical isomorphisms
$$
F\circ f^*\simeq f^*\circ F, \ \ \ U\circ f_*\simeq f_*\circ U.
$$

We lay the details of particularly important algebraic structures out in the sequel.

\subsection{Monoid objects in monoidal categories}\label{monoid-obs}

Let $\cV$ be a symmetric monoidal category. 

A \emph{monoid object in $\cV$} is an object $G$, together with $\cV$-morphisms
$$
m_G:G\otimes G\to G, \ \ \ e_G:I\to G,
$$
making the diagrams
 \begin{center}
 \begin{tikzpicture} 
\matrix(m)[matrix of math nodes, row sep=2em, column sep=3em, text height=1.9ex, text depth=0.25ex]
 {
 |(1)|{G\otimes G\otimes G}		& |(2)|{G\otimes G} 	\\
 |(l1)|{G\otimes G}		& |(l2)|{G} 	\\
 }; 
\path[->,font=\scriptsize,>=to, thin]
(1) edge node[above]{$\id\otimes m_G$} (2) edge node[left]{$m_G\otimes\id$}   (l1)
(2) edge node[right]{$m_G$} (l2) 
(l1) edge  node[below]{$m_G$} (l2);
\end{tikzpicture}
 \begin{tikzpicture} 
\matrix(m)[matrix of math nodes, row sep=2em, column sep=3em, text height=1.9ex, text depth=0.25ex]
 {
 |(1)|{G}		& |(2)|{G\otimes G} 	\\
 |(l1)|{G\otimes G}		& |(l2)|{G} 	\\
 }; 
\path[->,font=\scriptsize,>=to, thin]
(1) edge node[above]{$(\id,\bar{e}_G)$} (2) edge node[left]{$(\bar{e}_G,\id)$}   (l1)
(2) edge node[right]{$m_G$} (l2) 
(l1) edge  node[below]{$m_G$} (l2)
(1) edge  node[pos=0.6,above]{$\id$} (l2);
\end{tikzpicture}
\end{center}
commutative, where we wrote $\bar{e}_G$ for the composite $G\to I\stackrel{e_G}{\to}G$. We say that $G$ is \emph{commutative,} if it satisfies an additional diagram expressing that $m_G$ is symmetric.

\subsection{Monoid actions and modules}\label{monoid-action}

Let $\cV$ be a symmetric monoidal category, let $G$ be a $\cV$-monoid, and let $X$ be an object of $\cV$. A \emph{$\cV$-action of $G$ on $X$} is a $\cV$-morphism
$$
\mu:G\otimes X\to X
$$
that makes the diagrams
 \begin{center}
 \begin{tikzpicture} 
\matrix(m)[matrix of math nodes, row sep=2em, column sep=3em, text height=1.9ex, text depth=0.25ex]
 {
 |(1)|{G\otimes G\otimes X}		& |(2)|{G\otimes X} 	\\
 |(l1)|{G\otimes X}		& |(l2)|{X} 	\\
 }; 
\path[->,font=\scriptsize,>=to, thin]
(1) edge node[above]{$\id\times \mu$} (2) edge node[left]{$m_G\times\id$}   (l1)
(2) edge node[right]{$\mu$} (l2) 
(l1) edge  node[below]{$\mu$} (l2);
\end{tikzpicture}
 \begin{tikzpicture} 
\matrix(m)[matrix of math nodes, row sep=2em, column sep=3em, text height=1.9ex, text depth=0.25ex]
 {
 |(1)|{I\otimes X}		&  |(2)|{X}	\\
 |(l1)|{G\otimes X}		& |(l2)|{X} 	\\
 }; 
\path[->,font=\scriptsize,>=to, thin]
(2) edge (1) edge node[right]{$\id$} (l2) 
(1)  edge node[left]{$e_G\times\id$}   (l1)
(l1) edge  node[below]{$\mu$} (l2)
;
\end{tikzpicture}
\end{center}
commutative. We also say that $X$ is a \emph{(left) $G$-module}. The category of $G$-modules is denoted
$$
G\Mod.
$$
The category 
of right $G$-modules is defined analogously. 

If $\cV$ is symmetric monoidal closed, an action $G\otimes X\to X$ gives rise to a $\cV$-monoid morphism
$$
G\to [X,X],
$$  
where $[X,X]$ is a $\cV$-monoid through the composition morphism $[X,X]\otimes[X,X]\to[X,X]$. Conversely, any such $\cV$-monoid morphism yields an action of $G$ on $X$.

If  $\cV$ is cartesian closed, $G$ can be considered an internal category in $\cV$ and right $G$-modules are identified with internal presheaves on $G^\op$, 
$$
G\Mod\simeq [G,\cV]=[(G^\op)^\op,\cV].
$$
Hence, if $\cV$ is a topos, then 
$$
\bB G=G\Mod
$$
is also a topos  called the \emph{classifying topos of $G$}.

By the symmetry of $\cV$, the category of right $G$-modules is isomorphic to the category of left $G$-modules. 

\subsection{Commutative monoid actions}

Let $\cV$ be symmetric monoidal closed, and let $A$ be a commutative monoid in $\cV$. 
%By the symmetry of $\cV$, the category 
%$$
%A\Mod
%$$
%of left $A$-modules is isomorphic to the category of right $A$-modules. 
It is known (\cite{florian-marty}) that the category $A\Mod$
is complete/cocomplete whenever $\cV$ is. If $\cV$ admits coequalisers, we obtain a symmetric monoidal category
$$
(A\Mod, \otimes_A)
$$
with the tensor product of $M,N\in A\Mod$ defined as
$$
M\otimes_AN=
\begin{tikzcd}[cramped, sep=small, ampersand replacement=\&]
{\coeq\left(M\otimes A\otimes N\right.}\ar[yshift=2pt]{r}{} \ar[yshift=-2pt]{r}[swap]{} \&{\left.M\otimes N\right)}
\end{tikzcd},
$$
where the top arrow is defined in an evident way using $\mu_M$, and the bottom one using $\mu_N$.

If $\cV$ is bicomplete, then $(A\Mod, \otimes_A)$ is closed with the internal homs
$$
[M,N]_A=
\begin{tikzcd}[cramped, sep=small, ampersand replacement=\&]
{\Eq\left([M,N]\right.}\ar[yshift=2pt]{r}{} \ar[yshift=-2pt]{r}[swap]{} \&{\left.[A\otimes M,N]\right)}
\end{tikzcd},
$$
where the top arrow is defined using the action $\mu_M$, and the bottom using $\mu_N$.

\subsection{Monoid actions and Day convolution}\label{module-ops-day-conv}

Let $\cV$ be symmetric monoidal closed, and let $A$ be a commutative $\cV$-monoid. 

Let $\cA$ be a monoidal $\cV$-category with a single object $I$ and $\cA(I,I)=A$, with composite morphism given by the monoid operation $m_A:A\otimes A\to A$, and with $I\otimes I=I$. Note that $\cA^\op\simeq\cA$.

There is an equivalence of $\cV$-categories (\cite[Example~4.2]{brian-day-on-closed-categories-of-functors})
$$
A\Mod\simeq [\cA,\cV],
$$
such that Day convolution and internal homs on $[\cA,\cV]$, as defined in \ref{day-conv}, coincide with the tensor product and internal homs of $A$-modules.

% $\bbA$ be the $\cV$-internal category associated with $G$, i.e., with object of objects $e$ and object of morphisms $G$. 

\subsection{Monoid actions in tensored categories}

Let $\cV$ be a symmetric monoidal closed category and let $\cC$ be a $\cV$-tensored category. Let $G$ be a monoid in $\cV$, and $X\in\cC$. An \emph{action of $G$ on $X$} is a $\cC_0$-morphism
$$
G\otimes X\to X
$$
satisfying the diagrammatic conditions analogous to those from \ref{monoid-action}. The tensored structure yields the relation $\cC(G\otimes X,X)\simeq [G,\cC(X,X)]$, whence we obtain a $\cV$-monoid morphism
$$
G\to \cC(X,X),
$$
where the monoid structure on $\cC(X,X)$ is obtained from the composition $\cV$-morphism
$c_{XXX}:\cC(X,X)\otimes\cC(X,X)\to\cC(X,X)$.

If the enrichment comes from a geometric morphism $\gamma:\cF\to\cE$ of topoi as in \ref{enrich-via-geom}, the tensored structure is given by
$$
E\otimes X=\gamma^*E\times X
$$
for $E\in\cE$ and $X\in\cF$. Hence, if $G$ is an $\cE$-groupoid and $X\in\cF$, an action of $G$ on $X$ in the above sense is in fact an action of $\gamma^*G$ on $X$ in the sense of \ref{monoid-action} (note that $\gamma^*G$ is a groupoid because $\gamma^*$ commutes with finite limits and the theory of groupoids is algebraic).

\subsection{Monoid actions in the opposite category}

Let $\cC$ be a tensored and cotensored category over a symmetric monoidal closed category $\cV$. By \ref{tens-cotens-op}, $\cC^\op$ is again tensored and cotensored over $\cV$.

A (left) action of a $\cV$-groupoid $G$ on $X\in\cC$ corresponds to a right action of $G$ on $X^\op\in\cC^\op$, i.e., to the (left) action of the opposite groupoid $G^\op$ on $X^\op$.

Indeed, a $\cC$-morphism $G\otimes X\to X$ corresponds to a $\cV$-groupoid morphism
$G\to \cC(X,X)$, which corresponds to a $\cV$-groupoid morphism $G^\op\to\cC^\op(X^\op,X^\op)$, which in turn corresponds to an action $G^\op\otimes^\op X^\op\to X^\op$. Note that, in view of the definition of $\otimes^\op$, the last morphism in fact comes from a $\cC$-morphism $X\to [G,X]$ obtained through $\cC(G\otimes X,X)\simeq\cC(X,[G,X])$.

\subsection{Group objects in cartesian closed categories}\label{V-groups}

Let $\cV_0$ be a category with products.

A group object in $\cV_0$ is a monoid object $G$ equipped with a $\cV_0$-morphism
$$
i_G:G\to G
$$
so that the diagram
 \begin{center}
 \begin{tikzpicture} 
\matrix(m)[matrix of math nodes, row sep=2em, column sep=2em, text height=1.5ex, text depth=0.25ex]
 {
|(0)|{G\times G}  &  |(1)|{G}		& |(2)|{G\times G} 	\\
|(l0)|{G\times G} & |(l1)|{G}		& |(l2)|{G\times G} 	\\
 }; 
\path[->,font=\scriptsize,>=to, thin]
(1) edge node[above]{$\Delta$} (0)
(l0) edge node[below]{$m_G$} (l1)
(0) edge  node[left]{$\id\times i_G$} (l0)
(1) edge node[above]{$\Delta$} (2) 

(1) edge node[left]{$\bar{e}_G$}   (l1)
(2) edge node[right]{$i_G\times\id$} (l2) 
(l2) edge node[below]{$m_G$} (l1) 
;
\end{tikzpicture}
\end{center}
commutes.

Let $\cV$ be a cartesian closed category. A \emph{$\cV$-monoid} is a monoid object in $\cV_0$, and a \emph{$\cV$-group} is a group object in $\cV_0$. 

If $G$ and $H$ are $\cV$-groups, the internal group homomorphism object
$$
\cV\daGp[G,H]
$$ 
is the equaliser of the diagram
\begin{center}
 \begin{tikzpicture} 
\matrix(m)[matrix of math nodes, row sep=2em, column sep=3em, text height=1.5ex, text depth=0.25ex]
 {
  		& |(2)|{[G\times G,H\times H]}  & 	\\
 |(l1)|{[G,H]}	& 	& |(l3)|{[G\times G,H]} 	\\
 }; 
\path[->,font=\scriptsize,>=to, thin]
(l1) edge node[above,pos=.4]{$\delta$} (2) edge node[below]{$m_G^{*}$}   (l3)
(2) edge node[above,pos=.7]{$m_{H*}$} (l3) 
;
\end{tikzpicture}
\end{center}
where $\delta$ is obtained by adjunction from the composite of
$$
G\times G\times[G,H]\xrightarrow{\id\times\id\times\Delta}
G\times G\times[G,H]\times[G,H]\simeq
G\times [G,H]\times G\times[G,H]\xrightarrow{\mathop{\rm ev}\times\mathop{\rm ev}}H\times H.
$$
Thus we have defined the $\cV$-category
$$
\cV\daGp
$$
of $\cV$-groups. 

We also write
$$
\uHom_{\cV\daGp}(G,H)=\cV\daGp[G,H].
$$
It is proved in \cite[Proof of Theorem~2.1]{borceux-clementino-montoli} that 
$$
\uEnd_{\cV\da\Gp}(G)=\cV\da\Gp[G,G]
$$ 
is a submonoid of $[G,G]$. By analogy with \ref{int-isom}, we can define objects
$$
\uIsom_{\cV\da\Gp}(G,H), \uAut_{\cV\da\Gp}(G)\in\cV.
$$

\subsection{Group actions in cartesian closed categories}\label{V-actions}
Let $\cV$ be a complete cartesian closed category. 

Let $G$ be a $\cV$-group, and let $X$ be an object of $\cV$. A \emph{$\cV$-action of $G$ on $X$} is a $\cV$-monoid action
$$
\mu:G\times X\to X.
$$
%that makes the diagrams
% \begin{center}
% \begin{tikzpicture} 
%\matrix(m)[matrix of math nodes, row sep=2em, column sep=3em, text height=1.9ex, text depth=0.25ex]
% {
% |(1)|{G\times G\times X}		& |(2)|{G\times X} 	\\
% |(l1)|{G\times X}		& |(l2)|{X} 	\\
% }; 
%\path[->,font=\scriptsize,>=to, thin]
%(1) edge node[above]{$\id\times \mu$} (2) edge node[left]{$m_G\times\id$}   (l1)
%(2) edge node[right]{$\mu$} (l2) 
%(l1) edge  node[below]{$\mu$} (l2);
%\end{tikzpicture}
% \begin{tikzpicture} 
%\matrix(m)[matrix of math nodes, row sep=2em, column sep=3em, text height=1.9ex, text depth=0.25ex]
% {
% |(1)|{I\times X}		&  |(2)|{X}	\\
% |(l1)|{G\times X}		& |(l2)|{X} 	\\
% }; 
%\path[->,font=\scriptsize,>=to, thin]
%(2) edge (1) edge node[right]{$\id$} (l2) 
%(1)  edge node[left]{$e_G\times\id$}   (l1)
%(l1) edge  node[below]{$m_G$} (l2)
%;
%\end{tikzpicture}
%\end{center}
%commutative.

This yields a $\cV$-monoid morphism $\rho:G\to [X,X]$, when $[X,X]$ is equipped with monoid multiplication coming from composition  
$$c=c_{XXX}:[X,X]\times[X,X]\to[X,X].$$
It follows, using the fact that $G$ is a $\cV$-group, that the solid part of the diagram
\begin{center}
 \begin{tikzpicture} 
\matrix(m)[matrix of math nodes, row sep=1em, column sep=2em, text height=1.9ex, text depth=0.25ex]
 {
 |(0)|{G} & & \\
 & |(1)|{\uAut(X)}		& |(2)|{I} 	\\[1em]
 & |(l1)|{[X,X]\times[X,X]}		& |(l2)|{[X,X]\times[X,X]} 	\\
 }; 
\path[->,font=\scriptsize,>=to, thin]
(0) edge[dashed] (1)
(0) edge[bend right=15] node[left]{$(\rho,\rho\circ i_G)$} (l1)
(0) edge[bend left=10] (2)
(1) edge node[above]{} (2) edge node[left]{}   (l1)
(2) edge node[right]{$(u_X,u_X)$} (l2) 
(l1) edge  node[below]{$(c,c^{\mathop{\rm op}})$} (l2);
\end{tikzpicture}
\end{center}
commutes, whence we obtain the dashed homorphism of $\cV$-groups
$$
G\to\uAut(X).
$$
Conversely, such a homomorphism defines an action of $G$ on $X$.

\subsection{Group objects in a category tensored over a cartesian closed category}\label{C-V-groups}

Let $\cC$ be a $\cV$-category with products, which is tensored over a complete cartesian closed category $\cV$. 

A \emph{$\cC\da\cV$-group} is a group object in $\cC_0$. For  $\cC\da\cV$-groups $G$, $H$, the internal group homomorphism object 
$$
\cC\da\cV\daGp(G,H)\in\cV
$$
is the equaliser of the diagram
\begin{center}
 \begin{tikzpicture} 
\matrix(m)[matrix of math nodes, row sep=2em, column sep=3em, text height=1.5ex, text depth=0.25ex]
 {
  		& |(2)|{\cC(G\times G,H\times H)}  & 	\\
 |(l1)|{\cC(G,H)}	& 	& |(l3)|{\cC(G\times G,H)} 	\\
 }; 
\path[->,font=\scriptsize,>=to, thin]
(l1) edge node[above,pos=.4]{$\delta$} (2) edge node[below]{$m_G^{*}$}   (l3)
(2) edge node[above,pos=.7]{$m_{H*}$} (l3) 
;
\end{tikzpicture}
\end{center}
where $\delta$ is obtained by adjunction from the composite of
$$
G\times G\times\cC(G,H)\xrightarrow{\id\times\id\times\Delta}
G\times G\times\cC(G,H)\times\cC(G,H)\simeq
G\times \cC(G,H)\times G\times\cC(G,H)\xrightarrow{\mathop{\rm ev}\times\mathop{\rm ev}}H\times H.
$$
Thus we have defined a $\cV$-category
$$
\cC\da\cV\daGp
$$
of $\cC\da\cV$-groups.

\subsection{Module objects in cartesian closed categories}\label{V-modules}

Let $\cV$ be a cartesian closed category. 

A \emph{$\cV$-ring with identity} is an object $R\in\cV$, together with $\cV_0$-morphisms 
$$
a_R:R\times R\to R,\ \ m_R:R\times R\to R,\ \ z_R:I\to R,\ \ e_R:I\to R,
$$
satisfying natural commutative diagrams expressing the classical axioms relating addition, multiplication, zero and identity element. 

A $\cV$-abelian group $F$ is a \emph{(left) $R$-module,} if we have a $\cV_0$-morphism
$$
\mu:R\times F\to F,
$$
together with the natural commutative diagrams expressing the classical axioms for being a (left) module. Equivalently, passing through the motions of \ref{V-actions}, an $R$-module structure on $F$ can be defined through a $\cV$-ring homomorphism 
$$
R\to \uEnd_{\cV\da\Ab}(F).
$$
We also write
$$
(R,F)\in\cV\Mod.
$$
If $\cV$ is complete, we endow $\cV\Mod$ with the structure of a $\cV$-category as follows. Given $\cV$-modules $(R,F)$ and $(R',F')$, the internal module homomorphism object 
$$
\cV\Mod[(R,F),(R',F')]
$$
is the equaliser of the diagram 
\begin{center}
 \begin{tikzpicture} 
\matrix(m)[matrix of math nodes, row sep=2em, column sep=3em, text height=1.5ex, text depth=0.25ex]
 {
  		& |(2)|{[R\times F,R'\times F']}  & 	\\
 |(l1)|{{\cV\rng}[R,R']\times{\cV\da\Ab}[F,F']}	& |(l2)|{{\cV\da\Ab}[F,F']	} & |(l3)|{[R\times F,F']} 	\\
 }; 
\path[->,font=\scriptsize,>=to, thin]
(l1) edge node[above,pos=.4]{$\delta$} (2) edge (l2) 
(l2) edge node[below]{$\mu^{*}$}   (l3)
(2) edge node[above,pos=.7]{$\mu'_{*}$} (l3) 
;
\end{tikzpicture}
\end{center}
where $\delta$ is obtained by adjunction from the composite of
$$
R\times F\times\cV\rng[R,R']\times\cV\da\Ab[F,F']\simeq
R\times\cV\rng[R,R']\times F\times\cV\da\Ab[F,F']\xrightarrow{\mathop{\rm ev}\times\mathop{\rm ev}}R'\times F'.
$$

If $R$ is a $\cV$-ring, and $F, F'$ are left $R$-modules, we define the internal homomorphism object
$$
R\Mod[F,F']=\cV\da R\Mod[F,F']\in\cV
$$
as the pullback
\begin{center}
 \begin{tikzpicture} 
\matrix(m)[matrix of math nodes, row sep=2em, column sep=5em, text height=1.9ex, text depth=0.25ex]
 {
 |(1)|{R\Mod[F,F']}		& |(2)|{\cV\da\Ab[F,F']} 	\\
 |(l1)|{\cV\Mod[(R,F),(R,F')]}		& |(l2)|{\cV\rng[R,R]\times\cV\da\Ab[F,F']} 	\\
 }; 
\path[->,font=\scriptsize,>=to, thin]
(1) edge  (2) edge   (l1)
(2) edge  (l2) 
(l1) edge   (l2);
\end{tikzpicture}
\end{center}
where the right vertical arrow is the composite
$$
\cV\da\Ab[F,F']\simeq I\times\cV\da\Ab[F,F']\xrightarrow{u_R\times\id}\cV\rng[R,R]\times\cV\da\Ab[F,F'].
$$
Thus, we have defined the $\cV$-category
$$
R\Mod
$$
of $R$-modules.

Note that the additive structure on $F'$ induces a
the structure of a $\cV$-abelian group on the internal homomorphism object, i.e., that
$$
R\Mod[F,F']\in\cV\da\Ab.
$$

\subsection{Module objects in a category tensored over a cartesian closed category}\label{C-V-modules}

Let $\cC$ be a $\cV$-category with products, which is tensored over $\cV$. A $\cC\da\cV$-module is a module object in $\cC_0$. In the spirit of \ref{C-V-groups} and by analogy with \ref{V-modules}, given $\cC\da\cV$-modules $(R,F)$ and $(R',F')$, we can define the internal group homomorphism object 
$$
\cC\da\cV\Mod((R,F),(R',F'))\in\cV.
$$  
Hence we obtain a $\cV$-category
$$
\cC\da\cV\Mod
$$
of $\cC\da\cV$-modules. 

Given a $\cC\da\cV$-ring $R$, by analogy to \ref{V-modules}, we can also define the $\cV$-category
$$
\cC\da\cV\da R\Mod.
$$

\subsection{Group module objects}\label{group-modules}

Let $\cV$ be a cartesian closed category. Let $G$ be a $\cV$-group, $R$ a $\cV$-ring and $F$ an $R$-module. A structure of a $G\da R$-module on $F$ is a morphism of $\cV$-groups
$$
\rho:G\to \uAut_{R\Mod}(F).
$$ 
Let $F$ and $F'$ be $G\da R$-modules, and write $\mu:G\times F\to F$, $\mu':G\times F'\to F'$ for the associated actions via \ref{V-actions}. The internal $G\da R$-module homomorphism object
$$
G\da R\Mod[F,F']=\cV\da G\da R\Mod[F,F']\in\cV
$$
is the equaliser of the diagram
\begin{center}
 \begin{tikzpicture} 
\matrix(m)[matrix of math nodes, row sep=2em, column sep=3em, text height=1.5ex, text depth=0.25ex]
 {
  	&	& |(2)|{[G\times F,G\times F']}  & 	\\
 |(l1)|{R\Mod[F,F']} & |(l2)|{[F,F']}	& 	& |(l3)|{[G\times F,F']} 	\\
 }; 
\path[->,font=\scriptsize,>=to, thin]
(l1) edge (l2)
(l2) edge node[above,pos=.4]{$\delta$} (2) edge node[below]{$\mu^{*}$}   (l3)
(2) edge node[above,pos=.7]{$\mu'_{*}$} (l3) 
;
\end{tikzpicture}
\end{center}
where $\delta$ is obtained by adjunction from 
$$
G\times F\times[F,F']\xrightarrow{\id\times\mathop{\rm ev}} G\times F'.
$$

%Suppose now that $\cC$ is a $\cV$-category with products, which is tensored over $\cV$. Let $G$ be a $\cC\da\cV$-group, $R$ a $\cC\da\cV$-ring and $F$ an $R$-module. A structure of a $G\da R$-module is a morphism of $\cC\da\cV$-groups
%$$
%\rho:G\to\uAut_{\cC\da\cV\da R\Mod}(F)
%$$

\subsection{Enriched group presheaves}

Let $\cV$ be a complete cartesian closed category, let $\cC$ be a small $\cV$-category, and let $\hC=[\cC^\circ,\cV]$ be the $\cV$-category of $\cV$-presheaves on $\cC$. By \ref{enrich-presh-duality}, $\hC$ is complete cartesian closed. %and is (trivially) tensored over $\cV$. 
Hence, \ref{V-groups} gives a meaning to the notion of $\hC$-groups.

More explicitly, a \emph{$\hC$-group} is a group object in $\hC_0$, i.e., a $\cV$-presheaf $\bG$ on $\cC$, together with $\cV$-natural transformations
$$
\bG\times\bG\to \bG, \ \ \  \bG\to\bG, \ \ \ \ \underline{e}\to \bG
$$
affording diagrams analogous to those from \ref{V-groups}.

Since $\hC$ is complete cartesian closed, the construction from \ref{V-groups} endows the above category with group homomorphism presheaves
$$
\hC\daGp[\bG,\bH]\in\hC,
$$
for two $\hC$-groups $\bG$, $\bH$.

Moreover, the class of $\hC$-groups is endowed with a $\cV$-category structure with internal homomorphism objects
$$
\hC\daGp(\bG,\bH)=\Gamma(\hC\daGp[\bG,\bH])\simeq\hC\da\cV\daGp(\bG,\bH),
$$
where the last object is obtained by the construction from \ref{C-V-groups} in view of the fact that  $\hC$ is (trivially) tensored over $\cV$.

Thus we have obtained the $\cV$-category of $\hC$-groups
$$
\hC\daGp.
$$ 
\begin{proposition}\label{V-gp-prop}
%Equivalently, it is the $\cV$-functor category
We have an equivalence of $\cV$-categories
$$
\hC\daGp\simeq[\cC^\circ,\cV\daGp].
$$
If $\cV$ is a topos, for $X\in\cC$ and $\bG,\bH\in \hC\daGp$, we have
$$
\hC\daGp[\bG,\bH](X)\simeq \hC_{\Ov X}\daGp(i_X^*\bG,i_X^*\bH).
$$ 
\end{proposition}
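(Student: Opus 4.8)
The plan is to prove the two assertions separately: the first by reducing to an argument-wise computation and invoking that ends commute with limits, the second by combining the isomorphism of \ref{psh-int-hom-rel} with the fact that $i_X^*$ preserves finite products.

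\textbf{The equivalence $\hC\daGp\simeq[\cC^\circ,\cV\daGp]$.} On objects, a $\hC$-group is a group object in $\hC_0$, i.e.\ a $\cV$-presheaf $\bG$ together with $\cV$-natural transformations $\bG\times\bG\to\bG$, $\be\to\bG$, $\bG\to\bG$ satisfying the group axioms. Since finite products in $\hC$ are computed argument-wise ($(\bG\times\bG)(S)=\bG(S)\times\bG(S)$ and $\be(S)=e$) and the axioms are equalities of morphisms, such a structure is the same as a family of $\cV$-group structures on the objects $\bG(S)$; unwinding the $\cV$-naturality of the structure maps shows this amounts exactly to a lift of $\bG\colon\cC^\circ\to\cV$ to a $\cV$-functor $\cC^\circ\to\cV\daGp$. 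The same reasoning applies to $\cV$-natural transformations and is functorial, so it suffices to compare hom objects. By definition $\hC\daGp(\bG,\bH)=\Gamma(\hC\daGp[\bG,\bH])$, and applying \ref{V-groups} to the complete cartesian closed category $\hC$ (\ref{enrich-presh-duality}), $\hC\daGp[\bG,\bH]$ is the equaliser in $\hC$ of the two maps $[\bG,\bH]\rightrightarrows[\bG\times\bG,\bH]$ (precomposition with $m_\bG$, and postcomposition with $m_\bH$ after the diagonal). Now $\Gamma$ is a right adjoint (\ref{enrich-global-sect}), hence preserves this equaliser; moreover $\Gamma[\bF,\bG]\simeq\hC(\bF,\bG)$ by \ref{uhom-to-hom} and $\hC(\bF,\bG)\simeq\eend_{S\in\cC}\cV(\bF(S),\bG(S))$ by \ref{enr-fun-cat}. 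Therefore
\begin{align*}
\hC\daGp(\bG,\bH)
&\simeq\Eq\bigl(\eend_S\cV(\bG(S),\bH(S))\rightrightarrows\eend_S\cV(\bG(S)\times\bG(S),\bH(S))\bigr)\\
&\simeq\eend_S\Eq\bigl(\cV(\bG(S),\bH(S))\rightrightarrows\cV(\bG(S)\times\bG(S),\bH(S))\bigr)\\
&\simeq\eend_S\cV\daGp(\bG(S),\bH(S))=[\cC^\circ,\cV\daGp](\bG,\bH),
\end{align*}
the middle step because ends commute with limits, the last by \ref{V-groups} for $\cV$. One checks that the parallel maps correspond under these identifications and that composition and units are respected; these verifications are routine and I would omit them.

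\textbf{The formula at $X$.} Assume now $\cV$ is a topos. As above, $\hC\daGp[\bG,\bH]$ is the equaliser in $\hC$ of $[\bG,\bH]\rightrightarrows[\bG\times\bG,\bH]$, and since equalisers in $\hC$ are argument-wise, evaluation at $X$ gives $\hC\daGp[\bG,\bH](X)\simeq\Eq\bigl([\bG,\bH](X)\rightrightarrows[\bG\times\bG,\bH](X)\bigr)$. By \ref{psh-int-hom-rel} there is a natural isomorphism $[\bF,\bG](X)\simeq\widehat{\cC_{\Ov X}}(i_X^*\bF,i_X^*\bG)$; and $i_X^*$, being the inverse image part of the geometric morphism attached to the discrete fibration $\cC_{\Ov X}\to\cC_{\Ov I}$ (\ref{funct-int-presh}, \ref{discr-fibs}), preserves finite products, so $i_X^*\bG$ is a group object in $\widehat{\cC_{\Ov X}}$ and $i_X^*(\bG\times\bG)\simeq i_X^*\bG\times i_X^*\bG$. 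Hence the displayed equaliser is isomorphic to $\Eq\bigl(\widehat{\cC_{\Ov X}}(i_X^*\bG,i_X^*\bH)\rightrightarrows\widehat{\cC_{\Ov X}}(i_X^*\bG\times i_X^*\bG,i_X^*\bH)\bigr)$, which, by the same $\Gamma$-preserves-equalisers argument (now using \ref{uhom-to-hom} and \ref{V-groups} for the topos $\widehat{\cC_{\Ov X}}$, exactly as in the first part), equals $\Gamma\bigl(\widehat{\cC_{\Ov X}}\daGp[i_X^*\bG,i_X^*\bH]\bigr)=\widehat{\cC_{\Ov X}}\daGp(i_X^*\bG,i_X^*\bH)$.

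\textbf{Main obstacle.} The one point that needs care — and which I expect to be the crux — is verifying that the two parallel maps are intertwined by the isomorphisms $\Gamma[\bF,\bG]\simeq\hC(\bF,\bG)$ and $[\bF,\bG](X)\simeq\widehat{\cC_{\Ov X}}(i_X^*\bF,i_X^*\bG)$, i.e.\ that these send ``precompose with $m_\bG$'' to ``precompose with the multiplication of $i_X^*\bG$'' (resp.\ of the $\bG(S)$) and likewise on the target. This reduces to $i_X^*$ preserving finite products and to the isomorphism of \ref{psh-int-hom-rel} being the one induced by the $\cV$-equivalence $\alpha$ of \ref{th-groth} applied to $i_X^*$; granting this, the relevant equaliser diagrams coincide on the nose and both conclusions follow.
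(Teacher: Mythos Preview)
Your proof is correct and follows essentially the same approach as the paper's: both arguments reduce to the fact that ends commute with equalisers (for the first claim) and that evaluation at $X$ preserves equalisers together with the identification of \ref{psh-int-hom-rel} (for the second). The paper's version is marginally more streamlined in that it computes $[\cC^\op,\cV\daGp](\bG,\bH)$ directly as $\eend_X\cV\daGp[\bG X,\bH X]$ and pulls the equaliser through the end, rather than routing through $\Gamma$ and \ref{uhom-to-hom}; and for the second claim it uses left-exactness of $[\h_X,\mathord{-}]$ rather than arguing via argument-wise equalisers, but these are cosmetic differences.
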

\begin{proof}
By definition,
\begin{multline*}
[\cC^\op,\cV\daGp](\bG,\bH) =\int_{X\in\cC}\cV\daGp[\bG X,\bH X]\\ 
\simeq
\int_{X\in\cC}
\begin{tikzcd}[cramped, sep=small, ampersand replacement=\&]
{\Eq\left([\bG X,\bH X]\right.}\ar[yshift=2pt]{r}{} \ar[yshift=-2pt]{r}[swap]{} \&{\left.[\bG X\times \bG X,\bH X]\right)}
\end{tikzcd}\\
\simeq
\begin{tikzcd}[cramped, sep=small, ampersand replacement=\&]
{\Eq\left(\int_X[\bG X,\bH X]\right.}\ar[yshift=2pt]{r}{} \ar[yshift=-2pt]{r}[swap]{} \&{\left.\int_X[\bG X\times \bG X,\bH X]\right)}
\end{tikzcd}\\
\simeq
\begin{tikzcd}[cramped, sep=small, ampersand replacement=\&]
{\Eq\left(\hC(\bG,\bH)\right.}\ar[yshift=2pt]{r}{} \ar[yshift=-2pt]{r}[swap]{} \&{\left.\hC(\bG\times\bG,\bH)\right)}
\end{tikzcd}
\simeq \hC\daGp(\bG,\bH).
\end{multline*}
For the second claim, using the fact that $[\h_X,\mathord{-}]$ is left-exact and \ref{psh-int-hom-rel},
\begin{multline*}
\hC\daGp[\bG,\bH](X)=[\h_X,\hC\daGp[\bG,\bH]]\simeq
[\h_X,\begin{tikzcd}[cramped, sep=small, ampersand replacement=\&]
{\Eq\left([\bG,\bH]\right.}\ar[yshift=2pt]{r}{} \ar[yshift=-2pt]{r}[swap]{} \&{\left.[\bG\times\bG,\bH]\right)}
\end{tikzcd}]\\
\simeq 
\begin{tikzcd}[cramped, sep=small, ampersand replacement=\&]
{\Eq\left([\bG,\bH](X)\right.}\ar[yshift=2pt]{r}{} \ar[yshift=-2pt]{r}[swap]{} \&{\left.[\bG\times\bG,\bH](X)\right)}
\end{tikzcd}\\
\simeq 
\begin{tikzcd}[cramped, sep=small, ampersand replacement=\&]
{\Eq\left(\hC_{\Ov X}(\bG_X,\bH_X)\right.}\ar[yshift=2pt]{r}{} \ar[yshift=-2pt]{r}[swap]{} \&{\left.\hC_{\Ov X}(\bG_X\times\bG_X,\bH_X)\right)}
\end{tikzcd}
\simeq \hC_{\Ov X}\daGp(\bG_X,\bH_X).
\end{multline*}
\end{proof}

Using the principle of \ref{int-isom}, we  define the objects
$$
\Isom_{\hC\daGp}(\bG,\bH), \Aut_{\hC\daGp}(\bG)\in\cV,
$$
as well as $\cV$-presheaves
$$
\uIsom_{\hC\daGp}(\bG,\bH), \uAut_{\hC\daGp}(\bG)\in\hC.
$$
 
We say that an object $G\in\cC$ is a $\cC$-group, if the associated $\cV$-presheaf $\h_G$ is a $\hC$-group. For $\cC$-groups $G$, $H$, we let
$$
\cC\da\cV\daGp(G,H)=\hC\da\cV\daGp(\h_G,\h_H)\in\cV.
%, \ \ \cC\daGp(G,H)=\hC\daGp(\h_G,\h_H)\in\hC.
$$
We have thus defined the $\cV$-category of $\cC$-groups
$$
\cC\daGp.
$$

\subsection{Enriched group presheaf actions}\label{enr-actions}

Let $\cV$ be a complete cartesian closed category, and let $\cC$ be a $\cV$-category.

Let $\bE\in \hC$ and $\bG\in\hC\daGp$. An \emph{action of $\bG$ on $\bE$} is a $\cV$-natural transformation between $\cV$-presheaves
$$
\mu:\bG\times\bE\to\bE,
$$
that fits the diagrams analogous to those from \ref{V-actions}.

In view of \ref{enrich-presh-duality}, this yields a morphism of $\hC$-monoids $\bG\to\uEnd(\bE)$, and, by the argument of \ref{V-actions} and using the fact that $\bG$ is a $\hC$-group, we see that this morphism lands in $\uAut(\bE)$. Thus, an action of $\bG$ on $\bE$ is equivalently given by a $\hC$-group homomorphism
$$
\rho:\bG\to\uAut(\bE).
$$

\subsection{Enriched module presheaves}

Let $\cV$ be a complete cartesian closed category, and let $\cC$ be a $\cV$-category.

Using the conventions from \ref{V-modules}, let $\bO$ be a $\hC$-ring and let $\bF$ be a $\hC$-abelian group. We say that $\bF$ is a \emph{$\bO$-module}, if the pair $(\bO,\bF)$ is a $\hC$-module, or, equivalently, if the pair defines a $\cV$-functor 
$$
(\bO,\bF):\cC^\circ\to \cV\Mod.
$$
Using the fact that $\hC$ is a complete cartesian closed $\cV$-category which is tensored over $\cV$ and the results of \ref{V-modules} and \ref{C-V-modules}, the category
$$
\hC\Mod
$$
is a $\cV$-category with internal homomorphism $\cV$-presheaves 
$$
\hC\Mod[(\bO,\bF),(\bO',\bF')]\in\hC.
$$
and with internal homomorphism objects
\begin{multline*}
\hC\Mod((\bO,\bF),(\bO',\bF'))=
\hC\Mod[(\bO,\bF),(\bO',\bF')](I)\\
\simeq\hC\da\cV\Mod((\bO,\bF),(\bO',\bF'))\in\cV,
\end{multline*}

Equivalently, it is the $\cV$-functor category
$$
\hC\Mod=[\cC^\circ,\cV\Mod].
$$

In particular, for a fixed $\hC$-ring $\bO$, the class 
$$\bO\Mod$$ 
of $\bO$-modules can be made into a $\cV$-category with internal homomorphism objects
$$
\bO\Mod(\bF,\bF')=\hC\da\bO\Mod(\bF,\bF')\in\cV\da\Ab,
$$
and with homomorphism $\cV$-presheaves
$$
\bO\Mod[\bF,\bF']\in\hC\da\Ab.
$$

\subsection{Enriched group module presheaves}\label{enr-gp-mod}

Let $\cV$ be a complete cartesian closed category, and let $\cC$ be a $\cV$-category.

%\begin{definition}
Suppose $\bG$ is a $\hC$-group, $\bO$ is a $\hC$-ring, and $\bF$ is an $\bO$-module.
The structure of a $\bG\da\bO$-module on $\bF$ is given by a morphism of $\hC$-groups
$$
\rho:\bG\to \uAut_{\bO\Mod}(\bF).
$$
%Let $\bF$ and $\bF'$ be two $\bG\da\bO$-modules, and write $\mu:\bG\times\bF\to\bF$, $\mu':\bG\times\bF'\to\bF'$ for the associated actions via \ref{enr-actions}. 
%\end{definition}
Using the fact that $\hC$ is a complete cartesian closed category, \ref{group-modules} yields the $\cV$-category 
$$
\bG\da\bO\Mod=\hC\da\bG\da\bO\Mod
$$
with internal group module homomorphism $\cV$-presheaves
$$
\bG\da\bO\Mod[\bF,\bF']=\hC\da\bG\da\bO\Mod[\bF,\bF']\in\hC\da\Ab
$$
and with internal group module homomorphism objects
$$
\bG\da\bO\Mod(\bF,\bF')=\bG\da\bO\Mod[\bF,\bF'](I)\in\cV\da\Ab.
$$

\subsection{Functorial algebraic geometry in a topos}

Let $\cE$ be a topos. Let $$\cE\rng$$ be the category of ring objects (commutative, with identity) in $\cE$, considered as an $\cE$-category with internal hom objects
$$
\cE\rng[R,R']\in\cE.
$$
Let $k\in\cE\rng$, and write
$$
k\Alg
$$
for the $\cE$-category of (commutative unitary) $k$-algebra objects in $\cE$. 

The category of affine schemes in $\cE$ is
$$
\cE\aff\simeq \cE\rng^\op,
$$
often identified (by $\cE$-Yoneda) with the subcategory of the category of $\cE$-functors $[\cE\rng,\cE]$ consisting of representable $\cE$-functors
$$
\h^A:\cE\rng\to \cE, \ \ \h^A(R)=\cE\rng[A,R],
$$
for $A\in \cE\rng$. If $\bX=\h^A$ is the affine scheme associated to the ring $A$, and $R\in\cE\rng$, then
$$
\bX(R)=\h^A(R)\in\cE
$$
is the object of \emph{$R$-rational points of $X$}.

The $\cE$-category of $\cE$-presheaves on $\cE\aff$ is a topos, and we have an enriched Yoneda embedding 
$$
\cE\aff\to \widehat{\cE\aff}=[\cE\aff^\op,\cE]\simeq [\cE\rng,\cE].
$$

Analogously, we define the $\cE$-category of $k$-affine schemes
$$
k\aff\simeq k\Alg^\op \to [k\Alg,\cE].
$$
For $A,R\in k\Alg$, if $\bX=\h^A$ is the affine $k$-scheme associated to $A$, then
$$
\bX[R]=k\Alg[A,R]
$$
is the object of \emph{$R$-rational points of the $k$-scheme $\bX$}. The \emph{set of $R$-rational points of $\bX$} is 
$$
\bX(R)=k\Alg(A,R)\simeq\Gamma(\bX[R]).
$$

\section{Topos cohomology}\label{top-coh}

\subsection{Ringed topoi}\label{ringed-top}

A \emph{ringed topos} is a pair $(\cE,A)$ consisting of a Grothendieck topos $\cE$ and an $\cE$-ring $A$. The category 
$$
\lsub{A}{\cE} =A\Mod 
$$  
of (unitary) left $A$-modules is an abelian category satisfying axioms (AB5) and (AB3) (the existence of small products), as shown in \cite[II~6.7]{sga4.1}, and therefore has enough injectives.

 For $M,N\in\lsub{A}{\cE}$, we write
$$
\lsub{A}{\cE}(M,N)=\Hom_A(M,N)
$$ 
for the abelian group of $A$-modules from $M$ to $N$. If $A$ is commutative, the group $\Hom_A(M,N)$ has a structure of a $\Gamma(A)$-module. 

Taking for $A$ the constant sheaf $\Z$, we obtain the category
$$
\lsub{\Z}{\cE}=\cE\da\Ab
$$
of abelian groups in $\cE$.

A morphism $\phi:A\to B$ of $\cE$-rings yields the \emph{restriction of scalars by $\phi$} functor
$$
\mathop{\rm Res}(\phi):\lsub{B}{\cE}\to \lsub{A}{\cE}.
$$

A \emph{morphism $u:(\cE,A)\to(\cE',A')$ of ringed topoi} is a pair $(m,\theta)$ consisting of a topos morphism $m:\cE\to\cE'$ and an $\cE$-ring morphism $\theta:m^*A'\to A$. Equivalently, a morphism can be given by $m$ as above and a $\cE$-ring morphism $\Theta:A'\to m_*A$.

A morphism $u$ of ringed topoi as above gives rise to the \emph{direct image functor for modules}
$$
u_*:\lsub{A}{\cE}\to\lsub{A'}{\cE'},
$$
as well as the \emph{inverse image functor for modules}
$$
u^*:\lsub{A'}{\cE'}\to \lsub{A}{\cE}
$$
as follows.
Given $M\in \lsub{A}{\cE}$, the object $m_*M$ is canonically an $m_*A$-module, so we define
$$
u_*(M)=\mathop{\rm Res}(\Theta)m_*A. %\in \lsub{A'}{\cE'}.
$$
Conversely, for $N\in \lsub{A'}{\cE'}$, the object $m^*N$ is naturally a $m^*A'$-module, and we let
$$
u^*N=A\otimes_{m^*A'}m^*N,
$$
where $A$ is given the $m^*A'$-module structure via $\theta$.

While the `underlying sheaf of sets' of $u_*M$ is $m_*M$, the underlying set-valued sheaf of $u^*N$ is generally non-isomorphic to $m^*N$. Hence, we will write $u^{-1}$ for $m^*$ in the sequel to emphasise this distinction between the two operations.

These functors form an adjunction
$$
u^*\dashv u_*,
$$
%i.e., we have isomorphisms 
%$$
%
%$$
whence we conclude that $u_*$ is left exact, and $u^*$ is right exact. While $u^{-1}$ is always exact, $u^*$ is exact when the canonical morphism $u^{-1}A\to A$ is flat.

In particular, if $X\in\cE$ and $i_X:\cE_{\ov X}\to E$ is the localisation morphism, then $i_X^*A=A|X$ is a ring in $\cE_{\ov X}$ and we obtain the morphism of ringed topoi
$$
(\cE_{\ov X},A|X)\to (\cE,A).
$$
In this case there is an \emph{extension by zero functor} ${i_X}_!:\lsub{A|X}{\cE_{\ov X}}\to\lsub{A}{\cE}$, which is exact, faithful, commutes with colimits, and
$$
{i_X}_!\dashv i_X^*\dashv {i_X}_*,
$$
so $i_X^*$ is exact.

The \emph{free $A$-module generated by $X\in\cE$} is the $A$-module
$$
A_X={i_X}_!(A|X).
$$
There is a canonical isomorphism 
$$
\cE(X,M)\simeq\lsub{A}{\cE}(A_X,M),
$$
functorial in $M\in\lsub{A}{\cE}$.

All the above results have analogues for the category
$$
\cE_A
$$
of right $A$-modules in $\cE$.

\subsection{Operations on modules in topoi}\label{mod-ops-top}

Let $(\cE,A)$ be a ringed topos. Let $M,N\in \lsub{A}{\cE}$. For an object $X\in\cE$, the object $[X,N]={i_X}_*i_X^* N$ is naturally an $A$-module, so we may consider the functor 
$$
\cE\to \cE\da\Ab, \ \ \
X\mapsto \Hom_A(M,[X,N]).
$$
It is representable by an object
$$
[M,N]_A=A\Mod[M,N]=\uHom_A(M,N)\in \cE\da\Ab,
$$
and we have a canonical isomorphism
$$
[M,N]_A(X)  \simeq \Hom_{A|X}(i_X^*M,i_X^*N).
$$
%\begin{remark}
%The theory of modules is \emph{algebraic}, i.e., expressed by commutativity of diagrams involving finite limits. CONTINUE!!!!!??????
%In \ref{V-modules}, we defined 
%\end{remark}

Moreover, for an $A|X$-module $P$, we obtain natural isomorphisms
\begin{align*}
 i_X^*[M,N]_A & %\stackrel{\Phi_X}{\simeq} 
 \simeq [i_X^*M,i_X^*N]_{A|X},\\
{i_X}_*[i_X^*M,P]_{A|X} & \simeq [M,{i_X}_*P]_A,\\
[{i_X}_!P,N]_A & %\stackrel{\Xi_X}{\simeq}
\simeq {i_X}_*[P,i_X^*N]_{A|X}.
\end{align*}

If $A, B, C$ are rings in $\cE$, $M$ an $A\da B$-bimodule, $N$ an $A\da C$-bimodule, then $[M,N]_A$ has a structure of a $B\da C$-bimodule. If $A$ is commutative and $M$, $N$ are $A$-modules, then $[M,N]_A$ is naturally an $A$-module.

Let $M\in\cE_A$, $N\in\lsub{A}{\cE}$. The functor
$$
\cE\da\Ab\to\cE\da\Ab, \ \ \ P\mapsto \Hom_A(N,[M,P]_\Z)
$$
is representable by an object 
$$
M\otimes_AN\in\cE\da\Ab,
$$
i.e., 
$$
\Hom_\Z(M\otimes_AN,P)\simeq\Hom_A(N,[M,P]_\Z).
$$

For $X\in\cE$, we have a canonical isomorphism
$$
i_X^*(M\otimes_AN)\simeq i_X^*M\otimes_{A|X}i_X^*N.
$$
For $P\in \cE_{A|X}$ and $Q\in\lsub{A|X}{\cE}$, we have the projection formulae
\begin{align*}
{i_X}_!(P\otimes_{A|X}i_X^*N) & \simeq {i_X}_!P\otimes_AN,\\
{i_X}_!(i_X^*M\otimes_{A|X}Q) & \simeq M\otimes_A {i_X}_!Q.
\end{align*}
Writing $A_X$ for the free left $A$-module generated by $X$, there is a canonical isomorphism
$$
M\otimes_AA_X\simeq {i_X}_!i_X^*M.
$$

If $A,B,C$ are rings in $\cE$, $M$ a $B\da A$-bimodule and $N$ an $A\da C$-bimodule, then $M\otimes_AN$ is canonically a $B\da C$-bimodule. If $A$ is commutative and $M,N$ are $A$-modules, then $M\otimes_AN$ is an $A$-module.

\begin{proposition}
Let $A$ and $B$ be rings in a Grothendieck topos $\cE$, $M\in\cE_A$, $N$ an $A\da B$-bimodule and $P\in\cE_B$. There is a canonical isomorphism
$$
\Hom_B(M\otimes_AN,P)\simeq\Hom_A(M,[N,P]_B).
$$
\end{proposition}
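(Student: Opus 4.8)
The statement is the standard tensor–hom adjunction for bimodules, now in the internal setting of a ringed topos, and the natural approach is to reduce it to the two adjunctions already established in \ref{mod-ops-top}: the representability defining $M\otimes_A N$ as an object of $\cE\da\Ab$, and the representability defining the internal hom $[N,P]_B$ of $B$-modules. First I would recall that for $M\in\cE_A$, $N$ an $A\da B$-bimodule and $P\in\cE_B$, the object $M\otimes_A N$ carries a canonical right $B$-module structure (from the $B$-action on $N$), and $[N,P]_B$ carries a canonical right $A$-module structure (from the $A$-action on $N$), so both sides of the claimed isomorphism make sense as $\Gamma(\text{base ring})$-modules, or at least as sets/abelian groups, and it suffices to produce a natural bijection of the underlying sets functorial in all arguments.

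The key steps, in order: (1) Start from $\Hom_A(M,[N,P]_B)$ and unwind the definition of $[N,P]_B=B\Mod[N,P]$ from \ref{mod-ops-top}: a morphism $M\to [N,P]_B$ of right $A$-modules is, via the representability $[N,P]_B(X)\simeq\Hom_{B|X}(i_X^*N,i_X^*P)$ together with enriched Yoneda, the same as a suitably $A$-bilinear pairing $M\otimes_{\Z} N\to P$ landing in $B$-module maps; concretely it is an element of $\Hom_{\Z}(N,[M,P]_{\Z})$ that is simultaneously $A$-balanced and $B$-linear. (2) Invoke the defining universal property of $M\otimes_A N$ as the representing object for $P\mapsto\Hom_A(N,[M,P]_\Z)$ to rewrite the $A$-balanced part as a map out of $M\otimes_A N$. (3) Match the residual $B$-linearity on the two sides: a $\Z$-linear map $M\otimes_A N\to P$ is $B$-linear for the induced $B$-structure on $M\otimes_A N$ precisely when the corresponding pairing is $B$-linear in the last variable, which is exactly the condition cutting out $[N,P]_B\subseteq[N,P]_\Z$. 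This gives the bijection; naturality in $M$, $N$, $P$ is then routine from the naturality of the two universal properties. I would also remark that one may localise at each $X\in\cE$ and use $i_X^*(M\otimes_A N)\simeq i_X^*M\otimes_{A|X}i_X^*N$ and $i_X^*[N,P]_B\simeq[i_X^*N,i_X^*P]_{B|X}$ to reduce, if desired, to checking the statement objectwise, though the direct argument via the two representabilities avoids this.

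Alternatively, and perhaps more cleanly, I would present it purely formally: by definition $M\otimes_A N$ represents $Q\mapsto\Hom_A(N,[M,Q]_\Z)$ on $\cE\da\Ab$, so for $P\in\cE_B$ with underlying abelian group $|P|$ we have $\Hom_\Z(M\otimes_A N,|P|)\simeq\Hom_A(N,[M,|P|]_\Z)$; then cut down both sides to the $B$-linear part, using that the $B$-action on $M\otimes_A N$ is transported from the $B$-action on $N$, so that a $\Z$-linear $f:M\otimes_A N\to P$ is $B$-linear iff the adjoint map $N\to[M,|P|]_\Z$ lands in $B\Mod[N,P]=[N,P]_B$. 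The composite of the two identifications is the desired isomorphism.

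\textbf{Main obstacle.} The only real subtlety is bookkeeping the various module structures internally: making precise that "the $B$-action on $M\otimes_A N$" and "the $A$-action on $[N,P]_B$" are exactly the structures transported through the representability isomorphisms, so that the $B$-linearity conditions on the two sides correspond under the adjunction. This is morally transparent but requires care because $\otimes_A$ and $[-,-]_B$ are defined by representability rather than by explicit elements, so one must track the actions through evaluation/unit morphisms; localising at objects $X\in\cE$ and using the projection formulae and $i_X^*$-compatibilities from \ref{mod-ops-top} is the cleanest way to pin this down if a fully rigorous verification is wanted. Everything else is a formal consequence of the two universal properties already proved.
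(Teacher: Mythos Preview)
The paper states this proposition without proof; it is quoted as a standard fact (ultimately from SGA4, Expos\'e~IV) in the middle of the review section \ref{mod-ops-top}, with the corollary on monoidal closedness following immediately. So there is no paper proof to compare against.

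Your approach is correct and is exactly the standard argument: combine the defining representability of $M\otimes_A N$ with that of $[N,P]_B$, and check that the residual $B$-linearity condition on one side matches the condition on the other. One small bookkeeping point: the paper's definition of $M\otimes_A N$ literally gives $\Hom_\Z(M\otimes_A N,Q)\simeq\Hom_A(N,[M,Q]_\Z)$ with $N$ the \emph{left} $A$-module, whereas you want to land in $\Hom_A(M,[N,-])$ with $M$ the \emph{right} $A$-module. You need the symmetric form of the adjunction, which follows either by noting that $M\otimes_A N$ also represents $Q\mapsto\Hom_{\cE_A}(M,[N,Q]_\Z)$ (immediate from the construction, or from the symmetry $\cE_A\simeq\lsub{A^{\op}}{\cE}$), or by running your argument with the roles of $M$ and $N$ swapped and then transposing. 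Once that is in hand, your ``cut down to the $B$-linear part'' step goes through exactly as you describe, and the identification of the transported $B$-action on $M\otimes_A N$ with the one coming from $N$ is the routine verification you flagged.
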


\begin{corollary}\label{module-sh-mon-cl}
Let $(\cE,A)$ be a ringed topos with $A$ commutative. The category $\lsub{A}{\cE}$ of $A$-modules is monoidal closed.
\end{corollary}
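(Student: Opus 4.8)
The plan is to exhibit $(\lsub{A}{\cE},\otimes_A)$ as a symmetric monoidal category with unit object $A$ and internal hom $[-,-]_A$, and then read off closedness from the preceding Proposition specialised to $B=A$. No genuinely new construction is needed: the tensor product $\otimes_A$, the internal hom $[-,-]_A$, and all the base-change and projection formulae relating them to $i_X^*$, ${i_X}_!$ and ${i_X}_*$ were already set up in \ref{mod-ops-top}.

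First I would record the monoidal data. Since $A$ is commutative, every left $A$-module is canonically a right $A$-module and an $A\da A$-bimodule, so for $M,N\in\lsub{A}{\cE}$ the tensor product $M\otimes_AN$ of \ref{mod-ops-top} is defined and, as noted there, carries a natural $A$-module structure; the unit object is $A$ itself, with unit isomorphisms $A\otimes_AM\simeq M\simeq M\otimes_AA$ coming from the module action, and the symmetry $M\otimes_AN\simeq N\otimes_AM$ is induced by the braiding of $\cE\da\Ab$, commutativity of $A$ identifying the two defining coequalisers. For the associator, rather than manipulate coequalisers directly I would use the preceding Proposition with $B=A$ twice: it shows that both $(M\otimes_AN)\otimes_AP$ and $M\otimes_A(N\otimes_AP)$ represent the functor
$$
Q\mapsto \Hom_A\bigl(M,[N,[P,Q]_A]_A\bigr),
$$
where the identification $[N\otimes_AP,Q]_A\simeq[N,[P,Q]_A]_A$ is obtained by evaluating at each $X$ and invoking $i_X^*(N\otimes_AP)\simeq i_X^*N\otimes_{A|X}i_X^*P$ together with $i_X^*[P,Q]_A\simeq[i_X^*P,i_X^*Q]_{A|X}$ from \ref{mod-ops-top}; Yoneda then yields a canonical associativity isomorphism. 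The pentagon and triangle coherence diagrams are then checked by applying the exact functors $i_X^*$ for $X$ ranging over a generating family of $\cE$, where they become the classical identities for modules over the commutative ring $A|X$ in $\cE_{\ov X}$.

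With $(\lsub{A}{\cE},\otimes_A,A)$ a symmetric monoidal category, it remains to note that $[M,N]_A=\uHom_A(M,N)$ is again an $A$-module when $A$ is commutative (\ref{mod-ops-top}), so $[-,-]_A$ is a bifunctor $\lsub{A}{\cE}^\op\times\lsub{A}{\cE}\to\lsub{A}{\cE}$, and to apply the preceding Proposition with $B=A$ to get, for $M,N,P\in\lsub{A}{\cE}$, a natural isomorphism
$$
\Hom_A(M\otimes_AN,P)\simeq\Hom_A(M,[N,P]_A).
$$
This exhibits $(-)\otimes_AN$ as left adjoint to $[N,-]_A$ for every $N$, which is exactly the statement that $\lsub{A}{\cE}$ is monoidal closed. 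The main obstacle is purely the bookkeeping of the first step — verifying that $\otimes_A$ assembles into a \emph{coherent} symmetric monoidal structure rather than merely a bifunctor with the adjunction — and the device that defuses it is that every structural isomorphism and coherence diagram can be tested after the exact localisation functors $i_X^*$, reducing everything to the familiar commutative-ring case.
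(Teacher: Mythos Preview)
Your proposal is correct and follows the same route as the paper: the corollary is stated without explicit proof, being an immediate specialisation of the preceding Proposition with $B=A$, which is exactly what you do. Your additional care in verifying that $\otimes_A$ assembles into a coherent symmetric monoidal structure (via localisation to reduce to the classical commutative-ring case) is more detail than the paper provides, since it treats this as standard background from \cite{sga4.1}.
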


Let $u:(\cE,A)\to(\cE',A')$ be a morphism of ringed topoi. We have an isomorphism
$$
u_*[u^*N,M]_A\simeq [N,u_*M]_{A'},
$$
natural in $M\in \lsub{A}{\cE}$ and $N\in\lsub{A'}{\cE'}$.

Moreover, there is an isomorphism
$$
u^*A'_X\simeq A_{u^{-1}(X)},
$$
functorial in $X\in\cE'$.

If $A'$ is commutative and the morphism $u^{-1}A'\to A$ is central, there is an isomorphism
$$
u^*M\otimes_A u^*N\simeq u^*(M\otimes_{A'}N),
$$
natural in $M\in\cE'_{A'}$ and $N\in\lsub{A'}{\cE'}$.

\subsection{Topos cohomology}\label{topos-coh}

Let $(\cE,A)$ be a ringed topos, and let $M,N\in \lsub{A}{\cE}$. Given \ref{ringed-top}, we may write
$$
\Ext^n_A(\cE; M,N)
$$
for the value at $N$ of the $n$-th right-derived functor of the functor $\Hom_A(M,\mathord{-})$. By \cite{groth-tohoku}, the functors $\Ext^n_A(\cE; M,N)$ form a $\delta$-functor in $N$, and a contravariant $\delta$-functor in $M$.

Let $X\in\cE$ and write $A_X$ for the free $A$-module generated by $X$. We define
$$
\tH^n(X,N)=\Ext^n_A(\cE; A_X,N).
$$
In other words, the functor $\tH^n(X,\mathord{-})$ is the $n$-th right-derived functor of the functor $\Gamma(X,\mathord{-})=\cE(X,\mathord{-})=\Hom_A(A_X,\mathord{-})$.

In particular, we write
$$
\tH^n(\cE,N)=\tH^n(e,N)=\Ext^n(\cE;A,N).
$$

If $i_X:\cE_{\ov X}\to\cE$ is a localisation morphism, we explained  in \ref{ringed-top} that the functor $i_X^*$ is exact and admits an exact left adjoint ${i_X}_!$, so we conclude that it takes injective modules to injective modules, and we obtain an isomorphism
$$
\Ext^n_{A|X}(\cE_{\ov X};M,i_X^*N)\simeq \Ext_A^n(\cE;{i_X}_!M,N),
$$ 
natural in $N\in\lsub{A}{\cE}$ and $M\in\lsub{A|X}{\cE_{\ov X}}$.

In particular, we obtain a canonical isomorphism
$$
\tH^n(\cE_{\ov X},i_X^*N)\simeq \tH^n(X,N).
$$
This motivates the definition
$$
\Ext^n_A(X;M,N)\simeq\Ext^n_{A|X}(\cE_{\ov X};i_X^*M, i_X^*N),
$$
for $M,N\in\lsub{A}{\cE}$. 

The functors $\Ext^n_A(X;M,\mathord{-})$ are the right-derived functors of the functor $N\mapsto\Hom_{A|X}(i_X^*M, i_X^*N)$, and the functors $(M,N)\mapsto\Ext^n_A(X;M,N)$ for $n\geq0$ form a $\delta$-functor in both variables.

In the special case when $A=Z$, the object of integers in $\cE$, we have that $\lsub{Z}{\cE}=\Ab(\cE)$ and we obtain definitions of
$$
\Ext^n(\cE,M,N) \text{ and } \tH^n(\cE,\N)
$$
as values at $N$ of the derived functors of $\Ab(\cE)(M,\mathord{-})$ and $\Gamma=\cE(1,\mathord{-})$, for $M,N\in\Ab(\cE)$. Equivalently,
$$
\tH^n(\cE,N)\simeq\Ext^n_{\Ab(\cE)}(\cE,Z,N).
$$
Given an object $X\in\cE$, 
$$
\tH^n(X,\mathord{-})
$$
is the $n$-th derived functor of $\Gamma(X,\mathord{-})=\cE(X,\mathord{-}):\Ab(\cE)\to\Ab$.
%and we have a natural isomorphism
%$$
%\tH^n(X,M)\simeq H^n(\cE_{\ov X},X^*M),
%$$

\subsection{Geometric morphisms and cohomology}\label{geom-m-coh}

Let $f:\cF\to\cE$ be a geometric morphism between Grothendieck topoi, let $A\in\Ab(\cE)$ and $B\in\Ab(\cF)$. 

For each $n$, there is a homomorphism
$$
f^*:\tH^n(\cE,A)\to H^n(\cF,f^*A),
$$
which is functorial in $f$ and natural in $A$.

Moreover, we have the \emph{Leray spectral sequence}
$$
\tH^p(\cE,R^qf_*(B))\Rightarrow \tH^{p+q}(\cF,B).
$$ 
It is obtained from the Grothedieck spectral sequence for the composite of direct image functors $\Gamma_\cF=\Gamma_\cE\circ f_*$.

If $\cE=\sh(\cC,J)$, we have an explicit description of $R^nf_*(B)$ as the $J$-sheaf associated to the presheaf 
$$
U\mapsto \tH^n(\cF,f^* l(U), B)
$$
on $\cC$, where $l:\cC\to \sh(\cC,J)$ is the canonical functor.

\subsection{Internal cohomology}\label{int-ext}

Let $(\cE,A)$ be a ringed topos, and let $M$ be an $A$-module. The functor
$$
N\mapsto [M,N]_A=\uHom_A(M,N)
$$
from $\lsub{A}{\cE}$ to $\cE\ab$ is left exact and we write
$$
\uExt_A^n(M,N)
$$
for its right-derived functors. 

By definition, we have
$$
\tH^0(X,\uExt_A^0(M,N))=\Ext^0_A(X;M,N)\simeq \Hom_{A|X}(M|X,N|X),
$$
which yields a spectral sequence
$$
E_2^{p,q}=\tH^p(X,\uExt_A^q(M,N))\Rightarrow \Ext_A^{p+q}(X;M,N),
$$
as well as a natural isomorphism
$$
\uExt_A^n(M,N)|X\simeq\uExt_{A|X}^n(M|X,N|X).
$$
Moreover, the sheaf $\uExt_A^n(M,N)$ is isomorphic to the sheaf associated to the presheaf
$$
X\mapsto \Ext_A^n(X;M,N).
$$
Using the fact that for each $X$, the functors $\Ext_A^n(X;M,N)$ form a $\delta$-functor in each variable, we obtain that the functors
$$
(M,N)\mapsto \uExt_A^n(M,N)
$$
form a $\delta$-functor in each variable.

\subsection{Group torsors}\label{group-torsors}

Let $G$ be an internal group in a topos $\cS$, and let $\bbG$ be the associated internal groupoid with object of objects 1, and let $p:\cE\to\cS$ be a geometric morphism.

An internal presheaf $T$ on $\bbG$ in $\cE$ is a right $G$-object, equipped with a unitary, associative action morphism
$$
\phi: T\times p^*G\to T.
$$
We say that $T$ is a \emph{$G$-torsor} if  it is a $\bbG$-torsor in the sense of \ref{tors-diacon}, i.e., if $T\to 1$ is epic, and 
$$
(\pi_1,\phi): T\times p^*G\to T\times T
$$
is an isomorphism.

The second condition is equivalent to the existence of a \emph{division morphism} 
$\delta:T\times T\to p^*G$ such that 
\begin{center}
 \begin{tikzpicture} 
\matrix(m)[matrix of math nodes, row sep=2em, column sep=2.5em, text height=1.5ex, text depth=0.25ex]
 {
 |(1)|{T\times T}		& |(2)|{p^*G\times T} 	\\
		& |(l2)|{T} 	\\
 }; 
\path[->,font=\scriptsize,>=to, thin]
(1) edge node[above]{$(\delta,\pi_2)$} (2) edge node[left]{$\pi_1$}   (l2)
(2) edge node[right]{$\phi$} (l2) 
;
\end{tikzpicture}
 \begin{tikzpicture} 
\matrix(m)[matrix of math nodes, row sep=2em, column sep=2.5em, text height=1.5ex, text depth=0.25ex]
 {
 |(1)|{p^*G\times T}		& |(2)|{T\times T} 	\\
		& |(l2)|{p^*G} 	\\
 }; 
\path[->,font=\scriptsize,>=to, thin]
(1) edge node[above]{$(\phi,\pi_2)$} (2) edge node[left]{$\pi_1$}   (l2)
(2) edge node[right]{$\delta$} (l2) 
;
\end{tikzpicture}
\end{center}
commute.

Equivalently, $T$ is a $G$-torsor if it is locally isomorphic to $p^*G$, i.e., there exists an epimorphism $U\to 1$ in $\cE$ and an isomorphism
$$
U^*(T)\simeq U^*(p^*G)
$$
of right $G$-objects in $\cE_{\ov U}$.

A $G$-torsor $T$ in $\cS$ is isomorphic to the \emph{trivial torsor}
$$
G\times G\xrightarrow{m} G
$$
if and only if it has a global element.
 
The category 
$$
\Tors(G,\cS)
$$
is a groupoid under the symmetric monoidal structure given by $\otimes_\bbG$, the unit being the trivial torsor. Its connected components form an abelian group
$$
\Tors^1(G,\cS),
$$
whose elements clearly correspond  to the isomorphism classes of torsors.

\begin{theorem}[{\cite[8.33]{johnstone}}]\label{H1-class-tors}
Let $G$ be an abelian group  in a Grothendieck topos $\cS$. Then
$$
H^1(\cS;G)\simeq \Tors^1(\cS,G).
$$
\end{theorem}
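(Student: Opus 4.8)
The plan is to run the classical dimension-shifting argument inside the topos $\cS$, reducing both $\tH^1(\cS;G)$ and $\Tors^1(\cS,G)$ to the cokernel of one and the same map of abelian groups. Since $\Ab(\cS)$ has enough injectives (\ref{ringed-top}), I would first embed $G$ into an injective abelian group object $I$ and let $Q$ be the cokernel, giving a short exact sequence $0\to G\to I\xrightarrow{\pi}Q\to 0$ in $\Ab(\cS)$. Because $I$ is injective we have $\tH^1(\cS,I)=0$, so the long exact cohomology sequence attached to this extension (\ref{topos-coh}) degenerates to $\Gamma(\cS,I)\to\Gamma(\cS,Q)\to\tH^1(\cS;G)\to 0$, that is, $\tH^1(\cS;G)\simeq\coker\bigl(\Gamma(\cS,I)\to\Gamma(\cS,Q)\bigr)$.

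Next I would build the matching description of $\Tors^1(\cS,G)$. To a global section $q\colon 1\to Q$ I associate the pullback $P_q=q\times_Q I\subseteq I$; it is a $G$-torsor because $P_q\to 1$ is an epimorphism (epimorphisms are stable under pullback in a topos) and $P_q\times P_q\simeq P_q\times G$ canonically, as $G=\ker\pi$, so $P_q$ is locally trivial in the sense of \ref{group-torsors}. Addition in $I$ yields isomorphisms of $G$-torsors $P_q\otimes_G P_{q'}\simeq P_{q+q'}$ and $P_0\simeq G$, so $\tau(q)=[P_q]$ is a group homomorphism $\tau\colon\Gamma(\cS,Q)\to\Tors^1(\cS,G)$. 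By the triviality criterion of \ref{group-torsors}, $[P_q]=0$ exactly when $P_q$ has a global section, i.e.\ when $q$ lifts along $\pi$ to a global section of $I$; thus $\ker\tau=\im\bigl(\Gamma(\cS,I)\to\Gamma(\cS,Q)\bigr)$, and $\tau$ factors through an injection $\coker\bigl(\Gamma(\cS,I)\to\Gamma(\cS,Q)\bigr)\hookrightarrow\Tors^1(\cS,G)$. Combined with the previous paragraph this already gives a monomorphism $\tH^1(\cS;G)\hookrightarrow\Tors^1(\cS,G)$.

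It remains to prove $\tau$ is surjective, and this is where the real work lies. Given a $G$-torsor $T$, I would form the contracted product $E'=T\otimes_{G}I$ (with $I$ a left $G$-object via $G\hookrightarrow I$), which is an $I$-torsor, together with the map $\rho\colon E'\to T\otimes_{G}Q\simeq Q$ induced by $\pi$, noting that the fibre of $\rho$ over a base point recovers a translate of $T$. So it suffices to know that every $I$-torsor is trivial, i.e.\ $\Tors^1(\cS,I)=0$: a global section $s$ of $E'$ then produces some $q\in\Gamma(\cS,Q)$ with $[P_q]=[T]$. To prove $\Tors^1(\cS,I)=0$ directly, for an $I$-torsor $S$ I would use the free abelian group object $\Z_S$ and its augmentation sequence $0\to N\to\Z_S\to\Z\to 0$: the division morphism $S\times S\to I$ determines a canonical coboundary map $N\to I$, which extends along $N\hookrightarrow\Z_S$ by injectivity of $I$ to $\Z_S\to I$, hence, via the adjunction $\cS(S,-)\simeq\Ab(\cS)(\Z_S,-)$ of \ref{ringed-top}, to a morphism $f\colon S\to I$; one then checks that $s\mapsto s\cdot(-f(s))$ is a morphism $S\to S$ which equalises the kernel pair $S\times S\rightrightarrows S$ of $S\to 1$, so, since every epimorphism in a topos is the coequaliser of its kernel pair (\ref{giraud-th}), it factors through $1$ and exhibits a global section of $S$.

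I expect the construction of the canonical coboundary map $N\to I$, together with the bookkeeping of contracted products and the identification of their fibres, to be the fiddly part; everything else is formal from the Giraud axioms and the material of \ref{group-torsors} and \ref{topos-coh}. Finally I would remark that, tracing through the definitions, $\tau$ agrees up to sign with the connecting homomorphism of the cohomology sequence, so the resulting isomorphism $\tH^1(\cS;G)\simeq\Tors^1(\cS,G)$ is independent of the chosen injective and natural in $G$.
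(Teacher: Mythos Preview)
The paper does not give its own proof of this theorem; it is stated with a citation to \cite[8.33]{johnstone} and used as a black box. Your dimension-shifting argument (embed $G$ in an injective $I$, identify both sides with $\coker(\Gamma I\to\Gamma Q)$, and reduce surjectivity to the vanishing of $\Tors^1(\cS,I)$ via the augmentation sequence $0\to N\to\Z_S\to\Z\to0$) is exactly the proof given in the cited reference, so there is nothing to compare.
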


\section{Enriched homological algebra}\label{enr-homol-alg}

\subsection{Enriched abelian categories}\label{enr-ab-cat}

An \emph{abelian monoidal} category is an abelian category $\cA$ equipped with a symmetric monoidal structure where $\otimes$ is an additive bifunctor. An \emph{abelian monoidal closed} category is an abelian monoidal category which is also monoidal closed.

\begin{lemma}\label{ab-mon-cl}
Let $\cA$ be an abelian monoidal closed category, and $A\in\cA$. The functors
$$
A\otimes\mathord{-}:\cA\to \cA, \ \ \ \ \mathord{-}\otimes A:\cA\to \cA
$$
are right-exact, and the functors
$$
[A,\mathord{-}]:\cA\to\cA, \ \ \ \ \ [\mathord{-},A]:\cA^\circ\to\cA
$$
are left-exact.
\end{lemma}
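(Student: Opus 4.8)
The plan is to deduce everything from standard adjunction arguments, since in an abelian monoidal closed category the relevant functors come in adjoint pairs and one direction of each pair is the one we need to control.

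First I would recall the adjunctions. By definition of monoidal closed, for each fixed $A$ we have the adjunction $\mathord{-}\otimes A\dashv [A,\mathord{-}]$, i.e.\ a natural isomorphism $\cA(X\otimes A,Y)\simeq\cA(X,[A,Y])$. By symmetry of the monoidal structure, $X\otimes A\simeq A\otimes X$ naturally, so we also get $A\otimes\mathord{-}\dashv[A,\mathord{-}]$ (the right adjoint is the same internal hom, up to the symmetry iso). For the contravariant functor $[\mathord{-},A]\colon\cA^\circ\to\cA$, I would use that $\cA(X\otimes Y,A)\simeq\cA(X,[Y,A])$; reading this in the variable $Y$, with $X$ fixed, and using symmetry again, exhibits $[\mathord{-},A]$ (viewed as a functor $\cA^\circ\to\cA$) as adjoint on the right to its own opposite, i.e.\ $\cA^\circ(X,[\mathord{-},A])\simeq\cA(\mathord{-}\otimes X,A)$ makes $[\mathord{-},A]$ and $[\mathord{-},A]^\op$ into an adjoint pair between $\cA$ and $\cA^\circ$.

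Then I would invoke the general principle that a left adjoint preserves all colimits that exist, and a right adjoint preserves all limits that exist. In an abelian category, right-exactness of an additive functor $F$ means exactly that $F$ preserves cokernels (equivalently, finite colimits), and left-exactness means it preserves kernels (finite limits). So: $A\otimes\mathord{-}$ and $\mathord{-}\otimes A$, being left adjoints, preserve cokernels, hence are right-exact; $[A,\mathord{-}]$, being a right adjoint, preserves kernels, hence is left-exact. For $[\mathord{-},A]\colon\cA^\circ\to\cA$: a left-exact contravariant functor is one that sends cokernels (colimits in $\cA$, i.e.\ limits in $\cA^\circ$) to kernels; since $[\mathord{-},A]^\op\colon\cA\to\cA^\circ$ is a left adjoint it preserves colimits, which unwinds to say $[\mathord{-},A]$ turns finite colimits in $\cA$ into finite limits in $\cA$, i.e.\ it is left-exact as a contravariant functor. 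Additivity of all four functors is automatic here because $\otimes$ is assumed additive, and a functor with an additive left or right adjoint between additive categories is additive; in any case one checks directly that $[A,\mathord{-}]$ etc.\ preserve finite biproducts.

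There is no real obstacle; the only point requiring a moment's care is bookkeeping the variance for $[\mathord{-},A]$ and $[A,\mathord{-}]$ — making sure the symmetry isomorphism is used correctly so that the internal hom genuinely appears as the right adjoint in each case — and being explicit that "left/right exact" for the one-sided-exact additive functors is literally "preserves kernels / cokernels," which is the property adjunctions hand us. I would write the proof in four short lines, one per functor, citing the adjunctions established in the first paragraph.

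\begin{proof}
Since $\cA$ is monoidal closed, for each $A\in\cA$ we have $\mathord{-}\otimes A\dashv[A,\mathord{-}]$, and by symmetry of $\otimes$ also $A\otimes\mathord{-}\dashv[A,\mathord{-}]$ and an adjunction exhibiting $[\mathord{-},A]\colon\cA^\circ\to\cA$ as the right adjoint of its opposite, via $\cA(X\otimes Y,A)\simeq\cA(X,[Y,A])$. A left adjoint between abelian categories preserves cokernels and is hence right-exact, and a right adjoint preserves kernels and is hence left-exact. Thus $A\otimes\mathord{-}$ and $\mathord{-}\otimes A$ are right-exact, and $[A,\mathord{-}]$ is left-exact. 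For $[\mathord{-},A]$, the left-adjoint functor $[\mathord{-},A]^\op\colon\cA\to\cA^\circ$ preserves colimits, which means precisely that $[\mathord{-},A]$ carries finite colimits in $\cA$ to finite limits in $\cA$; in particular it sends cokernels to kernels, i.e.\ it is left-exact as a contravariant functor. Finally, additivity of all four functors follows from the additivity of $\otimes$ together with preservation of finite biproducts by adjoints.
\end{proof}
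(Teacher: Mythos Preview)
Your proof is correct and follows essentially the same approach as the paper: both use the adjunctions $\mathord{-}\otimes A\dashv[A,\mathord{-}]$ (and the analogous adjunction exhibiting $[\mathord{-},A]$ as a right adjoint out of $\cA^\circ$) together with the fact that left adjoints preserve colimits and right adjoints preserve limits, reducing right-/left-exactness to preservation of cokernels/kernels. The paper packages these adjunction facts into a reference to an earlier section on enriched limits, but the substance is identical to what you wrote.
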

\begin{proof}
By \ref{enr-limits}, the functors $A\otimes\mathord{-}$ and $\mathord{-}\otimes A$ preserve colimits, and therefore they preserve cokernels, so they are right-exact. Moreover, the functor $[A,\mathord{-}]$ preserves limits and thus kernels, so it is left-exact. Similarly, the contravariant functor $[\mathord{-},A]$ transforms colimits into limits and thus cokernels into kernels, so it is also left-exact.
\end{proof}

More generally, let $\cV$ be a cartesian closed category such that $\cV\da\Ab$ is abelian monoidal closed. A \emph{$\cV$-additive} category is a $\Vab$-category. 
A \emph{$\cV$-abelian} category is a tensored and cotensored $\cV\da\Ab$-category $\cA$ whose underlying category $\cA_0$ is abelian.

\begin{lemma}\label{lr-exactness}
Let $\cA$ be a $\cV$-abelian category, let $E\in\Vab$, and $A\in\cA$. The functors
$$
E\otimes\mathord{-}:\cA_0\to \cA_0, \ \ \ \ \mathord{-}\otimes A:\Vab\to \cA_0
$$
are right-exact, and the functors
\begin{align*}
& [E,\mathord{-}]:\cA_0\to\cA_0,   & [\mathord{-},A]:\Vab^\circ\to\cA_0, \\
& \cA(A,\mathord{-}):\cA_0\to\Vab,  & \cA(\mathord{-},A):\cA^\circ_0:\Vab
\end{align*}
are left-exact.
\end{lemma}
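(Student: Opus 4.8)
The plan is to deduce Lemma~\ref{lr-exactness} from Lemma~\ref{ab-mon-cl} together with the adjointness relations collected in Subsection~\ref{enr-limits}. The key observation is that in a $\cV$-abelian category $\cA$ the tensor and cotensor functors are related to ordinary limits and colimits by the adjunctions of Remarks~\ref{tens-pres-lim}, \ref{cotens-pres-lim} and \ref{tens-cotens-lim}, and a functor that has a right (resp.\ left) adjoint is right-exact (resp.\ left-exact) since it preserves finite colimits (resp.\ finite limits), which in an abelian category amounts to preserving cokernels (resp.\ kernels). So the whole statement reduces to exhibiting the right adjoints and left adjoints, and then invoking the fact that an additive functor between abelian categories which preserves cokernels is right-exact and one which preserves kernels is left-exact.

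Concretely, I would argue as follows. For fixed $E\in\Vab$, the functor $E\otimes\mathord{-}:\cA_0\to\cA_0$ is left adjoint to $[E,\mathord{-}]:\cA_0\to\cA_0$ by Remark~\ref{tens-cotens-lim} (using that $\cA$ is both tensored and cotensored over $\cV\da\Ab$); hence $E\otimes\mathord{-}$ is right-exact and $[E,\mathord{-}]$ is left-exact. For fixed $A\in\cA$, Remark~\ref{tens-pres-lim} gives that $\mathord{-}\otimes A:\Vab\to\cA_0$ is left adjoint to $\cA(A,\mathord{-}):\cA_0\to\Vab$, so $\mathord{-}\otimes A$ is right-exact and $\cA(A,\mathord{-})$ is left-exact. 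Finally, Remark~\ref{cotens-pres-lim} applied with the cotensored structure gives that $[\mathord{-},A]:\Vab^\circ\to\cA_0$ is left adjoint to $\cA(\mathord{-},A):\cA_0^\circ\to\Vab$ (viewed as functors between the opposite categories), so both of these contravariant functors turn colimits into limits, in particular cokernels into kernels, and are therefore left-exact. Additivity of all these functors follows since tensor, cotensor, and the hom-objects of a $\cV\da\Ab$-category are additive in each variable, being built from the abelian monoidal closed structure of $\Vab$ via Lemma~\ref{ab-mon-cl}.

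The only subtlety — and the place I expect to spend a sentence of genuine care — is the interplay between variance and exactness for the two contravariant functors $[\mathord{-},A]$ and $\cA(\mathord{-},A)$: one must be careful that ``left-exact contravariant functor'' here means it sends finite colimits in the source to finite limits in the target (equivalently, it is left-exact as a covariant functor on the opposite category), and that Remark~\ref{cotens-pres-lim} indeed provides the adjunction on the nose in that form. Likewise one should note that finite limits and colimits suffice: an additive functor preserving \emph{all} kernels (or cokernels) certainly preserves finite ones, and that is all that is needed for exactness in the usual sense. I do not anticipate any deeper obstacle; the lemma is essentially a bookkeeping consequence of the adjunctions already established, exactly as Lemma~\ref{ab-mon-cl} was for the special case $\cA=\cV\da\Ab$.

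\begin{proof}
All the functors in question are additive, being assembled from the tensor, cotensor and hom-objects of the $\cV\da\Ab$-category $\cA$, each of which is additive in every variable by Lemma~\ref{ab-mon-cl} and the definition of a $\cV$-abelian category. It therefore suffices to check that each functor preserves the relevant finite (co)limits, since an additive functor between abelian categories is right-exact as soon as it preserves cokernels and left-exact as soon as it preserves kernels.

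Fix $E\in\Vab$ and $A\in\cA$. By Remark~\ref{tens-cotens-lim}, $E\otimes\mathord{-}:\cA_0\to\cA_0$ is left adjoint to $[E,\mathord{-}]:\cA_0\to\cA_0$; hence the former preserves colimits and the latter preserves limits, so $E\otimes\mathord{-}$ is right-exact and $[E,\mathord{-}]$ is left-exact. By Remark~\ref{tens-pres-lim}, $\mathord{-}\otimes A:\Vab\to\cA_0$ is left adjoint to $\cA(A,\mathord{-}):\cA_0\to\Vab$, so $\mathord{-}\otimes A$ is right-exact and $\cA(A,\mathord{-})$ is left-exact. Finally, by Remark~\ref{cotens-pres-lim}, the contravariant functor $[\mathord{-},A]:\Vab^\circ\to\cA_0$ is left adjoint to $\cA(\mathord{-},A):\cA_0^\circ\to\Vab$; as functors on the opposite categories they are, respectively, a left adjoint and a right adjoint, so each carries colimits to limits, in particular cokernels to kernels, and is therefore left-exact.
\end{proof}
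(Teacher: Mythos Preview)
Your proof is correct and follows exactly the same approach as the paper's own proof, which consists of the single sentence ``As in \ref{ab-mon-cl}, we use \ref{tens-pres-lim}, \ref{cotens-pres-lim}, \ref{tens-cotens-lim}.'' You have simply unpacked those references explicitly, matching each pair of functors to the relevant adjunction from Subsection~\ref{enr-limits}.
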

\begin{proof}
As in \ref{ab-mon-cl}, we use \ref{tens-pres-lim}, \ref{cotens-pres-lim}, \ref{tens-cotens-lim}.
\end{proof}

We proceed to define $\Vab$-functors 
$$
\ker, \coker: \cA^{\to}\to\cA^{\to}.
$$
On arrows $f\in \cA_0(A,B)$, we let
$$
\ker(A\xrightarrow{f}B)=(\Ker(f)\xrightarrow{\ker(f)}A),
$$
and
$$
\coker(A\xrightarrow{f}B)=(A\xrightarrow{\coker(f)}\Coker(f)).
$$
For $f:A\to B$ and $f':A'\to B'$, we define the $\Vab$-morphism
$$
\ker_{f,f'}:\cA^{\to}(f,f')\to\cA^{\to}(\ker{f},\ker{f'})
$$
as follows. Using \ref{lr-exactness}, we see that
$$
\cA(\Ker(f),\Ker(f'))=\Ker(f'_*:\cA(\Ker(f),A')\to\cA(\Ker(f),B')),
$$
so it suffices to show that the composite
$$
\cA^{\to}(f,f')\to \cA(\Ker(f),A')\xrightarrow{f'_*}\cA(\Ker(f),B'))
$$
is zero. From the commutativity of the diagram
 \begin{center}
 \begin{tikzpicture} 
\matrix(m)[matrix of math nodes, row sep=2em, column sep=3em, text height=1.5ex, text depth=0.25ex]
 {
 |(1)|{\cA^{\to}(f,f')}		& |(2)|{\cA(B,B')} & |(3)|{\cA(B,\Coker(f'))}	\\
 |(l1)|{\cA(A,A')}		& |(l2)|{\cA(A,B')} 	& |(l3)|{\cA(A,\Coker(f'))}\\
 |(d1)|{\cA(\Ker(f),A')} & |(d2)|{\cA(\Ker(f),B')} &\\
 }; 
\path[->,font=\scriptsize,>=to, thin]
(1) edge node[above]{} (2) edge node[left]{}   (l1)
(2) edge node[right]{$f^*$} (l2) 
(l1) edge  node[above]{$f'_*$} (l2)
(2) edge node[above]{$\coker(f')_*$} (3) 
(l2) edge node[above]{$\coker(f')_*$} (l3) 
(3) edge node[right]{$f^*$} (l3) 
(l2) edge node[right]{$\ker(f)^*$} (d2) 
(l1) edge node[left]{$\ker(f)^*$} (d1) 
(d1) edge  node[above]{$f'_*$} (d2)
;
\end{tikzpicture}
\end{center}
this is the same as the composite
$$
\cA^{\to}(f,f')\to\cA(B,B')\xrightarrow{\ker(f)^*\circ f^*}[\Ker(f),B'],
$$
which is zero since $\ker(f)^*\circ f^*=(f\circ \ker(f))^*=0$.

To define the $\cV$-morphism
$$
\coker_{f,f'}:\cA^{\to}(f,f')\to\cA^{\to}(\coker(f),\coker(f')),
$$
we observe, using \ref{lr-exactness}, that 
$$
\cA(\Coker(f),\Coker(f'))=\Ker(f^*:\cA(B,\Coker(f')),\cA(A,\Coker(f'))),
$$
so it suffices to show that the composite 
$$
\cA^{\to}(f,f')\to\cA(B,\Coker(f'))\xrightarrow{f^*}\cA(A,\Coker(f'))
$$
is zero, but the above diagram shows that this equals
$$
\cA^{\to}(f,f')\to\cA(A,A')\xrightarrow{\coker(f')_*\circ f'_*}\cA(A,\Coker(f')),
$$
which is zero since $\coker(f')_*\circ f'_*=(\coker(f')\circ f')_*=0$.

We also define 
$$
\coim=\coker\circ\ker, \ \ \ \ \im(f)=\ker\circ\coker.
$$
as $\Vab$-functors $\cA^{\to}\to\cA^{\to}$.

Since $\cA_0$ is assumed abelian, the natural morphism
$$
\hat{f}:\Coim(f)\to\Im(f)
$$
is an isomorphism, which yields the $\cA_0$-diagram
 \begin{center}
 \begin{tikzpicture} 
\matrix(m)[matrix of math nodes, row sep=.5em, column sep=.7em, text height=1.5ex, text depth=0.25ex]
 {
  & |(a)|{A} & & &    |(b)|{B} & 	\\
 |(k)|{\Ker f}& & |(ci)|{\Coim f} & |(i)|{\Im f} & & |(ck)|{\Coker f}	\\
 }; 
\path[->,font=\scriptsize,>=to, thin]
(a) edge node[above]{$f$} (b) edge [style=->>]  (ci)
(k) edge[style=>->]  (a) 
(i) edge[style=>->]  (b) 
(b) edge[style=->>]  (ck) 
(ci) edge  node[above]{$\hat{f}$} (i);
\end{tikzpicture}
\end{center}
called the \emph{analysis} of $f$. Note that the analysis can also be viewed as a $\Vab$-functor on $\cA^{\to}$.

\subsection{Complexes in enriched abelian categories}

Let us consider $\Z$ as a diagram category with arrows $d_n$ with source $n$ and target $n+1$. Let $\Sigma$ be the diagram scheme based on $\Z$ with commutativity relations $d_{n+1}d_n=0$. We can trivially consider $\Z$ as a $\cV\da\Ab$-category, and consider the 
$\Vab$-functor category
$$
\cA^\Z=[\Z,\cA].
$$
We also consider the full $\Vab$-subcategory 
$$
\ch(\cA)=\cA^\Sigma
$$
of \emph{complexes} with values in $\cA$. More precisely, the objects of $\cA^\Z$ are diagrams in $\cA_0$
$$
\cdots\xrightarrow{d^{n-1}} A^{n}\xrightarrow{d^n} A^{n+1} \xrightarrow{d^{n+1}}\cdots
$$
and internal hom objects are
$$
\cA^\Z(A,B)=\Eq(d_A^{*},d_{B,*}),
$$
where 
$$
d_A^{*}=\prod_i d_A^{i,*}:\prod_i\cA(A^i,B^{i})\to\prod_i\cA(A^i, B^{i+1}),
$$
and
$$
d_{B,*}=\prod_i d_{B,*}^{i}:\prod_i\cA(A^i,B^{i})\to\prod_i\cA(A^i, B^{i+1}).
$$
Objects of $\ch(\cA)$ are diagrams $A\in\cA^\Z$ with the property
$$
d_A^{n+1}\circ d_A^n=0, 
$$
and internal homs are
$$
\ch(\cA)(A,B)=\cA^\Z(A,B).
$$
Sequences of chain morphisms in $\ch(\cA)_0$ are defined to be exact if they are exact in each degree. We extend the $\Vab$-functors $\ker$, $\coker$, $\im$, $\coim$ in a natural way to $\Vab$-functors $\ch(\cA)^{\to}\to\ch(\cA)^{\to}$. With this structure, $\ch(\cA)$ is also a $\cV$-abelian category. 

The $\Vab$-category of \emph{short exact sequences} in $\cA$
$$
\Ex(\cA)
$$
is a full $\Vab$-subcategory of $\ch(\cA)$.

The \emph{mapping cone} functor is the $\Vab$-functor
$$
\cone:\ch(\cA)^{\to}\to \ch(\cA)
$$
which, for a chain morphism $f:A\to B$, returns the complex
$$
\cone(f)^n=A^{n+1}\oplus B^n,
$$ 
with differential
$$
d_f^n=\begin{bmatrix}-d_A^{n+1} & 0 \\f^{n+1} & d_B^n\end{bmatrix}.
$$
We obtain the \emph{translation} $\Vab$-functor
$$
\Sigma:\ch(\cA)\to\ch(\cA), \ \ \ \ \Sigma A=\cone(A\to 0).$$ 
More explicitly, 
$$
(\Sigma A)^n=A^{n+1}, \ \ \ \ d_{\Sigma A}^n=-d_A^{n+1}.
$$

The \emph{homotopy category} $\K(\cA)$ is the $\Vab$-category with the same objects as $\ch(\cA)$ and internal homs
$$
\K(\cA)(A,B)=\Coker(\chi:\prod_n\cA(A^n,B^{n-1})\to\ch(\cA)(A,B)),
$$
where $\chi$ is obtained from the composite 
\begin{multline*}
\prod_n\cA(A^n,B^{n-1})\xrightarrow{\Delta}\prod_n\cA(A^n,B^{n-1})\times\cA(A^{n+1},B^n)\\\xrightarrow{\prod_n d_{B,*}^{n-1}\times d_A^{n,*}}\prod\cA(A^n,B^n)\times\cA(A^n,B^n)\xrightarrow{\prod_n+_n}\prod_n\cA(A^n,B^n),
\end{multline*}
where $+_n$ denotes the addition in the $\cV$-abelian group $\cA(A^n,B^n)$.

The \emph{cohomology} $\Vab$-functor
$$
\coh:\ch(\cA)\to\ch(\cA)
$$ is defined as follows. For $A\in\ch(\cA)$, consider the complexes $K_A=\Ker(d_A)$ and $I_A=\Im(d_A)$. Since $A$ is a chain complex, we obtain chain morphisms
$$
K_A\to A\to I_A\to \Sigma K_A,
$$
so we can define 
$$
\coh(A)=\Coker(I_A\to \Sigma K_A),
$$
i.e., $\coh^n(A)=\Coker(\Im(d_A^{n-1})\to\Ker(d_A^n))$.

The cohomology $\Vab$-functor factors through the homotopy category, i.e., there is a dashed arrow making the diagram
 \begin{center}
 \begin{tikzpicture} 
\matrix(m)[matrix of math nodes, row sep=.7em, column sep=.7em, text height=1.5ex, text depth=0.25ex]
 {
  |(a)|{\ch(\cA)} & & |(b)|{\ch(\cA)}  	\\
 & |(k)|{\K(\cA)} &  	\\
 }; 
\path[->,font=\scriptsize,>=to, thin]
(a) edge node[above]{$\coh$} (b) edge (k)
(k) edge[style=dashed] (b) 
;
\end{tikzpicture}
\end{center}
commutative.

\subsection{Enriched injectives and projectives}

Let $\cA$ be a $\cV$-abelian category. 

\begin{definition}\label{enr-injectives}
An object $I\in\cA$ is called
\begin{enumerate}
\item\emph{$\cA$-injective,} if the functor
$$
\cA(\mathord{-},I):\cA_0^\op\to\Vab
$$
is exact;
\item \emph{$\cA_0$-injective,} if the functor 
$$
\cA_0(\mathord{-},I):\cA_0^\op\to\Ab
$$
is exact;
\item \emph{enriched injective,} if it is both $\cA$-injective and $\cA_0$-injective.  
\end{enumerate}
\end{definition}
\begin{definition}\label{enr-projectives}
An object $P\in\cA$ is called
\begin{enumerate}
\item\emph{$\cA$-projective,} if the functor
$$
\cA(P,\mathord{-}):\cA_0\to\Vab
$$
is exact; 
\item \emph{$\cA_0$-projective,} if the functor
$$
\cA_0(P,\mathord{-}):\cA_0\to\Ab
$$
is exact;
\item \emph{enriched projective,} if it is both $\cA$-projective and $\cA_0$-projective.
\end{enumerate}
\end{definition}

We say that $\cA$ has \emph{enough $\cA$-injectives}, if every object $A\in\cA$ admits a monomorphism $A\to I$ into a $\cA$-injective object $I$. We make analogous definitions for $\cA_0$-injectives and enriched injectives.

Dually, $\cA$ has \emph{enough $\cA$-projectives} if every object $A\in\cA$ admits an epimorphism $P\to A$ from a $\cA$-projective object $P$. We express that $\cA$ has enough $\cA_0$-projectives or enriched projectives analogously.

\begin{proposition}\cite[]{harting1983}\label{inj-is-int-inj}
Let $\cE$ be a topos with a natural number object (such as a Grothendieck topos). If $I\in \cE\da\Ab$ is injective, then $I$ is \emph{internally injective}, i.e., the functor $$[\mathord{-},I]:\cE\da\Ab^\op\to \cE\da\Ab$$ is exact. %, so $I$ is also enriched injective.
\end{proposition}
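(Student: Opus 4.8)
The plan is to prove the two halves of exactness separately. The functor $[\mathord{-},I]=\uHom_\Z(\mathord{-},I)$ is automatically left exact: by the hom-tensor adjunction recorded in \ref{mod-ops-top} (cf.\ \ref{module-sh-mon-cl}) it turns colimits in the first variable into limits, so it sends a right-exact sequence $A\to B\to C\to 0$ in $\cE\da\Ab$ to a left-exact sequence $0\to[C,I]\to[B,I]\to[A,I]$. Hence the whole content is: for every monomorphism $u:A\mono B$ in $\cE\da\Ab$, the induced morphism $u^\vee\colon[B,I]\to[A,I]$ is an epimorphism. So I would fix such a $u$ and aim at that.

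First I would reduce epimorphy in $\cE\da\Ab$ to epimorphy of the underlying $\cE$-morphism: the forgetful functor $U\colon\cE\da\Ab\to\cE$ (which exists and makes $\cE\da\Ab$ monadic over $\cE$ with finite colimits, using the natural number object) is faithful and preserves zero morphisms, so if $Uu^\vee$ is an epimorphism in $\cE$ then $u^\vee$ is one in $\cE\da\Ab$. For $\cE$ a Grothendieck topos, say $\cE\simeq\sh(\cC,J)$, an $\cE$-morphism is epic as soon as it is surjective on $X$-sections for every $X\in\cC$, so it suffices to prove that $[B,I](X)\to[A,I](X)$ is surjective for all such $X$.

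The heart of the matter is the local picture. By \ref{mod-ops-top} one has, for $X\in\cC$, natural isomorphisms $i_X^*[M,N]_\Z\simeq[i_X^*M,i_X^*N]_{\Z|X}$ and $[M,N]_\Z(X)\simeq\Hom_{\cE_{\ov X}\da\Ab}(i_X^*M,i_X^*N)$; thus a section of $[A,I]$ over $X$ is precisely a morphism $\bar s\colon i_X^*A\to i_X^*I$ in $\cE_{\ov X}\da\Ab$. Now the two inputs from \ref{ringed-top}: $i_X^*$ is exact, so $i_X^*u\colon i_X^*A\mono i_X^*B$ is still monic; and $i_X^*$ is right adjoint to the exact functor ${i_X}_!$ (extension by zero), hence preserves injectives, so $i_X^*I$ is injective in $\cE_{\ov X}\da\Ab$. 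Therefore $\bar s$ extends along $i_X^*u$ to some $\bar t\colon i_X^*B\to i_X^*I$, which is exactly a section of $[B,I]$ over $X$ mapping to $s$ under $u^\vee$. This gives the required sectionwise surjectivity, a fortiori epimorphy of $u^\vee$, and finishes the Grothendieck case.

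For a general elementary topos with a natural number object the same argument applies with Kripke--Joyal semantics in place of sheaf sections: $u^\vee$ epic is the internal statement $\forall\phi\in[A,I]\ \exists\psi\in[B,I]\ (u^\vee\psi=\phi)$, and at an arbitrary stage $X$ a generalised element $\phi\colon X\to[A,I]$ transposes to a morphism $i_X^*A\to i_X^*I$ in $\cE_{\ov X}\da\Ab$, which extends over $X$ itself as above (so the witnessing cover is trivial). The ingredients needed here — that $i_X\colon\cE_{\ov X}\to\cE$ induces ${i_X}_!\dashv i_X^*\dashv{i_X}_*$ on abelian groups with both ${i_X}_!$ and $i_X^*$ exact — hold in any such $\cE$ by the fundamental theorem of topos theory \ref{fund-th-topos} together with the module-level statements of \ref{ringed-top}; this is, in effect, the content of \cite{harting1983}. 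I expect the main obstacle to be exactly this passage to slices: checking that ``internally injective'' really is a stagewise condition and that injectivity of $I$ is inherited by each $i_X^*I$; once the triple adjunction and its exactness properties are in hand, the rest is formal.
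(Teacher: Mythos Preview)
The paper does not supply its own proof of this proposition; it is stated with a bare citation to \cite{harting1983}. Your argument is correct and is essentially the standard one (compare, e.g., the argument for sheaves of modules on a site). The crucial step---that $i_X^*$ preserves injectives because its left adjoint ${i_X}_!$ (extension by zero) is exact, so that every section of $[A,I]$ over $X$ extends to a section of $[B,I]$ over the \emph{same} $X$---is exactly right and is recorded in the paper at \ref{ringed-top}. Note that your argument in fact shows more than epimorphy: since $\cE(X,[B,I])\to\cE(X,[A,I])$ is surjective for every $X$, the map $u^\vee$ is even a split epimorphism in $\cE$ (lift the identity of $[A,I]$).
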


\begin{lemma}\label{V-inj-res}
If $\cA$ has enough $\cA$-injectives, then every object $A\in\cA$ has a \emph{$\cA$-injective resolution,} i.e., there is an exact sequence
$$
0\to A\to I^0\to I^1\to\cdots
$$
where $I^n$ are $\cA$-injective. We have the same statement for $\cA_0$-injective resolutions and enriched injective resolutions.
\end{lemma}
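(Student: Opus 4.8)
The statement to prove is Lemma \ref{V-inj-res}: if $\cA$ has enough $\cA$-injectives, then every object admits a $\cA$-injective resolution, with analogous statements for $\cA_0$-injectives and enriched injectives.

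This is a completely standard homological algebra argument — the construction of injective resolutions by iterated embedding into injectives and taking cokernels. The plan is as follows.

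\textbf{Proof plan.} First I would set up the inductive construction. Given $A \in \cA$, since $\cA$ has enough $\cA$-injectives, choose a monomorphism $\iota^0 : A \to I^0$ with $I^0$ being $\cA$-injective. Let $C^0 = \Coker(\iota^0)$, formed using the $\Vab$-functor $\coker$ defined in \ref{enr-ab-cat}; since $\cA_0$ is abelian, the sequence $0 \to A \to I^0 \to C^0 \to 0$ is exact in $\cA_0$. Inductively, having constructed $C^{n-1}$, choose a monomorphism $C^{n-1} \to I^n$ into an $\cA$-injective object, set $C^n = \Coker(C^{n-1} \to I^n)$, and define $d^n : I^n \to I^{n+1}$ as the composite $I^n \twoheadrightarrow C^n \hookrightarrow I^{n+1}$. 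One checks $d^{n+1} \circ d^n = 0$ because it factors through $C^n \hookrightarrow I^{n+1} \twoheadrightarrow C^{n+1}$ and the composite of the epi $I^n \to C^n$ followed by $C^n \to I^{n+1}$ followed by the epi $I^{n+1} \to C^{n+1}$ contains the zero composite $C^n \to I^{n+1} \to C^{n+1}$ in the middle (this is the standard kernel/image bookkeeping, which I would not spell out in full). Exactness at $I^n$ for $n \geq 1$ follows from the standard diagram chase: $\ker(d^n) = \ker(I^n \to C^n) = \Im(C^{n-1} \to I^n) = \Im(I^{n-1} \to I^n)$, using that $C^{n-1} \to I^n$ is mono so its image equals the image of $I^{n-1} \twoheadrightarrow C^{n-1} \hookrightarrow I^n$. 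All of this happens in the abelian category $\cA_0$, so it is nothing but the classical argument.

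\textbf{The two variant statements.} The $\cA_0$-injective version is obtained by the identical construction, replacing "enough $\cA$-injectives" by "enough $\cA_0$-injectives" and "$\cA$-injective" by "$\cA_0$-injective" throughout; nothing changes since the construction only uses the abelian structure of $\cA_0$ together with the ability to embed into injectives. Likewise for the enriched injective version, using "enough enriched injectives" and embedding each time into an enriched injective object. I would phrase the proof generically ("let $\cJ$ denote whichever of the three classes is under consideration; by hypothesis $\cA$ has enough $\cJ$-objects") so as to handle all three cases at once.

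\textbf{Main obstacle.} There is essentially no obstacle: the content is entirely the classical injective-resolution construction, and since $\cA_0$ is assumed abelian with the three notions of injectivity defined purely via exactness of hom-functors, no enriched subtlety intervenes — the resolution is built in $\cA_0$ and the adjective "$\cA$-injective" (or "$\cA_0$-injective", or "enriched injective") is simply carried along. The only point worth a sentence is that the differentials $d^n$ and the cokernel objects $C^n$ are formed via the $\Vab$-functors $\ker$, $\coker$, $\im$ established in \ref{enr-ab-cat}, so that the resulting resolution is a genuine object of the $\Vab$-category $\ch(\cA)$; but exactness is checked degreewise in $\cA_0$ as defined there. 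I expect the write-up to be a few lines.
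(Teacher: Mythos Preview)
Your proposal is correct and follows exactly the same approach as the paper: iteratively embed into an injective of the relevant class, take the cokernel, and repeat. The paper's proof is in fact even terser than your sketch (it omits the verification of $d^{n+1}d^n=0$ and exactness at $I^n$, simply writing ``by construction, the sequence is exact''), so your write-up is if anything more thorough.
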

\begin{proof}
Since $\cA$ has enough $\cV$-injectives, we can find a monomorphism $A\to I^0$ with $I^0$ $\cV$-injective. Let $A^0=\Coker(A\to I^0)$, and let us find a monomorphism $A^0\to I^1$ with $I^1$ $\cV$-injective. We take $A^1=\Coker(A^1\to I^1)$ and proceed inductively. By construction, the sequence $A\to I^0\to I^1\to\cdots$ is exact.
\end{proof}

By duality, we formulate the notion of \emph{$\cA$-projective resolutions} and show that a $\cV$-abelian category with enough $\cA$-projectives has $\cA$-projective resolutions of all objects, and similarly for $\cA_0$-projective resolutions and enriched projective resolutions.

\subsection{Derived enriched functors}

Let $\cA$ be a $\cV$-abelian category.
\begin{proposition}\label{comparison-th}
Let $A,B\in\cA$, and suppose that $A\to I$, $B\to J$ are $\cA$-injective resolutions. There exists a $\Vab$-monomorphism
$$
\cA(A,B)\to \K(\cA)(A\to I,B\to J)\simeq \K(\cA)(I,J).
$$
\end{proposition}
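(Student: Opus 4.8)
The plan is to mimic the classical proof of the comparison theorem for injective resolutions, but carried out entirely in the $\cV\da\Ab$-enriched setting, using the enriched $\ker$, $\coker$, $\im$ functors and the left/right exactness facts recorded in Lemma~\ref{lr-exactness}. The classical statement says that a morphism $f\colon A\to B$ lifts to a chain map $I\to J$ of the resolutions, unique up to chain homotopy; here the content is that this lifting-and-uniqueness is realised by an \emph{enriched} monomorphism of hom-objects $\cA(A,B)\to\K(\cA)(I,J)$ in $\cV\da\Ab$. First I would use that $A\to I^0$ and $B\to J^0$ are monomorphisms with $I^n,J^n$ being $\cA$-injective, so each functor $\cA(\mathord{-},J^n)\colon\cA_0^\op\to\cV\da\Ab$ is exact by Definition~\ref{enr-injectives}; in particular it sends the monomorphism $A\to I^0$ to an epimorphism $\cA(I^0,J^n)\to\cA(A,J^n)$, and more generally sends the exact complex $A\to I^\bullet$ to an exact complex $\cA(A,J^n)\leftarrow\cA(I^\bullet,J^n)$ of $\cV$-abelian groups.

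Next I would construct the comparison map. Consider the double complex $\cA(I^\bullet,J^\bullet)$ in $\cV\da\Ab$. Augmenting $J^\bullet$ by $B$ and using exactness of each $\cA(I^m,\mathord{-})$ on the resolution $B\to J^\bullet$ — wait, that is not available in general; rather, one augments the \emph{other} variable: $\cA(\mathord{-},J^n)$ applied to $0\to A\to I^0\to I^1\to\cdots$ is exact because $J^n$ is $\cA$-injective. So the rows $\cA(A,J^n)\to\cA(I^0,J^n)\to\cA(I^1,J^n)\to\cdots$ are exact for each $n$. Taking the (enriched) total complex $\mathrm{Tot}(\cA(I^\bullet,J^\bullet))$ and using a standard enriched double-complex / acyclic-assembly argument, together with the fact that ends commute with limits and that equalisers are built from products, one gets a quasi-isomorphism of $\cV$-abelian complexes between the augmentation $\cA(A,J^\bullet)$ and $\mathrm{Tot}(\cA(I^\bullet,J^\bullet))$; in degree $0$ this produces the map $\cA(A,B)=\ker\bigl(\cA(A,J^0)\to\cA(A,J^1)\bigr)\to H^0\mathrm{Tot}=\K(\cA)(I,J)$, where the identification of $H^0$ of the total complex with the homotopy hom-object follows directly from the definitions of $\ch(\cA)(I,J)$ as an equaliser and of $\K(\cA)(I,J)$ as a cokernel in \ref{enr-fun-cat}. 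All of these manipulations are legitimate in $\cV\da\Ab$ by Lemma~\ref{lr-exactness}, which guarantees that the relevant $\ker$, $\coker$, products and the functors $\cA(A,\mathord{-})$, $\cA(\mathord{-},A)$ behave exactly as in an ordinary abelian category.

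Finally, injectivity (i.e. that the map is a monomorphism in $\cV\da\Ab$) is the point where I expect to spend real effort. The underlying-group statement is classical: two liftings of $f$ are chain-homotopic, so $\cA_0(A,B)\to \K(\cA)_0(I,J)$ is injective on points, and in fact on arrows from any object of $\cV$. To upgrade this to an enriched monomorphism I would argue that the map is the equaliser-induced comparison of two exact complexes and hence is itself obtained by applying left-exact functors ($\ker$ of a chain map) to a monomorphism; concretely, $\cA(A,B)\hookrightarrow\cA(A,J^0)$ and the composite $\cA(A,B)\to\K(\cA)(I,J)\to\Coker\bigl(\textstyle\prod_n\cA(I^n,J^{n-1})\to\ch(\cA)(I,J)\bigr)$ can be checked to be injective by comparing it, via the augmentation quasi-isomorphism, with the honest monomorphism $\cA(A,B)\hookrightarrow\cA(A,J^0)\cong\cA(I^0,J^0)/(\text{image of }\cA(I^1,J^0))\cap\cdots$. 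The main obstacle is precisely bookkeeping the enriched double-complex (acyclic assembly) argument without an explicit element-chase: one must phrase the "a cocycle that lifts $0$ is a coboundary" step using only universal properties of $\ker$, $\coker$ and the exactness in Lemma~\ref{lr-exactness}, and I would handle it by reducing via the enriched Yoneda embedding to a diagram chase in $\cV_0$-valued presheaves, where ordinary homological algebra applies pointwise since limits and colimits in $\widehat{\cC_0}$ are computed argument-wise.
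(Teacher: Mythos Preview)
The paper states this proposition without proof (it is followed immediately by the horseshoe lemma), so there is no argument to compare against directly. Your double-complex strategy is the right one, and indeed essentially forced: the naive inductive construction of a lift $I^\bullet\to J^\bullet$ requires choosing a section of the epimorphism $\cA(I^n,J^n)\twoheadrightarrow(\text{target})$ at each stage, and such sections need not exist in an arbitrary $\Vab$.

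That said, your proposal significantly undersells its own conclusion. The row-filtration spectral sequence of the Hom bicomplex $\cA(I^p,J^q)$ has $E_1$ concentrated on the single column $p=0$ (precisely by $\cA$-injectivity of each $J^q$, as you observe), hence degenerates at $E_2$ and produces an \emph{isomorphism}
\[
\K(\cA)(I,J)\;=\;H^0\,\mathrm{Tot}\bigl(\cA(I^\bullet,J^\bullet)\bigr)\ \xrightarrow{\ \sim\ }\ H^0\bigl(\cA(A,J^\bullet)\bigr)\;=\;\cA(A,B),
\]
the last identification by left-exactness of $\cA(A,-)$ from Lemma~\ref{lr-exactness}. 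The monomorphism in the proposition is then simply the inverse of this isomorphism. Your entire final paragraph, in which you plan to ``spend real effort'' establishing injectivity via enriched Yoneda and pointwise element-chasing, is therefore redundant: the quasi-isomorphism you have already set up gives bijectivity on $H^0$, not merely a map. Redirect that effort instead to two genuine loose ends: first, convergence of the spectral sequence (the bicomplex sits in the second quadrant with product totalization; verify that for each fixed total degree the filtration by $J$-degree is bounded below and Hausdorff); second, the direction of the natural chain map, which is restriction along $A\hookrightarrow I^0$ and hence goes $\mathrm{Tot}\to\cA(A,J^\bullet)$, so the morphism of the proposition is obtained by inverting the induced isomorphism on $H^0$.
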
 

\begin{lemma}\label{horseshoe-lemma}
Suppose we have a solid horseshoe diagram
\begin{center}
 \begin{tikzpicture} 
\matrix(m)[matrix of math nodes, row sep=1.7em, column sep=1.7em, text height=1.5ex, text depth=0.25ex]
 {
  |(ad)|{A'} & |(i0d)|{I_0'} & |(i1d)|{I_1'} &  |(i3d)|{\cdots}	\\
 |(a)|{A} & |(i0)|{I_0} & |(i1)|{I_1} &  |(i3)|{\cdots}	\\
  |(add)|{A''} & |(i0dd)|{I_0''} & |(i1dd)|{I_1''} &  |(i3dd)|{\cdots}	\\
 }; 
\path[->,font=\scriptsize,>=to, thin]
(ad) edge[style=>->] (i0d) edge[style=>->]  (a) 
(a) edge[style=>->,dashed] (i0) edge[style=->>]  (add) 
(add) edge[style=>->]  (i0dd) 
(i0d) edge (i1d) edge[style=>->, dashed] (i0)
(i1d) edge (i3d) edge[style=>->, dashed] (i1)
(i0) edge[dashed] (i1) edge[style=->>, dashed] (i0dd)
(i1) edge[dashed] (i3) edge[style=->>, dashed] (i1dd)
(i0dd) edge (i1dd)
(i1dd) edge (i3dd)
;
\end{tikzpicture}
\end{center}
where the first column is short exact, the rows are enriched injective resolutions of $A'$ and $A''$, and $I_n=I_n'\times I_n''$. There exist dashed arrows that fill the horseshoe to produce a diagram with exact rows and columns, thus giving an enriched injective resolution of $A$.
\end{lemma}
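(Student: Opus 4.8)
The plan is to carry out the classical Horseshoe Lemma construction inside the underlying abelian category $\cA_0$ --- where the snake lemma and the usual bookkeeping with kernels and cokernels are available --- and to dispatch separately the single place where the enrichment intervenes.

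First I would observe that each $I_n=I_n'\times I_n''$ is enriched injective. Products in $\cA$ are $\cV$-products, so by the left-exactness of $\cA(A,\mathord{-})$ and $\cA_0(A,\mathord{-})$ recorded in \ref{lr-exactness} one has $\cA(\mathord{-},I_n)\simeq\cA(\mathord{-},I_n')\times\cA(\mathord{-},I_n'')$ and $\cA_0(\mathord{-},I_n)\simeq\cA_0(\mathord{-},I_n')\times\cA_0(\mathord{-},I_n'')$; a finite product of exact functors into $\Vab$ (resp.\ $\Ab$) is exact, so $I_n$ is both $\cA$-injective and $\cA_0$-injective, hence enriched injective in the sense of \ref{enr-injectives}.

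Next I would build the dashed arrows recursively, taking every vertical map $I_n'\to I_n$ and $I_n\to I_n''$ to be the canonical inclusion and projection of the product. At the initial stage, $\cA_0$-injectivity of $I_0'$ makes $\cA_0(\mathord{-},I_0')$ exact, so applying it to $0\to A'\to A\to A''\to0$ yields a surjection $\cA_0(A,I_0')\epi\cA_0(A',I_0')$ and hence an extension $\sigma\colon A\to I_0'$ of the resolution map $A'\mono I_0'$. I would then let $A\to I_0=I_0'\times I_0''$ be $(\sigma,\ A\epi A''\mono I_0'')$, the second coordinate being $A\epi A''$ followed by the resolution map; a direct check shows the two relevant squares commute, so these data form a map of short exact sequences $0\to A'\to A\to A''\to0$ and $0\to I_0'\to I_0\to I_0''\to0$ (the latter the canonical split one) whose outer components are monomorphisms. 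The snake lemma in $\cA_0$ then forces $\Ker(A\to I_0)$ to embed into $\Ker(A''\to I_0'')=0$, so $A\to I_0$ is a monomorphism, and it also produces a short exact sequence $0\to\bar A'\to\bar A\to\bar A''\to0$ of the cokernels $\bar A=\Coker(A\to I_0)$, $\bar A'=\Coker(A'\to I_0')$, $\bar A''=\Coker(A''\to I_0'')$. Exactness of the two given resolutions shows that $\bar A'\mono I_1'\to I_2'\to\cdots$ and $\bar A''\mono I_1''\to I_2''\to\cdots$ are again enriched injective resolutions, so the hypotheses of the lemma hold with $\bar A$ in place of $A$; defining $I_0\to I_1$ as $I_0\epi\bar A\mono I_1$ and iterating produces all the remaining dashed maps.

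Finally I would assemble the output: by construction every square commutes, each degreewise column $0\to I_n'\to I_n\to I_n''\to0$ (as well as the given $0\to A'\to A\to A''\to0$) is short exact, and the middle row, augmented to $0\to A\to I_0\to I_1\to\cdots$, is exact, since at each stage the differential $I_n\to I_{n+1}$ factors as the epimorphism of $I_n$ onto the current cokernel followed by that cokernel's monomorphism into $I_{n+1}$. Together with the first paragraph this exhibits $0\to A\to I_\bullet$ as an enriched injective resolution of $A$, sitting inside a degreewise split short exact sequence of complexes $0\to I'_\bullet\to I_\bullet\to I''_\bullet\to0$. I do not expect a genuine obstacle here: the only lifting property invoked is ordinary ($\cA_0$-)injectivity, everything else takes place in the abelian category $\cA_0$, and the enriched-injectivity bookkeeping is handled once and for all in the first step --- so this is precisely an instance of "classical homological algebra applies".
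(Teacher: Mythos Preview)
Your proposal is correct and follows essentially the same approach as the paper: invoke the classical horseshoe lemma in $\cA_0$ to produce the dashed arrows, and observe separately that the product of enriched injectives is enriched injective. The paper's own proof is a two-line sketch stating exactly these two points; you have simply spelled out the classical construction in detail where the paper takes it for granted.
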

\begin{proof}
The classical horseshoe lemma in $\cA_0$ produces the dashed arrows, and we merely observe that $A\to I$ is an enriched injective resolution since the product of enriched injective objects is an enriched injective object.
\end{proof}

\begin{definition}\label{enr-derived-fun}
Let $F:\cA\to\cB$ be $\Vab$-functor between two $\cV$-abelian categories, where $\cA$ has enough $\cA$-injectives. We naturally extend $F$ to a $\Vab$-functor $\ch(\cA)\to\ch(\cB)$ by applying it in each degree, and also to a $\Vab$-functor $\K(\cA)\to\K(\cB)$.

We define the $\Vab$-functor 
$$\der F:\cA\to \ch(\cB)$$
as follows. 
For an object $A\in\cA$, we take an $\cA$-injective resolution $A\to I$, and let 
$$
\der F(A)=\coh(F(I)).
$$
Note that the definition does not depend on the choice of $I$  using \ref{comparison-th}.

For $A,B\in\cA$, we construct the $\Vab$-morphism
$$
\cA(A,B)\to \ch(\cA)(\der F(A),\der F(B))
$$ 
as the composite 
$$
\cA(A,B)\to \K(I,J)\to\K(F(I),F(J))\to \ch(\cA)(\coh F(I),\coh F(J))=\ch(\cA)(\der F(A),\der F(B)).
$$

Thus we have simultaneously defined all the $n$-th \emph{derived $\Vab$-functors} of $F$,
$$
\der^nF(A)=\coh^n(F(I)).
$$

\end{definition}

\begin{definition}
An \emph{enriched $\delta$-functor} from a $\cV$-abelian category $\cA$ to a $\Vab$-category $\cB$ is a $\Vab$-functor
$$
T:\Ex(\cA)\to\ch(\cB)
$$
such that for every $0\to A\to B\to C\to 0\in \Ex(\cA)$, $T$ gives a chain 
$$
\cdots\to T^{n-1}(C)\xrightarrow{\delta} T^n(A)\to T^n(B)\to T^n(C)\xrightarrow{\delta} T^{n+1}(A)\to\cdots
$$
A $\delta$-functor is \emph{exact}, if it lands in the exact chains of $\cB$, i.e., if the above chain is exact for every choice of the short exact sequence.

A \emph{morphism of enriched $\delta$-functors} $T, T'$ is a $\Vab$-natural transformation $T\to T'$. We also have the internal hom object
$$
[\Ex(\cA),\ch(\cB)](T,T')\in\Vab.
$$
\end{definition}

In particular,   an enriched $\delta$-functor $T$ between $\cA$ and $\cB$ gives rise to a sequence of $\Vab$-functors 
$$
T^n:\cA\to\cB,
$$
and a morphism $T\to T'$ of enriched $\delta$-functors yields $\Vab$-natural transformations
$$
T^n\to {T'}^n.
$$
An enriched $\delta$-functor $T$ is \emph{universal,} if, for any other enriched $\delta$-functor $T'$, and any $\Vab$-natural transformation $f^0: T^0\to {T'}^0$, there exists a morphism $f:T\to T'$ of enriched $\delta$-functors that extends $f^0$. 

\begin{theorem}
Let $\cA$ be a $\cV$-abelian category with enough enriched injectives, and let $F:\cA\to \cB$ be a left-exact functor to a $\Vab$-category $\cB$. The functor $\der F:\cA\to\ch(\cB)$ 
%from \ref{enr-derived-fun} 
extends to the universal (cohomological) enriched $\delta$-functor
$$
\der F:\Ex(\cA)\to\ch(\cB).
$$
\end{theorem}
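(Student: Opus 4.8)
The plan is to transcribe Grothendieck's T\^ohoku argument into the $\cV$-enriched setting, using the enriched horseshoe lemma \ref{horseshoe-lemma} and the enriched comparison theorem \ref{comparison-th} as the two workhorses. First I would promote the $\Vab$-functor $\der F$ of \ref{enr-derived-fun} to an enriched $\delta$-functor on $\Ex(\cA)$. Given a short exact sequence $0\to A'\to A\to A''\to 0$, apply \ref{horseshoe-lemma} to obtain enriched injective resolutions $A'\to I'$, $A\to I$, $A''\to I''$ with $I^n=(I')^n\times(I'')^n$ and with $0\to I'\to I\to I''\to 0$ degreewise split exact. Since $F$ is additive it preserves these degreewise splittings, so $0\to F(I')\to F(I)\to F(I'')\to 0$ is a short exact sequence of complexes in $\cB$; applying the cohomology $\Vab$-functor $\coh$ and the (enriched) snake lemma built from the $\Vab$-functors $\ker,\coker,\coh$ produces the connecting morphisms $\delta\colon\der^{n}F(A'')\to\der^{n+1}F(A')$ and the long exact chain $\cdots\to\der^{n-1}F(A'')\xrightarrow{\delta}\der^{n}F(A')\to\der^{n}F(A)\to\der^{n}F(A'')\xrightarrow{\delta}\cdots$. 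Because exactness in $\ch(\cB)$ is tested degreewise and the ordinary long exact cohomology sequence is exact, this chain is exact, so the resulting $T$ lands in the exact chains.

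Second, I would address well-definedness and $\Vab$-functoriality. By \ref{comparison-th}, $\der F$ factors through the homotopy category $\K(\cB)$, hence is independent of the chosen resolution up to a canonical isomorphism; this makes $T$ well-defined on objects. For morphisms, a morphism of short exact sequences lifts --- via the comparison theorem applied to the horseshoe resolutions --- to a chain morphism of the chosen resolutions, unique up to chain homotopy, and applying $F$ then $\coh$ defines the action on hom-objects $\Ex(\cA)(s,s')\to\ch(\cB)(Ts,Ts')$. The $\Vab$-functor axioms (preservation of identities, composites, and of the $\cV$-morphism structure) and the $\Vab$-naturality of $\delta$ reduce, after passing to underlying categories via $(\mathord{-})_0$, to the classical diagram chases; the enriched statements then follow because, by \ref{lr-exactness}, the objects $\cA(\mathord{-},\mathord{-})$, $\cB(\mathord{-},\mathord{-})$, $\K(\cB)(\mathord{-},\mathord{-})$ and $\ch(\cB)(\mathord{-},\mathord{-})$ are built from kernels, cokernels and equalisers, so a $\cV$-morphism is pinned down by its underlying map together with commutativities the classical argument already supplies, while \ref{comparison-th} gives the $\Vab$-monomorphism controlling the homotopy-class level.

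Third comes universality. I would show $\der^n F$ is effaceable for $n\ge 1$: given $A$, choose (using enough enriched injectives) a monomorphism $A\to I$ with $I$ enriched injective; then $0\to I\to I\to 0\to\cdots$ is an enriched injective resolution, so $\der^n F(I)=0$ for $n\ge 1$, and $A\to I$ annihilates $\der^n F(A)$. Since $F$ is left exact, $\der^0F\simeq F$. Given an arbitrary enriched $\delta$-functor $T'$ and a $\Vab$-natural transformation $f^0\colon\der^0F\to(T')^0$, I would construct $f^n\colon\der^nF\to(T')^n$ inductively by dimension shifting: for a monomorphism $A\to I$ into an enriched injective with cokernel $C$, the chains of $T=\der F$ give $\der^{n}F(A)\simeq\der^{n-1}F(C)$ for $n\ge 2$ and $\der^1F(A)=\Coker(F(I)\to F(C))$, and the chains of $T'$ together with $f^{n-1}$ determine $f^n_A$; one checks independence of the choice of $I$ and $\Vab$-naturality by the standard argument, again reading the $\cV$-level off the underlying level via \ref{lr-exactness}.

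The main obstacle I expect is not the homological algebra --- which runs parallel to the classical case --- but the systematic bookkeeping ensuring every step is \emph{enriched}: that the resolutions, horseshoe fillings, comparison lifts and dimension-shift isomorphisms assemble into honest $\Vab$-functors and $\Vab$-natural transformations, and in particular that the connecting morphism $\delta$ is a genuine element of the internal $\cV$-hom-object $\ch(\cB)(Ts,Ts')$ rather than merely a morphism of $\cB_0$. The leverage for this is that $\cA$ and $\cB$ are tensored and cotensored over $\Vab$ and that, by \ref{lr-exactness}, the enriched hom-objects are exact-limit constructions, so the faithful underlying-category functor together with the $\Vab$-monomorphism of \ref{comparison-th} lets one transport the classical constructions up to the enriched level. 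Verifying this transport for $\delta$ and for the universality comparison maps $f^n$ is the delicate part; everything else is routine once it is in place.
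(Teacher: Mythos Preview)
Your proposal is correct and follows essentially the same approach as the paper: both reduce to the classical T\^ohoku argument by exploiting that enriched injectives are simultaneously $\cA$-injective and $\cA_0$-injective, so the classical construction of $\der F_0$ runs in parallel with the $\Vab$-enriched one, with the enriched horseshoe lemma \ref{horseshoe-lemma} and comparison theorem \ref{comparison-th} supplying the enriched structure on hom-objects. The paper's proof is far more terse, simply pointing to the classical universality result and asserting that the enriched horseshoe lemma lets the classical steps be followed while simultaneously defining the enriched functors on internal homs; your proposal is a careful elaboration of exactly this strategy, including the effaceability and dimension-shift arguments the paper leaves implicit.
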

\begin{proof}
Since $\cA$ has enriched injectives, we can compute $\der F:\cA\to\ch(\cB)$ using enriched injective resolutions. Since an enriched injective resolution is an $\cA$-injective resolution, this coincides with the calculation in \ref{enr-derived-fun}. 

On the other hand, since an enriched injective resolution is also $\cA_0$-injective, we see that, on objects, the calculation of $\der F$ agrees with the classical calculation of $\der F_0$ in $\cA_0$, and the latter is known to be a universal cohomological $\delta$-functor. 

Key results used in the classical construction of the connecting morphisms $\delta$ are the horseshoe lemma and the snake lemma. The enriched horseshoe lemma \ref{horseshoe-lemma} shows that the classical steps can be followed and we can simultaneously define the enriched functors on internal hom objects.
\end{proof}

\subsection{Relative and enriched topos cohomology}\label{rel-top-coh}

Let $u:(\cE,A)\to (\cS,R)$ be a bounded morphism of ringed (Grothendieck) topoi. As discussed in \ref{enrich-via-geom}, $\cE$ is $\cS$-enriched with internal  hom objects
$$
\cE_{\ov\cS}(X,Y)=u_*[X,Y]_\cE\in \cS.
$$
If $R$ is commutative, then \ref{module-sh-mon-cl} shows that $R\Mod=\lsub{R}{\cS}$ is monoidal closed, and $A\Mod=\lsub{A}{\cE}$ is $R\Mod$-enriched with internal hom objects
$$
A\Mod(M,N)=u_*[M,N]_A\in \lsub{R}{\cS}.
$$
Hence, %$\lsub{A}{\cE}$ 
in terminology of \ref{enr-ab-cat}, $A\Mod$ is an $\cS$-abelian category.

The isomorphism $u_*[u^*N,M]_A\simeq[N,u_*M]_R$ shows that the adjunction $u^*\dashv u_*$ is enriched, i.e.,  we have isomorphisms
$$
A\Mod(u^*F,M)\simeq R\Mod(F,u_*M),
$$
natural in $M\in\lsub{A}{\cE}$ and $F\in\lsub{R}{\cS}$.

\begin{remark}\label{exact-enr-inj}
If $u_*$ preserves epimorphisms, then $A\Mod$ has enough enriched injectives. Indeed, an injective object $I$ is also $A\Mod$-injective. By \ref{inj-is-int-inj}, $I$ is internally injective, so $[\mathord{-},I]$ takes monomorphisms to epimorphism, and hence $A\Mod(\mathord{-},I)=u_*\circ[\mathord{-},I]$ also takes monomorphisms to epimorphisms.
\end{remark}

Suppose $A\Mod$ has enough enriched injectives. For $M,N\in A\Mod$, we define 
$$
\Ext^n_{A\Mod}(M,N)=\Ext^n_A(\cE_{\ov\cS}; M,N)=\Ext^n_A(u; M,N)
$$
as the value at $N$ of the $n$-th right-derived $R\Mod$-functor of  the $R\Mod$-functor $A\Mod(M,\mathord{-}):A\Mod\to R\Mod$. The functors $\Ext^n_{A\Mod}(M,N)$ form an enriched $\delta$-functor in each variable.

As before, for $X\in\cE$, we let
$$
\tH^n(\cE_{\ov\cS}; X,N)=\tH^n(u; X,N)=\Ext^n_A(u; A_X,N),
$$
which is the value at $N$ of the $n$-th derived enriched functor of the $R\Mod$-functor
$\Gamma_u(X,\mathord{-})=\cE_{\ov\cS}(X,\mathord{-})=A\Mod(A_X,\mathord{-})$.

In particular, 
$$
\tH^n(\cE_{\ov\cS},N)=\tH^n(u,N)=\tH^n(u;e,N)=\Ext^n_A(u;A,N).
$$

If $A\Mod$ does not have enough enriched injectives, the above definitions still make sense for the underlying ordinary functors. We allow ourselves to use the same notation, given that enriched derived functors agree with corresponding ordinary derived functors  on objects. 

Suppose that $u$ fits in a commutative diagram
$$
\begin{tikzpicture} 
\matrix(m)[matrix of math nodes, row sep=2em, column sep=2em, text height=1.5ex, text depth=0.25ex]
 {
 |(1)|{(\cE,A)}		& |(2)|{(\cS,R)} 	\\
		& |(l2)|{(\Set,R_0)} 	\\
 }; 
\path[->,font=\scriptsize,>=to, thin]
(1) edge node[above]{$u$} (2) edge node[left]{$w$}   (l2)
(2) edge node[right]{$v$} (l2) 
;
\end{tikzpicture}
$$
where $v$ and $w$ are canonical geometric morphisms of Grothendieck topoi $\cS$ and $\cE$ to $\Set$. Then, for $M\in\lsub{A}{\cE}$, 
$$
\lsub{A}{\cE}(M,\mathord{-})=v_*\circ A\Mod(M,\mathord{-}),
$$
so Grothendieck's spectral sequence for the composite of two functors yields a spectral sequence
$$
E_2^{p,q}(N)=\der^p v_*(\Ext_A^q(u; M,N))\Rightarrow \Ext_A^{p+q}(\cE;M,N),
$$
which we can also write as $\tH^p(\cS,\Ext_{A\Mod}^q(M,N))\Rightarrow \Ext_A^{p+q}(M,N)$.

Moreover, by the definition of internal homs in $A\Mod$,
$$
\Ext^0_{A\Mod}(M,\mathord{-})=u_*\circ \uExt^0_A(M,\mathord{-}),
$$
so we obtain a spectral sequence
$$
E_2^{p,q}=\der^p u_*(\uExt^q_A(M,N))\Rightarrow\Ext^{p+q}_{A\Mod}(M,N).
$$
Combining these two spectral sequences gives the familiar spectral sequence relating $\uExt_A$ and $\Ext_A$ from \ref{int-ext}.

\subsection{Enriched and internal cohomology}

Let $(\cE,A)$ be a ringed topos with $A$ commutative. Then $A\Mod$ is monoidal closed and we consider it as enriched over itself. By \ref{inj-is-int-inj}, $A\Mod$ has enough internal injectives, i.e., enough enriched injectives for the self-enrichment, so the functors 
$$
\uExt^n_A(M,N)
$$
from \ref{int-ext} in fact form an enriched $\delta$-functor in each variable.

\begin{remark}
Let $u:(\cE,A)\to (\cS,R)$ be a bounded morphism of ringed topoi with $A$ and $R$ commutative, making $A\Mod$ into a $R\Mod$-category. If $u_*$ is exact, then $A\Mod$ has enough enriched injectives by \ref{exact-enr-inj} and the functors $\Ext_{A\Mod}^n$ form an enriched $\delta$-functor in each variable, and we know that the same holds of $\uExt_A^n$. On the other hand, the spectral sequence relating the two degenerates to the statement $\Ext_{A\Mod}^n(M,N)=u_*(\uExt_A^n(M,N))$, so in this case the enriched structure of $\Ext_{A\Mod}$ is obtained by base change via the cartesian functor $u_*$ from that of $\uExt_A$. 
%WARNING: NEED $u_*$ to be monoidal!!!
\end{remark}

\section{Algebraic geometry over a base topos}\label{rel-AG}

\subsection{Topologies in algebraic geometry}\label{ag-sites}

Let $\Sch$ denote the category of affine schemes of finite type over $\spec(\Z)$, and let $\tau$ be a Grothendieck topology on $\Sch$. For an affine scheme $S$, let 
$$
(\Sch_{\ov S},\tau)
$$
denote the big $\tau$-site on $S$, and let
$$
(S_\tau,\tau)
$$
denote the relevant small $\tau$-site. 

In the sequel we will most commonly use the Zariski, \'etale and fppf topologies.

\subsection{Hakim's spectra}\label{external-spectra}

Let $(\cC,J)$ be a standard site for a Grothendieck topos $\cE$, let $A$ be a ring in $\cE$, and let $\tau$ be a topology on schemes as in \ref{ag-sites}. 

Hakim defines the \emph{$\tau$-spectrum of $(\cE,A)$} as the ringed topos
$$
\spec.\tau(\cE,A)=(\sh(\cC_\tau,J_\tau),\cO_\tau)\xrightarrow{\pi_\tau}(\cE,A),
$$
where
\begin{enumerate}
\item the category $\cC_\tau$ has objects $(U,P)$, with $U\in\cC$ and $P\to \spec(A(U))$ is a scheme morphism in $\spec(A(U))_\tau$, and arrows $(V,Q)\xrightarrow{(u,r)}(U,P)$ consist of $V\xrightarrow{u}U\in\cC$ and a scheme morphism $Q\xrightarrow{r} P$ such that the diagram
 $$
 \begin{tikzpicture} 
\matrix(m)[matrix of math nodes, row sep=3em, column sep=2em, text height=1.9ex, text depth=0.25ex]
 {
 |(1)|{Q}		& |(2)|{P} 	\\
 |(l1)|{\spec(A(V))}		& |(l2)|{\spec(A(U))} 	\\
 }; 
\path[->,font=\scriptsize,>=to, thin]
(1) edge node[above]{$r$} (2) 
(1) edge   (l1)
(2) edge  (l2) 
(l1) edge  node[above]{$\bar{u}$} (l2)
;
\end{tikzpicture}
$$
commutes, where $\bar{u}$ is associated to $A(u):A(U)\to A(V)$;

\item the coverage $J_\tau$ is generated by families
$$
\{(U_\lambda,\spec(A(U_\lambda))\to (U,\spec(A(U))) : \lambda\in\Lambda\},
$$
where $\{U_\lambda\to U:\lambda\in\Lambda\}$ is a $J$-covering, and
$$
\{(U,P_i)\to (U,P): i\in I\}, 
$$
where $\{P_i\to P:i\in I\}$ is a $\tau$-covering in $\spec(A(U))_\tau$;

\item the ring $\cO_\tau$ is the ring in $\sh(\cC_\tau,J_\tau)$ associated to the presheaf
$$
(U,P)\mapsto \cA(P),
$$
where $\cA(P)$ is the ring corresponding to the affine scheme $P$;

\item the topos morphism $\pi_\tau$ is associated with the morphism of sites $p:(\cC,J)\to (\cC_\tau,J_\tau)$ given by
$$
p(U)=(U,\spec(A(U))).
$$
\end{enumerate}

\subsection{Localisations of rings}\label{tau-local}

Let $(\cE,A)$ be a ringed topos, $U\in\cE$, and $P$ an affine scheme over $\spec(A(U))$. Let the object 
$$
U_P\xrightarrow{\varphi}U\in \cE_{\ov U}
$$
be defined by
$$
U_P(V)=\coprod_{\cE(V,U)}\Sch_{\ov\spec A(U)}(\spec A(V),P),
$$
and let $\varphi_V$ be the natural projection to $U(V)=\cE(V,U)$.

If $\tau$ is a topology on $\Sch$, we say (cf.~\cite[III.4.4]{hakim}) that $A$ is \emph{$\tau$-local}, if for every $U\in\cE$ and every $\tau$-covering $\{P_i\to \spec A(U):i\in I\}$, the family
$\{U_{P_i}\to U:i\in I\}$ is epimorphic.

Hakim proves that the structure ring $\cO_\tau$ of $\spec.\tau(\cE,A)$ is $\tau$-local, at least in the case of $\tau$ being the Zariski, \'etale and fppf topology. Moreover, for the case of the Zariski and \'etale spectra, the corresponding structure rings are universal in the appropriate sense to be discussed below.

\subsection{The Zariski spectrum of a ringed topos}\label{spec-zar}

Hakim  proved in \cite{hakim} that the inclusion 2-functor from the category of locally ringed topoi to the category of ringed topoi admits a 2-right adjoint 
$$
\speczar
$$ 
called the \emph{Zariski spectrum}. Moreover, it can be constructed by taking $\tau=\text{Zar}$, the Zariski topology, in \ref{external-spectra}.

More explicitly, given a ringed topos $(\cE,A)$, there exists a locally ringed topos 
$$\speczar(\cE,A)=(\tilde{\cE},\tilde{A})$$ and a morphism of ringed topoi $\pi:(\tilde{\cE},\tilde{A})\to (\cE,A)$ such that any morphism $\varphi:(\cF,B)\to (\cE,A)$ with $B$ local, factors uniquely through $(\tilde{\cE},\tilde{A})$ by a morphism $\bar{\varphi}$ of locally ringed topoi. The diagram
\begin{center}
\begin{tikzpicture} 
\matrix(m)[matrix of math nodes, row sep=2em, column sep=2em, text height=1.5ex, text depth=0.25ex]
 {
|(0)|{(\cF,B)}  &  	& |(2)|{(\tilde{\cE},\tilde{A})} 	\\
 & |(l1)|{(\cE,A)}		& 	\\
 }; 
\path[->,font=\scriptsize,>=to, thin]
(0) edge[style=dashed] node[above]{$\bar{\varphi}$} (2)
%(l1) edge node[above]{$d_0$} (l0)
edge  node[below left]{$\varphi$} (l1)
(2) edge node[below right]{$\pi$} (l1) 
;
\end{tikzpicture}
\end{center}
illustrates the above universal property, as well as the fact that the morphism
$$
\pi^*A\to\tilde{A}
$$
should be viewed as the solution to the problem of finding a universal localisation of $A$, which is only possibly by changing the topos from $\cE$ to $\tilde{\cE}$.

%Hakim's proof constructs $\speczar(\cE,A)$ by taking $\tau=\text{Zar}$ in \ref{external-spectra}, the Zariski topology, where the covering families are jointly surjective families of open immersions, and then verifies the universal property.

\subsection{Tierney's construction of the Zariski spectrum}\label{tierney-spec-zar}

We provide an internal construction of $\speczar(\cE,A)$ of a ringed topos $(\cE,A)$, following the steps outlined in \cite{tierney-spec}, when $\cE$ has a natural number object.

The \emph{object of radical ideals} of $A$ is the subobject of realisations 
$$
R\mono PA
$$
of the formula
\begin{align*}
\phi(\A:PA) \equiv \ \ &  (\forall f:A\, \forall g:A\ f\in\A\land g\in\A\Rightarrow f+g\in\A) \\
\land\ \  & (\forall f:A\, \forall a:A\ f\in\A \Rightarrow af\in\A)\\
 \land\ \  & (\forall f:A\ f\in\A \Leftrightarrow f^2\in\A)\\
\land\ \  & 0\in \A. \\
\end{align*}
Moreover, $R$ is an internal frame (a complete Heyting algebra), and we have a retraction
$$
r:PA\to R, \ \ \ r(X)=\bigwedge\{ \A\in R\, |\, X\leq \A\}.
$$
The composite
$$
\rho:A\xrightarrow{\{\}}PA\xrightarrow{r}R,
$$
satisfies 
\begin{multline*}
\rho(1)=A\ \land\  \rho(0)=\mathop{\rm Nil}(A)\\
\land\ \forall f:A\ \forall g:A\ \  (\rho(fg)=\rho(f)\land\rho(g))\ \land\ (\rho(f+g)\leq\rho(f)\lor\rho(g)).
\end{multline*}

Using the above notation, we define a poset $$\bbA=(A_1,A),$$
 where %$f\leq g$ whenever $\rho(f)\leq\rho(g)$.
$A_1\mono A\times A$ is described by the formula
$$
\{(f,g):A\times A \,|\, \rho(f)\leq \rho(g)\}.
$$

An internal coverage on $\bbA$, in the sense of \ref{int-cov-poset},
$$
T\xhookrightarrow{(b,c)}A\times \mathop{\rm Idl}(\bbA)
$$
is given by the formula
$$
\{(f,I):A\times\mathop{\rm Idl}(\bbA)\, | \, \rho(f)\leq r(I)\},
$$
and we call it the \emph{Zariski topology}.

Tierney now considers the canonical geometric morphism
$$
\tilde{\cE}=\sh_{\cS}(\bbA,T)\xrightarrow{\pi}\cS
$$
and notes that the map $\varphi:\pi^*A\to\Omega_{\sh(\bbA,T)}$ which corresponds by adjunction to $\rho:A\to R\simeq \pi_*\Omega_{\sh(\bbA,T)}$ (where the last identification holds since the topology is induced by $r$), satisfies
\begin{multline*}
\varphi(1)=\mathop{\rm true}\ \land\  \varphi(0)=\mathop{\rm false}\\
\land\ \forall f:\pi^*A\ \forall g:\pi^*A\ \  (\varphi(fg)=\varphi(f)\land\varphi(g))\ \land\ (\varphi(f+g)\leq\varphi(f)\lor\varphi(g)).
\end{multline*}
Hence, the object $S\mono \pi^*A$, classified by the map $\varphi$ is the `universal coprime' in $\pi^*A$, and the localisation
$$
\tilde{A}=S^{-1}(\pi^*A)
$$
is therefore a local ring in $\tilde{\cE}$.

Equivalently, working with the canonical geometric morphism $\gamma:[\bbA^\op,\cE]\to\cE$, $A_1^\op$ is the universal saturated multiplicative subobject in $\gamma^*A$, so 
$$
\bar{A}=(A_1^\op)^{-1}(\gamma^*A).
$$
is a local ring object in $[\bbA^\op,\cE]$, and 
$\tilde{A}$ can be obtained as the $T$-sheaf associated to $\bar{A}$.

Tierney did not prove in \cite{tierney-spec} that this particular construction works when $\cE$ is a topos with a natural number object because it was difficult to establish the required universal property. Instead, he found an alternative construction by the method of `forcing topologies'. 

We favour the above construction because we can implement it explicitly in the special case when $\cE$ is the topos of difference sets in \ref{speczar-diff}. We avoid the difficulty of establishing the universal property in the case of a Grothendieck topos  $\cE$ by showing that the externalisation of Tierney's construction gives the original Hakim's construction.

\subsection{Tierney's vs.\ Hakim's construction of the Zariski spectrum}\label{hakim-spec-zar}

\begin{proposition}\label{tierney-works}
If $\cE$ is a Grothendieck topos, the construction from \ref{tierney-spec-zar} yields the Zariski spectrum.
%$$
%\speczar(\cE,A)\simeq (\tilde{\cE},\tilde{A}).
%$$
\end{proposition}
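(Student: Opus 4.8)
The plan is to avoid verifying the universal property of Tierney's construction directly (which, as \ref{hakim-spec-zar} notes, is the hard point) and instead to show that, once externalised over a standard site for $\cE$, Tierney's construction reproduces Hakim's site-theoretic Zariski spectrum from \ref{external-spectra}; the universal property then follows formally from \ref{spec-zar}.

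First I would fix $\cE\simeq\sh(\cC,J)$ with $(\cC,J)$ a standard site. The internal poset $\bbA=(A_1,A)$ equipped with the Zariski coverage $T$ is an internal site in $\cE$, so applying the externalisation of bounded morphisms (\ref{semidir}, \ref{bdd-morphi}) to the canonical geometric morphism $\pi\colon\sh_\cE(\bbA,T)\to\cE$ gives, up to equivalence,
\[
\sh_\cE(\bbA,T)\ \simeq\ \sh(\cC\rtimes\bbA,\ J\rtimes T),
\]
with $\pi$ identified with the morphism induced by the projection $\cC\rtimes\bbA\to\cC$. Unwinding the semidirect-product site, an object of $\cC\rtimes\bbA$ is a pair $(U,f)$ with $U\in\cC$ and $f\in A(U)$, which I would read as the basic open $D(f)\hookrightarrow\spec A(U)$; a morphism $(V,g)\to(U,f)$ is a $\cC$-arrow $u\colon V\to U$ together with the relation $\rho_V(g)\le\rho_V(A(u)(f))$, i.e.\ a scheme morphism $D(g)\to D(f)$ over $\bar u\colon\spec A(V)\to\spec A(U)$; and, by the description of coverages on an internal poset in \ref{int-cov-poset}, $J\rtimes T$ is generated by the $J$-coverings together with the families $\{D(g_i)\to D(f)\}$ that are Zariski coverings of $D(f)$ by basic opens.

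Next I would compare this site with Hakim's $(\cC_{\mathrm{Zar}},J_{\mathrm{Zar}})$. The assignment $(U,f)\mapsto(U,D(f))$ defines a fully faithful, continuous functor $\cC\rtimes\bbA\to\cC_{\mathrm{Zar}}$, and since every object $(U,P)$ of $\cC_{\mathrm{Zar}}$ admits a $J_{\mathrm{Zar}}$-cover by basic opens, it exhibits $\cC\rtimes\bbA$ as a dense subsite. A comparison of sites then yields an equivalence
\[
\sh(\cC\rtimes\bbA,\ J\rtimes T)\ \simeq\ \sh(\cC_{\mathrm{Zar}},J_{\mathrm{Zar}})
\]
commuting with the projections to $\cE$, hence intertwining $\pi$ with Hakim's $\pi_{\mathrm{Zar}}$. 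It remains to match the structure rings: externally, Tierney's $\tilde A$ is the $T$-sheaf associated to $\bar A=(A_1^{\op})^{-1}(\gamma^*A)$, and $\bar A$ corresponds to the presheaf on $\cC\rtimes\bbA$ with value $A(U)_f=\cA(D(f))$ at $(U,f)$, i.e.\ the restriction of Hakim's presheaf $(U,P)\mapsto\cA(P)$; since sheafification is transported by the equivalence, $\tilde A\simeq\cO_{\mathrm{Zar}}$. Therefore $(\sh_\cE(\bbA,T),\tilde A)\xrightarrow{\pi}(\cE,A)$ is isomorphic, as a ringed topos over $(\cE,A)$, to $\spec.\mathrm{Zar}(\cE,A)$; as $\tilde A$ is local by \ref{tierney-spec-zar} and $\spec.\mathrm{Zar}$ is the value of the $2$-right adjoint to the inclusion of locally ringed topoi (\ref{spec-zar}, \cite{hakim}), Tierney's construction has exactly the universal property of the Zariski spectrum.

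The main obstacle is the identification of the externalised data in the middle step. One has to check, via Kripke--Joyal semantics over $(\cC,J)$, that the internal formulas defining $A_1$, the object of radical ideals $R$, the morphism $\rho$ and the coverage $T$ translate into the genuine Zariski-theoretic relations (inclusion of basic opens, radical membership, Zariski covers) rather than into their naive presheaf-level analogues — sheafification intervenes here and must be tracked — and that the generating families listed above really do generate $J_{\mathrm{Zar}}$ and no finer topology. Once this bookkeeping is settled, the rest is the formal argument sketched above.
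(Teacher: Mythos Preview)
Your proposal is correct and follows essentially the same route as the paper: externalise Tierney's internal site $(\bbA,T)$ via the semidirect product construction of \ref{semidir}, identify the resulting ringed site with Hakim's, and then borrow her universal property. The only nuance is that the paper asserts the externalised description $(\cC\rtimes\bbA,J\rtimes T,\tilde{\bA})$ is \emph{identical} to Hakim's in \cite[IV.1]{hakim} (conceding a ``slight adaptation'' is needed to match the formulation in \ref{external-spectra}, where objects are arbitrary $P\in\spec(A(U))_\tau$ rather than basic opens); you make that adaptation explicit by exhibiting $(U,f)\mapsto(U,D(f))$ as a dense subsite inclusion and invoking the Comparison Lemma, which is exactly what the paper's parenthetical glosses over.
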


\begin{proof}

Let $\cE=\sh(\cC,J)$ be a Grothendieck topos with a ring object $A$. We constructed the Zariski spectrum $\speczar(\cE,A)$ as a locally ringed topos $(\tilde{\cE},\tilde{A})$, where
$$
\tilde{\cE}=\sh_\cE(\bbA,T),
$$
where $(\bbA,T)$ is an internal site in $\cE$. 

We externalise the above by the construction from \ref{semidir} and obtain another description of $\tilde{\cE}$ as
$$
\tilde{\cE}=\sh(\cC\rtimes\bbA, J\rtimes T).
$$
Explicitly, the category 
$$
\cC\rtimes\bbA
$$
consists of objects $(U,s)$, where $U\in\cC$, and $s\in A(U)$, while a morphism
$\varphi:(U',s')\to (U,s)$ consists of a $\cC$-morphism $\varphi:U'\to U$ and an $\bbA(U')$-morphism $s'\to \bbA(\varphi)(s)$. More explicitly, this means that $s'\leq s|\varphi$, where $s|\varphi$ denotes the image of $s$ in $A(U')$, i.e., 
$$
s'\in\sqrt{s|\varphi} \text{ in } A(U').
$$
The coverage
$$
J\rtimes T
$$
consists of sieves generated by families
$$
\{(U_\lambda,s_{\lambda,i})\xrightarrow{\varphi_\lambda}(U,s):i\in I_\lambda, \lambda\in\Lambda\},
$$
where $\{U_\lambda\xrightarrow{\varphi_\lambda}U:\lambda\in\Lambda\}$ is a $J$-covering family, and, for each $\lambda \in\Lambda$, 
$$
s|\varphi_\lambda\in\sqrt{\{s_{\lambda,i}:i\in I_\lambda\}}. %\text{ and each} s_{\lambda,i}\in\sqrt{s|\varphi_\lambda}.
$$

The local ring $\tilde{A}$ is the associated $T$-sheaf to the internal presheaf $\bar{A}=(A_1^\op)^{-1}(\gamma^*A)$, where $\gamma:[\bbA^\op,\cE]\to\cE$ is the canonical geometric morphism. The corresponding presheaf
$$
\bar{\bA}\in[(\cC\rtimes\bbA)^\op,\Set]
$$
is defined as
$$
\bar{\bA}(U,s)=\{x\in \bar{A}_0(U) : \pi_2(x)=s\}\simeq A(U)_s.
$$
Thus, the $J\rtimes K$-sheaf on $\cC\rtimes\bbD$ corresponding to $\tilde{A}$ is the sheaf $\tilde{\bA}$ associated to the presheaf $\bar{\bA}$.

The above description of $\speczar(\cE,A)$ as $(\sh(\cC\rtimes\bbA,J\rtimes T,\tilde{\bA})$ is identical to Hakim's in \cite[IV.1]{hakim} (or, with slight adaptation, to construction given in \ref{external-spectra}), so we have shown that her construction is an externalisation of Tierney's construction \ref{tierney-spec-zar}. Given that Hakim proves that her construction satisfies the universal property from \ref{spec-zar}, so does Tierney's.
\end{proof}

%and we have obtained an indirect proof of \ref{tierney-works}.

\subsection{The \'etale spectrum}\label{spec-et}

Hakim calls an \'etale-local ring in a topos \emph{strictly local}. She proved that the inclusion 2-functor of the full subcategory of strictly locally ringed topoi in the category of locally ringed topoi admits a 2-right adjoint
$$
\specet
$$
called the \emph{\'etale spectrum}. Moreover, it can be constructed by taking $\tau=\text{\'et}$, the \'etale topology, in \ref{external-spectra}.

More explicitly, given a locally ringed topos $(\cE,A)$, there exists a strictly locally ringed topos 
$$\specet(\cE,A)=(\tilde{\cE},\tilde{A})$$ and a morphism of locally ringed topoi $\pi:(\tilde{\cE},\tilde{A})\to (\cE,A)$ such that any local morphism $\varphi:(\cF,B)\to (\cE,A)$ with $B$ strictly local, factors uniquely through $(\tilde{\cE},\tilde{A})$ by a morphism $\bar{\varphi}$ of locally ringed topoi, i.e., the diagram
\begin{center}
\begin{tikzpicture} 
\matrix(m)[matrix of math nodes, row sep=2em, column sep=2em, text height=1.5ex, text depth=0.25ex]
 {
|(0)|{(\cF,B)}  &  	& |(2)|{(\tilde{\cE},\tilde{A})} 	\\
 & |(l1)|{(\cE,A)}		& 	\\
 }; 
\path[->,font=\scriptsize,>=to, thin]
(0) edge[style=dashed] node[above]{$\bar{\varphi}$} (2)
%(l1) edge node[above]{$d_0$} (l0)
edge  node[below left]{$\varphi$} (l1)
(2) edge node[below right]{$\pi$} (l1) 
;
\end{tikzpicture}
\end{center}
can be completed by a unique dashed arrow.

%\subsection{Cole's theorem}

\subsection{Spectra as classifying topoi}\label{cole}

Let $\bbS$ and $\bbT$ be finitely presented geometric theories in the same language such that $\bbT$ is a quotient theory of $\bbS$, i.e., the axioms of $\bbT$ include those of $\bbS$. Let $\bbA$ be a class of morphisms of $\bbT$-models which satisfies the following conditions:
\begin{enumerate}
\item the property of being in $\bbA$ is preserved by inverse image functors;
\item $\bbA$ contains all identity morphisms;
\item given $L\xrightarrow{f}M\xrightarrow{g}P$ with $g\in\bbA$, then $f\in\bbA$ if and only if $gf\in\bbA$;
\item every $\bbS$-model morphism $M\xrightarrow{f}L$ to a $\bbT$-model $L$ has a factorisation as $M\xrightarrow{q}M_f\xrightarrow{\hat{f}}L$, where $M_f$ is a $\bbT$-model and $\hat{f}\in\bbA$ which is universal in the sense that any for other factorisation $M\xrightarrow{r}P\xrightarrow{g}L$, there exists a unique $M_f\xrightarrow{h}P$ such that $gh=\hat{f}$ and $hq=r$;
\item the above factorisations are preserved by inverse image functors. 
\end{enumerate}

Let us write $\bbS\da\Top_N$ for the 2-category of $\bbS$-modelled topoi with a natural number object and $\bbA\da\Top_N$ for the sub-2-category of $\bbT\da\Top_N$ with those 1-arrows $(p,f)$ for which $f\in\bbA$. 

A theorem of Cole \cite{cole} states that the inclusion 2-functor $\bbA\da\Top_N\to \bbS\da\Top_N$ admits a right 2-adjoint 
$$
\spec: \bbS\da\Top_N\to \bbA\da\Top_N.
$$

The universal property of the Zariski spectrum from \ref{spec-zar} can be viewed as a special case of Cole's theorem for $\bbS$ the algebraic theory of rings, $\bbT$ the geometric theory of local rings, and $\bbA$ the class of local homomorphisms of local rings. In perticular, that entails that Zariski local rings are local rings.

The universal property of the Zariski spectrum of a ringed topos $(\cE,A)$ can therefore be restated as follows. Let $\bbT_{\text{Zar},A}$ be the geometric theory over $\cE$ of localisations of $A$. If $\speczar(\cE,A)=(\tilde{\cE},\tilde{A})$, then $\tilde{\cE}$ is the classifying topos of $\bbT_{\text{Zar},A}$, and $\tilde{A}$ is its generic model. Modulo a small abuse of notation, we can write
$$
\speczar(\cE,A)\simeq \cE[\bbT_{\text{Zar},A}].
$$
In other words, for any $\cE$-topos $\cF$, we have an equivalence
$$
\Top_\cE(\cF,\tilde{\cE})\simeq \text{Loc.Rng}_A(\cF),
$$
which takes $\cF\xrightarrow{f}\tilde{\cE}$ to $f^*\tilde{A}$.

Joyal and Wraith \cite{wraith-strict-local} have shown that strictly local rings are models of the geometric theory of strictly henselian rings, so the universal property of the \'etale spectrum from \ref{spec-et} is a special case of Cole's theorem for $\bbS$ the geometric theory of local rings, $\bbT$ the geometric theory of strictly henselian rings, and $\bbA$ the class of local homomorphisms of strictly henselian rings.

Hence, given a locally ringed topos $(\cE,A)$, we have a geometric theory $\bbT_{\text{\'et},A}$ over $\cE$ of strict henselisations of $A$, and, if $\specet(\cE,A)=(\hat{\cE},\hat{A})$, then $\hat{\cE}$ is the classifying topos of 
$\bbT_{\text{\'et},A}$ and $\hat{A}$ is its generic model, and we can write
$$
\specet(\cE,A)\simeq \cE[\bbT_{\text{\'et},A}].
$$
In other words, for any $\cE$-topos $\cF$, we have an equivalence
$$
\Top_\cE(\cF,\hat{\cE})\simeq \text{Str.Loc.Rng}_A(\cF),
$$
which takes $\cF\xrightarrow{f}\tilde{\cE}$ to $f^*\hat{A}$.

\subsection{Relative Affine schemes}\label{rel-affine}

The \emph{affine scheme} associated to a ringed topos $(\cS,\cO_\cS)$ is the locally ringed topos
$$
(X,\cO_X)=\speczar(\cS,\cO_\cS)\xrightarrow{\pi}(\cS,A). 
$$

If $\tau$ is a topology on schemes that refines the Zariski topology, we define the $\tau$-topos of $X$ as 
$$
(X_\tau,\cO_\tau)=\spec.\tau(X,\cO_X).
$$
In particular, we obtain the \'etale and fppf topoi of $X$,
$$
X_\text{\'et}, \ \ X_\text{fppf}.
$$

\subsection{Relative schemes}\label{rel-schemes}

In the first draft of this manuscript, we will mostly deal with affine schemes over a base topos, but we indicate Hakim's approach to treating general schemes and the subtle glueing constructions involved.

Let $(\cS,\cO_\cS)$ be a ringed topos, and let
$$
\Sigma:\cS\to\cat
$$
be the 2-functor given, for $U\in\cS$, by
$$
\Sigma(U)=\Sch_{\ov \spec\Gamma(U,\cO_S)},
$$
and, for $V\xrightarrow{\varphi}U\in \cS$, by
$$
\Sigma(\varphi): X\mapsto X\times_{\spec\Gamma(U,\cO_X)}\spec\Gamma(V,\cO_S).
$$
Let
$$
\{\Sch;(\cS,\cO_\cS)\}
$$
be the stackification of the fibered category associated with $\Sigma$, and we define the category of \emph{relative schemes over $(\cS,\cO_\cS)$} as the fibre
$$
\Sch_{(\cS,\cO_\cS)}
$$
of the stack $\{\Sch;(\cS,\cO_\cS)\}$ over the terminal object of $\cS$.

An $\cS$-quasi-scheme is a locally ringed topos $(X,\cO_X)$ over $\cS$ which is associated with a relative $\cS$-scheme as in \cite[V.7.1]{hakim}. 

For the first reading of this manuscript, it is enough to work with relative affine schemes $X$.

\section{Relative Galois theory}\label{rel-galois}

\subsection{Janelidze's categorical Galois theory}\label{janelidze}

We review Janelidze's pure Galois theory, as set out in \cite{janelidze} and \cite{borceux-janelidze}. 

We start with we an adjoint pair of functors
\begin{center}
 \begin{tikzpicture} 
 [cross line/.style={preaction={draw=white, -,
line width=3pt}}]
\matrix(m)[matrix of math nodes, minimum size=1.7em,
inner sep=0pt, 
row sep=3.3em, column sep=1em, text height=1.5ex, text depth=0.25ex]
 { 
  |(dc)|{\cA}	\\
 |(c)|{\cP} 	      \\ };
\path[->,font=\scriptsize,>=to, thin]
%
%(dc) edge[draw=none] node (mid) {} (c)
%(dc) edge node (fo) [pos=0.25,right=-3.5pt]{$\forg{\kern1pt}$}  (c)
(dc) edge [bend right=30] node (ss) [left]{$S$} (c)
(c) edge [bend right=30] node (ps) [right]{$C$} (dc)
%(m)id edge[draw=none] node{$\dashv$} (ss)
(ss) edge[draw=none] node{$\dashv$} (ps)
;
\end{tikzpicture}
\end{center}
%i.e., $S:\cA\to \cP$, $C:\cP\to\cA$, with $S\dashv C$, and 
with unit $\eta:\id\to CS$ and counit $\epsilon:SC\to \id$.

For any $X\in\cA$, we obtain an adjunction
\begin{center}
 \begin{tikzpicture} 
 [cross line/.style={preaction={draw=white, -,
line width=3pt}}]
\matrix(m)[matrix of math nodes, minimum size=1.7em,
inner sep=0pt, 
row sep=3.3em, column sep=1em, text height=1.5ex, text depth=0.25ex]
 { 
  |(dc)|{\cA_{\ov X}}	\\
 |(c)|{\cP_{\ov S(X)}} 	      \\ };
\path[->,font=\scriptsize,>=to, thin]
%
%(dc) edge[draw=none] node (mid) {} (c)
%(dc) edge node (fo) [pos=0.25,right=-3.5pt]{$\forg{\kern1pt}$}  (c)
(dc) edge [bend right=30] node (ss) [left]{$S_X$} (c)
(c) edge [bend right=30] node (ps) [right]{$C_X$} (dc)
%(m)id edge[draw=none] node{$\dashv$} (ss)
(ss) edge[draw=none] node{$\dashv$} (ps)
;
\end{tikzpicture}
\end{center}
where  $C_X(E\xrightarrow{e}S(X)$ is obtained by forming the pullback

\begin{center}
 \begin{tikzpicture} 
\matrix(m)[matrix of math nodes, row sep=2em, column sep=2em, text height=1.9ex, text depth=0.25ex]
 {
 |(1)|{C_X(e)}		& |(2)|{C(E)} 	\\
 |(l1)|{X}		& |(l2)|{CS(X)} 	\\
 }; 
\path[->,font=\scriptsize,>=to, thin]
(1) edge  (2) edge   (l1)
(2) edge node[right]{$C(e)$} (l2) 
(l1) edge node[above]{$\eta_X$}  (l2);
\end{tikzpicture}
\end{center}
and
$$
S_X(A\xrightarrow{a}X)=S(A)\xrightarrow{S(a)}S(X).
$$

An object $A\xrightarrow{a}Y\in\cA_{\ov Y}$ is \emph{split} by a morphism $X\xrightarrow{f}Y\in\cA$ when the unit $\eta^X:\id\to C_XS_X$ of adjunction $S_X\dashv C_X$ gives an isomorphism
$$
\eta^X_{f^*a}:f^*a\to C_XS_X(f^*a),
$$
or, equivalently, if there exists an object $E\xrightarrow{e}S(X)$ such that 
$$
f^*a\simeq C_X(e).
$$
We write
$$
\Split_Y(f)
$$
for the full subcategory of $\cA_{\ov Y}$ of objects split by $f$.

We say that a morphism $X\xrightarrow{f}Y$ in $\cA$ is an \emph{effective descent morphism} if the functor
$$
f^*:\cA_{\ov Y}\to\cA_{\ov X}
$$
is monadic. Moreover, $f$ is of \emph{relative Galois descent} if
\begin{enumerate}
\item $f$ is an effective descent morphism;
\item the counit $\epsilon^X:S_XC_C\to \id$ of adjunction $S_X\dashv C_X$ is an isomorphism;
\item for every $E\xrightarrow{e}S(X)$ in $\cP_{\ov S(X)}$, the object $\Sigma_f\circ C_X(e)\in\cA_{\ov Y}$ is split by $f$.
\end{enumerate} 

If $X\xrightarrow{f}Y$ is of relative Galois descent, we define 
$$
\Gal[f]
$$
as the internal groupoid in $\cP$ given by the data

\begin{center}
 \begin{tikzpicture} 
\matrix(m)[matrix of math nodes, row sep=0em, column sep=3em, text height=1.5ex, text depth=0.25ex]
 {
|(0)|{S(X\times_YX)\times_{S(X)}S(X\times_YX)}  &[3em]  |(1)|{S(X\times_YX)}		&[1em] |(2)|{S(X)} \\
 }; 
\path[->,font=\scriptsize,>=to, thin]
(0) edge node[above]{$(S(\pi_1),S(\pi_4))$} (1)
([yshift=1em]1.east) edge node[above=-2pt]{$S(\pi_1)$} ([yshift=1em]2.west) 
(2)  edge node[above=-2pt]{$S(\Delta)$} (1) 
([yshift=-1em]1.east) edge node[above=-2pt]{$S(\pi_2)$} ([yshift=-1em]2.west) 
%([xshift=-3pt]1.south) edge [loop below] ([xshift=3pt]1.south)
 (1) edge [loop below] node {$S(\tau)$} (1)
;
\end{tikzpicture}
\end{center}
and now Janelidze's \emph{Galois theorem} gives us an equivalence of categories
$$
\Split_Y(f)\simeq [\Gal[f],\cP].
$$

%\subsection{Joyal-Tierney's generalisation of Grothendieck Galois theory}

\subsection{Bunge's localic fundamental group}\label{bunge-pi1}

Let $\gamma:\cE\to \cS$ be a locally connected geometric morphism, i.e., the adjunction 
$$
\gamma_!\dashv \gamma^*
$$
is $\cS$-indexed. The guiding philosophy is that in such a case, $\gamma_!$ can be thought of as the `connected components' functor.

An object $A\in\cE$ is \emph{split} by a cover $U$ in $\cE$ (i.e., an epimorphism $U\to 1$), if any of the following equivalent conditions is satisfied
\begin{enumerate}
\item there is a morphism $S\to \gamma_!U$ in $\cS$ and an isomorphism
$$
\gamma^*S\times_{\gamma^*\gamma_!U}U\to A\times U
$$
over $U$, where $\eta_U:U\to \gamma^*\gamma_!U$ is the unit of adjunction $\gamma_!\dashv\gamma^*$ evaluated at $U$;
\item there is a morphism $\alpha:S\to\gamma_!U$ and a morphism $\zeta:A\times U\to \gamma^*S$ so that the square
\begin{center}
 \begin{tikzpicture} 
\matrix(m)[matrix of math nodes, row sep=2em, column sep=2em, text height=1.9ex, text depth=0.25ex]
 {
 |(1)|{A\times U}		& |(2)|{U} 	\\
 |(l1)|{\gamma^*S}		& |(l2)|{\gamma^*\gamma_!U} 	\\
 }; 
\path[->,font=\scriptsize,>=to, thin]
(1) edge node[above]{$\pi_2$}  (2) edge node[left]{$\zeta$}   (l1)
(2) edge node[right]{$\eta$} (l2) 
(l1) edge node[above]{$\gamma^*\alpha$}  (l2);
\end{tikzpicture}
\end{center}
is a pullback;
\item there is a morphism $J\to I$ in $\cS$, a morphism $U\to \gamma^*I$ in $\cE$ and an isomorphism
$$
\gamma^*J\times_{\gamma^*I}U\to A\times U
$$
over $U$.
\end{enumerate}
Note that this agrees with the notion of $A$ being split by $U$ for the adjoint pair $\gamma_!\dashv\gamma^*$ in the sense of Janelidze \ref{janelidze}.

Let 
$$
\Split(U)
$$
denote the full subcategory of $\cE$ of objects split by the cover $U$, and let
$$
\Split(\cE)
$$
be the full subcategory of \emph{locally constant} objects, which are split by some cover.

For a cover $U$ in $\cE$, Bunge \cite{bunge-04} forms the \emph{fundamental pushout} topos $\cG_U$ as the pushout in $\Top_\cS$

\begin{center}
 \begin{tikzpicture} 
\matrix(m)[matrix of math nodes, row sep=2em, column sep=2em, text height=1.9ex, text depth=0.25ex]
 {
 |(1)|{\cE_{\ov U}}		& |(2)|{\cE} 	\\
 |(l1)|{\cS_{\ov\gamma_!U}}		& |(l2)|{\cG_U} 	\\
 }; 
\path[->,font=\scriptsize,>=to, thin]
(1) edge node[above]{$\varphi_U$}  (2) edge node[left]{$\rho_U$}   (l1)
(2) edge node[right]{$\sigma_U$} (l2) 
(l1) edge node[above]{$p_U$}  (l2);
\end{tikzpicture}
\end{center}
where $\varphi_U$ is the canonical local homeomorphism, $\rho_U$ is the connected locally connected part in the unique factorisation of the (locally connected) composite $\gamma\varphi_U:\cE_{\ov U}\to\cS$ into a connected locally connected morphism followed by a surjective local homeomorphism. 

As a category, $\cG_U$ is equivalent to $\Split(U)$.

By the properties of pushout, we obtain that $\sigma_U$ is connected and $p_U$ is a surjective local homeomorphism, so $p_U$ is of effective descent in the sense of Joyal-Tierney \cite{joyal-tierney}. Hence, 
$$
\cG_U=B G_U,
$$
the classifying topos of the localic groupoid $G_U$ in $\cS$ of automorphisms of $p_U$. 

We proceed to give an even more explicit description of $G_U$. Consider the diagram
\begin{center}
 \begin{tikzpicture} 
\matrix(m)[matrix of math nodes, row sep=2em, column sep=2em, text height=1.9ex, text depth=0.25ex]
 {
 |(2)|{\cE} 	 & \\
 |(l2)|{\cG_U} & |(l3)|{\cS} 	\\
 }; 
\path[->,font=\scriptsize,>=to, thin]
(2) edge node[above right]{$\gamma$}  (l3)
(2) edge node[left]{$\sigma_U$} (l2) 
(l2) edge node[above,pos=0.3]{$g$}  (l3);
\end{tikzpicture}
\end{center}
where $g$ is the structure geometric morphism. Since $\sigma$ is connected and locally connected and $\gamma$ is locally connected and $g\sigma= \gamma$, it follows that $g$ is locally connected. 

Let
$$
\zeta=u_{\sigma_!U}:\sigma_!U\to g^*g_!\sigma_!U\simeq g^*\gamma_!U,
$$
where $u$ is the unit of adjunction $g_!\dashv g^*$. Then $p_U^*$ is represented by $\zeta$, 
$$
\sigma^*\zeta:\sigma^*\sigma_!U\to \sigma^*g^*\gamma_!U\simeq \gamma^*\gamma_!U
$$
is a Galois family that generates $\cG_U$, and $\cG_U$ is the classifying topos of the discrete localic groupoid $G_U\simeq \Aut(\sigma^*\zeta)$,
$$
\cG_U\simeq B \Aut(\sigma^*\zeta).
$$

The \emph{coverings fundamental topos} of $\cE$, denoted  
$$\Pi_1(\cE),$$ is the limit in $\Top_\cS$ of the filtered system of toposes $\cG_U$, indexed by a cofinal generating poset of covers $U$ in $\cE$. As a category, it is equivalent to $\Split(\cE)$. 

We define the \emph{coverings fundamental groupoid} as the prodiscrete localic groupoid
$$
\pi_1(\cE)=\lim G_U,
$$
obtained as the limit of discrete localic groupoids $G_U$ in the category of localic groupoids. 

If $\cS$ is a Grothendieck topos, we have that $\Pi_1(\cE)$ is the classifying topos of $\pi_1(\cE)$, i.e., 
$$
\Pi_1(\cE)=B\pi_1(\cE).
$$
Moreover, $\pi_1(\cE)$ represents first-degree cohomology of $\cE$ with coefficients in discrete groups, i.e., for a group $K$ in $\cS$, we have
$$
\tH^1(\cE,\gamma^*K)\simeq \pi_0(\Hom(\pi_1(\cE),K)),
$$
where $\pi_0$ stands for taking connected components.

\subsection{Comparing theories of Bunge and Janelidze}

Janelidze's theory is more general, because we do not have to work with topoi at all, but it requires us to find normal objects/morphisms of relative descent. When applied to the adjunction
$$
\gamma_!\dashv\gamma^*
$$
associated with a locally constant geometric morphism $\gamma:\cE\to\cS$, with notation from \ref{bunge-pi1}, we have that $$\sigma^*\zeta$$
is a normal object, $$\cG_U\simeq\Split(\sigma^*\zeta),$$
and
$$
\Gal[\sigma^*\zeta]\simeq \Aut(\sigma^*\zeta)
$$
as groupoids in $\cS$, so $\cG_U$ is the classifying topos for either of the above groupoids.
%Q what about topology? Aut has it, Gal does not?

\subsection{Relative \'etale fundamental group}\label{rel-pi1}

Let $(\cS,A)$ be a ringed Grothendieck topos, $(X,\cO_X)=\speczar(\cS,A)$ and $(X_\text{\'et},\cO_\text{\'et})=\spec.\text{\'et}(X,\cO_X)$. Consider the structure geometric morphism
$$
\gamma:X_\text{\'et}\to X\to\cS.
$$
If $\gamma$ is locally connected (e.g., when $\spec(\forg{A})$ is connected), we define the {relative \'etale fundamental groupoid}
$$
\pi^\text{\'et}_1(X)
$$
as the prodiscrete localic fundamental groupoid of $X_\text{\'et}\xrightarrow{\gamma}\cS$ following \ref{bunge-pi1}.

Using the fact that $X_\text{\'et}$ is the classifying space for the geometric theory of strict henselisations of localisations of $A$ from \ref{cole}, a point
$$
x:\cS\to X_\text{\'et}
$$
corresponds to a model $\Omega_x$ of that theory in $\cS$, and we can calculate the pro-localic group $\pi^\text{\'et}_1(X)$ as the pro-localic group of automorphisms of $\Omega$ in $\cS$.

\section{Cohomology in relative algebraic geometry}\label{rel-coh}

\subsection{Cohomology}\label{coh-rel-sch}

Let $(\cS,A)$ be a ringed Grothendieck topos, let $(X,\cO_X)\xrightarrow{\pi}(\cS,A)$ be a relative quasi-scheme, let $\tau$ be a scheme topology finer than Zariski topology, and let $(X_\tau,\cO_\tau)$ be the $\tau$-topos associated with $X$, giving rise to a diagram
$$
\begin{tikzpicture} 
\matrix(m)[matrix of math nodes, row sep=2em, column sep=2em, text height=1.5ex, text depth=0.25ex]
 {
 |(1)|{(X_\tau,\cO_\tau)}		& |(2)|{(\cS,A)} 	\\
		& |(l2)|{(\Set,\Z)} 	\\
 }; 
\path[->,font=\scriptsize,>=to, thin]
(1) edge node[above]{$\pi_\tau$} (2) edge node[left]{$$}   (l2)
(2) edge node[right]{$$} (l2) 
;
\end{tikzpicture}
$$
of geometric morphisms. 

For abelian groups $M, N$ in $X_\tau$, we consider the abelian groups
$$
\Ext^n_\tau(M,N)=\Ext^n(X_\tau, M,N)\ \ \text{ and } \ \ \tH^n(X_\tau,N)
$$
defined through classical topos cohomology \ref{topos-coh}, as well as the \emph{relative $\Ext$ and cohomology} abelian groups in $\cS$,
$$
\Ext^n(X_\tau/\cS, M,N)\ \ \text{ and }\ \ \tH^n(X_\tau/\cS,N)=R^n\pi_{\tau*}(N),
$$
as expounded in \ref{rel-top-coh}, where we also discuss the Leray/Grothendieck spectral sequences relating the two. 

Our guiding philosophy stipulates that, whilst we should appreciate the importance of the absolute cohomology groups, it is natural to expect that in relative algebraic geometry, the relative cohomology groups play an important role, and we should strive to identify it.

\subsection{Cohomology of relative quasi-coherent sheaves}\label{coh-quasicoh}

Quasi-coherent modules can be defined in any ringed topos,  as in \cite[03DL]{stacks-project}. Given  an $(\cS,\cO_S)$-quasi-scheme $(X,\cO_X)$, Hakim defines in \cite[VI.1.4]{hakim} the category of \emph{$\cS$-quasi-coherent $\cO_X$-modules}.

In perfect analogy with vanishing theorems for cohomology of classical affine schemes, she proves the following.

Let $(\cS,\cO_\cS)$ be a ringed topos, let $\varphi:A\to B$ be a homomorphism of $\cO_\cS$-algebras inducing a morphism $f:\speczar(\cS,B)\to\speczar(\cS,A)$, and let $\cF$ be an $\cS$-quasi-coherent $\cO_{\speczar(\cS,B)}$-module. Then, for all $i>0$, we have
$$
R^if_*\cF=0.
$$
Moreover, if $\pi:\speczar(\cS,\cO_\cS)\to (\cS,\cO_\cS)$ is the structure morphism, and 
$\cG$ is an $\cS$-quasi-coherent $\cO_{\speczar(\cS,\cO_\cS)}$-module, then, for $i>0$
$$
R^i\pi_*\cG=0.
$$

\subsection{Picard group of a ringed topos}\label{picard}

Let $(\cE,\cO)$ be a ringed topos. We say that an $\cO$-module $\cM$ is \emph{invertible}, if there exists an $\cO$-module $\cN$ such that
$$
\cM\otimes\cN\simeq\cO.
$$
Following \cite[040C]{stacks-project}, we define the \emph{Picard group} of $(\cE,\cO)$ as
the abelian group
$$
\Pic(\cO)
$$
of isomorphism classes of invertible $\cO$-modules, where addition corresponds to tensor product.

If $(\cE,\cO)$ is a locally ringed topos, by \cite[040E]{stacks-project}, there is a canonical isomorphism
$$
\tH^1(\cE,\cO^\times)\simeq \Pic(\cO).
$$

\subsection{Hilbert's Theorem 90}\label{hilbert90}

Let $(X,\cO_X)$ be an $(\cS,\cO_S)$-quasi-scheme. We write
$$
\bbG_m
$$
for the object of $X_\et$ represented by the object $(1,\spec(A(1)[t,t^{-1}])$, using the notation from \ref{external-spectra}. For an object of the \'etale site, it assigns
$$
\bbG_m(U,P\to\spec A(U))= \cA(P)^{\times},
$$ 
the set of invertible elements in the $A(U)$-algebra $\cA(P)$. We can also write
$$
\bbG_m=\cO_\et^\times.
$$
According to \ref{picard}, 
$$
\tH^1(X_\et,\bbG_m)\simeq \Pic(\cO_\et).
$$
On the other hand, we have a generalised version of Hilbert's Theorem 90, stating that,
%we consider natural morphisms of sites
%$$
%(\cC_\text{fppf},J_{fppf})\to(\cC_\et,J_\et)\to(\cC_\text{Zar},J_\text{Zar}),
%$$
if we consider the composite $\epsilon$ of the natural morphisms
$$
X_\text{fppf}\to X_\et\to X,
$$
then 
$$
R^1\epsilon_*\bbG_m=0,
$$
which entails that the canonical homomorphisms
$$
\tH^1(X,\cO_X^\times)\to \tH^1(X_\et,\bbG_m)\to \tH(X_\text{fppf},\bbG_m) 
$$
are isomorphisms, and we conclude that
$$
\tH^1(X_\et,\bbG_m)\simeq\Pic(\cO_X)=\Pic(X),
$$
the usual (Zariski) Picard group.

\subsection{Kummer Theory}\label{rel-kummer}

With notation from \ref{hilbert90}, if $n$ is invertible in $X$, we have a short exact sequence in $X_\et$
$$
1\to \mu_n\to \bbG_n \xrightarrow{(\,)^n}\bbG_n\to 1,
$$
where $\mu_n$ is the sheaf of $n$-th roots of unity, defined as the kernel of the $n$-th power map. 

Indeed, in order to show that the $n$-th power map is surjective, take an arbitrary $u\in \bbG_m(U,P)=\cA(P)^\times$, where $P\to \spec A(U)$ is \'etale. Since $n$ is invertible over $P$, the polynomial
$$
T^n-u
$$
is separable over $P$, i.e., $P'=\spec(\cA(P)[T]/(T^n-u))$ is \'etale over $P$, so we have found a cover $(U,P')\to (U,P)$ with an $n$-th root of $u$ in $\bbG_m(U,P')$.

A reader familiar with categorical logic will note that the statement is obvious in view of Wraith's description of the \'etale spectrum from \cite{wraith-strict-local}.

The exact cohomology sequence associated to the above is
$$
0\to\Gamma(\mu_n)\to\Gamma(\cO_X)^\times\xrightarrow{(\,)^n}\Gamma(\cO_X)^\times\to \tH^1(X_\et,\mu_n)\to \Pic(X)\xrightarrow{n\cdot}\Pic(X),
$$
whence
$$
0\to \Gamma(\cO_X)^\times/(\Gamma(\cO_X)^\times)^n\to \tH^1(X_\et,\mu_n)\to {}_n\Pic(X)\to0,
$$
where we wrote ${}_n\Pic(X)=\Ker(\Pic(X)\xrightarrow{n\cdot}\Pic(X))$.

The long exact sequence sequence for higher direct images of $\pi=\pi_\et:X_\et\to \cS$ yields an exact sequence of relative cohomology groups
$$
0\to \pi_*\mu_n\to \pi_*\bbG_m\to\pi_*\bbG_m\to \tH^1(X_\et/\cS,\mu_n)\to \uPic(X)\to\uPic(X),
$$
where by $\uPic(X)$ we mean the abelian group $H^1(X/\cS,\cO_X^\times)$ in $\cS$, given the interpretation of the `enriched Picard group'.

\subsection{Artin-Schreier Theory}\label{rel-AS}

 If $X$ is a relative scheme of characteristic $p>0$, we have an exact sequence in $X_\et$
 $$
 0\to \Z/p\Z\to \cO_\et\xrightarrow{F-\id}\cO_\et\to 0,
 $$
 where $F-\id$ is associated to a homomorphism of additive groups $f\mapsto f^p-f$. 
 
To see that it is an epimorphism, using the notation from \ref{external-spectra}, let $a\in\cO_\et(U,P)=\cA(P)$, where $P\to\spec A(U)$ is \'etale. The polynomial $T^p-T-a$ is separable, so it defines an \'etale extension $P'\to P$, hence a covering $(U,P')\to(U,P)$, so that the equation $T^p-T-a=0$ has a solution in $\cO_\et(U,P')$.

The corresponding long exact cohomology sequence is
$$
\Gamma(\cO_X)\xrightarrow{F-\id}\Gamma(\cO_X)\to \tH^1(X_\et,\Z/p\Z)\to\tH^1(X_\et,\cO_\et)\xrightarrow{F-\id}\tH^1(X_\et,\cO_\et),
$$
whence we obtain a short exact sequence
$$
0\to \Gamma(\cO_X)/(F-\id)(\Gamma(\cO_X))\to H^1(X_\et,\Z/p\Z)\to \tH^1(X_\et,\cO_\et)^F\to 0.
$$

The long exact sequence for relative cohomology is
$$
%\tH^0(X_\et/\cS,\cO_\et)\xrightarrow{\p}\tH^0(X_\et/\cS,\cO_\et)
\pi_*\cO_\et \xrightarrow{F-\id} \pi_*\cO_\et
\to \tH^1(X_\et/\cS,\Z/p\Z)\to\tH^1(X_\et/\cS,\cO_\et)\xrightarrow{F-\id}\tH^1(X_\et/\cS,\cO_\et).
$$

\section{Group cohomology}\label{s:rel-gp-coh}

\subsection{Group cohomology in a presheaf category}

This subsection is a special case of \ref{gp-coh-top}, and is only included for the sake of the reader who prefers to revise the material in the more concrete setting of an ordinary presheaf category as in Demazure's  \cite{sga3.1}. 

Let $\cC$ be a category, and write $\hC=[\cC^\op,\Set]$ for the category of presheaves on $\cC$.

%{We review the abstract categorical machinery for dealing with group functor cohomology on an arbitrary category $\cC$ developed by Demazure in \cite{sga3.1}, where the author's intended application was to group functors in the category $\Sch$ of schemes. We, on the other hand, will apply these techniques in the category of (affine) difference schemes $\DSch$.}

Let $\bG\in\Ob(\hC)$. We say that $\bG$ is a \emph{$\hC$-group} if there exists a multiplicative law on $\bG$, i.e., a $\hC$-morphism 
$$
\pi_{\bG}:\bG\times\bG\to\bG
$$
such that for every $S\in\Ob(\cC)$, the operation $\pi_{\bG}(S)$ makes $\bG(S)$ a group.

A $\hC$-morphism $f:\bG\to\bH$ is a \emph{morphism} of $\hC$-groups if the diagram
\begin{center}
 \begin{tikzpicture} 
\matrix(m)[matrix of math nodes, row sep=2em, column sep=3em, text height=1.5ex, text depth=0.25ex]
 {
 |(1)|{\bG\times\bG}		& |(2)|{\bG} 	\\
 |(l1)|{\bH\times\bH}		& |(l2)|{\bH} 	\\
 }; 
\path[->,font=\scriptsize,>=to, thin]
(1) edge node[above]{${\pi_{\bG}}$} (2) edge node[left]{$f\times f$}   (l1)
(2) edge node[right]{$f$} (l2) 
(l1) edge  node[above]{$\pi_{\bH}$} (l2);
\end{tikzpicture}
\end{center}
commutes. The resulting category of $\hC$-groups is denoted
$$
\hCGp.
$$

Equivalently, $\bG$ is a $\hC$-group if, for every $S\in\Ob(\cC)$, the set $\bG(S)$ is equipped with a group structure in a functorial way, i.e., for any morphism $f:S'\to S''$ in $\cC$, the map $\bG(f):\bG(S'')\to\bG(S')$ is a group homomorphism. Moreover, a $\cC$-morphism $f:\bG\to\bH$ is a morphism in $\hCGp$ if, for every $S\in\Ob(\cC)$, the map $f(S):\bG(S)\to\bH(S)$ is a group homomorphism.

The terminal object $\be$ of $\hC$ has a trivial $\cC$-group structure, hence it acts as as a terminal object of $\hCGp$ as well.

The \emph{identity section} of a $\hC$-group $\bG$ is the element $e_{\bG}\in\Gamma(\bG)=\Hom(\be,\bG)$ such that, for every $S\in\Ob(\cC)$, $e_{\bG}(S)$ is the identity of the group $\bG(S)$. Naturally, $e_{\bG}$ is a $\hC$-group morphism $\be\to\bG$.

A $\hC$-group $\bH$ is a sub-$\hC$-group of $\bG$ if $\bH$ is a sub-object of $\bG$ such that, for every $S\in\Ob(\cC)$, $\bH(S)$ is a subgroup of $\bG(S)$, i.e., such that the monomorphism $\bH\to\bG$ is a morphism of $\hC$-groups.

The \emph{product} of $\hC$-groups $\bG$ and $\bH$ in the category of $\hC$-groups is the object $\bG\times\bH$ equipped with the obvious $\hC$-group structure: for every $S\in\Ob(\cC)$, $\bG(S)\times\bH(S)$ is the direct product of groups $\bG(S)$, $\bH(S)$. 

If $\bG$ is a $\hC$-group, for each $S\in\Ob(\cC)$, $\bG_S$ is a $\widehat{\cC_{\ov S}}$-group. Thus, if $\bH$ is another $\hC$-group, we can define an object $\uHom_\hCGps(\bG,\bH)$ of $\hC$ by
$$
\uHom_\hCGps(\bG,\bH)(S)=\Hom_{\widehat{\cC_{\ov S}}\text{-}\Gp}(\bG_S,\bH_S).
$$
We define $\uIsom_\hCGps(\bG,\bH)$, $\uEnd_\hCGps(\bG)$ and $\uAut_\hCGps(\bG)$ analogously.

We say that an object $G$ of $\cC$ is a \emph{$\cC$-group}, provided we have a $\hC$-group structure on $\h_G\in\Ob(\hC)$. A \emph{morphism} between $\cC$-groups $G$ and $H$ is an element
$u\in\Hom(G,H)$ which defines a morphism of $\hC$-groups between $\h_G$ and $\h_H$.

We denote the category of $\cC$-groups by
$$
\CGp.
$$
Note that all the constructions from the discussion of $\cC$-groups apply to $\cC$-groups, as long as all the objects involved are representable.

Example: If $\bE\in\Ob(\hC)$, then $\uAut(\bE)$ is a $\hC$-group with the obvious structure.

%NOTATION??? $S$-group instead of $\hC_{\ov S}$-group.

%\subsection{Group actions}

Let $\bE\in\Ob(\hC)$ and $\bG\in\Ob\hCGp$. We say that $\bG$ \emph{acts} on $\bE$ if we are given a $\hC$-morphism 
$$
\mu:\bG\times\bE\to\bE
$$
such that, for every $S\in\Ob(\cC)$, the map $\mu(S)$ gives an action of the group $\bG(S)$ on the set $\bE(S)$.

Equivalently, using the bijection $\Hom(\bG\times\bE,\bE)\simeq\Hom(\bG,\uEnd(\bE))$, a group action is defined by a morphism of $\hC$-groups
$$
\rho:\bG\to\uAut(\bE).
$$

Suppose that a $\hC$-group $\bG$ acts on an object $\bE$ in $\cC$. We write $\bE^{\bG}$ for a sub-object of $\bE$ defined by
$$
\bE^{\bG}(S)=\{x\in\bE(S): x_{S'}\text{ is invariant under }\bG(S')\text{ for all }S'\to S\},
$$
where $x_{S'}$ is the image of $x$ via the map $\bE(S)\to\bE(S')$.

\begin{definition}
Let $\bF$ be a sub-object of $\bE$ in $\hC$. 
\end{definition}

\subsection{Categories of modules}

\begin{definition}
Let $\bO$ and $\bF$ be objects in $\hC$. We say that $\bF$ is a \emph{$\hC$-module over a $\hC$-ring} $\bO$ (or an \emph{$\bO$-module} for short), if for all $S\in\Ob(\cC)$, $\bF(S)$ has a structure of a module over a ring $\bO(S)$, so that for every morphism $S'\to S''$ in $\cC$, $\bO(S'')\to\bO(S')$ is a ring homomorphism, and $\bF(S'')\to\bF(S')$ is a homomorphism of abelian groups compatible with the above ring homomorphism.
\end{definition}

If $\bF$, $\bF'$ are $\bO$-modules, we have an abelian group of $\bO$-morphisms
$$
\Hom_\bO(\bF,\bF').
$$
The category of $\bO$-modules is denoted $\Omod$.

\begin{lemma}\label{omod-is-ab}
The category $\Omod$ is abelian. In fact, is is an \emph{(AB 5)} category, in the sense that it has arbitrary coproducts (and hence colimits), and filtered colimits of exact sequences are exact.
\end{lemma}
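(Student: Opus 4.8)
The plan is to verify the (AB5) and (AB3) axioms for $\Omod$ by reducing everything to the corresponding facts about modules over an ordinary ring, exploiting the fact that in a presheaf category all limits and colimits are computed argument-wise. First I would observe that $\Omod$ is naturally identified with the functor category $[\cC^\op, \lMod]$ of ``variable modules'', where an object assigns to each $S\in\Ob(\cC)$ a pair $(\bO(S),\bF(S))$ with $\bF(S)$ an $\bO(S)$-module, and to each arrow $S'\to S$ a compatible pair consisting of a ring homomorphism and an additive map. Since $\bO$ is fixed, a cleaner way to phrase this is via the Grothendieck construction: letting $\cD=\cC_{\ov\bO}$ be the category of elements of $\bO$ (viewing $\bO$ as a presheaf of rings in the evident way, or more precisely as a presheaf of sets underlying the ring structure), the category $\Omod$ is equivalent to the category of additive presheaves on an associated ``ringed site''. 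In any case, the key point I would extract is purely formal: $\Omod$ is the category of functors from $\cC^\op$ into a suitable (Grothendieck) abelian category, or a full subcategory thereof closed under the relevant operations.

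Next I would carry out the verification of the axioms step by step. For additivity and the existence of kernels and cokernels: given a morphism $f:\bF\to\bF'$ of $\bO$-modules, define $(\Ker f)(S)=\Ker(f_S)$ and $(\Coker f)(S)=\Coker(f_S)$ with the induced $\bO(S)$-module structures; functoriality of these constructions in $S$ is immediate from functoriality of kernels and cokernels in $\lMod$, and one checks directly that these are kernels and cokernels in $\Omod$. Biproducts are formed argument-wise as well, $(\bF\oplus\bF')(S)=\bF(S)\oplus\bF'(S)$. The crucial abelian-category identity, that the canonical map $\Coim(f)\to\Im(f)$ is an isomorphism, then holds in $\Omod$ because it holds argument-wise in each $\lMod(\bO(S))$, which is abelian. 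This establishes that $\Omod$ is abelian. For (AB3): arbitrary coproducts $\bigoplus_i\bF_i$ are defined by $(\bigoplus_i\bF_i)(S)=\bigoplus_i\bF_i(S)$, again with the argument-wise module structure and the evident transition maps; this is a coproduct in $\Omod$ since for each $S$ it is one in $\lMod$. For (AB5): given a filtered diagram, the colimit is computed argument-wise as $(\colim_j\bF_j)(S)=\colim_j\bF_j(S)$, and exactness of a sequence in $\Omod$ is tested argument-wise by construction of kernels and cokernels; since filtered colimits in $\lMod$ over any ring are exact (the classical fact, using that a filtered colimit of modules is computed as a quotient of the disjoint union and that filtered colimits commute with finite limits of sets), the same holds in $\Omod$.

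The main obstacle — really the only point requiring care rather than routine bookkeeping — is the interaction between the varying ring $\bO(S)$ and the transition maps, i.e.\ making sure that all of the argument-wise constructions genuinely assemble into $\bO$-modules and $\bO$-module morphisms, and in particular that filtered colimits remain exact in the presence of the ring action. Here the subtlety is that a transition map $\bF(S)\to\bF(S')$ is not an $\bO(S)$-module map but rather ``semilinear'' over the ring homomorphism $\bO(S)\to\bO(S')$; however, since we test exactness and form (co)limits in the underlying abelian groups / sets and then check that the result carries a compatible module structure, this causes no genuine difficulty: the functor $\Omod\to\cC^\op\da\Ab$ forgetting the module structure (but remembering the abelian-group-presheaf structure) creates all limits and colimits, and it detects exactness. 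One should also note that the fixed ring presheaf $\bO$ plays no role in any of the colimit or exactness arguments, only in cutting out the full subcategory; since that subcategory is closed under the operations in question (a filtered colimit of $\bO(S)$-modules is again an $\bO(S)$-module, etc.), everything descends. I would therefore conclude by remarking that $\Omod$ is in fact a Grothendieck abelian category (it also has a generator, namely a suitable sum of ``free'' $\bO$-modules $\bO_{\h_S}$, although (AB3) and (AB5) as stated suffice), which in particular gives that $\Omod$ has enough injectives, consistent with the subsequent development.
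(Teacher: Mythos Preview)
Your proposal is correct and follows essentially the same approach as the paper: everything is computed and verified argument-wise, reducing to the corresponding facts for ordinary module categories. The only minor difference is that for (AB5) you use the formulation ``filtered colimits of exact sequences are exact'', whereas the paper uses Grothendieck's equivalent lattice-theoretic formulation $\bigcup_i(\bF_i\cap\bN)=(\bigcup_i\bF_i)\cap\bN$ for a directed family of submodules; both are standard and the verification in either case is immediate at each $S$.
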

The structure of the abelian category is defined `argument-wise'. More precisely, if $f:\bF\to\bF'$ is a morphism of $\bO$-modules, we define the $\bO$-modules $\Ker f$, $\Im f$, $\Coker f$ via
$(\Ker f)(S)=\Ker f(S)$, $(\Im f)(S)=\Im f(S)$, $(\Coker f)(S)=\Coker f(S)$. It is immediate that $\Ker f$ is the kernel of $f$, $\Coker f$ is the cokernel of $f$, and there is an isomorphism of $\bO$-modules $\bF/\Ker f\simeq \Im f$ whence $\Omod$ is abelian. 

Coproducts (direct sums) are also defined argument-wise so they clearly exist. If $\bM$ is an $\bO$-module, $\bN$ a submodule, and $(\bF_i)_{i\in I}$ an increasing directed family of submodules of $\bM$, then 
$$
\textstyle\bigcup_{i\in I}(\bF_i\cap\bN)=\left(\bigcup_{i\in I}\bF_i\right)\cap\bN.
$$
Indeed, if $S\in\Ob(\cC)$ and $x\in \bN(S)\cap\bigcup_i\bF_i(S)$, there exists an $i\in I$ such that $x\in\bN(S)\cap \bF_i(S)$.

\begin{proposition}\label{omod-has-inj}
If $\cC$ is small, then $\bO$ is a generator of $\Omod$ and $\Omod$ has enough injectives.
\end{proposition}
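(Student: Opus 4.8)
The plan is to prove the two assertions of Proposition~\ref{omod-has-inj} in turn, following the standard Grothendieck recipe adapted to the presheaf-of-modules situation, exactly as in \cite[II~6.7]{sga4.1}.

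\textbf{Step 1: $\bO$ is a generator.} First I would exhibit, for each $S\in\Ob(\cC)$, the ``free $\bO$-module on $\h_S$'', namely the $\bO$-module $\bO_S$ with $\bO_S(T)=\bigoplus_{\h_S(T)}\bO(T)=\bO(T)^{(\cC(T,S))}$, equipped with the evident functorial module structure. By construction (and by the ordinary Yoneda lemma applied argument-wise) one has a natural isomorphism of abelian groups
$$
\Hom_\bO(\bO_S,\bF)\simeq \bF(S),
$$
functorial in $\bF\in\Omod$. Since $\cC$ is small, the family $\{\bO_S : S\in\Ob(\cC)\}$ is a \emph{set}, and the above isomorphism shows it is a generating family: if $\bF\to\bF'$ is a morphism that is not an isomorphism, it fails to be so at some $S$, hence is not detected as an isomorphism after applying $\Hom_\bO(\bO_S,\mathord{-})$. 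Then $\bO=\bigoplus_{S\in\Ob(\cC)}\bO_S$ (the coproduct exists by Lemma~\ref{omod-is-ab}) — or, more cleanly, one simply records that a category with a small generating \emph{family} and arbitrary coproducts has a single generator given by the coproduct of the family. (If one insists on the literal single object $\bO$ being a generator, note $\bO=\bO_e$ when $\cC$ has a terminal object $e$; in general it is the coproduct that matters and I would phrase the statement that way, or leave ``$\bO$'' to denote this coproduct as is common.)

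\textbf{Step 2: enough injectives.} With a small generating family and exactness of filtered colimits in hand (Lemma~\ref{omod-is-ab}: $\Omod$ is (AB5) and (AB3)), I would invoke the classical theorem of Grothendieck (\cite{groth-tohoku}, or \cite[II~6.7]{sga4.1}) that any (AB5) abelian category with a generator has enough injectives. Rather than cite it as a black box, I would sketch the two ingredients: (i) for abelian groups (equivalently, modules over $\bO(S)$ argument-wise), every module embeds into a divisible, hence injective, one; combined with the right adjoints $\bF\mapsto \Hom_{\bO(S)}(\bO(?), \text{-})$-type coinduction functors from the $\bO_S$-description of Step~1, one produces, for every $\bF$, a monomorphism $\bF\to \prod_{S}\mathrm{(coinduced\ from\ an\ injective\ }\bO(S)\text{-module)}$; (ii) a transfinite small-object / Baer-sum argument using (AB5) upgrades ``enough monomorphisms into modules that are injective relative to the generators $\bO_S$'' to ``enough genuine injectives'', because injectivity can be tested against the subobjects of the (set-indexed) generators. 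I would point the reader to \cite{groth-tohoku} for the transfinite argument rather than reproduce it.

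\textbf{Main obstacle.} The only genuinely delicate point is Step~2(ii): the passage from ``injective with respect to the generating set'' to ``injective''. This is precisely Grothendieck's theorem and is not formal from (AB3)+generator alone — it needs (AB5) (exactness of filtered colimits), which is why Lemma~\ref{omod-is-ab} was proved first. Everything else (the $\bO_S$, the Yoneda identity $\Hom_\bO(\bO_S,\bF)\simeq\bF(S)$, argument-wise divisible hulls) is routine. Accordingly I would keep the writeup short: construct $\bO_S$, note the generating property, and then simply say ``by \cite{groth-tohoku} (see also \cite[II~6.7]{sga4.1}), an (AB5) abelian category with a generator has enough injectives,'' which applies by Lemma~\ref{omod-is-ab} and Step~1.
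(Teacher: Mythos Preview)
The paper states this proposition without proof (it is lifted, as the author remarks, from Demazure's exposition in \cite{sga3.1}, and in that context is treated as folklore following \cite[II~6.7]{sga4.1}). Your proposal is the standard argument and is correct: construct the free $\bO$-modules $\bO_S$ with $\Hom_\bO(\bO_S,\bF)\simeq\bF(S)$, observe they form a small generating family, and then invoke Grothendieck's theorem that an (AB5) abelian category with a generator has enough injectives.

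You are also right to flag the literal claim ``$\bO$ is a generator'': as you note, $\Hom_\bO(\bO,\bF)\simeq\Gamma(\bF)$, which need not detect every nonzero $\bF$ unless $\cC$ has a terminal object. (For instance, take $\cC$ to be the arrow category $a\to b$ and $\bF$ with $\bF(b)=0$, $\bF(a)\neq 0$.) The honest statement is that the family $\{\bO_S\}_{S\in\Ob(\cC)}$, or equivalently its coproduct, is a generator; the paper's phrasing is a mild abuse inherited from its source. Your parenthetical handles this correctly.
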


Let $\bO_0$ be the $\hC$-ring defined by $\bO_0(S)=\Z$ for all $S\in\Ob(\cC)$. The category of $\bO_0$-modules is isomorphic to the category of abelian $\cC$-groups.

\begin{definition}
Let $\bF$ be an $\bO$-module. For every $S\in\Ob(\cC)$, $\bF_S$ is an $\bO_S$-module, so we can define an abelian $\cC$-group $\uHom_\bO(\bF,\bF')$ by
$$
\uHom_\bO(\bF,\bF')(S)=\Hom_{\bO_S}(\bF_S,\bF'_S).
$$
By analogy we define objects
$$
\uIsom_\bO(\bF,\bF'),\ \ \ \ \uEnd_\bO(\bF),\ \ \ \ \uAut_\bO(\bF),
$$
where the last object has the structure of a $\hC$-group induced by composition of morphisms.
\end{definition}

\begin{definition}
Let $\bO$ be a $\hC$-ring, $\bF$ an $\bO$-module and $\bG$ a $\hC$-group. The structure of a $\bG$-$\bO$-module on $\bF$ is given by a morphism of $\hC$-groups
$$
\rho:\bG\to\uAut_\bO(\bF).
$$
We define the abelian group
$$
\Hom_{\bG\text{-}\bO}(\bF,\bF')
$$
in a natural way. The resulting additive category of $\bG$-$\bO$-modules is denoted $\GOmod$.
\end{definition}
%Equivalently, $\bF$ is a $\bG$-$\bO$-module if...

\begin{remark}
The category $\GOmod$ can also be defined as the category of $\bO[\bG]$-modules, where $\bO[\bG]$ is the $\hC$-algebra of the $\hC$-group $\bG$ over the $\hC$-ring $\bO$. Using \ref{omod-is-ab} and \ref{omod-has-inj}, we conclude that $\GOmod$ is an (AB 5) abelian category and, when $\cC$ is small, $\GOmod$ has enough injectives.
\end{remark}

\subsection{Group cohomology}\label{group-coh}

Let $\cC$ be a category, $\bG$ a $\hC$-group, $\bO$ a $\hC$-ring and $\bF$ a $\bG$-$\bO$-module. For $n\geq 0$, we let
$$
\Ch^n(\bG,\bF)=\Hom(\bG^n,\bF), \ \ \ \uC^n(\bG,\bF)=\uHom(\bG^n,\bF),
$$
with convention that $\bG^0$ is the final object $\be$. 

The objects $\uC^n(\bG,\bF)$ (resp.\ $\Ch^n(\bG,\bF)$) are naturally $\bO$-modules (resp.\ $\Gamma(\bO)$-modules), and we have that
$$
\Ch^n(\bG,\bF)\simeq\Gamma(\uC^n(\bG,\bF)),\ \ \ \text{ and }\ \ \ \uC^n(\bG,\bF)(S)=\Ch^n(\bG_S,\bF_S). 
$$
Thus, an element of $\Ch^n(\bG,\bF)$ is given by a family of $n$-cochains of $\bG(S)$ in $\bF(S)$, functorially in $S\in\Ob(\cC)$. The coboundary operator
$$
\partial:\Ch^n(\bG(S),\bF(S))\to \Ch^{n+1}(\bG(S),\bF(S))
$$ 
is given by the formula
\begin{align*}
\partial f(g_1,\ldots,g_{n+1})=g_1f(g_2,\ldots,g_{n+1})&+\sum_{i=1}^n (-1)^i f(g_1,\ldots,g_ig_{i+1},\ldots,g_{n+1})\\ & +(-1)^{n+1}f(g_1,\ldots,g_n),
\end{align*}
and, since it is functorial in $S$, it defines a homomorphism
$$
\partial:\Ch^n(\bG,\bF)\to\Ch^{n+1}(\bG,\bF)
$$
such that 
$$
\partial\circ\partial=0.
$$
We have thus obtained a complex of abelian groups (and even of $\Gamma(\bO)$-modules), denoted
$\Ch^{*}(\bG,\bF)$ and called the \hoch (or standard) complex.

Similarly we define a complex of $\bO$-modules $\uC^{*}(\bG,\bF)$, so that
$$
\Ch^{*}(\bG,\bF)=\Gamma(\uC^{*}(\bG,\bF).
$$

The \emph{\hoch cohomology groups} 
$$
\HH^n(\bG,\bF)
$$
are defined as cohomology groups of the complex $\Ch^{*}(\bG,\bF)$, while the $\hC$-groups
$$
\uH^n(\bG,\bF)
$$
are the cohomology groups of the complex $\uC^{*}(\bG,\bF)$.

In particular,
$$
\uH^0(\bG,\bF)=\bF^{\bG}\ \ \ \text{ and }\ \ \ \ \HH^0(\bG,\bF)=\Gamma(\bF^{\bG}).
$$

\begin{proposition}\label{der-of-fix}
Let $\bG$ be representable, and suppose either that $\cC$ is small with finite products, or that $\GOmod$ has enough injectives. Then the functors $\HH^n(\bG,\mathord{-})$ (resp.\ $\uH^n(\bG,\mathord{-})$) are the derived functors of the left exact functor $\HH^0(\bG,\mathord{-})$ (resp.\ $\uH^0(\bG,\mathord{-})$) on the category $\GOmod$.
\end{proposition}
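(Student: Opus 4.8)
The plan is to follow the classical template for identifying group cohomology with derived functors of the fixed-point functor, adapted to the presheaf (and internal) setting. First I would establish that the functor $\uH^0(\bG,\mathord{-})=(\mathord{-})^{\bG}\colon\GOmod\to\Omod$ (resp.\ $\HH^0(\bG,\mathord{-})\colon\GOmod\to\Gamma(\bO)\lMod$) is left exact. This is immediate argument-wise: for each $S\in\Ob(\cC)$, $\bF\mapsto\bF(S)^{\bG(S)}$ is left exact on ordinary modules, and since kernels and finite limits in $\Omod$ and $\GOmod$ are computed argument-wise (as recorded in the discussion preceding Lemma~\ref{omod-is-ab}), left exactness of the global functor follows; the case of $\HH^0$ then follows by composing with the left-exact global sections functor $\Gamma$.

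Next I would produce an effaceability/acyclicity argument. The key point is that the Hochschild complex $\uC^{*}(\bG,\bF)$ provides, for each $\bF$, a functorial resolution, and that its terms $\uC^n(\bG,\bF)=\uHom(\bG^n,\bF)$ are acyclic for the fixed-point functor when $\bF$ is injective in $\GOmod$. The standard way is to exhibit, for a suitable "co-induced" or "induced" module $\bF$, a contracting homotopy on $\Ch^{*}(\bG(S),\bF(S))$ for each $S$ — the usual formula $h(f)(g_1,\dots,g_{n-1})=f(e,g_1,\dots,g_{n-1})$ works argument-wise and is functorial in $S$, hence globalises. Concretely, I would show that $\uHom(\bG^n,\mathord{-})$ sends injectives to acyclics (equivalently, that $\bO[\bG]$-modules of the form $\uHom(\bG,\bF)$ are acyclic), so that for an injective resolution $\bF\to\bI^{\bullet}$ in $\GOmod$ the double complex $\uC^{*}(\bG,\bI^{\bullet})$ has rows exact in positive degrees; the filtration spectral sequences of this double complex then collapse to give $\uH^n(\bG,\bF)\simeq R^n\uH^0(\bG,\mathord{-})(\bF)$, and likewise for $\HH^n$. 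The hypotheses — $\bG$ representable and either $\cC$ small or $\GOmod$ having enough injectives — are exactly what is needed to guarantee injective resolutions exist (via \ref{omod-has-inj} and the remark identifying $\GOmod$ with $\bO[\bG]\lMod$) and that the Hochschild terms are genuine objects of $\GOmod$.

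The main obstacle I anticipate is not the homological bookkeeping but verifying the acyclicity of the Hochschild terms $\uC^n(\bG,\bF)$ for injective $\bF$ in the \emph{enriched/internal} sense, i.e.\ making sure the argument-wise contracting homotopies assemble into an honest natural transformation of $\bO$-modules and that "injective in $\GOmod$" is strong enough to kill the relevant cohomology after applying $(\mathord{-})^{\bG}$. In the ordinary presheaf case one can lean on the description $\uC^n(\bG,\bF)(S)=\Ch^n(\bG_S,\bF_S)$ and the fact that $\bF\mapsto\bF_S$ is exact, reducing everything to the classical statement over $\Set$ fibrewise; the delicate step is checking that an injective object of $\GOmod$ restricts to something acyclic for $\bG(S)$-cohomology at each $S$, which one handles by the generator/coinduced-module argument of \ref{omod-has-inj} together with the adjunction between restriction along $\be\to\bG$ and coinduction. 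Once that acyclicity is in place, the dimension-shifting/spectral-sequence comparison of $\delta$-functors is routine, and the universal-$\delta$-functor characterisation identifies $\uH^n(\bG,\mathord{-})$ with $R^n\uH^0(\bG,\mathord{-})$ and $\HH^n(\bG,\mathord{-})$ with $R^n\HH^0(\bG,\mathord{-})$ on $\GOmod$.
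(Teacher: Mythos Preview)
Your overall strategy---show that the Hochschild cohomology forms a $\delta$-functor, then establish effaceability via coinduced modules and a contracting homotopy---is correct and is exactly what the paper does in the proof of the enriched analogue \ref{enr-der-of-fix}. However, you have misidentified where the hypothesis ``$\bG$ representable and $\cC$ has finite products'' enters, and this is the one genuinely load-bearing step.

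The crucial role of representability is \emph{not} that the Hochschild terms are ``genuine objects of $\GOmod$'' (they are $\bO$-modules, not $\bG\da\bO$-modules, so that phrase does not type-check, and neither does ``acyclic for the fixed-point functor''). Rather, writing $\bG=\h_G$, the point is that $\uC^n(\bG,\bF)(S)=\uHom(\h_{G^n},\bF)(S)\simeq\bF(G^n\times S)$, and since exactness in $\GOmod$ is computed argument-wise, the functor $\bF\mapsto\uC^n(\bG,\bF)$ is \emph{exact}. This is what turns a short exact sequence of $\bG\da\bO$-modules into a short exact sequence of Hochschild complexes, hence gives the long exact sequence making $\uH^*(\bG,\mathord{-})$ a $\delta$-functor (or, in your double-complex picture, it is what makes the \emph{columns} exact so that one of the two spectral sequences collapses). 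You never state this, and your spectral-sequence argument tacitly depends on it.

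Your concern about whether injectives of $\GOmod$ restrict fibrewise to $\bG(S)$-acyclics is an unnecessary detour. Effaceability is direct: every $\bM\in\GOmod$ embeds, via the unit of the forgetful/coinduction adjunction, into $E(\bM)=\uHom(\bG,\bM)$, and the paper shows (and you correctly sketch) that $\uH^n(\bG,E(\bP))=0$ for $n>0$ by the explicit homotopy $\gamma(f)(g_1,\dots,g_{n-1})=f(e,g_1,\dots,g_{n-1})$. There is no need to analyse arbitrary injectives. Once exactness of $\uC^*(\bG,\mathord{-})$ and vanishing on coinduced modules are in hand, the universal $\delta$-functor argument finishes the proof without any double complex.
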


\subsection{Group cohomology in a topos}\label{gp-coh-top}

Let $(\cE,O)$ be a ringed (Grothendieck) topos, let $G$ be a group in $\cE$, and let $\bB G$ be the classifying topos of $G$, considered in \ref{monoid-action}. Then $\bB G$ is also a Grothendieck topos.

The ring $O$ is an object of $\bB G$ when endowed with the trivial action of $G$. Moreover, $O$ is trivially a ring in $\bB G$. The group ring
$$
O[G]
$$
of $G$ with respect to $O$ is the $O$-algebra whose underlying module is the free $O$-module $O_G$ generated by $G$, together with the multiplication $O_G\otimes_O O_G\to O_G$ induced by the group multiplication $G\times G\to G$. 

Unravelling the definitions, we obtain equivalences of categories
$$
G\da O\Mod\simeq O[G]\Mod\simeq O\Mod(\bB G).
$$
They all have enough injectives since the last item is the category of modules in a ringed topos $(\bB G,O)$ so the general principles of \ref{ringed-top} apply. Hence, for $G\da O$-modules $M$ and $N$, it makes sense to  define 
$$
\Ext^n_{G\da O}(M,N)
$$
as the value at $N$ of the $n$-th derived functor of $\Hom_{G\da O}(M,\mathord{-}):G\da O\Mod\to\Ab$, and the above equivalence shows that
$$
\Ext^n_{G\da O}(M,N)\simeq \Ext^n_O(\bB G;M,N).
$$
%In particular, if $\Z$ is the object of integers in $\cE$,
Using the fact that the free $O$-module $O_e\simeq O$ in $O\Mod(\bB G)$, for $M\in\bB G$, 
$$
\Gamma(M)={\textstyle\varprojlim_G}(M)\simeq M^G\simeq \bB G(e, M)\simeq O\Mod(\bB G)(O,M)\simeq G\da O\Mod(O,M),
$$
so we conclude that group cohomology is an instance of topos cohomology,
$$
\HH^n(G,M)=\Ext^n_{G\da O}(O,M)\simeq \tH^n(\bB G,M).
$$
It also makes sense to define the internal group cohomology, and it is again an instance of internal topos cohomology, i.e., we obtain that
$$
\uH^n(G,M)\simeq \uH^n(\bB G,M),
$$
and
$$
\uExt^n_{G\da O}(M,N)\simeq \uExt^n_O(\bB G; M,N).
$$

\subsection{Group cohomology over a base topos}\label{rel-gp-coh}

Let $\gamma:(\cE,O)\to(\cS,R)$ be a (bounded) geometric morphism of ringed topoi with $R$ commutative, and let $G$ be a group in $\cE$. We can consider the category
$$
G\da O\Mod
$$
an $R\Mod$-category with internal hom objects
$$
G\da O\Mod(M,N)=\gamma_*[M,N]_{G\da O}\in R\Mod,
$$
and we define
$$
\Ext_{G\da O\Mod}^n(M,\mathord{-})
$$
as the $n$-th derived functor of $G\da O\Mod(M,\mathord{-})$. If $G\da O\Mod$ has enough enriched injectives, we can consider its $n$-th  derived enriched functor.

Moreover, we consider the functor $\Gamma_{\cE_{\ov\cC}}:G\da O\Mod\to R\Mod$ given by
%$G\da O$-module $M$, 
$$
\Gamma_{\cE_{\ov \cS}}(M)=\cE_{\ov\cS}(e,M)=\gamma_*[e,M].
$$ 
and we let 
$$
\HH^n_{\cE_{\ov\cS}}(G,\mathord{-})
$$
be its $n$-th derived functor. Again, if $G\da O\Mod$ has enough enriched injectives, we can take its $n$-th  derived enriched functor.

Since $\HH^0_{\cE_{\ov\cS}}(G,\mathord{-})\simeq\gamma_*\circ \uH^0(G,\mathord{-})$, we have a spectral sequence 
$$
E_2^{p,q}=\der^p\gamma_*(\uH^q(G,M))\Rightarrow \HH^{p+q}_{\cE_{\ov\cS}}(G,M).
$$
Similarly, we obtain a spectral sequence
$$
E_2^{p,q}=\der^p\gamma_*(\uExt^q(M,N))\Rightarrow \Ext_{G\da O\Mod}^{p+q}(G,M).
$$
It is now straightforward to formulate spectral sequences relating $\HH^n(G,\mathord{-})$, $\HH^n_{\cE_{\ov\cS}}(G,\mathord{-})$ and $\uH^n(G,\mathord{-})$, as well as
$\Ext^n_{G\da O}$, $\Ext^n_{G\da O\Mod}$ and $\uExt^n_{G\da O}$, in the spirit of \ref{rel-top-coh}.

\subsection{Induced group modules}\label{ind-gp-mod}

Let $\cV$ be a complete cartesian closed category, let $\cC$ be a $\cV$-category, and let $\hC=[\cC^\circ,\cV]$ be the $\cV$-category of $\cV$-presheaves on $\cC$.
 
\begin{definition}
Let $\bO$ be a $\hC$-ring, let $\bG$ be a $\hC$-group, and let $\bP$ be an $\bO$-module. We write 
$$
E(\bP)=[\bG,\bP],
$$
endowed with the structure of a $\bG\da\bO$-module as follows. Writing $\bG_1$ and $\bG_2$ for two copies of $\bG$, we have an isomorphism
\begin{multline*}
\hC(\bG_1\times E(\bP),E(\bP))\simeq\hC(E(\bP),[\bG_1,[\bG_2,\bP]])\simeq\hC(E(\bP),[\bG_2\times\bG_1,\bP])\\
\simeq \hC([\bG,\bP],[\bG_2\times\bG_1,\bP]).
\end{multline*}
The required action 
$$
\bG_1\times E(\bP)\to E(\bP)
$$ 
corresponds via the above isomorphism to the morphism
$$
\mu_G^*:[\bG,\bP]\to [\bG_2\times\bG_1,\bP],
$$
coming from the multiplication $\mu_G:\bG_2\times\bG_1\to \bG$.

Using the identity section $e_\bG:\h_I\to \bG$, we obtain a morphism
$$
\varepsilon_P=e_\bG^*:E(\bP)=[\bG,\bP]\to [\h_I,\bP]\simeq\bP.
$$
We thus obtain a $\cV$-functor
$$
E:\bO\Mod\to \bG\da\bO\Mod
$$
and a $\cV$-natural transformation
$$
\varepsilon:E\to \id.
$$
\end{definition}

\begin{lemma}\label{E-radj-forg}
The $\cV$-functor $E$ is an enriched right adjoint to the forgetful functor $\bG\da\bO\Mod\to\bO\Mod$. More explicitly, the $\cV$-natural transformation $\varepsilon:E\to\id$ induces an isomorphism
$$
\bG\da\bO\Mod(\bM,E(\bP))\simeq \bO\Mod(\bM,\bP),
$$
natural in $\bM\in \bG\da\bO\Mod$ and $\bP\in\bO\Mod$.
\end{lemma}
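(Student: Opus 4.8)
The plan is to exhibit, naturally in $\bM\in\bG\da\bO\Mod$ and $\bP\in\bO\Mod$, an isomorphism of internal hom presheaves $\bG\da\bO\Mod[\bM,E(\bP)]\simeq\bO\Mod[\bM,\bP]$; applying the global sections functor $\Gamma$ (cf.\ \ref{uhom-to-hom}, \ref{enr-gp-mod}) then yields the stated isomorphism $\bG\da\bO\Mod(\bM,E(\bP))\simeq\bO\Mod(\bM,\bP)$, and since all the maps in sight will be $\cV$-morphisms this is exactly the assertion that $E$ is enriched right adjoint to the forgetful $\cV$-functor $U\colon\bG\da\bO\Mod\to\bO\Mod$ with counit $\varepsilon$. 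The two mutually inverse maps are the expected ones. In the forward direction I compose a $\bG\da\bO$-module map $\phi\colon\bM\to E(\bP)$ with $\varepsilon_\bP$, i.e.\ send $\phi$ to $m\mapsto\phi(m)(e_\bG)$. In the backward direction I send an $\bO$-module map $\psi\colon\bM\to\bP$ to the map $\tilde\psi$ adjoint, via \ref{enrich-presh-duality}, to $\bM\times\bG\xrightarrow{\mathrm{swap}}\bG\times\bM\xrightarrow{\mu_\bM}\bM\xrightarrow{\psi}\bP$, i.e.\ to $m\mapsto(h\mapsto\psi(h\cdot m))$, where $\mu_\bM$ is the action encoded by $\rho_\bM\colon\bG\to\uAut_{\bO\Mod}(\bM)$. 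This is precisely the coinduction adjunction for modules over a group, transcribed into $\hC$.

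\textbf{Well-definedness.} The first thing to check is that these assignments are genuine $\cV$-morphisms compatible with the equaliser/sub-object descriptions of $\bG\da\bO\Mod[\mathord{-},\mathord{-}]$ and $\bO\Mod[\mathord{-},\mathord{-}]$ from \ref{group-modules} and \ref{V-modules}. The forward map is the composite of the canonical ``forget equivariance'' $\cV$-morphism $\bG\da\bO\Mod[\bM,E(\bP)]\to\bO\Mod[\bM,E(\bP)]$ (the equaliser inclusion of \ref{group-modules}) with post-composition by $\varepsilon_\bP$; this is legitimate because $\varepsilon_\bP=e_\bG^{*}\colon[\bG,\bP]\to[\h_I,\bP]\simeq\bP$ is $\bO$-linear, the $\bO$-module structure on $[\bG,\bP]$ being the pointwise one inherited from $\bP$. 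For the backward map one starts from precomposition by $\mu_\bM$, a $\Vab$-morphism $[\bM,\bP]\to[\bG\times\bM,\bP]$, followed by the transposition isomorphisms of \ref{enrich-presh-duality} identifying the target with $[\bM,E(\bP)]$, and one must verify this carries $\bO\Mod[\bM,\bP]$ into $\bG\da\bO\Mod[\bM,E(\bP)]$: the $\bO$-linearity of $\tilde\psi$ follows from $\mu_\bM$ commuting with the scalar $\bO$-action on $\bM$ (this is exactly the content of $\rho_\bM$ landing in $\uAut_{\bO\Mod}(\bM)$ rather than in $\uAut(\bM)$), and the $\bG$-equivariance of $\tilde\psi$ with respect to the action $\mu_G^{*}$ on $E(\bP)$, after currying the ``evaluation'' variable, reduces to the associativity axiom $\psi\big(h\cdot(g\cdot m)\big)=\psi\big((hg)\cdot m\big)$ for $\mu_\bM$. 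Both verifications are routine diagram chases.

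\textbf{Inverseness and naturality.} That the two composites are the identity is the usual computation. Forward after backward: $\varepsilon_\bP(\tilde\psi)(m)=\tilde\psi(m)(e_\bG)=\psi(e_\bG\cdot m)=\psi(m)$ by the unit axiom of $\mu_\bM$. Backward after forward: with $\psi=\varepsilon_\bP\phi$ one has $\tilde\psi(m)(h)=\psi(h\cdot m)=\phi(h\cdot m)(e_\bG)=\big(h\cdot\phi(m)\big)(e_\bG)=\phi(m)\big(\mu_G(e_\bG,h)\big)=\phi(m)(h)$, using the $\bG$-equivariance of $\phi$ and the unit axiom of $\bG$. Naturality in $\bM$ and $\bP$ is immediate, since every ingredient -- the counit $\varepsilon$, precomposition by $\mu_\bM$, the exponential transpositions -- is natural, and the enriched form of naturality is automatic because all the maps involved are $\cV$-morphisms.

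\textbf{Main obstacle.} The mathematical content is entirely classical -- $E$ is coinduction along the unit $\bO\to\bO[\bG]$ of the $\hC$-group algebra -- so the only place where care is genuinely needed is the enriched bookkeeping of the second paragraph, namely that the coinduction bijection is realised by $\Vab$-morphisms compatible with the equaliser presentations of the two internal hom objects. If one prefers to sidestep this, one may instead evaluate both internal hom presheaves at each $X\in\cC$ using \ref{psh-int-hom-rel}, together with the fact that internal homs, group structures and module structures all commute with base change to $\hC_{\Ov X}$ -- so that $E(\bP)_X$ is the coinduction of $\bP_X$ along $\bO_X\to\bO_X[\bG_X]$ -- whereupon the assertion reduces verbatim to the classical coinduction adjunction in $\hC_{\Ov X}$, and one reassembles over $\cC$ exactly as in the proof of \ref{V-gp-prop}. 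Alternatively one can deduce the lemma from the enriched hom--tensor adjunction applied to restriction of scalars along $\bO\to\bO[\bG]$, once $E(\bP)$ has been identified with $\uHom_\bO(\bO[\bG],\bP)$ carrying the $\bG$-module structure dual to right multiplication.
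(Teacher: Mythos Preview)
Your proposal is correct and takes essentially the same approach as the paper. The paper's proof is terser: it simply records that the counit is $\varepsilon$ and that the unit $\eta_{\bM}\colon\bM\to E(F(\bM))=[\bG,\bM]$ is the transpose of the action $\bG\times\bM\to\bM$ via the isomorphism $\hC(\bM,[\bG,\bM])\simeq\hC(\bG\times\bM,\bM)$, leaving the triangle identities and the enriched bookkeeping implicit; your version spells out the two mutually inverse maps and verifies inverseness explicitly, which amounts to unpacking exactly those triangle identities.
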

\begin{proof}
The counit of the adjunction is (modulo a slight imprecision of omitting a symbol for the forgetful functor $F$) is given by $\epsilon$. The unit $$\eta:\id\to E\circ F$$ is obtained by taking, given $\bM\in\bG\da\bO\Mod$, the map $\eta_\bM:\bM\to E(F(\bM))=[\bG,\bM]$, which corresponds to the action $\bG\times\bM\to\bM$ via the isomorphism
$$
\hC(\bM,[\bG,\bM])\simeq\hC(\bG\times\bM,\bM).
$$
\end{proof}

\subsection{Enriched group module presheaves have enough injectives}

Let $\cC$ be enriched over a complete cartesian closed category $\cV$, let $\hC=[\cC^\op,\cV]$ be the $\cV$-category of $\cV$-presheaves on $\cC$, let $\bO$ be a $\hC$-ring, and let $\bG$ be a $\hC$-group.

\begin{proposition}\label{enough-G-enr-inj}
If $\bI$ is an enriched injective object in $\bO\Mod$, then $E(\bI)$ is an enriched injective object in $\bG\da\bO\Mod$. Consequently, if $\bO\Mod$ has enough enriched injectives, then $\bG\da\bO\Mod$ has enough enriched injectives.
\end{proposition}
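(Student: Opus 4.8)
The plan is to mimic the classical argument that, for a group $G$ and a ring $O$, the coinduced module $\mathrm{Coind}_1^G(I) = \mathrm{Hom}_O(O[G], I)$ is injective whenever $I$ is, deducing this from the adjunction between coinduction and the forgetful functor together with exactness of the forgetful functor. Here the roles are played by the $\cV$-functor $E:\bO\Mod\to\bG\da\bO\Mod$ of \ref{ind-gp-mod} and the forgetful functor $F:\bG\da\bO\Mod\to\bO\Mod$, with $E$ enriched right adjoint to $F$ by Lemma~\ref{E-radj-forg}.

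First I would record that $F$ is exact: by the explicit pointwise description of the abelian structure on $\bG\da\bO\Mod$ (the underlying category being $[\cC^\circ,\cV\da\Ab]$-style, with kernels and cokernels computed argument-wise), a sequence of $\bG\da\bO$-modules is exact if and only if it is exact as a sequence of $\bO$-modules, so $F$ preserves and reflects exactness. Next, I would argue the two halves of \ref{enr-injectives} separately for $E(\bI)$. For $\bG\da\bO$-injectivity, the enriched adjunction of Lemma~\ref{E-radj-forg} gives a natural isomorphism of $\cV\da\Ab$-valued functors
$$
\bG\da\bO\Mod(\mathord{-},E(\bI))\simeq \bO\Mod(F(\mathord{-}),\bI) = \bO\Mod(\mathord{-},\bI)\circ F^\op.
$$
Since $F^\op$ is exact (being opposite to an exact functor) and $\bO\Mod(\mathord{-},\bI)$ is exact by $\bO$-injectivity of $\bI$ in the enriched sense, the composite is exact, which is precisely $\bG\da\bO$-injectivity of $E(\bI)$. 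For $(\bG\da\bO\Mod)_0$-injectivity one runs the identical argument with the underlying $\Ab$-valued hom-functors $(\bG\da\bO\Mod)_0(\mathord{-},E(\bI))\simeq (\bO\Mod)_0(\mathord{-},\bI)\circ F^\op$, using that $\bI$ is $(\bO\Mod)_0$-injective; hence $E(\bI)$ is enriched injective.

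For the ``consequently'' clause, suppose $\bO\Mod$ has enough enriched injectives and let $\bM\in\bG\da\bO\Mod$. Choose a monomorphism $F(\bM)\to\bI$ into an enriched injective $\bI$ of $\bO\Mod$. Applying $E$ and precomposing with the unit $\eta_\bM:\bM\to E(F(\bM))$ of the adjunction $F\dashv E$ yields a morphism $\bM\to E(F(\bM))\to E(\bI)$ in $\bG\da\bO\Mod$, and $E(\bI)$ is enriched injective by the first part. It remains to check this composite is a monomorphism: $\eta_\bM$ is split mono because applying $F$ and then the counit gives the identity of $F(\bM)$ by the triangle identities, and $F$ reflects monomorphisms since it is exact and faithful (faithfulness is clear from the pointwise description); alternatively $E$ preserves the monomorphism $F(\bM)\to\bI$ as $E$ is left exact, being a right adjoint, and one composes.

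The main obstacle I anticipate is purely bookkeeping rather than conceptual: making sure the enriched adjunction isomorphism of Lemma~\ref{E-radj-forg} is genuinely an isomorphism of $\cV\da\Ab$-objects (not just of underlying sets) and that ``exact'' for a $\cV\da\Ab$-valued functor is tested the way \ref{enr-ab-cat} and \ref{enr-injectives} prescribe, so that ``$\bO\Mod(\mathord{-},\bI)$ exact'' plus ``$F^\op$ exact'' really does compose to ``exact'' in the enriched sense; this is where one must invoke that $F$ preserves all the relevant finite limits and colimits pointwise, which follows from small (co)limits in presheaf categories being computed argument-wise (as in \ref{Gamma-exact}). Once that compatibility is in hand, both the injectivity of $E(\bI)$ and the ``enough injectives'' conclusion are formal consequences of the adjunction, exactly as in the classical case.
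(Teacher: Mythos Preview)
Your proposal is correct and follows essentially the same approach as the paper: use the enriched adjunction of Lemma~\ref{E-radj-forg} to identify $\bG\da\bO\Mod(\mathord{-},E(\bI))$ with $\bO\Mod(F(\mathord{-}),\bI)$, invoke exactness of the forgetful functor, and for the second part embed $\bM$ via the unit $\eta_\bM$ followed by $E$ applied to a chosen mono $F(\bM)\to\bI$. Your added justification that $\eta_\bM$ is a split monomorphism (via the triangle identity) and that $E$ preserves monomorphisms (as a right adjoint) makes explicit a step the paper leaves implicit.
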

\begin{proof}
Suppose $\bI$ is an $\bO\Mod$-injective object. We shall verify that $E(\bI)$ is $\bG\da\bO\Mod$-injective. Indeed, let $0\to\bA\to\bB\to\bC\to0$ be an exact sequence in $\bG\da\bO\Mod$. Then it is also exact as a sequence in $\bO\Mod$, so, by assumption,
the sequence
$$
0\to\bO\Mod(\bC,\bI)\to\bO\Mod(\bB,\bI)\to\bO\Mod(\bA,\bI)\to 0
$$
is exact. By adjunction, this sequence corresponds to
$$
0\to\bG\da\bO\Mod(\bC,E(\bI))\to\bG\da\bO\Mod(\bB,E(\bI))\to\bG\da\bO\Mod(\bA,E(\bI))\to 0,
$$
as required. An analogous proof, using the underlying adjunction of \ref{E-radj-forg}, shows that, if $\bI$ is $(\bO\Mod)_0$-injective, then it is also $(\bG\da\bO\Mod)_0$-injective.

Let $\bM\in\bG\da\bO\Mod$ be arbitrary. The forgetful functor gives its underlying $\bO$-module $F(\bM)$. If $\bO\Mod$ has enough enriched injectives, there exists an enriched injective $\bI\in\bO\Mod$ and a monomorphism $F(\bM)\to\bI$. Then the composite
$$
\bM\xrightarrow{\eta_M} E(F(\bM))\to E(\bI)
$$
is a monomorphism into an enriched injective $E(\bI)$ in $\bG\da\bO\Mod$.
\end{proof}

\begin{proposition}\label{tens-groth-enr-inj}
If $\cV$ is a Grothendieck topos and $\cC$ is a small tensored $\cV$-category with a terminal object, then $\bO\Mod$ has enough enriched injectives.
\end{proposition}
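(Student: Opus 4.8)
The plan is to recognise Proposition~\ref{tens-groth-enr-inj} as an instance of the criterion recorded in Remark~\ref{exact-enr-inj}, applied to a bounded morphism of ringed Grothendieck topoi, and then to check that the hypotheses on $\cC$ are exactly what make the relevant pushforward exact. Concretely, I would show two things: that $(\bO\Mod)_0$ has enough ordinary injectives, and that ordinary injective $\bO$-modules are automatically enriched injective in the sense of Definition~\ref{enr-injectives}.

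First I would internalise. Since $\cC$ is small, the objects $\coprod_{S\in\cC}\cC(S,I)$ and $\coprod_{S,S'\in\cC}\cC(S',S)\times\cC(S,I)$ live in the Grothendieck topos $\cV$, so the internalisation $\C=\cC_{\Ov I}\in\cat(\cV)$ of \ref{enrich-groth-constr} exists, and by the case $\bF=\h_I$ of Theorem~\ref{th-groth} we have $\hC\simeq\widehat{\C}$. The canonical geometric morphism $\pi\colon\widehat{\C}\to\cV$ is bounded by \ref{bdd-morphi}, and $\cV\to\Set$ is bounded since $\cV$ is Grothendieck; hence the composite $\hC\to\Set$ is bounded and $\hC$ is a Grothendieck topos by the Giraud reformulation in \ref{bdd-morphi}. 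Thus $(\hC,\bO)$ is a ringed Grothendieck topos and, by \ref{ringed-top}, $(\bO\Mod)_0=\lsub{\bO}{\hC}$ has enough ordinary injectives. Moreover $\pi_*$ is the global sections functor $\Gamma=\hC(\be,\mathord{-})$, so that the $\cV\da\Ab$-enrichment of $\bO\Mod$ is $\bO\Mod(\bF,\bF')=\pi_*\,\uHom_{\bO}(\bF,\bF')$; in other words $\bO\Mod$ is the enriched module category of the bounded morphism $\pi\colon(\hC,\bO)\to(\cV,\Z)$, where $\Z$ denotes the integers object of $\cV$.

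By Remark~\ref{exact-enr-inj} it now suffices that $\pi_*$ preserve epimorphisms, and here the hypotheses enter: $\pi_*=\Gamma$ is \emph{exact} by Lemma~\ref{Gamma-exact}, precisely because $\cC$ is tensored over the cocomplete category $\cV$ and $\cC_0$ has a terminal object $e_\cC$ — these force $\h_{e_\cC}\simeq\be$, whence $\Gamma\simeq$ evaluation at $e_\cC$, which preserves all limits and colimits because these are computed pointwise in $\hC$. Unwinding \ref{exact-enr-inj}: given an ordinary injective $\bO$-module $\bI$, it is $(\bO\Mod)_0$-injective by definition; it is internally injective in $\hC$, i.e. $\uHom_{\bO}(\mathord{-},\bI)$ is exact, by the module analogue of Harting's Proposition~\ref{inj-is-int-inj} (valid because each localisation functor $i_S^*$ has the exact left adjoint $(i_S)_!$ of \ref{ringed-top} and hence preserves injectives, together with the formula $\uHom_{\bO}(M,\bI)|S\simeq\uHom_{\bO|S}(M|S,\bI|S)$ of \ref{mod-ops-top}); and therefore $\bO\Mod(\mathord{-},\bI)=\Gamma\circ\uHom_{\bO}(\mathord{-},\bI)$ is exact, so $\bI$ is $\bO\Mod$-injective. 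Hence $\bI$ is an enriched injective, and since every $\bO$-module embeds into an ordinary injective, $\bO\Mod$ has enough enriched injectives.

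The only step that is not purely formal is the module version of Harting's internal-injectivity result: Proposition~\ref{inj-is-int-inj} as stated concerns abelian group objects, so one must either invoke its routine extension to modules over an arbitrary ring object in a topos with a natural number object (which is what \ref{exact-enr-inj} already tacitly uses) or reprove it via the localisation argument above. Everything else — the equivalence $\hC\simeq\widehat{\cC_{\Ov I}}$, the chain of bounded morphisms, the identification $\pi_*=\Gamma$, and the exactness of $\Gamma$ — is either already in the text or immediate, so the actual write-up is short once that point is settled.
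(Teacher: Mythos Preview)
Your proposal is correct and follows exactly the paper's approach: invoke Lemma~\ref{Gamma-exact} to conclude that $\Gamma=\pi_*$ is exact under the given hypotheses, and then apply Remark~\ref{exact-enr-inj}. The paper's proof is just these two citations; your version is more detailed in that you explicitly verify the setting of \ref{exact-enr-inj} applies (identifying $\hC$ as a Grothendieck topos via internalisation and $\pi$ as bounded) and flag the passage from Harting's abelian-group statement to its module analogue, but the strategy is identical.
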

\begin{proof}
By \ref{Gamma-exact}, $\Gamma:\hC\to\cV$ is exact, so \ref{exact-enr-inj} entails that $\bO\Mod$ has enough enriched injectives.
\end{proof}

\begin{corollary}\label{groth-enough-enr-inj}
With the hypothesis of \ref{tens-groth-enr-inj}, the $\cV$-abelian category $\bG\da\bO\Mod$ has enough enriched (and internal) injectives.
\end{corollary}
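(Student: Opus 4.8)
The plan is to obtain the corollary by assembling \ref{tens-groth-enr-inj} and \ref{enough-G-enr-inj}, and then pinning down internal injectivity via the classifying topos. First I would note that the hypotheses here ($\cV$ a Grothendieck topos, $\cC$ a small tensored $\cV$-category with a terminal object) are precisely those of \ref{tens-groth-enr-inj}, so $\bO\Mod$ has enough enriched injectives; moreover, inspecting that proof through \ref{exact-enr-inj} and \ref{inj-is-int-inj}, the enriched injectives it supplies are ordinary injective objects of $\bO\Mod$, which are in particular internally injective. Now \ref{enough-G-enr-inj} applies verbatim: for such an injective $\bI\in\bO\Mod$ the object $E(\bI)\in\bG\da\bO\Mod$ is enriched injective, and any $\bM\in\bG\da\bO\Mod$ embeds into some such $E(\bI)$ via $\bM\xrightarrow{\eta_\bM}E(F(\bM))\to E(\bI)$, where $F(\bM)\hookrightarrow\bI$ is a chosen embedding of its underlying $\bO$-module into an injective. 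This already yields enough enriched injectives.

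For internal injectivity I would first identify the ambient topos. Since $\cC$ has a terminal object $e_\cC$, \ref{Gamma-exact} gives $\h_{e_\cC}\simeq\be$, so \ref{th-groth} identifies $\hC$ with $\widehat{\cC_{\Ov e_\cC}}$, the category of internal presheaves on the $\cV$-internal category $\cC_{\Ov e_\cC}$, which is a Grothendieck topos by \ref{comon-coalg}. Hence $\bG$ is a group object in the Grothendieck topos $\hC$, its classifying topos $\bB\bG=\bG\Mod$ is again a Grothendieck topos by \ref{monoid-action}, and by \ref{gp-coh-top} we have
\[
\bG\da\bO\Mod\simeq\bO[\bG]\Mod\simeq\bO\Mod(\bB\bG)
\]
as $\cV$-enriched categories. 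In particular $\bG\da\bO\Mod$ is the module category of the ringed topos $(\bB\bG,\bO)$, so by \ref{ringed-top} it has enough ordinary injectives, and by \ref{inj-is-int-inj} (Grothendieck topoi have natural number objects, and Harting's argument covers module objects as well as abelian groups) every injective object of $\bG\da\bO\Mod$ is internally injective. Combining the two observations, the object $E(\bI)$ built above from a genuinely injective $\bO$-module $\bI$ is simultaneously enriched injective and internally injective, and there are enough such objects, which is what the corollary asserts.

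The one point I expect to require care is the matching of the enriched structures: verifying that the equivalence $\bG\da\bO\Mod\simeq\bO\Mod(\bB\bG)$ of \ref{gp-coh-top} is genuinely $\cV$-enriched (so that "enriched injective" transports along it), and that the $\hC\da\Ab$-valued internal hom $\bG\da\bO\Mod[\mathord{-},\mathord{-}]$ used to define "internally injective" agrees with the internal hom of $\bO$-modules computed in $\bB\bG$, so that \ref{inj-is-int-inj} is being applied to the correct functor. Everything else is a direct appeal to the cited results. An alternative that sidesteps the $\bB\bG$ detour is to check directly that $E$ carries internally injective $\bO$-modules to internally injective $\bG\da\bO$-modules, mimicking the adjunction argument in the proof of \ref{enough-G-enr-inj} with the $\hC$-self-enrichment in place of the $\cV\da\Ab$-enrichment, and then invoking \ref{inj-is-int-inj} at the level of $\bO\Mod$ once \ref{tens-groth-enr-inj} has provided enough injectives there.
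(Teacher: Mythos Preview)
The paper gives no proof for this corollary at all: it is stated immediately after \ref{tens-groth-enr-inj} and is meant to be read as the trivial combination of \ref{tens-groth-enr-inj} (which supplies enough enriched injectives in $\bO\Mod$) with \ref{enough-G-enr-inj} (which transfers them to $\bG\da\bO\Mod$ via $E$). Your argument for the ``enriched'' part is exactly this, so there is nothing to compare there.

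Where you go beyond the paper is in taking the parenthetical ``(and internal)'' seriously. The paper leaves this entirely implicit, presumably trusting the reader to remember from the subsection on enriched and internal cohomology that modules in a ringed Grothendieck topos have enough internal injectives by Harting's result \ref{inj-is-int-inj}. Your route---identifying $\hC$ as a Grothendieck topos via \ref{th-groth} and \ref{comon-coalg} (using $\h_{e_\cC}\simeq\be$), then passing to $\bB\bG$ and invoking \ref{gp-coh-top} and \ref{inj-is-int-inj}---is a correct way to make this explicit, and your caveat about checking that the enrichments match under the equivalence $\bG\da\bO\Mod\simeq\bO\Mod(\bB\bG)$ is a genuine point that the paper never addresses. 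Your alternative suggestion (showing directly that $E$ preserves internal injectivity by rerunning the adjunction argument of \ref{enough-G-enr-inj} for the $\hC$-self-enrichment) is arguably the cleaner route, and closer in spirit to how the paper would likely want this done had it written anything at all.
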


\subsection{Enriched group cohomology}\label{enr-gp-coh}

Let $\cV$ be a complete cartesian closed category, let $\cC$ be a $\cV$-category, and let $\hC$ be the $\cV$-category of $\cV$-presheaves on $\cC$. We write $\Gamma=\hC(\be,\mathord{-})$ for the global section 
$\cV$-functor $\hC\to \cV$ considered in \ref{enrich-global-sect}.

Let $\bG$ be a $\hC$-group, $\bO$ a $\hC$-ring and $\bF$ a $\bG\da\bO$-module. For $n\geq 0$, we let
$$
\Ch^n(\bG,\bF)=\hC(\bG^n,\bF)\in\cV, \ \ \ \uC^n(\bG,\bF)=[\bG^n,\bF]\in\hC,
$$
with convention that $\bG^0$ is the final object $\be$. 

The objects $\uC^n(\bG,\bF)$ (resp.\ $\Ch^n(\bG,\bF)$) are naturally $\bO$-modules (resp.\ $\Gamma(\bO)$-modules), and we have that
$$
\Ch^n(\bG,\bF)\simeq\Gamma(\uC^n(\bG,\bF)).
%,\ \ \ \text{ and }\ \ \ \uC^n(\bG,\bF)(S)=\Ch^n(\bG_S,\bF_S). 
$$
%Thus, an element of $\Ch^n(\bG,\bF)$ is given by a family of $n$-cochains of $\bG(S)$ in $\bF(S)$, functorially in $S\in\Ob(\cC)$. 
The coboundary operator
%$$
%\partial:\Ch^n(\bG,\bF)\to \Ch^{n+1}(\bG,\bF)
%$$ 
%is defined as
%\begin{align*}
%\partial f(g_1,\ldots,g_{n+1})=g_1f(g_2,\ldots,g_{n+1})&+\sum_{i=1}^n (-1)^i f(g_1,\ldots,g_ig_{i+1},\ldots,g_{n+1})\\ & +(-1)^{n+1}f(g_1,\ldots,g_n),
%\end{align*}
%and, since it is functorial in $S$, it defines a homomorphism
$$
\partial:\uC^n(\bG,\bF)\to\uC^{n+1}(\bG,\bF)
$$
is defined as
$$
\partial=\mu_n+\sum_{i=1}^{n}(-1)^i m_i^*+(-1)^{n+1}p_n^*,
$$
where
$$
p_n:G^{n+1}=G^n\times G\to G^n
$$
is the projection,
$$
m_i=\id_{G^{i-1}}\times m\times\id_{G^{n-i}}:G^{n+1}\to G^n,
$$
and $\mu_n$ is obtained by adjunction from the composite
$$
\bG^{n+1}\times[\bG^n,\bF]\simeq \bG\times \bG^n\times[\bG^n,\bF]\xrightarrow{\id\times\text{ev}}\bG\times\bF\xrightarrow{\mu}\bF.
%intuition: \mu_n(f)(g_1,\ldots,g_{n+1})=g_1.f(g_2,\ldots,g_{n+1})
$$
%$$
%[\bG^n,\bF]\to[\bG^n,\bF]\times I\to[\bG^n,\bF]\times[\bG,\bG]\to[\bG^{n+1},\bG\times\bF]
%\xrightarrow{\mu_*}[\bG^{n+1},\bF]. 
%$$

%By applying $\Gamma$ to the above, we obtain 
%$$
%\partial:\Ch^n(\bG,\bF)\to\Ch^{n+1}(\bG,\bF).
%$$
We have that
$$
\partial\circ\partial=0,
$$
so we have obtained a complex  of $\bO$-modules
$$\uC^{*}(\bG,\bF),$$
called the enriched \hoch (or standard) complex.

Similarly we define a complex of $\cV$-abelian groups (and even $\Gamma(\bO)$-modules), 
$$
\Ch^{*}(\bG,\bF)=\Gamma(\uC^{*}(\bG,\bF)).
$$
We can view the standard complex as a $\Vab$-functor
$$
\uC^{*}(\bG,\mathord{-}):\bG\da\bO\Mod\to \ch(\bO\Mod), 
$$
and 
$$
\Ch^{*}(\bG,\mathord{-})=\Gamma\circ\uC^{*}(\bG,\mathord{-}):\bG\da\bO\Mod\to \ch(\Gamma(\bO)\Mod).
$$

The \emph{enriched \hoch cohomology groups} 
$$
\HH^n(\bG,\bF)\in\cV\da\Ab
$$
are defined as cohomology groups of the complex $\Ch^{*}(\bG,\bF)$, while the $\hC$-groups
$$
\uH^n(\bG,\bF)
$$
are the cohomology groups of the complex $\uC^{*}(\bG,\bF)$. We can view cohomology groups as $\Vab$-functors
\begin{align*}
\uH^n(\bG,\mathord{-})&=\coh^n\circ\uC^{*}(\bG,\mathord{-}):\bG\da\bO\Mod\to \bO\Mod\\
\HH^n(\bG,\mathord{-})&=\coh^n\circ\Ch^{*}(\bG,\mathord{-}):\bG\da\bO\Mod\to \Gamma(\bO)\Mod.
\end{align*}

In particular,
\begin{align*}
\uH^0(\bG,\bF)& =\bF^{\bG}=\Ker(\partial^0:\bF\to[\bG,\bF])\simeq \bG\da\bO\Mod[\bO,\bF],  \text{ and }\\
 \HH^0(\bG,\bF) &=\Gamma(\bF^{\bG})=\Ker(\partial^0:\bF\to\hC(\bG,\bF))\simeq \bG\da\bO\Mod(\bO,\bF).
\end{align*}

\begin{proposition}\label{enr-der-of-fix}
Suppose that $\cV$ is bicomplete, $\cC$ is a small $\cV$-tensored category with finite products, and that $\bO\Mod$ has enough enriched (resp.~internal) injectives. 
Let $\bG$ be  a representable $\hC$-group. %$\GOmod$ has enough enriched injectives. 
Then the functors $\HH^n(\bG,\mathord{-})$ (resp.\ $\uH^n(\bG,\mathord{-})$) are the derived enriched functors of the left exact functor $\HH^0(\bG,\mathord{-})$ (resp.\ $\uH^0(\bG,\mathord{-})$) on the category $\GOmod$, i.e., 
$$
\HH^n(\bG,\mathord{-})\simeq\Ext^n_{\bG\da\bO\Mod}(\bO,\mathord{-})\ \ \ (\text{resp. }
\uH^n(\bG,\mathord{-})\simeq\uExt^n_{\bG\da\bO}(\bO,\mathord{-})).
$$
\end{proposition}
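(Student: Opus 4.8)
The statement to prove is Proposition~\ref{enr-der-of-fix}: under the hypotheses that $\cV$ is bicomplete, $\cC$ is a small $\cV$-tensored category with finite products, and $\bO\Mod$ has enough enriched (resp.\ internal) injectives, for a representable $\hC$-group $\bG$ the functors $\uH^n(\bG,\mathord{-})$ and $\HH^n(\bG,\mathord{-})$ coincide with $\uExt^n_{\bG\da\bO}(\bO,\mathord{-})$ and $\Ext^n_{\bG\da\bO\Mod}(\bO,\mathord{-})$ respectively. The approach is the standard Grothendieck dimension-shifting argument, upgraded to the enriched setting, and the key enabling ingredient is the existence of a functorial acyclic resolution given by the enriched standard complex.

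\medskip\noindent\emph{Step 1: the standard complex resolves $\bO$.} First I would show that the augmented enriched \hoch complex
\[
0\to\bO\to\uC^0(\bG,\mathord{-})\to\uC^1(\bG,\mathord{-})\to\cdots
\]
gives, for each $\bG\da\bO$-module $\bF$, an exact complex of $\bO$-modules resolving $\bF$; equivalently, that $\uC^{*}(\bG,\bF)$ computes $\uExt^{*}_{\bG\da\bO}(\bO,\bF)$. The crucial point is that $\uC^n(\bG,\bF)=[\bG^n,\bF]$ is, up to the identifications in \ref{ind-gp-mod}, an induced module: since $\bG$ is a group, $\bG^{n+1}\simeq\bG\times\bG^n$ as a $\bG$-object (acting on the first factor), so $[\bG^n,\bF]\simeq[\bG,\,[\bG^{n-1},\bF]\,]=E([\bG^{n-1},\bF])$ in $\bG\da\bO\Mod$ for $n\ge1$. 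By Lemma~\ref{E-radj-forg}, $E$ is an enriched right adjoint to the forgetful functor, so it preserves enriched injectives — in fact, more relevantly, each $E(\bP)$ is \emph{acyclic} for $\uH^0(\bG,\mathord{-})=\bG\da\bO\Mod[\bO,\mathord{-}]$ because $\bG\da\bO\Mod[\bO,E(\bP)]\simeq\bO\Mod[\bO,\bP]\simeq\bP$ and the same holds after applying $E$ to an injective resolution of $\bP$. The contracting homotopy for the augmented bar complex, built as usual from the identity section $e_\bG:\h_I\to\bG$, exists as an honest $\cV$-morphism because all the operations ($\mu$, the $m_i^*$, $p_n^*$, and the evaluation maps) are $\cV$-morphisms; this gives exactness of the augmented complex.

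\medskip\noindent\emph{Step 2: acyclicity and comparison.} Having a resolution of $\bO$ by $\uH^0(\bG,\mathord{-})$-acyclic (and, for the internal version, internally $\uExt^0$-acyclic) modules, I would invoke the enriched derived-functor machinery of Section~\ref{enr-homol-alg}. By Corollary~\ref{groth-enough-enr-inj} (whose hypotheses are exactly ours), $\bG\da\bO\Mod$ has enough enriched and internal injectives, so $\uExt^{*}_{\bG\da\bO}(\bO,\mathord{-})$ and $\Ext^{*}_{\bG\da\bO\Mod}(\bO,\mathord{-})$ are well-defined universal enriched $\delta$-functors with $0$-th terms $\uH^0(\bG,\mathord{-})$ and $\HH^0(\bG,\mathord{-})$ (the identifications at the end of \ref{enr-gp-coh}). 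The enriched analogue of the fact that derived functors may be computed by acyclic resolutions — which follows from the enriched comparison theorem \ref{comparison-th} together with the $\Vab$-functoriality of $\coh$ and the usual spectral-sequence/dimension-shift argument transported to the $\Vab$-enriched setting — then yields $\uH^n(\bG,\bF)=\coh^n(\uC^{*}(\bG,\bF))\simeq\uExt^n_{\bG\da\bO}(\bO,\bF)$, $\cV$-naturally in $\bF$. Applying the exact functor $\Gamma$ (exact by Lemma~\ref{Gamma-exact}, since $\cC$ is tensored over the cocomplete $\cV$ and has a terminal object) and using $\Ch^{*}(\bG,\bF)=\Gamma(\uC^{*}(\bG,\bF))$ together with $\HH^0(\bG,\mathord{-})=\gamma_*\circ\uH^0$-type relations, one deduces the statement for $\HH^n$ and $\Ext^n_{\bG\da\bO\Mod}$.

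\medskip\noindent\emph{Main obstacle.} The routine part is the combinatorics of the coboundary and the contracting homotopy; the genuine work is Step~2, namely establishing rigorously that \emph{enriched} derived functors can be computed by resolutions that are merely acyclic (not enriched injective). Classically this is a consequence of the long exact sequence and dimension shifting, but in the $\Vab$-enriched framework one must check that the connecting $\Vab$-morphisms $\delta$ behave well and that the relevant comparison maps are $\Vab$-isomorphisms and not just isomorphisms on underlying objects — this is where Proposition~\ref{comparison-th}, the enriched horseshoe lemma \ref{horseshoe-lemma}, and the observation that enriched derived functors agree with ordinary ones on objects all have to be combined carefully. I expect the cleanest route is to first prove the enriched "acyclic resolution" lemma once and for all (it is implicit in the universality statement of the theorem in the "Derived enriched functors" subsection), then quote it; the remainder is then formal.
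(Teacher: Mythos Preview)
Your strategy is in the right spirit but takes a more indirect route than the paper, and the extra machinery you invoke is precisely what the paper manages to avoid.

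The paper does \emph{not} try to exhibit the standard complex as coming from an acyclic resolution of $\bF$ and then appeal to an enriched ``acyclic-resolution lemma''. Instead it argues directly that $\uH^{*}(\bG,\mathord{-})$ and $\HH^{*}(\bG,\mathord{-})$ are enriched \emph{effaceable cohomological $\delta$-functors}, which by universality must coincide with the derived functors of their degree-$0$ pieces. Concretely: (i)~representability of $\bG=\h_G$ and the assumption that $\cC$ has finite products give $[\bG^n,\bF](S)\simeq\bF(G^n\times S)$, so each $\uC^n(\bG,\mathord{-})$ is exact (evaluation at an object is exact), making $\uH^{*}(\bG,\mathord{-})$ a $\delta$-functor; exactness of $\Gamma$ (from~\ref{Gamma-exact}) transfers this to $\HH^{*}(\bG,\mathord{-})$. (ii)~Effaceability is obtained by showing $\uH^n(\bG,E(\bP))=0$ for $n>0$ via an \emph{explicit null-homotopy} $\gamma:\Ch^{n+1}(\bG,E(\bP))\to\Ch^n(\bG,E(\bP))$ built from the identity section $e_\bG$, with $\partial\gamma+\gamma\partial=\id$. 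That is the whole proof.

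This sidesteps exactly the issue you flag as the ``main obstacle''. Your Step~2 requires an enriched acyclic-resolution lemma; the paper never needs one, because effaceability plus $\delta$-functoriality characterises derived functors without any comparison of resolutions. Note also that your argument for ``acyclicity of $E(\bP)$'' as stated establishes $\uExt$-acyclicity, not $\uH$-acyclicity, so to close the loop you really do need that extra lemma---whereas the paper's null-homotopy gives $\uH^n(\bG,E(\bP))=0$ directly. Finally, you do not isolate step~(i), which is where the hypotheses ``$\bG$ representable'' and ``$\cC$ has finite products'' are actually used; this is the mechanism making $[\bG^n,\mathord{-}]$ exact, and it deserves to be stated as a separate lemma in your write-up.
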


\begin{proof}
Note that \ref{enough-G-enr-inj} implies that $\GOmod$ has enough enriched injectives, so
it suffices to prove that the functors $\HH^n(\bG,\mathord{-})$ (resp.\ $\uH^n(\bG,\mathord{-})$) are enriched cohomological functors effaceable in degrees $n>0$.

The assumptions that $\bG=\h_G$ and that $\cC$ has products entail that the $\Vab$-functor 
$$\uC^*(\bG,\mathord{-}):\bG\da\bO\Mod\to\ch(\bO\Mod)$$
is exact. Indeed, if
$$
0\to\bF'\to\bF\to\bF''\to0
$$
is an exact sequence of $\bG\da\bO$-modules, then the sequence of $\bO$-modules
$$
0\to\uC^n(\h_G,\bF')\to\uC^n(\h_G,\bF)\to\uC^n(\h_G,\bF'')\to 0,
$$
evaluated at $S\in\cC$ is
$$
0\to\bF'(G^n\times S)\to\bF(G^n\times S)\to\bF''(G^n\times S)\to 0,
$$
which is exact. This shows that $\uH^*(\bG,\mathord{-})$ is an enriched cohomological functor. Given that $\Ch^*(G,\mathord{-})$ is obtained from $\uC^*(G,\mathord{-})$ by applying the functor $\Gamma$, and that operation is exact by \ref{Gamma-exact}, we obtain the same result for $\HH^*(\bG,\mathord{-})$.

It remains to verify that the cohomology of induced modules vanishes, i.e. for $\bP\in\bO\Mod$ and $n>0$,
$$
\HH^n(\bG,E(\bP))=0 \ \ \ \text{and}\ \ \ \  \uH^n(\bG,E(\bP))=0,
$$
and we will do this by exhibiting that the chains $\Ch^*(\bG,E(\bP))$ and $\uC^*(\bG,E(\bP))$ are null-homotopic.  For $n\geq 0$, we let
$$
\gamma:\Ch^{n+1}(\bG,E(\bP))\to \Ch^n(\bG,E(\bP))
$$
be the morphism obtained through the isomorphism
\begin{multline*}
\hC([\bG_0\times\bG^n,[\bG_1,\bP]],[\bG^n,[\bG_0,\bP]])\simeq
\hC(\bG^n\times[\bG_0\times\bG^n,[\bG_1,\bP]],[\bG_0,\bP])\\
\simeq
\hC(\bG_0\times\bG^n\times[\bG_0\times\bG^n,[\bG_1,\bP]],\bP),
\end{multline*}
applied to the composite 
\begin{multline*}
\bG_0\times\bG^n\times[\bG_0\times\bG^n,[\bG_1,\bP]]\simeq 
\bG_0\times\bG^n\times[\bG_0\times\bG^n\times\bG_1,\bP]\\
\xrightarrow{\id\times e_{G_1}}
\bG_0\times\bG^n\times\bG_1\times[\bG_0\times\bG^n\times\bG_1,\bP]\xrightarrow{\text{ev}}\bP,
\end{multline*}
and taking $\bG_0=\bG_1=\bG$. A substantial but routine verification shows that
$$
\partial\gamma+\gamma\partial=\id,
$$
as required.
\end{proof}
The following is immediate from \ref{tens-groth-enr-inj} (or \ref{groth-enough-enr-inj}) and \ref{enr-der-of-fix}.
\begin{corollary}\label{enr-coh-is-top-coh}
Suppose that $\cV$ is a Grothendieck topos and $\cC$ is small $\cV$-tensored with finite products. 
Let $\bG$ be  a representable $\hC$-group. %$\GOmod$ has enough enriched injectives. 
Then the functors $\HH^n(\bG,\mathord{-})$ (resp.\ $\uH^n(\bG,\mathord{-})$) are the derived enriched functors of the left exact functor $\HH^0(\bG,\mathord{-})$ (resp.\ $\uH^0(\bG,\mathord{-})$) on the category $\GOmod$, and they coincide with the relative and internal group (topos) cohomology from \ref{rel-gp-coh} and \ref{gp-coh-top},
\begin{align*}
\HH^n(\bG,\mathord{-})&\simeq \tH^n_{\hC_{\ov\cV}}(\bG,\mathord{-})\simeq\Ext^n_{\bG\da\bO\Mod}(\bO,\mathord{-})\\
\uH^n(\bG,\mathord{-})&\simeq\uExt^n_{\bG\da\bO}(\bO,\mathord{-}).
\end{align*}
\end{corollary}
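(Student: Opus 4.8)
The plan is that the corollary falls out by assembling the structural results already proved, the only genuine work being to recognise the relative and internal group cohomology of \ref{rel-gp-coh} and \ref{gp-coh-top} inside this picture. First I would record that the standing hypotheses feed directly into the earlier propositions: a Grothendieck topos $\cV$ is in particular bicomplete and complete cartesian closed, a small $\cV$-tensored $\cC$ with finite products has a terminal object, and $\bG=\h_G$ is representable by assumption. Hence \ref{Gamma-exact} makes $\Gamma\colon\hC\to\cV$ exact, \ref{exact-enr-inj} then gives enough enriched injectives in $\bO\Mod$, \ref{groth-enough-enr-inj} (through \ref{enough-G-enr-inj}) carries this over to $\GOmod$, and---$\bO$ being commutative---the same objects are internally injective as well. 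With these in place \ref{enr-der-of-fix} applies unchanged and yields
$$
\HH^n(\bG,\mathord{-})\simeq\Ext^n_{\bG\da\bO\Mod}(\bO,\mathord{-}),\qquad
\uH^n(\bG,\mathord{-})\simeq\uExt^n_{\bG\da\bO}(\bO,\mathord{-}),
$$
the right-hand derived enriched functors being computed from enriched (resp.\ internal) injective resolutions in $\GOmod$.

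Next I would identify the ringed topos over $\cV$ that is implicitly at play. The adjunction $\cC^{*}\dashv\Gamma$ of \ref{enrich-global-sect} is a geometric morphism $\gamma\colon\hC\to\cV$, its inverse image $\cC^{*}$ being the pointwise-constant presheaf functor and hence left exact; since $\hC$ and $\cV$ are Grothendieck topoi, $\gamma$ is bounded by the cancellation property for bounded morphisms recalled in \ref{bdd-morphi}. Viewing $(\hC,\bO)$ as a ringed topos over $(\cV,\Gamma(\bO))$, the $\cV$-enrichment of $\hC$ produced from $\gamma$ in \ref{enrich-via-geom}, namely $\gamma_*[\bF,\bF']_{\hC}=\Gamma[\bF,\bF']$, agrees with the presheaf internal-hom enrichment by \ref{uhom-to-hom}, and the parallel computation matches the $\Gamma(\bO)\Mod$-enrichment of $\GOmod$ with the one underlying \ref{enr-gp-coh}. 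Under this identification the relative group cohomology $\tH^n_{\hC_{\ov\cV}}(\bG,\mathord{-})$ of \ref{rel-gp-coh} is, by construction, the $n$-th derived enriched functor of $M\mapsto\HH^0(\bG,M)=\GOmod(\bO,M)$, hence equal to $\Ext^n_{\bG\da\bO\Mod}(\bO,\mathord{-})$; and \ref{gp-coh-top}, applied to the topos $\hC$ with its group object $\bG$, exhibits $\uExt^n_{\bG\da\bO}(\bO,\mathord{-})\simeq\uExt^n_{\bO}(\bB\bG;\bO,\mathord{-})$ as bona fide internal topos cohomology, the relative version being recovered by applying the exact functor $\Gamma$. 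Concatenating the displayed isomorphisms then delivers the chain asserted in the statement.

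The step I expect to be the main obstacle is the bookkeeping of the second paragraph: verifying that $\hC$ equipped with its enriched-presheaf structure is precisely the topos over $\cV$ to which \ref{rel-gp-coh} and \ref{gp-coh-top} refer, i.e.\ that the two provenances of $\cV$-enrichment (via $\gamma$ versus via presheaf internal homs) coincide not merely on $\hC$ but on $\GOmod$, and that the enriched derived functor computed in \ref{enr-der-of-fix} is the one implicit in \ref{rel-gp-coh}. This is routine rather than deep---it rests on \ref{uhom-to-hom}, on $\gamma_*$ respecting the module and group-module internal homs, and on enriched derived functors agreeing with ordinary ones on objects---but it is the only place where the argument must do something, everything else being the immediate combination of \ref{tens-groth-enr-inj}, \ref{groth-enough-enr-inj} and \ref{enr-der-of-fix} advertised just before the corollary.
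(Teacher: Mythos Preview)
Your proposal is correct and follows the paper's own approach: the paper records the corollary as ``immediate from \ref{tens-groth-enr-inj} (or \ref{groth-enough-enr-inj}) and \ref{enr-der-of-fix}'', which is precisely your first paragraph. Your second paragraph, spelling out why the $\Ext$ groups of \ref{enr-der-of-fix} coincide with the relative and internal group cohomology of \ref{rel-gp-coh} and \ref{gp-coh-top}, is a careful elaboration of what the paper treats as definitional---the objects $\tH^n_{\hC_{\ov\cV}}(\bG,\mathord{-})$ and $\Ext^n_{\bG\da\bO\Mod}(\bO,\mathord{-})$ are introduced in \ref{rel-gp-coh} as the same derived functor, and similarly for $\uExt^n_{\bG\da\bO}$ in \ref{gp-coh-top}---so the identification is by construction rather than by a further argument; your verification that the two enrichments of $\hC$ agree (via \ref{uhom-to-hom}) is thus more than the paper supplies, but entirely appropriate and not a deviation in method.
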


\subsection{Group cohomology of the associated presheaves}\label{coh-assoc}

With the notation of the previous section, let $\widehat{\cC_0}$ be the category of presheaves on the underlying category $\cC_0$ of $\cC$. Let
\begin{enumerate}
\item $\assoc{\bG}$ be the $\widehat{\cC_0}$-group associated with the $\Vab$-group $\bG$;
\item $\assoc{\bO}$ be a $\widehat{\cC_0}$-ring associated with $\bO$;
\item $\assoc{\bF}$ be the $\assoc{\bG}\da\assoc{\bO}$-module associated with the $\bG\da\bO$-module $\bF$.
\end{enumerate}
In \ref{group-coh}, we defined the (ordinary) group presheaf cohomology groups
$$
\HH^n(\assoc{\bG},\assoc{\bF})\ \ \ \text{ and }\ \ \ \ \uH^n(\assoc{\bG},\assoc{\bF})
$$
as the cohomology groups of standard complexes $\Ch^*(\assoc{\bG},\assoc{\bF})$ and $\uC^*(\assoc{\bG},\assoc{\bF})$. 

Suppose that $\bG$ is representable. Using \ref{inthom-assoc-psh}, we see that
$$
\Ch^*(\assoc{\bG},\assoc{\bF})=V(\Ch^*(\bG,\bF))\ \ \ \text{ and }\ \ \ \ \uC^*(\assoc{\bG},\assoc{\bF})=(\uC^*(\bG,\bF))\assoc{\vphantom{)}}.
$$
Using the fact that the functors $V:\Gamma(\bO)\Mod\to V(\Gamma(\bO))\Mod$ and
$\assoc{V}=\assoc{(\mathord{-})}:\bO\Mod\to \assoc{\bO}\Mod$ are left exact, we see that
\begin{multline*}
\HH^0(\assoc{\bG},\assoc{\bF})=\Ker(\partial^0:\Ch^0(\assoc{\bG},\assoc{\bF})\to\Ch^1(\assoc{\bG},\assoc{\bF}))
=\Ker(V(\partial^0:\Ch^0(\bG,\bF)\to\Ch^1(\bG,\bF)))\\=
V(\Ker(\partial^0:\Ch^0(\bG,\bF)\to\Ch^1(\bG,\bF)))=V(\HH^0(\bG,\bF)),
\end{multline*}
and similarly
$$
\uH^0(\assoc{\bG},\assoc{\bF})=(\uH^0(\bG,\bF))\assoc{\vphantom{(}}=\assoc{V}(\uH^0(\bG,\bF)).
$$
Thus,
$$
\HH^0(\assoc{\bG},\mathord{-})=V\circ\HH^0(\bG,\mathord{-})\ \ \ \text{and}\ \ \ \uH^0(\assoc{\bG},\mathord{-})=\assoc{V}\circ\uH^0(\bG,\mathord{-}),
$$
so, in the context of \ref{der-of-fix} and \ref{enr-der-of-fix}, by the Grothendieck spectral sequence for the composite of two functors, we obtain spectral sequences
$$
E_2^{p,q}(\bF)=\der^pV (\HH^q(\bG,\bF))\Rightarrow \HH^{p+q}(\assoc{\bG},\assoc{\bF}).
$$
and
$$
\underline{E}_2^{p,q}(\bF)=\der^p\assoc{V} (\uH^q(\bG,\bF))\Rightarrow \uH^{p+q}(\assoc{\bG},\assoc{\bF}).
$$

\begin{lemma}
Suppose that $\der^pV=0$ and $\der^p\assoc{V}=0$ for $p>1$. Then the spectral sequences degenerate (\cite[Exercise~5.2.1]{weibel}) and we obtain exact sequences
$$
0\to \der^1V(\HH^{n-1}(\bG,\bF))\to \HH^n(\assoc{\bG},\assoc{\bF})\to V(\HH^n(\bG,\bF))\to 0,
$$
and
$$
0\to \der^1\assoc{V}(\uH^{n-1}(\bG,\bF))\to \uH^n(\assoc{\bG},\assoc{\bF})\to \assoc{V}(\uH^n(\bG,\bF))\to 0.
$$
\end{lemma}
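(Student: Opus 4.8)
The plan is to invoke the standard degeneration of a cohomological spectral sequence concentrated in two adjacent columns. Recall that in \ref{coh-assoc} we produced, via the Grothendieck spectral sequence of a composite of functors, the two convergent first-quadrant spectral sequences $E_2^{p,q}(\bF)=\der^pV(\HH^q(\bG,\bF))\Rightarrow \HH^{p+q}(\assoc{\bG},\assoc{\bF})$ and $\underline{E}_2^{p,q}(\bF)=\der^p\assoc{V}(\uH^q(\bG,\bF))\Rightarrow \uH^{p+q}(\assoc{\bG},\assoc{\bF})$, taking place in the relevant Grothendieck abelian categories (of abelian groups / modules in $\cV$ and in $\widehat{\cC_0}$). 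Under the hypothesis $\der^pV=0$ for $p>1$ we have $E_2^{p,q}=0$ whenever $p\notin\{0,1\}$, and symmetrically for $\underline{E}_2$ under $\der^p\assoc{V}=0$ for $p>1$.

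First I would observe that such a two-column spectral sequence collapses at the second page: for $r\ge 2$ the differential $d_r\colon E_r^{p,q}\to E_r^{p+r,q-r+1}$ has target in column $p+r\ge 2$, hence in a zero column, so $d_r=0$ and $E_\infty^{p,q}=E_2^{p,q}$. Next, convergence endows $\HH^n(\assoc{\bG},\assoc{\bF})$ with a finite decreasing filtration $F^\bullet$ with $F^0=\HH^n(\assoc{\bG},\assoc{\bF})$, $F^2=0$, and $F^p/F^{p+1}\simeq E_\infty^{p,n-p}$; since $F^2=0$ this is exactly the short exact sequence $0\to E_\infty^{1,n-1}\to \HH^n(\assoc{\bG},\assoc{\bF})\to E_\infty^{0,n}\to 0$. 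Finally, substituting $E_\infty^{1,n-1}=E_2^{1,n-1}=\der^1V(\HH^{n-1}(\bG,\bF))$ and $E_\infty^{0,n}=E_2^{0,n}=\der^0V(\HH^n(\bG,\bF))=V(\HH^n(\bG,\bF))$ — using that $V$ is left exact, so $\der^0V\simeq V$ — yields the first claimed sequence; this is the content of \cite[Exercise~5.2.1]{weibel}. The argument for the internal statement is verbatim the same, with $V$ replaced by $\assoc{V}$, $\HH$ by $\uH$, and $E$ by $\underline{E}$, invoking $\der^0\assoc{V}\simeq\assoc{V}$.

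Honestly there is no serious obstacle here: the only points that need a modicum of care are bookkeeping ones — that the edge morphisms of the two-column spectral sequence are the natural maps, so that the resulting sequences are functorial in $\bF$ (as is implicitly needed and as will be used elsewhere), and that the ambient categories genuinely support the spectral-sequence formalism, which is guaranteed once $\cV$ is taken to be a Grothendieck topos so that all the module/abelian-group categories in play are Grothendieck abelian. If anything, the "hard part" is merely recording correctly that the hypotheses $\der^pV=\der^p\assoc{V}=0$ for $p>1$ are precisely what makes the columns $p\ge 2$ vanish, which is exactly what the two-column collapse requires.
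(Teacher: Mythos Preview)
Your proposal is correct and follows exactly the approach the paper indicates: the paper does not give an independent proof but simply cites \cite[Exercise~5.2.1]{weibel} for the standard two-column degeneration, and you have written out precisely that argument. Your handling of the filtration and the identification $\der^0V\simeq V$ (and likewise for $\assoc{V}$) is the expected one.
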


\part[{$\sigma$GA}]{Difference algebraic geometry}

\section{Difference categories}\label{diff-cats}

\subsection{Monoids of operators}\label{mon-of-ops}

If $G$ is a small monoid, considered a category, there are unique functors 
$\begin{tikzcd}[cramped, sep=small, ampersand replacement=\&] %,every arrow/.append style={shift left}]
{\mathbf{1}}\arrow[r,yshift=2pt,"i"]  \&{G} \arrow[l,yshift=-2pt,"r"]
\end{tikzcd}$. For a complete category $\cC$, they induce functors 
$$
\begin{tikzcd}[cramped, column sep=normal, ampersand replacement=\&] %,every arrow/.append style={shift left}]
{[G,\cC]}\arrow[r,yshift=2pt,"i^*"]  \&{\cC} \arrow[l,yshift=-2pt,"r^*"]
\end{tikzcd}
$$
satisfying $$i^*\circ r^*=\id_\cC.$$
As in \cite[V.2]{lawvere-thesis} (or using our discussion from \ref{funct-int-presh}), we obtain adjoints
$$
i_!\dashv i^* \dashv i_*, \ \ \ \ r_!\dashv r^*\dashv r_*,
$$
satisfying
$$
r_!\circ i_!=\id_\cC, \ \ \ 
r_*\circ i_*=\id_\cC, \ \ \ 
r_!\circ r^*=\id_\cC, \ \ \
r_*\circ r^*=\id_\cC.
$$
Moreover, $r_!=\varinjlim_G$, and, for $X\in[G,\cC]$, 
$$
r_!(X)=X/M
$$
is the `orbit object', equipped with a regular epimorphism $X\to r^*(X/M)$, while
$r_*=\varprojlim_G$ and
$$
r_*(X)=M\backslash X=X^M
$$
is the `fixed object', equipped with a regular monomorphism $r^*(M\backslash X)\to X$.

\subsection{Topos of monoid actions}\label{monoid-topos}

Let $G$ be a monoid, considered as a category with a single object $o$. The classifying topos
$$\bB G\simeq[G,\Set]=[(G^\op)^\op,\Set]$$ 
of left $G$-actions (discussed in \ref{monoid-action} and \ref{module-ops-day-conv}) is  a Grothendieck topos, as a category of presheaves on $G^\op$. 

The pair of adjoint triples established in \ref{mon-of-ops} witness the fact that the global section geometric morphism
$$
\bB G\to \Set
$$
admits a right inverse.   %and thus $\bB G$ is a \emph{local} topos.

Limits in $\bB G$ are computed component-wise (for underlying sets), and the terminal object $e$ is a point with a trivial $G$-action. A morphism is a monomorphism (resp.\ epimorphism), if it underlying function is injective (resp.\ surjective). 

By general principles (\cite[I.4]{maclane-moerdijk}), the subobject classifier $\Omega$ is isomorphic to the set of sieves on the object $o$, which corresponds to the set of left ideals of $G$, with the action 
$$G\times\Omega\to\Omega, \ \ \ (g,R)\mapsto \{h\in G: hg\in R\}.$$  

The `truth' map 
$$
t:e\to\Omega
$$ 
sends the only point of $e$ to the maximal left ideal in $G$.

The internal hom/exponential of $Y,Z\in \bB G$ is given as
$$
[Y,Z]=\bB G(G\times Y,Z),
$$
where $G$ is considered a left $G$-module through its monoid operation. 

Indeed, a functor $X\in[G,\Set]$ is identified with the set $X(o)\in \bB G$, so it suffices to evaluate
$$
[Y,Z](o)=[G,\Set](\h_o\times Y,Z),
$$
and the representable functor $\h_o$ identifies with the left $G$-module $G$.

Note, if $G$ is commutative, using the by the closedness of $(\bB G,\times_G)$, we also get
$$
\bB G(X,[Y,Z])=\bB G(X,\bB G(G\times Y,Z))\simeq \bB G(X\times_G(G\times Y),Z)\simeq \bB G(X\times Y,Z).
$$

We will study the special case of $\bB\N$ in more detail in \ref{topos-diff-sets}.

\subsection{Ordinary difference categories}\label{dif-cats}

Let $\bsi$ be a category with a single object $o$ and $\Hom_{\bsi}(o,o)$ consists of composites of a single endomorphism $\sigma:{o}\to{o}$. In other words, as a monoid,
$$
\End_{\bsi}(o)\simeq \N. 
$$

Let $\cC$ be a category. Its \emph{difference category} is defined as the functor category
$$
\diff\cC=\Diff(\cC)=[\bsi,\cC].
$$

More explicitly, $\diff\cC$ has objects of form $(X,\sigma)$, where $X\in\Ob(\cC)$, and $\sigma\in\cC(X,X)$. A morphism $f:(X,\sigma_X)\to (Y,\sigma_Y)$ is a commutative diagram
 \begin{center}
 \begin{tikzpicture} 
\matrix(m)[matrix of math nodes, row sep=2em, column sep=2em, text height=1.5ex, text depth=0.25ex]
 {
 |(1)|{X}		& |(2)|{Y} 	\\
 |(l1)|{X}		& |(l2)|{Y} 	\\
 }; 
\path[->,font=\scriptsize,>=to, thin]
(1) edge node[above]{$f$} (2) edge node[left]{$\sigma_X$}   (l1)
(2) edge node[right]{$\sigma_Y$} (l2) 
(l1) edge  node[above]{$f$} (l2);
\end{tikzpicture}
\end{center}
in $\cC$, i.e., $f\in\Hom_\cC(X,Y)$ such that $f\circ\sigma_X=\sigma_Y\circ f$.

If $e$ is a terminal object of $\cC$, then $(e,\id)$ is a terminal object of $\diff\cC$. 

If $\cC$ has  products (resp.\ fibre products), so does $\diff\cC$. Indeed, if $(X,\sigma_X)$ and $(Y,\sigma_Y)$ are objects of $\diff\cC$ (resp.\ of $\diff\cC_{\ov (S,\sigma)})$, then 
$$
(X\times Y,\sigma_X\times\sigma_Y)\ \ \ \text{resp. }\ \ \ (X\times_S Y,\sigma_X\times\sigma_Y)
$$ is their product in $\diff\cC$ (resp.\ in $\diff\cC_{\ov (S,\sigma)}$).

We have a natural embedding of categories
$$
\I:\cC\to\diff\cC,\ \ \ \ \ \I(X)=(X,\id),
$$
as well as the forgetful functor $\forg{\, }:\diff\cC\to\cC$,
$$
\forg{(X,\sigma)}=X\in\Ob(\cC),
$$
satisfying 
$$
\forg{\,}\circ\I=\id_\cC.
$$

\subsection{Difference category of a monoidal closed category}\label{diff-mon-closed}

Let $(\cV,\otimes,I)$ be a complete symmetric monoidal closed category. If $\cC$ is a small category, it is a classical fact that the functor category
$$
[\cC,\cV]
$$
is monoidal closed with the argument-wise tensor product.

\begin{corollary}
The difference category $$\diff\cV=[\bsi,\cV]$$ is symmetric monoidal closed.
\end{corollary}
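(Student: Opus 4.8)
The statement is a direct corollary of the preceding classical fact that for a small category $\cC$ and a complete symmetric monoidal closed category $(\cV,\otimes,I)$, the functor category $[\cC,\cV]$ is symmetric monoidal closed with the pointwise tensor product. Since $\bsi$ is by definition a small category (it has one object $o$ with endomorphism monoid $\N$), and since $\diff\cV$ is defined precisely as $[\bsi,\cV]$, one only needs to instantiate $\cC=\bsi$ in the general fact. So the plan is: first recall the pointwise monoidal structure, then identify what the internal hom looks like concretely in the difference case, and finally note symmetry is inherited from $\cV$.

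\textbf{Step 1: the pointwise tensor.} For $(X,\sigma_X),(Y,\sigma_Y)\in\diff\cV$, set $(X,\sigma_X)\otimes(Y,\sigma_Y)=(X\otimes Y,\sigma_X\otimes\sigma_Y)$, with unit object $(I,\id_I)=\I(I)$. Associativity, unit, and symmetry isomorphisms are obtained by applying the corresponding coherence isomorphisms of $\cV$ componentwise; they are automatically $\diff\cV$-morphisms because they are natural, hence commute with the endomorphisms $\sigma_X\otimes\sigma_Y$ etc. This is routine and I would not spell it out.

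\textbf{Step 2: the internal hom.} Since $\cV$ is closed and complete, the general functor-category result gives closedness, but it is worth recording the explicit formula, in the spirit of the internal hom $[X,Y]$ of difference sets from the introduction. For $(Y,\sigma_Y),(Z,\sigma_Z)\in\diff\cV$, one expects
$$
[(Y,\sigma_Y),(Z,\sigma_Z)] = \diff\cV\big((\Nsucc)_{\cV}\otimes (Y,\sigma_Y),(Z,\sigma_Z)\big),
$$
or, concretely, the object $\bigl(\,\eend \text{-type end}\,\bigr)$ which unwinds (using that $\bsi$ has a single object with $\End(o)=\N$) to the equaliser computing $[X,Y]$ in the enriched-functor-category sense from \ref{enr-fun-cat}: an object $W\in\cV$ carrying an endomorphism $s$, together with the adjunction
$$
\diff\cV\big((X,\sigma_X)\otimes(Y,\sigma_Y),(Z,\sigma_Z)\big)\simeq \diff\cV\big((X,\sigma_X),[(Y,\sigma_Y),(Z,\sigma_Z)]\big).
$$
Rather than rederiving this, I would simply cite the pointwise-closedness of $[\cC,\cV]$ (a standard fact, e.g. in \cite{borceux-2} or \cite{kelly}, and also a special case of Day convolution \ref{day-conv} for the trivial monoidal structure on $\bsi$ viewed appropriately, or more directly of \ref{funct-int-presh}/\ref{enr-psh-tens-cotens}), and then unwind the formula for $\bsi$ if desired.

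\textbf{Main obstacle.} There is essentially no obstacle: the only thing to check is that $\bsi$ qualifies as a small category and that $\cV$ satisfies the hypotheses (complete symmetric monoidal closed), both of which are given. The one genuinely substantive point, if the reader wants it, is writing the internal hom $[(Y,\sigma_Y),(Z,\sigma_Z)]$ explicitly — it is not the naive $([\forg Y,\forg Z],\text{conjugation})$ (that object is not closed, exactly the phenomenon flagged in the introduction), but rather the "sequence" object $\{(f_i)_{i\in\N}: f_i\in[\forg Y,\forg Z],\ f_{i+1}\circ\sigma_Y=\sigma_Z\circ f_i\}$ with shift $s$, which is the limit over $\bsi$ of the cotensor diagram. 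I would include a one-line remark to this effect and leave the verification of the adjunction as an instance of the general enriched functor category computation \ref{enr-fun-cat}.
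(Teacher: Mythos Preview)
Your proposal is correct and takes the same approach as the paper: both observe that the result is an instance of the classical fact that $[\cC,\cV]$ is symmetric monoidal closed for small $\cC$, and both identify the internal hom as the ``sequence'' object $\{(f_i):f_{i+1}\circ\sigma_Y=\sigma_Z\circ f_i\}$ with shift. The only difference is emphasis: the paper actually writes out the equaliser description of $[Y,Z]$ inside $[Y_0,Z_0]^\N$ and verifies the adjunction $\diff\cV(X\otimes Y,Z)\simeq\diff\cV(X,[Y,Z])$ by hand (constructing mutually inverse assignments $\phi\mapsto\theta_\phi$ and $\theta\mapsto\phi_\theta$), whereas you defer this to the general enriched-functor-category machinery and only sketch the shape of the answer.
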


Although the statement  is a direct consequence of the general principle stated above, we give an explicit construction of internal homs. Let $X,Y,Z\in\diff\cV$, and denote $X_0=\forg{X}$, $Y_0=\forg{Y}$, $Z_0=\forg{Z}$. Since $\cV$ is monoidal closed, we have an isomorphism
$$
\alpha:\cV(X_0\otimes Y_0,Z_0)\to\cV(X_0,[Y_0,Z_0])
$$
natural in all three variables. In particular, for $\cV$-morphisms $\sigma_X:X_0\to X_0$, $\sigma_Y:Y_0\to Y_0$, $\sigma_Z:Z_0\to Z_0$, we have commuting endomorphisms
$$
\sigma_X^*, \sigma_Y^*, \sigma_{Z*}:\cV(X_0\otimes Y_0,Z_0)\to\cV(X_0\otimes Y_0,Z_0)
$$
given by
\begin{align*}
\sigma_X^*(u) & =u\circ(\sigma_X\otimes\id_Y),\\
\sigma_Y^*(u) & =u\circ(\id_X\otimes\sigma_Y),\\
\sigma_{Z*}(u) & =\sigma_Z\circ u.
\end{align*}
Similarly, we have commuting endomorphisms 
$$
\bar{\sigma}_X^*,\bar{\sigma}_{Y*},\bar{\sigma}_{Z*}:\cV(X_0,[Y_0,Z_0])\to\cV(X_0,[Y_0,Z_0]),
$$
defined by 
\begin{align*}
\bar{\sigma}_X^*(v) & =v\circ\sigma_X,\\
\bar{\sigma}_{Y*}(v) & =[\sigma_Y,\id_Z]\circ v,\\
\bar{\sigma}_{Z*}(v) & =[\id_Y,\sigma_Z]\circ v.
\end{align*}
Naturality states that 
$$
\bar{\sigma}_X^{*i}\bar{\sigma}_{Y*}^j\bar{\sigma}_{Z*}^k\alpha(u)=
\alpha(\sigma_X^{*i}\sigma_Y^{*j}\sigma_{Z*}^k u)
$$
for all $i,j,k\geq0$.
Let $$[Y_0,Z_0]^\N=\prod_{i\in\N}[Y_i,Z_i],$$ where $Y_i$ (resp.\ $Z_i$) are isomorphic copies of $Y_0$ (resp.\ $Z_0$). Consider the endomorphisms of $[Y_0,Z_0]^\N$ defined by
\begin{align*}
\tilde{\sigma}_Y & =\prod_{i\in\N}[\sigma_Y,\id_Z],\\
\tilde{\sigma}_Z & =\prod_{i\in\N}[\id_Y,\sigma_Z],
\end{align*}
as well as the shift $s$ given as the composite
$$
\prod_{i\geq0}[Y_i,Z_i]\xrightarrow{\text{proj}}\prod_{i\geq 1}[Y_i,Z_i]\xrightarrow{\prod s_i} \prod_{i\geq0}[Y_i,Z_i],
$$
where $s_i:[Y_{i+1},Z_{i+1}]\to[Y_i,Z_i]$ is the identity.

We define 
$$
[Y,Z]=(\text{eq}(s\circ\tilde{\sigma}_Y,\tilde{\sigma}_Z),s).
$$
Note that $s$ restricts to $[Y,Z]$ because it commutes with $\tilde{\sigma}_Y$ and $\tilde{\sigma}_Z$.

We claim that 
$$
\diff\cV(X\otimes Y,Z)\simeq\diff\cV(X,[Y,Z]).
$$
Suppose $\phi\in\diff\cV(X\otimes Y,Z)$, and let 
$$\theta_i=\alpha(\sigma_X^{*i}\phi)=\bar{\sigma}_X^{*i}\alpha(\phi):X\to[Y_i,Z_i],$$
and
$$
\theta_\phi=\theta=\prod_{i\in\N}\theta_i:X\to[Y_0,Z_0]^\N.
$$
By definition, $\bar{\sigma}_X^*\theta_i=\theta_i\circ\sigma_X=\theta_{i+1}$,
so $\theta$ satisfies
$$
\theta\circ\sigma_X=s\circ\theta.
$$
By assumption, $\phi\circ(\sigma_X\otimes\sigma_Y)=\sigma_Z\circ\phi$, i.e., 
$\sigma_X^*\sigma_Y^*\phi=\sigma_{Z*}\phi$, so
$$
\bar{\sigma}_Y^*\theta_{i+1}=\bar{\sigma}_Y^*\alpha(\sigma_X^{*(i+1)}\phi)=
\alpha(\sigma_Y^*\sigma_X^*\sigma_X^{*i}\phi)=\alpha(\sigma_{Z*}\sigma_X^{*i}\phi)=\bar{\sigma}_{Z*}\theta_i.
$$
In other words, 
$$
s\circ\tilde{\sigma}_Y\circ\theta=\tilde{\sigma}_Z\circ\theta,
$$
so $\theta$ lands in $[Y,Z]$.

Conversely, if $\theta\in\diff\cV(X,[Y,Z])$, define 
$$\phi=\phi_\theta=\alpha^{-1}(\theta_0).$$ The assignments $\phi\mapsto\theta_\phi$ and $\theta\mapsto\phi_\theta$ are mutually inverse.

\subsection{Difference category of a topos}\label{diff-topos}

Let $\cE$ be an elementary topos with a natural number object $N$. Then $\bsi$ is an $\cE$-internal category with the object of objects $I$ and the object of morphisms $N$, so
$$
\diff\cE=[\bsi,\cE]
$$
can be identified with a category of internal presheaves, and, by the results of \ref{comon-coalg}, it is again a topos. Moreover, if $\cE$ is a Grothendieck topos, so is $\diff\cE$, and most considerations of \ref{topos-diff-sets} generalise to this context.

\subsection{Difference category of a presheaf category}

The situation of \ref{diff-topos} becomes explicit for the case $\cE=\hC$, where $\cC$ is a small category.
%Let $\cC$ be a category. % and consider the category $\diff(\hC)$. 
We observe that
\begin{align*}
\diff\hC & \simeq[\bsi,[\cC^\op,\Set]]\simeq[\bsi\times\cC^\op,\Set]  \\
& \simeq [\cC^\op\times\bsi,\Set]
\simeq[\cC^\op,[\bsi,\Set]] \simeq[\cC^\op,\diff\Set].
\end{align*}
%is the category of difference presheaves on $\cC$.

Indeed, an object $(\bF,\sigma)\in\diff\hC$ consists of a presheaf $\bF\in\hC$ and a natural transformation $\sigma:\bF\to\bF$. To each $X\in\cC$ we can assign an object $(\bF(X),\sigma_X)\in\diff\Set$, and to each morphism 
$f\in\cC(X,Y)$ we can assign a morphism
$$
\bF(f)\in{\diff\Set}((\bF(Y),\sigma_Y),(\bF(X),\sigma_X)).
$$
Thus, starting from an object in $\diff\hC$, we can define a functor $\cC^{\circ}\to \diff\Set$ and vice versa, yielding an equivalence of categories
$\diff\hC$ and $\mathbf{Hom}(\cC^\circ,\diff\Set)$.

%Iterating this procedure, we see that $\widehat{\cC^\bsi}^\bsi=\diff(\widehat{\diff\cC})$ can be identified with
%$$
%\diffpsh(\diff\cC)=\mathbf{Hom}((\diff\cC)^\circ,\diff\Set).
%$$

\subsection{Forgetting and recovering the difference structure}\label{forget-and-adj}

In \ref{mon-of-ops}, we considered adjoints of the forgetful functor $[G,\cC]\to \cC$ for an arbitrary monoid $G$ and a suitable category $\cC$. Here, we work out the details for the case $G=\N$ and a category $\cC$ admitting (countable) coproducts, i.e., we discuss adjoints of the forgetful functor
$$
\forg{\,}:\diff\cC\to\cC.
$$
In particular, this applies to the forgetful functor $\bB\N\simeq\diff\Set\to\Set$.

We define a functor $\ssig{\,}:\cC\to\diff\cC$
as follows. For $X\in\Ob(\cC)$, let 
$$
\ssig{X}=\coprod_{i\in\N}X_i,
$$
where each $X_i$ is an isomorphic copy of $X$, together with
$\sigma:\ssig{X}\to \ssig{X}$ which takes $X_i$ identically to $X_{i+1}$. 
There is a natural morphism $\iota_X: X\to \forg{\ssig{X}}$ mapping $X$ to $X_0$. For any $Z\in\diff\cC$, the natural morphism
$$
{\diff\cC}(\ssig{X},Z)\to{\cC}(X,\forg{Z})
$$
is bijective, i.e., $\ssig{\,}$ is left adjoint to $\forg{\,}$.

If $f\in {\diff\cC}(\ssig{X},Z)$, then clearly $f_0=\forg{f}\circ\iota_X\in {\cC}(X,\forg{Z})$. Conversely, if $f_0\in{\cC}(X,\forg{Z})$, there exists a unique $f\in {\diff\cC}(\ssig{X},Z)$ given on $X_i$ by $\sigma_Z^i\circ f_0$. The above assignments are mutually inverse.

Let $S\in\Ob(\diff\cC)$ and consider a forgetful functor $\forg{\,}_S=\forg{\, }:\diff\cC_{\ov S}\to\cC_{\ov\forg{S}}$.
Assume that $\cC$ admits (countable) fibre products. 

Let $f_0:X\to\forg{S}$ be an object of $\cC_{\ov\forg{S}}$. Write $f_i:X_i\to\forg{S}$ for the base change %$X_{\varsigma^i}$ of $X$ 
%$f_{0,\forg{\varsigma^i}}$ 
of $f_0$ along $\forg{\varsigma^i}:\forg{S}\to\forg{S}$, and let $\sigma_i:X_{i+1}\to X_i$ be the induced morphism satisfying
$$
f_i\circ\sigma_i=\varsigma\circ f_{i+1}.
$$ 
Consider the fibre product, i.e., the product in ${\cC_{\ov\forg{S}}}$
$$
\psig{X}_{\ov S}=\prod_{i\in\N}f_i=\prod_{i\in\N}X_i/S,
$$
together with a morphism
$\sigma:\psig{X}_{\ov S}\to\psig{X}_{\ov S}$ defined as the composite 
$$
\prod_{i\geq0}X_i/S\xrightarrow{\text{proj}}\prod_{i\geq1}X_i/S\xrightarrow{\prod\sigma_i}\prod_{i\geq0}X_i/S.
$$
This construction gives a functor
$$
\psig{\,}_{\ov S}:\cC_{\ov \forg{S}}\to\diff\cC_{\ov S}
$$
which is right adjoint to $\forg{\,}_S$. Indeed, the $\cC_{\ov \forg{S}}$-projection $\pi_X:\forg{\psig{X}}\to X$ induces a natural bijection
$$
{\diff\cC_{\ov S}}(Z,\psig{X}_{\ov S})\to {\cC_{\ov\forg{S}}}(\forg{Z},X).
$$
For $f\in{\diff\cC_{\ov S}}(Z,\psig{X}_{\ov S})$, the morphism $\pi_X\circ\forg{f}$ is in ${\cC_{\ov\forg{S}}}(\forg{Z},X)$.

Conversely, let $f_0\in{\cC_{\ov\forg{S}}}(\forg{Z},X)$ and let 
$f_i=\forg{\bar{\sigma}_Z^i}\circ f_{0,\forg{\varsigma^i}} \in {\cC_{\ov\forg{S}}}(\forg{Z},X_i)$. We have that
$$
\sigma_{i}\circ f_{i+1}=f_i\circ\sigma_Z,
$$
whence $f=\prod_i f_i\in{\diff\cC_{\ov S}}(Z,\psig{X}_{\ov S})$. The above constructions are mutually inverse. 

Analogously, when $\cC$ has (countable) products, we define a functor
$$
\psig{\,}:\cC\to\diff\cC,
$$
which is right adjoint to $\forg{\,}$. For $X\in\Ob(\cC)$, we let 
$$
\psig{X}=\prod_{i\in\N}X_i,
$$
where each $X_i$ is an isomorphic copy of $X$, together with the `shift' $\sigma:\psig{X}\to\psig{X}$ defined as the composite
$$
\prod_{i\geq0}X_i\xrightarrow{\text{proj}}\prod_{i\geq1}X_i\xrightarrow{\prod\sigma_i}\prod_{i\geq0}X_i,
$$
where $\sigma_i$ is the identity $X_{i+1}\to X_i$.

Clearly, if $\diff\cC$ has a terminal object $e$, then $\psig{X}=\psig{X}_{\ov e}$.

%\begin{tikzcd}
%\mathbf{C} \arrow[bend left=35]{r}[name=F]{F}
%   & \mathbf{D}\arrow[bend left=35]{l}[name=U]{U}
%               \arrow[from=F to U, no line]{}{\vdash}
%\end{tikzcd}

\subsection{Fixed points and quotients}\label{fix-quo}

Let $\cC$ be a category. 
The adjoints of the `constant object' functor $\cC\to [G,\cC]$ were discussed in \ref{mon-of-ops}, and we provide their explicit constructions in the case of $\diff\cC$ below.

The \emph{fixed point object} of $X\in\Ob(\diff\cC)$, is the equaliser $X_0$ 
\begin{center}
 \begin{tikzpicture} 
\matrix(m)[matrix of math nodes, row sep=0em, column sep=1.7em, text height=1.5ex, text depth=0.25ex]
 {
|(0)|{X_0} & |(1)|{\forg{X}}		& |(2)|{\forg{X}} 	\\
 }; 
\path[->,font=\scriptsize,>=to, thin,yshift=12pt]
(0) edge node[above]{} (1)
([yshift=2pt]1.east) edge node[above]{$\id$} ([yshift=2pt]2.west) 
([yshift=-2pt]1.east)edge node[below]{$\sigma$}   ([yshift=-2pt]2.west) 
;
\end{tikzpicture}
\end{center}
of the pair $(\id,\forg{\sigma})$ of morphisms in $\cC$, if it exists. Equivalently, using the difference language, it is
an object $X_0\in\Ob(\cC)$, equipped with a morphism
$$
i\in{\diff\cC}(\I(X_0),X),
$$ 
such that, for any $Z\in\Ob(\cC)$ and $f\in {\diff\cC}(\I(Z),X)$, there is a unique morphism $h\in\cC(Z,X_0)$ satisfying
$$
f=i\circ\I(h).
$$
If a fixed point object for $X$ exists, it is unique up to an (unique) isomorphism and we denote it by
$$
\Fix(X).
$$
Dually, a \emph{quotient object} for $X\in\Ob(\diff\cC)$ is the coequaliser $\bar{X}$ of the pair $(\id,\forg{\sigma})$ in $\cC$. Equivalently, it is an object $\bar{X}\in\Ob(\cC)$, equipped with a morphism
$$
q\in{\diff\cC}(X,\I(\bar{X})),
$$ 
such that, for any $Z\in\Ob(\cC)$ and $f\in {\diff\cC}(X,\I(Z))$, there is a unique morphism $h\in\cC(\bar{X},Z)$ satisfying
$$
f=\I(h)\circ q.
$$
If a quotient object for $X$ exists, it is unique up to an (unique) isomorphism and we denote it by
$$
\Quo(X).
$$
If all the fixed point objects in $\diff\cC$ exist (for instance, if equalisers in $\cC$ exist), the above universal property shows that the functor $\Fix:\diff\cC\to\cC$ satisfies
$$
{\diff\cC}(\I(Z),X)\simeq\cC(Z,\Fix(X)),
$$
i.e., $\Fix$ is right adjoint to $\I$.

Dually, if all the quotient objects in $\diff\cC$ exist (for instance, if $\cC$ has coequalisers), the functor $\Quo:\diff\cC\to\cC$ satisfies
$$
{\diff\cC}(X,\I(Z))\simeq\cC(\Quo(X),Z),
$$
i.e., $\Quo$ is left adjoint to $\I$.

\subsection{Moving between a category and its difference category}\label{cat--diff-cat}

To summarise \ref{forget-and-adj}, the diagram of categories and functors
\begin{center}
 \begin{tikzpicture} 
 [cross line/.style={preaction={draw=white, -,
line width=3pt}}]
\matrix(m)[matrix of math nodes, minimum size=1.7em,
inner sep=0pt, 
row sep=3.3em, column sep=1em, text height=1.5ex, text depth=0.25ex]
 { 
  |(dc)|{\diff\cC}	\\
 |(c)|{\cC} 	      \\ };
\path[->,font=\scriptsize,>=to, thin]
(dc) edge[draw=none] node (mid) {} (c)
(dc) edge node (fo) [pos=0.25,right=-3.5pt]{$\forg{\kern1pt}$}  (c)
(c) edge [bend left=45] node (ss) [left]{$\ssig{\kern1pt}$} (dc) edge [bend right=45] node (ps) [right]{$\psig{\kern1pt}$} (dc)
(mid) edge[draw=none] node{$\dashv$} (ss)
(mid) edge[draw=none] node{$\dashv$} (ps)
;
\end{tikzpicture}
\end{center}
shows the adjunctions found there, when $\cC$ has enough products and coproducts.

If $\cE$ is a Grothendieck topos, the above triple, together with the considerations of \ref{diff-topos} constitutes an essential geometric surjection
$$
\cE\to\diff\cE.
$$
%
%surjection because the unit of adjunction is \eta:Y\to \psig{\forg{Y}}, y\mapsto (y,\sigma y,\sigma^2 y...) is a mono.
%

The diagram
\begin{center}
 \begin{tikzpicture} 
 [cross line/.style={preaction={draw=white, -,
line width=3pt}}]
\matrix(m)[matrix of math nodes, minimum size=1.7em,
inner sep=0pt, 
row sep=3.3em, column sep=1em, text height=1.5ex, text depth=0.25ex]
 { 
  |(dc)|{\diff\cC}	\\
 |(c)|{\cC} 	      \\ };
\path[->,font=\scriptsize,>=to, thin]
(c) edge[draw=none] node (mid) {} (dc)
(c) edge node (fo) [pos=0.65,right=-2pt]{$\I$}  (dc)
(dc) edge [bend right=45] node (ss) [left]{$\Quo$} (c) edge [bend left=45] node (ps) [right]{$\Fix$} (c)
(mid) edge[draw=none] node{$\dashv$} (ss)
(mid) edge[draw=none] node{$\dashv$} (ps)
;
\end{tikzpicture}
\end{center}
shows the adjunctions found in \ref{fix-quo}, if $\cC$ has equalisers and coequalisers.

If $\cE$ is a Grothendieck topos, in view of \ref{diff-topos}, the resulting geometric morphism
$$
\diff\cE\to\cE
$$
is the canonical global section morphism, and it is an essential surjection.

As in \ref{mon-of-ops}, these functors satisfy
$$
\Quo\circ\ssig{\,}\simeq\id_\cC, \ \ \Fix\circ\psig{\,}\simeq\id_\cC, \ \ \Quo\circ\I\simeq\id_\cC, \ \ \Fix\circ\I\simeq\id_\cC.
$$

\subsection{Difference categories with pullbacks}\label{diff-cat-pb}

We shall write $$\diffst\cC$$ for the $\diff\Set$-category with the same objects as $\diff\cC$, in which, for objects $X$, $Y$,
$$
\diffst\cC(X,Y)=(\diff\cC(X,Y),\sigma^*_X)\in\diff\Set,
$$
where $\sigma^*_X:\diff\cC(X,Y)\to\diff\cC(X,Y)$ is given by 
$$\sigma^*_X(u)=u\circ\sigma_X.$$

%Similarly, we will write $\diffst(\cC)$ for the category $\diff\cC$ considered as enriched in $\diff\Set$ via
%$$
%\Hom_{\diffst(\cC)}(X,Y)=(\Hom_{\diff\cC}(X,Y),\sigma_X^*).
%$$
%  

Note that $\diff\cC(X,Y)$, together with the operator 
$\sigma_{Y*}=\sigma_Y\circ\mathord{-}$, would yield the same difference set.

If $\cV$ is symmetric monoidal closed with coequalisers and a natural number object, applying the discussion of \ref{module-ops-day-conv} to the monoid $\bsi\simeq\N$,  Day convolution on $\diff\cV=[\bsi,\cV]$, or equivalently, tensor product of $\bsi$-modules in $\cV$ specialises to the operation
$$
X*Y=\begin{tikzcd}[cramped, column sep=normal, ampersand replacement=\&]
{\coeq\left(X\otimes Y\right.}\ar[yshift=2pt]{r}{\sigma_X\otimes\id} \ar[yshift=-2pt]{r}[swap]{\id\otimes\sigma_Y} \&{\left.X\otimes Y\right)}
\end{tikzcd},
$$ 
with $\sigma_{X*Y}$ induced by $\sigma_X\otimes\id$.

If $\cV$ is bicomplete, then $(\diffst\cV, *)$ is closed with internal homs 
$$
\diffst\cV(X,Y).
$$
Indeed, the general construction of $\N$-modules internal hom gives
$$
\begin{tikzcd}[cramped, column sep=normal, ampersand replacement=\&]
{\Eq\left([X,Y]\right.}\ar[yshift=2pt]{r}{\sigma_X^*} \ar[yshift=-2pt]{r}[swap]{{\sigma_Y}_*} \&{\left.[X,Y]\right)}\simeq\diff\cV(X,Y)
\end{tikzcd},
$$
with the difference operator $\sigma_X^*$.

\subsection{Difference twists}

Suppose that $\cC$ is a category with fibre products, and let $(S,\varsigma)\in\Ob(\diff\cC)$.
Note that $\varsigma\in\End_{\diff\cC}(S)$, so we can consider the base change along $\varsigma$ functor 
$$
p_\varsigma:\diff\cC_{\ov S}\to \diff\cC_{\ov S},\ \ \ \ p_\varsigma(X)=X_\varsigma.
$$
By the universal property of fibre products, for $X\in\Ob(\diff\cC_{\ov S})$ the diagram
$$
 \begin{tikzpicture} 
 [cross line/.style={preaction={draw=white, -,
line width=3pt}}]
\matrix(m)[matrix of math nodes, minimum size=1.7em,
inner sep=0pt, 
row sep=1.5em, column sep=1em, text height=1.5ex, text depth=0.25ex]
 { 
 	    			&[2em]	& |(3)|{X}		\\
 |(2)|{X} 	& |(P)| {X_{\varsigma}} &         \\[1em]
 |(h)|{S}             & |(1)|{S}  &\\};
\path[->,font=\scriptsize,>=to, thin]
(P) edge node[left=-3pt]{$p_\varsigma(X)$}  (1) edge (2)
(1) edge node[below]{$\varsigma$}  (h)
(2) edge (h)
(3) edge [bend left=10] (1) edge [bend right=10] node[above]{$\sigma_X$} (2) edge[dashed] node[pos=0.6,above left=-4pt]{$\bar{\sigma}_X$} (P)
;
\end{tikzpicture}
$$
yields a $\diff\cC_{\ov S}$-morphism $\bar{\sigma}_X:X\to X_\varsigma$. Since this construction is functorial in $X$, we obtain a natural transformation $$\bar{\sigma}:\id_{\diff\cC_{\ov S}}\to p_\varsigma.$$
The above morphism induces a $\widehat{\diff\cC_{\ov S}}$-morphism $h_X\to h_{X_\varsigma}$ as follows. For $T\in \Ob(\diff\cC_{\ov S})$, it maps $x\in X(T)$ to 
$$
\bar{\sigma}_X\circ x=x_\varsigma\circ\bar{\sigma}_T\in X_\varsigma(T).
$$
Note that for $T=S$, we have $\bar{\sigma}_T=\bar{\varsigma}=\id$ and so
$$
\bar{\sigma}_X\circ x=x_\varsigma
$$
for $x\in X(S)$.

By \ref{bc-abstr}, we know that $p_\varsigma$ is right adjoint to the functor $i_\varsigma$. In other words, given $f:Y\to S\in\Ob(\diff\cC_{\ov S})$, we have a natural bijection 
$$
\Hom_{\diff\cC_{\ov S}}(Y,X_\varsigma)\simeq\Hom_{\diff\cC_{\ov S}}(i_\varsigma(Y),X),
$$
where $i_\varsigma(f)=\varsigma\circ f$.  

On the other hand, the restriction of scalars via $\varsigma$
$$
\prod_\varsigma:\widehat{\diff\cC_{\ov S}}\to \widehat{\diff\cC_{\ov S}}
$$
is right adjoint to $p_\varsigma$ as discussed in \ref{weilres-alg}; for $\bY,\bZ\in\widehat{\diff\cC_{\ov S}}$, we have
$$
\Hom_S(\bZ,\prod_\varsigma\bY)\simeq\Hom_S(\bZ_\varsigma,\bY).
$$
The unit of adjunction $\eta_{\bZ}:\bZ\to\prod_\varsigma(\bZ_\varsigma)$ is given, for $S'$ over $S$, by
$$
\eta_\bZ(S'):\bZ(S')\to \bZ_\varsigma(S'_\varsigma),\ \ \ \ u\mapsto u_\varsigma.
$$

\subsection{Twist-equivariance}

Let $(S,\varsigma)\in\Ob(\diff\cC)$. A  \emph{$\varsigma$-equivariant} (or \emph{$S$-equivariant}) object is a pair $(X,\varphi_X)$, where $X\in\Ob(\diff\cC_{\ov S})$, and $\varphi:X_\varsigma\to X$ is a morphism in $\diff\cC_{\ov S}$.
A \emph{$\varsigma$-equivariant} morphism between $\varsigma$-equivariant objects $(X,\varphi_X)$ and $(Y,\varphi_Y)$ is a morphism $f:X\to Y$ in $\diff\cC_{\ov S}$ satisfying 
$$
f\circ\varphi_X=\varphi_Y\circ f_\varsigma.
$$
Let $\cD^\varsigma=\diff\cC_{\ov\eq{S}}$ denote the category of $\varsigma$-equivariant objects and morphisms. The family $\varphi_X$, $X\in\Ob(\diff\cC_{\ov\eq{S}})$ defines a natural transformation
$$
\varphi:p_\varsigma\to \id_{\cD^\varsigma}.
$$
Composing it with $\bar{\sigma}:\id_{\cD^\varsigma}\to p_\varsigma$, we obtain a morphism
$$
\varphi\circ\bar{\sigma}:\id_{\cD^\varsigma}\to \id_{\cD^\varsigma}.
$$

\section{The topos of difference sets}\label{topos-diff-sets}\label{diff-set-top}

\subsection{Elementary topos structure of difference sets}

The category
$$
\diff\Set=[\bsi,\Set]
$$
is a Grothendieck topos (as a preheaf category on $\bsi^\op\simeq\bsi$). We explicate the key elements of its structure as an elementary topos.

%As observed in \ref{dif-cats}, the category $\diff\Set$ has products, so we can consider it a cartesian monoidal category, i.e., a symmetric monoidal category in which the tensor product is simply the direct product, and the identity object is the terminal object $e$ of $\diff\Set$, i.e., a point $(*,\id)$. 

The terminal object is a point/singleton with the identity difference operator,
$$
I=(\{*\},\id).
$$
Monomorphisms  are injective difference maps, and epimorphisms are surjective.

The subobject classifier is the set 
$$
\Omega=\N\cup\{\infty\},
$$
with the difference operator
$$
\sigma_\Omega:   0\mapsto 0,\  \infty\mapsto\infty, \text{ and } i+1\mapsto i\text{ for }i\in\N.
$$
and the `truth' map
$$
t:I\to\Omega, \ *\mapsto 0.
$$
It is, of course, in bijection with the set of sieves on the object $o\in\bsi$, which consists of $R_{\infty}=\emptyset$ and $R_n=\{\sigma^i: i\geq n\}$ for $n\in\N$.

For a monomorphism $Y\xrightarrow{u}X$, the classifying map is
$$
\chi_u:X\to\Omega, \ \ \chi_u(x)=\min\{n:\sigma_X^n(x)\in Y\}.
$$
Clearly $Y=\chi_u^{-1}(\{0\})$, and, more generally, $\sigma_X^{-n}Y=\chi_u^{-1}(\{0,1,\ldots,n\})$.

The representable presheaf $\h_o$ corresponds to the difference set
$$
\Nsucc=(\N,i\mapsto i+1),
$$
which serves as a \emph{generator} for $\diff\Set$, since
$$
\diff\Set(\Nsucc,\mathord{-})\simeq \forg{\,}:\diff\Set\to \Set
$$
is conservative (i.e., it is faithful and reflects isomorphisms).

%Either by using the general principle \ref{diff-mon-closed}, or by \cite[6.1.9.d]{borceaux}, we conclude that
%the category $\diff\Set=\mathbf{Hom}(\bsi,\Set)$ is \emph{cartesian closed}, i.e., 
%the functor $\mathord{-}\times B:\diff\Set\to\diff\Set$ has a right adjoint denoted $[B,\mathord{-}]$.

As a presheaf category, $\diff\Set=[\bsi,\Set]\simeq[\bsi^\op,\Set]$ is cartesian closed, and the \emph{internal hom} objects %$[\mathord{-},\mathord{-}]$
are given as
\begin{align*}
[B,C] &=\Hom(\h_o\times B,C)%\simeq\diff\Set(\End_{\bsi}(o)\times B,C)
\simeq\diff\Set(\Nsucc\times B,C)\\
&\simeq\{ f:\N\times\forg{B}\to\forg{C}\,|\, f(i+1,\sigma_B(x))=\sigma_Cf(i,x), \text{ for }i\in\N, x\in \forg{B}\}\\
&\simeq \{(f_i)\in \Set(\forg{B},\forg{C})^\N: f_{i+1}\,\sigma_B=\sigma_C\, f_i\},
%\Hom_{\diffinf\Set}(\I_\infty(B),\I_\infty(C))\in\Ob(\diff\Set),
\end{align*}
together with the shift
$$
s:[B,C]\to [B,C], \ \ s(f_0,f_1,\ldots)=(f_1, f_2,\ldots).
$$
An internal morphism $f=(f_i)\in [B,C]$ is conveniently pictured as a commutative diagram
 \begin{center}
 \begin{tikzpicture} 
\matrix(m)[matrix of math nodes, row sep=2em, column sep=2em, text height=1.5ex, text depth=0.25ex]
 {
 |(u1)|{\forg{B}}		& |(u2)|{\forg{C}} 	\\
 |(1)|{\forg{B}}		& |(2)|{\forg{C}} 	\\
 |(l1)|{\forg{B}}		& |(l2)|{\forg{C}} 	\\[-.5em]
 |(b1)|{}		& |(b2)|{}\\[0em]
 }; 
\path[->,font=\scriptsize,>=to, thin]
(u1) edge node[above]{$f_0$} (u2) edge node[left]{$\sigma_B$}   (1)
(u2) edge node[right]{$\sigma_C$} (2) 
(1) edge node[above]{$f_1$} (2) edge node[left]{$\sigma_B$}   (l1)
(2) edge node[right]{$\sigma_C$} (l2) 
(l1) edge  node[above]{$f_2$} (l2)
(l1) edge[dashed] (b1)
(l2) edge[dashed] (b2);
\end{tikzpicture}
\end{center}
shaped as an `infinitely descending ladder'.

We construct functorial isomorphisms
$$
\diff\Set(A\times B,C)\simeq \diff\Set(A,[B,C]),
$$
for $A,B,C\in\Ob(\diff\Set)$ explicitly as follows. For a $\diff\Set$-morphism $\phi:A\times B\to C$ we define
$\theta=\theta_\phi:A\to[B,C]$ by
$$
\theta_\phi(a)_i(b)=\phi(\sigma_A^i(a),b).
$$
Clearly $\theta\circ\sigma_A=s\circ\theta$, as well as $\theta(a)_{i+1}\circ\sigma_B=\sigma_C\circ\theta_i(a)$, for $a\in A$ and $i\in\N$.

Conversely, for a $\diff\Set$-morphism $\theta:A\to[B,C]$, let $\phi=\phi_\theta:A\times B\to C$ be defined by
$$
\phi_\theta(a,b)=\theta(a)_0(b).
$$
It verifies the relation $\phi(\sigma_A(a),\sigma_B(b))=\sigma_C\phi(a,b)$, as required. 
The assignments $\phi\mapsto\theta_\phi$ and $\theta\mapsto\phi_\theta$ are mutually inverse.

%The internal hom objects allow us to consider the category $\diff\Set$ as enriched over itself.

Given that $\diff\Set$ is a Grothendieck topos, the natural number object $N$ is the coproduct
$$
N=\coprod_{i\in\N}I\simeq(\N,\id).
$$

\subsection{Locally cartesian closed structure of difference sets}\label{loc-cart-diff-set}

In a locally cartesian closed category $\cC$, the dependent product and local internal homs are inter-definable. If we have the functor $\sprod_p$ for every object 
 $X\xrightarrow{p} S\in\cC_{\ov S}$, then the internal hom in the slice category $\cC_{\ov S}$ is given by
$$
[p,\mathord{-}]_S=\sprod_p\circ p^*.
$$
Conversely, if we are given the internal hom on $\cC_{\ov S}$, then, for any morphisms
$X\xrightarrow{p} S$ and $Y\xrightarrow{f}X$, the dependent product/Weil restriction $\sprod_p f$ is obtained as the pullback
 \begin{center}
 \begin{tikzpicture} 
\matrix(m)[matrix of math nodes, row sep=2em, column sep=2em, text height=1.5ex, text depth=0.25ex]
 {
 |(1)|{\sprod_p f}		& |(2)|{[p,p\circ f]_S} 	\\
 |(l1)|{S}		& |(l2)|{[p,p]_S} 	\\
 }; 
\path[->,font=\scriptsize,>=to, thin]
(1) edge node[above]{} (2) edge node[left]{}   (l1)
(2) edge node[right]{} (l2) 
(l1) edge  node[above]{} (l2);
\end{tikzpicture}
\end{center}
where the bottow arrow is obtained by applying the adjunction 
$\cC_{\ov S}(\id_S\times_S p,p)\simeq \cC_{\ov S}(\id_S,[p,p]_S)$ to the isomorphism $\id_S\times_S p\simeq p$.

%Let $S\in\diff\Set$ %considered an $\N$-module via $\mu_X:X\times\N\to X$, $(x,n)\mapsto \sigma_X^n(x)$. Let 
%and let $Y\xrightarrow{g}S$, $Z\xrightarrow{h}S$ be objects in $\diff\Set_{\ov S}$. 

Given an object $S\in\diff\Set$, let us fix some notation needed to study the category 
$$\diff\Set_{\ov S}.$$
By the methods of \ref{discr-fibs}, we consider $S$ as a discrete fibration $\bbS\to \bsi$, i.e., a category with the set of objects $S_0=\forg{S}$, the set of morphisms $S_1\simeq \N\times S_0$,  the source map $\N\times S_0\to S_0$, $(s,n)\mapsto \sigma_S^n(s)$, and the second projection as the target map.  

An object $Y\xrightarrow{g}S$ of $\diff\Set_{\ov S}$ can be viewed as an object of $[\bbS^\op,\Set]$ via 
$$Y(s)=Y_s=g^{-1}(\{s\}),$$ for $s\in S_0$, and, for $(n,s)\in S_1$,
$$
Y(\sigma^n_S(s)\xrightarrow{(n,s)}s)=Y(s)\xrightarrow{\sigma^n_Y}Y(\sigma^n_S(s)).
$$

For $s\in S_0$, let $H_s\to S$ be the difference subset  of 
$\Nsucc\times S\xrightarrow{\pi_2} S$ given by
$$
H_s=\{(n,\sigma_S^ns):n\in\N\}.
$$
Its fibres are
$$
H_{s,u}\simeq \h_s(u)=\bbS(u,s)\simeq\{n\in\N: \sigma_S^n s=u\},
$$
for $u\in S$, and note that
%and $\sigma_{H_s}(n,)=(n+1,\sigma_S(u))$.
$$
\diff\Set_{\ov S}(H_s, Y)\simeq Y_s.
$$

%Moreover, we view $Y$ and $Z$ as objects of $[\bbS^\op,\Set]$ via $Y(s)=Y_s=g^{-1}(\{s\})$ for $s\in S_0$, and, for $(n,s)\in S_1$,
%$$
%Y(\sigma^n_S(s)\xrightarrow{(n,s)}s)=Y(s)\xrightarrow{\sigma^n_Y}Y(\sigma^n_S(s)),
%$$
%and similarly for $Z$.

%and $Z(s)=Z_s=h^{-1}(\{s\})$, for $s\in S$.  

The internal hom of $Y\xrightarrow{g}S$, $Z\xrightarrow{h}S$ in $\diff\Set_{\ov S}$ is calculated through the identification
$$
[g,h]_S\simeq[\bbS^\op,\Set][Y,Z].
$$ 
As an object of $\diff\Set_{\ov S}$, it is determined by its fibres over all $s\in S_0$, and they are evaluated as
$$
[g,h]_{S,s}\simeq [\bbS^\op,\Set][Y,Z](s)\simeq [\bbS^\op,\Set](\h_s\times Y,Z)\simeq \diff\Set_{\ov S}(H_s\times_S Y,Z),
$$

%More explicitly, an element $\varphi\in [g,h]_X$ is determined, for each $x\in X$, by the sequence of morphisms
%$$
%u_{x,n}:Y_{\sigma^n x}\to Z_{\sigma^n x},
%$$
%where $Y_x=g^{-1}(\{x\})$ and $Z_x=h^{-1}(\{x\})$, satisfying
%$$
%u_{\sigma x,n+1}\circ\sigma_Y=\sigma_Z\circ u_{x,n}.
%$$

Given $\varphi\in \diff\Set_{\ov S}(H_s\times_S Y,Z)$, writing
$$
\varphi_{n}=\varphi(n,\sigma^ns,\mathord{-}):Y_{\sigma^n s}\to Z_{\sigma^n s},
$$
we obtain that 
$$
[g,h]_{S,s}=\{(\varphi_{n})_n\in\prod_{n\in\N}\Set(Y_{\sigma^n s},Z_{\sigma^n s}) : \varphi_{n+1}\circ\sigma_X=\sigma_Y\circ \varphi_{n}\}.
$$
%We conclude that 
%$$
%[g,h]_S=\{(\varphi_n)\in \Set_{\ov\forg{S}}(\forg{Y},\forg{Z})^\N: \
%$$

The difference structure on $[g,h]_S$ is given by shifts
$$
\sigma:[g,h]_{S,s}\to [g,h]_{S,\sigma_S(s)},
%$$
%taking $\varphi_s\in [g,h]_{S,s}$ to 
%$(\sigma\varphi)_{\sigma s}\in\[g,h]_{S,\sigma_S(s)$, with
%given by
%$$
\ \ \ \ (\sigma\varphi)_{n}=\varphi_{n+1}.
$$
%taking $\varphi_s\in \diff\Set_{\ov S}(H_s\times_S Y,Z)$ to 
%$(\sigma\varphi)_{\sigma s}\in\diff\Set_{\ov S}(H_{\sigma_S(s)}\times_S Y,Z)$, with
%$$
%(\sigma\varphi)(n,u,y)=\varphi(n+1,u,y),
%$$
%whenever $\sigma_S^n \sigma_S(s)=\sigma_S^{n+1}(s)=u$ and $y\in Y_u$.

Given an object $X\xrightarrow{p}S$ in $\diff\Set_{\ov S}$, the canonical isomorphism
$$
\diff\Set_{\ov S}(p\times_S g,h)\simeq \diff\Set_{\ov S}(p,[g,h]_S)
$$
assigns to a $\diff\Set_{\ov S}$ morphism $\phi: X\times_S Y\to Z$ the $\diff\Set_{\ov S}$-morphism $\theta:X\to [g,h]_S$ by letting, for each $s\in S$, $\theta_s:X_s\to [g,h]_{S,s}$ be defined as
%$$
%(\theta_s(x))(n,u,y)=\phi_u(\sigma_X^n x,y)
%$$
%where $(n,u)\in H^s$ and $y\in Y_u$. This assignment is a well-defined since $\sigma^n s=u$, and it is straightforward to verify that it is a difference morphism.
$$
(\theta_{s}(x))_n(y)=\phi_{\sigma_S^n s}(\sigma^n_Xx,y),
$$
for $y\in Y_{\sigma_S^n s}$.

Conversely, given a morphism $\theta:X\to [g,h]_S$ over $S$, we define the morphism
$\phi: X\times_S Y\to Z$ over $S$ fibre-wise by the rule
%$$
%\phi_s:X_s\to [g,h]_{S,s}, \ \ \phi_s(x,y)=\theta_s(x)(0,s,y)\in Z_s.
%$$
$$
\phi_s:X_s\to [g,h]_{S,s}, \ \ \phi_s(x,y)=(\theta_s(x))_0(y)\in Z_s.
$$
The evaluation map
$$
[Y,Z]_S\times_S Y\to Z
$$
sends a pair  $(\varphi,y)\in [Y,Z]_{S,s}\times Y_s$ to $\varphi_{0}(y)\in Z_s$.
%$\varphi(0,s,y)\in Z_s$.

We can now calculate the dependent product
$$\sprod_p(Y\xrightarrow{f}X)\to S$$
using the above pullback diagram. Its fibres are
\begin{multline*}
(\sprod_p f)_s=\{h\in\diff\Set_{\ov S}(H_s\times_SX,Y): f(h(n,\sigma^ns,x))=x, 
\text{for }n\in\N, x\in X_{\sigma^n s}\}\\
\simeq \{(h_n)\in [X,Y]_{S,s}: h_n(x)\in Y_x, \text{for }x\in X_{\sigma^n s}\}.
\end{multline*}
%\begin{multline*}
%(\sprod_p f)_s=\{h\in\diff\Set_{\ov S}(H^s\times_SX,Y): f(h(u,n,x))=x \\
%\text{whenever }n\in\N, u\in S\text{ with }\sigma_S^nu=s, x\in X_u\}.
%\end{multline*}
The reader can verify that we obtain the same result from the identification
$$
(\sprod_p f)_s=\diff\Set_{\ov S}(H_s,\sprod_p f)\simeq \diff\Set_{\ov X}(p^*H_s, Y).
$$

An explicit formula for $\sprod_X$ follows by choosing $p:X\to e$, and we compute that
\begin{multline*}
\sprod_X(Y\xrightarrow{f}X)=\{h\in [X,Y]\, | \, f\circ h=\id_X\in [X,X]\}\\
=\{h:\N\times X\to Y \, | \, h(n,x)\in Y_x \text{ for all }x\in X\}.
\end{multline*}
If we set $h_n(\mathord{-})=h(n,\mathord{-})$, we obtain that for all $n\in\N$ and $x\in X$, $h_n(x)\in Y_x$, 
$$
h_{n+1}(\sigma_X x)=\sigma_Y h_n(x),
$$
and $\sprod_X g$ is equipped with the shift $s(h_0,h_1,\ldots)=(h_1,h_2,\ldots)$.

Clearly, $$\sprod_X(Y\xrightarrow{f}X) \simeq \prod_i \sprod_{X_i} (f^{-1}(X_i)\to X_i),$$
where $X=\coprod_i X_i$ is the partition of $X$ into maximal $\sigma$-orbits. Hence, we may assume that $X=O(x)=\{x,\sigma x,\sigma^2 x,\ldots\}$, and, in that case, an element $h$ is determined by values $y_i=h_0(\sigma^i x)\in Y_{\sigma^n x}$ and $y_{j}=h_{-j}(x)\in Y_x$ for $i\geq 0$, $j<0$. 

If $x$ is not preperiodic, i.e., all the elements of $(\sigma^ix:i\in\N)$ are distinct, we obtain
$$
\sprod_X f\simeq \prod_{j<0} Y_x\times \prod_{i\geq 0} Y_{\sigma^i x},
$$
with the shift
\begin{alignat*}{5}
s(\ldots, & y_{-1},& y_0,&& y_1, &&y_2, &\ldots)\\
=(\ldots, & y_{-2},& y_{-1},&&\sigma y_0, &&\sigma y_1, &\ldots).\\
\end{alignat*}

If $x$ is periodic with the least period of length $n$, i.e., $\sigma^n x=x$, $n>0$, then
$$
\sprod_X f\simeq  \prod_{i=0}^{n-1} Y_{\sigma^i x},
$$
with the shift
$$
s(y_0,y_1,\ldots, y_{n-1})=(\sigma y_{n-1},\sigma y_0,\ldots, \sigma y_{n-2}).
$$

If $x$ is preperiodic with the least pair $0<m<n$ such that $\sigma^mx=\sigma^n x$, then
$$
\sprod_X g\subseteq \prod_{j<0} Y_x\times \prod_{i=0}^{n -1} Y_{\sigma^i x},
$$
consists of the tuples $y$ satisfying the relations
\begin{alignat*}{3}
\sigma y_{m-1} & =\sigma y_{n-1}, && \ldots,  \sigma^m y_0 &&=\sigma^m y_{n-m}\\
\sigma^m y_{-1}& =\sigma^{m+1}y_{n-m-1},  && \ldots,  \sigma^m y_{m-n+1}&&=\sigma^{n-1} y_1\\
\sigma^m y_{m-n-i}& =\sigma^n y_{-i},\ \ i\geq 0.  && &&
\end{alignat*}

\begin{example}
The dependent product in the difference case is not necessarily exact.  
Let $X=\{0,1,2\}$, with $\sigma(0,1,2)=(1,2,1)$, let $Y=\{a_i,b_i:i\in X\}$ with $\sigma(a_0,b_0, a_1, b_1, a_2,b_2)=(a_1,a_1, a_2, b_2, a_1,a_1)$, and let $f:Y\to X$ be the projection $a_i\mapsto i$, $b_i\mapsto i$. Then $\sprod_X f=\emptyset$ since there do not exist $y_0\in Y_0$ and $y_2\in Y_2$ with $\sigma y_0=\sigma y_2$.  
\end{example}

\subsection{Logic of difference sets}

Logical operations on $\Omega$ are the following. The map `true' was defined above, while the `false' map is
$$
f:I\to\Omega, \ \ \ *\mapsto\infty.
$$
Conjunction is the classifying map of $I\xrightarrow{(t,t)}\Omega\times\Omega$, which yields
$$
\land:\Omega\to\Omega, \ \ \ \land(i,j)=\max\{i,j\}.
$$
Disjunction is the classifying map of the union of subobjects $\Omega\times I\xrightarrow{\id\times t}\Omega\times\Omega$ and $I\times\Omega\xrightarrow{t\times\id}\Omega\times\Omega$, and we compute
$$
\lor(i,j)=\min\{i,j\}.
$$
Negation $\lnot:\Omega\to\Omega$ is the classifying map of $I\xrightarrow{f}\Omega$, which is
$$
\lnot(i)=\begin{cases}
0, & i=\infty;\\
\infty, & i\in\N.
\end{cases}
$$
Note that $\lnot\lnot\neq\id_\Omega$, so $\diff\Set$ is not a Boolean topos. On the other hand, we can directly verify the relation
$$
\lnot(i\land j)=\lnot i\lor \lnot j,
$$
so $\diff\Set$ is a \emph{De Morgan} topos.

The order relation $\Omega_1=\Eq(\land,\pi_1)\subseteq\Omega\times\Omega$ becomes
$$
(i,j)\in\Omega_1, \ \text{ if }i\geq j,
$$
and the implication is the classifying map of $\Omega_1\to \Omega\times\Omega$, i.e., 
$$
{\Rightarrow}(i,j)=\begin{cases}
0, & i\geq j;\\
j, & i<j.
\end{cases}
$$

The above structure makes the poset $\Omega$ into an internal Heyting algebra in $\diff\Set$. 

\subsection{Heyting algebra of difference subsets}\label{heyting-sub-diff}

Given an object $X\in\diff\Set$, arguing through characteristic maps of subobjects,  the Heyting algebra structure of $\Omega$ makes
$$
\Sub(X)
$$ 
a Heyting algebra with the following operations.

For subobjects $U$ and $V$ of $X$, clearly
$$
U\land V=U\cap V, \ \ \ \ U\lor V=U\cup V.
$$
Moreover,
$$
U\Rightarrow V=\{x\in X: \text{for all }n\in\N, \sigma_X^nx\in U\text{ implies }\sigma_X^nx\in V\}, 
$$
and
$$
\lnot U=\{x\in X: \text{for all }n\in\N, \sigma_X^n x\notin U\}.
$$

Given a morphism $f:X\to Y$ in $\diff\Set$,  we can describe the functors
\begin{center}
 \begin{tikzpicture} 
 [cross line/.style={preaction={draw=white, -,
line width=3pt}}]
\matrix(m)[matrix of math nodes, minimum size=1.7em,
inner sep=0pt, 
row sep=0em, column sep=4em, text height=1.5ex, text depth=0.25ex]
 { 
  |(dc)|{\Sub(X)}	 &
 |(c)|{\Sub(Y)} 	      \\ };
\path[->,font=\scriptsize,>=to, thin]
(c) edge[draw=none] node (mid) {} (dc)
(c) edge node (fo) [pos=0.2,above=-1pt]{$f^{-1}$}  (dc)
(dc) edge [bend right=35] node (ss) [below]{$\forall_f$} (c) edge [bend left=35] node (ps) [above]{$\exists_f$} (c)
(mid) edge[draw=none] node[sloped]{$\vdash$} (ss)
(mid) edge[draw=none] node[sloped]{$\dashv$} (ps)
;
\end{tikzpicture}
\end{center}
as follows. 

The functor $f^{-1}$ is defined as the restriction of the pullback functor $f^*:\diff\Set_{\ov Y}\to\diff\Set_{\ov X}$ to $\Sub(Y)$, whence
$$
f^{-1}(V\mono Y)=f^{-1}(V)\mono X
$$
is the usual set-theoretic preimage.

By definition,
$$
\exists_f(U\mono X)=\im(U\mono X\xrightarrow{f}Y)=f(U)\mono Y
$$
is the usual set-theoretic image of $U$ along $f$.

On the other hand, the functor $\forall_f$ is the restriction of $\sprod_f:\diff\Set_{\ov X}\to\diff\Set_{\ov Y}$ to $\Sub(X)$, so \ref{loc-cart-diff-set} yields that
$$
\forall_f(U\mono X)=\{y\in Y: \text{for all }n\in\N, U_{\sigma^n y}=X_{\sigma^n y}\}\mono Y.
$$

\subsection{Difference power objects}\label{diff-powerset}

Given an object $X\in\diff\Set$, its power object is constructed as
$$
PX=[X,\Omega]\simeq\diff\Set(\Nsucc\times X,\Omega)\simeq\Sub(\Nsucc\times X).
$$
It follows that $PX$ is the set of commutative diagrams of the form
 \begin{center}
 \begin{tikzpicture} 
\matrix(m)[matrix of math nodes, row sep=2em, column sep=2em, text height=1.5ex, text depth=0.25ex]
 {
 |(u1)|{Y_0}		& |(u2)|{\forg{X}} 	\\
 |(1)|{Y_1}		& |(2)|{\forg{X}} 	\\
 |(l1)|{Y_2}		& |(l2)|{\forg{X}} 	\\[-.5em]
 |(b1)|{}		& |(b2)|{}\\[0em]
 }; 
\path[->,font=\scriptsize,>=to, thin]
(u1) edge[style=right hook->] node[above]{} (u2) edge node[left]{}   (1)
(u2) edge node[right]{$\sigma_X$} (2) 
(1) edge[style=right hook->]  node[above]{} (2) edge node[left]{}   (l1)
(2) edge node[right]{$\sigma_X$} (l2) 
(l1) edge[style=right hook->]   node[above]{} (l2)
(l1) edge[dashed] (b1)
(l2) edge[dashed] (b2);
\end{tikzpicture}
\end{center}
which are thought of as `generalised difference subsets'. 

Indeed, if $Y\in PX$ is viewed as a subobject of $\Nsucc\times X$, %we write $Y=(Y_i)_{i\in\N}$, where
writing $Y_i$ for the fibre of $Y$ over $i\in \N$, we have that
$$
\sigma_X(Y_i)\subseteq Y_{i+1}.
$$
Conversely, any system $(Y_i)_{i\in\N}$ of subsets of $X$ satisfying the above property gives rise to an an element of $PX$.

An element $y=(y_i)\in [X,\Omega]$ corresponds to the system of subsets $Y=(Y_i)$  via $Y_i=y_i^{-1}(0)$.

The membership relation $\text{\large$\in$}^X\subseteq PX\times X$ is defined by
$$
((Y_i),x)\in \text{\large$\in$}^X \ \text{ if } \ x\in Y_0,
$$
or, equivalently, 
$((y_i),x)\in \text{\large$\in$}^X$ if $y_0(x)=0$.

A monomorphism $R\to X\times Y$ induces the difference map 
$$
r:Y\to PX=[X,\Omega], \ \ r(y)_i(x)=\min\{n:(\sigma^{n+i}y, \sigma^nx)\in R\},
$$
which realises the universal property of power objects in this case.

By applying \ref{heyting-sub-diff} to $PX\simeq\Sub(\Nsucc\times X)$, we see that
$PX$ is an internal Heyting algebra in $\diff\Set$  with the following operations. 
For $Y=(Y_n), Z=(Z_n)\in PX$,
$$
(Y\land Z)_n=Y_n\land Z_n, \ \ \ (Y\lor Z)_n=Y_n\lor Z_n,
$$ 
inducing the partial order
$$
Y\leq Z \text{ whenever }Y_n\subseteq Z_n\text{ for all }n.
$$
Moreover, 
$$
(Y\Rightarrow Z)_n=\{x\in \forg{X}: \forall i\in\N\ \sigma^i x\in Y_{n+i}\Rightarrow \sigma^i x\in Z_{n+i}\},
$$
and
$$
(\lnot Y)_n=\{ x\in \forg{X}:\forall i\in\N\  \sigma^ix\notin X_{n+i}\}.
$$

\subsection{A standard site for difference sets}\label{std-site-diff-set}

Let $\cD$ be the finite completion of the full subcategory of finitely presented difference sets in $\diff\Set$. 

Then $\cD$ is dense  in $\diff\Set$ for the canonical topology (cf.~\cite[C2.2.1]{elephant2}), since every $U\in\diff\Set$ can be written as a union $U=\bigcup_{i\in I}U_i$ of (maximal) $\sigma_U$-orbits $U_i\in\cD$, and the sieve generated by the family $\{U_i\mono U: i\in I\}$ belongs to the canonical coverage.

Moreover, the restriction $J_\cD$ of the canonical coverage of $\diff\Set$ to $\cD$ is subcanonical, since for $U\in\cD$, presheaves $\cD(\mathord{-},U)$ are $J_\cD$-sheaves.

The Comparison Lemma \cite[C2.2.3]{elephant2} gives that
$$
\diff\Set\simeq\sh(\diff\Set,J_\text{canonical})\simeq \sh(\cD,J_\cD),
$$
whence we conclude that $(\cD,J_\cD)$ is the \emph{standard site} for $\diff\Set$. 

More explicitly, for each $L\in\sh(\cD,J_\cD)$, there exists an object $L_0\in\diff\Set$ such that $L$ is the restriction of $\h_{L_0}$ to $\cD$, i.e., for $U\in\cD$,
$$
L(U)=\diff\Set(U,L_0).
$$ 
Since $\Nsucc\in\cD$, the representing object can be calculated as 
$$
L_0=(L(\Nsucc), L(\sigma_{\Nsucc})).
$$

\subsection{Locales in difference sets}

Let $L$ be a poset in $\diff\Set$. For variables $a{:}L$ and $X{:}PL$, the upper bound and supremum predicates 
\begin{align*}
\text{ub}(a,X)  & \equiv \forall x{:}L\  \ x\in X\Rightarrow x\leq a, \\
\sup(a,X)  & \equiv \text{ub}(a,X) \land \forall b{:}L\  \ \text{ub}(b,X)\Rightarrow a\leq b
\end{align*}
are interpreted as
$$
\real{\text{ub}}=\{(a,X)\in \forg{L}\times \forg{PL}: \forall n\in\N\ \forall x\in X_n\ \ x\leq \sigma^na\}
$$
and
\begin{align*}
\real{\sup}& =
\begin{multlined}[t]\{(a,X)\in\forg{L}\times \forg{PL}: (a,X)\in\real{\text{ub}} \\
 \land \forall m\in\N\ \forall b\in\forg{L}\ \ (b,\sigma^mX)\in\real{\text{ub}} \Rightarrow \sigma^m a\leq b\}
\end{multlined}\\
&=\begin{multlined}[t]
 \{(a,X): \left(\forall n\ \forall x\in X_n\ x\leq \sigma^n a\right)\\
\land\left(\forall m\ \forall b\in\forg{L} (\forall n\ \forall x\in X_{n+m}\ x\leq \sigma^n b)\Rightarrow \sigma^m a\leq b\right)\}.
\end{multlined}\\
\end{align*}

The poset $L$ is complete if
$$
\models \forall X{:}PX\ \exists a{:}L\ \sup(a,X),
$$
and then there exists (\cite[6.11.6]{borceux-3}) a morphism
$$
\bigvee:PL\to L
$$
such that $\models \sup(\bigvee X,X)$.

\begin{example}\label{PX-int-frame-diff}
By the general principle, %\ref{???}, 
for every $X\in\diff\Set$, $PX$ is an internal frame, i.e., an internal Heyting algebra which is complete as an internal poset. Using the above description of suprema, we can directly verify that the join of $Y\in P(PX)$ is given by
$$
(\bigvee Y)_n=\{x\in\forg{X}: \exists Y\in X_n\ x\in Y_0\}.
$$ 
\end{example}

\begin{proposition}\label{locales-in-diff-set}
The category 
$$
\text{\rm Loc}(\diff\Set)
$$
of locales in $\diff\Set$ is equivalent to the category of locales equipped with an open endomorphism.
\end{proposition}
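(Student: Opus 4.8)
The statement is an equivalence of categories $\text{Loc}(\diff\Set)\simeq(\text{Loc with an open endomorphism})$, so the plan is to unwind the definition of an internal locale in $\diff\Set$ using the explicit description of the topos from \ref{topos-diff-sets}, and to recognise the resulting data as precisely an ordinary locale together with a distinguished open map from it to itself. Recall that a locale in a topos $\cE$ is the formal dual of an internal frame, i.e.\ an internal complete Heyting algebra in which finite meets distribute over arbitrary joins; by \ref{heyting-sub-diff} and \ref{PX-int-frame-diff} we already have a good grip on what internal Heyting-algebra structure and internal suprema look like in $\diff\Set$. So the first step is: take a frame $F=(\forg F,\sigma_F)$ in $\diff\Set$, apply the forgetful functor $\forg{\,}:\diff\Set\to\Set$, and check that $\forg F$ is an ordinary frame, with $\sigma_F:\forg F\to\forg F$ a frame homomorphism.

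That $\forg F$ carries a Heyting-algebra structure and that $\sigma_F$ preserves $\wedge,\vee,\leq,\Rightarrow$ is the easy part: all these operations are finite-limit/finite-colimit constructions, hence computed argument-wise in the presheaf category $[\bsi,\Set]$, and a morphism of internal structures is by definition a $\bsi$-equivariant map, i.e.\ a map commuting with $\sigma_F$. The content is in the \emph{infinitary} structure. Here I would use the internal supremum morphism $\bigvee:PF\to F$ from \ref{locales-in-diff-set}'s preamble and the explicit description of $PF$ from \ref{diff-powerset}: a global element of $PF$ is a system $(Y_n)_{n\in\N}$ of subsets of $\forg F$ with $\sigma_F(Y_n)\subseteq Y_{n+1}$, and using the computation of $\real{\sup}$ I would extract from $\bigvee$ an ordinary join operation on $\forg F$ (apply $\bigvee$ to the constant-in-a-suitable-sense system determined by an arbitrary subset $S\subseteq\forg F$, or more precisely to $H_S$-type data), verify it is a least upper bound in $\forg F$, and verify the frame distributive law $a\wedge\bigvee S=\bigvee_{s\in S}(a\wedge s)$ internally — which, since everything is argument-wise, reduces to the same law in $\Set$. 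The equivariance of $\bigvee$ then says exactly that $\sigma_F$ commutes with joins, so $\sigma_F$ is a frame homomorphism. Conversely, given an ordinary frame $L$ with a frame endomorphism $\tau$, set $F=(L,\tau)\in[\bsi,\Set]$; the argument-wise operations make it an internal Heyting algebra, and one must check it is internally complete, i.e.\ construct $\bigvee:PF\to F$ from the system-of-subsets description — sending $(Y_n)_n$ to the element $n\mapsto\bigvee_L\{x:x\in Y_0\}$ read off degree-wise, using $\tau(Y_n)\subseteq Y_{n+1}$ and $\tau$ a frame map to see this is $\bsi$-equivariant and satisfies $\models\sup(\bigvee X,X)$ by the formula for $\real{\sup}$. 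The open-map condition on $\tau$ (i.e.\ that $\tau^*:L\to L$ has a left adjoint $\tau_!$ satisfying Frobenius reciprocity) is what corresponds, under locale duality, to $\sigma_F$ being a morphism of \emph{internal frames} in the appropriate direction; I would match this up by noting that a morphism of locales in $\diff\Set$ is an internal frame homomorphism the other way, and an internal frame homomorphism out of $\forg F$ that is equivariant for $\sigma_F$ unwinds to an open map of ordinary locales. Finally one checks the two constructions are mutually inverse (immediate on objects since $\forg{\,}$ and the pairing $(L,\tau)\mapsto(L,\tau)$ visibly undo each other) and functorial: a morphism of internal locales is a $\bsi$-equivariant internal frame homomorphism, which is exactly a frame homomorphism commuting with the two endomorphisms, i.e.\ a morphism in the target $2$-category.

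The main obstacle I expect is \emph{bookkeeping around the power object and the meaning of ``open endomorphism''}: the internal $PF$ is genuinely larger and more complicated than the external powerset (systems $(Y_n)$ rather than plain subsets), so care is needed to see that internal completeness of $F$ is equivalent to ordinary completeness of $\forg F$ rather than to something stronger — one must check that the extra ``room'' in $PF$ does not impose extra conditions, and here the subobject description $PX\simeq\Sub(\Nsucc\times X)$ together with $\sigma_F(Y_n)\subseteq Y_{n+1}$ is the key, since every such system is determined by $\sigma_F$ acting on genuine subsets. The second delicate point is pinning down precisely why ``$\sigma_F$ is a frame homomorphism'' on the frame side translates to ``$\sigma$ is an open map'' on the locale side: this is just the definition of open geometric-morphism-like structure for locales (an open locale map $f$ is one whose inverse image frame map $f^*$ has a left adjoint satisfying Frobenius), and since $\diff\Set$-internal frame maps are computed argument-wise, a $\bsi$-equivariant frame homomorphism $\forg F\to\forg F$ is nothing but a frame map commuting with $\sigma$ — but I would want to state this cleanly, perhaps by invoking directly that $\text{Loc}(\cE)=\text{Frm}(\cE)^{\op}$ and that $\text{Frm}([\bsi,\Set])\simeq[\bsi,\text{Frm}(\Set)]$ by the argument-wise principle (an instance of the reasoning in \ref{diff-topos} and of $\bbT(\hC)\simeq[\cC^\op,\bbT(\Set)]$ for algebraic-type theories, with the caveat that ``frame'' is not finitary — so the verification that internal frames in $[\bsi,\Set]$ are argument-wise frames with an equivariant structure map genuinely does need the explicit $PX$ computation and cannot be quoted wholesale).
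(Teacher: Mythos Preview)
There is a genuine gap: you have misidentified where the ``open'' condition comes from and what it corresponds to. You claim that internal completeness of $(L_0,\sigma_F)$ amounts to ordinary completeness of $L_0$ together with $\sigma_F$ preserving joins, and that this join-preservation is what ``open'' means. Both halves are wrong. A locale map is open when its inverse-image frame map $\sigma^*$ has a \emph{left adjoint} $\sigma_!$ satisfying Frobenius reciprocity $\sigma_!(\sigma^*u\wedge v)=u\wedge\sigma_!v$; this is strictly stronger than $\sigma^*$ being a frame homomorphism, which is automatic. The extra room in $PL$ \emph{does} impose exactly this extra condition: an element of $PL$ is a sequence $(Y_n)$ with $\sigma^*(Y_n)\subseteq Y_{n+1}$, and the $Y_n$ can be strictly larger than $\sigma^{*n}(Y_0)$, so your proposed supremum $\bigvee_L Y_0$ fails to be an internal upper bound already at level $n=1$ whenever $Y_1\supsetneq\sigma^*(Y_0)$.

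The paper's argument handles this correctly. For the forward direction it invokes \cite[C1.6.10]{elephant2} applied to the standard site for $\diff\Set$, which says precisely that an internal frame yields a frame $L_0=L(\Nsucc)$ whose transition map $\sigma^*=L(\sigma_{\Nsucc})$ admits a left adjoint $\sigma_!$ satisfying Frobenius. For the converse, given $\sigma_!\dashv\sigma^*$ with Frobenius, the internal supremum of $(Y_n)\in PL$ is
\[
a=\bigvee_{n\in\N}\bigvee\{\sigma_!^n x: x\in Y_n\},
\]
and one checks by hand, using the adjunction for the upper-bound and least-upper-bound clauses and Frobenius for the frame distributive law, that this works. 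Your plan never produces $\sigma_!$ and cannot, since you only assume $\sigma^*$ is a frame map; and your remarks about morphisms are beside the point, since a map of locales-with-open-endomorphism is just a locale map commuting with the endomorphisms, not itself open.
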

% CHECK: whether BC cond/Frob reciprocity is included in defn.
\begin{proof}
Let $X$ be a locale in $\diff\Set$, with associated internal frame $L=\cO(X)\in\text{\rm Loc}(\diff\Set)$. We apply \cite[C1.6.10]{elephant2} to $L$, viewed as a sheaf on the standard site $(\cD,J_\cD)$ for $\diff\Set$ discussed in \ref{std-site-diff-set}, to obtain a frame $L_0=L(\Nsucc)$, equipped with an endomorphism $\sigma^*=L(\sigma_{\Nsucc})$ of complete lattices admitting a left adjoint $\sigma_!$ and satisfying the Frobenius reciprocity
$$
\sigma_!(\sigma^*u\land v)=u\land \sigma_!v,
$$
for $u,v\in L_0$. By definition, this means that the corresponding locale map $\sigma:X\to X$ is open. 

Conversely, suppose that we are given a frame $L_0$ and an adjoint pair $\sigma_!\dashv\sigma^*$ satisfying Frobenius reciprocity, as above. Let us show that $L=(L_0,\sigma^*)$ is an internal frame in $\diff\Set$. 

To verify internal completeness, let $F=(F_n)\in PL$, so that $F_n\subseteq L_0$ with $\sigma^*F_n\subseteq F_{n+1}$, and let us show that
$$
a=\bigvee_{n\in\N}\bigvee\exists(\sigma_!^n)F_n=\bigvee_{n\in\N}\bigvee\{\sigma_!^n x:x\in F_n\}
$$ 
is the supremum of $F$ in the sense discussed above. Clearly, for every $n$ and every $x\in F_n$, $\sigma_!^nx\leq a$, so, by adjunction, $x\leq {\sigma^*}^na$. Moreover, suppose $b\in L_0$ and $i\in\N$ are such that for every $n$ and every $x\in F_{n+i}$, $x\leq {\sigma^*}^nb$. By adjunction, for every $n$, $\bigvee\{\sigma_!^n x: x\in F_{n+i}\}\leq b$, and, applying $\sigma_!^i$ to both sides, we obtain that 
$$
\bigvee_{n\in\N}\bigvee\{\sigma_!^{n+i}x:x\in F_{n+i}\}=\bigvee_{n\in\N}\bigvee\exists(\sigma_!^{n+i})F_{n+i}\leq \sigma_!^i b.
$$
On the other hand, using the fact that $\sigma^*(F_n)\subseteq F_{n+1}$, we have that 
$\bigvee\exists(\sigma_!^n)F_n\leq \bigvee\exists(\sigma_!^{n+1})F_{n+1}$ for all $n$, so the left hand side in the above inequality equals $a$, whence ${\sigma^*}^i a\leq b$, as required.

This complete lattice is a frame since, using Frobenius reciprocity,
\begin{multline*}
b\land\bigvee F=b\land\bigvee_{n\in\N}\bigvee\{\sigma_!^n x: x\in F_n\}=
\bigvee_{n\in\N}\bigvee\{b\land\sigma_!^n x: x\in F_n\}\\
=\bigvee_{n\in\N}\bigvee\{\sigma_!^n({\sigma^*}^n b\land x): x\in F_n\}
=\bigvee(b\land F),
\end{multline*}
where $(b\land F)_n=\{{\sigma^*}^n b\land x: x\in F_n\}$.
\end{proof}

\begin{example}
Let $L$ be a frame with an endomorphism $\sigma$. Then $(L,\sigma)$ is a lattice in $\diff\Set$, and we can complete it to a frame $\tilde{L}$ in $\diff\Set$ as follows. Let 
$$
\tilde{L}=\{(u_i)_{i\in\N}: u_i\in L, \sigma^*u_i\leq u_{i+1}\}.
$$
We define maps $s, r:\tilde{L}\to\tilde{L}$ by
$$
s(u_0,u_1,\ldots)=(u_1,u_2,\ldots), \ \ \text{ and }\ \ r(u_0,u_1,\ldots)=(\bot_L,u_0,u_1,\ldots).
$$
Then $\tilde{L}$ is a frame with coordinate-wise operations, endowed with an endomorphism $s$ admitting a left adjoint $r$ satisfying Frobenius reciprocity, so  \ref{locales-in-diff-set} shows that it is an internal frame in $\diff\Set$. 

It can be verified by a combination of techniques used in \ref{PX-int-frame-diff} and \ref{locales-in-diff-set} that the join map
$
\bigvee:P\tilde{L}\to\tilde{L}
$
is given, for $F=(F_n)\in  P\tilde{L}$, by the formula
$$
\left(\bigvee F\right)_n=\bigvee\{u_0: u\in F_n\}.
$$
\end{example}

\subsection{Points of the topos of difference sets}\label{points-diff-set}

Points of $\diff\Set$ correspond to flat functors $\bsi\to\Set$ by Diaconescu's theorem \ref{tors-diacon}, 
$$
[\Set,\diff\Set]=[\Set,[\bsi^\op,\Set]]\simeq \text{\rm Flat}(\bsi,\Set).
$$
A functor $E:\bsi\to\Set$ is flat if and only if 
%\begin{enumerate}
%\item $\forg{E}=E(o)\neq\empty$;
%\item for any $y,z\in\forg{E}$, there exist $n,m\in\N$ and $w\in\forg{E}$ such that $\sigma^nw=y$ and $\sigma^m w=z$;
%\item for all $m,n\in\N$ and $y\in\forg{E}$ with $\sigma^my=\sigma^ny$, there exist $r\in\N$ and $z\in\forg{E}$ such that $m+r=n+r$ and $\sigma^r z
%\end{enumerate}
it corresponds to a free and transitive $\N$-action in the sense that $\forg{E}\neq\emptyset$,  for all $y\in\forg{E}$, $\sigma^n y=\sigma^m y$ implies $m=n$ and for all $y,z\in\forg{E}$ there exists an $n$ with
$z=\sigma^n y$ or $y=\sigma^n z$. Hence, the only flat functors correspond to difference sets
$$
\Nsucc\ \ \text{ and }\ \ \Zsucc=(\Z,n\mapsto n+1). 
$$

The point 
$$
\tau(\Nsucc):\Set\to \diff\Set=[\bsi^\op,\Set]
$$
associated with $\Nsucc$ is the pair of adjoint functors
$$
\tau(\Nsucc)^*=\forg{\,}:\diff\Set\to\Set, \ \ \ \tau(\Nsucc)_*=\psig{\,}:\Set\to \diff\Set
$$
encountered in \ref{forget-and-adj}. Indeed, 
$\tau(\Nsucc)^*=\mathord{-}\otimes_\bsi\Nsucc$, where, for $X\in\diff\Set$, $X\otimes_\bsi\Nsucc$ is the set $\{x\otimes n: x\in\forg{X},n\in\N\}$ modulo the relation $\sigma x\otimes n=x\otimes(n+1)$, which clearly identifies with $\forg{X}$. On the other hand, $\tau(\Nsucc)_*=\uHom(\Nsucc,\mathord{-})$, and we see that, for $S\in\Set$, 
$\uHom(\Nsucc,S)=\psig{S}$, i.e., the set $\uHom(\Nsucc,S)(o)=\Hom(\Nsucc(o),S)=\Hom(\N,S)=S^\N$ equipped with the left shift.

We note that $\diff\Set$ has enough points since $\tau(\Nsucc)^*=\forg{\,}$ is conservative.

The point 
$$
\tau(\Zsucc):\Set\to \diff\Set=[\bsi^\op,\Set]
$$
associated with $\Zsucc$ is described as follows. Its inverse image is the functor
$$
\tau(\Zsucc)^*:\diff\Set\to\Set, \ \  X\mapsto \varinjlim(\cdots\xrightarrow{\sigma}\forg{X}\xrightarrow{\sigma} \forg{X}\xrightarrow{\sigma}\cdots).%\ \ \ \tau(\Nsucc)_*=\psig{\,}:\Set\to \diff\Set
$$
Indeed, by definition, $\tau(\Zsucc)^*(X)=X\otimes_\bsi\Zsucc$ is the set $\{x\otimes n: x\in\forg{X},n\in\Z\}$ modulo the relation $\sigma x\otimes n=x\otimes(n+1)$, which identifies with the colimit of the diagram consisting of copies of $\forg{X}$ indexed by $\Z$ and connected by maps $\sigma_X$. 

On the other hand, its direct image functor is given by
$$\tau(\Zsucc)_*(S)=\uHom(\Zsucc,S)=S^\Z,$$ 
 equipped with the (invertible) left shift.

\subsection{Categories internal in difference sets}\label{int-cat-diff-set}

%Using \ref{ext-yoneda}, there is an equivalence of categories %SOMETHING WRONG HERE

There is an equivalence of 2-categories
$$
\cat(\diff\Set) %\to [(\diff\Set)^\op, \cat]\simeq [[\bsi,\Set]^\op,\cat]
\simeq [\bsi,\cat]=\diff\cat.
$$
In order to verify this, let $\bbD=(D_0,D_1)\in \cat(\diff\Set)$. Let $\cD$ be a category with objects $\forg{D_0}$ and arrows $\forg{D_1}$. The action of $\sigma_{D_0}$ on objects, together with the action of $\sigma_{D_1}$ on arrows give rise to an endofunctor 
$
\sigma_\cD:\cD\to \cD
$,
so we obtain an object
$$
(\cD,\sigma_\cD)\in\diff\cat.
$$
An internal functor $f:\C\to \bbD$ yields a commutative diagram of functors
 \begin{center}
 \begin{tikzpicture} 
\matrix(m)[matrix of math nodes, row sep=2em, column sep=2em, text height=1.5ex, text depth=0.25ex]
 {
 |(1)|{\cC}		& |(2)|{\cD} 	\\
 |(l1)|{\cC}		& |(l2)|{\cD} 	\\
 }; 
\path[->,font=\scriptsize,>=to, thin]
(1) edge node[above]{$f$} (2) edge node[left]{$\sigma_\cC$}   (l1)
(2) edge node[right]{$\sigma_\cD$} (l2) 
(l1) edge  node[above]{$f$} (l2);
\end{tikzpicture}
\end{center}
while an internal natural transformation $\alpha$ between two internal functors $f,g:\C\to\bbD$ yields a natural transformation $\alpha$ between functors $f,g:\cC\to\cD$ satisfying
$$
\alpha_{\sigma_\cC(X)}=\sigma_\cD(\alpha_X),
$$
for every $X\in\cC$.

\subsection{Internal presheaves in difference sets}\label{internal-diff-presh}

Let $\cS=\diff\Set=[\bsi,\Set]$, let $\C\in \cat(\cS)$.
We can  describe the category $[\C^\op,\cS]$ of internal $\cS$-valued presheaves on $\cD$ in terms of the difference category $(\cC,\sigma_\cC)\in\diff\cat$ associated to $\cC$ as in \ref{int-cat-diff-set} as follows.

Unravelling the definition of internal presheaves from \ref{int-presh}, we see that an $F\in[\C^\op,\cS]$ corresponds to an \emph{$\sigma_\cC$-equivariant} presheaf on $\cC$, i.e., an object $\bF\in [\cC^\op,\Set]$ equipped with a natural transformation
$$
\sigma_\bF:\bF\to \sigma_\cC^*\bF=\bF\circ \sigma_\cC.
$$
% i.e., for every $u:X\to Y$ in $\cC$, the diagram
% \begin{center}
% \begin{tikzpicture} 
%\matrix(m)[matrix of math nodes, row sep=2em, column sep=2em, text height=1.5ex, text depth=0.25ex]
% {
% |(1)|{\bF(Y)}		& |(2)|{\bF(\sigma_\cC Y)} 	\\
% |(l1)|{\bF(X)}		& |(l2)|{\bF(\sigma_\cC X)} 	\\
% }; 
%\path[->,font=\scriptsize,>=to, thin]
%(1) edge node[above]{$\sigma_{\bF,Y}$} (2) edge node[left]{$\bF(u)$}   (l1)
%(2) edge node[right]{$\bF(\sigma_\cC u)$} (l2) 
%(l1) edge  node[above]{$\sigma_{\bF,X}$} (l2);
%\end{tikzpicture}
%\end{center}
%commutes.
We have thus established an equivalence
$$
[\C^\op,\cS]\simeq [\cC^\op,\Set]^{\sigma_\cC},
$$
where the latter denotes the category of $\sigma_\cC$-equivariant presheaves on $\cC$.

Note that, for $X\in \cC$ (i.e., $X\in\forg{C_0}$) the value of the presheaf $\bF$ corresponding to the internal presheaf $F$ on $X$ is the fibre
$$
\bF(X)=F_{0,X}=\gamma_0^{-1}(\{X\}),
$$ 
and, for a morphism $X'\xrightarrow{f}X$ in $\cC$ (i.e., $f\in \forg{C_1}$), we have
$$
\bF(f)=e_F(f,\mathord{-}):\bF(X)\to\bF(X').
$$

\subsection{Topos of internal presheaves in difference sets}\label{top-int-psh-diff}

With the notation of \ref{internal-diff-presh}, we describe the elementary topos structure of the Grothendieck topos
$$
[\C^\op,\cS].
$$
For $F,G\in [\C^\op,\cS]$, the internal hom
$$
[F,G]\in [\C^\op,\cS]
$$
can be worked out from first principles as in \ref{comon-coalg}, but we proceed as follows.

The underlying object $U[F,G]\in \cS_{\ov C_0}$ can be determined through its fibres over $c\in\forg{C_0}$ by the relation
$$
(U[F,G])_c\simeq \cS_{\ov C_0}(H_c,U[F,G])\simeq \Hom(R(H_c),[F,G])\simeq \Hom(R(H_c)\times F,G),
$$
where $H_c\mono C_0^*\Nsucc$ is given by $H_c=\{(n,\sigma^nc):n\in \N\}$. 

By definition, % \ref{???}, 
$R(H_c)=C_1\times_{C_0}H_c\xrightarrow{d_0\pi_1}C_0$, hence
$$
R(H_c)=\{(f,n,\sigma^nc): d_1f=\sigma^nc\},
$$
with evaluation map 
$$
e_{R(H_c)}:C_1\times_{C_0}R(H_c)\to R(H_c), \ \ \ (g,f,n,\sigma^nc)\mapsto (f\circ g,n,\sigma^nc).
$$
Let 
$$
P=R(H_c)\times_{C_0}F_0=\{(f,n,\sigma^nc,x): d_1f=\sigma^nc, d_0f=\gamma_F(x)\},
$$
with evaluation
$$
e_P:C_1\times_{C_0}P\to P, \ \ \ (g,f,n,\sigma^nc,x)\mapsto (f\circ g, n,\sigma^nc,e_F(g,x)).
$$
Thus, an element $\varphi\in \Hom(R(H_c)\times F,G)$ is a difference map $P\to Y$ in $\cS_{\ov C_0}$ such that
$$
e_G\circ d_1^*\varphi=\varphi\circ e_P.
$$
More explicitly, for $(g,f,n,\sigma^n c,x)\in C_1\times_{C_0}P$,
$$
e_G(g,\varphi(f,n,\sigma^nc,x)=\varphi(f\circ g,n,\sigma^nc,e_F(g,x)),
$$
i.e., 
$$
\bG(g)(\varphi(f,n,\sigma^nc,x))=\varphi(f\circ g,n,\sigma^nc,\bF(g)(x)).
$$
Consider the maps in $\cS_{\ov C_0}$ defined by
$$
\varphi_n: C_{1,\sigma^nc}\times_{C_0}X\to Y, \ \ \ \varphi_n(f,x)=\varphi(f,n,\sigma^nc,x),
$$
where $C_{1,\sigma^nc}=\{f\in C_1: d_1f=\sigma^nc\}\xrightarrow{d_0}C_0$. The compatibility of $\varphi$ with evaluation maps yields that
$$
\bG(g)\varphi_n(f,x)=\varphi_n(f\circ g,\bF(g)(x)),
$$
so $\varphi_n$ can be viewed as a natural transformation
$$
\varphi_n: \h_{\sigma_\cC^n c}\times \bF\to \bG
$$
of presheaves on $\cC$. Moreover, the fact that $\varphi$ is a difference map yields that
$$
\varphi_{n+1}(\sigma_{C_1} f,\sigma_{F_0} x)=\sigma_{G_0}\varphi_n(f,x),
$$
so we obtain a commutative diagram
 \begin{center}
 \begin{tikzpicture} 
\matrix(m)[matrix of math nodes, row sep=2em, column sep=3em, text height=1.5ex, text depth=0.25ex]
 {
 |(1)|{\h_{\sigma_\cC^nc}\times\bF}		& |(2)|{\bG} 	\\
 |(l1)|{\sigma_\cC^*(\h_{\sigma_\cC^{n+1}c})\times\sigma_\cC^*(\bF)}		& |(l2)|{\sigma_{\cC}^*\bG} 	\\
 }; 
\path[->,font=\scriptsize,>=to, thin]
(1) edge node[above]{$\varphi_n$} (2) edge node[left]{$\sigma_n\times\sigma_\bF$}   (l1)
(2) edge node[right]{$\sigma_\bG$} (l2) 
(l1) edge  node[above]{$\sigma_\cC^*\varphi_{n+1}$} (l2);
\end{tikzpicture}
\end{center}
where $\sigma_{n,u}:\h_{\sigma^nc}(u)\to \h_{\sigma^{n+1}c}(\sigma_\cC u)$ is the restriction of $\sigma_{C_1}$ to $\h_{\sigma^nc}(u)=\cC(u,\sigma^n c)$, which maps to $\cC(\sigma u,\sigma^{n+1}c)=\h_{\sigma^{n+1}c}(\sigma_\cC u)$.

If we write $\bar{\varphi}_n$ for the morphisms obtained from $\varphi_n$ by adjunction, the above is equivalent to the commutativity of the diagram
 \begin{center}
 \begin{tikzpicture} 
\matrix(m)[matrix of math nodes, row sep=2em, column sep=3em, text height=1.5ex, text depth=0.25ex]
 {
 |(1)|{\h_{\sigma_\cC^nc}}		& |(2)|{[\bF,\bG]} 	\\
 &  |(m2)|{[\bF,\sigma^*\bG]} \\
 |(l1)|{\sigma^*(\h_{\sigma_\cC^{n+1}c})}& |(l2)|{[\sigma^*\bF,\sigma^*\bG]}  	\\
 }; 
\path[->,font=\scriptsize,>=to, thin]
(1) edge node[above]{$\bar{\varphi}_n$} (2) edge node[left]{$\sigma_n$}   (l1)
(2) edge node[right]{$[\bF,\sigma_\bG]$} (m2) 
(l2) edge node[right]{$[\sigma_\bF,\sigma^*\bG]$} (m2) 
(l1) edge  node[above]{$\overline{\sigma_\cC^*\varphi_{n+1}}$} (l2);
\end{tikzpicture}
\end{center}
where the bottom arrow decomposes as
$$
\sigma^*(\h_{\sigma_\cC^{n+1}c})\xrightarrow{\sigma^*\bar{\varphi}_{n+1}}\sigma^*[\bF,\bG]\xrightarrow{\tilde{\sigma}^*}[\sigma^*\bF,\sigma^*\bG],
$$
and $\tilde{\sigma}^*$ is the canonical natural transformation defined as follows.
 For $u\in \cC$, 
 $$
 \tilde{\sigma}^*_u:\Hom(\bF_{\sigma u},\bG_{\sigma u})\to \Hom((\sigma^*\bF)_u,(\sigma^*\bG)_u)
$$
is the map taking the natural transformation $\psi$ to the natural transformation $\tilde{\sigma}^*(\psi)$ whose component at $u'\xrightarrow{f} u$ is given by
$$
\tilde{\sigma}^*(\psi)_f=\psi_{\sigma f}:\bF(\sigma u')\to \bG(\sigma u').
$$

To summarise, for $c\in \forg{C_0}$, i.e., an object of $\cC$, modulo the identification 
$[\cC^\op,\Set](\h_{\sigma_\cC^n c}\times\bF,\bG) \simeq [\bF,\bG](\sigma_\cC^nc)$, the internal hom is given by
\begin{multline*}
[F,G]_c=[(\bF,\sigma_\bF),(\bG,\sigma_\bG)](c)\\
\simeq \left\{(\varphi_n)_n\in 
%\prod_{n\in\N}[\cC^\op,\Set](\h_{\sigma_\cC^n c}\times\bF,\bG) \simeq 
\prod_{n\in\N}[\bF,\bG](\sigma_\cC^nc) : 
 \sigma_G\circ \varphi_n=\sigma_\cC^*\varphi_{n+1}\circ (\sigma_n\times\sigma_\bF)\text{ for }n\in\N\right\}\\
\simeq \left\{(\bar{\varphi}_n)_n\in  \prod_{n\in\N}[\cC^\op,\Set](\h_{\sigma_\cC^n c},[\bF,\bG]):
[\bF,\sigma_\bG]\circ\bar{\varphi}_n=[\sigma_\bF,\sigma^*G]\circ\tilde{\sigma}^*\circ\sigma^*\bar{\varphi}_{n+1}\circ \sigma_n\right\}.
\end{multline*}

In order to determine the subobject classifier 
$$
\Omega=\Omega_{[\C^\op,\cS]},
$$
we observe that its fibre over an object $c\in\forg{C_0}$ can be calculated from
$$
(U\Omega)_c\simeq \cS_{\ov C_0}(H_c,U(\Omega))\simeq\Hom(R(H_c),\Omega)\simeq\Sub(R(H_c)),
$$
so it suffices to identify the subpresheaves of $R(H_c)$. Using the above description of $R(H_c)$, let $S\mono R(H_c)$ be a subobject, and let 
$$
S_n=\{f\in C_1: (f,n,\sigma^n c)\in S\}.
$$
This is a set of morphisms of $\cC$ with target $\sigma^n c$, and the requirement that $S$ must be closed under the action of $R(H_c)$ implies that $S_n$ is a sieve on $\sigma^nc$ in $\cC$.

Moreover, the difference structure of $S$ yields that
$$
\sigma_\cC(S_n)\subseteq S_{n+1}.
$$

Hence, the subobject classifier is the internal presheaf $\Omega$ on $\C$, or equivalently, 
the $\sigma_\cC$-equivariant presheaf $(\mathbf{\Omega},\sigma_\mathbf{\Omega})$,
determined by 
$$
\Omega_c=\mathbf{\Omega}(c)\simeq\left\{ (S_n)_n: S_n \text{ is a sieve on }\sigma_\cC^nc \text{ in }\cC, \text{ and }\sigma_\cC(S_n)\subseteq S_{n+1}\right\},
$$
with
$$
\sigma_{\mathbf{\Omega}}:\mathbf{\Omega}\to \sigma_\cC^*\mathbf{\Omega}
$$
acting on the sequences $(S_n)$ as the shift. 

We invite the reader to obtain the same result from the general description given in \ref{comon-coalg}, and we give another calculation in \ref{int-site-diff-set} using the description of $\Omega$ as the object of sieves in the internal logic of $\cS$.

\subsection{Externalising internal presheaves in difference sets}

In \ref{semidir} we constructed the category
$$
\bsi\rtimes \C\simeq \bsi\rtimes\cC
$$
with the same objects as $\cC$, and morphisms $X\to Y$ are pairs $(n,f)$, consisting of an $n\in\N$, and a $\cC$-morphism
$$
f:X\to \sigma_\cC^n Y,
$$
and the composite of $X\xrightarrow{(n,f)}Y\xrightarrow{(m,g)}Z$ is the pair $(m+n,\sigma_\cC^n(g)\circ f)$.

We obtain another explicit description on internal $\cS$-valued presheaves on $\C$ as ordinary presheaves on $\bsi\rtimes\cC$,
$$
[\C^\op,\diff\Set]\simeq [(\bsi\rtimes\cC)^\op,\Set].
$$

\subsection{Presheaves and internal presheaves in difference sets}\label{psh-vs-int-diff-psh}

With notation from \ref{internal-diff-presh}, suppose that there exist functors $i_\C$ and $r_\C$ that fit in the diagram
$$
\begin{tikzcd}[column sep=normal, ampersand replacement=\&] %,every arrow/.append style={shift left}]
{\cC}\arrow[r,yshift=2pt,"i_\C"] \arrow[d] \&{\bsi\rtimes\C} \arrow[l,yshift=-2pt,"r_\C"]\arrow[d]\\
{1}\arrow[r,yshift=2pt,"i"]  \&{\bsi} \arrow[l,yshift=-2pt,"r"]
\end{tikzcd}
$$
and satisfy $r_\C\circ i_\C=\id_\cC$. By \ref{funct-int-presh} and \ref{internal-diff-presh}, we obtain a diagram of essential geometric morphisms
 $$
\begin{tikzcd}[column sep=normal, ampersand replacement=\&] %,every arrow/.append style={shift left}]
{\hC}\arrow[r,yshift=2pt,"i_\C"] \arrow[d] \&{\widehat{\bsi\rtimes\C}} \arrow[l,yshift=-2pt,"r_\C"]\arrow[d]
\arrow[r,"\sim"]\& {[\C^\op,\diff\Set]} \arrow[dl] \\
{\Set}\arrow[r,yshift=2pt,"i"]  \&{\diff\Set} \arrow[l,yshift=-2pt,"r"]
\end{tikzcd}
$$
with $i_\C^*\circ r_\C^*\simeq \id$, $r_{\C!}\circ i_{\C!}\simeq\id$, $r_{\C*}\circ i_{\C*}\simeq\id$, while $i$ and $r$ conform to the general principles from \ref{mon-of-ops}, made more explicit for difference sets in \ref{cat--diff-cat}.

\subsection{Internal difference limits and colimits}\label{diff-int-lim-colim}

Let $(\cC,\sigma_\cC)$ be the category with an endofunctor associated to an internal category $\C$ in $\diff\Set$ via \ref{int-cat-diff-set}. We discuss internal limits and colimits over $\C$, recalling the general principles from \ref{int-lim-colim}.

The constant internal presheaf functor 
$$
\C^*:\diff\Set\to [\C^\op,\diff\Set]
$$
is described as follows. Given an object $X\in\diff\Set$,
the $\cC$-equivariant presheaf $(\bX,\sigma_\bX)$ corresponding to $\C^*X$ via \ref{internal-diff-presh} is the constant presheaf on $\cC$,
$$
\bX=\cC^*\forg{X}, \ \ \  \bX(c)=\forg{X}
$$
for $c\in\cC$, together with the natural transformation $\sigma_\bX:\bX\to \sigma_\cC^*\bX=\bX$ given by 
$$
\sigma_{\bX, c}=\sigma_X:\forg{X}\to\forg{X}.
$$
%for any $c\in \cC$. 

The internal colimit functor 
$$\textstyle{\varinjlim_\C}:[\C^\op,\diff\Set]\to\diff\Set$$ is left adjoint to $\C^*$. 
For an internal presheaf $F\in [\C^\op,\diff\Set]$, we have
$$
\textstyle{\varinjlim_\C F}=
\coeq(\begin{tikzcd}[cramped,sep=small]
F_1 \ar[yshift=2pt]{r}{e} \ar[yshift=-2pt]{r}[swap]{\pi_2} & F_0
\end{tikzcd}).
$$
Writing $(\bF,\sigma_\bF)$ for the $\sigma_\cC$-equivariant presheaf corresponding to $F$ via \ref{internal-diff-presh}, and taking into account that the above coequaliser in $\diff\Set$ is calculated as $\coeq(\begin{tikzcd}[cramped,sep=small]
\forg{F_1} \ar[yshift=2pt]{r}{\forg{e}} \ar[yshift=-2pt]{r}[swap]{\forg{\pi_2}} & \forg{F_0}
\end{tikzcd})$, with a naturally induced difference operator, we see that
$$
\textstyle{\varinjlim_\C F}=\varinjlim_\cC\bF,
$$
together with a difference operator induced by $\sigma_\cC$ and $\sigma_\bF$ by naturality of colimits.

Using the fact that the internal limit functor
$$\textstyle{\varprojlim_\C}:[\C^\op,\diff\Set]\to\diff\Set$$ is right adjoint to $\C^*$, we compute
$$
\forg{\textstyle{\varprojlim_\C F}}\simeq \diff\Set(\Nsucc,\textstyle{\varprojlim_\C F})\simeq[\C^\op,\diff\Set](\C^*\Nsucc,F),
$$
and the latter set consists of morphisms
$$
u\in\diff\Set_{\ov C_0}(C_0\times\Nsucc,F_0), \text{ such that } e\circ d_1^* u=u\circ e.
$$
More explicitly, the functions $u_n=u(\mathord{-},n)\in\Hom_{\ov\forg{C_0}}(\forg{C_0},\forg{F_0})$ satisfy $u_n(c)\in\bF(c)$, $u_n(d_0f)=e(f,u_n(d_1f))=\bF(f)(u_n(d_1f))$ and
$u_{n+1}(\sigma_\cC c)=\sigma_\bF(u_n(c))$. Thus, we can identify them with elements $u_n\in \hC(\be,\bF)\simeq\Gamma(\bF)$, where $\be=\cC^*e$ is the terminal presheaf on $\cC$ and $\Gamma:\hC\to\Set$ denotes the classical global sections functor.
Hence
$$
\textstyle{\varprojlim_\C F}=\{ (u_n)\in \Gamma(\bF)^\N\simeq \hC(\be,\bF)^\N: u_{n+1}\circ\sigma_\cC=\sigma_\bF\circ u_n\},
$$
with the difference structure provided by the usual shift. 

If $\cC$ has a terminal object $e$, then $\Gamma(\bF)\simeq\bF(e)$, and the above expression simplifies to
$$
\textstyle{\varprojlim_\C F}=\{ (u_n)\in \bF(e)^\N: \bF(\sigma_\cC e\to e)(u_{n+1})=\sigma_\bF(u_n)\}.
$$

\subsection{Internal sites in difference sets}\label{int-site-diff-set}

Let $(\cC,\sigma_\cC)$ be a category with an endofunctor associated with a category  $\C$ internal in $\diff\Set$ as in \ref{int-cat-diff-set}.

Interpreting the formula from \ref{int-sites}, we find that the fibre of the object of sieves of $\C$ over an object $b\in \forg{C_0}$ is
\begin{align*}
\mathop{\rm Sv}(\C)_b & = \{S\in \forg{P(d_1)_b}\, | \, \forall n, \forall f,f'\in\forg{C_1}, \ f\in \sigma^nS\land d_1(f')=d_0(f)\Rightarrow f\circ f'\in \sigma^n S\}\\
&=\{ (S_n)\in \forg{P(d_1)_b}\, | \, \forall n, \forall f,f'\in\forg{C_1}, \ f\in S_n\land d_1(f')=d_0(f)\Rightarrow f\circ f'\in S_n\}\\
&= \{(S_n)\in \forg{PC_1}\, | \, \text{for all }n, S_n \text{ is a sieve on }\sigma^n b\text{ in }\cC\}.
\end{align*} 

An internal coverage is a span of $\diff\Set$-morphisms
$$
C_0 \xleftarrow{b} T \xrightarrow{c} \mathop{\rm Sv}(\C)\mono PC_1
$$
such that, for all $t\in\forg{T}$, $c(t)_0$ is a sieve on $b(t)$, satisfying the following difference analogue of axiom (C). If we form the object
$$
Q=\{(t',f,t)\in T\times_{C_0}C_1\times_{C_0}T\, | \, \forall n, \forall f'\in\forg{C_1}, f'\in c(t')_n \Rightarrow \sigma^n f\circ f'\in c(t)_n\},
$$
then the difference morphism
$$
Q\xrightarrow{\pi_{23}} C_1\times_{C_0}T
$$
is an epimorphism. Rewritten in a more familiar form, we require that whenever $f\in\forg{C_1}$ and $t\in \forg{T}$ satisfy $d_1(f)=b(t)$, if we consider
$f^*c(t)\in\forg{PC_1}$ given by
$$
(f^*c(t))_n=\{f'\in\forg{C_1}: \sigma^n f\circ f'\in c(t)_n\},
$$
then there exists $t'\in\forg{T}$ such that $c(t')\leq f^* c(t)$, i.e., for every $n$ and every $f'\in c(t')_n$, we have that $\sigma^n f\circ f'\in c(t)_n$.

\subsection{Internal sheaves in difference sets}\label{int-sh-diff-set}

Let $(\C,T)$ be an internal site in $\diff\Set$, as described in \ref{int-site-diff-set}, and let $(\cF,\sigma_F)$ be the data associated with an internal presheaf $F\in[\C^\op,\diff\Set]$ as in 
\ref{internal-diff-presh}.

Let $s=(s_n)\in [d_0,\gamma_0]$ be a system of sections of $F$, and let $S=(S_n)\in \mathop{\rm Sv}(\C)$ be a sieve. By interpreting the formula $\mathop{\rm comp}_F$ from \ref{int-sh} in difference sets, we see that $s$ is a system of sections of $F$ compatible with $S$, provided
$$
\forall n, \forall (f,g)\in\forg{C_2}, \ f\in S_n\Rightarrow s_n(f\circ g)=e(g,s_n(f))=\cF(g)(s_n(f)),
$$
i.e., for each $n$, $s_n$ is a system of sections of $\cF$ compatible with $S_n$, with $$s_{n+1}\circ\sigma_\cC=\sigma_\cF\circ s_n.$$

By interpreting the internal sheaf property from \ref{int-sh}, we see that $F$ is a $T$-sheaf if
for all $t\in\forg{T}$ and $s\in[d_0,\gamma_0]$, 
\begin{multline*}
(\forall n, \forall (f,g)\in\forg{C_2}, f\in c(\sigma^nt) \Rightarrow s_n(f\circ g)=e(g,s_n(f)))
\\ \Rightarrow 
\left(
(\exists \tilde{s}\in\forg{F_0}, \gamma_0(\tilde{s})=b(t) \land \forall n, \forall f\in c(\sigma^nt)=c(t)_n, \ e(f,\sigma^n\tilde{s})=s_n(f))\right.\\
\land
(\forall i, \forall \tilde{s}',\tilde{s}''\in\forg{F_0}, \gamma_0(\tilde{s}')=\gamma_0(\tilde{s}'')=b(\sigma^i t) \\
\left.\Rightarrow (\forall n, \forall f\in c(t)_{n+i}, e(f,\sigma^n \tilde{s}')=e(f,\sigma^n \tilde{s}''))\Rightarrow \tilde{s}'=\tilde{s}'')
\right).
\end{multline*} 

Equivalently, the data $(\cF,\sigma_\cF)$ gives rise to a $T$-sheaf if, for every $t\in \forg{T}$ and every system $s=(s_n)$ of sections of $F$ compatible with $c(t)$ in the sense described above, there exists a section $\tilde{s}\in\cF(b(t))$ such that for every $n$, and every $f\in c(t)_n$, $$\cF(f)(\sigma^n\tilde{s})=s_n(f),$$ i.e., $\sigma^n\tilde{s}$ is obtained by glueing the system $s_n$ of sections of $\cF$ with respect to the sieve $c(t)_n$. Moreover, this glueing is unique in the sense that whenever $i$ and $\tilde{s}'\in \cF(\sigma^i b(t))$ are such that for every $n$ and every $f\in c(t)_{n+i}=c(\sigma^{n+i}t)$, $\cF(f)(\sigma^n\tilde{s}')=s_{n+i}(f)$, then $\tilde{s}'=\sigma^i\tilde{s}$.

\subsection{Internal sheaves and equivariant sheaves in difference sets}\label{int-sh-equiv-sh}

Let $(\C,T)$ be an internal site in $\cS=\diff\Set$, let $(\cC,\sigma_\cC)$ correspond to $\C$ by \ref{int-cat-diff-set}, and suppose that $T\mono C_0\times\mathop{\rm Sv}(\C)$ is such that there exists a coverage $\cT$ on $\cC$ such that
\begin{enumerate}
\item\label{nwise}
$(b,S)\in T \text{ if and only if, for all }n, (\sigma^nb, S_n)\in\cT$;
\item\label{ninv}
$(b,S)\in\cT \text{ implies } (\sigma b,\sigma(S))\in \cT$.
\end{enumerate}

Then we have an equivalence of categories
$$
\sh_\cS(\C,T)\simeq \sh(\cC,\cT)^{\sigma_\cC},
$$
i.e., the equivalence of categories of presheaves \ref{internal-diff-presh} restricts to the respective full subcategories of sheaves. 

Indeed, if $F\in\sh_\cS(\C,T)$, let $(\bF,\sigma_\bF)\in [\cC^\op,\Set]^{\sigma_\cC}$ be the corresponding $\sigma_\cC$-equivariant presheaf. In order to verify that $\bF$ is a $\cT$-sheaf, let $s_0$ be a system of sections of $\bF$ compatible with a $\cT$-covering sieve $S_0$ of $b\in\cC$. 

Define $S_n=\sigma^n(S_0)$, and let $s_n:S_n\to F_0$ be a morphism given by $s_n(\sigma_\cC^n h)=\sigma_\bF^n s_0(h)$. Using assumptions (\ref{nwise}) and (\ref{ninv}), $s=(s_n)$ is a system of sections of $F$ compatible with a $T$-covering sieve $S=(S_n)$, so the internal unique glueing property \ref{int-sh-diff-set} for $F$ yields a section $\tilde{s}\in \bF(b)$ that also realises the unique glueing property for $\bF$.

 Conversely, let $\bF\in \sh(\cC,\cT)^{\sigma_\cC}$, and let $F\in[\C^\op,\cS]$ be the corresponding internal presheaf. Let $s=(s_n)$ be a system of sections of $F$ compatible with a $T$-covering sieve $S=(S_n)$ in the sense of  \ref{int-sh-diff-set}. By assumption~(\ref{nwise}), for every $n\in\N$, $s_n$ is a system of sections of $\bF$ compatible with a $\cT$-covering sieve $S_n$, so the glueing property of $\bF$ yields a section $\tilde{s}_n$. 
 
 Assumption (\ref{ninv}) gives that, for each $n$, $\sigma(S_n)$ is a subcover of $S_{n+1}$, and the glueing data is compatible since $s_{n+1}\sigma_\cC=\sigma_\bF s_n$, so by the uniqueness of gluing for $\bF$ we deduce that 
$$
\tilde{s}_{n+1}=\sigma_\bF \tilde{s}_n. 
$$
This shows that the internal glueing condition from \ref{int-sh-diff-set} for $F$ is satisfied by $\tilde{s}_0$, and the internal uniqueness property  is verified in a similar way.

\section{Generalised difference categories}

\subsection{Generalised difference categories}\label{gen-cats}

Let $\cC$ be a category and let $E\in\Ob(\diff\Set)$. We let the category 
$$\difd{E}\cC$$ have the same objects as $\diff\cC$, and, for $X,Y\in\Ob(\difd{E}\cC)=\Ob(\diff\cC)$, 
$$
\difd{E}\cC(X,Y)=\{(f_e)\in\cC(X,Y)^E: f_{\sigma_E(e)}\circ\sigma_X=\sigma_Y\circ f_e,\text{ for }e\in E\}.
$$

Similarly, the category
$$
\difu{E}\cC
$$
has the same objects as $\diff\cC$ and morphism sets
$$
\difu{E}\cC(X,Y)=\{(f_e)\in\cC(X,Y)^E: f_e\circ\sigma_X=\sigma_Y\circ f_{\sigma_E(e)},\text{ for }e\in E\}.
$$

For a natural number $n$, let $\dif{n}\cC$ be the category with the same objects as $\diff\cC$, but with the following more flexible notion of morphism. A morphism $(X,\sigma)\to (Y,\sigma)$ is a sequence $(f_0,f_1,\ldots,f_n)$ of morphisms $f_i\in\Hom_\cC(X,Y)$ such that for $i<n$,
$$
f_{i+1}\circ\sigma_X=\sigma_Y\circ f_i.
$$
%the diagram
% \begin{center}
% \begin{tikzpicture} 
%\matrix(m)[matrix of math nodes, row sep=2em, column sep=3em, text height=1.5ex, text depth=0.25ex]
% {
% |(1)|{X}		& |(2)|{Y} 	\\
% |(21)|{X}		& |(22)|{Y} 	\\
% 
% }; 
%\path[->,font=\scriptsize,>=to, thin]
%(1) edge node[above]{$f$} (2) edge node[left]{$\sigma_X$}   (21)
%(2) edge node[right]{$\sigma_Y$} (22) 
%(21) edge  node[above]{$f$} (22);
%\end{tikzpicture}
%\end{center}
We abbreviate $\Hom_{\dif{n}(\cC)}$ as $\Hom_n$.  Given $X,Y\in\Ob(\diff\cC)$, we have maps
$
\pi_n, s_n:\Hom_{n+1}(X,Y)\to\Hom_n(X,Y),
$
where
$$
\pi_n(f_0,\ldots,f_{n+1})=(f_0,\ldots,f_n),\ \ \ \ s_n(f_0,\ldots,f_{n+1})=(f_1,\ldots,f_n).
$$

In the special case of $E=(\N,i\mapsto i+1)$, we obtain categories
$$
\diffinf\cC=\diffN\cC\ \ \ \text{ and }\ \ \ \difuinf\cC=\difu{\N}\cC.
$$
More explicity, the set of morphisms between objects $X$ and $Y$ in $\diffinf\cC$ is
$$
\Hom_\infty(X,Y)=\{(f_i)\in\Hom_\cC(X,Y)^\N : f_{i+1}\circ\sigma_X=\sigma_Y\circ f_i\text{ for all }i\in\N\}. 
$$
Note that $\Hom_\infty(X,Y)$ has a structure of a difference set
 with the shift map
$$
s(f_0,f_1,\ldots)=(f_1,f_2,\ldots).
$$
Equivalently, $(\Hom_\infty(X,Y),s)\in\Ob(\diff\Set)$ is the  limit of the system $(\Hom_n(X,Y),s_n)$ with respect to connecting maps $\pi_n$. %\footnote{does this mean that $\diffinf\cC$ is enriched in $\diff\Set$?}Thus, identifying $\Hom_\infty(X,Y)$ with a subset of $\Hom_\cC(X,Y)^\N$,

There is an embedding of categories $$\I_\infty:\diff\cC\to\diffinf\cC$$ given as identity on objects, while for $X,Y\in\Ob(\diffinf\cC)=\Ob(\diff\cC)$,  the map $\I_\infty:\Hom_{\diff\cC}(X,Y)\to\Hom_\infty(X,Y)$ is defined by
$$
f_0\mapsto(f_0,f_0,\ldots).
$$
More precisely,
\begin{align*}
\Hom_{\diff\cC}(X,Y) &\simeq \Fix(\Hom_\infty(X,Y))=(\Hom_\infty(X,Y))^s\\ 
&=\{f\in\Hom_\infty(X,Y):s(f)=f\}\\
   &    \simeq \{f\in\Hom_1(X,Y): s_0(f)=\pi_0(f)\}.
\end{align*}
For $f\in\Hom_\infty(X,Y)$, identifying $\sigma_X$ and $\sigma_Y$ with $\I_\infty(\sigma_X)$ and $\I_\infty(\sigma_Y)$, we can write
$$
s(f)\circ\sigma_X=\sigma_Y\circ f.
$$
It is practical to write
$$
\jj_\infty=\I_\infty\circ\I: \cC\to \diffinf\cC.
$$

We leave a completely analogous discussion of sets
$$
\Hom^\infty(X,Y)=\{(f_i)\in\Hom_\cC(X,Y)^\N : f_i\circ\sigma_X=\sigma_Y\circ f_{i+1}\text{ for all }i\in\N\}, 
$$
and the embeddings 
$$
\I^\infty:\diff\cC\to\difuinf\cC\ \ \ \text{ and }\ \ \ \jj^\infty=\I^\infty\circ\I:\cC\to\difuinf\cC
$$
to the reader.

\begin{lemma} For any difference set $E$, we have isomorphisms
$$
\diff\Set(E,\difd{\infty}\cC(X,Y))\simeq\difd{E}\cC(X,Y),$$ and
$$\diff\Set(E,\difuinf\cC(X,Y))\simeq\difu{E}\cC(X,Y).$$
\end{lemma}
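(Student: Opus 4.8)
The plan is to produce explicit mutually inverse bijections, natural in $E$, $X$ and $Y$, in the same spirit as the other identifications in this section. It is enough to treat the first isomorphism; the second follows by the symmetric argument, replacing $\I_\infty$, $\Hom_\infty$ and the ladder relation $f_{i+1}\circ\sigma_X=\sigma_Y\circ f_i$ throughout by $\I^\infty$, $\Hom^\infty$ and $f_i\circ\sigma_X=\sigma_Y\circ f_{i+1}$. As a guiding remark, note that as a set $\difd{\infty}\cC(X,Y)=\Hom_\infty(X,Y)=\difd{\Nsucc}\cC(X,Y)$ (since $\sigma_{\Nsucc}(i)=i+1$), carrying the difference structure given by the left shift $s$; so the lemma has the flavour of an exponential law for the free difference set functor.

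First I would unwind the left-hand side: a $\diff\Set$-morphism $\phi\colon E\to\difd{\infty}\cC(X,Y)$ is a function $\varphi\colon\forg{E}\to\Hom_\infty(X,Y)$ with $\varphi(\sigma_E e)=s(\varphi(e))$ for all $e\in\forg{E}$. Writing $\varphi(e)=(\varphi(e)_i)_{i\in\N}$, this amounts to two conditions: the ladder relations $\varphi(e)_{i+1}\circ\sigma_X=\sigma_Y\circ\varphi(e)_i$ (membership in $\Hom_\infty$) and the equivariance $\varphi(\sigma_E e)_i=\varphi(e)_{i+1}$, which iterates to $\varphi(e)_i=\varphi(\sigma_E^i e)_0$. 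I would then set $\Phi(\phi)=(f_e)_{e\in\forg E}$ with $f_e=\varphi(e)_0$, and check, using the ladder relation at $i=0$ together with $\varphi(\sigma_E e)_0=\varphi(e)_1$, that $f_{\sigma_E e}\circ\sigma_X=\sigma_Y\circ f_e$; hence $\Phi(\phi)\in\difd{E}\cC(X,Y)$.

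Conversely, given $(f_e)_{e\in\forg E}\in\difd{E}\cC(X,Y)$, I would define $\varphi(e)_i=f_{\sigma_E^i e}$. The defining relation of $\difd{E}\cC(X,Y)$ applied at $\sigma_E^i e$ gives $f_{\sigma_E^{i+1}e}\circ\sigma_X=\sigma_Y\circ f_{\sigma_E^i e}$, i.e., $\varphi(e)\in\Hom_\infty(X,Y)$; and $\varphi(\sigma_E e)_i=f_{\sigma_E^{i+1}e}=\varphi(e)_{i+1}=s(\varphi(e))_i$, so $\varphi$ defines a $\diff\Set$-morphism $\Psi((f_e))\colon E\to\difd{\infty}\cC(X,Y)$. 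These assignments are mutually inverse: applying $\Psi$ to $\Phi(\phi)$ yields the map with $i$-th component $f_{\sigma_E^i e}=\varphi(\sigma_E^i e)_0=\varphi(e)_i$, so $\Psi\Phi=\id$; applying $\Phi$ to $\Psi((f_e))$ recovers $\varphi(e)_0=f_e$, so $\Phi\Psi=\id$. Naturality in $E$, $X$, $Y$ is then a routine verification, and the same reasoning word for word handles the $\difu{}$-version.

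There is no real obstacle here: the argument is pure bookkeeping. The only point needing care is keeping the three a priori distinct compatibility conditions apart — the internal ladder relation defining $\Hom_\infty$, the $\sigma_E$-equivariance of $\varphi$, and the single relation defining $\difd{E}\cC(X,Y)$ which interlaces the $E$-indexing with the $\sigma$-action — and observing that under the translation $\varphi(e)_i\leftrightarrow f_{\sigma_E^i e}$ they match up exactly. Reconciling the external index $i\in\N$ with iteration of $\sigma_E$ on $E$ is the (minor) crux of the argument.
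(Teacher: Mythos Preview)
Your proof is correct. The paper states this lemma without proof, treating it as a routine verification; your argument is precisely the natural one, unwinding the definitions and matching the shift on $\Hom_\infty(X,Y)$ against iteration of $\sigma_E$ via $\varphi(e)_i \leftrightarrow f_{\sigma_E^i e}$.
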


In the case $E=(\Z,i\mapsto i+1)$, we obtain categories $\difd{\Z}\cC$ and $\difu{\Z}\cC$. Explicitly,
$$
\Hom_\Z(X,Y)=\{(f_i)\in\Hom_\cC(X,Y)^\Z : f_{i+1}\circ\sigma_X=\sigma_Y\circ f_i\text{ for all }i\in\Z\}, 
$$
together with the (invertible) shift
$$
s(f_i)=(f_{i+1}).
$$
There is a natural embedding of categories
$$
\I_\Z:\diff\cC\to\diffZ(\cC).
$$
For $i\in\N$, we consider the forgetful functor 
$$
\forg{\,}_i:\diffinf\cC\to\cC,
$$
given on objects as $\forg{(X,\sigma)}=X$, and on morphisms as $\forg{(f_i)}=f_i$.\footnote{Does this have any adjoints?}

\subsection{Generalised difference categories are enriched over difference sets}\label{gen-cats-enr}

Let $\cC$ be a category. Given $X,Y\in\Ob(\diffinf\cC)$,  the discussion in \ref{gen-cats} yields an object
$$\diffinf\cC(X,Y)=(\Hom_\infty(X,Y),s)\in\diff\Set,$$ 
so we can consider $\diffinf\cC$ as a category enriched over $\diff\Set$, with the usual composition of morphisms and the usual identity morphisms, i.e., 
$$
\diffinf\cC\in(\diff\Set)\da\cat.
$$

The section functor $\Gamma$ on $\diff\Set$ is usually referred to as the \emph{underlying set} functor
$()_0:\diff\Set\to\Set$,
$$
S_0={\diff\Set}(e,S)=\Fix(S).
$$
It follows that the \emph{underlying category}  of a $\diff\Set$-category $\diffinf\cC$ is the ordinary difference category,
$$(\diffinf\cC)_0=\diff\cC.$$
By definition, the objects of $(\diffinf\cC)_0$ are $\Ob(\diffinf\cC)=\Ob(\diff\cC)$, and, for $X,Y\in\Ob(\diff\cC)$,
$$
{(\diffinf\cC)_0}(X,Y)=(\diffinf\cC(X,Y))_0=\Fix(\diffinf\cC(X,Y))\simeq{\diff\cC}(X,Y).
$$ 
Moreover, if $T:\cA\to\cB$ is $\diff\Set$-functor, its \emph{underlying functor} $T_0:\cA_0\to\cB_0$
is simply $T$ on objects, and $T_{0,XY}:\cA_0(X,Y)\to\cB_0(TX,TY)$ is 
$$
\Fix T_{XY}:\Fix\cA(X,Y)\to \Fix\cB(TA,TB).
$$

%Q:  Are $\I_\infty$ and $\I_\Z$ enriched functors?

The operations $\sigma_X^{*}=\mathord{-}\circ\sigma_X$ and $\sigma_Y\circ{-}$ provide two additional ways of considering $\forg{\Hom_\infty(X,Y)}$ as an object of $\diff\Set$.
% and in the former case we denote the resulting category by $\diffinf^*(\cC)$.

Similarly, we verify that 
$$
\difuinf\cC\in(\diff\Set)\da\cat,
$$
and its underlying category is again the ordinary difference category,
$$
(\difuinf\cC)_0=\diff\cC.
$$

The two constructions are related by duality of $\diff\Set$-categories via
$$
(\diffinf\cC)^\op\simeq \difuinf\cC^\op.
$$

By direct verification, $\diffZ\cC$ is also enriched over $\diff\Set$.

\subsection{Generalised difference categories are tensored and cotensored over difference sets}\label{gen-diff-is-tensored-cotensored}

%\begin{definition}[Tensored structure] 
%Let $E\in\Ob(\diff\Set)$, and $X\in\diffinf\cC$, and assume $\cC$ has small coproducts. We define an object $E\otimes X$ of $\diffinf\cC$ as
%$$
%E\otimes X=\coprod_{e\in E}X_e,
%$$ 
%where $X_e$ are isomorphic copies of $\forg{X}$, together with $\sigma:E\otimes X\to E\otimes X$, given on $X_e$ as $\sigma_X:X_e\to X_{\sigma_E(s)}$.
%\end{definition}

\begin{proposition}\label{diffinf-tens-cotens}
\begin{enumerate}
\item\label{tens-1} If $\cC$ admits small coproducts, then $\diffinf\cC$ is a \emph{tensored} $\diff\Set$-category. 
\item\label{cotens-2} If $\cC$ is complete, %has small limits, %products and equalisers, 
then $\diffinf\cC$ is a \emph{cotensored} $\diff\Set$-category.
\end{enumerate}
\end{proposition}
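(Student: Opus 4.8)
The plan is to verify the defining universal properties of tensors and cotensors over $\diff\Set$ directly, using the explicit description of the internal homs $\diffinf\cC(X,Y)=(\Hom_\infty(X,Y),s)$ from \ref{gen-cats} and \ref{gen-cats-enr}. Recall that, for $E\in\diff\Set$, we must produce $\diff\Set$-functors $E\otimes\mathord{-}$ and $[E,\mathord{-}]$ on $\diffinf\cC$ together with $\diff\Set$-natural isomorphisms $\diffinf\cC(E\otimes X,Y)\simeq[E,\diffinf\cC(X,Y)]$ and $\diffinf\cC(Y,[E,X])\simeq[E,\diffinf\cC(Y,X)]$, where $[E,\mathord{-}]$ on the right is the internal hom of $\diff\Set$ computed in \ref{diff-set-top}.

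For part (\ref{tens-1}), given $E=(\forg E,\sigma_E)\in\diff\Set$ and $X=(X_0,\sigma_X)\in\diffinf\cC$, I would set $E\otimes X=\coprod_{e\in\forg E}X_0$ with difference operator the map that sends the copy indexed by $e$ to the copy indexed by $\sigma_E(e)$ via $\sigma_X$; this uses the hypothesis that $\cC$ has small coproducts. One then checks, using the universal property of coproducts in $\cC$ and the definition of $\Hom_\infty$, that a morphism $(f^{(i)})_{i\in\N}\in\Hom_\infty(E\otimes X,Y)$ amounts precisely to a family $(f^{(i)}_e)_{i\in\N,\,e\in\forg E}$ of $\cC$-morphisms $X_0\to Y_0$ satisfying the two compatibilities $f^{(i+1)}_e\circ\sigma_X=\sigma_Y\circ f^{(i)}_e$ (from membership in $\Hom_\infty$) and $f^{(i)}_{\sigma_E(e)}\circ\sigma_X=\sigma_Y\circ f^{(i)}_e$ applied through the coproduct structure — equivalently, a point of $[E,\diffinf\cC(X,Y)]$ in the sense of \ref{diff-set-top}. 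Naturality in $Y$ as a $\diff\Set$-isomorphism then follows by tracking how the shift $s$ acts on both sides, which is the routine part; functoriality of $E\otimes\mathord{-}$ is immediate from functoriality of coproducts. This is essentially the construction $\ssig{\,}$ of \ref{forget-and-adj} in a fibrewise/enriched guise, so the verification parallels the adjunction argument given there.

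For part (\ref{cotens-2}), dually, given $E$ and $X$ with $\cC$ complete, I would define $[E,X]$ to have underlying object a suitable limit: concretely $\forg{[E,X]}=\prod_{i\in\N}\prod_{e\in\forg E}X_{0}$ (copies of $X_0$) cut out by the equations forcing compatibility with $\sigma_E$ and $\sigma_X$, exactly mirroring the construction of $\psig{\,}$ and of the internal hom $[B,C]$ in $\diff\Set$ itself from \ref{diff-set-top} — indeed the self-enriched case $\cC=\Set$ must reproduce that formula. The difference operator on $[E,X]$ is the shift on the $\N$-index. Then $\diffinf\cC(Y,[E,X])\simeq[E,\diffinf\cC(Y,X)]$ is checked by the universal property of the limit in $\cC$ together with $\cV$-naturality in $Y$, again tracking the shifts. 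I expect the main obstacle to be purely bookkeeping: there are \emph{three} commuting operators in play on any $\Hom_\infty$-set (the shift $s$, precomposition with $\sigma_X$, postcomposition with $\sigma_Y$, noted in \ref{gen-cats-enr}), and the subtlety is to make sure the tensor–hom adjunction isomorphism is $\diff\Set$-natural for the \emph{correct} one of these, namely the shift $s$ — i.e. that the isomorphism intertwines the shifts and not merely the underlying sets. Organising the indices so that this intertwining is manifest, and checking it is preserved by composition and units (the two diagrams defining a $\diff\Set$-functor), is where care is needed; everything else reduces to the defining universal properties of (co)products in $\cC$ and the explicit topos structure of $\diff\Set$ already established in \ref{diff-set-top}.
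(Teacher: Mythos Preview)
Your overall plan matches the paper's exactly: define $E\otimes X$ as the coproduct $\coprod_{e\in\forg E}X_e$ with $\sigma$ shifting the $e$-index via $\sigma_E$ and acting by $\sigma_X$, define $[E,X]$ as the equaliser inside $\prod_{i\in\N}\prod_{e\in\forg E}X_e$ picked out by compatibility of the shift with $\sigma_E$ and $\sigma_X$, and then verify the defining adjunctions by hand.

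There is, however, a concrete slip in your unpacking of part~(\ref{tens-1}), and it is precisely the bookkeeping you flag as the obstacle. A family $(\phi_i)\in\Hom_\infty(E\otimes X,Y)$ satisfies \emph{one} condition, not two: since $\sigma_{E\otimes X}$ sends the summand $X_e$ to $X_{\sigma_E(e)}$ via $\sigma_X$, the ladder identity $\phi_{i+1}\circ\sigma_{E\otimes X}=\sigma_Y\circ\phi_i$ reads component-wise as
\[
\phi_{i+1,\sigma_E(e)}\circ\sigma_X=\sigma_Y\circ\phi_{i,e}.
\]
Your two listed conditions $f^{(i+1)}_e\circ\sigma_X=\sigma_Y\circ f^{(i)}_e$ and $f^{(i)}_{\sigma_E(e)}\circ\sigma_X=\sigma_Y\circ f^{(i)}_e$ are neither of them this, and together they are not equivalent to it. More importantly, the source carries data indexed by $(i,e)$ while the target $[E,\diffinf\cC(X,Y)]$ carries data indexed by $(i,e,j)$, since an element is a sequence $(\theta_i)$ with each $\theta_i(e)=(\theta_i(e)_j)_j\in\Hom_\infty(X,Y)$; the two sides are \emph{not} the same family subject to the same conditions. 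The paper's bijection is the reindexing
\[
\theta_i(e)_j=\phi_{i+j,\sigma_E^j(e)},\qquad \phi_{i,e}=\theta_i(e)_0,
\]
and one checks that the single condition on $\phi$ is equivalent to the pair $\theta_{i+1}(\sigma_E(e))=s\,\theta_i(e)$ and $\theta_i(e)\in\Hom_\infty(X,Y)$. The cotensor side has the analogous reindexing $\theta_i(e)_j=\phi_{i+j,0,\sigma_E^j(e)}$ with inverse $\phi_{i,j,e}=\theta_{i+j}(e)_0\circ\sigma_X^j$. Once you write these formulas down the verification is routine, but without them the argument does not go through; the analogy with the ordinary adjunction for $\ssig{\,}$ in \ref{forget-and-adj} is not enough, since there the extra $\N$-index $j$ is absent.
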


\begin{proof}
(\ref{tens-1}) For tensored structure, given $E\in\diff\Set$ and $X\in\diffinf\cC$,
we define an object $E\otimes X$ of $\diffinf\cC$ as
$$
E\otimes X=\coprod_{e\in E}X_e,
$$ 
where $X_e$ are isomorphic copies of $\forg{X}$, together with $\sigma:E\otimes X\to E\otimes X$, given on $X_e$ as $\sigma_X:X_e\to X_{\sigma_E(s)}$.

Let us construct an isomorphism
$$
\diffinf\cC(E\otimes X,Y)\simeq[E,\diffinf\cC(X,Y)],
$$
functorial in $X$ and $Y$.

For $\phi=(\phi_i)\in\diffinf\cC(E\otimes X,Y)$, each $\phi_i$ belongs to $\cC(E\otimes X,Y)\simeq\prod_{e\in E}\cC(X_e,Y)$, so we can write it as $\phi_i=(\phi_{i,e})_{e\in E}$. By assumption,
$$
\phi_{i+1,\sigma_E(e)}\circ\sigma_X=\sigma_Y\circ\phi_{i,e}.
$$
%Given $i,j\in\N$ and $e\in E$, 
We define a morphism $\theta=\theta_\phi=(\theta_i)\in[E,\diffinf\cC(X,Y)]$  by
$$
\theta_i(e)_j=\phi_{i+j,\sigma_E^j(e)}.
$$
The stated properties of $\theta$ are verified through relations
$$
\theta_{i+1}\circ\sigma_E=s\circ\theta_i,\ \ \ \text{i.e.}\ \ \ \theta_{i+1}(\sigma_E(e))_j=\theta_i(e)_{j+1},
$$
and, for every $e\in E$,
$$
(\theta_i(e))\in\diffinf\cC(X,Y),\ \ \ \text{i.e.}\ \ \ \theta_i(e)_{j+1}\circ\sigma_X=\sigma_Y\circ\theta_i(e)_j.
$$
Conversely, given $\theta=(\theta_i)\in[E,\diffinf\cC(X,Y)]$, we define $\phi=\phi_\theta=(\phi_i)\in\diffinf\cC(E\otimes X,Y)$ by setting
$$
\phi_{i,e}=\theta_i(e)_0,
$$
and we verify that the assignments $\phi\mapsto\theta_\phi$ and $\theta\mapsto\phi_\theta$ are mutually inverse.
%\end{proof}

(\ref{cotens-2}) For cotensored structure,
%\begin{definition}\label{difinf-cotensor}
let $E\in\Ob(\diff\Set)$ and $X\in\diffinf\cC$. %and assume $\cC$ has small products and equalisers. 
We write $X_e$ for isomorphic copies of $\forg{X}$, $e\in E$, and consider the 
endomorphisms $s,\tilde{\sigma}_E^*,\tilde{\sigma}_{X*}$  of the object 
$$
\prod_{i\in\N}(\prod_{e\in E}X_e)  %=(\prod_{e\in E}X_e)^\N,
$$
as follows:
\begin{enumerate}
\item $s$ is the usual shift operator;
\item $\tilde{\sigma}_E^*=\prod_{i\in\N}\sigma_E^*$, where $\sigma_E^*$ is the permutation of the factors of $\prod_{e\in E}X_e$ taking $X_e$ identically onto $X_{\sigma_E(e)}$;
\item $\tilde{\sigma}_{X*}=\prod_{i\in\N}\sigma_{X*}$, where $\sigma_{X*}=\prod_{e\in E}\sigma_{X_e}$.
\end{enumerate}
We define the object $[E,X]$ in $\diffinf\cC$ as the equaliser 
$$
[E,X]=(\text{eq}(s\circ\tilde{\sigma}_E^*,\tilde{\sigma}_{X*}),s),
$$
noting that $s$ restricts to $[E,X]$ because it commutes with $\tilde{\sigma}_E^*$ and 
$\tilde{\sigma}_{X*}$.
%\end{definition}
%
%
%\begin{proposition}
%With the above construction, $\diffinf\cC$ becomes a \emph{cotensored} $\diff\Set$-category, i.e., 
Let us construct natural isomorphisms
$$
\diffinf\cC(X,[E,Y])\simeq[E,\diffinf\cC(X,Y)].
$$
%\end{proposition}
%\begin{proof}
Let $\phi=(\phi_i)\in\diffinf\cC(X,[E,Y])$. Thus, 
$$
\phi_{i+1}\circ\sigma_X=s\circ\phi_i.
$$
Moreover, each $\phi_i\in\cC(X,[E,Y])$, so, by the definition of $[E,Y]$ as an equaliser, 
$$s\circ\tilde{\sigma}_E^*\circ\phi_i=\tilde{\sigma}_{X*}\circ\phi_i.$$
Thus, writing the components of 
each $\phi_i$ as $\phi_{i,j,e}$, 
$$
\phi_{i,j+1,\sigma_E(e)}=\sigma_Y\circ\phi_{i,j,e}.
$$
We construct $\theta=\theta_\phi=(\theta_i)\in[E,\diffinf\cC(X,Y)]$ by setting
$$
\theta_i(e)_j=\phi_{i+j,0,\sigma_E^j(e)}.
$$
The stated properties of $\theta$ are verified through relations
$$
\theta_{i+1}\circ\sigma_E=s\circ\theta_i
$$
and
$$
\theta_i(e)_{j+1}\circ\sigma_X=\sigma_Y\circ\theta_i(e)_j,
$$
which are straightforward consequences of the above assumptions on $\phi$.

Conversely, starting with a $\theta\in[E,\diffinf\cC(X,Y)]$, we define $\phi=\phi_\theta$ by
$$
\phi_{i,j,e}=\theta_{i+j}(e)_0\circ\sigma_X^j,
$$
and we verify that $\phi\in \diffinf\cC(X,[E,Y])$, as well as the fact that the assignments $\phi\mapsto\theta_\phi$ and $\theta\mapsto\phi_\theta$ are mutually inverse.
\end{proof}

%\begin{definition}
%Assume that $\cC$ has arbitrary products.  Let $X\in \difuinf\cC$, and $E\in\diff\Set$. Using the notation from \ref{difinf-cotensor}, we define the object $\llbracket E,X\rrbracket=X^E$ of $\difuinf\cC$ as
%$$
%\llbracket E,X\rrbracket=X^E=\prod_{e\in E}X_e,
%$$
%together with $\sigma:X^E\to X^E$ given by
%$$
%\sigma=\sigma_{X*}\circ\sigma_E^*.
%$$
%\end{definition}

\begin{proposition}\label{difuinf-cotens}
\begin{enumerate}
\item\label{cotens-3} If $\cC$ has small products, then $\difuinf\cC$ is a cotensored $\diff\Set$-category.
\item\label{tens-4} If $\cC$ is cocomplete, %has small coproducts and coequalisers, 
then $\difuinf\cC$ is a tensored $\diff\Set$-category.
\end{enumerate}
\end{proposition}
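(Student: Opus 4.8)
The plan is to prove Proposition~\ref{difuinf-cotens} by exact duality with Proposition~\ref{diffinf-tens-cotens}, exploiting the relation $(\diffinf\cC)^\op\simeq\difuinf(\cC^\op)$ established in \ref{gen-cats-enr} together with the general principle from \ref{tens-cotens-op} that a tensored $\cV$-category has a cotensored opposite and vice versa. Concretely, for (\ref{cotens-3}): if $\cC$ has small products, then $\cC^\op$ has small coproducts, so by Proposition~\ref{diffinf-tens-cotens}(\ref{tens-1}) the $\diff\Set$-category $\diffinf(\cC^\op)$ is tensored. Since $\difuinf\cC\simeq(\diffinf(\cC^\op))^\op$ as $\diff\Set$-categories, \ref{tens-cotens-op} immediately gives that $\difuinf\cC$ is cotensored. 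For (\ref{tens-4}): if $\cC$ is cocomplete then $\cC^\op$ is complete, so Proposition~\ref{diffinf-tens-cotens}(\ref{cotens-2}) makes $\diffinf(\cC^\op)$ cotensored, and dualising via \ref{tens-cotens-op} shows $\difuinf\cC\simeq(\diffinf(\cC^\op))^\op$ is tensored.

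The one point requiring care — and the step I expect to be the main obstacle — is verifying that the duality isomorphism $(\diffinf\cC)^\op\simeq\difuinf(\cC^\op)$ is genuinely an isomorphism of $\diff\Set$-enriched categories (not merely of underlying categories), so that it transports the tensored/cotensored structure as stated. On hom-objects this amounts to the identity
$$
\difuinf(\cC^\op)(X,Y)=\{(f_e)\in\cC^\op(X,Y)^\N: f_i\circ\sigma_X=\sigma_Y\circ f_{i+1}\}=\{(f_e)\in\cC(Y,X)^\N: \sigma_X\circ f_{i+1}=f_i\circ\sigma_Y\},
$$
which is precisely $\Hom_\infty(Y,X)$ in $\cC$, i.e.\ $\diffinf\cC(Y,X)=(\diffinf\cC)^\op(X,Y)$, and one must check the shift operators $s$ agree on both sides; this is the routine bookkeeping underlying \ref{gen-cats-enr}, so it can be cited rather than redone.

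Alternatively, one can give direct constructions mirroring the proof of Proposition~\ref{diffinf-tens-cotens}. For (\ref{cotens-3}), given $E\in\diff\Set$ and $X\in\difuinf\cC$, define
$$
[E,X]=\Bigl(\mathrm{eq}\bigl(s\circ\tilde{\sigma}_E^*,\ \tilde{\sigma}_{X*}\bigr),\ s\Bigr),
$$
the equaliser taken inside $\prod_{i\in\N}\bigl(\prod_{e\in E}X_e\bigr)$ exactly as before but with the roles of source- and target-composition adjusted to the contravariant convention defining $\difuinf$, and then exhibit mutually inverse natural bijections $\difuinf\cC(X,[E,Y])\simeq[E,\difuinf\cC(X,Y)]$ by the same indexing manipulation $\theta_i(e)_j=\phi_{i+j,0,\sigma_E^j(e)}$. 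For (\ref{tens-4}), given $E\in\diff\Set$ and $X\in\difuinf\cC$, set $E\otimes X=\coprod_{e\in E}X_e$ with the obvious induced $\sigma$, and check $\difuinf\cC(E\otimes X,Y)\simeq[E,\difuinf\cC(X,Y)]$ using cocompleteness of $\cC$ to form the coproduct and the adjunction $\cC(\coprod_e X_e,Y)\simeq\prod_e\cC(X_e,Y)$. Either route works; I would present the duality argument as the main proof and remark that the explicit formulas are obtained by dualising those in the proof of Proposition~\ref{diffinf-tens-cotens}, since this keeps the exposition short and makes clear that no new phenomenon arises.
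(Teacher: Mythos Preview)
Your main duality argument is exactly the paper's approach: both claims follow from Proposition~\ref{diffinf-tens-cotens} together with the $\diff\Set$-enriched isomorphism $\difuinf\cC\simeq(\diffinf\cC^\op)^\op$ from \ref{gen-cats-enr} and the general principle \ref{tens-cotens-op}. Your remark that one must verify the isomorphism is genuinely $\diff\Set$-enriched is apt and correctly handled.

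However, your ``alternative direct constructions'' have the two cases swapped. Under \ref{tens-cotens-op}, the cotensor in $\difuinf\cC\simeq(\diffinf\cC^\op)^\op$ is the opposite of the \emph{tensor} in $\diffinf(\cC^\op)$, not of its cotensor. Since the tensor in $\diffinf(\cC^\op)$ is the simple coproduct $\coprod_{e\in E}X_e$ in $\cC^\op$, i.e.\ the product $\prod_{e\in E}X_e$ in $\cC$, the cotensor in $\difuinf\cC$ is just
\[
\llbracket E,X\rrbracket=\prod_{e\in E}X_e,\qquad \sigma=\sigma_{X*}\circ\sigma_E^*,
\]
with no equaliser needed---which is why the hypothesis for (\ref{cotens-3}) is only that $\cC$ have small products. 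Your proposed equaliser inside $\prod_{i\in\N}\prod_{e\in E}X_e$ would require equalisers in $\cC$, which are not assumed. Dually, the tensor in $\difuinf\cC$ is the opposite of the \emph{cotensor} in $\diffinf(\cC^\op)$, hence a coequaliser of coproducts in $\cC$---this is why (\ref{tens-4}) needs $\cC$ cocomplete, not merely having coproducts as your simple $\coprod_e X_e$ would suggest. The mismatch between your constructions and the stated hypotheses is the tell.
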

\begin{proof}
Both claims follow directly from \ref{diffinf-tens-cotens}, the fact that 
$\difuinf\cC\simeq(\diffinf\cC^\op)^\op$ and the general principle from \ref{tens-cotens-op}.

We provide the explicit construction of cotensors for (\ref{cotens-3}) below. Item (\ref{tens-4})  and the explicit construction of tensors, dual to the construction of cotensors in \ref{diffinf-tens-cotens}, is left  to the reader.

Let $X\in \difuinf\cC$, and $E\in\diff\Set$. Using the notation from the proof of \ref{diffinf-tens-cotens}(\ref{cotens-2}), we define the object $\llbracket E,X\rrbracket=X^E$ of $\difuinf\cC$ as
$$
\llbracket E,X\rrbracket=X^E=\prod_{e\in E}X_e,
$$
together with $\sigma:X^E\to X^E$ given by
$$
\sigma=\sigma_{X*}\circ\sigma_E^*.
$$

For $X,Y\in\difuinf\cC$, and $E\in\diff\Set$, we construct a natural isomorphism
$$
[E,\difuinf\cC(X,Y)]\simeq\difuinf\cC(X,\llbracket E,Y\rrbracket).
$$

Let $\phi=(\phi_i)\in [E,\difuinf\cC(X,Y)]$. Thus 
$$
\phi_{i+1}\circ\sigma_E=s\circ\phi_i, \ \ \ \text{i.e.,}\ \ \ \ \phi_i(\sigma_E(e))_j=\phi_{i+1}(e)_{j+1},
$$ and, for each $e\in E$,
$$
\phi_i(e)\in\difuinf\cC(X,Y),  \ \ \ \text{i.e.,}\ \ \ \ \phi_i(e)_j\circ\sigma_X=\sigma_Y\circ\phi_i(e)_{j+1}.
$$
 We define a morphism 
$\theta=\theta_\phi\in\difuinf\cC(X,\llbracket E,Y\rrbracket)$ as follows. By definition,
$\theta=(\theta_i)$, where each $\theta_i$ should be an element of $\cC(X,\llbracket E,Y\rrbracket)\simeq\prod_{e\in E}\cC(X,Y_e)$, so we can write components of $\theta_i$ as $\theta_{i,e}$, $e\in E$. With this notation,  let
$$
\theta_{i,e}=\phi_i(e)_0,
$$
whence we immediately verify that
$$
\theta_{i}\circ\sigma_X=\sigma\circ\theta_{i+1}\ \ \ \text{i.e.,}\ \ \ \theta_{i,e}\circ\sigma_X=\sigma_Y\circ\theta_{i+1,\sigma_E(e)}.
$$
Conversely, given $\theta\in \difuinf\cC(X,\llbracket E,Y\rrbracket)$, we define $\phi=\phi_\theta$ by setting
$$
\phi_i(e)_j=\theta_{i+j,\sigma_E^j(e)},
$$
and it is straightforward to check that $\phi\in[E,\difuinf\cC(X,Y)]$.

The assignments $\phi\mapsto\theta_\phi$ and $\theta\mapsto\phi_\theta$ are mutually inverse, and $\theta_{s\phi}=s\theta_\phi$,
as required.
\end{proof}

Note, when $E$ is inversive in the sense that $\phi_E$ is invertible and
$\cC$ has equalisers in addition to arbitrary products, writing $E^{-1}=(E,\sigma_E^{-1})$, we have
$$
\llbracket E,X\rrbracket\simeq[E^{-1},X].
$$
Indeed, for $\phi=(\phi_i)\in[E^{-1},X]$, we have that $\phi_{i+1}\circ\sigma_E^{-1}=\sigma_X\circ\phi_i$, so $\phi_{i+1}=\sigma_X\circ\phi_i\circ\sigma_E$ and the entire sequence $(\phi_i)$ is determined by $\phi_0\in\llbracket E,X\rrbracket$. Thus, the assignment
$$
(f_i)\mapsto f_0
$$
realises the required isomorphism.

In this case, the familiar isomorphism 
$$
\diffinf\cC(E^{-1}\otimes X,Y)\simeq[E^{-1},\diffinf\cC(X,Y)]\simeq\diffinf\cC(X,[E^{-1},Y])$$
yields an isomorphism
$$
\diffinf\cC(E^{-1}\otimes X,Y)\simeq \llbracket E,\diffinf\cC(X,Y)\rrbracket\simeq\difd{\infty}\cC(X,\llbracket E,Y\rrbracket).
$$

%Applying $\Fix$ to the above, we obtain
%\begin{align*}
%\diff\cC(X,\llbracket E,Y\rrbracket) & \simeq\diff\Set(E^{-1},\diffinf\cC(X,Y))\simeq
%\difd{E^{-1}}\cC(X,Y)\\ & \simeq\difu{E}\cC(X,Y)\simeq\diff\Set(E,\difuinf\cC(X,Y)).
%\end{align*}

\subsection{Constant difference objects}

Suppose $\cC$ is a category with coproducts and fibre products and that coproducts commute with base change.

Let $E\in\Ob(\diff\Set)$ and let $(S,\varsigma)\in\Ob(\diffinf\cC)$. The constant difference object $E_S\in\diff\cC$ is defined using the tensored structure as
$$
E_S=E\otimes S\in\diffinf\cC.
$$
By definition, it has the property
$$
\diffinf\cC(E_S,T)\simeq[E,\diffinf\cC(S,T)],
$$
for any $T\in\diffinf\cC$. In particular, applying the functor $\Fix$ to the above relation, 
$$
\diff\cC(E_S,T)\simeq\diff\Set(E,\diffinf\cC(S,T))\simeq\difd{E}\cC(S,T).
$$

%The following is an explicit definition and verification of the above property (OBSOLETE).
%$$
%E_S=\coprod_{i\in E}S_i,
%$$
%where all $S_i$ are isomorphic copies of $S$, together with $\sigma:E_S\to E_S$ defined on $S_i$ as $\varsigma:S_i\to S_{\sigma_E(i)}$. 
%It is characterised by the following property. 
%For any $T\in\diff\cC$,
%$$
%\Hom_{\diff\cC}(E_S,T)\simeq\Hom_{\diff\Set}(E,\Hom_\infty(\I_\infty S,\I_\infty T)).
%$$
%Indeed, for $\phi\in\Hom_{\diff\Set}(E,\Hom_\infty(\I_\infty S,\I_\infty T))$, we construct $\theta_\phi:E_S\to T$ by stipulating that $\theta_\phi$ on $S_i$ is the $0$-th component of $\phi(i)$, i.e., $\phi(i)_0:S\to T$. By assumption that $\phi$ is a difference morphism, for all $i\in E$, $j\in\N$, 
%$$
%\phi(\sigma_E(i))_j=\phi(i)_{j+1},
%$$
%and consequently
%$$
%\phi(\sigma_E(i))_j\circ\sigma_S=\sigma_T\circ\phi(i)_j.
%$$ 
%Specialising for $j=0$, we see that $\theta_\phi$ is a morphism $\theta_\phi:(E_S,\sigma)\to(T,\sigma)$, as required.
%
%
%Conversely, for $\theta\in \Hom_{\diff\cC}(E_S,T)$, we define $\phi_\theta:E\to\Hom_\infty(S,T)$ by requiring that $\phi_\theta(i)_j$ be the composite
%$$
%S_{\sigma_E^j(i)}\to E_S\stackrel{\theta}{\to}T.
%$$
%Using the above relations we verify that $\phi_\theta(i)\in\Hom_\infty(S,T)$ and that $\phi_\theta$ is a map of difference sets. The assignments $\phi\mapsto\theta_\phi$ and $\theta\mapsto\phi_\theta$ are mutually inverse.

The object $E_S$ comes equipped with a natural projection to $S$, and thus the assignment 
$$
E\mapsto E_S
$$
is a functor $\diff\Set\to\diff\cC_{\ov S}$. 

This functor commutes with products, for difference sets $E$, $F$, 
$$
(E\times F)_S\simeq E_S\times_SF_S.
$$
Consequently, if $E$ is a group (resp. ring, or other reasonable algebraic structure), then $E_S$ is an $S$-group (resp. $S$-ring, or another $S$-structure).

We offer another characterisation of constant objects using $\Hom_1$ in place of $\Hom_\infty$.
Given $X,Y\in\Ob(\diff\cC)$, even though the set $\Hom_1(X,Y)$ defined in \ref{dif-cats} is not quite a difference set, for $E\in\Ob(\diff\Set)$, we introduce notation
\begin{equation*}
\begin{split}
\Hom(E,\Hom_1(X,Y))=\{& \phi\in\Hom_\Set(\forg{E},\Hom_\cC(\forg{X},\forg{Y})):\\
& \phi(\sigma_E(i))\circ\sigma_X=\sigma_Y\circ\phi(i)\text{ for all }i\in E\}. 
\end{split}
\end{equation*}
Now, for any $T\in\diff\cC$,
$$
\Hom_{\diff\cC}(E_S,T)\simeq\Hom(E,\Hom_1(S,T)).
$$
Indeed, for $\phi\in\Hom(E,\Hom_1(S,T))$, we construct $\theta_\phi:(E_S,\sigma)\to(T,\sigma)$ by stipulating that $\theta_\phi$ on $S_i$ is $\phi(i):S\to T$.

Conversely, for $\theta\in \Hom_{\diff\cC}(E_S,T)$, we define $\phi_\theta:E\to\Hom_1(S,T)$ as the composite
$$
S_i\to E_S\stackrel{\theta}{\to}T,
$$
and we see that $(\phi_\theta(i),\phi_\theta(\sigma_E(i))\in\Hom_1(S,T)$. The assignments $\phi\mapsto\theta_\phi$ and $\theta\mapsto\phi_\theta$ are mutually inverse.

\subsection{Ordinary vs.\ generalised difference structures}\label{ord-vs-gen}

%The explicit constructions of the functors $\copinf$ and $\prodinf$ are given below.

Let $\cC$ be a category with countable coproducts. For $X\in\diffinf\cC$, we define an object $\copinf X\in\diff\cC$ as
$$
\copinf X=\coprod_{i\in\N}X_i,
$$
where $X_i\in\Ob(\cC)$ are all isomorphic copies of $\forg{X}$, and 
$$
\sigma:\copinf X\to\copinf X
$$
is given on $X_i$ as $\sigma_X:X_i\to X_{i+1}$.
Given $(f_i)\in\Hom_\infty(X,Y)$, we consider $f_i$ as a function $f_i:X_i\to Y_i$ and we let
$$
\copinf(f_i)=\coprod_{i\in\N}f_i:\copinf X\to\copinf Y.
$$
For $Y\in\diff\cC$, there is a natural projection
$$
\copinf\left(\I_\infty Y\right)\to Y.
$$
The resulting functor
$$
\copinf:\forg{\diffinf\cC}\to\diff\cC
$$
is left adjoint to $\I_\infty$, i.e., for $X\in\Ob(\diffinf\cC)$ and $Y\in\Ob(\diff\cC)$ we have a natural bijection
$$
\forg{\Hom_\infty(X,\I_\infty Y)}\simeq\Hom_{\diff\cC}(\copinf X,Y).
$$
Indeed, the assignment 
$$
\prod_i\Hom(X_i,\forg{Y})\ni(f_i)\mapsto\coprod_i f_i\in\Hom_\cC(\coprod_iX_i,\forg{Y})
$$
is bijective by the definition of coproducts, and $\coprod_i f_i$ is a difference morphism if and only if $f_{i+1}\circ\sigma_X=\sigma_Y\circ f_i$, i.e., $(f_i)\in\Hom_\infty(X,Y)$.

\begin{lemma}\label{cotens-with-N}
For $X\in\diffinf\cC$,
$$
[\N,X]\simeq X^\Z,
$$
where we write $X^\Z$ for the difference object $(\prod_{i\in\Z}\forg{X},s)$ with
$$
s=\left(\prod_{i<1}\id_X\times\prod_{i\geq 1}\sigma_X\right)\circ r,
$$
where $r$ is the right shift on $\prod_{i\in\Z}\forg{X}$.
\end{lemma}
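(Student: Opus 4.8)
The plan is to compute the cotensor $[\N,X]$ directly from the explicit construction of cotensors in $\diffinf\cC$ given in the proof of Proposition~\ref{diffinf-tens-cotens}(\ref{cotens-2}), specialised to $E=\N=(\N,i\mapsto i+1)$. Recall from that construction that, writing $X_e$ for isomorphic copies of $\forg{X}$ indexed by $e\in \forg{E}$, the object $[E,X]$ is the equaliser of the pair $(s\circ\tilde\sigma_E^*,\ \tilde\sigma_{X*})$ of endomorphisms of $\prod_{i\in\N}\bigl(\prod_{e\in E}X_e\bigr)$, equipped with the shift $s$. For $E=\N$ the permutation $\sigma_E^*$ of the factors of $\prod_{e\in\N}X_e$ is precisely the left shift $(x_e)_e\mapsto (x_{e+1})_e$, and $\sigma_{X*}=\prod_{e\in\N}\sigma_X$. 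So $[\N,X]$ consists of doubly-indexed families $(x_{i,e})_{i,e\in\N}$ with $x_{i,e}\in\forg X$ satisfying the equaliser condition, which unwinds to $x_{i,e+1}=\sigma_X(x_{i,e})$ for all $i,e\in\N$ (the $s$ in $s\circ\tilde\sigma_E^*$ shifts the $i$-index, but one checks that after re-indexing the condition is exactly the componentwise one just stated), together with the difference operator $s$ acting as $(sx)_{i,e}=x_{i+1,e}$.

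The key step is then to exhibit a bijection between this equaliser and $\prod_{i\in\Z}\forg X$ that intertwines $s$ with the operator claimed in the statement. A family $(x_{i,e})$ satisfying $x_{i,e+1}=\sigma_X(x_{i,e})$ is determined by the "diagonal" data $y_k := x_{i,e}$ whenever $e-i=k$ is constant along $e\ge 0$; more carefully, because increasing $e$ by one applies $\sigma_X$ and the $i$-index is free over $\N$, the whole family is determined by the single sequence $(x_{i,0})_{i\in\N}$, and conversely any such sequence extends uniquely. Reindexing $x_{i,0}$ by $-i$ gives the "negative half" $\prod_{i\le 0}\forg X$; but the relation $x_{i,e+1}=\sigma_X(x_{i,e})$ also lets the positive $e$-directions be absorbed, producing additional coordinates on the positive side, so that the total data is parametrised by $\prod_{i\in\Z}\forg X$. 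I would set up the isomorphism concretely as $(x_{i,e})\mapsto (z_j)_{j\in\Z}$ with $z_j = x_{j,0}$ for $j\le 0$ and $z_j=\sigma_X^{?}(x_{0,?})$ for $j>0$ chosen so that the shift $s\colon x_{i,e}\mapsto x_{i+1,e}$ becomes, on the $(z_j)$ side, the composite of the right shift $r$ with the map that applies $\id_X$ on coordinates $<1$ and $\sigma_X$ on coordinates $\ge 1$. The verification that this is a mutually inverse pair of $\diffinf\cC$-morphisms and that it commutes with the difference operators is then a routine bookkeeping exercise, which I would only sketch.

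Alternatively — and this may be cleaner to present — I would invoke the isomorphism $[\N,X]\simeq \bF(E\otimes S)$ type formula together with the explicit left adjoint: since $\N\otimes X=\copinf X=\coprod_{i\in\N}X_i$ from the preceding subsection (\ref{ord-vs-gen}), and cotensor is right adjoint to tensor in the relevant slice, one has $\diffinf\cC(Y,[\N,X])\simeq [\N,\diffinf\cC(Y,X)]$, and then identify $[\N,-]$ on $\diff\Set$ with $(-)^\Z$ via \ref{cotens-with-N} applied in $\diff\Set$ itself, bootstrapping to general $\cC$ by Yoneda. The main obstacle is purely notational: getting the indexing of the $\Z$-indexed product and the precise placement of the cut point (coordinates $<1$ vs $\ge 1$) to match the operator $s=\bigl(\prod_{i<1}\id_X\times\prod_{i\ge1}\sigma_X\bigr)\circ r$ exactly, rather than off by one. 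I expect no conceptual difficulty — everything follows from the already-established tensored/cotensored structure and the description of $\copinf X$ — so the proof will be short, consisting of the explicit bijection and a one-line check that it respects $s$.
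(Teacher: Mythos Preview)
Your overall strategy --- unwind the explicit cotensor construction from Proposition~\ref{diffinf-tens-cotens}(\ref{cotens-2}) for $E=\N$ and identify the result with a $\Z$-indexed product --- is exactly what the paper does. However, you mis-state the equaliser condition, and this derails the subsequent bookkeeping.

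The equaliser is of $s\circ\tilde\sigma_E^*$ and $\tilde\sigma_{X*}$. With $E=(\N,\text{succ})$, writing a point as a double family $(x_{i,e})_{i,e\in\N}$, the correct condition is
\[
x_{i+1,\,e+1}=\sigma_X(x_{i,e})\qquad\text{for all }i,e\in\N,
\]
i.e.\ the relation $f_{i+1}(n+1)=\sigma_X f_i(n)$ in the paper's notation $x_{i,e}=f_i(e)$. The shift $s$ in $s\circ\tilde\sigma_E^*$ genuinely moves the $i$-index; it does \emph{not} ``re-index away'' to give your componentwise relation $x_{i,e+1}=\sigma_X(x_{i,e})$. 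With your relation, each row $(x_{i,e})_e$ is determined by $x_{i,0}$ alone, so the whole family would be determined by $(x_{i,0})_{i\in\N}$ --- only an $\N$-indexed family, not $\Z$-indexed. Your attempt to manufacture a ``positive half'' from the $e$-direction is then incoherent, because under your wrong relation those coordinates are already determined.

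With the correct diagonal relation, the family is determined by the L-shaped boundary $(x_{i,0})_{i\ge 0}$ together with $(x_{0,e})_{e\ge 0}$, overlapping at $x_{0,0}$; this is exactly a $\Z$-indexed tuple. The paper's map is
\[
h(j)=\begin{cases} f_0(j)=x_{0,j}, & j\ge 0,\\ f_{-j}(0)=x_{-j,0}, & j<0,\end{cases}
\]
and the check that the shift $s$ on $[\N,X]$ corresponds to $\bigl(\prod_{j<1}\id\times\prod_{j\ge1}\sigma_X\bigr)\circ r$ is then immediate. Your alternative Yoneda approach is circular as written (you invoke the lemma to prove itself); it could be made to work by first proving the lemma in $\diff\Set$ by direct computation, but at that point you have essentially done the paper's proof anyway.
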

\begin{proof}
We give the proof for a concrete category $\cC$ since it is more intuitive, and we leave the general proof through commutativity of relevant diagrams to the reader. In the concrete case, the map $s$ is given by
$$
s(h)(i)=\begin{cases}
\sigma_X h(i-1),& i\geq 1\\
h(i-1),& i<1,
\end{cases}
$$
for $h:\Z\to\forg{X}$.
Given an $f=(f_i)\in [\N,X]$, the functions $f_i:\N\to \forg{X}$ satisfy 
$$
f_{i+1}(n+1)=\sigma_X f_i(n).
$$
Hence, we can assign to it the function $h=h_f\in X^\Z$ given by
$$
h(i)=\begin{cases}
f_0(i), & i\geq 0\\
f_{-i}(0), & i<0.
\end{cases}
$$
It is straightforward to verify that this assignment is an isomorphism.
\end{proof}

If $\cC$ has countable products, given $X\in\diffinf\cC$, we define
$$
\prodinf X=[\N,X]=X^\Z,
$$
as in the lemma above.

For $f=(f_i)\in\diffinf\cC(X,Y)$, we define
$$
\prodinf f=\prod_{i<1}f_0\times\prod_{i\geq 1} f_i:X^\Z\to X^\Z,
$$
and we verify that $\prodinf f\in \diff\cC(\prodinf X,\prodinf Y)$.

The resulting functor
$$
\prodinf:\forg{\diffinf\cC}\to\diff\cC
$$
is right adjoint to $I_\infty$, i.e., for $X\in\diff\cC$ and $Y\in\diffinf\cC$, we have a natural bijection
$$
\forg{\diffinf\cC(I_\infty X, Y)}\simeq \diff\cC(X,\prodinf Y).
$$
Indeed, the assignment
$$
\prod_{i\in\Z}\cC(\forg{X},Y_i)\ni(f_i)\mapsto\prod_{i\in\Z}f_i\in\cC(\forg{X},\prod_{i\in\Z}Y_i)
$$
is bijective by definition of products, and we have that
$(f_i)\in\diffinf\cC(X,Y)$ if and only if $\prod_if_i\in\diff\cC(X,Y)$.

By an analogous procedure, $\I_\Z:\diff\cC\to\diffZ\cC$ has a left adjoint $\copz$. 

On the other hand, given $X\in\diffZ\cC$, we let
$$
\prodz X=\prod_{i\in\Z}X_i,
$$
where $X_i$ are isomorphic copies of $\forg{X}$, together with 
$$
\sigma=\prod_{i\in\Z}\sigma_i:\prod_{i\in\Z}X_i\to\prod_{i\in\Z}X_i,
$$
where $\sigma_i:X_i\to X_{i+1}$ is defined as $\sigma_X$.

For $(f_i)\in\Hom_\Z(X,Y)$, we consider $f_i:X_i\to Y_i$ and let
$$
\prodz(f_i)=\prod_{i\in\Z}f_i:\prodz X\to\prodz Y.
$$
This yields a functor $\prodz:\diffZ\cC\to\diff\cC$ which is right adjoint to $\I_\Z$.

The diagram
\begin{center}
 \begin{tikzpicture} 
 [cross line/.style={preaction={draw=white, -,
line width=3pt}}]
\matrix(m)[matrix of math nodes, minimum size=1.7em,
inner sep=0pt, 
row sep=3.3em, column sep=1em, text height=1.5ex, text depth=0.25ex]
 { 
  |(dc)|{\forg{\diffinf\cC}}	\\
 |(c)|{\diff\cC} 	      \\ };
%\path[->,font=\scriptsize,>=to, thin]
%
%(dcinf) edge [bend right=30] node (ci) [left]{$\copinf$} (dc) 
%(dc) edge [bend right=30] node (ii) [right]{$\I_\infty$} (dcinf)
%(ci) edge[draw=none] node{$\dashv$} (ii)
%;
\path[->,font=\scriptsize,>=to, thin]
(c) edge[draw=none] node (mid) {} (dc)
(c) edge node (fo) [pos=0.7,right=-2pt]{$\I_\infty$}  (dc)
(dc) edge [bend right=45] node (ss) [left]{$\copinf$} (c) edge [bend left=45] node (ps) [right]{$\prodinf$} (c)
(mid) edge[draw=none] node{$\dashv$} (ss)
(mid) edge[draw=none] node{$\dashv$} (ps)
;
\end{tikzpicture}
\hspace{3em}
 \begin{tikzpicture} 
 [cross line/.style={preaction={draw=white, -,
line width=3pt}}]
\matrix(m)[matrix of math nodes, minimum size=1.7em,
inner sep=0pt, 
row sep=3.3em, column sep=1em, text height=1.5ex, text depth=0.25ex]
 { 
  |(dc)|{\forg{\diffZ\cC}}	\\
 |(c)|{\diff\cC} 	      \\ };
\path[->,font=\scriptsize,>=to, thin]
(c) edge[draw=none] node (mid) {} (dc)
(c) edge node (fo) [pos=0.65,right=-2pt]{$\I_\Z$}  (dc)
(dc) edge [bend right=45] node (ss) [left]{$\copz$} (c) edge [bend left=45] node (ps) [right]{$\prodz$} (c)
(mid) edge[draw=none] node{$\dashv$} (ss)
(mid) edge[draw=none] node{$\dashv$} (ps)
;
\end{tikzpicture}
\end{center}
depicts the above adjunctions. % GEOMETRIC MORPHISMS???

Note that the functor $\ssig{\,}:\cC\to\diff\cC$ from \ref{forget-and-adj} is the composite
$$
\ssig{\,}=\copinf\circ\I_\infty\circ\I.
$$ 
Indeed, for $X\in\Ob(\cC)$ and $Y\in\Ob(\diff\cC)$ we have natural bijections
\begin{align*}
\Hom_{\diff\cC}(\ssig{X},Y) & \simeq \Hom_{\diff\cC}(\copinf\circ\I_\infty\circ\I(X),Y) \\  &
\simeq\Hom_\infty(\I_\infty(X,\id),\I_\infty Y) \simeq\Hom_\cC(X,\forg{Y}).
\end{align*}
so the above composite is left adjoint to the forgetful functor $\forg{\,}:\diff\cC\to\cC$.

\subsection{Ordinary vs.\ generalised difference structures, enriched view}\label{diff-vs-gendif-enr}

In this section, we show how the ordinary functors $\copinf$ and $\prodinf$ can be recovered from their enriched counterparts.

We consider $\N$ as a difference set equipped with the difference operator $i\mapsto i+1$. %Note that
\begin{lemma}\label{fromN}
For any difference set $E$,
$$
\diff\Set(\N,E)\simeq \forg{E}.
$$
Moreover, there is a natural map
$$
\tau_E:\forg{[\N,E]}\to \diff\Set(\N,E)\simeq\forg{E}.
$$
\end{lemma}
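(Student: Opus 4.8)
The plan is to recognise the first isomorphism as an instance of the Yoneda lemma and the map $\tau_E$ as precomposition with the diagonal of $\N$. Throughout, $\N$ denotes the difference set $\Nsucc=(\N,i\mapsto i+1)$, which under $\diff\Set\simeq[\bsi,\Set]$ is the representable presheaf $\h_o$; the pitfall to keep track of is not to confuse it with the natural number object $(\N,\id)$.

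For the isomorphism $\diff\Set(\N,E)\simeq\forg{E}$, I would either cite the fact recorded in \ref{diff-set-top} that $\diff\Set(\Nsucc,\mathord{-})\simeq\forg{\,}$, or argue directly: a $\diff\Set$-morphism $f\colon\N\to E$ is a function $\forg{f}\colon\N\to\forg{E}$ satisfying $\forg{f}(i+1)=\sigma_E(\forg{f}(i))$, hence is uniquely determined by $\forg{f}(0)$, and conversely every $c\in\forg{E}$ arises from the morphism $i\mapsto\sigma_E^i(c)$. The assignment $f\mapsto\forg{f}(0)$ is a bijection, manifestly natural in $E$.

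For $\tau_E$, I would use the description of the internal hom from \ref{topos-diff-sets}: $[\N,E]=\diff\Set(\Nsucc\times\N,E)$, so $\forg{[\N,E]}$ identifies with the set of sequences $(f_i)_{i\in\N}$ of functions $f_i\colon\N\to\forg{E}$ with $f_{i+1}(j+1)=\sigma_E(f_i(j))$, the shift being $s(f_0,f_1,\ldots)=(f_1,f_2,\ldots)$. Precomposition with the diagonal $\Delta\colon\N\to\N\times\N$ gives a map $\Delta^{*}\colon\forg{[\N,E]}=\diff\Set(\N\times\N,E)\to\diff\Set(\N,E)$, and $\tau_E$ is the composite of $\Delta^{*}$ with the isomorphism of the previous paragraph; this works out explicitly to $(f_0,f_1,\ldots)\mapsto f_0(0)$ (equivalently, the sequence $(f_i)$ is sent to the difference morphism $i\mapsto f_i(i)$, which the relation $f_{i+1}(j+1)=\sigma_E(f_i(j))$ with $j=i$ shows is indeed a $\diff\Set$-morphism $\N\to E$, and then evaluated at $0$). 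Naturality in $E$ is then immediate: for $\varphi\colon E\to E'$ the induced map $[\N,\varphi]$ acts by postcomposition, $(f_i)\mapsto(\varphi\circ f_i)$, so $\tau_{E'}([\N,\varphi](f_i))=\varphi(f_0(0))=\varphi(\tau_E(f_i))$.

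There is no real obstacle here; the content is bookkeeping, and the only care needed is to use the correct description of $[\N,E]$ and its shift from \ref{topos-diff-sets}. It is perhaps worth noting, although the lemma does not ask for it, that $\tau_E$ is a natural retraction onto the fixed-point subset: under $\diff\Set(\N,E)\simeq\Fix([\N,E],s)$ the constant sequences $(f,f,\ldots)$ correspond to the elements of $\diff\Set(\N,E)$, and on these $\tau_E$ restricts to the identity — this is the property that will make $\tau_E$ the comparison map needed to recover $\prodinf$ from its enriched counterpart $[\N,\mathord{-}]$ in the remainder of \ref{diff-vs-gendif-enr}.
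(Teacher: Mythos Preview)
Your proof is correct and follows essentially the same approach as the paper. The paper's proof is terser: it writes down $h\mapsto h(0)$ for the first bijection and defines $\tau(f)(i)=f_i(i)$ directly, then notes this lands in $\diff\Set(\N,E)$ and maps to $f_0(0)$; your observation that $\tau_E$ is precomposition with the diagonal $\Delta:\N\to\N\times\N$ is a clean conceptual gloss on that same formula, and your added remark on the retraction property is correct but not asked for.
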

\begin{proof}
The first bijection is given by  
$$
h\mapsto h(0),
$$
for $h\in\diff\Set(\N,E)$. 

For the second, starting with $f=(f_i)\in[\N,E]$, we define
$$
\tau(f)(i)=f_i(i),
$$
and we verify that $\tau(f)\in\diff\Set(\N,E)$, and it eventually maps to $f_0(0)\in E$.
\end{proof}

Using the tensored and cotensored structure of the $\diff\Set$-category $\diffinf\cC$, we define the $\diff\Set$-functors
$$
\copinf:\diffinf\cC\to\diffinf\cC, \ \ \ \ X\mapsto \N\otimes X,
$$ 
and
$$
\prodinf:\diffinf\cC\to\diffinf\cC, \ \ \ \ Y\mapsto [\N,Y].
$$
We also consider 
$$
\produinf:\difuinf\cC\to\difuinf\cC, \ \ \ \ Y\mapsto Y^\N=\llbracket \N,Y\rrbracket.
$$
By general facts on tensors and cotensors, $\copinf$ is left $\diff\Set$-adjoint to $\prodinf$, i.e., there is a natural isomorphism
$$
\diffinf\cC(\copinf X,Y)\simeq \diffinf\cC(X,\prodinf Y).
$$

Let us discuss how the ordinary functors (denoted by same symbols)
$$
\copinf, \prodinf: \forg{\diffinf\cC}\to \diff\cC
$$
can be recovered from the above.

On objects, $\copinf$ is defined as before. On morphisms, it acts as the composite
\begin{multline*}
\forg{\diffinf\cC(X,Y)}\to\forg{\diffinf\cC(\N\otimes X,\N\otimes Y)}\simeq\forg{[\N,\diffinf\cC(X,\N\otimes Y)}\\
\xrightarrow{\tau (\ref{fromN})}\diffinf\cC(X,\N\otimes Y)\to\prod_{i\in\N}\cC(X_i,\N\otimes Y)\simeq\cC(\coprod_{i\in\N}X_i,\N\otimes Y),
\end{multline*}
which actually lands in $\diff\cC(\N\otimes X,\N\otimes Y)$. Hence,
$$
\diffinf\cC(\copinf X,Y)=\diffinf\cC(\N\otimes X,Y)\simeq[\N,\diffinf\cC(X,Y)],
$$
so, applying $\Fix$ and using \ref{fromN}, we obtain
$$
\diff\cC(\copinf X,Y)\simeq\diff\Set(\N,\diffinf\cC(X,Y))=\forg{\diffinf\cC(X,Y)}
$$
so $\copinf:\forg{\diffinf\cC}\to \diff\cC$ is left adjoint to $\I_\infty:\diff\cC\to\forg{\diffinf\cC}$.

Moreover, using \ref{CvsdiffinfC}
$$
\diff\cC(\copinf\jj_\infty(X_0),Y)\simeq\forg{\diffinf\cC(\jj_\infty(X_0),Y)}\simeq\cC(X_0,\forg{Y}),
$$
so we conclude that
$$
\copinf\circ\jj_\infty\simeq\ssig{\,}:\cC\to\diff\cC.
$$

The ordinary functor $\prodinf$ is defined analogously.  Starting from 
$$
\diffinf\cC(X,\prodinf Y)=\diffinf\cC(X,[\N,Y])\simeq[\N,\diffinf\cC(X,Y)]
$$
and applying $\Fix$, we see that
$$
\diff\cC(X,\prodinf Y)\simeq\forg{\diffinf\cC(X,Y)},
$$
so $\prodinf:\forg{\diffinf\cC}\to\diff\cC$ is right adjoint to $\I_\infty$.

On the other hand, 
$$
[\N,\difuinf\cC(X,Y)]\simeq \difuinf\cC(X,\produinf Y),
$$
and, applying $\Fix$, we deduce that
$$
\forg{\difuinf(X,Y)}\simeq\diff\cC(X,\produinf Y),
$$
so $\I^\infty:\diff\cC\to\forg{\difuinf\cC}$ is left adjoint to $\produinf:\forg{\difuinf\cC}\to\diff\cC$.

Moreover, for $Y_0\in\cC$, by \ref{CvsdiffinfC}
$$
\diff\cC(X,\produinf\jj^\infty(Y_0))\simeq\forg{\difuinf\cC(X,\jj^\infty(Y_0))}\simeq\cC(\forg{X},Y_0),
$$
and we deduce that $$\produinf\circ\jj^\infty\simeq\psig{\,}:\cC\to\diff\cC.$$

\subsection{Structures vs.\ generalised difference structures}\label{CvsdiffinfC}

The functor $\jj^\infty:\cC\to\difuinf\cC$ is right adjoint to the forgetful functor $\forg{\,}^0$ in a somewhat enriched sense, we have natural isomorphisms
$$
(\cC(\forg{X}^0,Y),\sigma_X^{*})\simeq\difuinf\cC(X,\jj^\infty(Y)),
$$
for $X\in\difuinf\cC$, $Y\in\cC$. Indeed, $(f_i)\in\difuinf\cC(X,\jj^\infty(Y))$ if and only if $f_i=f_0\circ\sigma_X^i$, so the whole sequence is determined by $f_0=\forg{(f_i)}^0$ and the shift has the same effect as $\sigma_X^*$.

Dually, we have
$$
\diffinf\cC(\jj_\infty(X),Y)\simeq(\cC(X,\forg{Y}_0),\sigma_{Y,{*}}).
$$

\section{Enriched difference presheaves}

Given a category $\cC$, we constructed the category
$$
\diff\cC,
$$
as well as enriched categories
$$
\diffst\cC\in(\diffst\Set)\da\cat, 
$$
and
$$
\diffinf\cC, \difuinf\cC\in (\diff\Set)\da\cat.
$$
Hence, we can consider the category of ordinary presheaves
$$
\widehat{\diff\cC}=[(\diff\cC)^\op,\Set]
$$
the $\diffst\Set$-category of $\diffst\Set$-presheaves
$$
\widehat{\diffst\cC} = [(\diffst\cC)^\op,\diffst\Set],
$$
as well as $\diff\Set$-categories of $\diff\Set$-presheaves
\begin{align*}
\widehat{\diffinf\cC} &= [(\diffinf\cC)^\op, \diff\Set], \\ 
\widehat{\difuinf\cC} &= [(\difuinf\cC)^\op, \diff\Set].
\end{align*}
We occasionally consider other $\diff\Set$-functor categories ($\diff\Set$-categories of copresheaves) such as $[\diffinf\cC, \diff\Set]$.

Through the internalisation construction of \ref{enrich-groth-constr}, the categories 
 $\diffinf\cC$ and $\difuinf\cC$ can be considered internal in $\diff\Set$, and the above enriched presheaf categories can be viewed as categories of internal presheaves as well.

We proceed to study the specific details of each of the above cases.

\subsection{Presheaves on difference categories}\label{func-diff-cat}

Let $\cC$ be a category (with countable products, coproducts and finite equalisers, coequalisers when needed). Using the adjunctions
$$
\Quo\dashv\I\dashv\Fix, \ \ \ \ \ \ \ \ssig{\,}\dashv\forg{\,}\dashv\psig{\,}
$$
and general principles from \ref{general-representab}, we obtain a number of ways to pass between categories $\hC$ and $\widehat{\diff\cC}$. In particular, the right adjoints 
$$
\I, \Fix, \forg{\,}, \psig{\,}
$$
can be naturally extended to presheaf categories as
$$
\widehat{\Quo}, \widehat{\I}, \widehat{\ssig{\,}}, \widehat{\forg{\,}},
$$
respectively, and any adjunctions between the original functors are preserved between the corresponding extensions.
% and let $\diff\cC$. Given a functor $\bF\in\Ob(\hC)$, we can define a functor
%$$
%\psig{\bF}=\bF\circ \forg{\,}\in\Ob(\widehat{\diff\cC}).
%$$
%If $\cC$ has (countable) products, using the adjointness of $\forg{\,}$ and $\psig{\,}$ from \ref{forget-and-adj}, we see that for $X\in\Ob(\cC)$,
%$$
%\psig{\h_X}=\h_X\circ\forg{\,}=\h_{\psig{X}},
%$$
% |(dc)|{\diff\cC}	\\
% |(c)|{\cC} 	      \\ };
%\path[->,font=\scriptsize,>=to, thin]
%%
%(dc) edge[draw=none] node (mid) {} (c)
%(dc) edge node (fo) [pos=0.25,right=-3.5pt]{$\forg{\kern1pt}$}  (c)
%(c) edge [bend left=45] node (ss) [left]{$\ssig{\kern1pt}$} (dc) edge [bend right=45] node (ps) [right]{$\psig{\kern1pt}$} (dc)
%(mid) edge[draw=none] node{$\dashv$} (ss)
The diagrams
 \begin{center}
  \begin{tikzpicture} 
\matrix(m)[matrix of math nodes, row sep=3em, column sep=3em, text height=1.9ex, text depth=0.25ex]
 {
 |(1)|{\cC}		& |(2)|{\hC} 	\\
 |(l1)|{\diff\cC}		& |(l2)|{\widehat{\diff\cC}} 	\\
 }; 
\path[->,font=\scriptsize,>=to, thin]
(1) edge node[above]{$\h$} (2) 
(l1) edge [bend left=20] node(i) [left]{$\forg{}$}   (1)
(1) edge [bend left=20] node(fi) [right]{$\psig{}$}   (l1)
(l2) edge [bend left=20] node(qo) [left]{$\widehat{\ssig{}}$} (2) 
(2) edge [bend left=20] node(hi) [right]{$\widehat{\forg{}}$}   (l2)
(l1) edge  node[above]{$\h$} (l2)
(i) edge[draw=none] node{$\dashv$} (fi)
(qo) edge[draw=none] node{$\dashv$} (hi)
;
\end{tikzpicture}
\hspace{3em}
 \begin{tikzpicture} 
\matrix(m)[matrix of math nodes, row sep=3em, column sep=3em, text height=1.9ex, text depth=0.25ex]
 {
 |(1)|{\cC}		& |(2)|{\hC} 	\\
 |(l1)|{\diff\cC}		& |(l2)|{\widehat{\diff\cC}} 	\\
 }; 
\path[->,font=\scriptsize,>=to, thin]
(1) edge node[above]{$\h$} (2) 
(1) edge [bend right=20] node(i) [left]{$\I$}   (l1)
(l1) edge [bend right=20] node(fi) [right]{$\Fix$}   (1)
(2) edge [bend right=20] node(qo) [left]{$\widehat{\Quo}$} (l2) 
(l2) edge [bend right=20] node(hi) [right]{$\widehat{\I}$}   (2)
(l1) edge  node[above]{$\h$} (l2)
(i) edge[draw=none] node{$\dashv$} (fi)
(qo) edge[draw=none] node{$\dashv$} (hi)
;
\end{tikzpicture}
\end{center}
summarise the above situation, when the curved parallelograms are interpreted as commutative.

Let $\cC$ be a category with (countable) fibre products and let $(S,\varsigma)\in\diff\cC$.
For an object $\bF$ in $\widehat{\cC_{\ov\forg{S}}}$, we define a functor
$$
\psig{\bF}_{\ov S}=\bF\circ\forg{\,}_S:\diff\cC_{\ov S}^\circ\to\Set.
$$
By adjointness of $\forg{\,}_S$ and $\psig{\,}_{\ov S}$ from \ref{forget-and-adj}, we see that for $X\in\Ob(\cC_{\ov S})$,
$$
\psig{\h_X}_{\ov S}=\h_{\psig{X}_{\ov S}},
$$
and we infer an analogous diagram to the above in the relative setting.

We obtain another relative version of $\psig{\,}$ when we work over an object $S_0\in\cC$. For a presheaf  $\bF$ in $\widehat{\cC_{\ov S_0}}$, we define a functor 
$$
\psig{\bF}\in\widehat{\diff\cC_{\ov\psig{S_0}}}
$$
as follows. For $S'\to\psig{S_0}$ in $\diff\cC_{\ov\psig{S_0}}$, naturally $\forg{S'}\to S_0$ is in $\cC_{\ov S_0}$, so we can set
$$
\psig{\bF}(S')=\bF(\forg{S'}).
$$
Suppose $X_0\to S_0$ is in $\cC_{\ov S}$. Then
$$
\psig{\h_{X_0/S_0}}(S')=\h_{X_0/S_0}(\forg{S'})=\cC_{\ov S_0}(\forg{S'},X_0)\simeq
\diff\cC_{\ov\psig{S_0}}(S',\psig{X_0})\simeq\h_{\psig{X_0}/\psig{S_0}}(S'),
$$ 
so $$\psig{\h_{X_0/S_0}}\simeq\h_{\psig{X_0}/\psig{S_0}}$$ and the notation is justified.

%Suppose now that $\diff\cC$ has all the quotient objects, and let $\bF\in\Ob(\hC)$. We define an object $\I(\bF)$ of $\widehat{\diff\cC}$ as 
%$$
%\I(\bF)=\bF\circ\Quo.
%$$
%By adjointness of $\I$ and $\Quo$ from \ref{fix-quo}, for $X\in\Ob(\cC)$,
%$$
%\I(\h_X)\simeq \h_{\I(X)}.
%$$
%QUESTION: what is the right adjoint of $\Fix$?
%
%Similarly, for an object $\bG$ of $\widehat{\diff\cC}$, we define $\Fix(\bG)$ in $\hC$ by
%$$
%\Fix(\bG)=\bG\circ\I.
%$$
%By adjointness of $\I$ and $\Fix$ from \ref{fix-quo}, if $Y\in\Ob(\diff\cC)$ has a fixed point object, then
%$$
%\Fix(\h_Y)=\h_{\Fix(Y)}.
%$$
%
%QUESTION: Are functors $I:\hC\to\widehat{\diff\cC}$ and $\Fix:\widehat{\diff\cC}\to\hC$ adjoint?

\subsection{Enriched presheaves on difference categories with pullbacks}

We define the category of presheaves on difference categories enriched with pullbacks (cf.\ \ref{diff-cat-pb}) as the category of $\diff\Set$-functors
$$
\diffst\psh(\diffst\cC)=\diff\Set[(\diffst\cC)^\circ,\diffst\Set].
$$
Such enriched presheaves $\bF$ come equipped with $\diff\Set$-morphisms
$$
\bF_{XY}:\diffst\cC(X,Y)\to\diffst\Set(\bF(Y),\bF(X)),
$$
i.e., $\bF_{XY}\circ\sigma^*_X=\sigma_{\bF(Y)}^*\circ \bF_{XY}$.

\subsection{Extending presheaves to difference presheaves}\label{psh-to-diffpsh}

For $\bF\in\hC$, we define an object $\psig{\bF}^{*}\in\diffst\psh(\diffst\cC)$ by
$$
\psig{\bF}^{*}(X,\sigma)=(\bF(\forg{X}),\bF(\forg{\sigma})).
$$
For objects $X$, $Y$, we define
$$
\psig{\bF}^{*}_{XY}:\diffst\cC(X,Y)\to\diffst\Set(\psig{\bF}^{*}(Y),\psig{\bF}^{*}(X))
$$
by simply setting $$\psig{\bF}^{*}_{XY}(f)=\bF(\forg{f}).$$
 Since $\bF(f\circ\sigma_X)=\bF(\sigma_Y\circ f)=\bF(f)\circ \bF(\sigma_Y)$, we conclude that
$\psig{\bF}^{*}_{XY}\circ\sigma_X^*=\sigma_{\bF(\sigma_Y)}^*\circ\psig{\bF}^{*}_{XY}$, so $\psig{\bF}^{*}_{XY}$ is a $\diff\Set$-morphism.

This construction specialises to the one from \ref{func-diff-cat} via the relation
$$
\psig{\bF}=\forg{\psig{\bF}^{*}}.
$$
%Recall that, for $Y,Z\in\Ob(\diffst(\cC))$, the set $\Hom_{\diffst(\cC)}(Y,Z)$ is considered an object of $\diff\Set$, equipped with a self-map 
%$$
%\sigma_Y^*:u\mapsto u\circ\sigma_Y.
%$$
%Similarly, for $X\in\Ob(\cC)$ and $Y\in\Ob(\diff\cC)$, 
%$$
%(\Hom_\cC(\forg{Y},X), \forg{\sigma_Y}^*)\in\Ob(\diff\Set).
%$$
%%Let $X\in\Ob(\cC)$. 
The natural bijection giving the adjunction of $\forg{\,}$ and $\psig{\,}$ respects the difference structure defined by pullbacks, i.e.,
$$
(\Hom_{\diffst(\cC)}(Y,\psig{X}),\sigma_Y^*)\simeq (\Hom_\cC(\forg{Y},X),\forg{\sigma_Y}^*),
$$
which entails that $\psig{\h_X}^\Diff$ is represented by $\psig{X}$ in $\diffst(\cC)$. Indeed,
\begin{align*}
\psig{\h_X}^{*}(S)&=(\h_X(\forg{S}),\h_X(\forg{\sigma_S})\simeq(\cC(\forg{S},X),\sigma_{\forg{S}}^*)\\
& \simeq(\diff\cC(S,\psig{X}),\sigma_S^*)\simeq\diffst\cC(S,\psig{X})\simeq\h_{\psig{X}}(S).
\end{align*}
We obtain a diagram of categories
 \begin{center}
 \begin{tikzpicture} 
\matrix(m)[matrix of math nodes, row sep=2em, column sep=3em, text height=1.5ex, text depth=0.25ex]
 {
 |(1)|{\cC}		& |(2)|{\hC} 	\\
 |(l1)|{\diffst\cC}		& |(l2)|{\diffst\psh(\diffst\cC)} 	\\
 }; 
\path[->,font=\scriptsize,>=to, thin]
(1) edge node[above]{$\h$} (2) edge node[left]{$\psig{\,}$}   (l1)
(2) edge node[right]{$\psig{\,}^{*}$} (l2) 
(l1) edge  node[above]{$\h$} (l2);
\end{tikzpicture}
\end{center}
refining the picture from \ref{func-diff-cat}. 

Let $(S,\varsigma)\in\diff\cC$. For a functor $\bF\in\widehat{\cC_{\ov\forg{S}}}$, we define a functor
$$
\psig{\bF}^{*}:\diff{\cC}_{\ov\eq{S}}\to \diff\Set
$$
as follows. For a $\varsigma$-equivariant object $(X,\varphi_X)$, we let
$$
\psig{\bF}^{*}(X)=\left(\bF(\forg{X}),\bF(\forg{\varphi_X\circ\bar{\sigma}_X})\right).
$$
%COMPLETE WITH DISCUSSION of representability in this relative setting!!!!! ADD $\sigma^*$-YONEDA

\subsection{Presheaves on generalised difference categories}\label{gendif-psh}

Using the adjunctions 
$$
\copinf\dashv\I_\infty, \ \ \ \ \ \ \copz\dashv\I_\Z\dashv\prodz
$$
from \ref{ord-vs-gen} and general principles of \ref{func-diff-cat}, we construct functors
$$
\I_\infty:\widehat{\diff\cC}\to \widehat{\forg{\diffinf\cC}}
$$
and
$$
\I_\Z:\widehat{\diff\cC}\to\widehat{\forg{\diffZ\cC}},
$$
along with its right adjoint $\prodz$. The diagrams
\begin{center}
 \begin{tikzpicture} 
\matrix(m)[matrix of math nodes, row sep=3em, column sep=3em, text height=1.9ex, text depth=0.25ex]
 {
 |(1)|{\diff\cC}		& |(2)|{\widehat{\diff\cC}} 	\\
 |(l1)|{\forg{\diffinf\cC}}		& |(l2)|{\widehat{\forg{\diffinf\cC}}} 	\\
 }; 
\path[->,font=\scriptsize,>=to, thin]
(1) edge node[above]{$\h$} (2) 
(1) edge node(i) [left]{$\I_\infty$}   (l1)
(2) edge node(qo) [left]{$\widehat{\copinf}$} (l2) 
(l1) edge  node[above]{$\h$} (l2)
;
\end{tikzpicture}
\hspace{3em}
\begin{tikzpicture} 
\matrix(m)[matrix of math nodes, row sep=3em, column sep=3em, text height=1.9ex, text depth=0.25ex]
 {
 |(1)|{\diff\cC}		& |(2)|{\widehat{\diff\cC}} 	\\
 |(l1)|{\forg{\diffZ\cC}}		& |(l2)|{\widehat{\forg{\diffZ\cC}}} 	\\
 }; 
\path[->,font=\scriptsize,>=to, thin]
(1) edge node[above]{$\h$} (2) 
(1) edge [bend right=20] node(i) [left]{$\I_\Z$}   (l1)
(l1) edge [bend right=20] node(fi) [right]{$\prodz$}   (1)
(2) edge [bend right=20] node(qo) [left]{$\widehat{\copz}$} (l2) 
(l2) edge [bend right=20] node(hi) [right]{$\widehat{\I_\Z}$}   (2)
(l1) edge  node[above]{$\h$} (l2)
(i) edge[draw=none] node{$\dashv$} (fi)
(qo) edge[draw=none] node{$\dashv$} (hi)
;
\end{tikzpicture}
\end{center}
show the relevant relationships between the above functors.

\subsection{Enriched presheaves on generalised difference categories}\label{repr-gendiff}

%and 2. representability in $\diffinf\cC$.---remember the `pushout' construction!

Let $\cC$ be a category. We have seen that $\diffinf\cC$ has the structure of of a $\diff\Set$-category. The category of difference presheaves on $\diffinf\cC$ is the $\diff\Set$-functor category
$$
\diffpsh(\diffinf\cC)=\diff\Set[(\diffinf\cC)^\circ,\diff\Set].
$$
In particular, a difference presheaf $F\in \diffpsh(\diffinf\cC)$ associates, to $X,Y\in\diffinf\cC$, a $\diff\Set$-morphism 
$$
\diffinf\cC(X,Y)\to[F(Y),F(X)].
$$
For $F,G\in\diffpsh(\diffinf\cC)$, the object $\Nat(F,G)\in\diff\Set$ is the equaliser
\begin{center}
\begin{tikzpicture} 
\matrix(m)[matrix of math nodes, row sep=0em, column sep=1.7em, text height=1.5ex, text depth=0.25ex]
 {
|(0)|{\cV\nat(F,G)} & |(1)|{\displaystyle\prod_{X\in\diffinf\cC}[FX,GX]}		& |(2)|{\displaystyle\prod_{X,X'\in\diffinf\cC}[\diffinf\cC(X,X'),[FX,GX']].} 	\\
 }; 
\path[->,font=\scriptsize,>=to, thin,yshift=12pt]
(0) edge node[above]{} (1)
([yshift=2pt]1.east) edge node[above]{} ([yshift=2pt]2.west) 
([yshift=-2pt]1.east)edge node[below]{}   ([yshift=-2pt]2.west) 
;
\end{tikzpicture}
\end{center}
Unravelling the definitions, we find that $\Nat(F,G)$ is the set of collections of morphisms $\varphi_X\in[F(X),G(X)]$, for $X\in\diffinf\cC$, such that, for every $f\in\diffinf\cC(X,X')$, for every $i\in\N$,
$$
G(f)\circ s^i\varphi_{X'}=s^i\varphi_{X}\circ F(f),
$$
with the natural difference structure.

In particular, a $\diff\Set$-natural transformation $\varphi:F\Rightarrow G$ is an element of $\Gamma(\Nat(F,G))=\Fix(\Nat(F,G))$, i.e., it is a collection of $\varphi_X\in\diff\Set(F(X),G(X))$ such that for every $f\in\diffinf\cC(X,X')$,
$$
G(f)\circ\varphi_{X'}=\varphi_{X}\circ F(G).
$$

%Let $\cC$ be a category. Since $\diffinf\cC$ is enriched in $\diff\Set$, 
For any $X\in\Ob(\diffinf\cC)$, we have a functor
$$
\h_X:\diffinf\cC^\circ\to\diff\Set, \ \ \ \ S\mapsto \diffinf\cC(S,X).
$$
By enriched Yoneda lemma, given a difference presheaf $F$,
there is a natural $\diff\Set$-isomorphism
$$
\Nat(\h_X,F)\simeq F(X),
$$
and the functor
$$
\h:\diffinf\cC\to\diffpsh(\diffinf\cC).
$$
is an embedding of $\diff\Set$-categories.

%OBSOLETE(?):
%and 
%$$
%\I_\infty^\Diff:\hdifC\to \hdifinfC
%$$
%

Generally speaking, if $\bF$ is an object in $\hdifC$, then functors $\Fix\circ\bF$, $\Quo\circ\bF$ and $\forg{\,}\circ\bF$ are objects of $\widehat{\diff\cC}$. 

%If $\bF$ is $\diffinf\cC$-representable by $X\in\Ob(\diff\cC)$, then $\Fix\circ\bF$ is $\diff\cC$-representable by $X$.

Note that, for $X\in\Ob(\diff\cC)$, 
$$
\Fix\circ\h_{\I_\infty(X)}\circ\I_\infty\simeq\h_X,
$$
since, given $S\in\diff\cC$, we have
$$
\Fix\circ\h_{\I_\infty(X)}(\I_\infty(S))=\Fix(\Hom_\infty(\I_\infty(S),\I_\infty(X)))=\Hom_{\diff{\cC}}(S,X).
$$

Similarly, for $X\in\Ob(\diffZ(\cC))$, 
$$
\forg{\,}\circ\h_X\circ\I_\Z\simeq\h_{\prodz X},
$$
because
$$
\forg{\,}\circ\h_X\circ\I_\Z(S)=\forg{\Hom_{\diffZ(\cC)}(\I_\Z(S),X)}\simeq\Hom_{\diff\cC}(S,\prodz X).
$$

\subsection{Enriched difference presheaves and ordinary presheaves}

Let $\cC$ be a category. There exist functors $i^\infty$ and $r^\infty$ that make the diagram
$$
\begin{tikzcd}[column sep=normal, ampersand replacement=\&] %,every arrow/.append style={shift left}]
{\forg{\difuinf\cC}}\arrow[r,yshift=2pt,"i^\infty"] \arrow[d] \&{\bsi\rtimes\difuinf\cC} \arrow[l,yshift=-2pt,"r^\infty"]\arrow[d]\\
{1}\arrow[r,yshift=2pt,"i"]  \&{\bsi} \arrow[l,yshift=-2pt,"r"]
\end{tikzcd}
$$
commutative, and such that $r^\infty\circ i^\infty=\id$.

Indeed, a morphism $X\xrightarrow{(n,f)}Y$ in $\bsi\rtimes\difuinf\cC$ consists of a natural number $n$ and an element $f\in\forg{\difuinf\cC(X,Y)}$, and we define
$$
r^\infty(n,f)=\sigma_Y^n\circ f\in \forg{\difuinf\cC}.
$$
This is a functor because
\begin{align*}
r^\infty((m,g)\circ(n,f)) &=r^\infty(m+n,s^n(g)\circ f)=\sigma_Z^{m+n}\circ s^n(g)\circ f=\sigma_Z^m \circ g\circ \sigma_Y^n \circ f\\
& = r^\infty(m,g)\circ r^\infty(n,f).
\end{align*}
Conversely, $i^\infty$ is defined by
$$
i^\infty(f)=(0,f).
$$

The principle \ref{psh-vs-int-diff-psh} yields a diagram 
 $$
\begin{tikzcd}[column sep=normal, ampersand replacement=\&] %,every arrow/.append style={shift left}]
{[\forg{\difuinf\cC}^\op,\Set]}\arrow[r,yshift=2pt,"i^\infty"] \arrow[d] \&{[(\difuinf\cC)^\op,\diff\Set]} \arrow[l,yshift=-2pt,"r^\infty"]\arrow[d]\\
{\Set}\arrow[r,yshift=2pt,"i"]  \&{\diff\Set} \arrow[l,yshift=-2pt,"r"]
\end{tikzcd}
$$
of essential geometric morphisms.

Using the duality $(\diffinf\cC)^\op\simeq \difuinf(\cC^\op)$, we obtain a diagram 
$$
\begin{tikzcd}[column sep=normal, ampersand replacement=\&] %,every arrow/.append style={shift left}]
{[\forg{\diffinf\cC},\Set]}\arrow[r,yshift=2pt] \arrow[d] \&{[\diffinf\cC,\diff\Set]} \arrow[l,yshift=-2pt]\arrow[d]\\
{\Set}\arrow[r,yshift=2pt]  \&{\diff\Set} \arrow[l,yshift=-2pt]
\end{tikzcd}
$$
of essential geometric morphisms.

\subsection{Creating generalised difference presheaves}\label{create-gen-dif-psh}

For $\bF\in\hC$, we define a $\diff\Set$-functor $$\psig{\bF}^{\infty}:(\difuinf\cC)^\circ\to\diff\Set$$ as follows. For $X\in\Ob(\diffinf\cC)$, 
$$
\psig{\bF}^{\infty}(X)=(\bF(\forg{X}),\bF(\forg{\sigma_X}))\in\Ob(\diff\Set).
$$  
For a morphism $(f_i)\in\difuinf\cC(X,Y)$, let
$$
\psig{\bF}^{\infty}(f_i)=(\bF(f_i))\in[\bF(Y),\bF(X)].
$$
This construction respects representability. If $Y\in\cC$, then, using \ref{CvsdiffinfC},
$$
\h_{\jj^\infty(Y)}(X)=\difuinf\cC(X,\jj^\infty(Y))\simeq(\cC(\forg{X},Y),\sigma_X^{*})=(\h_Y(\forg{X},\h_Y(\forg{\sigma_X}),
$$
whence
$$
\psig{\h_Y}^\infty\simeq\h_{\jj^\infty(Y)}.
$$
% COMPLETE with a discussion of the properties of
%$$
%\psig{\,}^{\infty}:\hC\to\mathbf{Hom}((\difuinf\cC)^\circ,\diff\Set).
%$$

\subsection{Solving difference equations in difference presheaves}

Let $E\in\diff\Set$, $e$ a terminal object in $\diffinf\cC$ and let $\bG\in\diffpsh(\diffinf\cC)$. We define an object $\bG^E\in\diffpsh(\diffinf\cC)$ as
$$
\bG^E=\uHom_{(\diff\Set)\text{-}\mathbf{Cat}}(E_{e},\bG).
$$
By the universal property of constant difference objects,  for $S\in\Ob(\diffinf\cC)$,
\begin{align*}
\bG^E(S)& =\uHom(E_{e},\bG)(S) %\simeq\Hom_{\diff\cC_{\ov S}}(E_S,\bG_S) \\ 
\simeq\Nat(E_S,\bG) \\ &\simeq\Nat(E\otimes S,\bG)\simeq[E,\Nat(S,G)]
 \simeq [E,\bG(S)].
\end{align*}

The underlying functor $\bG_0^E\in\Ob(\widehat{\diff\cC})$ is therefore given by the rule
$$
\bG_0^E(S)=\Hom_{\diff\Set}(E,\bG_0(S)).
$$

Hence, if $\bH\in\diffpsh(\diff\cC)$, it is natural to define $\bH^E\in\widehat{\diff\cC}$ as
$$
\bH^E(S)=\diff\Set(E,\bH(S)).
$$ 

%More generally, if $\cA$ is a tensored and cotensored $\cV$-category, for $E\in\cV$ and $X\in\cA$,
%$$
%\uHom_\cA(E_e,X)\simeq\h_{[E,X]}.
%$$
%In particular, in a monoidal closed $\cV$,
%$$
%\h_{[X,Y]}\simeq\uHom(\h_X,\h_Y),
%$$
%so $\uHom$ is a natural extension of $[\,\,,\,]$ to presheaves.
%

\section{Difference algebra}

\subsection{Difference rings}

\subsection{Difference modules}

Let $R\in\diff\Rng$. A difference abelian group $M$ is an $R$-module, if $\forg{M}$ is an $\forg{R}$-module, and, for all $r\in R$, $m\in M$,
$$
\sigma_M(r.m)=\sigma_R(r).\sigma_M(m).
$$
A morphism of $R$-modules is a morphism of the underlying $\forg{R}$-modules that commutes with $\sigma$. We thus obtain the category of difference $R$-modules denoted
$$
R\Mod.
$$
%
%
%
%
%
%\subsection{The monoidal closed structure of difference modules}
%Let $R$ be a difference ring. The category of difference modules 
It is a monoidal category, with the tensor product structure given by
$$
M\otimes_RN=(\forg{M}\otimes_{\forg{R}}\forg{N},\sigma_M\otimes\sigma N).
$$
This is an $R$-module equipped with a difference bilinear map $\otimes:M\times N\to M\otimes_RN$ such that any difference bilinear map $M\times N\to P$  to an $R$-module $P$ factors uniquely through an $R$-module map $M\otimes_RN\to P$, i.e., the diagram
\begin{center}
 \begin{tikzpicture} 
\matrix(m)[matrix of math nodes, row sep=1em, column sep=2em, text height=1.9ex, text depth=0.25ex]
 {
 			& |(2)|{M\otimes_RN} 	\\
 |(1)|{M\times N}		& 	\\
			& |(l2)|{P} 	\\
 }; 
\path[->,font=\scriptsize,>=to, thin]
(1) edge node[above]{$\otimes$} (2) edge    (l2)
(2) edge[dashed]  (l2);
\end{tikzpicture}
\end{center}
can be uniquely completed by a dashed arrow. Equivalently,
$$
\text{Bil}_R(M,N;P)\simeq R\Mod(M\otimes_RN,P).
$$
We provide  $R\Mod$ with the structure of symmetric monoidal closed category as follows. Given $M,N\in R\Mod$, we let
$$
[M,N]=\{(f_i): f_i\in\forg{R}\Mod(\forg{M},\forg{N}), \ \ f_{i+1}\circ\sigma_M=\sigma_N\circ f_i\}, 
$$
together with the shift $s(f_0,f_1,\ldots)=(f_1,f_2,\ldots)$. Moreover, $[M,N]$ is an $R$-module with component-wise addition and the the scalar multiplication given by
$$
r.(f_i)=(\sigma^i_R(r).f_i).
$$
Following the template from \ref{diff-mon-closed} and \ref{topos-diff-sets}, we verify that
$$
\Hom_R(M\otimes_RN,P)\simeq\Hom_R(M,[N,P]).
$$
Applying general principles on monoidal closed categories, this generalises to 
$$
[M\otimes_RN,P]\simeq[M,[N,P]].
$$
We will write
$$
R_\infty\Mod
$$
for the monoidal closed category $R\Mod$ considered as enriched over itself, i.e., we may write
$$
R_\infty(M,N)=[M,N]\in R\Mod,
$$
and we refer to it as the category of \emph{enriched difference modules}.

\subsection{Skew polynomial rings}\label{skew-poly-prop}

Let $R$ be a difference ring, and consider the skew polynomial ring $\Do(R)=R[T;\sigma_R]$. Equivalently, $\Do(R)$ can be described as the constant object $\N_R=\N\otimes R$, or as a difference semigroup ring $R[\N]$. 

The ring $\Do(R)$ acts naturally on any $R$-module $M$ by $\Fix(R)$-module endomorphisms,
$$
(\sum_ir_iT^i).m=\sum_i r_i\sigma_M^i(m),
$$
where $r_i\in R$, $m\in M$. 

Note, since $\sigma_{\Do(R)}(f)=Tf$ for $f\in\Do(R)$, we deduce the following.
\begin{remark}\label{skew-diff-equiv}
The category of difference $R$-modules is equivalent to the category of left $\Do(R)$-modules,
$$
R\Mod\simeq \Do(R)\Mod.
$$
Through this equivalence, finitely $\sigma$-generated $R$-modules
correspond to finite left $\Do(R)$-modules.
\end{remark}

Consequently, for any $R$-module $M$, and $f\in\Do(R)$, 
$$
\Hom_{R\Mod}(\Do(R)/\Do(R)f,M)\simeq \{a\in M: f.a=0\}
$$
is the set of solutions to a linear difference equation $f=0$ in $M$.

\subsection{Difference twists and ring changes}

The classical base change isomorphism is compatible with the difference structure. Indeed, given a homomorphism $\varphi:R\to R'$ of difference rings, we have functors
$$
(\mathord{-})_\varphi :R\Mod\to R'\Mod, \ \ \ \ M_\varphi=M\otimes_RR',
$$
and
$$
i_\varphi:R'\Mod\to R\Mod, 
$$
where, given an $R'$-module $M'$, $i_\varphi(M')$ is the abelian group $M'$ given the $R$-module structure through the homomorphism $\varphi$.

The difference ring change functor $(\mathord)_\varphi$ is left adjoint to the functor $i_\varphi$, i.e., 
for every $R$-module $M$ and $R'$-module $M'$, we have a natural bijection
$$
R'\Mod(M_\varphi,M')\simeq R\Mod(M,i_\varphi M').
$$
%
%where the rightmost $M'$ is given the $R$-module structure through the homomorphism $\varphi$ (it would be more precise to denote it $\varphi^*M'$).

In particular, for $\varsigma=\sigma_R:R\to R$, we obtain the difference twist functor
$$
(\mathord{-})_\varsigma:R\Mod\to R\Mod
$$
and its right adjoint 
$$
i_\varsigma:R\Mod\to R\Mod.
$$

\subsection{Semilinear maps}

Let $(R,\varsigma)\in\diff\Rng$, let $M,M'\in R\Mod$. A map $\varphi:M\to M'$ is called \emph{$\varsigma$-linear} (or \emph{semilinear} when $\varsigma$ is clear from the context), if, for all $r\in R$ and $m\in M$,
$$
\varphi(r\cdot m)=\varsigma(r)\cdot\varphi(m).
$$
In other words, a semilinear map $\varphi:M\to M'$ can be viewed as an element 
$$
\varphi\in R\Mod(M,i_\varsigma M').
$$
Using the adjunction $(\mathord{-})_\varsigma \dashv i_\varsigma$, we obtain the 
\emph{associated linear map}
$$
\bar{\varphi}\in R\Mod(M_\varsigma,M).
$$
%Let $$M_\varsigma=R\otimes_R M,$$ denote the base change of $M$ along $\varsigma:R\to R$.
More explicitly, we have a natural semilinear map $\iota:M\to M_\varsigma$ given by
$$
\iota(m)=1\otimes m,
$$ 
and $\bar{\varphi}:M_\varsigma\to M'$ is given by
$$
\bar{\varphi}(r\otimes m)=r\cdot\varphi(m),
$$
so that
$$
\varphi=\bar{\varphi}\circ\iota.
$$

Suppose that $M$ and $M'$ are free $R$-modules with bases $e=(e_i)_{i\in I}$ and $f=(f_j)_{j\in J}$, and that we are given a semilinear map $\varphi:M\to M'$. Let $B=(b_{ji})$ be the $R$-matrix such that
$$
\varphi(e_i)=\sum_j b_{ji}f_j.
$$
For an element $x=\sum x_i e_i\in M$, we denote its coordinate tuple by
$$
[x]_e=(x_i)\in \oplus_{i\in I}\forg{R}.
$$
Then, writing $\varsigma$ for the component-wise action of $\varsigma$ on $\oplus_{i\in I}\forg{R}$, we obtain that
$$
[\varphi(x)]_f=B\varsigma([x]_e).
$$ 
We call $B$ \emph{the matrix of $\varphi$} with respect to the pair of bases $e,f$. 

Let $A$ be the transition matrix from the basis $e$  to a basis $e'$, and let $C$ be the transition matrix from $f$ to a basis $f'$. Let $B'$ be the matrix of $\varphi$ with respect to the pair $e',f'$. Then we compute that
$$
B'=CB\varsigma(A)^{-1},
$$
noting that $\varsigma(A)$ is invertible because $A$ is invertible.

\subsection{Modules vs.~difference modules}

 If $R$ is a difference ring with  $\sigma_R=\varsigma\neq\id$, the category $R\Mod$ is not the difference category associated to a category of modules so we cannot readily resort to techniques from \ref{forget-and-adj}. On the other hand, we can develop completely analogous techniques, as long as we treat twisting by $\varsigma$ carefully.
 
 We consider the natural forgetful functor
 $$
 \forg{\,}=\forg{\,}_R:R\Mod\to\forg{R}\Mod,
 $$
 and construct its right and left adjoints.
 
 Given a module $A_0\in\forg{R}\Mod$, write $A_j=i_{\varsigma^j}A_0$ and let
 $$
 \psig{A_0}=\psig{A_0}_R=\prod_{j\in\N}A_j,
 $$
 together with the shift map 
 $$
 \sigma:\psig{A_0}\to\psig{A_0}, \ \ \ \ \sigma(a_0,a_1,\ldots)=(a_1,a_2,\ldots).
 $$
 We directly verify that $(\psig{A_0}_k,\sigma)\in R\Mod$, i.e., that $\sigma(\lambda.a)=\varsigma(\lambda).a$, for $\lambda\in R$ and $a\in \psig{A_0}$.
 
 For an $\forg{R}$-module map $f_0:A_0\to B_0$, we let $f_j=i_{\varsigma^j}f_0$ and
 $$
 \psig{f_0}=\prod_j f_j:\psig{A_0}\to\psig{B_0},
 $$
 and we verify directly that $\psig{f_0}$ is a morphism of $R$-modules.
 
 Thus we have defined the functor
 $$
 \psig{\,}=\psig{\,}_R:\forg{R}\Mod\to R\Mod,
 $$
 which is right adjoint to $\forg{\,}_R$, 
 $$
 \forg{\,}_R\dashv\psig{\,}_R.
 $$
 
 Indeed, for $A_0\in\forg{R}\Mod$ and $B\in R\Mod$, we have a natural bijection
 $$
 \forg{R}\Mod(\forg{B},A_0)\simeq R\Mod(B,\psig{A_0}).
 $$ 
 
 The counit $\epsilon:\forg{\psig{\,}}\to\id$ is defined by stipulating that
 $$
 \epsilon_{A_0}:\forg{\psig{A_0}}=\prod_j A_j\to A_0
 $$
 be the projection onto the first factor.
 
 The unit $\eta:\id\to\psig{\forg{\,}}$ is defined by
 $$
 \eta_B:B\to\psig{\forg{B}}=\prod_j i_{\varsigma^j}\forg{B}, \ \ \ \ b\mapsto(b,\sigma b,\sigma^2 b,\ldots).
 $$
 
 Dually, given $A_0\in\forg{R}\Mod$, write $A_i=(A_0)_{\varsigma^i}$, so that $A_{i+1}\simeq (A_i)_\varsigma=A_i\otimes_RR$, and we have semilinear maps $\sigma_{i}:A_i\to A_{i+1}$, $\sigma_{i}(a)=a\otimes 1$.  Let 
 $$\ssig{A_0}=\oplus_{i\in\N}A_i,$$
 together with 
 $$
 \sigma:\ssig{A_0}\to\ssig{A_0}, \ \ \ \ \sigma(\sum_{i\in\N}a_i)=\sum_{i\in\N}\sigma_{i}(a_i),
 $$
 where $a_i\in A_i$.
 
 Given an $\forg{R}$-module map $f_0:A_0\to B_0$, we let $f_i=(f_0)_{\varsigma^i}:A_i\to B_i$ and
 $$
 \ssig{f_0}=\oplus_i f_i:\ssig{A_0}\to\ssig{B_0},
 $$
 and we readily verify that $\ssig{f_0}$ is a morphism of $R$-modules.
 
 Thus we have defined the functor
 $$
 \ssig{\,}=\ssig{\,}_R:\forg{R}\Mod\to R\Mod,
 $$
 which is left adjoint to $\forg{\,}_R$, 
 $$
 \ssig{\,}_R\dashv\forg{\,}_R.
 $$
 In other words, for $A_0\in\forg{R}\Mod$ and $B\in R\Mod$, there is a natural bijection
 $$
 R\Mod(\ssig{A_0}_R,B)\simeq\forg{R}\Mod(A_0,\forg{B}).
 $$
 The counit $\epsilon:\ssig{\forg{\,}}\to\id$ is defined in terms of linear maps $\bar{\sigma}_B^i:\forg{B}_{\varsigma^i}\to \forg{B}$ associated to $\sigma_B^i$ via
 $$
 \epsilon_B:\ssig{\forg{B}}\to B, \ \ \ \ \epsilon_B(\sum_i b_i)=\sum_i\bar{\sigma}_B^i(b_i),
 $$ 
 where $b_i\in \forg{B}_{\varsigma^i}$.

The unit $\eta:\id\to\forg{\ssig{\,}}$ is defined by taking canonical maps
$$
\eta_{A_0}:A_0\to\forg{\ssig{A_0}}\simeq\oplus_i A_i
$$
into the first summand.

\subsection{Difference modules and enriched difference modules}

The constructions of tensors and cotensors from \ref{gen-diff-is-tensored-cotensored}, applied to $R_\infty\Mod$, bear a natural difference module structure, so we conclude that
$R_\infty\Mod$ is tensored and cotensored over $\diff\Set$. 

Hence, the general constructions from \ref{ord-vs-gen} and \ref{diff-vs-gendif-enr} carry over virtually unchanged to the context of generalised difference modules.

Hence, we obtain 
additive functors 
$$
\copinf, \prodinf:\forg{R_\infty\Mod}\to R\Mod
$$
which are adjoints of $I_\infty:R\Mod\to \forg{R_\infty\Mod}$, i.e.,  %GEOM. MORPHISM??
$$
\copinf \dashv I_\infty \dashv \prodinf.
$$
Moreover, by the same argument as in \ref{co-prodinf-exact}, they are exact when considered as functors $R\Mod\to R\Mod$.

Their enriched counterparts are the
$R\Mod$-functors
$$
\copinf:R_\infty\Mod\to R_\infty\Mod, \ \ \ \ M\mapsto\N\otimes M,
$$
and 
$$
\prodinf:R_\infty\Mod\to R_\infty\Mod, \ \ \ \ M\mapsto[\N,M],
$$
so that $\copinf$ is left $R\Mod$-adjoint to $\prodinf$.

\subsection{\'Etale difference modules}

\begin{definition}
Let $(R,\varsigma)\in\diff\Rng$, let $M\in R\Mod$, and let $\bar{\sigma}_M:M_\varsigma\to M$ be the module map associated with the $\varsigma$-linear map $\sigma_M$. We say that $M$ is an \emph{\'etale $R$-module} if $\bar{\sigma}_M$ is an isomorphism.
\end{definition}

%If $M$ is \emph{free \'etale} in the sense that $\forg{M}$ is free over $\forg{R}$, so that   there exists a basis $(e_j:j\in J)$ of $M$ such that, for each $i$, $(\sigma_M^i(e_j):j\in J)$ is again a basis for $M$.

\begin{lemma}
Let $M$ be an $R$-module such that $\forg{M}$ is finite free over $\forg{R}$. The following conditions are equivalent.
\begin{enumerate}
\item $M$ is \'etale;
\item for every basis $(e_i)$ of $M$, the sequence $(1\otimes e_i)$ is a basis for $M_\varsigma$;
\item for every basis $(e_i)$ of $M$, the sequence $(\sigma(e_i))$ is a basis for $M$;
\item for some basis $(e_i)$ of $M$, the sequence $(\sigma(e_i))$ is a basis for $M$;
\item the matrix of $\sigma_M$ is invertible.
\end{enumerate}
\end{lemma}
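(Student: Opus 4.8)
The plan is to reduce all five conditions to a single statement about the $R$-linear map $\bar\sigma_M\colon M_\varsigma\to M$ associated, via the adjunction $(\mathord{-})_\varsigma\dashv i_\varsigma$, to the $\varsigma$-linear map $\sigma_M$: recall from the earlier discussion of semilinear maps that $\sigma_M=\bar\sigma_M\circ\iota$, where $\iota\colon M\to M_\varsigma$, $m\mapsto 1\otimes m$, is the canonical semilinear map. By definition, (1) is exactly the assertion that $\bar\sigma_M$ is an isomorphism, so everything comes down to showing that (2)--(5) all paraphrase this.

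First I would record the fact that, since $\forg M$ is finite free over $\forg R$ with basis $(e_i)$, base change along $\varsigma$ preserves freeness, so $\forg{M_\varsigma}$ is finite free over $\forg R$ on the sequence $(1\otimes e_i)$. This already gives (2); indeed (2) holds for \emph{every} such $M$, so it is not an extra hypothesis but precisely the fact that powers the remaining arguments, and in particular it is implied by (1). Using it, $\bar\sigma_M$ becomes a morphism of finite free $\forg R$-modules of equal rank with $\bar\sigma_M(1\otimes e_i)=\sigma_M(e_i)$. Hence, writing $B=(b_{ji})$ for the matrix of $\sigma_M$ in the basis $(e_i)$ (so $\sigma_M(e_i)=\sum_j b_{ji}e_j$, in the sense of the section on semilinear maps), $B$ is simultaneously the matrix of the $\forg R$-linear map $\bar\sigma_M$ with respect to the bases $(1\otimes e_i)$ and $(e_i)$. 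A morphism of finite free modules of equal rank over a commutative ring is an isomorphism iff its matrix lies in $\mathrm{GL}$ iff it carries a (hence every) basis to a basis; applying this yields at once (1)$\Leftrightarrow$(5), and (1)$\Leftrightarrow$(3) for the given basis. Since the fixed basis $(e_i)$ was arbitrary, the same argument gives ``(3) for some basis'' $\Leftrightarrow$ ``$\bar\sigma_M$ an isomorphism'' $\Leftrightarrow$ ``(3) for every basis'', i.e.\ (3)$\Leftrightarrow$(4); alternatively this independence of the basis can be read off the change-of-basis formula $B'=CB\varsigma(A)^{-1}$ recorded earlier, which also shows that (5) is well-posed.

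I do not expect a serious obstacle: the content is entirely in the two preliminary observations (the associated linear map $\bar\sigma_M$, and preservation of freeness under $(\mathord{-})_\varsigma$), and the rest is routine linear algebra over a commutative ring. The only points that need care are bookkeeping ones: keeping track that $\bar\sigma_M$ is $R$-linear while $\sigma_M$ is merely $\varsigma$-linear, so that the twist $\varsigma$ sits in the change-of-basis matrix and not in $B$ itself; dispatching the trivial case $M=0$, where all conditions hold vacuously; and, in the ``for some $\Rightarrow$ for every'' half of (3)$\Leftrightarrow$(4), using that once $\bar\sigma_M$ is an isomorphism it sends the basis $(1\otimes f_j)$ of $M_\varsigma$ to a basis of $M$ for \emph{any} basis $(f_j)$ of $\forg M$.
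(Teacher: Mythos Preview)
Your approach is essentially the paper's: reduce everything to the $R$-linear map $\bar\sigma_M$ and use that its matrix in the bases $(1\otimes e_i)$, $(e_i)$ coincides with the matrix $B$ of the semilinear map $\sigma_M$, whence (1), (3), (4), (5) are all reformulations of $B\in\mathrm{GL}$. The paper argues (1)$\Rightarrow$(5) by producing an explicit inverse $U$ with $AU=I$, then reads off (2) and (3); you instead invoke the standard fact that an endomorphism of equal-rank finite free modules is an isomorphism iff its matrix is invertible iff it sends a basis to a basis, which is cleaner and gives all directions at once.

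Your observation that (2) holds \emph{unconditionally}---base change of a finite free module along any ring homomorphism is free on the image of any basis---is sharper than the paper's, which only deduces (2) from (1). But you should push this one step further: since (2) is always true while (1) is not, the implication (2)$\Rightarrow$(1) fails, so the lemma as literally stated cannot be a five-way equivalence. The honest content is (1)$\Leftrightarrow$(3)$\Leftrightarrow$(4)$\Leftrightarrow$(5), with (2) serving as the automatic auxiliary fact that makes the matrix comparison possible. Flag this explicitly rather than leaving it implicit in the phrase ``in particular it is implied by (1).''
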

\begin{proof}
Suppose $\bar{\sigma}:M_\varsigma\to M$ has inverse $u:M\to M_\varsigma$. Let $(e_i)$ be a basis for $M$, and hence $(u(e_i))$ is a basis for $M_\varsigma$.  Write
$$
u(e_i)=\sum_j u_{ji}\otimes e_j,
$$ 
and let $A=(a_{kj})$ be the matrix for $\sigma$, i.e., for $x\in M$,
$$
[\sigma(x)]_e=A\varsigma([x]_e).
$$
Then
$$
e_i=\bar{\sigma}u(e_i)=\sum_j u_{ji}\sigma(e_j)=\sum_j u_{ji}\sum_k a_{kj}e_k=
\sum_k\left(\sum_j a_{kj}u_{ji}\right)e_k,
$$
whence we conclude that $AU=I$ so $U$ and A are invertible and it follows that $(1\otimes e_i)$ and $(\sigma(e_i))$ are bases.

Moreover, we see that $A$ is the matrix of the module map $\bar{\sigma}$ in bases $(1\otimes e_i)$ and $(e_i)$. 

Note, if $A'$ is the matrix of $\sigma$ in a different basis, and $B$ is the transition matrix between the bases,
$$
A'=B A \varsigma(B)^{-1},
$$
so that the condition of invertibility of the matrix of $\sigma$ does not depend on the choice of a basis.
\end{proof}

%We say that $M$ is \emph{finite \'etale} if it is \'etale and $\forg{M}$ is finite free over $\forg{R}$. Equivalently, identifying $M\simeq R^{\oplus n}$ for some $n$, $M$ is finite \'etale if and only if there exists an invertible matrix $A\in \text{GL}_n(R)$ such that, for $x\in R^{\oplus n}$,
%$$
%\sigma_M x=A\cdot \varsigma(x).
%$$
%Note, if we choose a different basis for which the action of $\sigma_M$ is matched by some $A'\in\text{GL}_n(R)$, we get that $A'$ is $\varsigma$-conjugate to $A$,
%$$
%A'=\varsigma(B)AB^{-1},
%$$
%where $B\in\text{GL}_n(R)$ is the transition matrix between the bases, so the above condition does not depend on the choice of a basis.

\begin{lemma}\label{inthom-etale}
Let $R\in\diff\Rng$ and let $M,M'\in R\Mod$ with $M$ \'etale. Then
$$
[M,M']\simeq\forg{R}\Mod(\forg{M},\forg{M'}),
$$
together with the `shift' 
$$
\varphi_0\mapsto \bar{\sigma}_{M'}\circ (\varphi_0)_\varsigma\circ \bar{\sigma}_M^{-1}.
$$
\end{lemma}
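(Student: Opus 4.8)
The plan is to realise the asserted isomorphism explicitly as the evaluation map
$$
\Phi\colon [M,M']\to\forg{R}\Mod(\forg{M},\forg{M'}),\qquad (f_i)_{i\in\N}\mapsto f_0,
$$
and to prove it bijective by showing that the whole sequence $(f_i)$ is recovered from $f_0$ by iterating a single operator. Put
$$
T\colon \forg{R}\Mod(\forg{M},\forg{M'})\to \forg{R}\Mod(\forg{M},\forg{M'}),\qquad T(\varphi)=\bar{\sigma}_{M'}\circ(\varphi)_\varsigma\circ\bar{\sigma}_M^{-1};
$$
this is well defined because $M$ is \'etale, so $\bar{\sigma}_M\colon M_\varsigma\to M$ is invertible, while $(\varphi)_\varsigma\colon M_\varsigma\to M'_\varsigma$ and $\bar{\sigma}_{M'}\colon M'_\varsigma\to M'$ are available with no \'etaleness hypothesis on $M'$. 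The key lemma I would isolate is: for $\forg{R}$-linear maps $f,g\colon\forg{M}\to\forg{M'}$ one has $g\circ\sigma_M=\sigma_{M'}\circ f$ if and only if $g=T(f)$. Granting this, an element of $[M,M']$ is exactly a sequence $(f_i)$ with $f_{i+1}=T(f_i)$, i.e.\ a sequence $(\varphi,T\varphi,T^2\varphi,\dots)$ determined by $\varphi=f_0$; hence $\Phi$ is bijective, and the shift $s(f_0,f_1,\dots)=(f_1,f_2,\dots)$ is transported by $\Phi$ to $T$, which is precisely the map $\varphi_0\mapsto\bar{\sigma}_{M'}\circ(\varphi_0)_\varsigma\circ\bar{\sigma}_M^{-1}$ in the statement.

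For the key lemma I would use the bijective correspondence, from the section on semilinear maps, between $\varsigma$-semilinear maps $M\to M'$ and $R$-linear maps $M_\varsigma\to M'$, together with the two identities $\overline{g\circ\sigma_M}=g\circ\bar{\sigma}_M$ and $\overline{\sigma_{M'}\circ f}=\bar{\sigma}_{M'}\circ(f)_\varsigma$. Both composites $g\circ\sigma_M$ and $\sigma_{M'}\circ f$ are readily checked to be $\varsigma$-semilinear (using that $f,g$ are $\forg{R}$-linear), so each side has an associated linear map; each of the two identities is then verified by evaluating on the subset $\iota_M(M)\subseteq M_\varsigma$, using $\sigma_M=\bar{\sigma}_M\circ\iota_M$, naturality $(f)_\varsigma\circ\iota_M=\iota_{M'}\circ f$, and $\sigma_{M'}=\bar{\sigma}_{M'}\circ\iota_{M'}$. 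The only point requiring care here is that $\iota_M(M)$ generates $M_\varsigma=M\otimes_RR$ as an $R$-module (since $r\otimes m=r\cdot\iota_M(m)$), so two $R$-linear maps out of $M_\varsigma$ that agree on $\iota_M(M)$ coincide; that all the composites in sight are $R$-linear is immediate because $R$ and $\forg{R}$ have the same underlying ring. Combining the two identities, $g\circ\sigma_M=\sigma_{M'}\circ f$ is equivalent to $g\circ\bar{\sigma}_M=\bar{\sigma}_{M'}\circ(f)_\varsigma$, and invertibility of $\bar{\sigma}_M$ turns this into $g=T(f)$.

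Finally I would record that $\Phi$ is additive and, when $R$ is commutative, $\forg{R}$-linear for the evident structures: the $R$-action on $[M,M']$ is $r\cdot(f_i)=(\varsigma^i(r)f_i)$, whose $0$-th component is $rf_0$, so $\Phi$ intertwines it with the natural action on $\forg{R}\Mod(\forg{M},\forg{M'})$. Transporting the difference $R$-module structure of $[M,M']$ along the bijection $\Phi$ therefore identifies $[M,M']$ with $\forg{R}\Mod(\forg{M},\forg{M'})$ carrying its natural module structure and the shift $T$, and in particular the difference-module axiom $\sigma(r\cdot x)=\varsigma(r)\cdot\sigma(x)$ for the target holds automatically. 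I do not expect a serious obstacle: the proof is a clean recursion driven by invertibility of $\bar{\sigma}_M$, and the one delicate part is the bookkeeping of which composites are genuinely $R$-linear versus merely $\varsigma$-semilinear, and invoking the semilinear/linear correspondence in the correct direction when proving the key lemma.
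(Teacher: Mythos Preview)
Your proof is correct and follows essentially the same route as the paper: both define $\Phi\colon(f_i)\mapsto f_0$ and recover the full sequence from $f_0$ by iterating $T(\varphi)=\bar{\sigma}_{M'}\circ(\varphi)_\varsigma\circ\bar{\sigma}_M^{-1}$, using the identities $\sigma_M=\bar{\sigma}_M\circ\iota_M$ and $(\varphi)_\varsigma\circ\iota_M=\iota_{M'}\circ\varphi$. Your version is in fact slightly more complete: you explicitly prove the ``only if'' direction of the key lemma (hence injectivity of $\Phi$), whereas the paper verifies only that the inductively defined sequence lies in $[M,M']$ and then asserts the two assignments are mutually inverse; you also record compatibility with the $R$-module structure, which the paper leaves implicit.
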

\begin{proof}
Given a morphism $\varphi=(\varphi_i)\in[M,M']$, we assign the component $\varphi_0$ to it.

Conversely, let $\varphi_0\in \forg{R}\Mod(\forg{M},\forg{M'})$. For $i\in\N$, define
$$
\varphi_{i+1}= \bar{\sigma}_{M'}\circ (\varphi_i)_\varsigma\circ \bar{\sigma}_M^{-1}.
$$
We have
$$
\varphi_{i+1}\circ\sigma_M=\bar{\sigma}_{M'}\circ (\varphi_i)_\varsigma\circ \bar{\sigma}_M^{-1}\circ\sigma_M=\bar{\sigma}_{M'}\circ (\varphi_i)_\varsigma\circ\iota_M=\bar{\sigma}_{M'}\circ\iota_{M'}\circ\varphi_i=\sigma_{M'}\circ\varphi_i,
$$
so that $(\varphi_i)\in[M,M']$. 

These assignments are mutually inverse.
\end{proof}

\subsection{Duals}

Given a difference $R$-module $M$, its (unrestricted) dual is defined as
$$
M^\vee=[M,R].
$$
Taking the adjoint of the evaluation map
$$
[M,R]\otimes M\to R, \ \ \ f\otimes m\mapsto f_0(m)
$$
yields the canonical map
$$
\iota:M\to M^{\vee\vee},\ \ \ \iota(m)_i(f)=f_0(\sigma_M^i(m)). 
$$

\begin{example}
Let $(R,\varsigma)\in\diff\Rng$. The $R$-modules $\copinf R$ and $\ssig{\forg{R}}$ are isomorphic, given that for $M\in R\Mod$,
$$
R\Mod(\copinf R,M)\simeq\forg{[R,M]}\simeq\forg{M}\simeq\forg{R}\Mod(\forg{R},\forg{M})\simeq R\Mod(\ssig{\forg{R}},M).
$$ 
More explicitly, they are both isomorphic to $R_\infty=\oplus_{j\in\N}R\simeq \oplus_{j\in\N}Re_j$, with the endomorphism $\sigma(\sum_j x_je_j)=\sum_j\varsigma(x_j)e_{j+1}$.

We claim that
$$
R_\infty^\vee=[R_\infty,R]\simeq R^\Z,
$$
together with the shift $s:R^\Z\to R^\Z$ defined, for $c=(c_i)\in R^\Z$, by the rule
$$
s(c)_j=\begin{cases}
c_{j-1}, & \text{for }j\leq 0\\
\varsigma c_{j-1}, & \for{for }j>0.
\end{cases} 
$$
In particular, $R_\infty^\vee$ is naturally equipped with an $R$-algebra structure.

Indeed, suppose $f=(f_i)\in R_\infty^\vee$. Then each $f_i$ belongs to
$$
\forg{R_\infty}^\vee=\left(\oplus_{j\in\N}\forg{R}\right)^\vee\simeq\prod_{j\in\N}\forg{R}^\vee\simeq\prod_{j\in\N}\forg{R}\simeq\forg{R}^\N.
$$
Hence, each $f_i$ can be identified with a tuple $b_i=(b_i^j)\in \forg{R}^\N$ so that, for $x=\sum_j x_j e_j\in R_\infty$, $f_i(x)=\sum_j b_i^j x_j$. The condition $f_{i+1}\circ\sigma=\varsigma\circ f_i$ implies that
$$
b_{i+1}^{j+1}=\varsigma(b_i^j).
$$
Thus, the whole array $b_i^j$ is determined by $b_0^j$, $j\in\N$ and $b_i^0$, $i\in\N$. We combine the two sequences into a sequence $c\in\forg{R}^\Z$ through
$$
c_j=\begin{cases}
b_0^{j}, & \text{for }j\geq 0\\
b^0_{-j}, & \for{for }j<0.
\end{cases} 
$$
It is straightforward to verify that the usual shift of the sequence $(b_i)$ has the claimed effect on the sequence $c$.
\end{example}

\begin{proposition}
Let $R\in\diff\Rng$ and let $M,N\in R\Mod$.
There is a canonical map 
$$
\alpha:M^\vee\otimes_RN\to[M,N], 
$$
given by $\alpha(f\otimes n)_i(m)=f_i(m).\sigma_N^i(n)$.
\begin{enumerate}
\item If $M$ is finite \'etale, then $\alpha$ is bijective.
\item If $N$ is \'etale (resp.\ finite \'etale), then $\alpha$ is injective (resp.\ bijective).
\end{enumerate}
\end{proposition}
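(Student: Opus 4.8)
The plan is to analyse the canonical map $\alpha\colon M^\vee\otimes_RN\to[M,N]$ by first unwinding what its components do in terms of the underlying $\forg R$-module structure, and then to reduce each of the two claims to the corresponding classical statement about $\forg R$-modules, transported along the étale identifications already available in \ref{inthom-etale} and the preceding lemmas.

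First I would record the underlying shape of $\alpha$. On the level of $\forg R$-modules we have $\forg{M^\vee\otimes_RN}\simeq \forg{[M,R]}\otimes_{\forg R}\forg N$, and the $0$-th component $\alpha(f\otimes n)_0(m)=f_0(m)\cdot n$ is exactly the classical canonical map $\forg M^\vee\otimes_{\forg R}\forg N\to\forg R\Mod(\forg M,\forg N)$, where $\forg M^\vee=\forg R\Mod(\forg M,\forg R)$. It is a standard fact that this classical map is injective when $\forg M$ is projective (in particular finite free) and bijective when $\forg M$ is finite free; the same holds with the roles of $M$ and $N$ suitably swapped since $\forg N\simeq\forg N^{\vee\vee}$ when $\forg N$ is finite free. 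So the first step is: identify $\alpha_0$ with the classical canonical map and invoke these facts. The higher components $\alpha(f\otimes n)_i(m)=f_i(m)\cdot\sigma_N^i(n)$ are then determined by $\alpha_0$ together with the shift structures on the source and target, because $M^\vee\otimes_RN$ and $[M,N]$ are difference modules and $\alpha$ is a difference-module morphism; concretely, $\sigma_{M^\vee\otimes N}(f\otimes n)=s(f)\otimes\sigma_N(n)$ and $s$ on $[M,N]$ shifts indices, and one checks $\alpha$ intertwines these.

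For (1), with $M$ finite étale, I would use \ref{inthom-etale}: since $M$ is étale, $[M,N]\simeq\forg R\Mod(\forg M,\forg N)$ as a set (with a twisted shift), and likewise $M^\vee=[M,R]\simeq\forg R\Mod(\forg M,\forg R)=\forg M^\vee$ with its twisted shift. Under these identifications $\alpha$ becomes, on underlying sets, precisely the classical isomorphism $\forg M^\vee\otimes_{\forg R}\forg N\xrightarrow{\sim}\forg R\Mod(\forg M,\forg N)$ valid because $\forg M$ is finite free; so $\alpha$ is bijective. One then must verify that $\alpha$ respects the two shift structures, which is a routine but necessary check using $\bar\sigma_M$ being an isomorphism — this is where the étale hypothesis is really consumed, ensuring the twisted shift on $[M,N]$ matches the one induced on $\forg M^\vee\otimes_{\forg R}\forg N$ via $\bar\sigma_M^{-1}$ and $\bar\sigma_N$. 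For (2), with $N$ étale, the identification $[M,N]\simeq\forg R\Mod(\forg M,\forg N)$ again holds by \ref{inthom-etale} (this lemma only needs $M$ étale for its stated form, so here I would instead argue directly: étaleness of $N$ lets us rewrite $\sigma_N^i$ via $\bar\sigma_N$ and reduce the component description of $[M,N]$ to sequences $(\varphi_i)$ freely determined by $\varphi_0\in\forg R\Mod(\forg M,\forg N)$ when $N$ is étale — one should double-check whether étaleness of $N$ alone suffices for this, since the naive reconstruction $\varphi_{i+1}=\bar\sigma_N\circ(\varphi_i)_\varsigma\circ\bar\sigma_M^{-1}$ used in \ref{inthom-etale} needs $\bar\sigma_M$ invertible). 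Modulo that point, injectivity of $\alpha$ follows from injectivity of the classical map for $\forg M$ projective-free, and bijectivity when moreover $\forg N$ is finite free, since then $\forg M^\vee\otimes\forg N\simeq\forg M^\vee\otimes\forg N^{\vee\vee}\simeq(\forg M\otimes\forg N^\vee)^\vee$ pairs correctly.

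The main obstacle I anticipate is precisely the compatibility of $\alpha$ with the \emph{shift/difference structures} on both sides, rather than the underlying bijectivity, which is classical. In particular in case (2) one must be careful about exactly what étaleness hypothesis on $N$ (versus $M$) is needed to get a clean description of $[M,N]$; the safe route is to first treat $M$ finite free with no étaleness and show $\alpha$ is injective on underlying modules, then separately verify the shift-intertwining using that $N$ étale makes $\bar\sigma_N$ invertible so that the higher components $f_i(m)\cdot\sigma_N^i(n)$ are forced and compatible. If the description of $[M,N]$ genuinely requires $M$ étale, one should instead restrict attention in (2) to showing injectivity at the level of $0$-th components (which only needs $\forg M$ finite free) and then note that a difference-module morphism whose $0$-th component is injective and which intertwines a shift admitting the relevant reconstruction is itself injective.
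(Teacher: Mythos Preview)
Your treatment of (1) is essentially the paper's: both reduce to the classical isomorphism $\forg{M}^\vee\otimes_{\forg R}\forg{N}\simeq\forg{R}\Mod(\forg M,\forg N)$ for $\forg M$ finite free, using the identification $[M,N]\simeq(\forg R\Mod(\forg M,\forg N),s)$ available from \ref{inthom-etale} when $M$ is \'etale.

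Your argument for (2), however, has a genuine gap. First, you repeatedly invoke hypotheses on $M$ (``treat $M$ finite free'', ``only needs $\forg M$ finite free'') that are simply not present: in (2) nothing is assumed about $M$. Second, and more seriously, your fallback strategy of ``showing injectivity at the level of $0$-th components'' cannot succeed. The map $x\mapsto\alpha(x)_0$ is in general \emph{not} injective even when $\alpha$ is: take $M=\ssig{\forg R}$, $N=R$, and $f\in M^\vee$ with $f_0=0$ but $f_1\neq0$ (such $f$ exist, e.g.\ $f_1(e_0)=1$, $f_1(e_i)=0$ for $i\ge1$); then $\alpha_0(f\otimes1)=0$ while $f\otimes1\neq0$. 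The reconstruction formula $\varphi_{i+1}=\bar\sigma_N\circ(\varphi_i)_\varsigma\circ\bar\sigma_M^{-1}$ you hoped might rescue this genuinely needs $\bar\sigma_M$ invertible, as you yourself suspected; \'etaleness of $N$ alone gives no such identification of $[M,N]$ with $\forg R\Mod(\forg M,\forg N)$.

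The paper's argument for (2) is instead a direct one in the spirit of Bourbaki, and crucially uses the \'etaleness of $N$ at \emph{every} index $i$ simultaneously. Since $N$ is \'etale it admits a basis $(e_j)$ such that $(\sigma_N^i e_j)_j$ is again a basis for each $i$; any element of $M^\vee\otimes_RN$ then writes uniquely as $\sum_j f^j\otimes e_j$, and if its image in $[M,N]$ vanishes one gets $\sum_j f^j_i(m)\,\sigma_N^i e_j=0$ for every $i$ and $m$, whence $f^j_i=0$ for all $i,j$ by linear independence at stage $i$. For surjectivity when $N$ is finite \'etale, given $f\in[M,N]$ one expands each $f_i$ in the basis $(\sigma_N^i e_j)$ and checks directly from $f_{i+1}\sigma_M=\sigma_N f_i$ that the coefficient sequences $(f^j_i)_i$ land in $M^\vee$; this step is where finiteness of the basis is used. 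The key idea you are missing is that the \'etale hypothesis on $N$ is consumed not through any identification of $[M,N]$, but through the persistence of the basis under all iterates of $\sigma_N$.
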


\begin{proof}
For (1), we start by noting that, if $M$ is finite \'etale, then $$[M,N]\simeq (\forg{k}\Mod(\forg{M},\forg{N}),s),$$ where the shift $s$ is defined as follows. We identify $M\simeq\oplus_{j=1}^n ke_j\simeq k^{\oplus n}$ so that $\sigma_M(x)=A\cdot\sigma(x)$ for some $A\in \text{GL}_n(R)$. For a $\forg{R}$-module map $f:\forg{M}\to\forg{N}$, let $f(e_j)=b_i\in N$, so we can formally write
$$
f(x)=B\cdot x,
$$ 
for $x\in R^{\oplus n}$, where $B=[b_1,\ldots,b_n]$, which stands for the relation $f(\sum_j x_j e_j)=\sum_j x_jb_j$.
With this notation, we define $s(f)$ by
$$
s(f)(x)=\sigma_N(B)\cdot A^{-1}\cdot x.
$$
We can now directly verify that
$$
s(f)\circ\sigma_M=\sigma_N\circ f.
$$
Indeed,
$$
s(f)(\sigma_M(x))=s(f)(A\cdot\varphi(x))=\sigma_N(B)\cdot A^{-1}\cdot A\cdot\varsigma(x)=
\sigma_N(B)\cdot\varsigma(x)=\sigma_N(B\cdot x)=\sigma_N (f(x)).
$$

In particular, 
$$
M^\vee=(\forg{M}^\vee,s). 
$$
On the other hand, classically (\cite[II,\S4.2, Proposition~2(ii)]{bourbaki-alg}) we know that
$$
\forg{M}^\vee\otimes\forg{N}\simeq\forg{R}\Mod(\forg{M},\forg{N})\simeq\forg{[M,N]},
$$
so the canonical map is bijective.

The proof of (2) proceeds in the spirit of \cite[II,\S4.2, Proposition~2(i)]{bourbaki-alg}.
Suppose that $N$ is \'etale, so it has a basis $(e_j:j\in J)$ so that for every $i$, 
$(\sigma_N^i(e_j):j\in J)$ is again a basis. Every element of $M^\vee\otimes_R N$ has a unique representation as a finite sum
$$
\sum_j f^j\otimes e_j.
$$
It is mapped to 0 in $[M,N]$ if, for every $i$ and $m\in M$,
$\sum_j  f^j_i(m)\sigma^ie_j=0$, which implies that all $f^j_i=0$, i.e., $f^j=0$ for all $j$, so we conclude that $\alpha$ is injective.

Assume now that $N$ is finite \'etale, i.e., that $J$ is finite. To show that $\alpha$ is surjective, let $f=(f_i)\in [M,N]$, and write
$$
f_i(x)=\sum_j f_i^j(x)\sigma_N^i e_j
$$
for unique $f_i^j$, $j\in J$. Let $f^j=(f_i^j)$, and we claim that for all $j\in J$, $f^j\in M^\vee=[M,R]$, i.e., that
$$
f_{i+1}^j\circ\sigma_M=\varsigma\circ f_i^j.
$$
Since $f_{i+1}\circ\sigma_M=\sigma_N\circ f_i$, we get that
$$
\sum_j f_{i+1}^j(\sigma_Mx)\sigma_N^{i+1}e_j=\sigma_N\left(\sum_j f_i^j(x)\sigma_N^ie_j\right)=\sum_j\varsigma(f_i^j(x))\sigma_N^{i+1}e_j,
$$
whence we obtain the required relation by comparing terms. It is routine to verify that
$$
\alpha\left(\sum_j f^j\otimes e_j\right)=f.
$$
\end{proof}

\begin{corollary}\label{etale-is-reflexive}
If $M$ is a finite \'etale module over a difference ring $R$, then $M$ is reflexive,
$$
M\simeq M^{\vee\vee}.
$$ 
\end{corollary}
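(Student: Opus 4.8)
The plan is to show that the canonical morphism $\iota\colon M\to M^{\vee\vee}$ introduced before the statement is an isomorphism of difference $R$-modules, by reducing it to the classical biduality isomorphism for finite free modules over $\forg{R}$. The two ingredients I will need are, first, that $R$ itself is finite \'etale as an $R$-module, and second — the real content — that $M^\vee$ is again finite \'etale. Once these are in place the argument is purely formal: $M^{\vee\vee}=[M^\vee,R]$ collapses, via Lemma~\ref{inthom-etale}, onto the underlying bidual of $\forg{M}$, and $\iota$ becomes the classical biduality map on the nose.

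First I would record that $R$, regarded as an $R$-module with $\sigma_M=\sigma_R=\varsigma$, is finite \'etale: $\forg{R}$ is free of rank one and $\bar\sigma_R\colon R_\varsigma\simeq R\otimes_RR\to R$ is an $R$-module map between free rank-one modules sending $1\otimes1$ to $\varsigma(1)=1$, hence an isomorphism. Next, since $M$ is \'etale, Lemma~\ref{inthom-etale} with $M'=R$ gives an isomorphism of difference modules $M^\vee=[M,R]\simeq\bigl(\forg{R}\Mod(\forg{M},\forg{R}),s\bigr)=(\forg{M}^\vee,s)$ with $s(\varphi_0)=\bar\sigma_R\circ(\varphi_0)_\varsigma\circ\bar\sigma_M^{-1}$. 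In particular $\forg{M^\vee}\simeq\forg{M}^\vee$ is finite free over $\forg{R}$. Choosing a basis of $\forg{M}$, the matrix $A$ of $\sigma_M$ is invertible (as $M$ is \'etale), and a short computation with $s$ on the dual basis expresses the matrix of $\sigma_{M^\vee}$ in terms of $A$ and the invertible matrix of $\bar\sigma_R$; it comes out invertible as well, so $M^\vee$ is finite \'etale.

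Now apply Lemma~\ref{inthom-etale} once more, with $M^\vee$ in place of $M$ and $M'=R$: this yields $M^{\vee\vee}=[M^\vee,R]\simeq\bigl((\forg{M^\vee})^\vee,s'\bigr)\simeq(\forg{M}^{\vee\vee},s')$, the accompanying underlying $\forg{R}$-module isomorphism $M^{\vee\vee}\to\forg{M}^{\vee\vee}$ being $(g_i)\mapsto g_0$. Unwinding the definition $\iota(m)_i(f)=f_0(\sigma_M^i(m))$ through these identifications (the map $M^\vee\to\forg{M}^\vee$ is $(f_j)\mapsto f_0$), the underlying $\forg{R}$-linear map $\forg{\iota}\colon\forg{M}\to\forg{M}^{\vee\vee}$ sends $m$ to the evaluation functional $\varphi_0\mapsto\varphi_0(m)$; that is, $\forg{\iota}$ is precisely the classical biduality map of the finite free module $\forg{M}$, hence an isomorphism. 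Finally, a morphism in $R\Mod$ whose underlying $\forg{R}$-module map $\bar f$ is invertible is itself an isomorphism, since $\bar f\circ\sigma_M=\sigma_N\circ\bar f$ with $\bar f$ invertible forces $\bar f^{-1}\circ\sigma_N=\sigma_M\circ\bar f^{-1}$; applying this to $\iota$ gives $M\simeq M^{\vee\vee}$.

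The step I expect to be the main obstacle is showing that $M^\vee$ is finite \'etale: the identification of $\forg{M^\vee}$ with the classical dual $\forg{M}^\vee$ is immediate from Lemma~\ref{inthom-etale}, but one must still verify that the shift $s$ on it has an invertible matrix. This is the single place where an explicit — if entirely routine — matrix manipulation (essentially a transpose-and-$\varsigma$-twist of $A$, together with the matrix of $\bar\sigma_R$) is unavoidable; everything after that is diagram chasing and the formal fact that the forgetful functor $\forg{\,}\colon R\Mod\to\forg{R}\Mod$ reflects isomorphisms.
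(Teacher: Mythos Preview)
Your proof is correct and follows exactly the line the paper intends. The corollary is stated without proof, meant to follow from the machinery established in the proof of the preceding proposition (in particular the identification $M^\vee\simeq(\forg{M}^\vee,s)$, which is Lemma~\ref{inthom-etale}); your argument is the natural fleshing-out of that, and the key intermediate step you flag---that $M^\vee$ is again finite \'etale, with shift matrix $(A^{-1})^T$---is precisely what is needed to iterate the lemma and land on the classical biduality of $\forg{M}$.
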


\subsection{Enriched difference module maps with finite-dimensional support}

Define 
$$
[M,N]_c
$$
as the $R$-module consisting of those $(f_i)\in [M,N]$ such that $f_0$ has finite-dimensional support, 
% for every 
%
%
%$m\in M$,  
and there exists an $i\in\N$ such that for every $j\geq i$,  $f_j\restriction_{M\setminus R\sigma M}=0$.
The $R$-module
$$
[M,N]_{+}
$$
consists of those $(f_i)\in[M,N]_c$ such that, for $i>0$, $f_i\restriction_{M\setminus R\sigma M}=0$.

An $R$-module $L$ is free finitely $\sigma$-generated, if there exist elements $e_1,\ldots,e_n\in L$ so that $L$ is a free module on $e_1,\ldots,e_n, \sigma e_1,\ldots,\sigma e_n, \sigma^2 e_1,\ldots, \sigma^2 e_n,\ldots$. Equivalently, $L$ is free finitely $\sigma$-generated by $n$ elements if it is of the form
$$
L\simeq \ssig{\forg{R}^n}_R.
$$

In addition to $L^\vee=[L,R]$, we consider the $R$-modules
$$
L^\vee_c=[L,R]_c\ \ \ \text{ and }\ \ \ L^\vee_{+}=[L,R]_{+}.
$$
The operators
$$
\ldexp{i}{e^\vee}{j}:\forg{L}\to\forg{R}, \ \ \  \ldexp{i}{e^\vee}{j}(\sigma^ke_l)=\delta_{ik}\delta_{jl}
$$
fit together to form elements of the dual basis
$$
e_j^\vee=(\ldexp{i}{e^\vee}{j})_i\in L^\vee_{+},
$$
for $j=1,\ldots,n$.
Although the shift $s$ on $L^\vee$ is not invertible, we write
$$
s^{-r}(e_j^\vee)=(\underbrace{0,\ldots,0}_{r\text{ times}})^\frown e_j^\vee\in L^\vee_c.
$$
It follows that
$L^\vee_{+}$ is freely $s$-generated by $e_1^\vee,\ldots,e_n^\vee$. More explicitly, for $f\in L^\vee_{+}$,
$$
f=\sum_{r\geq 0}\sum_{j=1}^n f_0(\sigma_L^re_j).s^r e_j^\vee=\sum_{r\geq 0}\sum_{j=1}^n \sigma_R^rf_r(e_j).s^r e_j^\vee,
$$
noting that the sum on $r$ is actually finite. The expression can be verified by evaluating both sides on the generators $\sigma^ie_j$. 

Moreover, if $R$ is inversive, then $L^\vee_c$ is freely $\{s,s^{-1}\}$-generated by $e_1^\vee,\ldots,e_n^\vee$. Explicitly, for $f\in L^\vee_c$,
$$
f=\sum_{r\in\Z}\sum_{j=1}^n \sigma_R^rf_r(e_j).
$$
For a free finitely $\sigma$-generated $R$-module $L$ and any $R$-module $M$  we have a canonical isomorphism
$$
L^\vee_{+}\otimes_RM\simeq[L,M]_{+}.
$$
We show that the canonical map $\alpha: L^\vee_{+}\otimes_RM\to[L,M]_{+}$, defined in the general case as 
$$
\alpha(f\otimes m)_i(x)=f_i(x).\sigma_M^i(m)
$$
realises this isomorphism. Let $h=(h_i)=\alpha(f\otimes m)$, for $f=(f_i)\in L^\vee_{+}$ and $m\in M$. In order to show that $h\in[L,M]_{+}$, we verify
\begin{enumerate}
\item $h_{i+1}\sigma_L=\sigma_M\circ h_i$;
\item for every $x\in L$, there exists an $i$ so that for every $j\geq i$, $h_0(\sigma^jx)=0$;
\item for $i>0$, $h_i(e_j)$.
\end{enumerate}
These are all direct verifications, so let us check the first condition only. For $x\in L$,
\begin{align*}
h_{i+1}(\sigma_Lx)& =f_{i+1}(\sigma_Lx).\sigma_M^{i+1}m=\sigma_R(f_i(x)).\sigma_M^{i+1}(m)\\ & =\sigma_M(f_i(x).\sigma_M^i(m))=\sigma_M h_i(x).
\end{align*}
Conversely, for $h\in[L,M]_{+}$, we consider
$$
u=\sum_{i\geq 0}\sum_{j=1}^n s^ie_j^\vee\otimes h_0(\sigma_L^i e_j).
$$
It is readily verified that $\alpha(u)=h$, that $\alpha$ is invertible and a difference map.

When $L$ is a free finitely $\sigma$-generated $R$-module,  we have a canonical isomorphism 
$$
L\simeq (L^\vee_{+})^\vee_{+}.
$$
We show that the previously defined canonical map $\iota:L\to L^{\vee\vee}$ given by 
$$\iota(x)_i(f)=f_0(\sigma_L^i(x))$$
realises this isomorphism. Indeed, a nonzero $x\in L$ must have a non-zero coordinate with respect to the $\sigma$-basis $e_1,\ldots,e_n$ of $L$, so there must be an operator $\ldexp{i}{e^\vee}{j}$ with $\ldexp{i}{e^\vee}{j}(x)\neq 0$. Thus, for $e^\vee_j\in L^\vee_{+}$, $\iota(x)(e^\vee_j)\neq 0$, so $\iota$ is injective. On the other hand, if 
$h=(h_i)\in (L^\vee_{+})^\vee_{+}$, we define
$$
x=\sum_{i\geq 0}\sum_{j=1}^{n}h_0(s^i e^\vee_j)\sigma^i e_j,
$$
and we verify that $\iota(x)=h$ by evaluating both on each element of the dual basis $s^i e^\vee_j$.

More generally, let $L$ be a finite \'etale $R$-module, and let $\tilde{L}=\copinf L$. Then 
$$
(\tilde{L}^{\vee}_{+})^{\vee}_{+}\simeq\tilde{L}.
$$
Indeed, for any $R$-module $M$,
$$
[\tilde{L},M]_{+}=[\copinf L,M]_{+}\simeq\copinf[L,M]_{+}\simeq \copinf[L,M].
$$
Thus,
$$
(\copinf L)^\vee_{+}=[\copinf L,R]_{+}\simeq \copinf [L,R]\simeq\copinf L^\vee.
$$
and 
$$
((\copinf L)^\vee_{+})^\vee_{+}\simeq\copinf L^{\vee\vee}\simeq\copinf L.
$$

\subsection{Difference symmetric tensor algebra}\label{diff-sym-alg}

Let $R$ be a difference ring. We have seen that the category $R\Mod$ is a (cocomplete) symmetric monoidal closed category. The category
$$
R\Alg
$$
of commutative difference $R$-algebras is the category of commutative algebra objects (monoids) in $R\Mod$. By general principles of \cite[Lemma~4.4.5]{branden}, the forgetful functor $R\Alg\to R\Mod$ has a left adjoint 
$$\sym:R\Mod\to R\Alg.$$
More explicitly, given an $R$-module $M$, the $R$-algebra $\sym(M)$ is simply the symmetric algebra of $\forg{M}$, endowed with a natural difference structure induced by $\sigma_R$ and $\sigma_M$. The natural morphism
$$
M\to \sym(M)
$$
is a morphism in $R\Mod$, such that every $R$-module map $M\to B$ to an $R$-algebra $B$ factors through it by an $R$-algebra map $\sym(M)\to B$, i.e.,
$$
\Hom_{R\Mod}(M,B)\simeq\Hom_{R\Alg}(\sym(M),B).
$$
Moreover, this generalises to
$$
[M,B]_{R\Mod}\simeq [\sym(M),B]_{R\Alg},
$$
where the right-hand side stands for $R_\infty\Alg(\sym(M),B)$.

\section{Difference homological algebra}\label{diff-homol}

\subsection{Difference category of an abelian category}

\begin{remark}
If $\cA$ is an abelian category, then the category $\diff\cA$ is again abelian since
 it is the functor category $[\bsi,\cA]$.

Moreover, in view of the fact that $(\diff\Set)\da\Ab=\diff\Ab$, the category $\diffinf\cA$ is 
$\diff\Set$-abelian in the sense of \ref{enr-ab-cat}.
\end{remark}

\begin{lemma}\label{co-prodinf-exact}
The functors $\copinf$ and $\prodinf$, considered as functors $\diff\cA\to\diff\cC$ are exact. 
\end{lemma}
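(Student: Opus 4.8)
The plan is to reduce the exactness statement from $\diff\cA$ down to $\cA$ itself by means of the forgetful functor $\forg{\,}\colon\diff\cA\to\cA$ of \ref{dif-cats}. Recall first what is meant here: the natural domain of $\copinf$ and $\prodinf$ is $\forg{\diffinf\cA}$, so ``considered as functors $\diff\cA\to\diff\cA$'' means the composites $\copinf\circ\I_\infty$ and $\prodinf\circ\I_\infty$ along the embedding $\I_\infty\colon\diff\cA\to\diffinf\cA$ of \ref{gen-cats}. (I assume, as is implicit in this section, that $\cA$ admits the relevant countable coproducts and products.) Since $\diff\cA=[\bsi,\cA]$ and $\bsi$ has a single object, kernels and cokernels in $\diff\cA$ are computed objectwise, so $\forg{\,}$ is exact; it moreover has a left adjoint $\ssig{\,}$ and a right adjoint $\psig{\,}$ (\ref{forget-and-adj}), hence is faithful and conservative, and therefore \emph{reflects} short exact sequences. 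It thus suffices to prove that the composites $\forg{\,}\circ\copinf\circ\I_\infty$ and $\forg{\,}\circ\prodinf\circ\I_\infty$ are exact functors $\diff\cA\to\cA$.

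Next, I would unwind the definitions of $\copinf$ and $\prodinf$ from \ref{ord-vs-gen} (using \ref{cotens-with-N} for $\prodinf$). For $X\in\diff\cA$ the object $\copinf(\I_\infty X)$ has underlying object $\coprod_{i\in\N}\forg{X}$, and $\forg{\copinf(\I_\infty f)}=\coprod_{i\in\N}\forg{f}$ for a morphism $f$; likewise $\forg{\prodinf(\I_\infty X)}=\prod_{i\in\Z}\forg{X}$ and $\forg{\prodinf(\I_\infty f)}=\prod_{i\in\Z}\forg{f}$. The twisted difference operators are carried along but are annihilated by $\forg{\,}$, so one obtains natural isomorphisms
\[
\forg{\,}\circ\copinf\circ\I_\infty\;\simeq\;\bigl(\textstyle\coprod_{i\in\N}(-)\bigr)\circ\forg{\,},
\qquad
\forg{\,}\circ\prodinf\circ\I_\infty\;\simeq\;\bigl(\textstyle\prod_{i\in\Z}(-)\bigr)\circ\forg{\,}.
\]

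Finally, since $\forg{\,}$ is exact, it remains only to observe that the countable coproduct functor $\coprod_{\N}\colon\cA^{\N}\to\cA$ and the countable product functor $\prod_{\Z}\colon\cA^{\Z}\to\cA$ are exact; composing with the exact, exactness-reflecting functor $\forg{\,}$ then gives the claim. This last input is exactly axiom (AB4) for $\copinf$ and (AB4${}^{*}$) for $\prodinf$, and it holds in every situation in which the lemma is applied: when $\cA$ is a category of difference modules $R\Mod$, or of modules over a ring, or of modules over a (ringed) Grothendieck topos, (co)products are computed on underlying abelian groups and the category of abelian groups is both (AB4) and (AB4${}^{*}$); moreover every Grothendieck abelian category is (AB5), hence (AB4).

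The step requiring the most care is the middle one together with this last observation: one must get the identification of the underlying functors with honest countable coproducts and products exactly right, and one must be honest that exactness of $\copinf$, $\prodinf$ is \emph{not} formal for an arbitrary abelian $\cA$ — it rests on $\cA$ having exact countable (co)products, which should be recorded as the standing hypothesis (satisfied in all the module and topos categories of interest). Once that is in place, everything else is bookkeeping, since $\forg{\,}$ both preserves and reflects short exactness.
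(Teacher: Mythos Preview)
Your argument is correct and follows the same route as the paper's one-line proof: unwind $\copinf$ and $\prodinf$ on the underlying objects to recognise them as countable coproducts and products of copies of the input, then invoke exactness of those. You are simply more explicit, both in using that $\forg{\,}$ reflects exact sequences and in naming the hypotheses (AB4) and (AB4${}^{*}$) that the paper tacitly assumes when it asserts ``both of these operations are exact.'' Your remark that this is not automatic for an arbitrary abelian $\cA$ is well taken; the paper only ever applies the lemma to module categories where these axioms hold.
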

\begin{proof}
Using the explicit constructions of these functors from \ref{ord-vs-gen}, we see that $\copinf$ transforms a short exact sequence into a sum of its copies, and $\prodinf$ transforms a short exact sequence into a product of its copies, and both of these operations are exact. 
\end{proof}

\begin{proposition}\label{diffcat-enough-inj}
If the abelian category $\cA$ has enough injectives, then the $\diff\Set$-abelian category $\diffinf\cA$ has enough enriched injectives. 
\end{proposition}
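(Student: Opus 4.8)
The plan is to manufacture enriched injectives in $\diffinf\cA$ out of ordinary injectives of $\cA$, by transporting them along the right adjoint of the (exact) forgetful functor. Recall from \ref{forget-and-adj} that $\forg{\,}\colon \diff\cA\to\cA$, evaluation at the unique object of $\bsi$, is exact and sits in $\ssig{\,}\dashv\forg{\,}\dashv\psig{\,}$, where $\psig{J}=\prod_{i\in\N}J_i$ carries the shift operator; since $(\diffinf\cA)_0=\diff\cA$ by \ref{gen-cats-enr}, an object of $\diff\cA$ is literally the same as an object of $\diffinf\cA$. I claim that for every injective object $J$ of $\cA$ the object $\psig{J}$ is \emph{enriched injective} in the sense of \ref{enr-injectives} (with $\cV=\diff\Set$, so that $\Vab=\diff\Ab$), and that every object of $\diffinf\cA$ admits a monomorphism into one of this form.

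First I would record the $(\diffinf\cA)_0$-injective half together with the embedding. Given $M\in\diff\cA$, pick a monomorphism $\forg{M}\mono J$ with $J$ injective in $\cA$. The unit $M\to\psig{\forg{M}}$ of $\forg{\,}\dashv\psig{\,}$ is, on underlying objects, split by the $0$-th projection, hence is a monomorphism in $\diff\cA$; composing it with $\psig{\,}$ applied to $\forg{M}\mono J$ (a monomorphism, since the right adjoint $\psig{\,}$ preserves limits) yields a monomorphism $M\mono\psig{J}$. Moreover, $\psig{\,}$ being a right adjoint to the exact functor $\forg{\,}$, it preserves injectives, so $\psig{J}$ is injective in $\diff\cA=(\diffinf\cA)_0$, i.e.\ it is $(\diffinf\cA)_0$-injective.

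The heart of the matter is the $\diffinf\cA$-injective half: that the $\diff\Ab$-valued functor $\diffinf\cA(\mathord{-},\psig{J})\colon(\diff\cA)^\op\to\diff\Ab$ is exact. For this I would identify it explicitly. The underlying set of $\diffinf\cA(X,\psig J)$ is $\Hom_\infty(X,\psig J)$, which by the adjunction $\copinf\dashv\I_\infty$ of \ref{ord-vs-gen} is naturally $\diff\cA(\copinf X,\psig J)$; applying $\forg{\,}\dashv\psig{\,}$, using that $\forg{\,}$ preserves coproducts (being a left adjoint, as $\forg{\,}\dashv\psig{\,}$), and $\cA(\coprod_i\forg X,J)\cong\prod_i\cA(\forg X,J)$, one obtains a natural isomorphism of difference abelian groups
$$\diffinf\cA(X,\psig{J})\;\cong\;\psig{\cA(\forg X,J)},$$
the shift on the left (induced by the shift on $\Hom_\infty$) matching the shift that $\psig{\,}$ places on $\cA(\forg X,J)^{\N}$. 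In other words, $\diffinf\cA(\mathord{-},\psig{J})\cong\psig{\,}\circ\cA(\mathord{-},J)\circ\forg{\,}$ as functors $(\diff\cA)^\op\to\diff\Ab$. Now $\forg{\,}$ is exact, $\cA(\mathord{-},J)$ is exact because $J$ is injective in $\cA$, and $\psig{\,}\colon\Ab\to\diff\Ab$ is exact because countable products are exact in $\Ab$ and exactness in $\diff\Ab$ is tested on underlying abelian groups (cf.\ \ref{co-prodinf-exact}); hence the composite is exact. Thus $\psig{J}$ is both $\diffinf\cA$-injective and $(\diffinf\cA)_0$-injective, i.e.\ enriched injective, and the monomorphism constructed above shows $\diffinf\cA$ has enough enriched injectives.

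The step I expect to be the main obstacle is the explicit identification $\diffinf\cA(\mathord{-},\psig{J})\cong\psig{\,}\circ\cA(\mathord{-},J)\circ\forg{\,}$, namely the bookkeeping that matches the shift operators on the two sides and confirms naturality in $X$; once that is in place the conclusion follows formally from exactness of the three factors. (As with the construction of $\psig{\,}$ itself, the argument tacitly uses the existence of the countable products appearing there, which is available in the situations of interest, e.g.\ module categories and Grothendieck abelian categories with enough injectives.)
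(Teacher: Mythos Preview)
Your proof is correct and follows the same approach as the paper: embed an arbitrary object into $\psig{J}$ for $J$ injective in $\cA$, and verify enriched injectivity via the adjunctions $\copinf\dashv\I_\infty$ and $\forg{\,}\dashv\psig{\,}$. Your packaging of the $\diffinf\cA$-injectivity as the factorisation $\diffinf\cA(\mathord{-},\psig{J})\cong\psig{\,}\circ\cA(\mathord{-},J)\circ\forg{\,}$ into three exact functors is a touch cleaner than the paper's explicit chase through the adjunctions (which first checks exactness of $\forg{\diffinf\cA(\mathord{-},\psig{I_0})}$ in $\Ab$ and then argues that difference morphisms give exactness in $\diff\Ab$), but the content is identical; both arguments tacitly assume $\cA$ has the countable products needed to form $\psig{\,}$.
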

\begin{proof}
Let us show that $\diff\cA$ has enough injectives. For $F\in \diff\cA$, let $I_0$ be an injective object in $\cA$  such that 
$$
0\to\forg{F}\to I_0
$$
is exact. The functor $\psig{\,}$ is left-exact being a right-adjoint, so we obtain the exact sequence
$$
0\to\psig{\forg{F}}\to\psig{I_0}.
$$
Precomposing with the unit $\eta_F$ of the adjunction $\forg{\,}\dashv\psig{\,}$ which is known to be a monomorphism, we obtain a monomorphism
$$
F\stackrel{\eta_F}{\longrightarrow} \psig{\forg{F}}\to\psig{I_0},
$$
so it is enough to verify that $\psig{I_0}$ is an enriched injective object in $\diff\cA$, i.e., that the functors
$\diff\cA(\mathord{-}, \psig{I_0})$ and $\diffinf\cA(\mathord{-},\psig{I_0})$ are exact. 

To see this, let
$
0\to A\to B\to C\to 0
$
be an exact sequence in $\diff\cA$. It follows that $0\to\forg{A}\to\forg{B}\to\forg{C}\to 0$ is exact in $\cA$, so, by injectivity of $I_0$, we obtain the exact  sequence
$$
0\to\cA(\forg{C},I_0)\to\cA(\forg{B},I_0)\to\cA(\forg{A},I_0)\to0,
$$
which, by adjunction, yields the exact sequence
$$
0\to \diff\cA(C,\psig{I_0})\to \diff\cA(B,\psig{I_0})\to \diff\cA(A,\psig{I_0})\to0,
$$ 
and thus $\psig{I_0}$ is $\diff\cA$-injective. 

%Moreover, it follows that, writing $A_i\simeq\forg{A}$, $B_i\simeq\forg{B}$, $C_i\simeq\forg{C}$,
%$$
%0\to\oplus_{i\in\N}A_i\to \oplus_{i\in\N}B_i\to\oplus_{i\in\N}C_i\to 0
%$$
%is an exact sequence in $\cA$, which, by \ref{diff-vs-gendif-enr}, equals
Moreover, using \ref{co-prodinf-exact}, we obtain the exact sequence
$$
0\to\forg{\copinf A}\to\forg{\copinf B}\to\forg{\copinf C}\to 0.
$$
By injectivity of $I_0$ again, we obtain that
$$
0\to\cA(\forg{\copinf C}, I_0)\to\cA(\forg{\copinf B},I_0)\to\cA(\forg{\copinf A},I_0)\to0
$$
is exact and hence, by adjointness, 
$$
0\to\forg{\diffinf\cA(C,\psig{I_0})}\to \forg{\diffinf\cA(B,\psig{I_0})}\to\forg{\diffinf\cA(A,\psig{I_0})}\to0
$$
is exact in $\Ab$. Since the underlying morphisms in the above sequence are difference morphisms, it follows that the sequence without the $\forg{\,}$ is also exact in $\diff\Ab$, so we conclude that $\psig{I_0}$ is also $\diffinf\cA$-injective.
\end{proof}

\begin{proposition}\label{diffcat-enough-proj}
If the abelian category $\cA$ has enough projectives, then the $\diff\Set$-abelian category $\diffinf\cA$ has enough enriched projectives.
\end{proposition}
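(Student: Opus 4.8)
The plan is to dualise, essentially verbatim, the proof of Proposition~\ref{diffcat-enough-inj}: where that argument used the coinduction functor $\psig{\,}\colon\cA\to\diff\cA$, the unit of $\forg{\,}\dashv\psig{\,}$, and left-exactness/injectivity, I would use the induction functor $\ssig{\,}\colon\cA\to\diff\cA$ of \ref{forget-and-adj} (formed from countable coproducts in $\cA$), the counit of $\ssig{\,}\dashv\forg{\,}$, and right-exactness/projectivity. First I would show that $\diff\cA$ has enough projectives: given $F\in\diff\cA$, choose a projective $P_0\in\cA$ and an epimorphism $P_0\to\forg{F}$; since $\ssig{\,}$ is a left adjoint it is right-exact, so $\ssig{P_0}\to\ssig{\forg{F}}$ is an epimorphism, and composing it with the counit $\epsilon_F\colon\ssig{\forg{F}}\to F$ of $\ssig{\,}\dashv\forg{\,}$ — an epimorphism because the right adjoint $\forg{\,}$ is faithful — yields an epimorphism $\ssig{P_0}\to F$. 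It then remains to verify that $\ssig{P_0}$ is an \emph{enriched} projective object of $\diffinf\cA$, i.e.\ that both $\diff\cA(\ssig{P_0},\mathord{-})$ and $\diffinf\cA(\ssig{P_0},\mathord{-})$ are exact.

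For the underlying category the argument is immediate: the adjunction $\ssig{\,}\dashv\forg{\,}$ gives a natural isomorphism $\diff\cA(\ssig{P_0},\mathord{-})\simeq\cA(P_0,\forg{\mathord{-}})$, and since $\forg{\,}\colon\diff\cA\to\cA$ is exact (an evaluation functor on a functor category) and $P_0$ is projective, this composite is exact, so $\ssig{P_0}$ is $\diff\cA$-projective. For the enriched statement — which I expect to be the delicate point — I would use that $\diffinf\cA$ is tensored over $\diff\Set$ (\ref{diffinf-tens-cotens}) together with the identity $\ssig{P_0}=\copinf\jj_\infty(P_0)=\N\otimes\jj_\infty(P_0)$ of objects of $\diffinf\cA$, which gives
\[
\diffinf\cA(\ssig{P_0},C)\simeq[\N,\diffinf\cA(\jj_\infty(P_0),C)].
\]
By \ref{CvsdiffinfC}, $\diffinf\cA(\jj_\infty(P_0),C)\simeq(\cA(P_0,\forg{C}),\sigma_{C,*})$, and this is exact in $C$ by projectivity of $P_0$ and exactness of $\forg{\,}$ (the induced difference operators on the resulting short exact sequence are manifestly compatible). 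Finally $[\N,\mathord{-}]$ is exact on $\diff\Ab$: by \ref{cotens-with-N} it sends $M$ to $M^{\Z}$, whose underlying functor is the (exact) infinite product on $\Ab$. Hence $\diffinf\cA(\ssig{P_0},\mathord{-})$ is a composite of exact functors, $\ssig{P_0}$ is $\diffinf\cA$-projective, and the proof is complete.

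The main obstacle is exactly this enriched projectivity: over $\diff\cA$ everything dualises word for word from \ref{diffcat-enough-inj}, but controlling the $\diff\Set$-valued internal $\mathrm{Hom}$ of $\diffinf\cA$ forces one to unwind a tensored/cotensored identification. An alternative that stays closer to the shape of the injective proof is to compute $\copinf\ssig{P_0}$ directly — one checks it is a small coproduct of copies of $\ssig{P_0}$, hence $\diff\cA$-projective — and then transfer projectivity through the isomorphism $\forg{\diffinf\cA(X,Y)}\simeq\diff\cA(\copinf X,Y)$ of \ref{diff-vs-gendif-enr}, using exactness of $\copinf$ (\ref{co-prodinf-exact}) and exactness of arbitrary products in $\Ab$, exactly as $\psig{I_0}$ was handled in \ref{diffcat-enough-inj}. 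One should also note the implicit standing hypothesis (as in the injective case, which tacitly needs countable products) that $\cA$ admits the countable coproducts used to form $\ssig{\,}$ and $\copinf$.
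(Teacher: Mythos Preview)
Your proposal is correct. The paper's proof follows the same outline for constructing the epimorphism $\ssig{P_0}\to F$ and for the $\diff\cA$-projectivity of $\ssig{P_0}$, but for the enriched projectivity step it takes the most literal dualisation of \ref{diffcat-enough-inj}: it applies $\prodinf$ to the \emph{variable} rather than manipulating $\ssig{P_0}$. Concretely, from exactness of $\prodinf$ (\ref{co-prodinf-exact}) and projectivity of $P_0$ one gets exactness of $\cA(P_0,\forg{\prodinf(\mathord{-})})$, and then the chain
\[
\cA(P_0,\forg{\prodinf A})\simeq\diff\cA(\ssig{P_0},\prodinf A)\simeq\forg{\diffinf\cA(\ssig{P_0},A)}
\]
(the first isomorphism from $\ssig{\,}\dashv\forg{\,}$, the second from $\I_\infty\dashv\prodinf$ in \ref{ord-vs-gen}) finishes the argument. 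Your main route via $\ssig{P_0}=\N\otimes\jj_\infty(P_0)$ and exactness of $[\N,\mathord{-}]$ is an equivalent unwinding through the tensored structure; your alternative via $\copinf\ssig{P_0}$ also works but is the least direct of the three, since one must first identify $\copinf\ssig{P_0}$ as a countable coproduct of copies of $\ssig{P_0}$ in $\diff\cA$. The paper's version has the virtue of being a mechanical dual of the injective case (swap $\copinf\leftrightarrow\prodinf$ and $\psig{\,}\leftrightarrow\ssig{\,}$), requiring no new identifications.
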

\begin{proof}
For the existence of enough enriched projectives, we argue dually to \ref{diffcat-enough-inj}. Given an object $F\in\diff\cA$, let $P_0$ be a projective object in $\cA$ so that 
$$
P_0\to \forg{F}\to 0
$$
is exact. Using the fact that $\ssig{\,}$ is right-exact, being a left adjoint, as well as the fact that the counit $\epsilon_F$ of the adjunction $\ssig{\,}\dashv\forg{\,}$ is an epimorphism, we obtain the epimorphism
$$
\ssig{P_0}\to \ssig{\forg{F}}\stackrel{\epsilon_F}{\longrightarrow}F,
$$
and it suffices to show that $\ssig{P_0}$ is an enriched projective object in $\diff\cA$, i.e., that the functors
$\diff\cA(\ssig{P_0},\mathord{-})$ and $\diffinf\cA(\ssig{P_0},\mathord{-})$ are exact. 

Assume that $0\to A\to B\to C\to0$ is exact in $\diff\cA$, so trivially $0\to\forg{A}\to\forg{B}\to\forg{C}\to0$ is exact in $\cA$. Using the projectivity of $P_0$, we obtain the exact sequence
$$
0\to\cA(P_0,\forg{A})\to\cA(P_0,\forg{B})\to\cA(P_0,\forg{C})\to0,
$$
which, by adjunction, yields the exact sequence
$$
0\to \diff\cA(\ssig{P_0},A)\to \diff\cA(\ssig{P_0},B)\to \diff\cA(\ssig{P_0},C)\to 0,
$$
whence $\ssig{P_0}$ is $\diff\cA$-projective.

%Moreover, writing $A_i\simeq\forg{A}$, $B_i\simeq\forg{B}$, $C_i\simeq\forg{C}$, the sequence
%$$
%0\to\prod_{i\in\Z}A_i\to\prod_{i\in\Z}B_i\to\prod_{i\in\Z}C_i\to 0
%$$
%is exact. By \ref{cotens-with-N}, this sequence coincides with
Moreover, using \ref{co-prodinf-exact}, we obtain the exact sequence
$$
0\to\forg{\prodinf A}\to\forg{\prodinf B}\to\forg{\prodinf C}\to0.
$$
By projectivity of $P_0$, the sequence
$$
0\to\cA(P_0,\forg{\prodinf A})\to\cA(P_0,\forg{\prodinf B})\to\cA(P_0,\forg{\prodinf C})\to0
$$
is exact in $\Ab$. By the adjunction from \ref{ord-vs-gen}, this coincides with the sequence
$$
0\to\forg{\diffinf\cA(\ssig{P_0},A)}\to\forg{\diffinf\cA(\ssig{P_0},B)}\to\forg{\diffinf\cA(\ssig{P_0},C)}\to0.
$$ 
Since the underlying morphisms are difference morphisms, the same sequence without the $\forg{\,}$ is exact in $\diff\Ab$, so $\ssig{P_0}$ is also $\diffinf\cA$-projective.
\end{proof}

\subsection{Injective and projective difference modules}

\begin{proposition}\label{diff-mod-enough-enr-inj-proj}
The category of enriched difference modules over a difference ring $R$ is abelian, with enough enriched injectives and enough enriched projectives (in the sense of \ref{enr-ab-cat}, \ref{enr-injectives}, \ref{enr-projectives}).
\end{proposition}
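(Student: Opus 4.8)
The plan is to run the two-step argument used for $\diffinf\cA$ in \ref{diffcat-enough-inj} and \ref{diffcat-enough-proj}, with the forgetful functor $\diff\cA\to\cA$ replaced by $\forg{\,}_R:R\Mod\to\forg{R}\Mod$ and its two adjoints $\ssig{\,}_R\dashv\forg{\,}_R\dashv\psig{\,}_R$ from the section on modules versus difference modules. First I would record that the underlying ordinary category of $R_\infty\Mod$ is $R\Mod$, which by \ref{skew-diff-equiv} is equivalent to $\Do(R)\Mod$ for the skew polynomial ring $\Do(R)=R[T;\sigma_R]$; being a module category over a ring, it is abelian (in fact Grothendieck). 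Together with the already-established facts that $R_\infty\Mod$ is tensored and cotensored over $\diff\Set$ and is enriched over $\diff\Ab=\diff\Set\da\Ab$ via the internal homs $R_\infty(M,N)=[M,N]$, this shows that $R_\infty\Mod$ is a $\diff\Set$-abelian category in the sense of \ref{enr-ab-cat}. Since $\forg{R}$ is an ordinary ring, $\forg{R}\Mod$ has enough injectives and enough projectives, which is the input needed below.

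For enough enriched injectives, given $M\in R\Mod$ I would choose an injective $\forg{R}$-module $J_0$ with a monomorphism $\forg{M}\hookrightarrow J_0$, apply the left-exact functor $\psig{\,}_R$ (a right adjoint), and precompose with the unit $\eta_M:M\to\psig{\forg{M}}_R$, which is a monomorphism, to obtain a monomorphism $M\hookrightarrow\psig{J_0}_R$. It then remains to verify that $\psig{J_0}_R$ is \emph{enriched} injective, i.e.\ both $(R\Mod)_0$-injective and $R_\infty\Mod$-injective in the sense of \ref{enr-injectives}. The first is immediate from the adjunction $\forg{\,}_R\dashv\psig{\,}_R$, exactness of $\forg{\,}_R$, and injectivity of $J_0$, which give $R\Mod(\mathord{-},\psig{J_0}_R)\simeq\forg{R}\Mod(\forg{\mathord{-}},J_0)$ exact. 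For the second, a direct computation parallel to the one in \ref{diffcat-enough-inj} identifies the underlying abelian group of $[\mathord{-},\psig{J_0}_R]$ with $\forg{R}\Mod(\forg{\copinf\mathord{-}},J_0)$; since $\copinf$ is exact (by the argument of \ref{co-prodinf-exact}) and $J_0$ is injective, this functor is exact, and as the forgetful functor $R\Mod\to\Ab$ reflects exactness we conclude that $[\mathord{-},\psig{J_0}_R]$ is exact. Enough enriched projectives is handled dually, mirroring \ref{diffcat-enough-proj}: for $M$ one takes a projective $\forg{R}$-module $P_0\twoheadrightarrow\forg{M}$, applies the right-exact functor $\ssig{\,}_R$, composes with the epimorphic counit $\epsilon_M$ to get $\ssig{P_0}_R\twoheadrightarrow M$, and then checks $\ssig{P_0}_R$ is $(R\Mod)_0$-projective via $\ssig{\,}_R\dashv\forg{\,}_R$ and $R_\infty\Mod$-projective via exactness of $\prodinf$ and projectivity of $P_0$.

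The routine parts are the diagram chases showing that the displayed adjunction isomorphisms respect the difference structure and that the relevant underlying sequences are exact. The main obstacle is the enriched half of each claim: one must show not merely that $\psig{J_0}_R$ (resp.\ $\ssig{P_0}_R$) is injective (resp.\ projective) in the ordinary category, but that the $\diff\Set$-enriched hom $[\mathord{-},\psig{J_0}_R]$ (resp.\ $[\ssig{P_0}_R,\mathord{-}]$) is exact. This forces one to carry the twisting by $\sigma_R$ through the identification with $\forg{R}\Mod(\forg{\copinf\mathord{-}},J_0)$ (resp.\ $\forg{R}\Mod(P_0,\forg{\prodinf\mathord{-}})$) and to use the exactness of $\copinf$ and $\prodinf$ in place of that of the inexact fixed-point functor; once that identification is in place, injectivity of $J_0$ (resp.\ projectivity of $P_0$) over $\forg{R}$ finishes the argument.
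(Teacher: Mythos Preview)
Your proposal is correct and follows essentially the same approach as the paper. The paper spells out the ordinary injectivity of $\psig{I_0}$ and projectivity of $\ssig{P_0}$ via the adjunctions $\ssig{\,}_R\dashv\forg{\,}_R\dashv\psig{\,}_R$ exactly as you do, and for the enriched part it simply instructs the reader to ``mimic the proofs of \ref{diffcat-enough-inj} and \ref{diffcat-enough-proj}''---which is precisely the $\copinf$/$\prodinf$ argument you carry out, identifying $\forg{[\mathord{-},\psig{J_0}_R]}$ with $\forg{R}\Mod(\forg{\copinf\mathord{-}},J_0)$ and using exactness of $\copinf$ together with reflection of exactness by the forgetful functor.
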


The existence of enough ordinary injective and projective objects is a straightforward consequence of \ref{skew-diff-equiv} and the fact that the category of left $\Do(R)$-modules has enough injectives and projectives, but we offer a proof more congenial to difference algebra techniques, which also leads to the existence of enough enriched injectives and projectives.

We introduce a shorthand notation for enriched injectives and projective specific to the context of enriched difference modules as follows.
\begin{definition}
Let $F$ be a difference module over a difference ring $R$. We say that
\begin{enumerate}
\item $F$ is \emph{$\diffinf$projective,} if $[F,\mathord{-}]: R\Mod\to R\Mod$ is an exact functor;
\item $F$ is \emph{$\diffinf$injective,} if $[\mathord{-},F]: R\Mod\to R\Mod$ is an exact functor.
\end{enumerate}
\end{definition}

\begin{proof}
It is straightforward to verify that $R\Mod$ is an abelian category. 

Let us show that it has enough injectives. For $F\in R\Mod$, let $I_0$ be an injective $\forg{R}$-module such that 
$$
0\to\forg{F}\to I_0
$$
is exact. The functor $\psig{\,}$ is left-exact being a right-adjoint, so we obtain the exact sequence
$$
0\to\psig{\forg{F}}\to\psig{I_0}.
$$
Precomposing with the unit $\eta_F$ of the adjunction $\forg{\,}\dashv\psig{\,}$ which is known to be injective, we obtain an injective map
$$
F\stackrel{\eta_F}{\longrightarrow} \psig{\forg{F}}\to\psig{I_0},
$$
so it is enough to verify that $\psig{I_0}$ is an injective object in $R\Mod$, i.e., that the functor
$R\Mod(\mathord{-}, \psig{I_0})$ is exact. To see this, let
$
0\to A\to B\to C\to 0
$
be an exact sequence in $R\Mod$. It follows that $0\to\forg{A}\to\forg{B}\to\forg{C}\to 0$ is exact in $\forg{R}\Mod$, so, by injectivity of $I_0$, we obtain the exact  sequence
$$
0\to\forg{R}\Mod(\forg{C},I_0)\to\forg{R}\Mod(\forg{B},I_0)\to\forg{R}\Mod(\forg{A},I_0)\to0,
$$
which, by adjunction, yields the exact sequence
$$
0\to R\Mod(C,\psig{I_0})\to R\Mod(B,\psig{I_0})\to R\Mod(A,\psig{I_0})\to0,
$$ 
as required.

For the existence of enough projectives, we argue dually. Given an $R$-module $F$, let $P_0$ be a projective $\forg{R}$-module so that 
$$
P_0\to \forg{F}\to 0
$$
is exact. Using the fact that $\ssig{\,}$ is right-exact, being a left adjoint, as well as the fact that the counit $\epsilon_F$ of the adjunction $\ssig{\,}\dashv\forg{\,}$ is surjective, we obtain the surjective map
$$
\ssig{P_0}\to \ssig{\forg{F}}\stackrel{\epsilon_F}{\longrightarrow}F,
$$
and it suffices to show that $\ssig{P_0}$ is a projective object in $R\Mod$, i.e., that the functor
$R\Mod(\ssig{P_0},\mathord{-})$ is exact. Assume that $0\to A\to B\to C\to0$ is exact in $R\Mod$, so trivially $0\forg{A}\to\forg{B}\to\forg{C}\to0$ is exact in $\forg{R}\Mod$. Using the projectivity of $P_0$, we obtain the exact sequence
$$
0\to\forg{R}\Mod(P_0,\forg{A})\to\forg{R}\Mod(P_0,\forg{B})\to\forg{R}\Mod(P_0,\forg{C})\to0,
$$
which, by adjunction, yields the exact sequencs
$$
0\to R\Mod(\ssig{P_0},A)\to R\Mod(\ssig{P_0},B)\to R\Mod(\ssig{P_0},C)\to 0,
$$
as required.

For the existence of enough enriched injectives and projectives, we invite the reader to mimic the proofs of \ref{diffcat-enough-inj} and \ref{diffcat-enough-proj}.
\end{proof}

\begin{lemma}\label{etale-is-diffinf-proj}
Let $F$ be an \'etale $R$-module with $\forg{F}$ a projective $\forg{R}$-module. Then $F$ is $\diffinf$projective.
\end{lemma}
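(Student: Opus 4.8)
The plan is to use Lemma~\ref{inthom-etale} to strip away the difference structure entirely. Since $F$ is \'etale, that lemma identifies, for every $R$-module $M$, the internal hom $[F,M]$ with the ordinary module $\forg{R}\Mod(\forg{F},\forg{M})$ of $\forg{R}$-linear maps, carried with the shift $\varphi\mapsto\bar{\sigma}_M\circ\varphi_\varsigma\circ\bar{\sigma}_F^{-1}$. Granting that this identification is natural in $M$ and $R$-linear, the functor $[F,\mathord{-}]\colon R\Mod\to R\Mod$ becomes, up to the reconstruction of the differential from $\sigma_R,\sigma_M$ and $\bar{\sigma}_F$, the composite of the (exact, faithful) forgetful functor $\forg{\,}\colon R\Mod\to\forg{R}\Mod$ with $\forg{R}\Mod(\forg{F},\mathord{-})$; the latter is exact precisely because $\forg{F}$ is a projective $\forg{R}$-module. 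So $[F,\mathord{-}]$ will be exact, i.e. $F$ will be $\diffinf$projective.

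In detail, I would first record the naturality and $R$-linearity of the isomorphism of \ref{inthom-etale}. Unwinding the explicit formulas $\varphi_{i+1}=\bar{\sigma}_M\circ(\varphi_i)_\varsigma\circ\bar{\sigma}_F^{-1}$ already used there, a morphism $f\colon M\to M'$ in $R\Mod$ induces $[F,f]\colon[F,M]\to[F,M']$ whose underlying map is post-composition $\forg{f}_*$; and the scalar action $r\cdot(\varphi_i)=(\sigma_R^i(r)\,\varphi_i)$ restricts on the $0$-th component to the usual $\forg{R}$-action on $\forg{R}\Mod(\forg{F},\forg{M})$. Next, given a short exact sequence $0\to A\to B\to C\to 0$ in $R\Mod$, its image $0\to\forg{A}\to\forg{B}\to\forg{C}\to 0$ under $\forg{\,}$ is exact in $\forg{R}\Mod$, and projectivity of $\forg{F}$ gives exactness of
\[
0\to\forg{R}\Mod(\forg{F},\forg{A})\to\forg{R}\Mod(\forg{F},\forg{B})\to\forg{R}\Mod(\forg{F},\forg{C})\to 0.
\]
By the previous step this is exactly the underlying sequence of $0\to[F,A]\to[F,B]\to[F,C]\to 0$, whose maps are morphisms of difference $R$-modules. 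Since the forgetful functor $R\Mod\to\forg{R}\Mod$ (equivalently, via \ref{skew-diff-equiv}, the forgetful functor $\Do(R)\Mod\to\forg{R}\Mod$) reflects exactness, this last sequence is exact in $R\Mod$. Hence $[F,\mathord{-}]$ preserves short exact sequences, which is the assertion.

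The only genuine work here is the bookkeeping in the first step: verifying that the identification $[F,M]\simeq\forg{R}\Mod(\forg{F},\forg{M})$ is natural in $M$ and compatible with both the $R$-module structure and the shift. This is a direct diagram chase with the formulas from the proof of \ref{inthom-etale}, using only that $\bar{\sigma}_F$ is an isomorphism; once it is in place, everything else is formal, so I do not expect a serious obstacle. (Note that $[F,\mathord{-}]$ is in any case automatically left exact, being a cotensor-type/right adjoint functor, so strictly speaking only preservation of epimorphisms needs the \'etale and projectivity hypotheses; the argument above handles it uniformly.)
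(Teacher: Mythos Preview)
Your proposal is correct and follows essentially the same argument as the paper: both use Lemma~\ref{inthom-etale} to identify $[F,M]$ with $\forg{R}\Mod(\forg{F},\forg{M})$, then invoke projectivity of $\forg{F}$ on the forgotten short exact sequence. You spell out the naturality and reflection-of-exactness steps more carefully than the paper, which treats these as evident.
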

\begin{proof}
Let $0\to A\to B\to C\to0$ be an exact sequence in $R\Mod$. In particular, the sequence 
$0\to\forg{A}\to\forg{B}\to\forg{C}\to0$ is exact in $\forg{R}\Mod$. Using the fact that $\forg{F}$ is projective, we obtain the exact sequence
$$
0\to\forg{R}\Mod(\forg{F},\forg{A})\to\forg{R}\Mod(\forg{F},\forg{B})\to\forg{R}\Mod(\forg{F},\forg{C})\to0.
$$
On the other hand, using \ref{inthom-etale}, the above entails that
$$
0\to[F,A]\to[F,B]\to[F,C]\to 0
$$
is also exact, as required.
\end{proof}

\begin{lemma}
Let $k$ be a difference field and let $I$ be an inversive difference vector space over $k$. Then $I$ is $\diffinf$injective.
\end{lemma}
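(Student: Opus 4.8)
The statement to prove is: if $k$ is a difference field and $I$ is an inversive difference vector space over $k$, then $I$ is $\diffinf$injective, i.e., the functor $[\mathord{-},I]\colon k\Mod\to k\Mod$ is exact. Since $[\mathord{-},I]$ is automatically left exact (it is the internal-hom functor of a monoidal closed category, cf.\ \ref{ab-mon-cl} applied to $R\Mod$, which is abelian monoidal closed), the content is right exactness: $[\mathord{-},I]$ takes a monomorphism $M'\mono M$ in $k\Mod$ to an epimorphism $[M,I]\epi[M',I]$.

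\medskip

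\textbf{The plan.} The key observation is that \emph{inversive} is exactly the condition that lets us replace the one-sided internal hom $[M,I]$ by something computed over $\forg{k}$. The hypothesis that $I$ is inversive means $\sigma_I$ is bijective, so $I=I^{-1}$ in the notation of \ref{diffinf-tens-cotens}, and more to the point $I$ is \'etale in the sense of the \'etale-module definition preceding \ref{inthom-etale} (the associated linear map $\bar\sigma_I\colon I_\varsigma\to I$ is an isomorphism since $\sigma_I$ is a bijective semilinear map over a field). Hence \ref{inthom-etale} gives, for any $M\in k\Mod$,
$$
[M,I]\simeq \forg{k}\Mod(\forg{M},\forg{I}),
$$
naturally in $M$, with the shift described there. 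Now take a monomorphism $M'\mono M$ in $k\Mod$; then $\forg{M'}\mono\forg{M}$ is a monomorphism of $\forg{k}$-vector spaces. Over a field every vector space is injective, so $\forg{k}\Mod(\forg{M},\forg{I})\epi\forg{k}\Mod(\forg{M'},\forg{I})$ is surjective. Via the isomorphism above this is precisely the map $[M,I]\to[M',I]$; I would check that the isomorphism of \ref{inthom-etale} is natural in the module variable (it is: the inverse construction $\varphi_0\mapsto(\varphi_i)$ with $\varphi_{i+1}=\bar\sigma_I\circ(\varphi_i)_\varsigma\circ\bar\sigma_M^{-1}$ is visibly functorial, and precomposition with $M'\mono M$ commutes with it because $\bar\sigma$ is natural). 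Therefore $[M,I]\to[M',I]$ is an epimorphism, and $[\mathord{-},I]$ is exact.

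\medskip

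\textbf{The main obstacle.} The delicate point is the naturality of \ref{inthom-etale} in the \emph{source} variable $M$ when $M$ varies over non-\'etale modules --- the lemma as stated fixes $M$ \'etale, but here $M'$ and $M$ need not be \'etale, only $I$ is. So I cannot invoke \ref{inthom-etale} symmetrically; I must re-derive the isomorphism $[M,I]\simeq\forg{k}\Mod(\forg{M},\forg{I})$ using only that $I$ is \'etale (invertibility of $\bar\sigma_I$), which is exactly how that lemma's proof works --- it only inverts $\bar\sigma_M$ in the formula $\varphi_{i+1}=\bar\sigma_{M'}\circ(\varphi_i)_\varsigma\circ\bar\sigma_M^{-1}$, wait, it does invert $\bar\sigma_M$. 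Re-examining: in \ref{inthom-etale} the hypothesis ``$M$ \'etale'' is what makes $\bar\sigma_M^{-1}$ available. Here instead I should run the argument with the roles arranged so that only $\bar\sigma_I^{-1}$ is needed: given $\varphi_0\in\forg{k}\Mod(\forg{M},\forg{I})$, set $\varphi_{i+1}=\bar\sigma_I\circ(\varphi_i)_\varsigma\circ\bar\sigma_M^{-1}$? No --- that still needs $\bar\sigma_M^{-1}$. The correct move is: an element of $[M,I]$ is a sequence $(f_i)$ with $f_{i+1}\sigma_M=\sigma_I f_i$; since $\sigma_I$ is bijective, $f_i=\sigma_I^{-i}\circ(\text{something})$ --- precisely, $f_0$ determines the sequence backward-and-forward because $f_{i} = \sigma_I^{-1}f_{i+1}\sigma_M$ is not quite it either. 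The genuinely clean route, and the one I would actually carry out, is to use the equivalence $k\Mod\simeq\Do(k)\Mod$ from \ref{skew-diff-equiv} together with \ref{inj-is-int-inj}: $I$ inversive over a field corresponds to a module over the \emph{inversive} skew polynomial ring $k[T,T^{-1};\sigma]$, which is then injective as such a module (it is a vector space in an appropriate sense), and internal injectivity over a Grothendieck topos / ringed setting gives that $[\mathord{-},I]$ is exact via \ref{inj-is-int-inj} applied in the difference-module category. The hard part, and where I would spend the most care, is reconciling these two approaches and pinning down which ambient abelian-with-enough-injectives structure (\ref{diff-mod-enough-enr-inj-proj}) makes ``injective $\Rightarrow$ internally/$\diffinf$injective'' a formal consequence, exactly paralleling \ref{inj-is-int-inj}.
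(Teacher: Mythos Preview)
The paper states this lemma without proof, so there is no argument to compare against. Your first approach correctly self-diagnoses its failure: \ref{inthom-etale} requires the \emph{source} to be \'etale, and there is no straightforward dual because when only the target is inversive the ladder relation determines $f_{i+1}$ from $f_i$ only on $\im\sigma_M$, not globally.

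Your fallback route through injectivity has a genuine gap: an inversive difference vector space need not be injective in $k\Mod$. Take $k=(\mathbb{C},\id)$ and $I=k$, trivially inversive. With $M=k^2$, $\sigma_M(a,b)=(a,a+b)$, $M'=0\oplus k$, and $g(0,b)=b\in k\Mod(M',I)$, any difference extension $f(a,b)=\alpha a+b$ would require $\sigma_k(\alpha)=\alpha+1$, impossible. So $I$ is not injective---yet $[M,I]\to[M',I]$ \emph{is} surjective here, since $(f_i)$ with $f_i(a,b)=-ia+b$ lifts $g$. Thus ``inversive $\Rightarrow$ injective $\Rightarrow$ internally injective'' fails at the first implication, and appealing to \ref{inj-is-int-inj} or \ref{skew-diff-equiv} cannot rescue it. A correct argument uses $\sigma_I^{-1}$ more directly: first extend $(g_i)$ uniquely to the saturation $M'_\infty=\bigcup_n\sigma_M^{-n}(M')$ via $\tilde g_i(m)=\sigma_I^{-n}g_{i+n}(\sigma_M^n m)$; after this replacement the quotient $M/M'_\infty$ has injective $\sigma$, and the remaining extension problem is solved by lifting each $\tilde g_i$ arbitrarily over $\forg{k}$, then correcting the defects $\bar\delta_i$ recursively (each $\bar h_{i+1}$ is determined on $\im\sigma_{M''}$ and extended freely to a complement).
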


\begin{proposition}\label{proj-res-diffinf-proj}
If $F$ is a $\diffinf$projective $R$-module, then
$$
0\to\copinf F\stackrel{\iota-\id}{\longrightarrow}\copinf F\stackrel{\tau}{\longrightarrow}F\to 0
$$
is a projective resolution of $F$ in $R\Mod$, where
$$
\iota(f_0,f_1,\ldots)=(0,f_0,f_1,\ldots)\, \, \text{ and }\, \, \tau(f_0,f_1,\ldots)=f_0+f_1+\cdots.
$$
\end{proposition}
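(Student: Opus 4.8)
The plan is to split the statement into two independent tasks: showing that $\copinf F$ is a projective object of $R\Mod$, and showing that the displayed two‑term sequence is exact. For the first task I would invoke the adjunction $\copinf\dashv\I_\infty$ recorded for generalised difference modules; specialising the natural bijection $\Hom_{\diff\cC}(\copinf X,Y)\simeq\forg{\diffinf\cC(X,\I_\infty Y)}$ to the module setting yields a natural isomorphism
$$
R\Mod(\copinf F,M)\simeq\forg{[F,M]},
$$
where $\forg{\,}$ now denotes passage to the underlying abelian group. Since $F$ is $\diffinf$projective, the $R\Mod$‑functor $[F,\mathord{-}]$ is exact, and composing with the exact forgetful functor $R\Mod\to\forg{R}\Mod\to\Ab$ shows that $R\Mod(\copinf F,\mathord{-})$ carries epimorphisms of $R$‑modules (i.e.\ surjections) to surjections. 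Hence $\copinf F$ is projective. One could alternatively recognise $\copinf F$ as $\Do(R)\otimes_{\forg{R}}\forg{F}$ via \ref{skew-diff-equiv} and argue by extension of scalars along $\forg{R}\hookrightarrow\Do(R)$, but that route needs $\forg{F}$ to be $\forg{R}$‑projective, which is more than is assumed here; the adjunction argument uses exactly the stated hypothesis.

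For the exactness task I would first check that every arrow in sight is a morphism in $R\Mod$. Both $\iota$ and $\tau$ are visibly $\forg{R}$‑linear (they are index shifts, respectively a finite sum of copies of $\id_{\forg{F}}$), and a one‑line computation using the description $\sigma_{\copinf F}(f_0,f_1,\ldots)=(0,\sigma_F f_0,\sigma_F f_1,\ldots)$ gives $\iota\circ\sigma_{\copinf F}=\sigma_{\copinf F}\circ\iota$ and $\tau\circ\sigma_{\copinf F}=\sigma_F\circ\tau$, so $\iota-\id$ and $\tau$ belong to $R\Mod$. The remaining verification of exactness is a direct computation, most transparently performed on elements since the conclusion concerns only the underlying additive category: $\tau$ is split epi via $f\mapsto(f,0,0,\ldots)$; $\iota-\id$ is injective because $(0,f_0,f_1,\ldots)=(f_0,f_1,f_2,\ldots)$ forces $f_0=0$ and then inductively $f_n=f_{n-1}=0$; the inclusion $\im(\iota-\id)\subseteq\ker\tau$ is the telescoping identity $\tau(0,f_0,f_1,\ldots)=\sum_i f_i=\tau(f_0,f_1,\ldots)$; and for $\ker\tau\subseteq\im(\iota-\id)$, given $(f_i)_i$ with $\sum_i f_i=0$ one sets $g_n=-\sum_{i=0}^n f_i$, observes that $(g_i)_i$ has finite support (because $(f_i)_i$ does and $\sum_i f_i=0$) hence lies in $\copinf F$, and checks $(\iota-\id)(g_i)_i=(f_i)_i$ by the same telescoping.

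The only step that requires genuine care is the projectivity of $\copinf F$: one must apply the adjunction $\copinf\dashv\I_\infty$ with the correct internal hom $[F,\mathord{-}]$ on the right and notice that $\diffinf$projectivity of $F$ is precisely what promotes exactness of $[F,\mathord{-}]$ to exactness of $R\Mod(\copinf F,\mathord{-})$, and thence to projectivity of $\copinf F$ in the ordinary sense. The exactness of the two‑term complex, by contrast, is entirely formal and will go through verbatim as above.
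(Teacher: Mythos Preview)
Your proposal is correct and follows essentially the same route as the paper: the adjunction $\copinf\dashv\I_\infty$ giving $R\Mod(\copinf F,M)\simeq\forg{[F,M]}$ is exactly the argument the paper uses for projectivity, and your element-level telescoping for exactness (with $g_n=-\sum_{i\le n}f_i$) is a cleaner variant of the paper's double-sum identity. One small caution: the section $f\mapsto(f,0,0,\ldots)$ splits $\tau$ only as a map of $\forg{R}$-modules, not in $R\Mod$ (it fails to commute with $\sigma$), but since you explicitly carry out the exactness check at the level of the underlying additive category this is harmless.
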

\begin{proof}
Recall that the difference operator on $\copinf F$ is given by the formula
$$
\sigma(f_0,f_1,\ldots)=(0,\sigma_F(f_0),\sigma_F(f_1),\ldots)
$$
and that the multiplication by elements of $R$ is component-wise, so $\iota$ and $\tau$ are clearly $R$-module maps. 

Let us verify the exactness of the above sequence. It is immediate that $\Ker(\iota-\id)=0$ and that $\tau$ is onto, so it remains to check that $\Ker(\tau)=\Im(\iota-\id)$. The right to left inclusion holds since
$$
\tau\circ(\iota-\id)=0.
$$
On the other hand, suppose $a=(a_0,a_1,a_2,\ldots)\in\Ker(\tau)$, i.e., $a_0+a_1+a_2+\cdots=0$. Then
\begin{align*}
a & =a-0=(a_0,a_1,a_2,\ldots)-(\sum_{i\geq 0} a_i,0,\ldots)=\sum_{i\geq 1}(\iota^i-\id)(a_i,0,\ldots)= \\
& = \sum_{i\geq 1}\sum_{j=1}^i(\iota^j-\iota^{j-1})(a_i,0,\ldots)=(\iota-\id)\sum_{i\geq 1}\sum_{j=1}^i\iota^{j-1}(a_i,0,\ldots)\in\Im(\iota-\id).
\end{align*}
 
It remains to check that $\copinf F$ is a projective object in $R\Mod$, i.e., that the functor
$
R\Mod(\copinf F,\mathord{-}): R\Mod\to\Ab
$
is exact. However, by adjunction, $$R\Mod(\copinf F,\mathord{-})\simeq\forg{[F,\I_\infty(\mathord{-})]},$$
and the latter is exact since $F$ is assumed $\diffinf$projective.
\end{proof}

\begin{proposition}\label{inj-res-diffinf-inj}
If $I$ is a $\diffinf$injective $R$-module, then
$$
0\to I\stackrel{\delta}{\longrightarrow}\prodinf I\stackrel{s-\id}{\longrightarrow}\prodinf I\to 0
$$
is an injective resolution in $R\Mod$, where
$$
\delta(x)=(x,x,x,\ldots)\, \, \text{ and }\, \, s(x_0,x_1,x_2,\ldots)=(x_1,x_2,\ldots).
$$
\end{proposition}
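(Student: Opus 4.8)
The plan is to argue dually to \ref{proj-res-diffinf-proj}, replacing the coproduct $\copinf F$, the insertion $\iota$ and the summation $\tau$ by the product $\prodinf I$, the diagonal $\delta$ and the shift $s$. Concretely, $\prodinf I$ carries componentwise scalar multiplication together with the difference operator dual to the one recalled for $\copinf F$ in \ref{proj-res-diffinf-proj}; with this description a one-line computation in each case shows that $\delta$ and $s$ commute both with the difference operator and with the $R$-action, so that $\delta$ and $s-\id$ are morphisms in $R\Mod$.

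Second, I verify exactness of $0\to I\xrightarrow{\delta}\prodinf I\xrightarrow{s-\id}\prodinf I\to 0$. Injectivity of $\delta$ is immediate, the inclusion $\Im\delta\subseteq\Ker(s-\id)$ follows from $(s-\id)\delta=0$, and the reverse inclusion holds because $s(x)=x$ forces all components of $x$ to coincide, hence $x=\delta(x_0)$. The one genuinely non-formal point is surjectivity of $s-\id$: given $y=(y_0,y_1,\ldots)$ one solves the recursion $x_{i+1}=x_i+y_i$ with $x_0=0$, i.e.\ $x_n=y_0+\cdots+y_{n-1}$, obtaining $(s-\id)(x)=y$. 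This is precisely dual to the identity $\Im(\iota-\id)=\Ker\tau$ used in \ref{proj-res-diffinf-proj}, and it is where the one-sided ($\N$-indexed) shape of $\prodinf I$ is essential.

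Third, I check that $\prodinf I$ is an injective object of $R\Mod$, i.e.\ that $R\Mod(\mathord{-},\prodinf I)$ is exact. By the adjunction $\I_\infty\dashv\prodinf$ recalled in \ref{ord-vs-gen} there is a natural isomorphism $R\Mod(\mathord{-},\prodinf I)\simeq\forg{[\I_\infty(\mathord{-}),I]}$, and the right-hand side is exact because $I$ is $\diffinf$injective, so that $[\mathord{-},I]:R\Mod\to R\Mod$ is exact, and the underlying-module functor $\forg{\,}$ is exact. Combining the three steps shows that the displayed sequence is an injective resolution of $I$ in $R\Mod$.

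I expect the only real obstacle to be bookkeeping: tracking the twist in the difference operator on $\prodinf I$ while checking that $\delta$, $s$ and $s-\id$ are difference morphisms and that $\prodinf I$ is indeed the right adjoint of $\I_\infty$ one expects. Beyond faithfully dualizing \ref{proj-res-diffinf-proj}, no new idea should be required.
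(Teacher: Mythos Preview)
Your proof is correct and follows essentially the same approach as the paper's. The paper dismisses exactness as ``routine verifications'' and then gives exactly your adjunction argument $R\Mod(\mathord{-},\prodinf I)\simeq\forg{[\I_\infty(\mathord{-}),I]}$ to deduce that $\prodinf I$ is injective from $\diffinf$injectivity of $I$; you simply spell out the routine part in more detail.
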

\begin{proof}
The sequence is seen to be exact by routine verifications. It remains to check that $\prodinf I$ is an injective object in $R\Mod$, i.e., that the functor $R\Mod(\mathord{-},\prodinf I):R\Mod\to\Ab$ is exact. By adjunction, we have that
$$
R\Mod(\mathord{-},\prodinf I)=\forg{[\I_\infty(\mathord{-}),I]},
$$
and the latter functor is exact by the assumption that $I$ is $\diffinf$injective.
\end{proof}

\subsection{Extensions of difference modules}

\begin{theorem}\label{calc-ext}
Let $F$ and $F'$ be difference modules over a difference ring $R$, such that either 
\begin{enumerate}
\item $F$ is $\diffinf$projective, or
\item $F'$ is $\diffinf$injective.
\end{enumerate}
Then 
$$
\Ext_{R\Mod}^i(F,F')=
\begin{cases}
R\Mod(F,F'),  &  i=0,\\
[F,F']_s,  & i=1,\\
0, & i>1,
\end{cases}
$$
where $[F,F']_s=[F,F']/\Im(s-\id)$ is the module of $s$-coinvariants of $[F,F']$.
\end{theorem}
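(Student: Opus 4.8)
The plan is to read off $\Ext$ directly from the explicit two-term resolutions of \ref{proj-res-diffinf-proj} and \ref{inj-res-diffinf-inj}. By \ref{diff-mod-enough-enr-inj-proj} the category $R\Mod$ has enough enriched (in particular, enough ordinary) projectives and injectives, so $\Ext^i_{R\Mod}(F,F')$ may be computed either from a projective resolution of $F$ or from an injective resolution of $F'$, and the two computations agree; consequently the two hypotheses in the statement are interchangeable, and when both hold they yield the same answer. It therefore suffices to treat each case with its own resolution. The degree-$0$ identity $\Ext^0_{R\Mod}(F,F')=R\Mod(F,F')$ holds by definition of $\Ext^0$, and will also reappear from the computation below.

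\textbf{Case (1): $F$ is $\diffinf$projective.} By \ref{proj-res-diffinf-proj}, the complex $P_\bullet=\bigl(\copinf F\xrightarrow{\iota-\id}\copinf F\bigr)$ concentrated in homological degrees $1,0$ is a projective resolution of $F$, so $\Ext^i_{R\Mod}(F,F')$ is the cohomology of $\bigl(R\Mod(\copinf F,F')\xrightarrow{(\iota-\id)^{*}}R\Mod(\copinf F,F')\bigr)$ in cohomological degrees $0,1$. The adjunction $\copinf\dashv\I_\infty$ from \ref{ord-vs-gen} (in the form already used in the proof of \ref{proj-res-diffinf-proj}) provides a natural isomorphism $R\Mod(\copinf F,F')\simeq\forg{[F,F']}$ sending an $R$-module morphism $\phi\colon\copinf F=\bigoplus_i F_i\to F'$ to the ladder $(\phi_i)_i$ with $\phi_i=\phi|_{F_i}$. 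Since $\iota$ carries the $i$-th summand of $\copinf F$ identically onto the $(i+1)$-st, precomposition with $\iota$ turns $(\phi_i)$ into $(\phi_{i+1})$, i.e.\ $\iota^{*}$ is the shift $s=\sigma_{[F,F']}$; hence $(\iota-\id)^{*}$ becomes $s-\id$ on $\forg{[F,F']}$. Thus $\Ext^i_{R\Mod}(F,F')$ is the cohomology of
$$
0\longrightarrow\forg{[F,F']}\xrightarrow{\ s-\id\ }\forg{[F,F']}\longrightarrow 0,
$$
so $\Ext^0=\ker(s-\id)$, $\Ext^1=\coker(s-\id)=[F,F']_s$, and $\Ext^i=0$ for $i>1$. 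Finally, by the definition of the fixed-point object of a difference module (\ref{fix-quo}), $\Fix([F,F'])$ is the equaliser of $\id$ and $s$ on $\forg{[F,F']}$, which consists of the constant ladders $(f,f,\dots)$ with $f\sigma_F=\sigma_{F'}f$, i.e.\ of the morphisms $F\to F'$ in $R\Mod$; hence $\ker(s-\id)=\Fix([F,F'])\simeq R\Mod(F,F')$.

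\textbf{Case (2): $F'$ is $\diffinf$injective.} Dually, \ref{inj-res-diffinf-inj} gives an injective resolution $\bigl(\prodinf F'\xrightarrow{s-\id}\prodinf F'\bigr)$ in cohomological degrees $0,1$, so $\Ext^i_{R\Mod}(F,F')$ is the cohomology of $\bigl(R\Mod(F,\prodinf F')\xrightarrow{(s-\id)_{*}}R\Mod(F,\prodinf F')\bigr)$. The adjunction $\I_\infty\dashv\prodinf$ of \ref{ord-vs-gen} yields a natural isomorphism $R\Mod(F,\prodinf F')\simeq\forg{[F,F']}$, and unwinding it on the covariant side exactly as $\iota^{*}$ was unwound above identifies $(s-\id)_{*}$ with $s-\id$ on $\forg{[F,F']}$. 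The same kernel/cokernel computation then gives the asserted values.

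The main obstacle is the identification, in each case, of the differential induced on $\forg{[F,F']}$ with $s-\id$: one must make the natural isomorphisms $R\Mod(\copinf F,F')\simeq\forg{[F,F']}$ and $R\Mod(F,\prodinf F')\simeq\forg{[F,F']}$ concrete enough to chase the maps $\iota-\id$ and $s-\id$ through them, keeping careful track of which shift plays which role — the shift $\iota$ on $\copinf F$, the twisted shift $s$ on $\prodinf F'=[\N,F']$, and the difference operator $s=\sigma_{[F,F']}$ — all of which become the single operator $s-\id$ on $\forg{[F,F']}$ under the adjunctions. Once this diagram chase is carried out, what remains is merely the cohomology of a two-term complex, together with the standard identities $\ker(s-\id)=\Fix([F,F'])\simeq R\Mod(F,F')$ and $\coker(s-\id)=[F,F']_s$.
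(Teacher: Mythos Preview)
Your proof is correct and follows essentially the same approach as the paper: in each case you use the two-term resolution from \ref{proj-res-diffinf-proj} or \ref{inj-res-diffinf-inj}, pass through the adjunction $\copinf\dashv\I_\infty$ or $\I_\infty\dashv\prodinf$ to identify the induced complex with $\forg{[F,F']}\xrightarrow{s-\id}\forg{[F,F']}$, and read off the kernel and cokernel. Your write-up is in fact slightly more explicit than the paper's in tracking why $\iota^*$ and $s_*$ become the shift $s$ on $[F,F']$, and in spelling out that $\ker(s-\id)=\Fix([F,F'])\simeq R\Mod(F,F')$.
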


\begin{proof}
For (1), if $F$ is $\diffinf$projective, we can use the projective resolution of $F$ from  \ref{proj-res-diffinf-proj}, which allows us to compute $\Ext^*(F,F')$ as the cohomology of the complex
$$
0\to R\Mod(\copinf F,F')\stackrel{\iota^*-\id}{\longrightarrow}R\Mod(\copinf F,F')\to 0,
$$
which, by adjunction, is isomorphic to the complex
$$
0\to\forg{[F,F']}\stackrel{s-\id}{\longrightarrow}\forg{[F,F']}\to 0.
$$
Hence
$$
\Ext^0(F,F')=\Ker(s-\id)=\Fix([F,F'])=R\Mod(F,F'),
$$
as expected, and
$$
\Ext^1(F,F')=\coker(s-\id)=[F,F']_s.
$$
Moreover, $\Ext^i(F,F')=0$ for $i>1$.

Suppose we have the condition (2), i.e., that $F'$ is $\diffinf$injective. We can use the injective resolution of $F'$ given by \ref{inj-res-diffinf-inj}, which allows us to compute $\Ext^*(F,F')$ as the cohomology of the complex 
$$
0\to R\Mod(F,\prodinf F')\stackrel{s_*-\id}{\longrightarrow} R\Mod(F,\prodinf F')\to 0,
$$
which, by adjunction, becomes the complex
$$
0\to\forg{[F,F']}\stackrel{s-\id}{\longrightarrow}\forg{[F,F']}\to 0,
$$
so we conclude as above.
\end{proof}

\begin{proposition}\label{lin-diff-cl-ext}
Let $F$ and $F'$ be finite free \'etale difference modules over a linearly difference closed $R$. Then, for $i>0$,
$$
\Ext^i(F,F')=0.
$$
\end{proposition}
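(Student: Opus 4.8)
The plan is to place ourselves in the situation of Theorem~\ref{calc-ext}, reduce the claim to the surjectivity of $s-\id$ on the internal hom $[F,F']$, and then dispatch that surjectivity by solving a linear difference system using the defining property of a linearly difference closed ring.

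First I would check the hypotheses of Theorem~\ref{calc-ext}(1). Since $F$ is finite free, $\forg{F}$ is a finite free, hence projective, $\forg{R}$-module, so Lemma~\ref{etale-is-diffinf-proj} shows that the \'etale module $F$ is $\diffinf$projective. Theorem~\ref{calc-ext} then yields $\Ext^i_{R\Mod}(F,F')=0$ for $i\geq 2$, together with $\Ext^1_{R\Mod}(F,F')\simeq[F,F']_s=\Coker\bigl(s-\id\colon[F,F']\to[F,F']\bigr)$; it therefore remains only to show that $s-\id$ is surjective.

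Next I would make $[F,F']$ explicit. Since $F$ is \'etale, Lemma~\ref{inthom-etale} identifies $[F,F']$, after a choice of $R$-bases $e$ of $F$ and $f$ of $F'$, with the $\forg{R}$-module $\mathrm{M}_{n\times m}(\forg{R})$ of matrices, which is finite free of rank $mn$; writing $A\in\mathrm{GL}_m(R)$ and $A'\in\mathrm{GL}_n(R)$ for the matrices of the semilinear maps $\sigma_F$, $\sigma_{F'}$ (invertible precisely because $F$, $F'$ are \'etale), the shift $s$ is the operator $C\mapsto A'\,\varsigma(C)\,A^{-1}$, where $\varsigma$ acts entrywise; this combines the shift formula in Lemma~\ref{inthom-etale} with the fact that the base change $(-)_\varsigma$ twists a matrix entrywise by $\varsigma$. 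Its associated $R$-linear map, conjugation by $(A',A^{-1})$, is an isomorphism, so $[F,F']$ is again a finite free \'etale $R$-module.

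Finally I would invoke the following instance of linear difference closedness: for a finite free \'etale $R$-module $E$, the map $\sigma_E-\id\colon E\to E$ is surjective. Indeed, in any $\forg{R}$-basis of $E$ the operator $\sigma_E$ has the form $v\mapsto M\varsigma(v)$ with $M$ invertible (\'etaleness), so $(\sigma_E-\id)(v)=w$ becomes the inhomogeneous linear difference system $\varsigma(v)=M^{-1}v+M^{-1}w$, which has a solution for every $w$ because $R$ is linearly difference closed. Applying this with $E=[F,F']$ shows that $s-\id$ is surjective, hence $\Ext^1_{R\Mod}(F,F')=0$, and the proof is complete. I expect the only delicate point to be pinning down the shift on $[F,F']$ in matrix form --- in particular correctly tracking the entrywise $\varsigma$-twist coming from $(-)_\varsigma$ --- and then matching the resulting system with the exact formulation of ``linearly difference closed'' in use; the remaining steps are direct applications of Theorem~\ref{calc-ext} and Lemma~\ref{inthom-etale}.
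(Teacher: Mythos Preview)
Your proof is correct and follows essentially the same route as the paper: reduce via Lemma~\ref{etale-is-diffinf-proj} and Theorem~\ref{calc-ext} to the surjectivity of $s-\id$ on $[F,F']$, identify the shift in matrix form via Lemma~\ref{inthom-etale}, and solve the resulting linear difference system using that $R$ is linearly difference closed. The only difference is cosmetic: where you package the final step by observing that $[F,F']$ is itself a finite free \'etale $R$-module and then invoke surjectivity of $\sigma_E-\id$ for such modules, the paper carries out the same computation explicitly by vectorising the matrix equation $B_1=A'\varsigma(B_0)A^{-1}-B_0$ with a Kronecker product to obtain the system $\varsigma(\vect(B_0))=({A'}^{-1}\otimes A^T)\vect(B_0)+\vect({A'}^{-1}B_1A)$.
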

\begin{proof}
Using \ref{etale-is-diffinf-proj} and \ref{calc-ext}, it is enought to show that
$$
\Ext^1(F,F')=[F,F']_s=0.
$$
Since $F$ is \'etale, \ref{inthom-etale} gives that $[F,F']=\forg{R}\Mod(\forg{F},\forg{F'})$ with the shift $s:\varphi_0\mapsto \bar{\sigma}_{F'}\circ(\varphi_0)_\varsigma\circ\bar{\sigma}_F^{-1}.$ We need to show that for any $\varphi_1\in [F,F']$ there exists a $\varphi_0$ with
$s\varphi_0-\varphi_0=\varphi_1$.

Upon a choice of bases for $F$ and $F'$, let $A$ and $A'$ be the matrices of $\sigma_F$ and $\sigma_{F'}$, and let $B_i$ be the matrices of $\varphi_i$, $i=0,1$. Hence, given $B_1$, we need to find $B_0$ such that 
$$B_1=A'\varsigma(B_0)A^{-1}-B_0.$$
Writing $\vect(X)$ for the `vectorisation' of a matrix $X$, the above matrix difference equation can be written as the system
$$
(A'\otimes (A^{-1})^T)\varsigma(\vect(B_0))=\vect(B_0)+\vect(B_1).
$$
Using that $F'$ is also \'etale and thus $A'$ is also invertible, we can write it as the linear difference system in $\vect(B_0)$
$$
\varsigma(\vect(B_0))=({A'}^{-1}\otimes A^T)\vect(B_0)+\vect({A'}^{-1}B_1A),
$$
where the Kronecker product ${A'}^{-1}\otimes A^T$ is regular since $A'$ and $A$ are, and such a system can be solved since $R$ is linearly difference closed.
\end{proof}

\begin{corollary}\label{RFix}
The derived functors of the functor $\Fix:R\Mod\to\Ab$ are given by 
$$
R^i\Fix(M)=
\begin{cases}
\Fix(M)=M^\sigma  &  i=0,\\
M/\Im(\sigma-\id)=M_\sigma,  & i=1,\\
0, & i>1.
\end{cases}
$$
\end{corollary}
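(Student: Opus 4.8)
The plan is to identify $\Fix$ with a $\Hom$-functor on $R\Mod$ and then invoke Theorem~\ref{calc-ext}. First I would observe that, regarding $R$ itself as the difference $R$-module $(R,\sigma_R)$, there is an isomorphism natural in $M$,
$$
\Fix(M)\simeq R\Mod(R,M),
$$
since a morphism of difference $R$-modules $R\to M$ is determined by the image $m$ of $1$, which must satisfy $\sigma_M(m)=m$, and conversely every such $m$ defines such a morphism. Because $R\Mod$ has enough injectives by \ref{diff-mod-enough-enr-inj-proj} and $\Fix$ is left exact (it is right adjoint to $\I$; see \ref{fix-quo}), this yields $R^i\Fix(M)\simeq\Ext^i_{R\Mod}(R,M)$ naturally in $M$.

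Next I would verify that the difference module $R$ is $\diffinf$projective, so that Theorem~\ref{calc-ext}(1) applies with $F=R$ and $F'=M$. The module $\forg{R}$ is free of rank one over $\forg{R}$, and with respect to the basis $\{1\}$ the semilinear map $\sigma_R$ has the matrix $(1)$, which is invertible; hence $R$ is an \'etale difference module, and since $\forg{R}$ is in particular projective, Lemma~\ref{etale-is-diffinf-proj} gives that $R$ is $\diffinf$projective. Theorem~\ref{calc-ext} then delivers $\Ext^0_{R\Mod}(R,M)=R\Mod(R,M)$, $\Ext^1_{R\Mod}(R,M)=[R,M]_s$, and $\Ext^i_{R\Mod}(R,M)=0$ for $i>1$.

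It then remains to rewrite these right-hand sides in the intrinsic form claimed. The degree-zero term is $R\Mod(R,M)\simeq\Fix(M)=M^\sigma$ by the first step. For degree one, \'etaleness of $R$ lets me apply Lemma~\ref{inthom-etale}: $[R,M]\simeq\forg{R}\Mod(\forg{R},\forg{M})\simeq\forg{M}$ via $\varphi_0\mapsto\varphi_0(1)$, and a direct check using $\bar\sigma_R^{-1}(1)=1\otimes 1$ together with $\bar\sigma_M(m\otimes 1)=\sigma_M(m)$ shows that under this identification the shift $s$ becomes $\sigma_M$. Hence $[R,M]_s=[R,M]/\Im(s-\id)\simeq M/\Im(\sigma_M-\id)=M_\sigma$.

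I expect the only subtle point to be this last identification: one must track the isomorphism of Lemma~\ref{inthom-etale} through the definition of the shift and through the adjoint (semilinear-versus-linear) maps $\iota$ and $\bar\sigma$. As an alternative that sidesteps this bookkeeping, one could argue directly: since $R$ is $\diffinf$projective, Proposition~\ref{proj-res-diffinf-proj} supplies the projective resolution $0\to\copinf R\xrightarrow{\iota-\id}\copinf R\xrightarrow{\tau}R\to 0$, and applying $R\Mod(-,M)$ and the adjunction $\copinf\dashv\I_\infty$ turns it into the two-term complex $0\to\forg{M}\xrightarrow{\sigma_M-\id}\forg{M}\to 0$, whose cohomology is $M^\sigma$ in degree $0$ and $M_\sigma$ in degree $1$, with everything vanishing above.
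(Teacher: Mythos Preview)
Your proof is correct and follows essentially the same route as the paper: identify $\Fix(\mathord{-})\simeq R\Mod(R,\mathord{-})$, observe that $R$ is \'etale (hence $\diffinf$projective by Lemma~\ref{etale-is-diffinf-proj}), and apply Theorem~\ref{calc-ext}. Your treatment is in fact more detailed than the paper's, which simply asserts the identification $[R,M]_s\simeq M_\sigma$ without unpacking Lemma~\ref{inthom-etale}; your alternative via Proposition~\ref{proj-res-diffinf-proj} is also valid and gives a clean way to see the two-term complex directly.
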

\begin{proof}
We have that $\Fix(\mathord{-})\simeq R\Mod(R,\mathord{-})$, so
$$
R^i\Fix(\mathord{-})\simeq \Ext^i(R,\mathord{-}).
$$
Since $R$ is free \'etale $R$-module, \ref{etale-is-diffinf-proj} yields that it is 
$\diffinf$projective, so \ref{calc-ext} applies. In particular, it gives that, for $M\in R\Mod$,
$$
R^1\Fix(M)\simeq\Ext^1(R,M)=[R,M]_s\simeq M_\sigma.
$$
\end{proof}
The following is immediate from the above and \ref{lin-diff-cl-ext}.
\begin{corollary}
If $R$ is linearly difference closed, the functor $\Fix$ %: R\Mod\to\Ab$ 
is exact.
\end{corollary}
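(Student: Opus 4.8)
The plan is to deduce exactness of $\Fix\colon R\Mod\to\Ab$ from the computation of its derived functors in \ref{RFix}. Recall that a left-exact additive functor between abelian categories is exact if and only if all of its higher right-derived functors vanish. By \ref{RFix} we already know $R^i\Fix(M)=0$ for every $M$ and every $i\geq 2$, so it remains to show that $R^1\Fix\equiv 0$, i.e.\ that $M_\sigma=M/\Im(\sigma_M-\id)=0$ for every difference $R$-module $M$; equivalently, that $\sigma_M-\id_M$ is an epimorphism on each $M$.

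I would next translate this into the $\Ext$-language already set up. Since $R$ is a finite free \'etale module over itself, \ref{etale-is-diffinf-proj} shows it is $\diffinf$projective, and by \ref{calc-ext} (exactly as used in the proof of \ref{RFix}) one has $R^1\Fix(M)\simeq\Ext^1_{R\Mod}(R,M)\simeq[R,M]_s$. So the statement reduces to $[R,M]_s=0$ for all $M$. For $M$ a finite free \'etale module this is precisely \ref{lin-diff-cl-ext} applied with $F=R$, $F'=M$: there the vanishing of $[R,M]_s$ is obtained by writing the equation $s\varphi_0-\varphi_0=\varphi_1$, which after \ref{inthom-etale} is the difference equation $\sigma_M(x)-x=m$, as a linear difference system $\varsigma(\vect(\cdot))=C\,\vect(\cdot)+d$ with $C$ invertible, and solving it using the linear difference closedness of $R$.

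The remaining work is to run this argument for an arbitrary $M\in R\Mod$. Given $m\in M$, one wants $x\in M$ with $\sigma_M(x)=x+m$; the point is that the coefficient of the leading $\varsigma$-term of this equation is the identity, so no regularity obstruction of the kind handled in \ref{lin-diff-cl-ext} arises, and one expects to produce a solution coordinate-by-coordinate with respect to a $\sigma$-generating family of $M$, invoking linear difference closedness of $R$ at each stage. Once $[R,M]_s=0$ is established for all $M$, we conclude $R^1\Fix\equiv 0$, hence $\Fix$ is exact.

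I expect the genuine difficulty to lie entirely in that last step: passing from the finite free \'etale case of \ref{lin-diff-cl-ext} to unrestricted modules. Concretely, one must control the equation $\sigma_M(x)=x+m$ on modules that are neither finitely generated nor \'etale over $R$ — for instance free difference modules such as $\copinf P$ — where the naive recursion producing a solution need not terminate inside $M$. Making the reduction go through in full generality (or else pinning down the precise class of modules, such as the finite-dimensional ones when $R$ is a field, on which it does) is the crux, and is where I would spend the bulk of the effort.
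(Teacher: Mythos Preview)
Your approach is exactly the paper's: the proof in the text is a one-liner citing \ref{RFix} and \ref{lin-diff-cl-ext}, and you have unpacked precisely that route. You have, however, gone further than the paper by noticing that \ref{lin-diff-cl-ext} only yields $\Ext^1(R,M)=0$ for $M$ finite free \'etale, so the corollary as literally stated (that $\Fix$ is exact on all of $R\Mod$) does not follow.

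Your concern is not merely a matter of missing bookkeeping: the step you flag genuinely fails for general $M$. Take $M=\copinf R=\bigoplus_{i\in\N}R$ with $\sigma(r_0,r_1,\ldots)=(0,\sigma_R(r_0),\sigma_R(r_1),\ldots)$ and componentwise $R$-action. For $m=(1,0,0,\ldots)$ the equation $\sigma_M(x)-x=m$ forces $x_0=-1$ and $x_{i+1}=\sigma_R(x_i)=-1$ for all $i$, so the only solution is $(-1,-1,-1,\ldots)$, which is not finitely supported and hence not in $M$. Thus $M_\sigma\neq 0$ and $R^1\Fix(M)\neq 0$ even over a linearly difference closed $R$. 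The corollary should therefore be read in the context of the preceding proposition, i.e.\ with $\Fix$ restricted to finite free \'etale $R$-modules (or some comparable class), where it is indeed immediate from \ref{RFix} and \ref{lin-diff-cl-ext}; your instinct to ``pin down the precise class of modules'' is the correct resolution.
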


\begin{corollary}
Let $F\in R\Mod$, and suppose either:
\begin{enumerate}
\item $F$ is \'etale, or
\item $R$ is inversive with $\forg{R}$ a self-injective ring.
\end{enumerate}
Then 
$$
\Ext_{R\Mod}^i(F,R)=
\begin{cases}
R\Mod(F,R),  &  i=0,\\
(F^\vee)_s,  & i=1,\\
0, & i>1.
\end{cases}
$$
In case (1), the above further simplifies to 
$$
\Ext^1(F,R)=\forg{F}^\vee/\Im(s-\id).
$$
%where $s$ is the difference operation induced on $\forg{F}^\vee=\forg{R}\Mod(\forg{F},\forg{R})$ from the shift via the isomorphism $[F,R]\simeq\forg{F}^\vee$.
\end{corollary}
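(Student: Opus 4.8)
The plan is to reduce the whole statement to Theorem~\ref{calc-ext} applied with $F'=R$, for which $[F,R]=F^\vee$ is by definition the (unrestricted) dual, so that the theorem yields $\Ext^0_{R\Mod}(F,R)\simeq R\Mod(F,R)$, $\Ext^1_{R\Mod}(F,R)\simeq[F,R]_s=(F^\vee)_s$, and $\Ext^i_{R\Mod}(F,R)=0$ for $i>1$. Thus the only thing to check is that one of the two hypotheses of Theorem~\ref{calc-ext} holds: in case~(1) that $F$ is $\diffinf$projective, and in case~(2) that $R$, viewed as a difference module over itself, is $\diffinf$injective.

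For case~(1), I would invoke Lemma~\ref{etale-is-diffinf-proj}: an étale $R$-module whose underlying $\forg R$-module is projective is $\diffinf$projective, so Theorem~\ref{calc-ext}(1) applies and gives the three displayed values. (Strictly, one is using here not only that $F$ is étale but also that $\forg F$ is $\forg R$-projective, which is automatic when $F$ is finite free, as in Proposition~\ref{lin-diff-cl-ext}.) To obtain the stated simplification of $\Ext^1$, apply Lemma~\ref{inthom-etale} with $M=F$ and $M'=R$: since $F$ is étale, $F^\vee=[F,R]\simeq\forg R\Mod(\forg F,\forg R)=\forg F^\vee$ as $R$-modules, the difference operator being the shift $s\colon\varphi_0\mapsto\bar\sigma_R\circ(\varphi_0)_\varsigma\circ\bar\sigma_F^{-1}$; passing to $s$-coinvariants converts $(F^\vee)_s$ into $\forg F^\vee/\Im(s-\id)$, as asserted.

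For case~(2), the point is that an inversive difference ring $R$ is in particular an inversive difference module over itself (its difference operator $\sigma_R$ being an automorphism of $\forg R$), and its underlying module $\forg R$ is injective over $\forg R$ by hypothesis; the same argument that establishes that an inversive difference vector space over a difference field is $\diffinf$injective (the unlabelled lemma preceding Theorem~\ref{calc-ext}) then shows $R$ is $\diffinf$injective. Concretely one must verify that $[\mathord{-},R]\colon R\Mod\to R\Mod$ is exact: for $0\to A\to B\to C\to 0$ exact in $R\Mod$, self-injectivity of $\forg R$ makes $0\to\forg C^\vee\to\forg B^\vee\to\forg A^\vee\to 0$ exact, which handles left-exactness and exactness in the middle of $0\to[C,R]\to[B,R]\to[A,R]\to 0$ directly, while for surjectivity one lifts a compatible family $(g_i)\in[A,R]$ to a compatible family in $[B,R]$, extending level by level: at each stage one extends off $\Im(\sigma_B)+\forg A$ using injectivity of $\forg R$, the bijectivity of $\sigma_R$ guaranteeing that the level-to-level constraints $f_{i+1}\sigma_B=\sigma_R f_i$ remain consistently liftable. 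With $R$ $\diffinf$injective, Theorem~\ref{calc-ext}(2) applies and delivers the same three-term formula.

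The main obstacle is exactly this last verification in case~(2) — namely, a clean proof that an inversive difference ring with self-injective underlying ring is $\diffinf$injective as a module over itself; everything else is an immediate citation of Theorem~\ref{calc-ext}, Lemma~\ref{etale-is-diffinf-proj} and Lemma~\ref{inthom-etale}. I expect the cleanest route to that obstacle is to prove the module-level analogue of the inversive-difference-vector-space lemma once and for all (for inversive $I$ with $\forg I$ injective over $\forg R$), rather than arguing it in place.
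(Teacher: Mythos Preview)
Your proposal is correct and follows precisely the route the paper intends: the Corollary is stated without proof, as a direct application of Theorem~\ref{calc-ext} with $F'=R$, so the whole content is the two reductions you identify---$F$ $\diffinf$projective in case~(1) via Lemma~\ref{etale-is-diffinf-proj}, and $R$ $\diffinf$injective in case~(2) as the module-level analogue of the inversive-vector-space lemma---together with Lemma~\ref{inthom-etale} for the simplification of $\Ext^1$. Your parenthetical remark that case~(1) tacitly requires $\forg F$ to be $\forg R$-projective (as in the hypothesis of Lemma~\ref{etale-is-diffinf-proj}) is well taken; the paper's statement is slightly elliptical on this point.

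One refinement worth recording for your case~(2) sketch: the level-by-level extension does not quite work if you only impose $f_{n+1}|_A=g_{n+1}$ and $f_{n+1}\sigma_B=\sigma_I f_n$, because compatibility on $A\cap\operatorname{Im}\sigma_B$ forces constraints on $f_n$ over $\sigma_B^{-1}(A)$, and these propagate backwards. The clean fix is to first pass to the $\sigma_B$-saturation $A_\infty=\bigcup_{k\geq0}\sigma_B^{-k}(A)$, define $h_n$ on $A_\infty$ by $h_n(x)=\sigma_I^{-k}g_{n+k}(\sigma_B^k x)$ (well-defined since $\sigma_I$ is bijective), verify $(h_n)\in[A_\infty,I]$, and only then extend inductively to $B$ using injectivity of $\forg I$; at each step the compatibility on $\operatorname{Im}\sigma_B\cap A_\infty$ is automatic because $\sigma_B^{-1}(A_\infty)=A_\infty$. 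This is exactly the ``module-level analogue'' you suggest proving once and for all.
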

%\begin{proof}
%
%\end{proof}

\begin{corollary}
Let $F$ be a finite free \'etale module over a linearly difference closed $R$. Then, for $i>0$,
$$\Ext^i(F,R)=0.$$
\end{corollary}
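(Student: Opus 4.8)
The plan is to obtain this as an immediate application of Proposition~\ref{lin-diff-cl-ext}, taking the second module to be $R$ itself. First I would verify that $R$, regarded as a module over itself, is finite free \'etale: it is free of rank one on the basis $\{1\}$, the $\varsigma$-linear map $\sigma_R$ fixes $1$, so its matrix in this basis is the $1\times1$ identity matrix, which is invertible, and by the characterisation of \'etale modules with finite free underlying module this forces $R$ to be \'etale. With $F'=R$ the hypotheses of \ref{lin-diff-cl-ext} are then satisfied, and it yields $\Ext_{R\Mod}^i(F,R)=0$ for every $i>0$.

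As a consistency check and an alternative route, one can instead invoke the Corollary immediately preceding this one. For $i>1$ the vanishing $\Ext_{R\Mod}^i(F,R)=0$ is already recorded there; for $i=1$ it gives $\Ext_{R\Mod}^1(F,R)\simeq\forg{F}^\vee/\Im(s-\id)$, so the remaining task is to show that $s-\id$ is surjective on $\forg{F}^\vee=[F,R]$. Since $F$ is \'etale, \ref{inthom-etale} identifies $[F,R]$ with $\forg{R}\Mod(\forg{F},\forg{R})$ equipped with the shift $\varphi_0\mapsto\bar\sigma_R\circ(\varphi_0)_\varsigma\circ\bar\sigma_F^{-1}$; choosing a basis of $F$ in which $\sigma_F$ has invertible matrix $A$ and using that $\bar\sigma_R$ is the identity in the basis $\{1\}$ of $R$, the equation $s(\varphi_0)-\varphi_0=\varphi_1$ unwinds to a regular linear difference system, solvable precisely because $R$ is linearly difference closed --- this is the same computation that powers the proof of \ref{lin-diff-cl-ext}.

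There is essentially no obstacle: the entire content has been set up in the two preceding results, and the only verification needed is the routine observation that $R$ itself is a finite free \'etale $R$-module. I would take the one-line route through \ref{lin-diff-cl-ext} as the actual proof, and perhaps note in passing that it also follows from the previous corollary by the surjectivity argument for $s-\id$ sketched above.
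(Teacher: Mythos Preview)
Your proposal is correct and matches the paper's intended (implicit) approach: the corollary is stated without proof precisely because it is the specialisation $F'=R$ of Proposition~\ref{lin-diff-cl-ext}, once one observes that $R$ is finite free \'etale over itself. Your alternative route through the preceding corollary is also valid and is essentially a reprise of the computation inside the proof of \ref{lin-diff-cl-ext}.
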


\section{Difference algebraic geometry}\label{diff-AG}

\subsection{The Zariski spectrum of a difference ring}\label{speczar-diff}

Let $A$ be a difference ring, i.e., a ring object in the topos $\cS=\diff\Set$. The spectrum of $A$ is defined as Hakim's Zariski spectrum (cf.\ \ref{spec-zar}) of the ringed topos $(\cS,A)$,
$$
\spec(A)=\speczar(\cS,A).
$$
We describe its construction as a locally ringed topos, %$(\tilde{\cE},\tilde{A})$, 
following the construction \ref{tierney-spec-zar}.

The object 
$$
R\mono PA
$$
of radical ideals of $A$ in this case, using the description of power objects from  \ref{diff-powerset}, turns out to be
$$
R=\{\A=(\A_n)\in PA \, | \, \A_n \text{ is a radical ideal in }\forg{A}\text{ for all }n\in\N\}.
$$
As in the general case, $R$ is an internal frame (a complete Heyting algebra), and the retraction
$r:PA\to R$
is given by 
$$
r(X)_n=\sqrt{X_n},
$$
the radical computed in $\forg{A}$, for $n\in\N$.

The composite
$\rho:A\xrightarrow{\{\}}PA\xrightarrow{r}R$
is explicitly given by
$$
\rho(f)_n=\sqrt{\sigma^nf}.
$$

The poset $$\bbA=(A_1,A)$$ is given by $f\leq g$ whenever $\rho(f)\leq\rho(g)$. More explicitly, $f\leq g$ whenever $\sqrt{f}\subseteq\sqrt{g}$ in $\forg{A}$, i.e., if and only if there exists an element $a\in\forg{A}$ and $i\in\N$ so that $f^i=ag$. 

An internal coverage, in the sense of \ref{int-cov-poset},
$$
T\xhookrightarrow{(b,c)}A\times \mathop{\rm Idl}(\bbA)
$$
is given by the rule
$$
(f,I)\in T\text{ if }\rho(f)\leq r(I),
$$
i.e., whenever $\sigma^nf\in\sqrt{I_n}$,  and, for all $g\in I_n$, $g\in \sqrt{\sigma^nf}$, for all $n\in\N$, the latter condition being implicit from the definition of a coverage.

%The intuition for the coverage condition can be found by considering the Zariski topology of the classical spectrum $\spec(\forg{A})$. Denoting by $D(f)$ the basic open subset of $\spec(\forg{A})$ associated with $f\in\forg{A}$, the above conditions are equivalent to saying that, for all $n$, 
%$$
%D(\sigma^nf)=\bigcup_{g\in I_n}D(g).
%$$ 

%Tierney's construction leads us to consider the canonical geometric morphism
%$$
%\sh_{\cS}(\bbA,T)\xrightarrow{\Gamma}\cS
%$$
%and tells us that the structure sheaf 
%$$
%\cO_{\spec(A)}
%$$
%is the local ring in the topos $\sh_{\cS}(\bbA,T)$ obtained by localising $\Gamma^*A$ at a `universal prime' $P\mono \Gamma^*A$, classified by the map $\Gamma^*A\to\Omega_{\sh(\bbA,T)}$ which corresponds by adjunction to $\rho:A\to R\simeq \Gamma_*\Omega_{\sh(\bbA,T)}$ (where the last identification holds since the topology is induced by $r$). 
%

We will write $(\cA,\sigma_\cA)$ for the category with an endofunctor associated to $\bbA$ by \ref{int-cat-diff-set}. The set of objects of $\cA$ is $\forg{A}$, and we have a unique morphism $f\to g$ if and only if $f\leq g$, for $f,g\in\forg{A}$. The endofunctor is defined by $\sigma_\cA(f)=\sigma_A(f)$.

We define a coverage $\cT\mono \forg{A}\times\mathop{\rm Idl}(\cA)$ on $\cA$ by stipulating that 
$$
(f,I)\in \cT \text{ whenever }f\in\sqrt{I} \text{ and, for all }h\in I, h\in\sqrt{f}.
$$

The ring
$$
\bar{A}\in[\bbA^\op,\cS]
$$
from the construction can be defined
through the $\sigma_\cA$-equivariant presheaf $(\tilde{\bA},\sigma_{\bar{\bA}})$ on $\cA$ as in \ref{internal-diff-presh} as follows. 

Given $f\in\cA=\forg{A}$, we let
$$
\bar{\bA}(f)=\forg{A}_f,
$$
the localisation of $\forg{A}$ with respect to the multiplicative set generated by $f$.

For $f\leq g$, i.e., $\sqrt{f}\subseteq\sqrt{g}$ in $\forg{A}$, we let
$$
\bar{\bA}(f\to g): \forg{A}_g\to \forg{A}_f
$$
be the usual change of the multiplicative system map.

The natural transformation $\sigma_{\bar{\bA}}: \bar{\bA}\to \bar{\bA}\circ \sigma_\cA$
is given, for $f\in\cA$, by the maps
$$
\sigma_{\bar{\bA},f}:\forg{A}_f\to \forg{A}_{\sigma_Af}
$$ 
naturally induced by $\sigma_A$. 

The construction tells us that
$$
\speczar(\cS,A)=(\tilde{\cE},\tilde{A}),
$$
where $\tilde{\cE}=\sh_\cS(\bbA,T)$, and the local ring $\tilde{A}\in\tilde{\cE}$ is the internal sheaf associated to $\bar{A}$.

We proceed to explain its connection to the classical spectrum of the underlying ring, so we introduce the appropriate notation.

The classical spectrum  $$(X,\cO_X)=\spec(\forg{A})$$ is a locally ringed space admitting
 the morphism of locally ringed spaces
 $$({\lexp{a}{\sigma}},\tilde{\sigma}):(X,\cO_X)\to(X,\cO_X)$$
 induced by $\sigma:\forg{A}\to\forg{A}$, where $\tilde{\sigma}:\cO_X\to{\lexp{a}{\sigma}}_*\cO_X$.

We write $$D(f)$$ for the basic open subset of $X$ associated with $f\in\forg{A}$

\begin{lemma}\label{diff-spec-class-spec}
We have equivalences of categories
$$
\tilde{\cE}=\sh_\cS(\bbA,T)\simeq\sh(\cA,\cT)^{\sigma_\cA}\simeq\sh(X)^{\lexp{a}{\sigma}}.
$$
\end{lemma}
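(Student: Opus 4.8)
The plan is to prove the two equivalences separately and then concatenate. The first, $\sh_\cS(\bbA,T)\simeq\sh(\cA,\cT)^{\sigma_\cA}$, is an instance of the general comparison between internal sheaves in $\diff\Set$ and $\sigma$-equivariant ordinary sheaves, \ref{int-sh-equiv-sh}; so what has to be checked is that the internal coverage $T$ on the internal poset $\bbA$ and the ordinary coverage $\cT$ on $\cA$ introduced in \ref{speczar-diff} satisfy the two compatibility conditions of \ref{int-sh-equiv-sh}. The second, $\sh(\cA,\cT)^{\sigma_\cA}\simeq\sh(X)^{\lexp{a}{\sigma}}$, will be deduced from the classical presentation of the Zariski topos $\sh(X)$ by the site of basic open subsets, keeping track of the endofunctors.

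For the first equivalence I would argue as follows. Since $\bbA=(A_1,A)$ is an internal poset, its object of sieves coincides with $\mathop{\rm Idl}(\bbA)$ and the span $T\hookrightarrow A\times\mathop{\rm Idl}(\bbA)$ of \ref{int-cov-poset} is an internal coverage in the format of \ref{int-sites}; by \ref{int-cat-diff-set} the associated category with endofunctor is the preorder $(\cA,\sigma_\cA)$ on $\forg{A}$ with $f\le g$ iff $\sqrt{f}\subseteq\sqrt{g}$ in $\forg{A}$, and $\sigma_\cA=\sigma_A$. By the explicit description of $T$ already given in \ref{speczar-diff}, a pair $(f,I)$ with $I=(I_n)$ lies in $T$ precisely when, for every $n\in\N$, $\sigma^n f\in\sqrt{I_n}$ and $I_n\subseteq\sqrt{\sigma^n f}$, i.e.\ precisely when $(\sigma_\cA^n f,I_n)\in\cT$ for all $n$; this is condition (1) of \ref{int-sh-equiv-sh}. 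Condition (2) follows at once from $\sigma_A$ being a ring endomorphism, which maps $\sqrt{J}$ into $\sqrt{\sigma_A J}$ for any $J$ and hence sends a $\cT$-cover of $f$ to a $\cT$-cover of $\sigma_A f$. Thus \ref{int-sh-equiv-sh} applies and identifies $\tilde{\cE}=\sh_\cS(\bbA,T)$ with $\sh(\cA,\cT)^{\sigma_\cA}$, as the restriction to sheaves of the presheaf equivalence $[\bbA^\op,\cS]\simeq[\cA^\op,\Set]^{\sigma_\cA}$ of \ref{internal-diff-presh}.

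For the second equivalence I would use the functor $\cA\to\cO(X)$, $f\mapsto D(f)$, into the frame of opens of $X=\spec(\forg{A})$. It sends $f\le g$ to $D(f)\subseteq D(g)$ and conversely $D(f)\subseteq D(g)$ forces $f\le g$; its image is the basis of basic opens, and it matches a $\cT$-cover $(f,I)$ exactly with the Zariski cover $\{D(g)\to D(f):g\in I\}$. Hence the Comparison Lemma \cite[C2.2.3]{elephant2}, used as in \ref{std-site-diff-set}, gives $\sh(\cA,\cT)\simeq\sh(X)$. Under this equivalence the endofunctor $\sigma_\cA\colon f\mapsto\sigma_A f$ of $\cA$ corresponds to $U\mapsto({\lexp{a}{\sigma}})^{-1}(U)$ on $\cO(X)$, because $({\lexp{a}{\sigma}})^{-1}(D(f))=D(\sigma_A f)$; therefore precomposition $\sigma_\cA^{*}$ on presheaves corresponds to the direct image $({\lexp{a}{\sigma}})_{*}$ on sheaves, so a $\sigma_\cA$-equivariant sheaf $\bF\to\sigma_\cA^{*}\bF$ is the same datum as a sheaf $F$ on $X$ together with a morphism $F\to({\lexp{a}{\sigma}})_{*}F$, i.e.\ an object of $\sh(X)^{\lexp{a}{\sigma}}$. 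An equivalence of topoi commuting, up to coherent isomorphism, with two prescribed endofunctors induces an equivalence of the categories of equivariant objects, so $\sh(\cA,\cT)^{\sigma_\cA}\simeq\sh(X)^{\lexp{a}{\sigma}}$, and composing with the first equivalence proves the lemma.

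The main work, rather than any serious difficulty, lies in the bookkeeping: getting the variance right, so that $\sigma_\cA^{*}$ is identified with $({\lexp{a}{\sigma}})_{*}$ and not with $({\lexp{a}{\sigma}})^{-1}$; checking that the presheaf-level equivalences — that of \ref{internal-diff-presh} and that of the Comparison Lemma — genuinely intertwine the endofunctors, so that they descend to equivariant objects; and verifying conditions (1)–(2) of \ref{int-sh-equiv-sh} from the formulas for $\rho$, $r$ and $\cT$. All of these are routine once unwound, and the preorder-versus-poset point for $\cA$ is harmless, since the topos of sheaves on a preorder site depends only on its poset reflection.
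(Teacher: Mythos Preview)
Your proof is correct and follows essentially the same approach as the paper: verify the two hypotheses of \ref{int-sh-equiv-sh} for the pair $(T,\cT)$ to obtain the first equivalence, then identify $\sh(\cA,\cT)$ with $\sh(X)$ via the assignment $f\mapsto D(f)$ and check that $\sigma_\cA^{*}$ corresponds to $({\lexp{a}{\sigma}})_{*}$ through $({\lexp{a}{\sigma}})^{-1}(D(f))=D(\sigma_A f)$. The only cosmetic difference is that the paper cites the Stacks Project tag on sheaves on a basis rather than the Comparison Lemma, but the content is the same.
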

\begin{proof}
The coverages $T$ and $\cT$ are such that $(f,I)\in T$ if and only if, for all $n$, $(\sigma^n f,I_n)\in\cT$, and that $(f,I)\in\cT$ implies $(\sigma f,\sigma(I))\in\cT$, so \ref{int-sh-equiv-sh} gives the first equivalence.

The equivalence 
$$
\sh(\cA,\cT)\simeq \sh(X)
$$
assigns to $\bF\in \sh(\cA,\cT)$ the unique Zariski sheaf $\cF\in\sh(X)$ determined by its values on the basic open sets by
$$
\cF(D(f))=\bF(f),
$$
see \cite[\href{https://stacks.math.columbia.edu/tag/009H}{Tag 009H}]{stacks-project}.

Note that $\cF$ is well-defined since the coverage $\cT$ is compatible with the Zariski topology on $X$, given that $(f,I)\in\cT$ if and only if
$$
D(f)=\bigcup_{g\in I}D(g).
$$ 

The actions of $\lexp{a}{\sigma}_*$ on $\sh(X)$ and $\sigma_\cA^*$ on $\sh(\cA,\cT)$ are compatible, since
$$
\lexp{a}{\sigma}_*\cF(D(f))=\cF(\lexp{a}{\sigma}^{-1}D(f))=\cF(D(\sigma f))=\bF(\sigma_\cA f)=\sigma_\cA^*\bF(f),
$$
so we conclude that $\sigma_\cA$-equivariant sheaves on $\cA$ correspond to $\lexp{a}{\sigma}$-equivariant sheaves on $X$.
\end{proof}

\begin{remark}
In the equivalence of \ref{diff-spec-class-spec}, $\bar{A}$ corresponds to the structure sheaf $(\cO_X,\tilde{\sigma})\in\sh(X)^{\lexp{a}{\sigma}}$, so it is actually a sheaf and we conclude that the structure local ring of $\spec(A)$ is
$$
\tilde{A}=\bar{A}\in\sh_\cS(\bbA,T).
$$
\end{remark}

\begin{proposition}
Let $A$ be a difference ring. % i.e., a ring object in $\diff\Set$. 
With the above notation, the structure morphism of the Zariski spectrum of $A$ can be identified with
$$
\spec(A)=\speczar(\diff\Set,A)\simeq(\sh(X)^{\lexp{a}{\sigma}}, (\cO_X,\tilde{\sigma}))\xrightarrow{\pi}(\diff\Set,A),
$$
where the global sections functor is given by
$$
\pi_*(\cF)=\cF(X),
$$
and $\cF(X)$ is a difference set because $\cF$ is $\lexp{a}{\sigma}$-equivariant, 
while 
$$
\pi^*(E)
$$
is the constant sheaf associated to a difference set $E$, which is naturally equivariant.
\end{proposition}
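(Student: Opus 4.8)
The plan is to derive the statement from constructions already in place, so that the only real work is transporting the structure geometric morphism along the equivalences of Lemma~\ref{diff-spec-class-spec}. First I would invoke Proposition~\ref{tierney-works}: the Zariski spectrum $\speczar(\diff\Set,A)$ is computed by Tierney's construction, which in the difference setting is unwound in \ref{speczar-diff}. There one has $\speczar(\diff\Set,A)=(\tilde{\cE},\tilde{A})$ with $\tilde{\cE}=\sh_{\cS}(\bbA,T)$ the topos of internal sheaves on the internal site $(\bbA,T)$, the local ring $\tilde{A}$ the $T$-sheaf associated with $\bar{A}$, and the structure morphism $\pi\colon\tilde{\cE}\to\cS=\diff\Set$ the \emph{canonical} geometric morphism of this internal site. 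By Lemma~\ref{diff-spec-class-spec} and the remark following it, the equivalences $\sh_{\cS}(\bbA,T)\simeq\sh(\cA,\cT)^{\sigma_{\cA}}\simeq\sh(X)^{\lexp{a}{\sigma}}$ carry $\tilde{A}$ to the equivariant structure sheaf $(\cO_X,\tilde{\sigma})$; so it remains only to compute $\pi_*$ and $\pi^*$ under these equivalences.

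For the direct image I would use \ref{funct-int-presh}: the canonical geometric morphism $[\bbA^\op,\cS]\to\cS$ has direct image $\varprojlim_{\bbA}$, so the composite $\sh_{\cS}(\bbA,T)\hookrightarrow[\bbA^\op,\cS]\to\cS$ has direct image the restriction of $\varprojlim_{\bbA}$ to internal sheaves. I would then evaluate this using the explicit formula of \ref{diff-int-lim-colim}. The category $(\cA,\sigma_{\cA})$ associated with $\bbA$ has a terminal object, namely $1\in\forg{A}$ (as $\sqrt{g}\subseteq\sqrt{1}=\forg{A}$ for every $g$), and $\sigma_{\cA}(1)=1$ since $\sigma_A$ is a ring homomorphism; hence, for an internal sheaf $F$ corresponding to $(\bF,\sigma_{\bF})$, hence to $(\cF,\tilde{\sigma})\in\sh(X)^{\lexp{a}{\sigma}}$, the formula for a category with terminal object gives
$$
\varprojlim_{\bbA}F=\{(u_n)\in\bF(1)^{\N}\,:\,u_{n+1}=\sigma_{\bF}(u_n)\}\simeq\{(u_n)\in\cF(X)^{\N}\,:\,u_{n+1}=\tilde{\sigma}_X(u_n)\}
$$
with the shift difference structure, using $\bF(1)\simeq\cF(D(1))=\cF(X)$ and the fact that the component of $\sigma_{\bF}$ at $1$ is the global-section component $\tilde{\sigma}_X$ of the equivariant structure. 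The constraint forces $u_n=\tilde{\sigma}_X^{\,n}(u_0)$, so $(u_n)\mapsto u_0$ identifies $\varprojlim_{\bbA}F$ with $(\cF(X),\tilde{\sigma}_X)$, the shift now acting as $\tilde{\sigma}_X$. This is exactly $\pi_*(\cF)=\cF(X)$ with its $\lexp{a}{\sigma}$-equivariant (hence difference) structure.

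For the inverse image, $\pi^*$ is the constant internal presheaf functor $\bbA^*$ followed by the associated-sheaf functor; under the equivalences this sends a difference set $E$ to the constant Zariski presheaf $\underline{\forg{E}}$ sheafified, i.e.\ to the constant sheaf on the set $\forg{E}$, equipped with the equivariant structure induced by $\sigma_E$ --- here one uses that $\lexp{a}{\sigma}^{*}$ preserves constant sheaves, so $\sigma_E$ yields a morphism $\lexp{a}{\sigma}^{*}\underline{\forg{E}}\to\underline{\forg{E}}$, which is the asserted natural equivariance. As a cross-check, I would verify the adjunction $\diff\Set(E,\pi_*\cF)\simeq\sh(X)^{\lexp{a}{\sigma}}(\pi^*E,\cF)$ directly, combining the classical constant-sheaf/global-sections adjunction on $\sh(X)$ with the elementary bookkeeping that a map $\forg{E}\to\cF(X)$ is $(\sigma_E,\tilde{\sigma}_X)$-equivariant precisely when the corresponding sheaf morphism $\underline{\forg{E}}\to\cF$ respects the two equivariant structures; left-exactness of $\pi^*$ is inherited from that of the classical constant-sheaf functor. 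The main obstacle I anticipate is purely organisational: confirming that the canonical geometric morphism $\sh_{\cS}(\bbA,T)\to\diff\Set$ corresponds, under the identifications of Lemma~\ref{diff-spec-class-spec}, to the ``equivariant global sections'' functor --- i.e.\ that the externalisation \ref{int-sh-equiv-sh} of internal sheaves and the Zariski equivalence $\sh(\cA,\cT)\simeq\sh(X)$ are compatible with the respective projections to $\diff\Set$. Once that compatibility is made precise, the rest is the routine limit computation above.
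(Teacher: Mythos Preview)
Your proof is correct. The paper itself does not give an explicit proof of this proposition: it is stated as a direct consequence of Lemma~\ref{diff-spec-class-spec} and the preceding remark, with the description of $\pi_*$ and $\pi^*$ asserted without further justification. Your argument supplies precisely the details the paper leaves implicit, and does so along the intended lines --- invoking \ref{tierney-works} and \ref{speczar-diff} for the construction, transporting along the equivalences of \ref{diff-spec-class-spec}, and computing $\pi_*$ via the internal limit formula of \ref{diff-int-lim-colim} using that $1\in\forg{A}$ is terminal in $\cA$ with $\sigma_\cA(1)=1$. The organisational concern you flag at the end is real but minor: the compatibility of the equivalence $\sh(\cA,\cT)\simeq\sh(X)$ with the respective structure morphisms to $\diff\Set$ follows from the fact that both are built from the same underlying ``global sections at the terminal object'' functor, and \ref{int-sh-equiv-sh} is set up precisely so that the canonical morphisms match.
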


\begin{corollary}
If $\spec(A)=(\tilde{\cE},\cA)\xrightarrow{\pi}(\diff\Set,A)$ is the structure morphism of the Zariski spectrum a difference ring, then
$$
\pi_*(\tilde{A})=A.
$$
\end{corollary}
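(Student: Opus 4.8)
The plan is to reduce the statement to the classical fact that the global sections of the structure sheaf of an affine scheme recover its coordinate ring, and then to keep track of the difference operator through this identification.

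First I would invoke the Proposition immediately preceding this corollary, which identifies the structure morphism of the Zariski spectrum with
$$
\pi\colon(\sh(X)^{{\lexp{a}{\sigma}}},(\cO_X,\tilde{\sigma}))\to(\diff\Set,A),
$$
where $(X,\cO_X)=\spec(\forg{A})$ is the classical affine scheme of $\forg{A}$, and $\pi_*(\cF)=\cF(X)$, the difference-set structure on $\cF(X)$ being the one induced by the ${\lexp{a}{\sigma}}$-equivariance datum $\cF\to{\lexp{a}{\sigma}}_*\cF$. Combined with the Remark that identifies $\tilde{A}$ with $(\cO_X,\tilde{\sigma})$ under the equivalence $\tilde{\cE}\simeq\sh(X)^{{\lexp{a}{\sigma}}}$ of \ref{diff-spec-class-spec}, this gives
$$
\pi_*(\tilde{A})=(\cO_X(X),\tilde{\sigma}_X),
$$
where $\tilde{\sigma}_X\colon\cO_X(X)\to({\lexp{a}{\sigma}}_*\cO_X)(X)=\cO_X(X)$ is the global-sections component of $\tilde{\sigma}\colon\cO_X\to{\lexp{a}{\sigma}}_*\cO_X$.

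Next I would use that the canonical isomorphism $\forg{A}\xrightarrow{\sim}\Gamma(X,\cO_X)=\cO_X(X)$ of classical algebraic geometry identifies the underlying rings. Since, by construction, $\tilde{\sigma}$ (like ${\lexp{a}{\sigma}}$) is the map induced by the ring endomorphism $\sigma=\sigma_A\colon\forg{A}\to\forg{A}$, and since $R\mapsto\Gamma(\spec R,\cO)$ is naturally isomorphic to the identity functor on rings, the map $\tilde{\sigma}_X$ corresponds under this isomorphism to $\sigma_A$ itself. Finally, $\pi_*$ preserves finite limits and hence ring objects, so the identification is one of difference rings, giving $\pi_*(\tilde{A})\simeq(\forg{A},\sigma_A)=A$.

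The main obstacle, though essentially bookkeeping, is verifying that the ${\lexp{a}{\sigma}}$-equivariance datum $\tilde{\sigma}$ on $\cO_X$ is genuinely the one carried over from the internal presheaf $\bar{A}\in[\bbA^\op,\cS]$ through the two equivalences of \ref{diff-spec-class-spec}, so that its global-sections component really is $\sigma_A$ under the classical identification. This is checked at the level of basic opens $D(f)$, $f\in\forg{A}$: there $\bar{A}$ corresponds to $\cO_X(D(f))=\forg{A}_f$ and the equivariance map is $\sigma_{\bar{\bA},f}\colon\forg{A}_f\to\forg{A}_{\sigma_A f}$ induced by $\sigma_A$, which is exactly $\tilde{\sigma}$ on $D(f)$; passing to global sections over the cover of $X$ by basic opens then yields $\sigma_A$ on $\Gamma(X,\cO_X)$.
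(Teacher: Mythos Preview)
Your proof is correct and follows exactly the route the paper intends: the Corollary is stated without proof because it is meant to be immediate from the preceding Proposition (which gives $\pi_*(\cF)=\cF(X)$) together with the Remark identifying $\tilde{A}$ with $(\cO_X,\tilde{\sigma})$, and the classical fact that $\Gamma(X,\cO_X)\simeq\forg{A}$ for $X=\spec(\forg{A})$. Your additional care in verifying that the equivariance datum $\tilde{\sigma}$ restricts to $\sigma_A$ on global sections, via the description on basic opens from \ref{speczar-diff}, is exactly the bookkeeping the paper leaves implicit.
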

In other words, $A$ can be recovered as the global sections of the structure local ring $\tilde{A}$ of its Zariski spectrum.

\subsection{Externalisations of the Zariski spectrum of a difference ring}

Let $A$ be a difference ring, i.e., a ring object in $\cS=\diff\Set$. We offer two explicit externalisations of $\spec(A)$.

The first is parallel to Hakim's description of $(\tilde{\cE},\tilde{A})=\speczar(\cS,A)$, where we take a standard site $(\cD,J)$ of definition for $\cS$, see \ref{std-site-diff-set}. Thus
$$
\cS=\sh(\cD,J).
$$
Let $(\bbA,T)$ be the internal site in $\cS$ constructed in \ref{speczar-diff}. By construction \ref{semidir}, the underlying topos of the Zariski spectrum can be expressed as
$$
\tilde{\cE}=\sh_\cS(\bbA,T)\simeq \sh(\cD\rtimes\bbA,J\rtimes T).
$$ 
The category 
$$
\cD\rtimes\bbA
$$
consists of pairs $(U,s)$, where $U\in\cD$ is a finitely presented difference set, and $s\in A(U)$, and we have a morphism $(V,t)\xrightarrow{\varphi}(U,s)$ when $V\xrightarrow{\varphi}U$ is a morphism in $\diff\Set$ such that $t\to \bbA(\varphi)(s)$ is a morphism in $\bbA(V)$, i.e.,
$$
t\in \sqrt{s|\varphi},
$$
where $s|\varphi=\bbA(\varphi)(s)$ is the image of $s\in A(U)$ in $A(V)$ by the natural map. 

The coverage 
$$
J\rtimes T
$$
is generated by families 
$$
\{(U_\lambda,s_{\lambda,i})\xrightarrow{\varphi_\lambda}(U,s):i\in I_\lambda, \lambda\in\Lambda\}
$$
such that $\{U_\lambda\xrightarrow{\varphi_\lambda}:\lambda\in\Lambda\}$ is a $J$-covering, i.e., the family is jointly epimorphic, and, for every $\lambda\in\Lambda$,
$$
s|\varphi_\lambda\in \sqrt{\{s_{\lambda,i}:i\in I_\lambda\}},
$$
and, implicitly, each $s_{\lambda,i}\in\sqrt{s|\varphi_\lambda}$.

In the above equivalence of the categories of sheaves, the local ring $\tilde{A}\in\sh_\cS(\bbA,T)$ corresponds to 
$$
\tilde{\bA}\in\sh(\cD\rtimes\bbA,J\rtimes T), \ \ \tilde{\bA}(U,s)=A(U)_s.
$$

The second, perhaps more interesting externalisation, is obtained by viewing 
$$
\cS=[\bsi^\op,\Set]=\sh(\bsi,J_0),
$$
where $\bsi$ is the category associated to the monoid $(\N,+)$ with the trivial coverage $J_0$. We take the same internal poset $\bbA$ in $\cS$ as above, and we obtain
$$
\tilde{\cE}=\sh_\cS(\bbA,T)\simeq \sh(\bsi\rtimes \bbA,J_0\rtimes T).
$$
The set of objects of $\bsi\rtimes\bbA$ can be identified with $\forg{A}$, and morphisms  $t\to s$ are triples $(n,t,s)$ with $t\in\sqrt{\sigma^ns}$, where the radical is taken in $\forg{A}$.

The coverage
$$
J_0\rtimes T
$$
specifies that a sieve $S$ on $s$ in $\bsi\rtimes\bbA$ covering, provided, writing $I_n=\{t: (n,t,s)\in S\}$, we have that for every $n$,
$$
\sigma^ns\in\sqrt{I_n}.
$$
In this case, the internal sheaf $\tilde{A}$ corresponds to
$$
\bA\in\sh(\bsi\rtimes\bbA,J_0\rtimes T),
$$
given by $\bA(s)=\forg{A}_s$, and $\bA(n,t,s):\bA(s)\to \bA(t)$ is the composite
$$
\forg{A}_s\xrightarrow{\bar{\sigma}^n}\forg{A}_{\sigma^n s}\rightarrow \forg{A}_t,
$$
where $\bar{\sigma}$ is the morphism induced by $\sigma$ on the localisation, and the second morphism is the usual change of the multiplicative system map.

\subsection{Points of the Zariski spectrum of a difference ring}

Let $(\tilde{\cE},\tilde{A})=\speczar(\cE,A)\to (\cE,A)$ be the Zariski spectrum of a Grothendieck ringed topos $(\cE,A)$. Hakim has shown (\cite[IV.2.1]{hakim}) that
any point $\tilde{x}:\Set\to\tilde{\cE}$ above a point $x:\Set\to \cE$ naturally identifies with a point 
$\p\in\spec(x^*A)$, and 
$$
\tilde{x}^*\tilde{A}\simeq (x^*A)_\p.
$$

Let $\cS=\diff\Set$, and let $A$ be a difference ring. 
%We describe the points of $\spec(A)=\speczar(\cS,A)$ below. 
By \ref{points-diff-set}, there are only two points of $\cS$.

The point $\tau(\Nsucc):\Set\to\cS$ corresponds to the adjoint pair
$$
\forg{\,}\dashv \psig{\,},
$$
so a point $\tilde{x}$ of $\spec(A)$ above it can be identified with a point 
$$
\p\in\spec(\forg{A}), 
$$
and the corresponding stalk is  $\tilde{x}^*\tilde{A}\simeq \forg{A}_\p$.

The inverse image functor (i.e., the `fibre functor') associated with the point $\tau(\Zsucc):\Set\to\cS$ maps $A$ to the ring
$$
\varinjlim_\sigma A=\varinjlim(\cdots\xrightarrow{\sigma}\forg{A}\xrightarrow{\sigma} \forg{A}\xrightarrow{\sigma}\cdots),
$$
so a point $\tilde{x}$ over $\tau(\Zsucc)$ corresponds to a point of 
$$
\spec(\varinjlim_\sigma A)\simeq\varprojlim_{\lexp{a}{\sigma}}\spec(\forg{A})\simeq \varprojlim (\cdots\xleftarrow{\lexp{a}{\sigma}}\spec(\forg{A})\xleftarrow{\lexp{a}{\sigma}} \spec(\forg{A})\xleftarrow{\lexp{a}{\sigma}}\cdots),
$$
i.e., to a sequence $\p=(\p_i)_{i\in\Z}$, with $\p_i=\lexp{a}{\sigma}(\p_{i+1})=\sigma_A^{-1}(\p_{i+1})$. 

The corresponding stalk
$$
\tilde{x}^*\tilde{A}\simeq (\varinjlim_\sigma A)_\p\simeq \varinjlim_i \forg{A}_{\p_i}
$$
is a local ring, as a colimit of local rings.

Given that $\speczar(\cS,A)=(\tilde{\cE},\tilde{A})$ is an $\cS$-topos, we must also describe $\cS$-points, i.e., geometric morphisms
$$
\tilde{x}:\cS\to \tilde{\cE}\simeq \sh_\cS(\bbA,T).
$$
over $\cS$.

By  Diaconescu's Theorem \ref{tors-diacon}, \cite[B3.2.7]{elephant1} and \cite[C2.3.9]{elephant2}, such a morphism corresponds to a continuous $\bbA^\op$-torsor $S$. Since $\bbA^\op$ is a poset,  \cite[B3.2.4(d)]{elephant1} states that $S$ can be identified with a subobject of $A$ which is an ideal in $\bbA^\op$ (downward closed and upward directed), i.e., $S$ is a filter in $\bbA$ (upward closed and downward directed).

Taking into account that $f\leq g$ if $f\in\sqrt{g}$ in $\forg{A}$, the above properties of $S$ imply that $1\in S$, $0\notin S$, and $fg\in S$ whenever $f\in S$ and $g\in S$. 

Using the fact that $\{f, g\}$ is a $T$-covering of $f+g$, continuity of $S$ gives that $f+g\in S$ implies $f\in S$ or $g\in S$.

We conclude that $S\mono A$ is a coprime, i.e., the complement of a fixed prime 
$$
\p\in \spec(\forg{A})^{\lexp{a}{\sigma}}=\Fix(\spec(\forg{A},\lexp{a}{\sigma}).
$$ 
Writing $\gamma:\bbS\to\bbA$ for the discrete fibration corresponding to $S$, the fibre functor $\tilde{x}^*$ is the restriction to sheaves of the composite
$$
[\bbA^\op,\cS]\xrightarrow{\gamma^*}[\bbS^\op,\cS]\xrightarrow{\textstyle{\varinjlim_\bbS}}\cS,
$$
so 
$$
\tilde{x}^*\tilde{A}\simeq S^{-1}A\simeq A_\p,
$$
where the last ring is a difference ring since $\p$ is a fixed prime. 

Intuitively, reasoning in $X=\spec(\forg{A})$, the basic open sets $D(f)$, $f\in S$ form a directed system of neighbourhoods of $\p$, and the stalk $\cO_{X,\p}=\varinjlim_{f\in S}\cO_X(D(f))=\varinjlim_{f\in S}\forg{A}_f\simeq \forg{S}^{-1}\forg{A}\simeq\forg{A}_\p$, and the difference structure can be imposed because $\p$ is fixed.

\subsection{\'Etale and other spectra of a difference ring}\label{tau-spectra-diff}

Let $A$ be a difference ring, i.e., a ring in the topos $\cS=\diff\Set$, and let $\tau$ be a Grothendieck topology on $\Sch$ as in \ref{ag-sites}.

We define the \emph{internal big $\tau$-site in $\cS$}
$$
(\bbS_A, T_\tau)
$$
where $\bbS_A\in\cat(\cS)$ is associated with the category 
$$(\Sch_{\ov \spec\forg{A}},\stig_A),$$
where the endofunctor $\stig_A$ is the base change functor $(\lexp{a}{\sigma_A})^*$ via the scheme morphism $\lexp{a}{\sigma_A}:\spec\forg{A}\to\spec\forg{A}$,
and the internal coverage $T_\tau$ is defined by setting 
$$
(P,S)\in T_\tau \ \text{ if } \ (\stig^n_A(P), S_n)\in \tau \text{ for all }n\in\N.
$$
The second property required for \ref{int-sh-equiv-sh} is automatic, because $\tau$ is invariant under base change, so the \emph{big $\tau$-spectrum of $(\cS,A)$} is
$$
\sh_\cS(\bbS_A,T_\tau)\simeq\sh(\Sch_{\ov \spec\forg{A}}, \tau)^{\stig_A},
$$
the topos of $\stig_A$-equivariant sheaves on the big $\tau$-site of $\spec\forg{A}$, equipped with the $\stig_A$-equivariant structure ring $\cO_{\spec\forg{A},\tau}$.

Similarly, we define the \emph{internal small $\tau$-site in $\cS$}
$$
(\bbS_{A,\tau},T_\tau)
$$
where $\bbS_{A,\tau}\in\cat(\cS)$ is associated with the category
$$
(S_{\forg{A},\tau},\stig_A),
$$
where $S_{\forg{A},\tau}=(\spec\forg{A})_\tau$ is the small $\tau$-site on $\spec\forg{A}$ and $\stig_A$ is the base change endofunctor as above, and $T_\tau$ is defined  analogously.

Again by \ref{int-sh-equiv-sh}, the \emph{small $\tau$-spectrum of $(\cS,A)$} is
$$
\sh_\cS(\bbS_{A,\tau},T_\tau)\simeq\sh(S_{\forg{A},\tau}, \tau)^{\stig_A},
$$
the topos of $\stig_A$-equivariant sheaves on the small $\tau$-site of $\spec\forg{A}$, equipped with the appropriate structure ring $\cO_{\spec\forg{A},\tau}$.

By externalising the definition of the small $\tau$-spectrum of $(\cS,A)$ as
$$
\sh(\cD\rtimes \bbS_{A,\tau}, J\rtimes T_\tau),
$$
where $(\cD,J)$ is a standard site for $\cS$, we obtain precisely Hakim's $\tau$-spectrum from \ref{external-spectra}, so we deduce that, as $\tau$-locally ringed topoi,
$$
\spec.\tau(\cS,A)\simeq  \spec.\tau(\Set,\forg{A})^{\stig_A}.
$$

\subsection{Affine difference schemes}

In line with our general definition \ref{rel-affine}, given a difference ring $A$, the associated affine difference scheme is the locally ringed topos
$$
(X,\cO_X)=\spec(A)=\speczar(\diff\Set,A).
$$
Given a scheme topology $\tau$ that refines the Zariski topology, we define the $\tau$-topos of $X$ as the ringed topos
$$
(X_\tau,\cO_\tau)=\spec.\tau(X,\cO_X),
$$ 
whose structure is expounded in \ref{tau-spectra-diff}.

In particular, we obtain the \'etale and flat topos
$$
X_\text{\'et}, \ \  X_\text{fppf}
$$
of the affine difference scheme $X$.

\subsection{Difference schemes}

By a \emph{difference scheme}, we will mean a relative scheme over the topos $\diff\Set$ as in  \ref{rel-schemes}, and a \emph{difference quasi-scheme} is a ringed topos associated to a difference scheme.

\section{Difference Galois theory}\label{diff-gal}

The relevance of various category-theoretic and topos-theoretic Galois theories has been pointed to us by Olivia Caramello and her useful paper \cite{olivia-gal}. We also used ideas from  \cite{borceux-janelidze}, \cite{janelidze}, as well as \cite{bunge-92}, \cite{bunge-moerdijk}, \cite{bunge-04}.

\subsection{Categorical Galois theory of difference rings}

We apply Janelidze's categorical Galois theory to the context of difference rings, in the spirit of Magid's separable Galois theory of commutative rings \cite{magid}.

Recall that the \emph{Pierce spectrum} of a commutative ring $R$ is defined as the spectrum of its boolean algebra of idempotents,
$$
\Sp(R)=\spec(B(R)).
$$
The total space of its structure sheaf of indecomposable rings is the morphism 
$$
\pi_R:\coprod_{M\in\Sp(R)}R/M\to \Sp(R),
$$
topologised in a natural way, see \cite[4.2.12]{borceux-janelidze} and \cite{johnstone-stone}.

Let us write
$$
\cA=\diff\Rng^\op\ \ \text{ and } \ \  \cP=\diff\text{Prof}
$$
for the opposite category of the category of difference rings and for the category of difference profinite sets. 

The \emph{difference Pierce spectrum} is the functor
$$
S:\cA\to \cP, \ \ \ S(A)=(\Sp(\forg{A}), \Sp(\forg{\sigma_A})).
$$
Given a difference ring $k\in\diff\Rng$, it naturally restricts to the functor
$$
S_k:\cA_{\ov k}\to \cP_{\ov S(k)}
$$
on the category of difference $k$-algebras.

Let
$$
C_k:\cP_{\ov S(k)}\to \cA_{\ov k}
$$
be the functor assigning to $E\xrightarrow{e}S(k)$ the $k$-algebra of continuous functions 
$$
\mathcal{C}_{\ov \Sp(\forg{k})}(\forg{e},\pi_{\forg{k}}),
$$
and the difference operator is 
$\sigma_E^*\bar{\sigma}_{k*}$, where  $\bar{\sigma}_k$ is the map induced on the total space by $\sigma_k$.

When $k$ is indecomposable, this can be viewed as the continuous version of the cotensor construction  $\llbracket E,k\rrbracket$ from \ref{difuinf-cotens}.

We obtain the adjunction
$$
S_k\dashv C_k.
$$
Thus, in order to apply Janelidze's Galois theory as outlined in \ref{janelidze}, we need to identify an interesting class of morphisms $A\to k$ in $\cA_k$ of relative Galois descent.

\begin{lemma}
If a morphism $f:\bar{k}\to k$ in $\cA_k$ is such that $\forg{\bar{k}}$ is a \emph{separable closure} of $\forg{k}$ in the sense of Magid \cite{magid}, then it is  of relative Galois descent.
\end{lemma}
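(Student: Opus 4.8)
The plan is to verify the three conditions from \ref{janelidze} that characterise a morphism of relative Galois descent for the adjunction $S_k\dashv C_k$, reducing each of them to the corresponding classical fact about the Pierce-spectrum Galois structure on commutative rings (Magid \cite{magid}; see also \cite{borceux-janelidze}) and then tracking the difference operators. Throughout I would use that $\forg{\,}\colon\diff\Rng\to\Rng$ and its slices are faithful and conservative, that $\diff\Rng\simeq\mathbf{Rng}(\diff\Set)$, and that $\bar{k}$ carries a semilinear lift $\sigma_{\bar{k}}$ of $\sigma_k$; thus a difference $k$-algebra is a $\forg{k}$-algebra together with a compatible lift of $\sigma_k$, and an object of $\cP_{\ov S(k)}$ is a profinite set over $\Sp(\forg{k})$ together with a lift of $\Sp(\forg{\sigma_k})$.

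First, $f$ is an effective descent morphism: the functor $f^{*}\colon\cA_{\ov k}\to\cA_{\ov\bar{k}}$ is $(\mathord{-})\otimes_{k}\bar{k}$ on difference $k$-algebras, and since $\forg{\bar{k}}$ is a separable closure of $\forg{k}$ the extension $\forg{k}\to\forg{\bar{k}}$ is faithfully flat and of effective descent for commutative algebras; because base change commutes with $\forg{\,}$ and the semilinear structure on a tensor product is determined by the underlying data, Beck's criterion for $f^{*}$ (reflection of isomorphisms, creation of $f^{*}$-split coequalisers, which are computed underlying) can be checked on underlying objects, giving monadicity of $f^{*}$. Next, condition~(2), the invertibility of the counit $\epsilon^{\bar{k}}\colon S_{\bar{k}}C_{\bar{k}}\to\id$, is the relativisation of admissibility of the Galois structure: the functor $C$ sending a difference profinite set to its difference ring of continuous sections of its Pierce sheaf of indecomposable rings is fully faithful, since a profinite set is recovered as the Pierce spectrum of such a ring and its operator as $\Sp$ of the induced ring endomorphism (a difference-equivariant Stone duality between Boolean algebras of clopens and profinite sets); fully-faithfulness then passes to $C_{\bar{k}}$ in the usual way.

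The principal step is condition~(3): for every $e\colon E\to S(\bar{k})$ the $k$-algebra $\Sigma_f C_{\bar{k}}(e)$ must be split by $f$, i.e.\ $\bigl(\Sigma_f C_{\bar{k}}(e)\bigr)\otimes_{k}\bar{k}$ must lie, up to isomorphism, in the image of $C_{\bar{k}}$. Since $C_{\bar{k}}(e)$ is Pierce-locally a finite power of $\bar{k}$, I would reduce this, by descent along the Pierce cover of $S(\bar{k})$, to the single assertion that $\bar{k}\otimes_{k}\bar{k}$ with its diagonal operator $\sigma_{\bar{k}}\otimes\sigma_{\bar{k}}$ is a split difference $\bar{k}$-algebra. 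On underlying rings this is Magid's normality theorem for separable closures: $\forg{\bar{k}}\otimes_{\forg{k}}\forg{\bar{k}}$ is a split $\forg{\bar{k}}$-algebra, corresponding to a profinite set $E_0\to\Sp(\forg{\bar{k}})$; it then remains to check that the diagonal operator permutes the idempotents of $\forg{\bar{k}}\otimes_{\forg{k}}\forg{\bar{k}}$ so as to equip $E_0$ with a lift of $\Sp(\forg{\sigma_{\bar{k}}})$, yielding an isomorphism $\bar{k}\otimes_{k}\bar{k}\simeq C_{\bar{k}}(E_0)$ of difference $\bar{k}$-algebras.

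I expect the main obstacle to be precisely this difference-equivariance in the last step: Magid's identification of $\forg{\bar{k}}\otimes_{\forg{k}}\forg{\bar{k}}$ with a section ring is not canonical, so one must arrange (or transport) it so that it intertwines the two tensor-factor operators with the shift on sections — equivalently, one must show that the Boolean algebra of idempotents of $\bar{k}\otimes_{k}\bar{k}$ together with its diagonal $\sigma$-action is the clopen algebra of a difference profinite set over $S(\bar{k})$. Granting this difference-equivariant normality, conditions~(1)--(3) hold, so $f$ is of relative Galois descent; Janelidze's theorem then identifies $\Split_{k}(f)$ with $[\Gal[f],\cP]$ exactly as in the classical case.
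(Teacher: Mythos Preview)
The paper states this lemma without proof (the manuscript's disclaimer notes it is a working draft and incomplete), so there is no argument in the text against which to compare yours. Your outline is the natural one: lift the classical Borceux--Janelidze/Magid verification of relative Galois descent from $\Rng^\op$ to $\diff\Rng^\op$ by checking conditions (1)--(3) on underlying rings and then carrying along the difference operators. That is exactly what one would expect the omitted proof to do.

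One comment on your self-identified ``main obstacle'': it is not really an obstacle. The isomorphism witnessing that $\forg{\bar{k}}\otimes_{\forg{k}}\forg{\bar{k}}$ lies in the essential image of $C_{\forg{\bar{k}}}$ is not an arbitrary choice---it is the component of the \emph{unit} $\eta$ of the adjunction $S\dashv C$ at this ring, and the unit is natural. Since $\sigma_{\bar{k}}\otimes\sigma_{\bar{k}}$ is a ring endomorphism, naturality of $\eta$ automatically intertwines it with the endomorphism $C(\Sp(\sigma_{\bar{k}}\otimes\sigma_{\bar{k}}))$ on the other side; no transport or rearrangement is needed. The same remark applies throughout condition~(3): once you know that a given underlying ring is split (i.e.\ $\eta$ is an isomorphism there), the difference structure comes for free by functoriality of $\Sp$ and $C$. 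With this observation your sketch becomes a complete argument, modulo the standard references to Magid and Borceux--Janelidze for the non-difference statements.
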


\begin{theorem}
Let $f:\bar{k}\to k\in\cA_k$ be as in the Lemma. Then we have an equivalence of categories
$$
\Split_k(f)\simeq[\Gal[f],\cP_k],
$$
where
\begin{enumerate}
\item  $\Split_k(f)$ is the category of difference $k$-algebras $A$ such that $\forg{A\otimes_k\bar{k}}$ is componentially locally strongly separable extension of $\forg{\bar{k}}$;
\item $\Gal[f]$ is a difference groupoid with space of objects $S(\bar{k})$ and space of arrows $S(\bar{k}\otimes_k\bar{k})$;
\item  (one side of) the equivalence is given by the functor
$$
A\mapsto S(A\otimes_k\bar{k}).
$$
\end{enumerate}
\end{theorem}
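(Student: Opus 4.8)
The statement is an instance of Janelidze's Galois theorem (the final display of \ref{janelidze}) applied to the adjunction $S_k \dashv C_k$ on $\cA_k = \diff\Rng_{\ov k}^\op$ and $\cP_k = \diff\mathrm{Prof}_{\ov S(k)}$, with the morphism $f\colon \bar k \to k$ of relative Galois descent supplied by the preceding Lemma. So the skeleton of the proof is: (i) invoke the Lemma to know $f$ is of relative Galois descent; (ii) quote Janelidze's equivalence $\Split_k(f)\simeq[\Gal[f],\cP_k]$; (iii) \emph{unwind} the three abstract data---the subcategory $\Split_k(f)$, the groupoid $\Gal[f]$, and the equivalence functor---into the concrete difference-algebraic descriptions claimed in items (1), (2), (3). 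The genuine content of the proof is step (iii); steps (i) and (ii) are invocations.

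\textbf{Unwinding $\Gal[f]$ (item 2).} From \ref{janelidze}, $\Gal[f]$ is the internal groupoid in $\cP$ with object-space $S(X)$ and arrow-space $S(X\times_Y X)$, where here $Y = k$ (terminal in $\cA_k$) and $X = \bar k$, with the fibre product $X\times_Y X$ taken in $\cA = \diff\Rng^\op$, i.e.\ $\bar k \otimes_k \bar k$ in $\diff\Rng$. Since $S$ is the difference Pierce spectrum $S(A) = (\Sp(\forg A),\Sp(\forg{\sigma_A}))$, this gives exactly the space of objects $S(\bar k)$ and space of arrows $S(\bar k \otimes_k \bar k)$, with source, target, composition, identity and inversion induced functorially by $S$ from the coprojections, multiplication, diagonal and twist of $\bar k\otimes_k\bar k$ over $\bar k$. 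I would just write down this diagram (it is the one displayed in \ref{janelidze} with $S,\bar k$ substituted) and note that applying the functor $S$ to the cospan/span yields the claimed difference groupoid; the only thing to check is that $S$ preserves the relevant pullbacks well enough for the groupoid structure to transport, which is where Magid's separability hypothesis on $\forg{\bar k}$ does its work (the Pierce spectrum behaves well on separable extensions).

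\textbf{Unwinding $\Split_k(f)$ and the equivalence (items 1, 3).} By definition $A\xrightarrow{a}k \in \Split_k(f)$ iff the unit $\eta^{\bar k}_{f^*a}\colon f^*a \to C_{\bar k}S_{\bar k}(f^*a)$ is an isomorphism, equivalently $f^* a \simeq C_{\bar k}(e)$ for some $e$ over $S(\bar k)$. Here $f^*a$ is base change along $\bar k \to k$, i.e.\ the difference $\bar k$-algebra $A\otimes_k\bar k$. Translating ``$f^*a \simeq C_{\bar k}(e)$'' through the description of $C_{\bar k}$ as continuous sections of the Pierce sheaf of indecomposables says precisely that $\forg{A\otimes_k\bar k}$ is, componentwise over its Pierce spectrum, a locally strongly separable extension of $\forg{\bar k}$---this is exactly Magid's characterization of the objects split by a separable closure (\cite{magid}, \cite[Ch.~4]{borceux-janelidze}), so item (1) follows by citing Magid's classification. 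For item (3), Janelidze's equivalence sends $A$ to the object $S_{\bar k}(f^*a) = S(A\otimes_k\bar k)$ of $\cP_{\ov S(\bar k)}$, regarded with its natural $\Gal[f]$-action; so the functor is $A\mapsto S(A\otimes_k\bar k)$ as claimed, and one checks the action of the groupoid on this object is the canonical descent action.

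\textbf{Main obstacle.} The formal Janelidze machinery is free once $f$ is of relative Galois descent, so the real work---and the main obstacle---is the identification of $\Split_k(f)$ and of the functor $C_{\bar k}$ in \emph{difference-algebraic} terms, i.e.\ verifying that the abstract ``objects split by $f$'' coincide with ``$\forg{A\otimes_k\bar k}$ componentially locally strongly separable over $\forg{\bar k}$'', and that the abstract $\Gal[f]$ is the difference Pierce groupoid on $S(\bar k\otimes_k\bar k)$. This amounts to carefully matching the topos-theoretic adjunction $S_k\dashv C_k$ against Magid's separable Galois theory of commutative rings and checking at each stage that the difference operators ($\sigma_k$, $\Sp(\forg{\sigma_A})$, the induced maps on total spaces) are carried along compatibly---a bookkeeping-heavy but conceptually routine verification, provided one is willing to take Magid's (non-difference) theory as a black box for the underlying-ring statements. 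A secondary subtlety is making precise the notion ``componentially locally strongly separable'' so that it is stable under the base change $A\mapsto A\otimes_k\bar k$ and matches the image of $C_{\bar k}$ exactly; I would pin this down by reference to \cite[4.5--4.6]{borceux-janelidze}.
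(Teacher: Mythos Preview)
Your proposal is correct and matches the paper's approach exactly: the paper gives no explicit proof of this theorem, treating it as an immediate application of Janelidze's Galois theorem (\ref{janelidze}) once the preceding Lemma supplies that $f$ is of relative Galois descent, with items (1)--(3) simply being the unwinding of the abstract data $\Split_k(f)$, $\Gal[f]$, and the equivalence in the concrete difference-algebraic setting (citing Magid for item~(1)). Your write-up in fact supplies more detail on this unwinding than the paper does.
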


An immediate consequence of this fact is the following, as yet unpublished, theorem of Wibmer and the author, who previously (\cite[2.20]{ive-tgs} and \cite[1.23]{ive-mich-babb}) identified the difference absolute Galois group of a difference field $k$ as the difference group
$$
G_k=(\Gal(\forg{\bar{k}}/\forg{k}),\lexp{\sigma_{\bar{k}}}{(\,)}),
$$
where $\bar{k}$ is a separable closure of $k$ with some choice of lifting $\sigma_{\bar{k}}$ of $\sigma_k$.

\begin{corollary}\label{diff-galois-gp}
With the above notation,
$$
\Gal[\bar{k}/k]\simeq G_k.
$$
Moreover, in the above Galois correspondence, the formally \'etale difference $k$-algebras of finite $\sigma$-type correspond to continuous $G_k$-actions on difference profinite sets of finite $\sigma$-type. 
\end{corollary}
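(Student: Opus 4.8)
The plan is to deduce Corollary~\ref{diff-galois-gp} from the theorem preceding it by making the internal difference groupoid $\Gal[\bar{k}/k]$ and the equivalence $\Split_k(f)\simeq[\Gal[f],\cP_k]$ completely explicit in the case where $\forg{k}$ is a field. First I would observe that, $\forg{k}$ being a field, its Boolean algebra of idempotents is trivial, so $\Sp(\forg{k})$ and $\Sp(\forg{\bar{k}})$ are both terminal objects of the category of profinite sets; hence $S(\bar{k})$ is the terminal difference profinite set, $\cP_{\ov S(k)}\simeq\cP$, and the source and target maps $S(\pi_1),S(\pi_2)\colon S(\bar{k}\otimes_k\bar{k})\to S(\bar{k})$ of $\Gal[\bar{k}/k]$ are forced to be the unique such map. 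Thus $\Gal[\bar{k}/k]$ is an internal difference \emph{group} in $\cP$, namely the space of arrows $S(\bar{k}\otimes_k\bar{k})=\Sp(\forg{\bar{k}}\otimes_{\forg{k}}\forg{\bar{k}})$ equipped with the multiplication induced by the comultiplication $\bar{k}\otimes_k\bar{k}\to\bar{k}\otimes_k\bar{k}\otimes_k\bar{k}$ and the difference operator induced by $\sigma_{\bar{k}}\otimes\sigma_{\bar{k}}$.

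Next I would invoke Magid's description of a separable closure: $\forg{\bar{k}}\otimes_{\forg{k}}\forg{\bar{k}}$ is the ring $\mathcal{C}(\Gal(\forg{\bar{k}}/\forg{k}),\forg{\bar{k}})$ of locally constant functions, whose Pierce spectrum is canonically the profinite set $\Gal(\forg{\bar{k}}/\forg{k})$, with the group structure matching the one coming from comultiplication. It then remains to check that the operator induced by $\sigma_{\bar{k}}\otimes\sigma_{\bar{k}}$ on this Pierce spectrum is exactly $g\mapsto\sigma_{\bar{k}}\circ g\circ\sigma_{\bar{k}}^{-1}=\lexp{\sigma_{\bar{k}}}{g}$; this is a short computation tracing idempotents through the isomorphism above. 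This identifies $\Gal[\bar{k}/k]$ with $G_k=(\Gal(\forg{\bar{k}}/\forg{k}),\lexp{\sigma_{\bar{k}}}{(\,)})$ and recovers the description of the difference absolute Galois group from \cite[2.20]{ive-tgs} and \cite[1.23]{ive-mich-babb}. The preceding theorem then gives $\Split_k(f)\simeq[\Gal[f],\cP_k]\simeq[G_k,\cP]$, the category of continuous actions of $G_k$ on difference profinite sets.

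For the second assertion I would first show that every formally \'etale difference $k$-algebra $A$ of finite $\sigma$-type lies in $\Split_k(f)$, i.e.\ that $\forg{A\otimes_k\bar{k}}$ is a componentially locally strongly separable extension of $\forg{\bar{k}}$: a formally \'etale difference $k$-algebra of finite $\sigma$-type has underlying ring a filtered union of finite \'etale $\forg{k}$-algebras (its successive ``$\sigma$-levels''), so after base change to the separably closed $\forg{\bar{k}}$ each level becomes a finite product of copies of $\forg{\bar{k}}$, whence the claim. Conversely one checks, using that $\bar{k}$ is a separable closure, that an object of $\Split_k(f)$ is formally \'etale over $k$, so that $\Split_k(f)$ is precisely the category of formally \'etale difference $k$-algebras. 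Finally I would match the two finiteness conditions: under $A\mapsto S(A\otimes_k\bar{k})$, finite $\sigma$-type of $A$ translates, via the level-by-level decomposition above and the (contravariant) Pierce-spectrum equivalence between \'etale $\forg{\bar{k}}$-algebras and finite sets, into the condition that the difference profinite $G_k$-set $S(A\otimes_k\bar{k})$ be finitely $\sigma$-generated, i.e.\ of finite $\sigma$-type; restricting the equivalence of the preceding theorem to these subcategories yields the corollary.

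The main obstacle I expect is this last step: pinning down the right notion of ``finite $\sigma$-type'' for a difference profinite $G_k$-set and verifying that it corresponds precisely, and functorially, to finite $\sigma$-type of the difference $k$-algebra — this requires carefully controlling how the filtration of $A$ by its $\sigma$-levels interacts both with base change to $\bar{k}$ and with the Pierce-spectrum/orbit decomposition, while keeping track of the twisted $G_k$-action throughout. The verification that formal \'etaleness is preserved and reflected by $\otimes_k\bar{k}$ — rather than merely that the underlying ring map is \'etale in the classical sense — is a secondary technical point that likewise needs the difference structure handled with care.
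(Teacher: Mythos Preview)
The paper does not actually provide a proof of this corollary: it is stated as ``an immediate consequence'' of the preceding theorem, with the result attributed to unpublished joint work with Wibmer and references to \cite[2.20]{ive-tgs} and \cite[1.23]{ive-mich-babb} for the prior identification of $G_k$. Your proposal is precisely the natural way to make this ``immediate consequence'' explicit, and the outline is sound.

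Your reduction of the groupoid to a group via triviality of $\Sp(\forg{k})$ and $\Sp(\forg{\bar k})$ is correct (both are fields, hence have trivial idempotent Boolean algebras), and the identification of $\Sp(\forg{\bar k}\otimes_{\forg{k}}\forg{\bar k})$ with the profinite group $\Gal(\forg{\bar k}/\forg{k})$ via Magid's description is the right move. One small point worth care: since $S$ is the Pierce spectrum functor on $\cA=\diff\Rng^{\op}$, the induced endomorphism on $\Sp$ is the \emph{preimage} map under $\sigma_{\bar k}\otimes\sigma_{\bar k}$, so when you trace through the isomorphism $\bar k\otimes_k\bar k\simeq\mathcal{C}(\Gal(\forg{\bar k}/\forg{k}),\forg{\bar k})$ you should check the direction of the conjugation carefully to land exactly on $g\mapsto\lexp{\sigma_{\bar k}}{g}$ rather than its inverse; the computation is routine but the variance deserves a sentence.

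For the second assertion your plan is again the expected one, and you have correctly flagged the genuine work: matching finite $\sigma$-type on the algebra side with finite $\sigma$-type on the difference profinite $G_k$-set side. This is where the cited joint work with Wibmer presumably contains the substance; your filtration-by-$\sigma$-levels argument is the right shape, but making it precise (in particular, ensuring the correspondence is an equivalence on the finite-$\sigma$-type subcategories rather than just a containment) is exactly the content the paper defers to the references.
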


\subsection{Difference fundamental groupoid}

Let $A$ be a difference ring, and let $X=\spec(A)$ be its Zariski spectrum, and let us write $\forg{X}=\spec(\forg{A})$ and $\sigma=\lexp{a}{\sigma_A}:\forg{X}\to\forg{X}$.

 As stipulated by \ref{rel-pi1}, the \emph{difference fundamental groupoid}
$$
\pi_1^\et(X)
$$
is the prodiscrete localic fundamental groupoid of $X_\et\xrightarrow{\gamma}\cS=\diff\Set$.

Let us assume that we have a point $\omega:\cS\to X_\et$. By arguments of \ref{rel-pi1}, it comes from a difference homomorphism
$$
A\to\Omega, 
$$
where $\forg{\Omega}$ is a separably closed field. Let 
$$
\bar{x}:W=\spec\forg{\Omega}\to\forg{X}
$$
be the associated geometric point in $\forg{X}$.

Recall that $X_\et=\sh_\cS(\bbS_\et,T_\et)$, where $\bbS$ is the internal category associated with $((\forg{X})_\et,\stig)$. The  continuous torsor
$$
F\in[\C_\et,\cS]
$$
 corresponding to $\omega$  % C.2.3.9
 is associated with the $\stig$-invariant diagram
$$
(\bF,\sigma_\bF)
$$
where $\bF:(\forg{X})_\et\to \Set$ is the classical fibre functor 
$$
\bF(P\to \forg{X})=\Sch_{\ov\forg{X}}(\bar{x},P)\simeq P_{\bar{x}}(\Omega),
%\simeq\pi_0(P_{\bar{x}).
$$
and $\sigma_\bF:\bF\to\bF\circ\stig$ is obtained by virtue of the diagram

$$
 \begin{tikzpicture} 
 [cross line/.style={preaction={draw=white, -,
line width=3pt}}]
\matrix(m)[matrix of math nodes, minimum size=1.7em,
inner sep=0pt, 
row sep=1.5em, column sep=1em, text height=1.5ex, text depth=0.25ex]
 { 	    			
		& |(u)|{W} 		&			& |(3)|{W}		\\
 |(2)|{P} 	& 			& |(P)| {P_{(\sigma)}} &     		    	\\[1em]
 |(h)|{\forg{X}}     &			& |(1)|{\forg{X}}  		&			\\};
\path[->,font=\scriptsize,>=to, thin]
(1) edge node[below]{$\sigma$}  (h)
(2) edge (h)
(u) edge node[above left]{$\bar{y}$} (2) edge  node[right,pos=0.7]{$\bar{x}$}(h)
(3) edge node[right]{$\bar{x}$}(1) edge node[above]{$\lexp{a}{\omega}$} (u) edge[dashed] node[pos=0.6,above left=-4pt]{$\sigma_\bF(\bar{y})$} (P)
(P) edge (1) edge[cross line] (2)
;
\end{tikzpicture}
$$
for $\bar{y}\in \bF(P\to\forg{X})$, where $P_{(\sigma)}=\stig(P)$ is the base change of $P$ by $\sigma$.

The prodiscrete localic group 
$$
\pi_1^\et(X,\omega)
$$
is obtained as a prodiscrete localic group in $\cS$ of automorphisms of $F$, whose underlying group is
\begin{multline*}
\gamma_*\uAut(F)=\uAut(\cF)(1)=\{ \varphi\in\Aut(\bF)^\N=\pi_1^\et(\forg{X},\bar{x})^\N: \sigma_\bF\varphi_n=(\varphi_{n+1}\circ\stig)\sigma_\bF\}.
\end{multline*}

\subsection{Difference fundamental group of a difference field}\label{pi1-diff-field}

If $k$ is a difference field, and we choose a difference embedding $\omega:k\to \bar{k}$ into its separable closure, then 
$$
\pi_1^\et(\spec{k},\omega)\simeq G_k,
$$
as in \ref{diff-galois-gp}. Indeed, $\sigma:\spec\forg{k}\to\spec\forg{k}$ is faithfully flat, so $\stig$ is an exact functor, and, for each $n$, $\bF\circ\stig$ is again a fibre functor, hence $\sigma_\bF$ are isomorphisms by Grothendieck Galois theory, so the whole sequence $(\varphi_n)$ in the above is determined by $\varphi_0\in\pi_1^\et(\forg{k},\forg{\omega})\simeq \Gal(\forg{\bar{k}}/\forg{k})$.

It is topologised as follows. For a Galois difference field extension $L/k$ of finite $\sigma$-type,  the localic automorphism group of $L/k$ is the group of internal automorphisms
$$
\uAut(L/k)=\{f\in \Aut(\forg{L}/\forg{k})^\N: f_{n+1}\sigma_L=\sigma_Lf_n\},
$$ 
where a subbasis for the topology is given by sets
$$
\langle a,b,n\rangle=\{f\in\uAut(L/k): f_n(a)=b\},
$$ 
for $a, b\in L$, $n\in\N$.

\subsection{\'Etale spectrum of a difference field}\label{etspec-diff-field}

Let $\omega:k\to\bar{k}$ be a choice of difference embedding of difference field into its separable closure, and let $X=\spec(k)$. From \ref{pi1-diff-field}, it follows that 
$\pi_1^\et(X,\omega)\simeq G_k$, so  
$$
\Split(X_\et)\simeq BG_k, 
$$
the category of continuous (difference) $G_k$-actions. On the other hand, we have that $X_\et\simeq \forg{X}_\et^{\stig_k}$, so,  using the Galois theory of $\forg{k}$ and the fact that every sheaf in $\forg{X}_\et$ is split (by $\bar{k}$), we deduce that $\Split(X_\et)=X_\et$, and thus the \'etale topos of a difference field $k$ is equivalent to the category of continuous actions of the difference group $G_k$, i.e.,
$$
\spec.\et(k)\simeq BG_k.
$$

\section{Cohomology of difference schemes}\label{diff-coh}

\subsection{Difference cohomology}\label{diff-cohom}

Let $A$ be a difference ring, i.e., a ring in $\cS=\diff\Set$, and let $(X,\cO_X)\xrightarrow{\pi}(\cS,A)$ be a quasi-difference scheme. Let $\tau$ be a scheme topology refining Zariski topology, and let $(X_\tau,\cO_\tau)\xrightarrow{\pi_\tau}(\cS,\cO_X)$ be the $\tau$-topos associated with $X$.

In \ref{coh-rel-sch}, given abelian groups $M, N$ in $X_\tau$, we defined the abelian groups
$$
\Ext(X_\tau,M,N) \ \ \text{ and }\ \ \tH^n(X_\tau,N),
$$
and the difference abelian groups
$$
\Ext(X_\tau/\cS,M,N)\ \ \text{ and }\ \ \tH^n(X_\tau/\cS,N)
$$
as suitable instances of the classical topos cohomology.

The  \emph{Leray spectral sequence} 
$$
\tH^p(\cS,H^q(X_\tau/\cS,N))\Rightarrow \tH^{p+q}(X_\tau,N).
$$ 
connecting the above cohomology groups becomes explicit in this case. By \ref{RFix}, we have that $\tH^1(\cS,\mathord{-})$ %=(\mathord{-})_\sigma$ 
is the functor of difference coinvariants, and $\tH^p(\cS,\mathord{-})=0$ for $p>1$, so the spectral sequence degenerates (as in \cite[Exercise~5.2.1]{weibel}) and we obtain exact sequences
$$
0\to \tH^{n-1}(X_\tau/\cS,N)_\sigma\to \tH^n(X_\tau,N)\to \tH^n(X_\tau/\cS,N)^\sigma\to 0.
$$

\subsection{Classifying difference torsors}\label{diff-tors}

Although we can speak about torsors in a very general setting of \ref{tors-diacon} and \ref{group-torsors}, in algebraic geometry one is usually interested in torsors on the flat site of a scheme, and their interaction with principal homogeneous spaces of a group scheme, as in \cite[III.4]{milne-etale}.
 
If $X$ is a difference scheme, $G$ a difference group scheme over $X$, and $P\to X$ a faithfully flat difference scheme over $X$ with an action $\mu_P:P\times_XG\to P$ of $G$ over $X$, we say that $P$ is principal homogeneous space for $G$, if the morphism
$$
P\times_XG\xrightarrow{(\id,\mu_P)}P\times_XP
$$
is an isomorphism. 

Let $\tilde{G}\in X_\text{fppf}$ be the group object represented by $G$. If $G$ is abelian, then \ref{H1-class-tors} tells us that 
$$
\Tors(X_\text{fppf},\tilde{G})\simeq \tH^1(X_\text{fppf},\tilde{G}).
$$
Using \cite[III.4.3]{milne-etale}, we deduce that the left hand side coincides with isomorphism classes of principal homogeneous spaces of $G$. This aspect has been of interest in \cite{michael-anette} and \cite{piotr2}, see Section~\ref{appendix1}.

\subsection{Generalised difference torsors}\label{gen-diff-tors}

Let $\gamma:\cE\to\cS=\diff\Set$ be a topos over difference sets, and let $G$ be a group in $\cE$. 

\emph{Generalised difference $G$-torsors} are defined as elements of the difference groupoid
$$
\uTors(\cE,G)=\Tors(\cE_{\ov \gamma^*\Nsucc}, (\gamma^*\Nsucc)^* G),
$$
with the difference structure inherited from $\Nsucc$. 

We also have the difference abelian group
$$
\uTors^1(\cE,G)=\Tors^1(\cE_{\ov \gamma^*\Nsucc}, (\gamma^*\Nsucc)^* G)
$$
of isomorphism classes of generalised difference $G$-torsors, which consists of connected components of the above difference groupoid.

The following statement compares generalised difference torsors to ordinary ones.

\begin{proposition}
If $G$ is an abelian group in $\cE$, we have a short exact sequence
$$
0\to (\gamma_*G)_\sigma\to \Tors^1(\cE,G)\to \uTors^1(\cE,G)^\sigma\to 0.
$$
\end{proposition}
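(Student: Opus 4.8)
The plan is to realise both $\Tors^1(\cE,G)$ and $\uTors^1(\cE,G)$ as topos cohomology groups, and then to extract the desired sequence from the Leray spectral sequence of the composite $\cE\xrightarrow{\gamma}\cS\to\Set$ (with $\cS=\diff\Set$), using the fact from Corollary~\ref{RFix} that the difference cohomology of a point vanishes above degree~$1$. So the proof is essentially a degenerate spectral sequence argument, once the two identifications of the outer terms are in place; throughout we use that $\cE$ (and hence every slice $\cE_{\ov X}$) is a Grothendieck topos.

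First the dictionary between torsors and $\tH^1$. Since $G$ is abelian in the Grothendieck topos $\cE$, Theorem~\ref{H1-class-tors} gives $\Tors^1(\cE,G)\simeq\tH^1(\cE,G)$; and since $(\gamma^*\Nsucc)^*G$ is abelian in the Grothendieck topos $\cE_{\ov\gamma^*\Nsucc}$, the same theorem gives $\uTors^1(\cE,G)=\Tors^1(\cE_{\ov\gamma^*\Nsucc},(\gamma^*\Nsucc)^*G)\simeq\tH^1(\cE_{\ov\gamma^*\Nsucc},(\gamma^*\Nsucc)^*G)$. By the localisation isomorphism of \ref{topos-coh} the latter is $\tH^1(\cE;\gamma^*\Nsucc,G)$. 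Now present $\cS=\diff\Set$ as $\sh(\bsi,J_\text{min})$ with the minimal topology, so that the canonical functor $l\colon\bsi\to\cS$ is the Yoneda embedding and $l(o)=\h_o\simeq\Nsucc$; then the explicit description of higher direct images in \ref{geom-m-coh} shows that $R^1\gamma_*(G)$ is exactly the presheaf $o\mapsto\tH^1(\cE;\gamma^*\h_o,G)$ on $\bsi$ (no sheafification is required for $J_\text{min}$), i.e.\ the difference abelian group whose underlying set is $\uTors^1(\cE,G)$ and whose difference operator is induced by $\h_o(\sigma)\colon\Nsucc\to\Nsucc$. Matching this operator with the one on $\uTors^1(\cE,G)$ inherited from $\sigma_\Nsucc$ yields $R^1\gamma_*(G)\simeq\uTors^1(\cE,G)$ in $\diff\Ab$; the same reasoning in degree $0$ gives $R^0\gamma_*(G)=\gamma_*G$ as a difference abelian group.

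Next, apply the Leray spectral sequence of \ref{geom-m-coh} to $\cE\xrightarrow{\gamma}\cS\to\Set$, whose $E_2$-page is $E_2^{p,q}=\tH^p(\cS,R^q\gamma_*G)\Rightarrow\tH^{p+q}(\cE,G)$. Taking $R=\Z$ in Corollary~\ref{RFix}, the functors $\tH^p(\cS,\mathord{-})=R^p\Fix$ on $\diff\Ab$ vanish for $p\ge 2$, equal $(\mathord{-})^\sigma$ for $p=0$, and equal $(\mathord{-})_\sigma$ for $p=1$. Hence the spectral sequence has only the two columns $p=0,1$ and so degenerates (as in \cite[Exercise~5.2.1]{weibel}), producing for each $n$ a short exact sequence
$$
0\to \bigl(R^{n-1}\gamma_*G\bigr)_\sigma\to \tH^n(\cE,G)\to \bigl(R^n\gamma_*G\bigr)^\sigma\to 0 .
$$
Specialising to $n=1$ and substituting $\tH^1(\cE,G)\simeq\Tors^1(\cE,G)$, $R^0\gamma_*G=\gamma_*G$ and $R^1\gamma_*G\simeq\uTors^1(\cE,G)$ from the previous paragraph gives exactly
$$
0\to (\gamma_*G)_\sigma\to \Tors^1(\cE,G)\to \uTors^1(\cE,G)^\sigma\to 0 .
$$

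The main obstacle is the compatibility of difference structures asserted at the end of the second paragraph: one must check that transporting the shift $s\colon\Nsucc\to\Nsucc$ through the chain $\tH^1(\cE;\gamma^*\Nsucc,G)\simeq\tH^1(\cE_{\ov\gamma^*\Nsucc},(\gamma^*\Nsucc)^*G)\simeq\Tors^1(\cE_{\ov\gamma^*\Nsucc},(\gamma^*\Nsucc)^*G)$ recovers precisely the difference operator used in the definition of $\uTors^1(\cE,G)$ in \ref{gen-diff-tors}, equivalently that $R^1\gamma_*$ applied to the induced endofunctor of $\cS$ agrees with this operator. This rests on naturality in the localising object of both the isomorphism of \ref{topos-coh} and of Theorem~\ref{H1-class-tors}, together with the base-change/twist formalism for slices over $\gamma^*\Nsucc$; once this naturality is nailed down, the rest of the argument is the standard two-column spectral sequence computation already used in \ref{diff-cohom}.
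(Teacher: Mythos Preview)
Your proof is correct and follows essentially the same route as the paper: identify $\Tors^1$ and $\uTors^1$ with the appropriate $\tH^1$ groups via Theorem~\ref{H1-class-tors}, compute $R^1\gamma_*(G)$ using the presheaf description of higher direct images over $\bsi$, and then read off the sequence from the Leray spectral sequence using Corollary~\ref{RFix}. The only cosmetic difference is that the paper invokes the five-term low-degree exact sequence directly (noting that the $\tH^2(\cS,\gamma_*G)$ term vanishes), whereas you argue via the full two-column degeneration as in \ref{diff-cohom}; these are of course the same computation.
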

\begin{proof}
The exact sequence of low degrees in the Leray spectral sequence is
$$
0\to \tH^1(\cS,\gamma_*G)\to \tH^1(\cE,G)\to \Gamma_\cS(R^1\gamma_*(G))\to H^2(\cS,\gamma_*G).
$$
Taking into account that $\Gamma_\cS=\Fix$, as well as \ref{RFix}, it simplifies to
$$
0\to (\gamma_*G)_\sigma\to \tH^1(\cE,G)\to (R^1\gamma_*(G))^\sigma \to 0.
$$
Using the description $\cS=[\bsi^\op,\Set]$ as in \ref{topos-diff-sets}, and the explicit description of $R^1\gamma_*$ from \ref{geom-m-coh}, we obtain that the difference set corresponding to the presheaf $R^1\gamma_*(G)$ is
$$
R^1\gamma_*(G)(o)=H^1(\cE,\gamma^*\h_o, G)\simeq H^1(\cE_{\ov \gamma^*\Nsucc}, (\gamma^*\Nsucc)^* G),
$$
so we obtain the required sequence by applying \ref{H1-class-tors}.
\end{proof}

\subsection{Comparison to cohomology of schemes}\label{comp-diff}

Using the notation from \ref{tau-spectra-diff}, for a difference scheme $X=\speczar(\cS,A)$, we have
$$
X_\tau=\sh_\cS(\bbS_{X,\tau},T_\tau)\simeq \sh(S_{\forg{X},\tau})^{\stig_X}=(\forg{X}_\tau)^{\stig_X}.
$$

Let 
$$
f:\Set\to \cS
$$
be the point from \ref{cat--diff-cat}, with $f^*=\forg{\,}$, $f_!=\ssig{\,}$ and $f_*=\psig{\,}$. 

We apply the base change construction \ref{bc-int-psh} to $f$ and the internal category $\bbS_{X,\tau}$ from \ref{bc-int-psh} to obtain a geometric morphism
$$
\tilde{f}:\forg{X}_\tau\to X_\tau\simeq (\forg{X}_\tau)^{\stig_X},
$$
where $\tilde{f}^*$ is simply the forgetful functor, forgetting the structure map of a $\stig_X$-equivariant sheaf. Its right adjoint is
$$
\tilde{f}_*(\bF)=\prod_{i\in\N} \bF\circ\stig^n,
$$
whose $\stig$-equivariant structure is given by the left shift, and its left adjoint is
$$
\tilde{f}_!(\bF)(P)=\coprod \bF(Q),
$$
where the coproduct is indexed by all $Q$ with $\stig^n(Q)=P$ for some $n$.

Clearly, $\tilde{f}_!$ is an exact left adjoint to $\tilde{f}^*$, so the forgetful functor $\tilde{f}^*$ preserves injectives. %add reference, i.e. appropriate lemma

Given an abelian group $N$ in $X_\tau$, the relative cohomology $\tH(X_\tau/\cS,N)$
is computed using a $\stig$-equivariant injective resolution of $N$, which we now know also gives an injective resolution of $\forg{N}$, and $H^0(\forg{X}_\tau,\forg{N})=H^0(X_\tau,N)$,
so we deduce that
$$
\tH^n(X_\tau/\cS,N)\simeq \tH^n(\forg{X}_\tau,\forg{N}).
$$

The short exact sequences from \ref{diff-cohom} become
$$
0\to \tH^{n-1}(\forg{X}_\tau,\forg{N})_\sigma\to \tH^n(X_\tau,N)\to \tH^n(\forg{X}_\tau,\forg{N})^\sigma\to 0.
$$

\subsection{Cohomology of difference quasi-coherent sheaves}\label{coh-quasicoh-diff}

Let $A$ be a difference ring, and let $\varphi:B\to C$ be a morphism of difference $A$-algebras. Let $M$ be a difference $C$-module, and let $N$ be a difference $A$-module.

Let $f:\spec(C)\to \spec(B)$ be the morphism of difference schemes assocated with $\varphi$ and let $\pi_A:\spec(A)\to(\diff\Set,A)$ and $\pi_C:\spec(C)\to(\diff\Set,C)$ be structure morphisms.

Writing $\tilde{M}=M\times_C\cO_{\spec C}$ for the pullback of $M$ via $\pi_C$ and 
$\tilde{N}=N\times_A\cO_{\spec A}$ for the pullback of $A$ via $\pi_A$, the results of \ref{coh-quasicoh} give that, for $i>0$,
$$
R^if_* \tilde{M}=0,
$$
and
$$
\tH^i(X/\cS,\tilde{N})=R^i\pi_{A*}\tilde{N}=0.
$$

\subsection{Hilbert's theorem 90 for difference schemes}\label{h90-diff}

Let $X$ be a difference quasi-scheme. Hilbert's Theorem 90 for relative schemes from \ref{hilbert90} applies, so we obtain
$$
\tH^1(X,\cO_X^\times)\simeq \tH^1(X_\et,\bbG_m)\simeq \tH(X_\text{fppf},\bbG_m).
$$
Let us give a direct proof in the difference case. Using \ref{picard},
we obtain natural maps
$$
\tH^1(X,\cO_X)\simeq \Pic(\cO_X)\to \Pic(\cO_\et)\simeq \tH^1(X_\et,\bbG_m),
$$
so it is enough to show that the middle map is surjective. Using the description 
$$
X_\et=\forg{X}_\et^\stig\ \ \text{ and } X_\text{Zar}=\forg{X}_\text{Zar}^\stig
$$
as categories of $\stig$-equivariant sheaves on the classical sites from \ref{tau-spectra-diff}, if $(\bF,\sigma_\bF)$ is an invertible sheaf in $X_\et$, then $\bF$ is in particular an invertible sheaf on $\forg{X}_\et$, which happens to be $\stig$-equivariant. Hence, $\bF$ is quasi-coherent and there is a quasi-coherent sheaf $\bF_0\in \forg{X}_\text{Zar}$ such that $\bF_0^a=\bF$. It follows automatically that $\bF_0$ is $\stig$-equivariant and invertible, i.e., $\bF_0\in X_\text{Zar}$, and we are done.

The comparison theorem \ref{comp-diff} gives that, writing $\cA(X)=\cO_X(X)$, the sequence
$$
0\to \cA(X)^\times_\sigma \to \Pic(X)\to \Pic(\forg{X})^\sigma\to 0
$$
is exact.

\subsection{Difference Kummer and Artin-Schreier theory}

\begin{notation}
 for an abelian group $E$ and an integer $n$, we write
$$
{}_nE=\Ker(E\xrightarrow{n}E), \ \ \ E_n=\Coker(E\xrightarrow{n}E).
$$
\end{notation}

Let $X$ be a quasi-difference scheme. %and write $\cA(X)=\cO_X(X)$. 

If $n$ is invertible in $X$,  Kummer theory from \ref{rel-kummer} in this case gives the exact sequence  
$$
0\to (\cO_X^\times(X)^\sigma)_n\to \tH^1(X_\et,\mu_n)\to{}_n\Pic(X)\to 0.
$$

If $X$ is of characteristic $p$, Artin-Schreier theory from \ref{rel-AS} gives the exact sequence  
$$
0\to \cO_X(X)^\sigma/(F-\id)\cO_X(X)^\sigma\to\tH^1(X_\et,\Z/n\Z)\to \tH^1(X_\et,\cO_\et)^F\to 0.
$$

\subsection{\'Etale cohomology of a difference field}\label{etcoh-diff-field}

Let $k$ be a difference field. We abbreviate $\tH^n(k,\bbG_m)=\tH^n(\spec(k)_\et,\bbG_m)$ and  $\tH^n(k,\mu_n)=\tH^n(\spec(k)_\et,\mu_n)$.

The comparison theorem from \ref{h90-diff} yields an exact sequence
$$
0\to (k^\times)_\sigma\to \Pic(k)\to \Pic(\forg{k})^\sigma\to0,
$$
and, since $\Pic(\forg{k})=0$, we obtain that
$$
\tH^1(k,\bbG_m)\simeq\Pic(k)\simeq (k^\times)_\sigma.
$$
In degree 2, comparison theorem \ref{comp-diff} gives an exact sequence
$$
0\to H^1(\forg{k},\bbG_m)_\sigma\to \tH^1(k,\bbG_m)\to H^2(\forg{k},\bbG_m)^\sigma\to 0,
$$
and, since the first term vanishes, we obtain that
$$
\tH^2(k,\bbG_m)\simeq \tH^2(\forg{k},\bbG_m)^\sigma.
$$
In other words, we have a relation between the Brauer groups of $k$ and its underlying field,
$$
\text{Br}(k)\simeq\text{Br}(\forg{k})^\sigma.
$$ 
The long exact sequence for cohomology associated to Kummer theory of $k$ becomes
\begin{align*}
0 & \to  \mu_n(k^\sigma)\to (k^\times)^\sigma \to (k^\times)^\sigma  \to\\
 & \to \tH^1(k,\mu_n)\to (k^\times)_\sigma\to (k^\times)_\sigma\to\\
 & \to \tH^2(k,\mu_n)\to \text{Br}(\forg{k})^\sigma\to \text{Br}(\forg{k})^\sigma,
\end{align*}
whence we extract short exact sequences  
$$
0\to ((k^\times)^\sigma)_n\to \tH^1(k,\mu_n)\to {}_n((k^\times)_\sigma)\to 0,
$$
and 
$$
0\to ((k^\times)_\sigma)_n\to \tH^2(k,\mu_n)\to {}_n\text{Br}(\forg{k})^\sigma\to 0.
$$

On the other hand, the comparison theorem yields exact sequences 
$$
0\to ({}_n(k^\times))_\sigma\to \tH^1(k,\mu_n)\to ((k^\times)_n)^\sigma\to 0
$$
and
$$
0\to ((k^\times)_n)_\sigma\to \tH^2(k,\mu_n)\to ({}_n\text{Br}(\forg{k}))^\sigma\to 0,
$$
the latter being equivalent to the sequence given above.

\subsection{Difference Galois cohomology}\label{diff-galcoh}

If $k$ is a difference field, \ref{etspec-diff-field} yields an equivalence
$$
\Ab(\spec.\et(k))\simeq \Ab(B G_k)
$$
between abelian \'etale sheaves on $k$ and continuous $G_k$-modules. Through this equivalence, we have that section functors correspond to $G_k$-invariants in the sense %that 
$$
\tH^0(\spec.\et(k)/\cS,\mathord{-})\simeq (\mathord{-})^{G_k} \ \ \text{ and } \ \ 
\tH^0(\spec.\et(k),\mathord{-})\simeq \Fix\circ (\mathord{-})^{G_k}.
$$
Hence, if $\cG\in\Ab(\spec.\et(k))$, we can compute the \'etale cohomology as continuous difference group cohomology
$$
H^n(\spec.\et(k)/\cS,\cG)\simeq H^n(B G_k/\cS,\cG), \ \ \text{ and } \ \ 
H^n(\spec.\et(k),\cG)\simeq H^n(B G_k,\cG),
$$
as outlined in \ref{rel-gp-coh}. 

This is of particular interest when the abelian group $\cG$ is associated to an abelian difference algebraic group $\bbG$ over $k$, in which case we can dub the above groups
$$
\tH^n((\bar{k}/k)/\cS,\bbG)\ \ \text{ and } \ \ \tH^n(\bar{k}/k,\bbG)
$$ 
and think of them as difference analogues of Galois cohomology.

\subsection{\'Etale cohomology of a difference curve}\label{etcoh-diff-curve}

Let $X$ be a smooth difference curve over a difference field $k$ with $\forg{k}$ algebraically closed, and let 
$$
\cR_X\simeq (\cR_{\forg{X}},\sigma_\cR), \ \ \cD_X\simeq (\cD_{\forg{X}},\sigma_\cD) \ \in X_\et
$$
be $\stig_X$-equivariant sheaves associated with the sheaf of rational functions and the sheaf of divisors on $\forg{X}$. We have a divisor short exact sequence in $X_\et$,
$$
0\to \bbG_m\to \cR_X^\times\to\cD_X\to 0.
$$

All cohomology groups in this subsection will be \'etale, so, to simplify notation, we write $\tH^i(X,\mathord{-})$ in place of $\tH^i(X_\et,\mathord{-})$.  

Classically (\cite[I.5.2]{freitag-kiehl}, \cite[Sect.~14]{milne-lec}, \cite[10.3]{tamme}) we know the following.
\begin{enumerate}
\item For $i\geq 2$, $$\tH^i(\forg{X},\bbG_m)=0.$$

\item For $i>0$, $$\tH^i(\forg{X},\cR^\times)=0\ \ \text{ and }\ \ \tH^i(\forg{X},\cD)=0.$$
\item Kummer theory for $\forg{X}$ gives a long exact sequence
\begin{align*}
0 &\to \mu_n(\cO^\times_{\forg{X}}(\forg{X})\to \cO^\times_{\forg{X}}(\forg{X})\xrightarrow{n}\cO^\times_{\forg{X}}(\forg{X)}\\
& \to \tH^1(\forg{X},\mu_n)\to \Pic(\forg{X})\xrightarrow{n}\Pic(\forg{X})\\
& \to \tH^2(\forg{X},\mu_n)\to 0,
\end{align*}
whence 
$$
 \tH^2(\forg{X},\mu_n)=\Pic\forg{X}_n.
$$
When $\forg{X}$ is proper, we have that $\cO_{\forg{X}}(\forg{X})=\forg{k}$, so

$$\tH^1(\forg{X},\mu_n)={}_n\Pic\forg{X}.$$  %\ \ \text{ and }\ \ \tH^2(\forg{X},\mu_n)=\Pic\forg{X}_n.$$
 Moreover, in view of the exact sequence
$$
0\to\Pic^0\forg{X}\to \Pic\forg{X}\xrightarrow{d}\Z\to 0,
$$
where $d$ is the degree map, and the fact that $\Pic^0\forg{X}$ has the structure of an abelian variety, we know that $\Pic^0\forg{X}\xrightarrow{n}\Pic^0\forg{X}$ is surjective, and we obtain the more precise relations
$$
\tH^1(\forg{X},\mu_n)={}_n\Pic^0\forg{X}\ \ \text{ and } \ \ \tH^2(\forg{X},\mu_n)=\Coker(\Z\xrightarrow{n}\Z)=\Z/n\Z.
$$
\end{enumerate}

Using (1), the comparison theorem in degree 1 gives a short exact sequence
$$
0\to(\cO_X^\times(X))_\sigma\to \Pic(X)\to\Pic(\forg{X})^\sigma\to 0,
$$
in degree 2,
$$
\tH^2(X,\bbG_m)\simeq\Pic(\forg{X})_\sigma,
$$
and, for $i\geq 3$,
$$
\tH^i(X,\bbG_m)=0.
$$

Using (2), we obtain
$$
\tH^1(X,\cR^\times)\simeq (k(X)^\times)_\sigma,
$$
and, for $i\geq 2$, 
$$
\tH^i(X,\cR^\times)=0,
$$
and similarly
$$
\tH^1(X,\cD)\simeq \cD(\forg{X})_\sigma, 
$$
and, for $i\geq 2$,
$$
\tH^i(X,\cD)=0.
$$

With the above information, the long cohomology exact sequence associated to the divisor short exact sequence can be written as
\begin{multline*}
0\to (\cO_X(X)^\times)^\sigma\to (k(X)^\times)^\sigma\to (\cD(\forg{X}))^\sigma\to\\
\to\Pic(X)\to
(k(X)^\times)_\sigma\to (\cD(\forg{X}))_\sigma\to\Pic(\forg{X})_\sigma\to 0,
\end{multline*}
whence we split off a short exact sequence for $\tH^1(X,\bbG_m)=\Pic(X)$,
$$
0\to\Coker\left((k(X)^\times)^\sigma\to\cD(\forg{X})^\sigma\right)\to\Pic(X)\to \Ker\left((k(X)^\times)_\sigma\to\cD(\forg{X})_\sigma\right)\to 0,
$$
while in degree 2 we get nothing new.

Using (3), comparison theorem in degree 1 gives that an exact sequence
$$
0\to ({}_n(\cO^\times_X(X)))_\sigma\to \tH^1(X,\mu_n)\to \tH^1(\forg{X},\mu_n)^\sigma\to 0.
$$
When $X$ is proper, this simplifies to 
$$
0\to \mu_n(k)_\sigma\to \tH^1(X,\mu_n)\to ({}_n\Pic^0\forg{X})^\sigma\to 0.
$$
In degree 2,  we obtain
$$
0\to \tH^1(\forg{X},\mu_n)_\sigma\to \tH^2(X,\mu_n)\to ((\Pic\forg{X})_n)^\sigma\simeq (\Z/n\Z)^\sigma,
$$
where $\sigma$ acts on $\bZ$ by multiplication by the generic degree of $\sigma_X$. When $X$ is proper, this simplifies to
$$
0\to ({}_n\Pic^0\forg{X})_\sigma\to \tH^2(X,\mu_n)\to (\Z/n\Z)^\sigma.
$$
In degree 3, we obtain
$$
\tH^3(X,\mu_n)\simeq\tH^2(\forg{X},\mu_n)_\sigma\simeq(\Z/nZ)_\sigma.
$$

Taking into account all of the above information, Kummer theory of $X_\et$ yields a long exact cohomology sequence
\begin{align*}
0 &\to \mu_n(\cO^\times_{X}(X))^\sigma \to \cO^\times_{X}(X)^\sigma \xrightarrow{n}\cO^\times_{X}(X)^\sigma \\
& \to \tH^1(X,\mu_n)\to \Pic(X)\xrightarrow{n}\Pic(X)\\
& \to \tH^2(X,\mu_n)\to \Pic(\forg{X})_\sigma\to\Pic(\forg{X})_\sigma\\
&\to \tH^3(X,\mu_n)\to 0.
\end{align*}
We can split off the short exact sequences
$$
0\to (\cO^\times_X(X)^\sigma)_n\to \tH^1(X,\mu_n)\to {}_n\Pic(X)\to 0,
$$
which, for proper $X$, simplifies to 
$$
0\to ((k^\times)^\sigma)_n\to \tH^1(X,\mu_n)\to {}_n\Pic(X)\to 0,
$$
and
$$
0\to \Pic(X)_n\to \tH^2(X,\mu_n)\to {}_n(\Pic(\forg{X})_\sigma),
$$
while in degree 3 we get no new information.

%\section{Algebraic structures in the category of difference schemes}

\section{Cohomology of difference algebraic groups}\label{diff-gp-coh}

\subsection{Difference algebraic groups}

For an affine difference scheme $X$, let us write $\cA(X)=\cO_X(X)$ for its associated difference ring. 

Let $S\in\diff\Sch$ be an affine difference scheme, and let $G\to S$ be an affine $S$-group, determined by $\diff\Sch\ov S$ morphisms
\begin{enumerate}
\item  $G\times_S G\to G$ (multiplication/product);
\item  $G\to G$ (inverse element);
\item $S\to G$ (identity section).
\end{enumerate}
Taking the associated algebras, we obtain $\cA(S)$-algebra morphisms 
\begin{enumerate}
\item $\Delta:\cA(S)\to\cA(G)\otimes_{\cA(S)}\cA(G)$ (coproduct);
\item $\tau:\cA(G)\to\cA(G)$ (antipode);
\item $\epsilon:\cA(G)\to \cA(S)$ (counit),
\end{enumerate}
endowing $\cA(G)$ with a difference Hopf algebra structure.

\subsection{Constructing difference subgroups}\label{find-diff-subgp}

Let us write $\Do$ for the difference ring $(\Z[T],\id)$. Equivalently, considering the difference set $(\N,i\mapsto i+1)$ and the object $\jj_\infty(\Z)=(\Z,\id)\in\diffinf\Rng$, we obtain
$$
\Do\simeq \N\otimes \jj_\infty(\Z)=\N_{\jj_\infty(\Z)}.
$$
The ring $\Do$ acts on any commutative difference group $G$ by difference group endomorphisms  via
$$
(a_0+a_1T+\cdots+a_nT^n).g=g^{a_0}\sigma g^{a_1}\cdots\sigma^ng^{a_n}.
$$
If $\bG$ is a commutative difference  group scheme, we have an action
$$
\Do\times \bG\to\bG,
$$
where, for each $S$, the action $\Do\times\bG(S)\to\bG(S)$ is as above. Hence we obtain a morphism
$$
\rho:\Do\to\uEnd(\bG).
$$
Given an $f\in\Do$, $$\bG^f=\Ker(\rho(f))$$ is a presheaf of difference groups that can be considered as the difference subgroup of $\bG$ defined by the equation `$f=e$' in $\bG$.
Equivalently, using the properties of the constant object $(\Do/(f))_e$,
$$
\bG^f=\uHom((\Do/(f))_e,\bG).
$$
\subsection{Additive groups}\label{additive-gps}

Classically, the additive group functor is the $\widehat{\Sch}$-group $\bG_a$ given by 
$$
\bG_a(S)=\cO_S(S),
$$
considered with the additive structure of the ring $\cO_S(S)$. It is represented by $\bbG_a=\spec(\Z[T])$. More generally, for an affine $S\in\Sch$, we have
$$
\bG_{a,S}\simeq\h_{\spec(\cA(S)[T])},
$$
where $\cA(S)[T]$ is given the Hopf algebra structure via $\Delta(T)=T\otimes 1+1\otimes T$, $\epsilon(T)=0$, $\tau(T)=-T$.

In the difference context, we let
$$
\diff\bG_a:\diff\Sch^\circ\to\Set,\ \ \ \ \bG_a(S)=\forg{\cO_S(S)}.
$$
Hence $\diff\bG_a(S)=\bG_a(\forg{S})$, so
$$
\diff\bG_a=\bG_a\circ\forg{\,}=\psig{\bG_a}
$$
and $\diff\bG_a$ is represented by $\diff\bbG_a=\psig{\bbG_a}=\spec(\Z\{T\})$,
where $\Z\{T\}=\Z[T_0,T_1,\ldots]$, together with $\sigma:T_i\mapsto T_{i+1}$.

Working over an affine base difference scheme $S$, 
$$
\diff\bG_{a,S}=\h_{\spec(\cA(S)\{T\})},
$$ 
where $\cA(S)\{T\}=\Z\{T\}\otimes\cA(S)=\cA(S)[T_0,T_1,\dots]$, $\sigma{\restriction}{\cA(S)}=\sigma_{\cA(S)}$, and $\sigma:T_i\mapsto T_{i+1}$. This difference algebra is equipped with a $\sigma$-Hopf algebra structure through $\Delta(T_i)=T_i\otimes 1+1\otimes T_i$, $\tau(T_i)=-T_i$, $\epsilon(T_i)=0$.

We can also consider the $\diff\Set$-functor
$$
\diffst\bG_a:(\diffst\Sch)^\circ\to\diffst\Set,
$$
where 
$$
\diffst\bG_a(S)=\cO_S(S)
$$
with the structure of a difference group. Since $\diffst\bG_a(S)=(\bG_a(\forg{S}),\bG_s(\forg{\sigma_S}))$, we see that 
$$
\diffst\bG_a=\psig{\bG_a}^{*},
$$
so, by \ref{psh-to-diffpsh}, it is represented by $\diff\bbG_a$ in $\diffst\Sch$.

The $\diff\Set$-functor
$$
\difuinf\bG_a:(\difuinf\Sch)^\circ\to\diff\Set
$$
is given by $\difuinf\bG_a(S)=\cO_S(S)=(\bG_a(\forg{S}),\bG_a(\forg{\sigma_S}))$ as a difference group, and by $\difuinf\bG_a(f_i)=(\bG_a(f_i))_i$. Thus, 
$$\difuinf\bG_a=\psig{\bG_a}^\infty,$$ so by \ref{create-gen-dif-psh}, 
$\difuinf\bG_a$ is represented by $\jj^\infty\bbG_a=\spec(\Z[T],\id)$.

The template from \ref{find-diff-subgp} allows us to find a myriad of difference algebraic subgroups of either of $\diffst\bG_a$, $\difuinf\bG_a$. In particular, for a polynomial $f=a_0+a_1T+\cdots+a_nT^n\in\Do$, we have that
$$
(\diffst\bG_a)^f(S)=\{u\in\cO_S(S):a_0 u+a_1\sigma(u)+\cdots+a_n\sigma^n(u)=0\},
$$
which is represented by the spectrum of $\Z\{T\}/\langle f\rangle$.

\subsection{Multiplicative groups}\label{mult-gps}

The classical multiplicative group functor $\bG_m$ is the $\widehat{\Sch}$-group given by
$$
\bG_m(S)=\cO_S(S)^{\times},
$$
with the structure of the multiplicative group of invertible elements of the ring $\cO_S(S)$. It is represented by the scheme
$$
\bbG_m=\spec(\Z[T,T^{-1}]),
$$
given that $\bbG_m(S)\simeq\Hom_{\text{Alg}}(\Z[T,T^{-1}], \cO_S(S))\simeq\cO_S(S)^{\times}$.

More generally, if $S$ in an affine scheme, $\bG_{m,S}$ is represented by $\bbG_{m,S}$, the spectrum of the $\cA(S)$-algebra $\cA(S)[T,T^{-1}]$, with Hopf algebra structure given by
$\Delta(T)=T\otimes T$, $\tau(T)=T^{-1}$, $\epsilon(T)=1$.

The functor 
$$
\diff\bG_m:(\diff\Sch)^\circ\to\Set
$$
is defined by
$$
\diff\bG_m(S)=\forg{\cO_S(S)^{\times}}=\bG_m(\forg{S}),
$$
so $\diff\bG_m=\psig{\bG_m}$ and it is therefore represented by
$$
\diff\bbG_m=\spec(\Z[T_i,T_i^{-1}:i\in\N],\sigma),
$$
where $\sigma(T_i)=T_{i+1}$ and $\sigma(T_i^{-1})=T_{i+1}^{-1}$. This is a difference Hopf algebra with $\Delta(T_i)=T_i\otimes T_i$, $\tau(T_i)=T_i^{-1}$, $\epsilon(T_i)=1$.

The $\diff\Set$-functor
$$
\diffst\bG_m:(\diffst\Sch)^\circ\to\diffst\Set,\ \ \ S\mapsto\cO_S(S)^{\times}
$$
coincides with $\psig{\bG_m}^{*}$, so it is represented by $\diff\bbG_m$, while the $\diff\Set$-functor
$$
\difuinf\bG_m:(\difuinf\Sch)^\circ\to\diff\Set,\ \ \ S\mapsto\cO_S(S)^{\times}
$$
is represented by $\jj^\infty(\bbG_m)=\spec(\Z[T,T^{-1}],\id)$.

Using the template from \ref{find-diff-subgp}, we can construct difference sub-tori, i.e.,  difference algebraic subgroups of either $\diffst\bG_m$ or $\difuinf\bG_m$. In particular, for a polynomial $f=a_0+a_1T+\cdots+a_nT^n\in\Do$, we have that
$$
(\diffst\bG_m)^f(S)=\{u\in\cO_S(S):u^{a_0}\sigma(u)^{a_1}\cdots\sigma^n(u)^{a_n}=1\},
$$
which is represented by the spectrum of

\subsection{The ring $\OO$}\label{ringO}

Classically, the $\widehat{\Sch}$-ring $\OO_\text{sch}$ is given by
$$
\OO_\text{sch}(S)=\cA(S)=\cO_S(S).
$$
It is represented by the scheme $\bbO=\spec(\Z[T])$. 

If $S$ is an affine scheme, the $\widehat{\Sch_{\ov S}}$-ring $\OO_S$ is represented by an $S$-scheme $$\bbO_S=S\times_{\spec(\Z)}\bbO=\spec(\cA(S)[T]).$$ 

In the difference context, we define the $\diff\Set$-ring presheaf
$$
\OO:(\difuinf\Sch)^\circ\to\diffinf\Set 
$$
as follows. Given $S,S'\in \difuinf\Sch$, we assign
 $$\OO(S)=\cA(S),$$
  and we define the difference morphism 
 $$
\OO_{S'S}:\difuinf\Sch(S',S)\to \diffinf\Rng(\cA(S),\cA(S'))
$$
to be the global section morphism. Note that  $\OO=\psig{\OO_\text{sch}}^\infty$, so it is represented by $\jj^\infty(\bbO)=\spec(\Z[T],\id)\in\difuinf\Sch$.

Its underlying functor is the difference ring presheaf
$$
\OO_0:\diff\Sch\to\diff\Set, \ \ \ \ S\mapsto \cA(S),
$$
and its associated presheaf is the ring presheaf
$$
\assoc{\OO}:\diff\Sch\to\Set, \ \ \ \ S\mapsto\Fix(\cA(S)),
$$
and it is represented by $\I(\bbO)=\spec(\Z[T],\id)\in\diff\Sch$.

We sometimes consider
$$
\diff\OO:(\diff\Sch)^\circ\to\Set,\ \ \  S\mapsto\forg{\cO_S(S)}=\OO(\forg{S}).
$$
Thus $\diff\OO=\psig\OO$, so it is represented by 
$$\diff\bbO=\spec(\Z\{T\}).$$

Additionally, we consider the $\diff\Set$-functor
$$
\diffst\OO:(\diffst\Sch)^\circ\to\diffst\Set,\ \ \ S\mapsto\cO_S(S),
$$
with the difference ring structure. Hence $\diffst\OO=\psig{\OO}^{*}$, so it is represented by $\diff\bbO$.

%Finally, the $\diff\Set$-functor
%$$
%\difuinf\OO:(\difuinf\Sch)^\circ\to\diff\Set,\ \ \ S\mapsto\cO_S(S)
%$$

\subsection{Modules}\label{diff-modules}

Classically, if $S$ is an affine scheme and $\cF$ is an $\cA(S)$-module (i.e., an $\OO(S)$-module), there are two natural ways of associating an $\OO_S$-module to it. We consider the contravariant functors $\V_\text{sch}(\cF)$ and $\W_\text{sch}(\cF)$ on $\Sch_{\ov S}$ defined by
\begin{align*}
\V_\text{sch}(\cF)(S') & = \Hom_{\OO(S')}(\cF\otimes_{\OO(S)}\OO(S'),\OO(S'))=\Hom_{\OO(S)}(\cF,\OO(S')),\\
\W_\text{sch}(\cF)(S') &= \cF\otimes_{\OO(S)}\OO(S').
\end{align*} 

We can also view $\V$ and $\W$ as functors from the category of $\cA(S)$-modules to the category of $\OO_S$-modules, where $\V$ is contravariant, and $\W$ covariant.

Let $S$ be an affine difference scheme, so $\cA(S)=\OO(S)$ is a difference ring. Let $\cF$ be an $\cA(S)$-module. We define $\OO_S$-modules $\V(\cF)$ and $\W(\cF)$ as the $\diff\Set$-functors $\difuinf\Sch\to\diffinf\Set$ given by
\begin{align*}
\V(\cF)(S') & = [\cF\otimes_{\OO(S)}\OO(S'),\OO(S')]_{\OO(S')}=[\cF,\OO(S')]_{\OO(S)},\\
\W(\cF)(S') &= \cF\otimes_{\OO(S)}\OO(S').
\end{align*} 
%We define the $\diff\Set$-functors 
%$$\dif{^{\infty}_{+}}\V, \dif{^{\infty}_{+}}W:\dif{^{\infty}_{+}}\Sch\to\diff\Set$$ by the same formulae on objects, and by analogy on morphisms. 

%We define $\diffst\OO_S$-modules $\diffst\V(\cF)$ and $\diffst\W(\cF)$ as the $\diff\Set$-functors $\diffst\Sch\to\diff\Set$ given by
%\begin{align*}
%\diffst\V(\cF)(S') & = [\cF\otimes_{\diffst\OO(S)}\diffst\OO(S'),\diffst\OO(S')]_{\diffst\OO(S'),+},\\
%\diffst\W(\cF)(S') &= \cF\otimes_{\diffst\OO(S)}\diffst\OO(S').
%\end{align*} 
%We define the $\diff\Set$-functors 
%$$\dif{^{\infty}_{+}}\V, \dif{^{\infty}_{+}}W:\dif{^{\infty}_{+}}\Sch\to\diff\Set$$ by the same formulae on objects, and by analogy on morphisms. 

Their underlying functors are the $\OO_0$-modules
$$
\V(\cF)_0,\W(\cF)_0:\diff\Sch^\circ\to\diff\Set,
$$ 
given by the same formulae on objects.

Their associated presheaves are $\assoc{\OO}$-modules
$\diff\Sch^\circ\to\Set$ given by 
\begin{align*}
\assoc{\V}(\cF)(S') & = \OO(S')\Mod(\cF\otimes_{\OO(S)}\OO(S'),\OO(S'))=\OO(S)\Mod(\cF,\OO(S')),\\
\assoc{\W}(\cF)(S') &= \Fix(\cF\otimes_{\OO(S)}\OO(S')).
\end{align*}

We let
$$
\diff\V(\cF)=\forg{\,}\circ\V_0(\cF)\ \ \ \text{and}\ \ \ \diff\W(\cF)=\forg{\,}\circ\W_0(\cF).
$$
We obtain $\cA(S)\Mod$-functors
$$
\V:(\cA(S)_\infty\Mod)^\circ\to \OO_{S,\infty}\Mod\ \ \text{and}\ \  \W:\cA(S)_\infty\Mod\to \OO_{S,\infty}\Mod,
$$
although we will mostly be interested in their underlying functors (which we denote by the same letters to simplify notation)
$$
\V:(\cA(S)\Mod)^\circ\to \OO_{S}\Mod\ \ \text{and}\ \  \W:\cA(S)\Mod\to \OO_{S}\Mod.
$$
We also have the functors arising from the associated presheaves
$$
\assoc{\V}:(\cA(S)\Mod)^\circ\to \assoc{\OO}_S\Mod\ \ \text{and}\ \  \assoc{\W}:\cA(S)\Mod\to \assoc{\OO}_{S}\Mod.
$$

%$$
%\diffst\V:(\cA(S)\Mod)_{\infty,+}^\circ\to \dif{^\infty_{+}}\OO_S\Mod\ \ \text{and}\ \  \diffst\W:\cA(S)\Mod_{\infty,+}\to \dif{^\infty_{+}}\OO_S\Mod.
%$$
%
%
%Note that, if $S_0$ is an affine scheme, and $\cF_0$ is an $\cA(S_0)$-module, then trivially $\I(\cF)$ is an $\cA(\I(S_0))$-module and
%$$
%\diffst\V(\I(\cF_0))\circ\I=\I\circ\V(\cF_0), \ \ \ \ \diffst\W(\I(\cF_0))\circ\I=\I\circ\W(\cF_0).
%$$
%Thus, our newly defined functors extend the classical ones to difference categories naturally. In order to simplify notation, when it is clear from the context which difference objects these functors act on, we allow ourselves to write $\V$ in place of $\diffst\V$ and $\dif{^{\infty}_{+}}\V$, as well as $\W$ in place of $\diffst\W$ and $\dif{^{\infty}_{+}}\W$. Similarly we write $\OO$ for $\diffst\OO$ and $\difuinf\OO$.

\begin{proposition}\label{V-representable}
Let $S$ be an affine difference scheme, and let $\cF$ be an $\cA(S)$-module. The functor $\V(\cF)$ is representable on $\difuinf\Sch$ (and $\assoc{\V}(\cF)$ is represented on $\diff\Sch$) by the affine difference $S$-scheme
$$
\bbV(\cF)=\spec(\sym(\cF)),
$$
where $\sym(\cF)$ denotes the symmetric difference $\cA(S)$-algebra associated to $\cF$, as in \ref{diff-sym-alg}. Consequently, $\diff\V(\cF)$ is representable on  $\diff\Sch$ by 
$$
\produinf\bbV(\cF).
$$ 
\end{proposition}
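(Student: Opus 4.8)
The plan is to realise $\V(\cF)$ as a representable $\diff\Set$-presheaf on the enriched slice $\difuinf\Sch_{\ov S}$ by producing a universal element, the difference analogue of the classical fact that the relative spectrum of a symmetric algebra represents the linear-dual functor. The single nontrivial input is the \emph{enriched} adjunction of \ref{diff-sym-alg}: for any difference $\cA(S)$-algebra $B$ one has $[\cF,B]_{\cA(S)}\simeq[\sym(\cF),B]_{\cA(S)\Alg}=\cA(S)_\infty\Alg(\sym(\cF),B)$, naturally in $B$. The other ingredient is the enriched anti-equivalence between affine difference $S$-schemes and difference $\cA(S)$-algebras: by the functorial viewpoint underlying \ref{ringO} and the enriched Grothendieck/Yoneda machinery of \ref{create-gen-dif-psh} and \ref{repr-gendiff}, the hom difference set $\difuinf\Sch_{\ov S}(S',\spec B)$ is naturally identified with $\cA(S)_\infty\Alg(B,\cA(S'))$. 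Putting $B=\sym(\cF)$ and composing gives, for every affine difference $S$-scheme $S'$,
$$
\h_{\bbV(\cF)}(S')=\difuinf\Sch_{\ov S}(S',\bbV(\cF))\simeq\cA(S)_\infty\Alg(\sym(\cF),\cA(S'))\simeq[\cF,\cA(S')]_{\cA(S)}=\V(\cF)(S').
$$

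First I would make this canonical by exhibiting the universal element: the identity of $\sym(\cF)$ corresponds under \ref{diff-sym-alg} to the canonical difference $\cA(S)$-module map $\iota_\cF\colon\cF\to\sym(\cF)$, which is an element of $\V(\cF)(\bbV(\cF))=[\cF,\sym(\cF)]_{\cA(S)}$; by the enriched Yoneda lemma of \ref{repr-gendiff} it induces a $\diff\Set$-natural transformation $\h_{\bbV(\cF)}\Rightarrow\V(\cF)$, sending $\phi\colon S'\to\bbV(\cF)$ (equivalently $\phi^\sharp\colon\sym(\cF)\to\cA(S')$) to $\phi^\sharp\circ\iota_\cF$. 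Then I would check it is an isomorphism, with inverse sending $\psi\in[\cF,\cA(S')]_{\cA(S)}$ to $\spec(\tilde\psi)$, where $\tilde\psi\colon\sym(\cF)\to\cA(S')$ is the unique difference $\cA(S)$-algebra extension of $\psi$ provided by \ref{diff-sym-alg}; that the two assignments are mutually inverse is immediate from uniqueness in the universal property, and the point of using the \emph{enriched} form of \ref{diff-sym-alg} is precisely that it makes the assignments compatible with the hom-object ($\diff\Set$-enriched) structure, not merely with global sections. This establishes the first assertion.

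The remaining two statements I would deduce formally. Since $\assoc{\V}(\cF)$ is by definition the ordinary presheaf associated with the $\diff\Set$-presheaf $\V(\cF)\simeq\h_{\bbV(\cF)}$, and by \ref{und-presh} the presheaf associated with a representable enriched presheaf $\h_X$ is represented by $X$ in the underlying category, one gets $\assoc{\V}(\cF)\simeq\assoc{\h}_{\bbV(\cF)}$ on $\diff\Sch_{\ov S}$. For $\diff\V(\cF)$, we have $\diff\V(\cF)(S')=\forg{\V(\cF)_0(S')}\simeq\forg{\difuinf\Sch_{\ov S}(S',\bbV(\cF))}$, and the adjunction $\I^\infty\dashv\produinf$ of \ref{diff-vs-gendif-enr}, in the form $\forg{\difuinf\cC(X,Y)}\simeq\diff\cC(X,\produinf Y)$, identifies this with $\diff\Sch_{\ov S}(S',\produinf\bbV(\cF))=\h_{\produinf\bbV(\cF)}(S')$, naturally in $S'$; this is the same passage as from $\difuinf\bG_a$ to $\diff\bG_a$ via $\produinf\circ\jj^\infty\simeq\psig{\,}$ in \ref{additive-gps}.

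I expect the main obstacle to be the bookkeeping needed to verify that every identification above is $\diff\Set$-enriched-natural, i.e.\ that it matches the hom-object structure maps of $\V(\cF)$ coming from scalar extension and the shift on $[\cF,\OO(S')]$ (as spelled out in \ref{diff-modules}) with the enriched representable structure of $\h_{\bbV(\cF)}$ coming from the construction of $\OO$ as a represented $\diff\Set$-ring (\ref{ringO}); the underlying bijections themselves are essentially the classical Spec-of-symmetric-algebra argument. A secondary point is to keep the relative setting explicit throughout — the functor, the homs, and the representing object all live over $S$ — and to note that $\bbV(\cF)=\spec(\sym(\cF))$ does lie in the ambient category of (generalised) difference $S$-schemes, so that it is a legitimate representing object there.
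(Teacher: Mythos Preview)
Your proposal is correct and follows essentially the same approach as the paper: both arguments chain the enriched symmetric-algebra adjunction of \ref{diff-sym-alg} with the enriched anti-equivalence between affine difference $S$-schemes and difference $\cA(S)$-algebras to obtain $\V(\cF)(S')\simeq[\sym(\cF),\cA(S')]_{\cA(S)\Alg}\simeq\difuinf\Sch_{\ov S}(S',\bbV(\cF))$, and both deduce the $\diff\V(\cF)$ claim from the adjunction $\I^\infty\dashv\produinf$ of \ref{diff-vs-gendif-enr}. Your version is more explicit about the universal element and the enriched-naturality bookkeeping, and you additionally spell out the $\assoc{\V}(\cF)$ claim via \ref{und-presh}, which the paper leaves implicit.
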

\begin{proof}
For $S'$ over $S$, we have
\begin{align*}
\V(\cF)(S')& =[\cF\otimes_{\OO(S)}\OO(S'),\OO(S')]_{\OO(S')\Mod}\simeq
[\cF\otimes,\OO(S')]_{\OO(S)\Mod}\\  & \simeq[\sym(\cF),\OO(S')]_{\OO(S)\Alg}
 \simeq\difuinf\Sch_{\ov S}(S',\bbV(\cF)).
\end{align*}
The last statement follows by \ref{diff-vs-gendif-enr}, since, for $S'\to S$ in $\diff\Sch$,
$$
\diff\V(\cF)(S')=\forg{\difuinf\Sch_{\ov S}(\I^\infty(S'),\bbV(F))}\simeq \diff\Sch_{\ov S}(S',\produinf\bbV(\cF)).
$$
\end{proof}

\begin{corollary}
Let $S_0$ be an affine scheme and $\cF_0$ an $\cA(S_0)$-module. Writing $\cF=\jj_{\infty}(\cF_0)$ and $S=\jj^{\infty}(S_0)$, we have that $\cF$ is an $\cA(S)$-module.
% Moreover, $\sym(\cF)=\jj_\infty(\sym(\cF_0))$ and $\bbV(\cF)=\jj^\infty(\bbV(\cF_0))$ 
Then $\V(\cF)$ is represented on $\difuinf\Sch$ by $$\jj^\infty(\bbV(\cF_0))$$ and
$\diff\V$ is represented on $\diff\Sch$ by 
$$
\produinf\jj^\infty(\bbV(\cF_0))=\psig{\bbV(\cF_0)}.
$$

\end{corollary}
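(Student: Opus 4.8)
The plan is to derive the corollary directly from Proposition~\ref{V-representable} by substituting $\cF=\jj_\infty(\cF_0)$ and tracking how the constructions $\sym$, $\spec$, $\bbV$ and $\produinf$ interact with the "trivial difference structure" embeddings $\jj_\infty$, $\jj^\infty$. First I would dispose of the auxiliary claim that $\cF$ is an $\cA(S)$-module: since $S=\jj^\infty(S_0)=\spec(\cA(S_0),\id)$, we have $\cA(S)=\cO_S(S)=(\cA(S_0),\id)$, the difference ring with identity operator, and $\cF=\jj_\infty(\cF_0)=(\cF_0,\id)$; the compatibility axiom $\sigma_\cF(r.m)=\sigma_{\cA(S)}(r).\sigma_\cF(m)$ then reduces to $r.m=r.m$, so $\cF$ is a difference $\cA(S)$-module, with $\forg{\cF}=\cF_0$ over $\forg{\cA(S)}=\cA(S_0)$.

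The heart of the argument is the identification $\bbV(\cF)\simeq\jj^\infty(\bbV(\cF_0))$. By definition (from Proposition~\ref{V-representable}), $\bbV(\cF)=\spec(\sym(\cF))$, where $\sym$ is the difference symmetric algebra functor of \ref{diff-sym-alg}. Because $\sym$ is a free-model functor (left adjoint to the forgetful functor $\cA(S)\Alg\to\cA(S)\Mod$), it commutes with inverse image functors of geometric morphisms, exactly as in the compatibility $F\circ f^*\simeq f^*\circ F$ for free $\bbT$-model functors recorded in the discussion of algebraic theories; applied to the (essential) geometric morphism relating $\Set$-based algebra with $\diff\Set$-based algebra, this yields $\sym(\jj_\infty(\cF_0))\simeq\jj^\infty(\sym_{\cA(S_0)}(\cF_0))$, i.e.\ the symmetric algebra of a constant difference module is the constant difference algebra on the ordinary symmetric algebra. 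Since $\spec$ likewise commutes with the trivial-difference-structure embedding, $\spec\circ\jj^\infty\simeq\jj^\infty\circ\spec$, we obtain $\bbV(\cF)=\spec(\sym(\cF))\simeq\jj^\infty(\spec(\sym(\cF_0)))=\jj^\infty(\bbV(\cF_0))$.

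It then remains only to feed this into Proposition~\ref{V-representable}: that proposition gives that $\V(\cF)$ is represented on $\difuinf\Sch$ by $\bbV(\cF)$, hence by $\jj^\infty(\bbV(\cF_0))$, and that $\diff\V(\cF)$ is represented on $\diff\Sch$ by $\produinf\bbV(\cF)\simeq\produinf\jj^\infty(\bbV(\cF_0))$. Finally I would invoke the identity $\produinf\circ\jj^\infty\simeq\psig{\,}$ established in \ref{diff-vs-gendif-enr} to rewrite this last object as $\psig{\bbV(\cF_0)}$, which is exactly the claimed representing object.

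I expect the only real obstacle to be the bookkeeping in the second paragraph: making precise which incarnation of the "constant" embedding ($\jj_\infty$ on modules, $\jj^\infty$ on algebras and schemes, $\I$ versus $\I^\infty$) is in play at each stage, and citing the correct general principle so that $\sym(\jj_\infty(\cF_0))\simeq\jj^\infty(\sym(\cF_0))$ and $\spec\circ\jj^\infty\simeq\jj^\infty\circ\spec$ are genuinely justified rather than merely asserted. Everything after that is a direct substitution into Proposition~\ref{V-representable} and one appeal to \ref{diff-vs-gendif-enr}, with no computation required.
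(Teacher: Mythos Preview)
Your proposal is correct. The paper states this corollary without proof, treating it as immediate from Proposition~\ref{V-representable}; your argument supplies exactly the missing steps, namely $\bbV(\cF)\simeq\jj^\infty(\bbV(\cF_0))$ and the identity $\produinf\circ\jj^\infty\simeq\psig{\,}$ from \ref{diff-vs-gendif-enr}.

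One minor remark: your justification of $\sym(\jj_\infty(\cF_0))\simeq\jj^\infty(\sym(\cF_0))$ via the general compatibility of free-model functors with inverse images is more machinery than needed here. The description in \ref{diff-sym-alg} already tells you that the difference symmetric algebra of $\cF$ is the classical symmetric algebra of $\forg{\cF}=\cF_0$ equipped with the difference structure induced by $\sigma_{\cA(S)}$ and $\sigma_\cF$; since both are the identity, the induced endomorphism is the identity, giving $(\sym(\cF_0),\id)$ directly. This avoids the bookkeeping worry you flag in your final paragraph.
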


\begin{proposition}\label{enr-mod-psh-ff}
Let $S$ be an affine difference scheme, and let $\cF$ and $\cF'$ be two $\cA(S)$-modules.
\begin{enumerate}
\item The functors $\V$ and $\W$ %$\diffst\V$ and $\diffst\W$ 
commute with base change. For $S'\to S$ affine, we have
$$
\V(\cF\otimes\cA(S'))\simeq \V(\cF)_{S'}\ \ \ \text{and}\ \ \ 
\W(\cF\otimes\cA(S'))\simeq \W(\cF)_{S'}.
$$
\item\label{dva} The enriched functors $\V$ and $\W$ are fully faithful, i.e., the canonical difference maps
\begin{align*}
[\cF,\cF']_{\cA(S)} & \to \OO_S\Mod(\V(\cF'),\V(\cF))\\
[\cF,\cF']_{\cA(S)} & \to \OO_S\Mod(\W(\cF),\W(\cF')])
\end{align*}
are bijective. In particular, we have bijections
\begin{align*}
\Hom_{\cA(S)}(\cF,\cF') & \to \Hom_{\OO_S}(\V(\cF'),\V(\cF))\\
\Hom_{\cA(S)}(\cF,\cF') & \to \Hom_{\OO_S}(\W(\cF),\W(\cF')).
\end{align*}
\item The functors $\V$ and $\W$ are additive,
$$
\V(\cF\oplus\cF')\simeq\V(\cF)\times_S\V(\cF')\ \ \text{and}\ \ 
\W(\cF\oplus\cF')\simeq\W(\cF)\times_S\W(\cF').
$$
\end{enumerate}
\end{proposition}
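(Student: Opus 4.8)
The three items are essentially independent; I would prove each in turn and obtain the non-enriched versions in (2) and (3) by applying $\Fix$.

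\textbf{Item (1), base change.} For $\W$ this is associativity of the difference tensor product: for $S''$ over $S'$ over $S$,
\[
\W(\cF\otimes_{\cA(S)}\cA(S'))(S'')=(\cF\otimes_{\cA(S)}\cA(S'))\otimes_{\cA(S')}\OO(S'')\simeq \cF\otimes_{\cA(S)}\OO(S'')=\W(\cF)_{S'}(S'').
\]
For $\V$ I would use the description $\V(\cF)(S')=[\cF,\OO(S')]_{\cA(S)}$ together with the restriction--extension adjunction for difference modules in its enriched form, $[\cF\otimes_{\cA(S)}\cA(S'),N]_{\cA(S')}\simeq[\cF,N]_{\cA(S)}$, applied to $N=\OO(S'')$. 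Both computations are routine once the relevant adjunctions, already recorded in the difference module sections, are invoked; naturality in $S''$ is clear.

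\textbf{Item (2), full faithfulness.} This is the substantive part. For $\W$: the canonical map $\W(\cF)(S)=\cF\to\W(\cF)(S')=\cF\otimes_{\cA(S)}\OO(S')$, $m\mapsto m\otimes 1$, has $\OO(S')$-generating image, so an $\OO_S$-module morphism $\W(\cF)\to\W(\cF')$ --- being $\OO(S')$-linear and natural --- is determined by its value at $S$, an $\cA(S)$-module map $\cF\to\cF'$; conversely $\varphi\mapsto(\varphi\otimes\id)$ inverts this. The same argument applied at each level of the shift yields the bijection $[\cF,\cF']_{\cA(S)}\simeq\OO_S\Mod(\W(\cF),\W(\cF'))$ of difference modules. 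For $\V$: by \ref{V-representable}, $\V(\cF)$ is represented on $\difuinf\Sch_{\ov S}$ by $\bbV(\cF)=\spec(\sym(\cF))$, so $\OO_S\Mod(\V(\cF'),\V(\cF))$ is a sub-object of $\difuinf\Sch_{\ov S}(\bbV(\cF'),\bbV(\cF))\simeq[\sym(\cF),\sym(\cF')]_{\cA(S)\Alg}$ (the enriched difference $\cA(S)$-algebra hom, via \ref{diff-sym-alg}). It remains to single out the $\OO_S$-module morphisms among all difference algebra maps $\sym(\cF)\to\sym(\cF')$: these are precisely the ones compatible with scalar multiplication, equivalently the degree-preserving (graded) ones, equivalently those of the form $\sym(\varphi)$ for a unique $\varphi\in[\cF,\cF']_{\cA(S)}$ (read off from the degree-$1$ part). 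Transporting this through \ref{diff-sym-alg} gives the enriched bijection $[\cF,\cF']_{\cA(S)}\simeq\OO_S\Mod(\V(\cF'),\V(\cF))$; concretely, I would evaluate a module morphism $\V(\cF')\to\V(\cF)$ on the universal point $\iota\colon\cF'\hookrightarrow\sym(\cF')$ of $\V(\cF')$ over $\bbV(\cF')$ and check that linearity forces the image to lie in $\cF'\subseteq\sym(\cF')$.

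\textbf{Item (3), additivity, and the main obstacle.} Additivity follows from preservation properties already available for difference modules: $\cF\otimes_{\cA(S)}(-)$ is a left adjoint, hence preserves the finite biproduct, giving $(\cF\oplus\cF')\otimes_{\cA(S)}\OO(S')\simeq(\cF\otimes_{\cA(S)}\OO(S'))\oplus(\cF'\otimes_{\cA(S)}\OO(S'))$ argument-wise in $S'$; dually $[(-),N]_{\cA(S)}$ turns the finite coproduct $\cF\oplus\cF'$ into a finite product. Since finite products and coproducts coincide in the abelian category $\OO_S\Mod$ (see \ref{omod-is-ab}), this yields $\W(\cF\oplus\cF')\simeq\W(\cF)\times_S\W(\cF')$ and $\V(\cF\oplus\cF')\simeq\V(\cF)\times_S\V(\cF')$. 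The one genuinely delicate point of the whole proof is the identification in item (2) for $\V$: cutting down correctly from the full hom-object of difference algebra maps $\sym(\cF)\to\sym(\cF')$ to the linear ones, and doing so compatibly with the $\sigma$-action so as to obtain an isomorphism of difference modules rather than merely of underlying sets. Everything else reduces to the tensor/hom adjunctions for difference modules established earlier.
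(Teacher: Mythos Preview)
Your proposal is correct and aligns with the paper's proof. The paper declares items (1) and (3) as not requiring proof, and for $\W$ in item (2) gives the same one-line argument (evaluate at $S$). For $\V$ in item (2) the paper takes the dual formulation of your argument: rather than characterising $\OO_S$-module morphisms $\V(\cF')\to\V(\cF)$ as the graded algebra maps $\sym(\cF)\to\sym(\cF')$, it shows directly that $\cF$ is recovered inside $\sym(\cF)$ as exactly those $z$ for which the evaluation map $\mathrm{ev}_z:\V(\cF)(X)\to\OO(X)$ is $\OO(X)$-linear for every $X$ over $S$. The concrete verification you flag as the delicate point is carried out via the test scheme $X=\spec(\sym(\cF)[T])$ with $\sigma(T)=T$: the generic scalar $T$ separates the homogeneous components of $z=\sum_n z_n$ through $(T.h)(z)=\sum_n T^n h(z_n)$, forcing $z_n=0$ for $n\neq 1$. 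This is precisely the mechanism behind your ``compatibility with scalar multiplication forces graded'' step, made explicit.
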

\begin{proof}
Only claim \ref{dva} requires proof. The case of $\W$ is trivial, because, given a morphism $\phi:\W(\cF)\to\W(\cF')$ we recover the original map as $\phi(S):\cF\to\cF'$.

In view of \ref{V-representable}, we show that $\cF$ can be reconstructed from the $\OO_S$-module structure on the $S$-scheme $\bbV(\cF)$. Indeed, for any difference scheme $X$ over $S$, the underlying difference set of the $\OO(X)$-module $\V(\cF)(X)=[\cF,\OO(X)]_{\OO(S)}$ canonically identifies to 
$[\sym(\cF),\OO(X)]_{\OO(S)\Alg}$.
This isomorphism allows us to evaluate an element $h\in\V(\cF)(X)$ on an element of $\sym(\cF)$. More explicitly, if $h,h'\in [\cF,\OO(X)]_{\OO(S)}$, $s_i\in\cF$, $t\in\OO(X)$, we consider $s_1s_1\cdots s_n$ as an element of $\sym(\cF)$, and, writing $h(s_i)$ for the evaluation of $h$ at $s_i$, 
\begin{align*}
(h+h')(s_1s_2\cdots s_n) & =\prod_i(h(s_i)+h'(s_i))\\
(t.h)(s_1s_2\cdots s_n) & =t^n\prod_i h(s_i).
\end{align*}
Hence, for every $z\in\sym(\cF)$, and every $X$ over $S$, we obtain a map
$$
\text{ev}_z:\V(\cF)(X)\simeq[\sym(\cF),\OO(X)]_{\OO(S)\Alg}\to\OO(X).
$$
We claim that 
$$
\cF=\{z\in\sym(\cF): \text{for every }X\text{ over }S,\ \text{ev}_z \text{ is an }\OO(X)\text{-module homomorphism}\}.
$$
The left to right inclusion is clear from the above formulae. Conversely, suppose $z$ satisfies the defining property of the set on the right hand side, and write $z=\sum_n z_n$, where $z_n\in\sym_n(\cF)$. Let us choose $X=\spec(\sym(\cF)[T])$, the spectrum of the difference ring extending $\sym(\cF)$ determined by $\sigma(T)=T$. Then, for 
$$h\in[\sym(\cF),\sym(\cF)[T]]_{\OO(S)\Alg},$$ we have $(T.h)(z)=\sum_n T^n h(z_n)$. By hypothesis on $z$, $(T.h)(z)=T.(h(z))$, i.e., $\sum_n T^n h(z_n)=T.\sum_n h(z_n)$. Taking for $h$ the canonical injection, we get $\sum_nT^n.z_n=T.\sum_n z_n$, which implies that $z_n=0$ for $n\neq 1$, and $z\in\cF$.

\end{proof} 
 
 \begin{corollary} With the notation of \ref{enr-mod-psh-ff}, we have the following.
 \begin{enumerate}
 \item The functors $\assoc{\V}$ and $\assoc{\W}$ commute with base change.
 \item The functor $\assoc{\V}$ is fully faithful, and $\assoc{\W}$ is not in general.
 \item The functors $\assoc{\V}$ and $\assoc{\W}$ are additive.
 \end{enumerate}
  \end{corollary}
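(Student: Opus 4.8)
\emph{Proof proposal.} The plan is to derive the corollary from Proposition~\ref{enr-mod-psh-ff} by applying throughout the associated-presheaf operation $\assoc{(\mathord{-})}=V\circ(\mathord{-})_0$, where $(\mathord{-})_0$ is the underlying-functor $2$-functor of \ref{und-cat} and $V=\Gamma=\Fix$. Unravelling \ref{diff-modules} one has $\assoc{\V}(\cF)=V\circ\V(\cF)_0$ and $\assoc{\W}(\cF)=V\circ\W(\cF)_0$, so every property of the enriched functors $\V,\W$ that survives passage to underlying functors and postcomposition with $V$ transfers automatically. For (1) and (3) this is formal: base change of (enriched or ordinary) presheaves is precomposition with the relevant slice-inclusion functor, finite products of presheaves are computed pointwise, and both $(\mathord{-})_0$ and $V\circ\mathord{-}$ commute with precomposition and preserve finite products (the latter since $V=\Fix$ is a right adjoint, by \ref{fix-quo}). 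Hence applying $\assoc{(\mathord{-})}$ to the isomorphisms of \ref{enr-mod-psh-ff}(1),(3) gives $\assoc{\V}(\cF\otimes\cA(S'))\simeq\assoc{\V}(\cF)_{S'}$, $\assoc{\W}(\cF\otimes\cA(S'))\simeq\assoc{\W}(\cF)_{S'}$, $\assoc{\V}(\cF\oplus\cF')\simeq\assoc{\V}(\cF)\times_S\assoc{\V}(\cF')$, and likewise for $\assoc{\W}$.

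For the positive half of (2) I would re-run the reconstruction from the proof of \ref{enr-mod-psh-ff}(2) on associated presheaves. By \ref{V-representable} and its proof, for a difference scheme $X$ over $S$ one has $\assoc{\V}(\cF)(X)=\OO(S)\Mod(\cF,\OO(X))\simeq\OO(S)\Alg(\sym(\cF),\OO(X))$. The crucial observation is that all the data used in that reconstruction --- the test object $X=\spec(\sym(\cF)[T])$ with $\sigma(T)=T$, the canonical injection $\sym(\cF)\hookrightarrow\sym(\cF)[T]$, and the element $T$ --- are \emph{fixed} (the injection is a morphism of difference algebras, and $T\in\Fix(\OO(X))=\assoc{\OO}_S(X)$), so the identity
$$
\cF=\{\,z\in\sym(\cF)\ :\ \mathrm{ev}_z\ \text{is an}\ \assoc{\OO}_S(X)\text{-module map for every}\ X\ \text{over}\ S\,\}
$$
already takes place inside $\assoc{\V}(\cF)$. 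This shows that $\cF$, and more generally a homomorphism $\cF\to\cF'$, is recovered from the $\assoc{\OO}_S$-module $\assoc{\V}(\cF)$ (resp.\ from a morphism $\assoc{\V}(\cF')\to\assoc{\V}(\cF)$), giving full faithfulness of $\assoc{\V}$.

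The main obstacle is the negative half of (2). Here the point is structural: the proof of \ref{enr-mod-psh-ff}(2) for $\W$ recovered $\cF$ as the value $\W(\cF)(S)$, whereas $\assoc{\W}(\cF)(S)=\Fix(\cF)$ is in general a proper submodule, and $\Fix$ is not faithful on difference modules; but turning this into an actual failure of full faithfulness of $\assoc{\W}$ requires producing $\cF,\cF'$ (or a pair of parallel maps) that $\assoc{\W}$ genuinely fails to separate, for which one must control $\Fix(\cF\otimes_{\cA(S)}A)$ naturally in \emph{all} difference $\cA(S)$-algebras $A$, not merely $A=\cA(S)$. I would look for such an example among difference modules whose twisting is "generic" enough that the associated systems of linear difference equations remain insoluble after every base change, reducing the verification to a computation with one-dimensional twists over the relevant test rings in the spirit of \ref{calc-ext} and \ref{inthom-etale}, and regard this as the only non-formal step; the remaining assertions about $\assoc{\W}$ in (1) and (3) are already covered above.
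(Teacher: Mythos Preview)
Your treatment of (1), (3), and the positive half of (2) matches the paper's approach: it simply applies $\Fix$ to the isomorphisms of \ref{enr-mod-psh-ff}(1),(3), and for (2) remarks that the reconstruction argument for $\V$ goes through verbatim for $\assoc{\V}$, exactly as you outline.

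The only point where you diverge is the negative half of (2). You correctly isolate the obstacle --- one needs a nonzero $\cF$ with $\Fix(\cF\otimes_{\cA(S)}A)=0$ for \emph{every} difference $\cA(S)$-algebra $A$ --- but you then reach for \'etale twists and the machinery of \ref{calc-ext}, \ref{inthom-etale}, which is more than the situation requires. The paper's counterexample is immediate: take $\cF$ to be the free difference $\cA(S)$-module on a single generator, i.e.\ $\cF\simeq\ssig{\cA(S)}$. After base change to any $A$, an element of $\cF\otimes_{\cA(S)}A$ is a finite sum $\sum_i a_i\,\sigma^i x$ with $a_i\in A$, and being $\sigma$-fixed forces $a_0=0$ and $a_i=\sigma(a_{i-1})$ for all $i\geq 1$; finiteness of the support then gives $a_i=0$ for all $i$. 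Hence $\assoc{\W}(\cF)(S')=0$ for every $S'\to S$, so $\assoc{\W}(\cF)$ is the zero functor while $\cF\neq 0$, and $\assoc{\W}$ cannot be faithful. No solvability analysis of linear difference systems is needed.
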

 \begin{proof}
 The properties (1) and (3) follow by applying the functor $\Fix$ to the corresponding properties from \ref{enr-mod-psh-ff}. For (2), the fact that $\assoc{\V}$ is fully faithful follows by the same proof as for $\V$. To see that $\assoc{\W}$ is not fully faithful in general, let $\cF$ be a free difference module on a single generator. Then $\assoc{\W}(\cF)(S')=0$ for any $S'\to S$, so it is impossible to reconstruct $\cF$ from its module presheaf.
\end{proof}

\begin{proposition}\label{anti-isom}
We have canonical morphisms
 \begin{center}
 \begin{tikzpicture} 
\matrix(m)[matrix of math nodes, row sep=1.2em, column sep=.7em, text height=1.5ex, text depth=0.25ex]
 {
  |(a)|{\OO_S\Mod[\W(\cF),\W(\cF')]} & & |(b)|{\OO_S\Mod[\V(\cF'),\V(\cF)]}  	\\
 & |(k)|{\W([\cF,\cF']_{\cA(S)})} &  	\\
 }; 
\path[-,font=\scriptsize,>=to, thin]
([yshift=1pt]a.east) edge %node[above]{$p'$} 
([yshift=1pt]b.west) 
([yshift=-1pt]a.east) edge %node[below]{$q'$} 
([yshift=-1pt]b.west) 
;
\path[->,font=\scriptsize,>=to, thin]
%(a) edge  (b) 
(k) edge (a)
(k) edge (b) 
;
\end{tikzpicture}
\end{center}
Moreover, if $\cF$ and $\cF'$ are finite \'etale $\cA(S)$-modules, then all the above arrows are isomorphisms. 
\end{proposition}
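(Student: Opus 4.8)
The plan is to verify all the asserted isomorphisms pointwise on the site $\difuinf\Sch$, reducing everything to a single base-change formula for the internal hom of finite \'etale difference modules. Fix an affine difference scheme $S'\to S$ and put $R=\cA(S)$, $R'=\cA(S')$, with $\varphi\colon R\to R'$ the structure homomorphism. By definition $\W([\cF,\cF']_{\cA(S)})(S')=[\cF,\cF']_{R}\otimes_{R}R'$. Since $\V$ and $\W$ commute with base change (\ref{enr-mod-psh-ff}(1)), the module version of \ref{V-gp-prop} (cf.\ \ref{psh-int-hom-rel}) gives
\[
\OO_S\Mod[\W(\cF),\W(\cF')](S')\simeq \OO_{S'}\Mod(\W(\cF\otimes_R R'),\W(\cF'\otimes_R R')),
\]
\[
\OO_S\Mod[\V(\cF'),\V(\cF)](S')\simeq \OO_{S'}\Mod(\V(\cF'\otimes_R R'),\V(\cF\otimes_R R')),
\]
and full faithfulness of $\V$ and $\W$ over $S'$ (\ref{enr-mod-psh-ff}(\ref{dva})) identifies both right-hand sides with $[\cF\otimes_R R',\cF'\otimes_R R']_{R'}$. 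Under these identifications every canonical morphism in the statement becomes a composite of the natural comparison map
\[
\beta_{S'}\colon [\cF,\cF']_{R}\otimes_{R}R'\longrightarrow [\cF\otimes_R R',\cF'\otimes_R R']_{R'},
\]
the two parallel horizontal maps becoming the identity of $[\cF\otimes_R R',\cF'\otimes_R R']_{R'}$ up to these isomorphisms, and the triangle with apex $\W([\cF,\cF']_{\cA(S)})$ commuting by naturality of the constructions of \ref{enr-mod-psh-ff}. So it remains to prove that $\beta_{S'}$ is an isomorphism when $\cF$ is finite \'etale.

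For that, I would first note that $\cF\otimes_R R'$ is again finite \'etale over $R'$: its underlying module is finite free over $\forg{R'}$, and, $\varphi$ being a difference homomorphism, the matrix of $\sigma_{\cF\otimes_R R'}$ is the $\varphi$-image of the (invertible) matrix of $\sigma_{\cF}$. By \ref{inthom-etale}, for $\cF$ \'etale $[\cF,\cF']_{R}$ is carried isomorphically onto $\forg{R}\Mod(\forg{\cF},\forg{\cF'})$ with shift $\psi_0\mapsto\bar{\sigma}_{\cF'}\circ(\psi_0)_{\varsigma}\circ\bar{\sigma}_{\cF}^{-1}$, and likewise over $R'$; the classical isomorphism $\forg{R}\Mod(\forg{\cF},\forg{\cF'})\otimes_{\forg{R}}\forg{R'}\simeq\forg{R'}\Mod(\forg{\cF}\otimes\forg{R'},\forg{\cF'}\otimes\forg{R'})$ (valid because $\forg{\cF}$ is finite projective) then realises $\beta_{S'}$ on underlying modules, and one checks it intertwines the shifts using that the associated linear maps commute with base change, i.e.\ $\bar{\sigma}_{\cF\otimes_R R'}=\bar{\sigma}_{\cF}\otimes_R R'$ under $(\cF\otimes_R R')_{\varsigma}\simeq\cF_{\varsigma}\otimes_R R'$. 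Alternatively one can argue through the proposition preceding \ref{etale-is-reflexive}: $[\cF,\cF']_R\simeq\cF^{\vee}\otimes_R\cF'$ for $\cF$ finite \'etale, $\cF^{\vee}=[\cF,R]_R$ is again finite \'etale by \ref{inthom-etale} (all maps in its shift are isomorphisms), so the target of $\beta_{S'}$ becomes $(\cF\otimes_R R')^{\vee}\otimes_{R'}(\cF'\otimes_R R')$, and one reduces to $\cF^{\vee}\otimes_R R'\simeq(\cF\otimes_R R')^{\vee}$, the same matrix computation with $\cF'=R$.

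The main obstacle I expect is not the underlying-module statement, which is classical, nor the reduction in the first paragraph, which is bookkeeping, but the careful propagation of the \emph{difference} structures: tracking the shift operators through \ref{inthom-etale} and through the classical $\Hom$ base-change isomorphism, and confirming that the canonical morphisms written in \ref{anti-isom} genuinely coincide with $\beta_{S'}$ after the identifications, so that the claim ``all the above arrows are isomorphisms'' is fully covered (including the two parallel arrows and the commutativity of the triangle). A secondary point is the intended scope of ``finite \'etale'': I will read it, as in the proposition preceding \ref{etale-is-reflexive}, as including that the underlying module is finite free---which is what the matrix arguments use---remarking that the finite projective case reduces to it by localisation.
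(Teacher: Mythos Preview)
Your proposal is correct and follows essentially the same approach as the paper: evaluate the presheaves at an arbitrary $S'\to S$, use base change and full faithfulness from \ref{enr-mod-psh-ff} to reduce all three arrows to the single comparison map $\beta_{S'}\colon[\cF,\cF']_R\otimes_R R'\to[\cF\otimes_R R',\cF'\otimes_R R']_{R'}$, and then observe that this is an isomorphism in the finite \'etale case. The paper's proof is a terse paragraph that asserts the last step without justification; you supply that justification via \ref{inthom-etale} and the classical base-change isomorphism for $\Hom$ out of a finite projective module, which is exactly what is needed.
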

\begin{proof}
The isomorphism in the top row follows using \ref{enr-mod-psh-ff}(1).  If we write the value of all these presheaves on some $S'\to S$, and use \ref{enr-mod-psh-ff}(2), the existence of the left morphism is a consequence of the natural morphism
$$
[\cF,\cF']_{\cA(S)}\otimes_{\cA(S)} \cA(S')\to[\cF\otimes\cA(S'),\cF'\otimes\cA(S')]_{\cA(S')},
$$
and similarly for the morphism on the right. Note that this morphism is an isomorphism when $\cF$ and $\cF'$ are finite \'etale.
\end{proof}

\begin{corollary}
If $\cF$ is a finite \'etale $\cA(S)$-module, then
\begin{align*}
\W(\cF^\vee) & \simeq \OO_S\Mod[\W(\cF),\OO_S]\simeq\V(\cF),\\
\V(\cF^\vee) & \simeq \OO_S\Mod[\V(\cF),\OO_S]\simeq\W(\cF).
\end{align*}
Moreover, 
$$
\assoc{\W}(\cF^\vee)\simeq \assoc{\V}(\cF).
$$
\end{corollary}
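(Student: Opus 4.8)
The plan is to reduce everything to Proposition~\ref{anti-isom} applied with $\cF'=\cA(S)$, together with the reflexivity of finite \'etale modules (Corollary~\ref{etale-is-reflexive}). I would begin by recording three elementary identifications. First, by the defining formulae for $\V$ and $\W$, for every $S'$ over $S$ one has $\W(\cA(S))(S')=\cA(S)\otimes_{\OO(S)}\OO(S')\simeq\OO(S')$ and $\V(\cA(S))(S')=[\cA(S)\otimes_{\OO(S)}\OO(S'),\OO(S')]_{\OO(S')}\simeq\OO(S')$, naturally in $S'$, whence $\W(\cA(S))\simeq\OO_S\simeq\V(\cA(S))$ as $\OO_S$-modules. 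Second, by the definition of the unrestricted dual, $[\cF,\cA(S)]_{\cA(S)}=\cF^\vee$. Third, $\cA(S)$ is (trivially) finite \'etale over itself: $\forg{\cA(S)}$ is free of rank one, and the matrix of $\sigma_{\cA(S)}$ with respect to the basis $\{1\}$ is the identity, which is invertible, so the \'etale criterion applies.

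With these in hand, Proposition~\ref{anti-isom} with $\cF'=\cA(S)$ applies (both $\cF$ and $\cA(S)$ being finite \'etale), so its canonical arrows are isomorphisms, and substituting the identifications above gives
$$
\OO_S\Mod[\W(\cF),\OO_S]\simeq\OO_S\Mod[\OO_S,\V(\cF)]\simeq\W(\cF^\vee).
$$
Since the internal hom out of the unit is the identity, $\OO_S\Mod[\OO_S,\V(\cF)]\simeq\V(\cF)$, this is exactly the first displayed line $\W(\cF^\vee)\simeq\OO_S\Mod[\W(\cF),\OO_S]\simeq\V(\cF)$.

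For the second line I would apply the first line to $\cF^\vee$ in place of $\cF$; this is legitimate because $\cF^\vee$ is again finite \'etale (in a $\sigma$-basis of $\cF$ with transition matrix $A\in\mathrm{GL}_n(\cA(S))$, the dual $\sigma$-basis of $\cF^\vee$ has transition matrix $(A^T)^{-1}$). The first line then yields $\W(\cF^{\vee\vee})\simeq\OO_S\Mod[\W(\cF^\vee),\OO_S]\simeq\V(\cF^\vee)$; using the reflexivity $\cF^{\vee\vee}\simeq\cF$ of Corollary~\ref{etale-is-reflexive} on the left, and rewriting $\W(\cF^\vee)\simeq\V(\cF)$ in the middle via the first line, produces $\W(\cF)\simeq\OO_S\Mod[\V(\cF),\OO_S]\simeq\V(\cF^\vee)$, which is the second displayed line. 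Finally, applying the underlying-presheaf functor (pointwise $\Fix$) to the $\OO_S$-module isomorphism $\W(\cF^\vee)\simeq\V(\cF)$ gives $\assoc{\W}(\cF^\vee)\simeq\assoc{\V}(\cF)$, since $\assoc{\W}$ and $\assoc{\V}$ are by definition the presheaves associated to $\W$ and $\V$.

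I do not expect a serious obstacle here: all of the substantive content is packed into Proposition~\ref{anti-isom}, and the remaining work is bookkeeping. The only points demanding a little care are the two \'etaleness claims — that $\cA(S)$ is finite \'etale over itself and that $\cF^\vee$ inherits finite \'etaleness from $\cF$ — and the insistence that the various identifications be natural in $S'$, so that one genuinely obtains isomorphisms of $\OO_S$-modules and not merely objectwise bijections.
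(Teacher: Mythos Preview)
Your proposal is correct and is precisely the argument the paper intends: the corollary is stated without proof, being immediate from Proposition~\ref{anti-isom} applied with $\cF'=\cA(S)$, together with reflexivity (Corollary~\ref{etale-is-reflexive}) for the second line. You have faithfully unpacked the implicit steps, including the auxiliary checks that $\cA(S)$ and $\cF^\vee$ are themselves finite \'etale.
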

 
 \begin{proposition}\label{diff-map-into-uhom}
 Let $\cB$ be an $\cA(S)$-algebra, and let $\cF$, $\cF'$ be $\cA(S)$-modules. Then we have a natural isomorphism
 $$
 \widehat{\difuinf\Sch_{\ov S}}(\spec(\cB),\OO_S\Mod[\W(\cF'),\W(\cF)])\simeq[\cF',\cF\otimes_{\cA(S)}\cB]_{\cA(S)}.
 $$
\end{proposition}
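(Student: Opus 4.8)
The plan is to reduce the statement to the enriched Yoneda lemma together with the properties of the functor $\W$ recorded in \ref{enr-mod-psh-ff} and a ring-change adjunction for difference modules. Since $\spec(\cB)$ is a representable object of $\widehat{\difuinf\Sch_{\ov S}}$ and $\OO_S\Mod[\W(\cF'),\W(\cF)]$ is (the underlying $\diff\Set$-presheaf of) an object of the $\diff\Set$-category of $\OO_S$-module presheaves, the enriched Yoneda lemma of \ref{repr-gendiff} identifies the left-hand side with the value of this internal-hom presheaf at $\spec(\cB)$:
\[
\widehat{\difuinf\Sch_{\ov S}}(\spec(\cB),\OO_S\Mod[\W(\cF'),\W(\cF)])\simeq \OO_S\Mod[\W(\cF'),\W(\cF)](\spec(\cB)).
\]

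Next I would unwind the right-hand side through a chain of natural isomorphisms. By the defining property of the internal hom of module presheaves (the presheaf-of-modules analogue of \ref{mod-ops-top}, obtained from \ref{psh-int-hom-rel} and the construction of \ref{enr-gp-mod}), for any $X\in\difuinf\Sch_{\ov S}$ one has
\[
\OO_S\Mod[\W(\cF'),\W(\cF)](X)\simeq (\OO_S|X)\Mod\bigl(\W(\cF')|X,\ \W(\cF)|X\bigr).
\]
Taking $X=\spec(\cB)$, the restricted ring $\OO_S|\spec(\cB)$ is the structure ring $\OO_{\spec(\cB)}$, and by the base-change compatibility of $\W$ in \ref{enr-mod-psh-ff}(1) we have $\W(\cF)|\spec(\cB)\simeq \W_{\spec(\cB)}(\cF\otimes_{\cA(S)}\cB)$, and likewise for $\cF'$. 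Then the full faithfulness of the enriched functor $\W$ from \ref{enr-mod-psh-ff}(2), applied over the new base ring $\cB$, turns the displayed $\OO_{\spec(\cB)}$-hom into the internal hom of difference $\cB$-modules $[\cF'\otimes_{\cA(S)}\cB,\ \cF\otimes_{\cA(S)}\cB]_{\cB}$. Finally, the base-change adjunction $(\mathord{-})_\varphi\dashv i_\varphi$ for difference modules along $\varphi\colon\cA(S)\to\cB$, in its internal (enriched) form $i_\varphi[M_\varphi,N]_{\cB}\simeq[M,i_\varphi N]_{\cA(S)}$ — valid because $\cA(S)\Mod$ is symmetric monoidal closed — applied with $M=\cF'$ and $N=\cF\otimes_{\cA(S)}\cB$, rewrites this as $[\cF',\cF\otimes_{\cA(S)}\cB]_{\cA(S)}$. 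Composing the chain yields the claim, and each step is visibly natural in $\cB$, $\cF$ and $\cF'$.

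The main obstacle is not the overall strategy but the bookkeeping in the two middle steps: one must check that the internal hom of $\OO_S$-module presheaves evaluated at a representable object $\spec(\cB)$ really is computed "slice-wise" as the $\OO_{\spec(\cB)}$-module hom — i.e. that the internal-hom construction of \ref{enr-gp-mod} commutes with restriction along $\spec(\cB)\to S$ — and that the identification $\W(\cF)|\spec(\cB)\simeq\W_{\spec(\cB)}(\cF\otimes_{\cA(S)}\cB)$ is compatible with the relevant $\OO$-module structures, so that the full-faithfulness statement of \ref{enr-mod-psh-ff}(2) is actually applicable over the new base. The other delicate point is upgrading the ring-change adjunction, stated in the excerpt only at the level of ordinary $\Hom$-sets, to the internal-hom level; this is a formal consequence of the monoidal closed structure on difference modules (the $\cB$-module $[\cF'_{\cB},N]_{\cB}$ represents $P\mapsto \cB\Mod(P\otimes_{\cB}\cF'_{\cB},N)$, which by extension of scalars equals $\cA(S)\Mod(i_\varphi P\otimes_{\cA(S)}\cF',i_\varphi N)$), but it should be spelled out carefully to make sure it respects the difference operators.
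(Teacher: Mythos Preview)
Your proposal is correct and follows essentially the same route as the paper's proof: enriched Yoneda to evaluate at $X=\spec(\cB)$, the identification of the internal-hom presheaf value as $\OO_X\Mod(\W(\cF')_X,\W(\cF)_X)$, base-change compatibility and full faithfulness of $\W$ from \ref{enr-mod-psh-ff}, and finally the ring-change adjunction. The paper's proof compresses these steps into a single display and is in fact slightly looser with the subscript on the penultimate internal hom (writing $\cA(S)$ where $\cB$ is meant); your version, with the explicit intermediate $[\cF'\otimes_{\cA(S)}\cB,\cF\otimes_{\cA(S)}\cB]_{\cB}$ and the enriched form of the extension-of-scalars adjunction spelled out, is the more careful of the two.
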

\begin{proof}
Writing $X=\spec{\cB}$, using enriched Yoneda, the left hand side becomes
\begin{multline*}
\OO_S\Mod[\W(\cF'),\W(\cF)](X)=\OO_X\Mod(\W(\cF')_X,\W(\cF)_X)\\
=\OO_X\Mod(\W(\cF'\otimes\cB),\W(\cF\otimes\cB))\simeq [\cF'\otimes\cB,\cF\otimes\cB]_{\cA(S)}
=[\cF',\cF\otimes\cB]_{\cA(S)}.
\end{multline*}
\end{proof}

\begin{proposition}\label{map-into-uhom-v}
With the above notation, we have a natural isomorphism
$$
\widehat{\diff\Sch_{\ov S}}(\spec{B},\uHom_{\assoc{\OO}_S}(\assoc{\V}(\cF'),\assoc{\V}(\cF))\simeq \cA(S)\Mod(\cF,\cF'\otimes_{\cA(S)}\cB).
$$
\end{proposition}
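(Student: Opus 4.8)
The plan is to reduce the claim to three facts already in hand: that $\assoc{\V}$ is fully faithful, that $\assoc{\V}$ commutes with base change, and the base-change adjunction for difference modules (the adjunction $(\mathord{-})_\varphi\dashv i_\varphi$ associated to a morphism $\varphi$ of difference rings). No new computation is required beyond bookkeeping of the identifications.

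First, by the Yoneda lemma in $\widehat{\diff\Sch_{\ov S}}$, the left-hand side is the value at $X=\spec\cB$ of the presheaf $\uHom_{\assoc{\OO}_S}(\assoc{\V}(\cF'),\assoc{\V}(\cF))$. By the description of internal homs of modules in a ringed topos (\ref{mod-ops-top}), and by the formula $[M,N]_A(X)\simeq\Hom_{A|X}(i_X^*M,i_X^*N)$, this value equals $\assoc{\OO}_X\Mod\big((\assoc{\V}(\cF'))_X,(\assoc{\V}(\cF))_X\big)$, where $(\mathord{-})_X$ is restriction along the localisation $i_X$ and $\assoc{\OO}_X=\assoc{\OO}_S|_X$ is the structure ring presheaf of $X$, so that $\cA(X)=\cB$. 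Next, since $\assoc{\V}$ commutes with base change (Corollary following \ref{enr-mod-psh-ff}, obtained by applying $\Fix$ to \ref{enr-mod-psh-ff}(1)), there are $\assoc{\OO}_X$-module isomorphisms $(\assoc{\V}(\cF))_X\simeq\assoc{\V}(\cF\otimes_{\cA(S)}\cB)$ and likewise for $\cF'$, the functor $\assoc{\V}$ on the right being formed relative to the base $X$. Substituting these and invoking full faithfulness of $\assoc{\V}$ over $X$ (same Corollary), the $\Hom$-set becomes $\cB\Mod\big(\cF\otimes_{\cA(S)}\cB,\cF'\otimes_{\cA(S)}\cB\big)$, the direction being reversed because $\assoc{\V}$ is contravariant. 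Finally, the adjunction $(\mathord{-})_\varphi\dashv i_\varphi$ for $\varphi:\cA(S)\to\cB$ gives $\cB\Mod\big(\cF\otimes_{\cA(S)}\cB,\cF'\otimes_{\cA(S)}\cB\big)\simeq\cA(S)\Mod\big(\cF,\cF'\otimes_{\cA(S)}\cB\big)$, which is the right-hand side. Composing these isomorphisms and checking naturality in $\cB$, $\cF$ and $\cF'$ finishes the argument.

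The main obstacle is the second step: one must make sure that restricting $\assoc{\V}(\cF)$ along $i_X$ genuinely reproduces the $\assoc{\V}$-presheaf of $\cF\otimes_{\cA(S)}\cB$ formed over the new base difference scheme $X$, as $\assoc{\OO}_X$-modules and not merely as presheaves of sets. This is precisely the ``commutes with base change'' clause of the Corollary after \ref{enr-mod-psh-ff}, so the work consists only in matching the identifications carefully; everything else is a formal chain of Yoneda and adjunctions. (One could alternatively phrase the first two steps via the representability $\assoc{\V}(\cF)\simeq\h_{\produinf\bbV(\cF)}$ from \ref{V-representable}, but the route above is shorter and keeps the module structure visible throughout.)
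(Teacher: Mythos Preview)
Your proof is correct and follows essentially the same route as the paper: Yoneda, then the identification of the internal $\Hom$ at $X=\spec\cB$, then base change for $\assoc{\V}$, then full faithfulness of $\assoc{\V}$ (with the direction reversal from contravariance), and finally the base-change adjunction for difference modules. You are slightly more explicit than the paper in naming the module-structure compatibility and in writing the intermediate $\Hom$ over $\cB$ rather than over $\cA(S)$, but the argument is the same.
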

\begin{proof}
Writing $X=\spec(\cB)$, using ordinary Yoneda, the left hand side equals
\begin{multline*}
\uHom_{\assoc{\OO}_S}(\assoc{\V}(\cF'),\assoc{\V}(\cF))(X)=
\assoc{\OO}_X\Mod(\assoc{\V}(\cF')_X,\assoc{\V}(\cF)_X)\\
=\assoc{\OO}_X\Mod(\assoc{\V}(\cF'\otimes\cB),\assoc{\V}(\cF\otimes\cB))\simeq \cA(S)\Mod(\cF\otimes\cB,\cF'\otimes\cB)\\
=\cA(S)\Mod(\cF,\cF'\otimes\cB).
\end{multline*}
\end{proof}

\subsection{Difference group modules}

Let $S$ be a difference scheme, let $G$ be an $S$-group, and let $\cF$ be an $\cA(S)$-module. 

\begin{definition}\label{def-G-mod}
A structure of an enriched $G\da\cA(S)_\infty$-module on $\cF$ is given by a structure of a $\h_G\da\OO_S$-module on $\W(\cF)$. The internal hom of two $G\da\cA(S)_\infty$-modules is their internal hom in the category $\h_G\da\OO_S\Mod$. Thus we obtain a full $\diff\Set$-subcategory
$$
G\da\cA(S)_\infty\Mod
$$
of $\h_G\da\OO_S\Mod$.
\end{definition}

Equivalently, taking the fixed points of the isomorphism in \ref{enr-gp-mod},  a structure of a $G\da\cA(S)_\infty$-module on $\cF$ is determined by a morphism of $\widehat{\difuinf\Sch_{\ov S}}$-groups
$$
\rho:\h_G\to \uAut_{\OO_S}[\W(\cF)].
$$
%Writing $G$, where $\cA$ is a difference Hopf algebra over $\cA(S)$, 
This is, using \ref{diff-map-into-uhom}, equivalent to a choice of an $\cA(S)$-module map
$$
\mu:\cF\to \cF\otimes\cA(G),
$$
which makes $\cF$ into a difference comodule for the difference Hopf algebra $\cA(G)$. Given two $G\da\cA(S)_\infty$-comodules $\cF$, $\cF'$, the internal hom object $G\da\cA(S)_\infty\Mod(\cF,\cF')$ corresponds to the internal hom object $\cA(G)_\infty\Comod(\cF,\cF')$, which is obtained as the equaliser
\begin{center}
 \begin{tikzpicture} 
\matrix(m)[matrix of math nodes, row sep=2em, column sep=1.5em, text height=1.5ex, text depth=0.25ex]
 {
  	&	& |(2)|{[\cF\otimes\cA(G),\cF'\otimes\cA(G)]}  & 	\\
 |(l1)|{\cA(G)_\infty\Comod(\cF,\cF')} & |(l2)|{[\cF,\cF']}	& 	& |(l3)|{[\cF,\cF'\otimes\cA(G)]} 	\\
 }; 
\path[->,font=\scriptsize,>=to, thin]
(l1) edge (l2)
(l2) edge node[above,pos=.4]{$$} (2) edge node[below]{$\mu_{\cF',*}$}   (l3)
(2) edge node[above,pos=.7]{$\mu_{\cF}^*$} (l3) 
;
\end{tikzpicture}
\end{center}
so we conclude the following.

\begin{proposition}\label{enr-eq-G-mod}
There is an equivalence of $\cA(S)\Mod$-categories
$$
G\da\cA(S)_\infty\Mod\simeq \cA(G)_\infty\Comod.
$$
\end{proposition}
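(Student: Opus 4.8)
\textbf{Proof plan for Proposition~\ref{enr-eq-G-mod}.} The plan is to assemble the equivalence from the pieces already established in the excerpt, with the main work being to verify that the $\cA(S)\Mod$-enriched structures on the two sides match. First I would recall the chain of identifications leading from a $G\da\cA(S)_\infty$-module structure on $\cF$ to a difference comodule structure. By Definition~\ref{def-G-mod}, such a structure is a $\h_G\da\OO_S$-module structure on $\W(\cF)$, which by the enriched version of \ref{V-actions} (equivalently, by the argument in \ref{enr-gp-mod} and \ref{enr-actions}) is the same as a morphism of $\widehat{\difuinf\Sch_{\ov S}}$-groups $\rho\colon\h_G\to\uAut_{\OO_S}[\W(\cF)]$. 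Applying \ref{diff-map-into-uhom} with $\cB=\cA(G)$ (and using the full faithfulness of $\W$ from \ref{enr-mod-psh-ff}(\ref{dva}) to see that automorphisms of $\W(\cF)$ correspond to automorphisms of $\cF\otimes\cA(G)$ respecting the structure), this morphism corresponds to an $\cA(S)$-module map $\mu\colon\cF\to\cF\otimes_{\cA(S)}\cA(G)$. The coassociativity and counit axioms for $\mu$ (making $\cF$ a difference $\cA(G)$-comodule) translate exactly into the group-action axioms for $\rho$; this is a diagram chase that I would carry out using the Hopf algebra structure maps $\Delta$, $\epsilon$, $\tau$ on $\cA(G)$ and the naturality of the isomorphism in \ref{diff-map-into-uhom}. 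This gives a bijection between structures, hence a bijection on objects of the two categories.

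Next I would establish the equivalence at the level of internal hom objects. Unravelling Definition~\ref{def-G-mod}, the internal hom $G\da\cA(S)_\infty\Mod(\cF,\cF')$ is by definition the internal hom of $\W(\cF)$ and $\W(\cF')$ in $\h_G\da\OO_S\Mod$, which by \ref{group-modules} (specialised as in \ref{enr-gp-mod}) is the equaliser of the two maps $[\W(\cF),\W(\cF')]\rightrightarrows[\h_G\times\W(\cF),\W(\cF')]$ coming from the actions. Using \ref{anti-isom} to identify $\OO_S\Mod[\W(\cF),\W(\cF')]\simeq\W([\cF,\cF']_{\cA(S)})$ and \ref{diff-map-into-uhom} to identify the terms involving $\h_G$ with $[\cF,\cF'\otimes\cA(G)]$, this equaliser becomes precisely the equaliser diagram displayed just before the statement, namely the one defining $\cA(G)_\infty\Comod(\cF,\cF')$. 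Here I must check that the two parallel arrows $\mu_{\cF',*}$ and $\mu_\cF^*$ correspond to the two action-derived maps under these identifications — again a naturality check, where the subtlety is getting the "twisted" map $\mu_\cF^*$ right, since it involves precomposition with the coaction and uses the Hopf comultiplication of $\cA(G)$. Compatibility of composition and identities is then automatic because all identifications used ($\W$, \ref{anti-isom}, \ref{diff-map-into-uhom}) are natural and functorial.

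Finally I would note that the construction is manifestly $\cA(S)\Mod$-enriched: the functor $\W$ is an $\cA(S)\Mod$-functor by \ref{diff-modules}, the equali, constructions defining internal homs are taken in $\cA(S)\Mod$-categories, and all the isomorphisms invoked (\ref{anti-isom}, \ref{diff-map-into-uhom}) are isomorphisms of $\cA(S)\Mod$-presheaves or objects. Hence the object bijection together with the internal-hom isomorphisms, compatible with composition, assemble into an equivalence of $\cA(S)\Mod$-categories $G\da\cA(S)_\infty\Mod\simeq\cA(G)_\infty\Comod$.

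\textbf{Main obstacle.} The genuine difficulty is not conceptual but bookkeeping: verifying that the two parallel arrows in the equaliser defining $\cA(G)_\infty\Comod(\cF,\cF')$ are exactly the images of the two action-maps defining $\h_G\da\OO_S\Mod(\W\cF,\W\cF')$ under the identifications \ref{anti-isom} and \ref{diff-map-into-uhom}. This requires tracking the Hopf algebra structure of $\cA(G)$ through several adjunctions and through the (anti-)isomorphism $\W(\cF^\vee)\simeq\V(\cF)$, and making sure the contravariance of $\V$ versus covariance of $\W$ does not introduce a spurious antipode. The finite-\'etale hypothesis is not needed for the statement, so I would be careful to use only the general (possibly non-iso) morphisms of \ref{anti-isom} and the genuine isomorphism of \ref{diff-map-into-uhom}, which holds for arbitrary $\cA(S)$-algebras $\cB$.
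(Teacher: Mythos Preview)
Your proposal is essentially correct and follows the same route as the paper, which treats the proposition as an immediate consequence of the discussion preceding it (object-level correspondence via Definition~\ref{def-G-mod} and \ref{diff-map-into-uhom}, hom-level correspondence via the displayed equaliser). You are simply filling in the bookkeeping the paper leaves implicit.

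One point deserves correction. Your invocation of \ref{anti-isom} to identify the presheaf $\OO_S\Mod[\W(\cF),\W(\cF')]$ with $\W([\cF,\cF']_{\cA(S)})$ is a detour that does not work in general: that map is only an isomorphism under finite-\'etale hypotheses, as you yourself note. The paper avoids this entirely. The equivalence is asserted at the level of $\cA(S)\Mod$-valued homs, not presheaf-valued ones, so the right move is to apply the global-sections functor $\Gamma$ (which is left-exact, hence preserves the equaliser defining the $\h_G\da\OO_S$-module hom) \emph{first}, and then use \ref{enr-mod-psh-ff}(\ref{dva}) to identify $\OO_S\Mod(\W\cF,\W\cF')\simeq[\cF,\cF']_{\cA(S)}$ and \ref{diff-map-into-uhom} with $\cB=\cA(G)$ for the other term. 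This matches the paper's displayed equaliser directly and sidesteps the finite-\'etale issue altogether. Once you make this adjustment, the remaining verifications (that the two parallel arrows correspond to $\mu_{\cF',*}$ and the composite through $\mu_\cF^*$) are exactly the naturality checks you describe.
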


%\begin{corollary}
%If the affine difference algebraic group $G$ is flat over $S$,  then the $\diff\Set$-category $G\da\cA(S)_\infty\Mod$ (equivalently, $\cA(G)_\infty\Comod$), is enriched abelian.
%\end{corollary}
 
\begin{remark}
If $\cF$ is an $\cA(S)$-module, a homomorphism 
$\rho:\h_G\to\uAut_{\OO_S}[\W(\cF)]$ gives rise to the diagram
 \begin{center}
 \begin{tikzpicture} 
\matrix(m)[matrix of math nodes, row sep=2em, column sep=3em, text height=1.5ex, text depth=0.25ex]
 {
 |(1)|{\h_G}		& |(2)|{\uAut_{\OO_S}[\W(\cF)]} 	\\
 |(l1)|{\h_G}		& |(l2)|{\uAut_{\OO_S}[\V(\cF)]} 	\\
 }; 
\path[->,font=\scriptsize,>=to, thin]
(1) edge node[above]{$\rho$} (2) 
(l1) edge node[left]{$$}   (1)
(2) edge node[right]{$$} (l2) 
(l1) edge  node[above]{$\rho^\vee$} (l2);
\end{tikzpicture}
\end{center}
where the left vertical arrow is the inverse of $G$, and the right vertical arrow is the anti-isomorphism obtained using \ref{anti-isom}. The contragredient representation $\rho^\vee$ makes $\V(\cF)$ into a $\h_G\da\OO_S$-module whenever $\rho$ makes $\W(\cF)$ into one. 

Given that the functor $\cV$ is contravariant, the category of $\cA(S)$-modules which are given the $\h_G\da\OO_S$-module structure through $\V(\cF)$ is equivalent to
$$(\cA(G)_\infty\Comod)^\circ.$$
\end{remark} 
 
\begin{definition}
A structure of a $G\da\cA(S)$-module on $\cF$ is given by a structure of a $\assoc{\h}_G\da\assoc{\OO}_S$-module on $\assoc{\W}(\cF)$. On the other hand, a structure of a $G\da\cA(S)^\circ$-module on $\cF$ is given by a structure of a $\assoc{\h}_G\da\assoc{\OO}_S$-module on $\assoc{\V}(\cF)$. Thus we obtain (ordinary) categories
$$
G\da\cA(S)\Mod \ \ \ \text{ and } \ \ \ \ G\da\cA(S)^\circ\Mod.
$$
\end{definition}

\begin{proposition}
There is an equivalence of categories 
$$
G\da\cA(S)^\circ\Mod\simeq \cA(G)\Comod^\circ.
$$
\end{proposition}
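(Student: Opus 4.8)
The plan is to repeat the argument behind Proposition~\ref{enr-eq-G-mod}, but with the covariant embedding $\W$ replaced by the contravariant embedding $\assoc{\V}$, with \ref{diff-map-into-uhom} replaced by \ref{map-into-uhom-v}, and with careful bookkeeping of the reversal of variance. First I would unwind the definition: a structure of $G\da\cA(S)^\circ$-module on an $\cA(S)$-module $\cF$ is, by definition, a structure of $\assoc{\h}_G\da\assoc{\OO}_S$-module on $\assoc{\V}(\cF)$, which by the translation used in \ref{V-actions} and \ref{enr-actions} amounts to a morphism of $\widehat{\diff\Sch_{\ov S}}$-groups $\rho\colon\assoc{\h}_G\to\uAut_{\assoc{\OO}_S}(\assoc{\V}(\cF))\subseteq\uEnd_{\assoc{\OO}_S}(\assoc{\V}(\cF))$.

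Next, by ordinary Yoneda together with \ref{map-into-uhom-v} applied with $\cF'=\cF$ and $B=\cA(G)$, giving such a $\rho$ is the same as giving an $\cA(S)$-module map $\mu\colon\cF\to\cF\otimes_{\cA(S)}\cA(G)$ subject to conditions extracted from the group-homomorphism axioms: compatibility of $\rho$ with the identity section $S\to G$ translates, via $\epsilon\colon\cA(G)\to\cA(S)$, into the counit axiom $(\id\otimes\epsilon)\circ\mu=\id_\cF$, while compatibility with the multiplication $m\colon G\times_SG\to G$ translates, via $\Delta\colon\cA(G)\to\cA(G)\otimes_{\cA(S)}\cA(G)$, into coassociativity. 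Because $\assoc{\V}$ is contravariant, composition in $\uEnd_{\assoc{\OO}_S}(\assoc{\V}(\cF))$ corresponds to composition of $\cA(S)$-module endomorphisms in the opposite order, so the resulting coassociativity square is exactly the one placing $(\cF,\mu)$ in $\cA(G)\Comod^\circ$ rather than in $\cA(G)\Comod$; equivalently, one may route the argument through the contragredient $\rho^\vee$ of the Remark following \ref{enr-eq-G-mod}, using the anti-isomorphism of \ref{anti-isom} and the inversion on $G$, and then invoke \ref{enr-eq-G-mod}. Invertibility of $\rho$ is automatic once it is a group homomorphism, so $\cF\mapsto(\cF,\mu)$ is a bijection on objects between $G\da\cA(S)^\circ\Mod$ and $\cA(G)\Comod^\circ$.

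For morphisms, a map $\cF\to\cF'$ in $G\da\cA(S)^\circ\Mod$ is an $\assoc{\h}_G\da\assoc{\OO}_S$-module morphism $\assoc{\V}(\cF)\to\assoc{\V}(\cF')$; since $\assoc{\V}$ is fully faithful and contravariant (the Corollary after \ref{enr-mod-psh-ff}), such morphisms correspond to $\cA(S)$-module morphisms $\cF'\to\cF$, and unwinding the $G$-equivariance condition via \ref{map-into-uhom-v} shows it corresponds precisely to being a morphism of $\cA(G)$-comodules. Hence $G\da\cA(S)^\circ\Mod(\cF,\cF')\simeq\cA(G)\Comod(\cF',\cF)=\cA(G)\Comod^\circ(\cF,\cF')$, naturally in both variables, and these identifications assemble into the asserted equivalence of categories.

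The main obstacle is the variance bookkeeping: one must verify that the two independent sources of contravariance — that of $\assoc{\V}$ on modules and the order reversal of the tensor factors of $\Delta$ coming from reading composition in $\uEnd$ backwards — combine to yield exactly $\cA(G)\Comod^\circ$, and not comodules over the Hopf-opposite of $\cA(G)$ or some hybrid. Making the isomorphism of \ref{map-into-uhom-v} compatible with the monoid structure on $\uEnd_{\assoc{\OO}_S}(\assoc{\V}(\cF))$ on one side and with the obvious monoid structure on $\cA(S)\Mod(\cF,\cF\otimes\cA(G))$ on the other is the one step requiring genuine care; the counit axiom, the invertibility, and the passage to morphisms are then routine diagram chases modelled on the proof of \ref{enr-eq-G-mod}.
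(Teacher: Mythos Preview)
Your proposal is correct and follows exactly the approach the paper indicates: parallel the argument of Proposition~\ref{enr-eq-G-mod}, replacing \ref{diff-map-into-uhom} by \ref{map-into-uhom-v} and tracking the contravariance of $\assoc{\V}$. The paper's own proof is a single sentence to this effect; your version supplies the details that the paper leaves implicit, including the variance bookkeeping and the handling of morphisms via the full faithfulness of $\assoc{\V}$.
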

\begin{proof}
The proof is parallel to the proof of \ref{enr-eq-G-mod}, using \ref{map-into-uhom-v} in place of \ref{diff-map-into-uhom}.
\end{proof}

\begin{corollary}\label{flat-comod-abelian}
Suppose that the affine difference algebraic group $G$ is flat over $S$ (i.e., $\forg{\cA(G)}$ is flat over $\forg{\cA(S)}$. Then the category $G\da\cA(S)_\infty\Mod$ (equivalent to $\cA(G)_\infty\Comod$) is enriched abelian. Moreover, the category $G\da\cA(S)^\circ\Mod$ (equivalent to $\cA(G)\Comod^\circ$) is abelian.
\end{corollary}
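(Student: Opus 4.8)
The plan is to reduce the enriched-abelian and abelian statements to the corresponding statements about module categories over a ring object in a topos, for which the machinery is already available. The key observation is that $\cA(G)_\infty\Comod$ is, via \ref{enr-eq-G-mod}, equivalent as an $\cA(S)\Mod$-category to $G\da\cA(S)_\infty\Mod$, which sits inside $\h_G\da\OO_S\Mod$ by \ref{def-G-mod}. So the first step is to recognise $\h_G\da\OO_S\Mod$ as a category of modules in a ringed topos. Working in the topos $\cE=\widehat{\difuinf\Sch_{\ov S}}$ of $\diff\Set$-presheaves (equivalently internal presheaves, via the internalisation of \ref{enrich-groth-constr}), the group presheaf $\h_G$ is a $\cE$-group, and $\OO_S$ is a $\cE$-ring; by \ref{monoid-action} and \ref{group-modules} the category of $\h_G\da\OO_S$-modules is $\OO_S[\h_G]\Mod\simeq \OO_S\Mod(\bB\h_G)$, i.e.\ the category of modules over the ring object $\OO_S[\h_G]$ in the ringed topos $(\bB\h_G,\OO_S)$, exactly as in \ref{gp-coh-top}. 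Hence by the general principles of \ref{ringed-top} (which apply to any ringed Grothendieck topos) $\h_G\da\OO_S\Mod$ is an abelian category satisfying (AB5) and (AB3), with enough injectives.

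The second step is to cut down from $\h_G\da\OO_S\Mod$ to the full subcategory $G\da\cA(S)_\infty\Mod$ and show it is again abelian (indeed an abelian subcategory, closed under kernels and cokernels). This is where the flatness hypothesis enters. By \ref{enr-mod-psh-ff}(\ref{dva}) the enriched functor $\W:\cA(S)\Mod\to \OO_S\Mod$ is fully faithful and commutes with base change, and by \ref{enr-mod-psh-ff}(3) it is additive. A module $\cF$ lies in $G\da\cA(S)_\infty\Mod$ precisely when $\W(\cF)$ carries an $\h_G\da\OO_S$-structure, equivalently (via \ref{diff-map-into-uhom}) a comodule map $\mu:\cF\to\cF\otimes_{\cA(S)}\cA(G)$. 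The subcategory is therefore identified with $\cA(G)_\infty\Comod$, and one must check that kernels and cokernels of comodule maps, computed in $\cA(S)\Mod$, again carry a comodule structure compatibly. Kernels always do (no hypothesis needed, since $\mathord{-}\otimes\cA(G)$ preserves kernels of maps it is applied to via the comodule map, by an elementary diagram chase). Cokernels require that $\mathord{-}\otimes_{\cA(S)}\cA(G)$ be right exact — automatic — \emph{and} that it preserve the relevant kernels/images so that the comodule axioms pass to the quotient; this is exactly guaranteed when $\forg{\cA(G)}$ is flat over $\forg{\cA(S)}$, since then $\mathord{-}\otimes_{\cA(S)}\cA(G)$ is exact on underlying modules and the difference structure is carried along by naturality. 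One then verifies the embedding $\cA(G)_\infty\Comod\hookrightarrow\h_G\da\OO_S\Mod$ reflects and preserves exactness, so that $\cA(G)_\infty\Comod$ inherits the abelian structure, and since by \ref{enr-ab-cat} (together with the tensored/cotensored structure of enriched modules from \ref{diff-mod-enough-enr-inj-proj} and the fact that $\cA(S)\Mod$ is monoidal closed) the enriched hom-objects land in $\cA(S)\Mod$, the category is $\cA(S)$-abelian in the sense of \ref{enr-ab-cat}.

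The third step handles the non-enriched assertion about $G\da\cA(S)^\circ\Mod\simeq\cA(G)\Comod^\circ$. Since a category is abelian if and only if its opposite is, it suffices to show $\cA(G)\Comod$ is abelian, and this follows by applying the global-sections functor $\Fix$ (equivalently $\assoc{(\mathord{-})}$) to the enriched picture: by the corollary to \ref{enr-mod-psh-ff}, $\assoc{\W}$ commutes with base change and is additive, and the same flatness argument shows kernels and cokernels of $\assoc{\W}$-comodule maps stay within $\cA(G)\Comod$. Alternatively one invokes \ref{enrich-global-sect}/\ref{Gamma-exact}: $\Fix$ is exact, so the abelian structure on $\cA(G)_\infty\Comod$ descends to its underlying category $\cA(G)\Comod$.

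I expect the main obstacle to be the second step: verifying cleanly that the full subcategory of comodules is closed under cokernels inside the ambient module category, and that the comodule structure maps descend. The flatness hypothesis is precisely what makes $\mathord{-}\otimes_{\cA(S)}\cA(G)$ exact so that the counit/coassociativity diagrams for the quotient comodule commute; without it one only gets right-exactness and the argument breaks. The bookkeeping of keeping track simultaneously of the $\cA(S)$-module structure, the $\sigma$-structure, and the comodule coaction — i.e.\ checking that all three are respected by the induced maps on kernels and cokernels — is routine but is the part that requires care, and is the reason the statement is phrased with the flatness assumption.
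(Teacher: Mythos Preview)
Your approach is correct in outline but considerably more elaborate than the paper's. The paper's proof is a single line: it invokes the classical fact that $\forg{\cA(G)}\Comod$ is abelian when $\forg{\cA(G)}$ is flat over $\forg{\cA(S)}$, and observes that the difference structure does not interfere with the abelian structure. Your Steps~1--3 essentially unpack that classical fact, and the topos-theoretic framing in Step~1 (embedding into $\h_G\da\OO_S\Mod$ as modules in a ringed topos) is extra scaffolding that is not needed, since the substance lies entirely in Step~2.

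Two slips worth noting. First, in Step~2 you have the roles of kernels and cokernels reversed: \emph{cokernels} of comodule maps always inherit a comodule structure because $\mathord{-}\otimes_{\cA(S)}\cA(G)$ is right exact, while \emph{kernels} are the ones requiring flatness. For $K=\ker(f\colon M\to N)$, the restriction $\mu_M|_K$ lands in $\ker(f\otimes\id)$, and one needs flatness to identify this with $K\otimes\cA(G)$. Second, in Step~3 your alternative argument that ``$\Fix$ is exact'' is false: by \ref{RFix} we have $\der^1\Fix\neq 0$. The reference \ref{Gamma-exact} concerns the global sections functor on a presheaf category with terminal object, not $\Fix$ on $\diff\Ab$. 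Your primary argument in Step~3 (repeating the flatness reasoning for $\assoc{\W}$) is fine, and in any case the underlying ordinary category of an enriched abelian category is abelian by definition, so no further work is required there.
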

\begin{proof}
It is a classical fact that $\forg{\cA(G)}\Comod$ is abelian when $\forg{\cA(G)}$ is flat over $\forg{\cA(S)}$. We simply note that the difference structure does not interfere with the abelian structure.
\end{proof}

\subsection{Induced difference group modules}

Let $S$ be a difference scheme and let $G$ be a difference $S$-group. Let us write $\Delta:\cA(G)\to\cA(G)\otimes\cA(G)$ for the comultiplication, and $\eta:\cA(G)\to\cA(S)$ for the counit of the difference Hopf algebra $\cA(G)$. 

Given an $\cA(S)$-module $\cP$, we let
$$
\ind(\cP)=\cP\otimes_{\cA(S)}\cA(G),
$$
with the $\cA(G)$-comodule structure given by 
$$
\id_\cP\otimes\Delta:\cP\otimes_{\cA(S)}\cA(G)\to \cP\otimes_{\cA(S)}\cA(G)\otimes_{\cA(S)}\cA(G).
$$
This defines a $\diff\Ab$-functor 
$$
\ind:\cA(S)_\infty\Mod\to G\da\cA(S)_\infty\Mod.
$$
This construction is related to the more general construction from \ref{ind-gp-mod} via
$$
\W(\ind(\cP))\simeq E(\W(\cP))=[\h_G,\W(\cP)].
$$
Through this identification, the morphism $\varepsilon:E(\W(\cP))\to\W(\cP)$ corresponds to the morphism $\id_\cP\otimes\eta:\ind(\cP)\to\cP$. 

We know by \ref{enr-mod-psh-ff} that the enriched functor $\W:\cA(S)_\infty\Mod\to \OO_S\Mod$ is fully faithful. On the other hand, by definition \ref{def-G-mod}, its restriction to $G\da\cA(S)_\infty\Mod$ is also fully faithful, i.e., for any $\cM,\cM'\in G\da\cA(S)_\infty\Mod$, there is an enriched natural isomorphism
$$
G\da\cA(S)_\infty\Mod(\cM,\cM')\simeq \h_G\da\OO_S\Mod(\W(\cM),\W(\cM')).
$$
It would be straightforward to prove it directly, but we can now draw the following conclusions using \ref{E-radj-forg} and \ref{enough-G-enr-inj}.
\begin{corollary}
The functor $\ind$ is an enriched right adjoint to the forgetful $\diff\Ab$-functor $G\da\cA(S)_\infty\Mod\to \cA(S)_\infty\Mod$. More precisely, we have isomorphisms
$$
G\da\cA(S)_\infty\Mod(\cM,\ind(\cP))\simeq \cA(S)_\infty\Mod(\cM,\cP),
$$
enriched natural in $\cM\in G\da\cA(S)_\infty\Mod$ and $\cP\in\cA(S)_\infty\Mod$.
\end{corollary}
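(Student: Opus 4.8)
The statement to prove is the adjunction
$$
G\da\cA(S)_\infty\Mod(\cM,\ind(\cP))\simeq \cA(S)_\infty\Mod(\cM,\cP),
$$
enriched natural in $\cM\in G\da\cA(S)_\infty\Mod$ and $\cP\in\cA(S)_\infty\Mod$. The plan is to deduce this from the general enriched right adjointness of the functor $E$ to the forgetful functor established in Lemma~\ref{E-radj-forg}, transported along the fully faithful enriched functor $\W$ of \ref{enr-mod-psh-ff}(\ref{dva}) and the equivalence $G\da\cA(S)_\infty\Mod\simeq\cA(G)_\infty\Comod$ of \ref{enr-eq-G-mod}.

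First I would make explicit the identification $\W(\ind(\cP))\simeq E(\W(\cP))=[\h_G,\W(\cP)]$ already alluded to in the paragraph preceding the statement: by definition $\ind(\cP)=\cP\otimes_{\cA(S)}\cA(G)$ with the $\cA(G)$-comodule structure $\id_\cP\otimes\Delta$, and applying $\W$ pointwise, for an $S$-scheme $X$ one has $\W(\ind(\cP))(X)=\cP\otimes_{\cA(S)}\cA(G)\otimes_{\cA(S)}\cA(X)$, which by \ref{diff-map-into-uhom} is naturally the value $[\h_G,\W(\cP)](X)$, and the coaction $\id_\cP\otimes\Delta$ corresponds precisely to the $\h_G$-action on $[\h_G,\W(\cP)]$ coming from the multiplication $\mu_G:\bG\times\bG\to\bG$, i.e.\ to the structure $E(\W(\cP))$ carries as a $\h_G\da\OO_S$-module per the Definition preceding \ref{E-radj-forg}. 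I would also check that under this identification the counit $\varepsilon:E(\W(\cP))\to\W(\cP)$ matches $\W(\id_\cP\otimes\eta):\W(\ind(\cP))\to\W(\cP)$, using that $\eta$ is the counit of $\cA(G)$ and that $e_\bG$ corresponds to $\eta$ under the $\spec(-)$ antiequivalence.

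Next I would run the chain of enriched isomorphisms. By the fully faithfulness of $\W$ restricted to $G$-modules (which is how $G\da\cA(S)_\infty\Mod$ was \emph{defined} as a full $\diff\Set$-subcategory of $\h_G\da\OO_S\Mod$ in Definition~\ref{def-G-mod}), we have
$$
G\da\cA(S)_\infty\Mod(\cM,\ind(\cP))\simeq \h_G\da\OO_S\Mod(\W(\cM),\W(\ind(\cP)))\simeq \h_G\da\OO_S\Mod(\W(\cM),E(\W(\cP))).
$$
Here $\h_G\da\OO_S$ plays the role of $\bG\da\bO$ in the setting of \ref{enr-gp-mod}/\ref{ind-gp-mod} with $\hC=\widehat{\difuinf\Sch_{\ov S}}$ (a $\cV$-category of $\cV$-presheaves for $\cV=\diff\Set$, so Lemma~\ref{E-radj-forg} applies verbatim, with $\bP=\W(\cP)$). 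Lemma~\ref{E-radj-forg} then gives
$$
\h_G\da\OO_S\Mod(\W(\cM),E(\W(\cP)))\simeq \OO_S\Mod(\W(\cM),\W(\cP)),
$$
and fully faithfulness of $\W$ on all $\cA(S)$-modules (\ref{enr-mod-psh-ff}(\ref{dva})) identifies the right-hand side with $\cA(S)_\infty\Mod(\cM,\cP)$. Composing yields the claimed isomorphism, and naturality in both variables is inherited from the naturality in Lemma~\ref{E-radj-forg} together with functoriality of $\W$.

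The main obstacle I anticipate is not the abstract adjunction shuffle but the bookkeeping in the first step: verifying that the module structures genuinely match, i.e.\ that the $\h_G$-action on $[\h_G,\W(\cP)]$ defined via $\mu_G$ in the general framework of \ref{ind-gp-mod} is carried by $\W$ to the comodule structure $\id_\cP\otimes\Delta$ on $\cP\otimes\cA(G)$ — this requires unwinding the several adjunctions packaged into \ref{diff-map-into-uhom} and the dictionary of \ref{enr-eq-G-mod} between $\h_G\da\OO_S$-module maps and difference comodule maps, and tracking the difference operators carefully (the $\diff\Set$-enrichment means every hom-object carries a shift that must be respected at each stage). Once that compatibility is pinned down, together with the counit identification, the rest is a formal consequence of results already in the paper, and the underlying (non-enriched) adjunction statement follows by applying $\Fix$ throughout.
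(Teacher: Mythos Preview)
Your proposal is correct and follows precisely the approach the paper has in mind: the paper states the identification $\W(\ind(\cP))\simeq E(\W(\cP))$ (with the counit correspondence) just before the Corollary, records the enriched fully faithfulness of $\W$ on both $\cA(S)_\infty\Mod$ and $G\da\cA(S)_\infty\Mod$, and then simply cites Lemma~\ref{E-radj-forg}. Your chain of isomorphisms is exactly the intended unpacking, and your flagged ``bookkeeping obstacle'' about matching the $\h_G$-action with $\id_\cP\otimes\Delta$ is the one verification the paper leaves implicit.
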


\begin{corollary}\label{G-mod-enough-enr-inj}
If $\cI$ is an enriched injective object of $\cA(S)_\infty\Mod$, then $\ind(\cI)$ is an enriched injective object of $G\da\cA(S)_\infty\Mod$. Consequently, $G\da\cA(S)_\infty\Mod$ has enough enriched injectives.
\end{corollary}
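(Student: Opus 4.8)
The plan is to transcribe the proof of Proposition~\ref{enough-G-enr-inj} to the present setting, the functor $E=[\h_G,\mathord{-}]$ used there being replaced by $\ind$; this is legitimate in view of the identification $\W(\ind(\cP))\simeq E(\W(\cP))$ recorded above and the enriched adjunction from the preceding corollary. Throughout I would work, as in \ref{flat-comod-abelian}, under the running hypothesis that $G$ is flat over $S$, so that $G\da\cA(S)_\infty\Mod\simeq\cA(G)_\infty\Comod$ is enriched abelian and, crucially, the forgetful $\diff\Ab$-functor $F\colon G\da\cA(S)_\infty\Mod\to\cA(S)_\infty\Mod$ is exact, kernels and cokernels of comodules being computed on underlying modules. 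Recall also that $\cA(S)_\infty\Mod$ is, unconditionally, enriched abelian with enough enriched injectives by \ref{diff-mod-enough-enr-inj-proj}.

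First I would record the adjunction data. By the preceding corollary, $F$ is enriched left adjoint to $\ind$, with unit $\eta\colon\id\to\ind F$ and counit $\varepsilon\colon F\ind\to\id$; since $F$ is faithful and the triangle identity gives $\varepsilon_{FM}\circ F(\eta_M)=\id_{FM}$, the morphism $F(\eta_M)$ is split monic, hence $\eta_M$ is a monomorphism in $G\da\cA(S)_\infty\Mod$ for every $M$. (Under $\W$, the counit $\varepsilon$ is $\id\otimes\eta_{\cA(G)}$ and $\eta_M$ the coaction, so this is also visible directly.)

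Next, given an enriched injective $\cI\in\cA(S)_\infty\Mod$, I claim $\ind(\cI)$ is enriched injective in $G\da\cA(S)_\infty\Mod$. For a short exact sequence $0\to\cA\to\cB\to\cC\to 0$ there, exactness of $F$ makes it exact in $\cA(S)_\infty\Mod$; applying the exact $\diff\Ab$-functor $\cA(S)_\infty\Mod(\mathord{-},\cI)$ and then the enriched adjunction isomorphism $G\da\cA(S)_\infty\Mod(\mathord{-},\ind(\cI))\simeq\cA(S)_\infty\Mod(F(\mathord{-}),\cI)$ shows $G\da\cA(S)_\infty\Mod(\mathord{-},\ind(\cI))$ sends it to a short exact sequence, i.e.\ $\ind(\cI)$ is $(G\da\cA(S)_\infty\Mod)$-injective; the same argument with the underlying ordinary adjunction gives $(G\da\cA(S)_\infty\Mod)_0$-injectivity, so $\ind(\cI)$ is enriched injective. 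For the last assertion, let $\cM\in G\da\cA(S)_\infty\Mod$ be arbitrary, pick by \ref{diff-mod-enough-enr-inj-proj} a monomorphism $F(\cM)\hookrightarrow\cI$ into an enriched injective, and form the composite $\cM\xrightarrow{\eta_\cM}\ind F(\cM)\to\ind(\cI)$; it is a monomorphism — $\eta_\cM$ is one by the previous paragraph and $\ind$, being a right adjoint, preserves monomorphisms — into an enriched injective object.

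The argument is formal, so the only genuine points to verify are the two I have flagged: exactness of the forgetful functor $F$, which rests on the flatness of $G$ over $S$ and on \ref{flat-comod-abelian} (the abelian structure of $\cA(G)_\infty\Comod$ being inherited from underlying modules), and monicity of the unit $\eta$, which follows from the triangle identity together with faithfulness of $F$. The only mildly tedious step is matching $\W\circ\ind$ with $E$ precisely enough to quote the proof of \ref{enough-G-enr-inj} verbatim, but this has essentially been carried out in the paragraph preceding the statement.
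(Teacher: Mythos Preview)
Your proposal is correct and takes essentially the same approach as the paper: both transcribe the argument of Proposition~\ref{enough-G-enr-inj} to this setting via the enriched adjunction $F\dashv\ind$ and the identification $\W\circ\ind\simeq E\circ\W$. The paper's proof is a one-liner deferring to \ref{enough-G-enr-inj} and \ref{diff-mod-enough-enr-inj-proj}; you have spelled out the details (exactness of the forgetful functor, monicity of the unit) and been more careful to flag the flatness hypothesis needed for $G\da\cA(S)_\infty\Mod$ to be abelian, which the paper leaves implicit.
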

\begin{proof}
We note that, by \ref{diff-mod-enough-enr-inj-proj}, $\cA(S)_\infty\Mod$ has enough enriched injectives. 
\end{proof}

\subsection{Cohomology of difference algebraic groups}

The most general theory of cohomology of (enriched) difference algebraic groups, is obtained by specialising the context of \ref{enr-gp-coh} to the following:
\begin{enumerate}
\item $\cV=\diff\Set$, the cartesian closed category of difference sets, with internal homs coming from $\diffinf\Set$ (when we think of $\cV$ as enriched over itself, it may be useful to think of $\cV$ as $\diffinf\Set$, with the underlying category $\cV_0=\diff\Set$);
\item $\Vab=\diff\Ab$, the category of difference abelian groups;
\item $\cC=\difuinf\Sch$, the $\cV$-category of enriched difference schemes, whose underlying category is $\cC_0=\diff\Sch$;
\end{enumerate}

Hence, if  $\bG:(\difuinf\Sch)^\circ\to \diffinf\Set$ is a group $\diff\Set$-presheaf, 
$\bO:(\difuinf\Sch)^\circ\to \diffinf\Set$ is a ring $\diff\Set$-presheaf, and $\bF$ is a $\bG\da\bO$-module, \ref{enr-gp-coh} gives meaning to enriched cohomology difference groups
$$
\HH^n(\bG,\bF)\in\diff\Ab\ \ \ \text{and}\ \ \ \uH^n(\bG,\bF)\in\bO\Mod.
$$

If  $\bar{\bG}:(\diff\Sch)^\circ\to \Set$ is a group presheaf, 
$\bar{\bO}:(\diff\Sch)^\circ\to \Set$ is a ring presheaf, and $\bar{\bF}$ is a $\bar{\bG}\da\bar{\bO}$-module, the classical context of \ref{group-coh} gives meaning to  cohomology groups
$$
\HH^n(\bar{\bG},\bar{\bF})\in\Ab\ \ \ \text{and}\ \ \ \uH^n(\bar{\bG},\bar{\bF})\in\bO\Mod.
$$

The spectral sequence relating the enriched  cohomology difference groups $\HH^n(\bG,\bF)$ (resp.\ $\uH^n(\bG,\bF)$) and the cohomology groups of the associated difference group presheaves
$\HH^n(\assoc{\bG},\assoc{\bF})$ (resp.\ $\uH^n(\assoc{\bG},\assoc{\bF})$) is given in \ref{coh-assoc}.

\subsection{Cohomology of enriched difference group modules}

Let $S$ be a difference scheme, and let $G$ be a difference group scheme over $S$, and let $\cF\in G\da\cA(S)_\infty\Mod$. We define the enriched difference cohomology groups of $G$ with values in $\cF$ by
$$
\HH^n(G,\cF)=\HH^n(\h_G,\W(\cF)).
$$
Using \ref{diff-map-into-uhom}, these are the cohomology groups of the complex $\Ch^*(G,\cF)$ given by
$$
\Ch^n(G,\cF)=\W(\cF)(G^n)=\cF\otimes\cA(G)^{\otimes n}.
$$
For $f\in\cF$ and $a_i\in\cA(G)$, the boundary operator is given in terms of the difference coalgebra structure of $\cA(G)$, with comultiplication $\Delta:\cA(G)\to\cA(G)\otimes\cA(G)$, and the difference $\cA(G)$-comodule structure $\mu_\cF:\cF\to\cF\otimes\cA(G)$ by
\begin{align*}
\partial(f\otimes a_1\otimes\cdots a_n)&=\mu_\cF(f)\otimes a_1\otimes\cdots\otimes a_n\\
&+\sum_{i=1}^n(-1)^i f\otimes a_1\otimes\cdots\otimes\Delta a_i\otimes\cdots\otimes a_n\\
&+(-1)^{n+1} f\otimes a_1\otimes\cdots\otimes a_n\otimes 1.
\end{align*}

In particular, 
\begin{align*}
\HH^0(G,\cF)&=\W(\cF)^{\h_G}=\ker(\partial_0)=\{f\in\cF:\mu_\cF(f)=f\otimes 1\}\\&=\cA(G)_\infty\Comod(\cA(S),\cF).
\end{align*}

\begin{theorem}
Let $S$ be an affine difference scheme, and let $G$ be an affine flat difference $S$-group. The functors $\HH^n(G,\mathord{-})$ are the enriched derived functors of $\HH^0(G,\mathord{-})$ on the enriched abelian category $G\da\cA(S)_\infty\Mod$, i.e.,
$$
\HH^n(G,\cF)=\Ext^n_{\cA(G)_\infty\Comod}(\cA(S),\cF).
$$
\end{theorem}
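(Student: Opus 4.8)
The plan is to deduce this from the general enriched group cohomology machinery of \ref{enr-der-of-fix} (and its topos-theoretic strengthening \ref{enr-coh-is-top-coh}), applied to the specific enriched data identified above, namely $\cV=\diff\Set$, $\Vab=\diff\Ab$, $\cC=\difuinf\Sch$, and the representable $\hC$-group $\bG=\h_G$. First I would check that the hypotheses of \ref{enr-der-of-fix} (respectively \ref{enr-coh-is-top-coh}) are met in this situation: $\cV=\diff\Set$ is a Grothendieck topos, hence bicomplete and cartesian closed; the $\cV$-category $\cC=\difuinf\Sch$ is tensored over $\diff\Set$ by \ref{difuinf-cotens}; and it has finite products (coproducts in $\diff\Rng$), so $\bG^n=\h_{G^{\times n}}$ is again representable. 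The remaining hypothesis, that $\bO\Mod$ has enough enriched (or internal) injectives, is where the affine and flatness assumptions enter, and I will return to it below.

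Next I would identify the abstract category $\bG\da\bO\Mod$ appearing in \ref{enr-der-of-fix} with the concrete category $G\da\cA(S)_\infty\Mod$, and then with $\cA(G)_\infty\Comod$. The first identification is essentially Definition~\ref{def-G-mod}: a $\bG\da\bO$-module in the present enriched setting is a $\h_G\da\OO_S$-module, and restricting to those of the form $\W(\cF)$ gives exactly $G\da\cA(S)_\infty\Mod$; the second identification is \ref{enr-eq-G-mod}, the equivalence $G\da\cA(S)_\infty\Mod\simeq\cA(G)_\infty\Comod$ of $\cA(S)\Mod$-categories. Under these equivalences I must check that the enriched functor $\HH^0(\bG,\mathord{-})=(\mathord{-})^{\bG}$ of \ref{enr-gp-coh} matches the functor $\HH^0(G,\mathord{-})=\cA(G)_\infty\Comod(\cA(S),\mathord{-})$: this is the computation already recorded just before the statement, $\HH^0(G,\cF)=\{f\in\cF:\mu_\cF(f)=f\otimes 1\}\simeq\cA(G)_\infty\Comod(\cA(S),\cF)$, which is $\bG\da\bO\Mod[\bO,\cF]$ evaluated at the terminal object, exactly as in \ref{enr-gp-coh}. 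Similarly the standard complex $\Ch^*(\bG,\bF)$ of \ref{enr-gp-coh}, whose $n$-th term is $[\bG^n,\bF]$ evaluated suitably, becomes under $\W$ the complex $\Ch^n(G,\cF)=\W(\cF)(G^n)=\cF\otimes\cA(G)^{\otimes n}$ with the coboundary spelled out above, using \ref{diff-map-into-uhom} to rewrite $\OO_S\Mod$-maps out of $\W$ of a free module as comodule data; this is where one sees that the enriched Hochschild complex and the Hochschild (cobar) complex of the comodule coincide.

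Finally, the hypothesis that $\cA(G)_\infty\Comod$ has enough enriched injectives is supplied by \ref{G-mod-enough-enr-inj}, whose proof invokes \ref{diff-mod-enough-enr-inj-proj} (enough enriched injectives in $\cA(S)_\infty\Mod$) together with the induced-module construction $\ind(\cP)=\cP\otimes_{\cA(S)}\cA(G)$, which by \ref{enough-G-enr-inj} preserves enriched injectivity and has the forgetful functor as enriched left adjoint. With all hypotheses verified, \ref{enr-der-of-fix} (or \ref{enr-coh-is-top-coh}) yields $\HH^n(\bG,\mathord{-})\simeq\Ext^n_{\bG\da\bO\Mod}(\bO,\mathord{-})$, which under the equivalence $\bG\da\bO\Mod\simeq\cA(G)_\infty\Comod$ sending $\bO$ to $\cA(S)$ reads $\HH^n(G,\cF)=\Ext^n_{\cA(G)_\infty\Comod}(\cA(S),\cF)$, as claimed. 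I expect the main obstacle to be bookkeeping rather than conceptual: carefully matching the abstract enriched standard complex of \ref{enr-gp-coh} (built from internal homs $[\bG^n,\bF]$ in $\hC$) with the Hochschild complex of the comodule $\cF$ under the functor $\W$, i.e.\ checking that the identifications of \ref{diff-map-into-uhom}, \ref{enr-mod-psh-ff} and \ref{enr-eq-G-mod} are compatible with the coboundary operators and with the enrichment, so that the abstract effaceability-via-induced-modules argument of \ref{enr-der-of-fix} transports to the statement that $\ind$-modules have vanishing higher $\HH^n(G,\mathord{-})$; the flatness of $G$ over $S$ is needed precisely to ensure $\cA(G)_\infty\Comod$ is (enriched) abelian, by \ref{flat-comod-abelian}, so that $\Ext$ in it makes sense.
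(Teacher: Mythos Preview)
Your proposal has the right ingredients but contains a genuine gap in the final step. You write that \ref{enr-der-of-fix} yields $\HH^n(\bG,\mathord{-})\simeq\Ext^n_{\bG\da\bO\Mod}(\bO,\mathord{-})$ and then invoke ``the equivalence $\bG\da\bO\Mod\simeq\cA(G)_\infty\Comod$''. But there is no such equivalence: as you yourself note earlier, $G\da\cA(S)_\infty\Mod$ is only the \emph{full subcategory} of $\h_G\da\OO_S\Mod$ consisting of objects of the form $\W(\cF)$. The abstract result \ref{enr-der-of-fix} computes $\Ext$ in the big presheaf category $\h_G\da\OO_S\Mod$, and there is no a priori reason why this agrees with $\Ext$ in the subcategory $\cA(G)_\infty\Comod$. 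The embedding $\W$ is not exact in general (since $\W(\cF)(S')=\cF\otimes_{\cA(S)}\cA(S')$ need not be left-exact in $\cF$), so injective resolutions and derived functors need not match up.

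The paper therefore does \emph{not} apply \ref{enr-der-of-fix} as a black box. Instead it re-runs the universal-$\delta$-functor argument directly inside $\cA(G)_\infty\Comod$: it uses flatness of $\cA(G)$ over $\cA(S)$ to show that each $\cF\mapsto\cF\otimes\cA(G)^{\otimes n}$ is exact, hence $\Ch^*(G,\mathord{-})$ is an exact functor on the comodule category and $\HH^*(G,\mathord{-})$ is a cohomological functor there; then it shows effaceability via the monomorphism $\mu_\cF:\cF\to\ind(\cF)$ together with $\HH^n(G,\ind(\cF))=\HH^n(\h_G,E(\W(\cF)))=0$, borrowing only the null-homotopy computation from the \emph{proof} of \ref{enr-der-of-fix}. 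In particular, flatness is used twice --- once for abelianness (as you note) and once for exactness of the standard complex --- and the second use is what you are missing. Your ``bookkeeping'' concern at the end is in fact the substantive point.
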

\begin{proof}
By \ref{enr-eq-G-mod}, the $\diff\Ab$-category category $G\cA(S)_\infty\Mod$ is 
enriched equivalent to $\cA(G)_\infty\Comod$, and, since $\bA(G)$ is flat over $\bA(S)$, \ref{flat-comod-abelian} shows that it is enriched abelian, while \ref{G-mod-enough-enr-inj} shows that it has enough enriched injectives. Again by flatness, each enriched functor 
$$
\cF\mapsto\cF\otimes_{\cA(S)}\cA(G)^{\otimes n}
$$
is exact, so $\Ch^*(G,\mathord{-})$ is an enriched exact functor on $G\da\cA(S)_\infty\Mod$. Hence $\HH^*(G,\mathord{-})$ is an enriched cohomological functor, so it suffices to prove that it is effaceable.

Indeed, if $\cF$ is a $G\da\cA(S)$-module, the comodule coaction $\mu_\cF:\cF\to \ind(\cF)$ is a monomorphism, and, by the proof \ref{enr-der-of-fix},
$$
\HH^n(G,\ind(\cF))=\HH^n(\h_G,\W(\ind(\cF)))=\HH^n(\h_G,E(\W(\cF)))=0,
$$  
for $n>0$.
\end{proof}

\subsection{Cohomology of associated difference group modules}

Let $S$ be a difference scheme, let $G$ be a flat difference group scheme over $S$, and let $\cF\in G\da\cA(S)_\infty\Mod$, i.e., $\W(\cF)$ is given a structure of the $\h_G\da\OO_S$-module. Equivalently, we can view $\cF$ as an object in $\cA(G)_\infty\Comod$. 

In particular, $\assoc{\W}(\cF)$ is the associated presheaf of $\W(\cF)$, and, when needed, we may consider $\cF$ as an object of the underlying category $\cA(G)\Comod$ of difference $\cA(G)$-comodules.
% $\assoc{W}(\cF)$ is given the structure of a $\assoc{\h}_G\da\OO_S$-module.

We write
$$
\HH^{\sigma,n}(G,\cF)=\HH^n(\assoc{\h}_G,\assoc{\W}(\cF)).
$$
Expanding the definition, these are the cohomology groups of the difference invariants complex 
$\Fix(\Ch^*(G,\cF))$,
i.e.,
$$
\HH^{\sigma,n}(G,\cF)=\coh^n(\Fix(\Ch^*(G,\cF))).
$$
We also introduce the cohomology groups of the difference coinvariants complex
$$
\HH^{n}_\sigma(G,\cF)=\coh^n(\Quo(\Ch^*(G,\cF))).
$$
From a  purely algebraic perspective, we let
$$
\cH^n(G,\cF)=\Ext^n_{\cA(G)\Comod}(\cA(S),\cF),
$$
as also considered in \cite{piotr}. 
\begin{remark}\label{hyperderived}
By the construction following \cite[2.4.2]{groth-tohoku}, the above groups can be expressed using the the hypercohomology functors of $\Fix$. Indeed, considering $\coh^0$ as a functor defined on chains, and using the fact that
$\Ch^*(G,\mathord{-})$ is exact, we see that
\begin{align*}
\cR^n\Fix(\Ch^*(G,\cF))&=\der^n(\Fix\circ\coh^0)(\Ch^*(G,F))\\
&=\der^n(\Fix\circ\coh^0\circ\Ch^*(G,\mathord{-})(\cF)\\
&=\der^n(\Fix\circ\HH^0(G,\mathord{-}))(\cF)=\cH^n(G,\cF).
\end{align*}
\end{remark}
The two hypercohomology spectral sequences \cite[2.4.2]{groth-tohoku} yield that
$$
\lexp{II}{E}_2^{p,q}=(\der^p\Fix)(\coh^q(\Ch^*(G,\cF)))\Rightarrow \cR^{p+q}\Fix(\Ch^*(G,\cF)),
$$
and
$$
\lexp{I}{E}_2^{p,q}=\coh^p(\der^q\Fix(\Ch^*(G,\cF)))\Rightarrow\cR^{p+q}\Fix(\Ch^*(G,\cF)).
$$
According to \ref{RFix}, the derived functors of $\Fix=(\mathord{-})^\sigma$ are $\der^1\Fix=\Quo=(\mathord{-})_\sigma$, and $\der^p\Fix=0$ for $p>1$.  Thus, $\lexp{II}{E}_2^{p,q}$ is a two-column spectral sequence, whence, for $n>0$, we obtain the exact sequence
$$
0\to\der^1\Fix(\HH^{n-1}(G,\cF))\to\cR^n\Fix(\Ch^*(G,\cF))\to\Fix(\HH^n(G,\cF))\to 0.
$$
In view of the notation we introduced, as well as \ref{hyperderived}, we can write it as follows.
\begin{proposition}
%With the above assumptions, 
For every $n>0$, we have an exact sequence
$$
0\to\HH^{n-1}(G,\cF)_\sigma\to\cH^n(G,\cF)\to\HH^n(G,\cF)^\sigma\to 0.
$$
\end{proposition}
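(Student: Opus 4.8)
The plan is to read the asserted short exact sequence directly off the second hypercohomology spectral sequence already assembled in the paragraph preceding the statement; almost all of the work has been done there, and what remains is to match up the identifications. First I would recall, from Remark~\ref{hyperderived}, that $\cH^n(G,\cF)$ coincides with the $n$-th hyperderived functor $\cR^n\Fix$ evaluated at the cochain complex $\Ch^*(G,\cF)$; this rests on the exactness of the enriched functor $\Ch^*(G,\mathord{-})$, which is exactly the input already used to identify $\HH^*(G,\mathord{-})$ with enriched $\Ext$ when $G$ is flat. With that in hand the relevant abutment is $\cR^{p+q}\Fix(\Ch^*(G,\cF))\simeq \cH^{p+q}(G,\cF)$.

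Second, I would invoke the spectral sequence
$$\lexp{II}{E}_2^{p,q} = \der^p\Fix\bigl(\coh^q(\Ch^*(G,\cF))\bigr) = \der^p\Fix\bigl(\HH^q(G,\cF)\bigr) \Rightarrow \cH^{p+q}(G,\cF),$$
where the middle equality is the defining formula $\HH^q(G,\cF) = \coh^q(\Ch^*(G,\cF))$. The key structural input is Corollary~\ref{RFix}: the derived functors of $\Fix$ vanish in degrees $\ge 2$, with $\der^0\Fix = (\mathord{-})^\sigma$ and $\der^1\Fix = (\mathord{-})_\sigma$. Hence $\lexp{II}{E}_2^{p,q}=0$ for $p\ge 2$, so the spectral sequence lives in the two columns $p=0,1$. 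A two-column cohomological spectral sequence degenerates at $E_2$ — every $d_r$ with $r\ge 2$ has source or target outside the strip $0\le p\le 1$ — so the filtration on $\cH^n(G,\cF)$ has precisely the two nonzero steps $F^1\subseteq F^0=\cH^n(G,\cF)$, with $F^1\simeq \lexp{II}{E}_2^{1,n-1}$ and $\cH^n(G,\cF)/F^1\simeq \lexp{II}{E}_2^{0,n}$. This yields, for each $n$, the short exact sequence
$$0\to \der^1\Fix\bigl(\HH^{n-1}(G,\cF)\bigr)\to \cH^n(G,\cF)\to \Fix\bigl(\HH^n(G,\cF)\bigr)\to 0,$$
and substituting $\der^1\Fix(\HH^{n-1}(G,\cF)) = \HH^{n-1}(G,\cF)_\sigma$ and $\Fix(\HH^n(G,\cF)) = \HH^n(G,\cF)^\sigma$ produces exactly the claimed sequence — it is the exact sequence displayed just before the proposition, rewritten in the $\HH$-notation. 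For $n>0$ all terms are defined; for $n=0$ the term $\HH^{-1}$ is absent, which is the reason for the hypothesis $n>0$.

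The one point demanding care — minor but real — is the filtration bookkeeping: one must verify that in the two-column situation it is the $\lexp{II}{E}_2^{1,n-1}$-piece that sits as the subobject of $\cH^n(G,\cF)$ and the $\lexp{II}{E}_2^{0,n}$-piece as the quotient, and not the reverse. This follows from the standard convention $E_\infty^{p,q}\simeq F^p\cH^{p+q}/F^{p+1}\cH^{p+q}$ for the decreasing filtration on $\cH^n(G,\cF)$ associated to the spectral sequence, together with $F^0=\cH^n(G,\cF)$ and $F^2=0$ in the present case. One should also confirm that the spectral sequence used is indeed the second of the two hypercohomology spectral sequences of \cite[2.4.2]{groth-tohoku} — the one whose $E_2$-term is obtained by taking cohomology of the complex first and then deriving $\Fix$ — which is precisely the one exhibited in the text; the first, $\lexp{I}{E}$, converges to the same abutment but plays no role here.
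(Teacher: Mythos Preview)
Your proposal is correct and follows essentially the same approach as the paper: the exact sequence is read off from the two-column hypercohomology spectral sequence $\lexp{II}{E}_2^{p,q}$, using Remark~\ref{hyperderived} to identify the abutment with $\cH^*(G,\cF)$ and Corollary~\ref{RFix} to collapse the sequence to two columns. The paper also remarks in passing that the same result could alternatively be obtained from the Grothendieck spectral sequence for the composite $\cH^0(G,\mathord{-})=\Fix\circ\HH^0(G,\mathord{-})$, but your argument matches the paper's primary derivation.
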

We could have obtained the same result through the Grothendieck spectral sequence for the composite functor
$$
\cH^0(G,\mathord{-})=\Fix\circ\HH^0(G,\mathord{-}).
$$

On the other hand, $\lexp{I}{E}_2^{p,q}$ is a two-row spectral sequence, which gives an exact sequence
\begin{multline*}
\cdots\to\cR^{n}\Fix(\Ch^*(G,\cF))\to\coh^{n-1}(\der^1\Fix(\Ch^*(G,\cF))\\\to\coh^{n+1}(\Fix(\Ch^*(G,\cF)))\to \cR^{n+1}\Fix(\Ch^*(G,\cF))\to\cdots
\end{multline*}
We can rewrite it more conveniently as follows.
\begin{proposition}
With the above notation, we have an exact sequence
$$
\cdots\to\cH^{n}(G,\cF)\to\HH^{n-1}_\sigma(G,\cF)\to\HH^{\sigma,n+1}(G,\cF)\to\cH^{n+1}(G,\cF)\to\cdots
$$
\end{proposition}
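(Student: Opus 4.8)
�The proof is an immediate application of the second hypercohomology spectral sequence of $\Fix$ applied to the exact chain functor $\Ch^*(G,\mathord{-})$, exactly as in the statement preceding this proposition. Let me outline the steps.

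\smallskip

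\textbf{Step 1: Set up the hypercohomology framework.} First I would recall, as in \ref{hyperderived}, that since $\Ch^*(G,\mathord{-})$ is an exact $\diff\Ab$-functor (by flatness of $\cA(G)$ over $\cA(S)$, each $\cF\mapsto \cF\otimes_{\cA(S)}\cA(G)^{\otimes n}$ is exact), the hyperderived functors of $\Fix$ applied to $\Ch^*(G,\cF)$ compute $\cR^n\Fix(\Ch^*(G,\cF))=\cH^n(G,\cF)$. So the object appearing in the spectral sequence abutment is precisely $\cH^n(G,\cF)$, which is what we want to relate to $\HH^{\sigma,\bullet}$ and $\HH^\bullet_\sigma$.

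\smallskip

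\textbf{Step 2: Invoke the first hypercohomology spectral sequence.} The spectral sequence $\lexp{I}{E}_2^{p,q}=\coh^p(\der^q\Fix(\Ch^*(G,\cF)))\Rightarrow \cR^{p+q}\Fix(\Ch^*(G,\cF))$ from \cite[2.4.2]{groth-tohoku} is already written down in the excerpt just above the proposition. By \ref{RFix}, $\der^0\Fix=\Fix=(\mathord{-})^\sigma$, $\der^1\Fix=(\mathord{-})_\sigma$, and $\der^q\Fix=0$ for $q>1$. Hence $\lexp{I}{E}_2^{p,q}$ is concentrated in the two rows $q=0,1$: the $q=0$ row is $\coh^p(\Fix(\Ch^*(G,\cF)))=\HH^{\sigma,p}(G,\cF)$, and the $q=1$ row is $\coh^p(\Quo(\Ch^*(G,\cF)))=\HH^p_\sigma(G,\cF)$.

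\smallskip

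\textbf{Step 3: Extract the long exact sequence.} For a two-row first-quadrant (here $\Z$-graded, but bounded below) spectral sequence, the only possibly nonzero differential out of each term is $d_2:\lexp{I}{E}_2^{p,1}\to \lexp{I}{E}_2^{p+2,0}$, i.e. $\HH^p_\sigma(G,\cF)\to \HH^{\sigma,p+2}(G,\cF)$, and after $E_3$ everything degenerates. The standard exact sequence associated to a two-row spectral sequence (the ``exact sequence of a filtration of length one'') then reads
$$
\cdots\to \cR^{n}\Fix(\Ch^*(G,\cF))\to \HH^{n-1}_\sigma(G,\cF)\xrightarrow{d_2}\HH^{\sigma,n+1}(G,\cF)\to\cR^{n+1}\Fix(\Ch^*(G,\cF))\to\cdots,
$$
and substituting $\cR^m\Fix(\Ch^*(G,\cF))=\cH^m(G,\cF)$ from Step 1 gives exactly the claimed sequence
$$
\cdots\to\cH^{n}(G,\cF)\to\HH^{n-1}_\sigma(G,\cF)\to\HH^{\sigma,n+1}(G,\cF)\to\cH^{n+1}(G,\cF)\to\cdots.
$$
The connecting maps are the edge maps $\cR^n\Fix\to \lexp{I}{E}_\infty^{n-1,1}\hookrightarrow \lexp{I}{E}_2^{n-1,1}$ and $\lexp{I}{E}_2^{n+1,0}\twoheadrightarrow \lexp{I}{E}_\infty^{n+1,0}\hookrightarrow \cR^{n+1}\Fix$ respectively, together with $d_2$.

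\smallskip

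\textbf{Main obstacle.} Honestly, there is no serious obstacle here: once the hypercohomology spectral sequence machinery of \cite[2.4.2]{groth-tohoku} and the computation of $\der^\bullet\Fix$ in \ref{RFix} are in hand, the proof is purely formal homological bookkeeping. The only point needing a line of care is checking that the filtration degenerates sufficiently — i.e. that a two-row spectral sequence yields precisely a long exact sequence with connecting map $d_2$ — and that the hyperderived-functor identification $\cR^n\Fix(\Ch^*(G,\cF))=\cH^n(G,\cF)$ is legitimate, which was already argued in \ref{hyperderived} via the identity $\der^n(\Fix\circ\coh^0\circ\Ch^*(G,\mathord{-}))=\der^n(\Fix\circ\HH^0(G,\mathord{-}))$ and effaceability of $\Ch^*(G,\mathord{-})$ on induced modules. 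So the plan is essentially to write ``apply $\lexp{I}{E}$ from the displayed spectral sequences, feed in \ref{RFix}, and read off the two-row exact sequence,'' mirroring verbatim the derivation of the preceding proposition but using the other spectral sequence.
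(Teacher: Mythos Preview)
Your proposal is correct and follows exactly the paper's approach: apply the first hypercohomology spectral sequence $\lexp{I}{E}_2^{p,q}=\coh^p(\der^q\Fix(\Ch^*(G,\cF)))$, use \ref{RFix} to see it is concentrated in the two rows $q=0,1$ with entries $\HH^{\sigma,p}$ and $\HH^p_\sigma$, and read off the standard long exact sequence of a two-row spectral sequence with abutment $\cH^{p+q}(G,\cF)$ via \ref{hyperderived}. There is nothing to add.
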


\section{Comparison to literature}\label{appendix1}

\subsection{Work by Cha{\l}upnik-Kowalski}

In their recent paper \cite{piotr2}, the authors independently identified a number of objects that naturally appeared in our work. 

In particular, their notion of site (Assumption 1.1) $\bC(X)$ with respect to some scheme topology $\tau$, equipped with a base change by $\sigma$ endofunctor, is known to us as the internal $\tau$-site in $\diff\Set$, and we think that their notion of left difference sheaves (Definition 2.1) corresponds to our objects of the $\tau$-spectrum $X_\tau$ from \ref{tau-spectra-diff}, sometimes referred to as $\stig$-equivariant.

What the authors call a difference site in 2.2, is obtained in various guises in our paper through the externalisation procedure we denote by $\rtimes$, but we do a variety of externalisations with respect to several sites for $\diff\Set$, depending on the intended application. 

Consequently, the difference sheaf cohomology $H_\sigma^n(X,\cF)$ from 3.1 of \cite{piotr2} agrees with our $\tH^n(X_\tau,\cF)$, and we obtain analogous results on the comparison with the cohomology of the underlying scheme, and various calculations of the cohomology groups. Our computations of the Picard group of a difference field agree.

The authors address the classification of principal homogeneous spaces for difference algebraic groups in more detail than us, we mostly dealt with it at the level of objects (sheaves) in the flat spectrum and for abelian groups, and did not consider the case of actual difference algebraic groups in more detail. We did not have enough time to analyse that aspect of their paper, and we aim to include a more thorough analysis in the next draft of this manuscript.

 In \cite{piotr}, the authors study cohomology of difference algebraic groups by computing $\Ext$ functors in the category of difference comodules for a difference Hopf algebra. In our Section~\ref{diff-gp-coh}, we wanted to do difference algebraic group cohomology in the style of Demazure's article in \cite{sga3.1}. We do this in the context of enriched category theory over $\diff\Set$, and derive a comparison theorem to the cohomology from \cite{piotr}. We use internal homs to replace the assumption of invertibility of difference operators.

\subsection{Work of Bachmayr-Wibmer}

In \cite{michael-anette}, given a difference algebraic group $G$ over a difference ring $k$ and a $k$-algebra $A$, the authors introduce the cohomology set
$$
H^1_\sigma(A/k,G),
$$
which classifies up to isomorphism the $G$-torsors which are trivialised by $A$. We think that the system of these cohomology sets for varying $A$ should play a role comparable, in the case of abelian $G$, to our cohomology groups
$$
\tH^1(\spec.\text{fppf}(k),G),
$$
or, in the case of a difference field $k$, to our Galois cohomology \ref{diff-galcoh}. 
We will provide a more precise comparison in the next draft of this manuscript.

\backmatter

\bibliographystyle{plain}
\bibliography{pib}
\end{document}